\setlist[enumerate]{leftmargin=*,noitemsep}
\setlist[itemize]{leftmargin=*,noitemsep}
\newcommand{\doi}[1]{\url{https://doi.org/#1}}
\def\@defaultbiblabelstyle#1{[#1]}
\theoremstyle{plain}
\newtheorem{thm}{Theorem}[section]
\newtheorem{prop}[thm]{Proposition}
\newtheorem{lemma}[thm]{Lemma}
\newtheorem{cor}[thm]{Corollary}
\theoremstyle{definition}
\newtheorem{definition}[thm]{Definition}
\theoremstyle{remark}
\newtheorem{remark}[thm]{Remark}
\newtheorem{example}[thm]{Example}
\newtheorem*{ack}{Acknowledgements}
\newcommand{\End}{\mathrm{End}}
\newcommand{\Aut}{\mathrm{Aut}}
\newcommand{\Mat}{\mathrm{Mat}}
\DeclareFontFamily{U}{rsf}{}
\DeclareFontShape{U}{rsf}{m}{n}{<5> <6> rsfs5 <7> <8> <9> rsfs7 <10-> rsfs10}{}
\DeclareMathAlphabet\Scr{U}{rsf}{m}{n}
\def\Z{\mathbb{Z}}
\def\C{\mathbb{C}}
\def\R{\mathbb{R}}
\def\rk{{\rm rk}}
\def\dd{\mathrm{d}}
\def\Ad{\mathrm{Ad}}
\def\Id{\mathrm{Id}}
\def\frq{\mathfrak{q}}
\def\frg{\mathfrak{g}}
\newcommand{\be}{\begin{equation*}}
\newcommand{\ee}{\end{equation*}}
\newcommand{\ben}{\begin{equation}}
\newcommand{\een}{\end{equation}}
\newcommand{\beqa}{\begin{eqnarray*}}
\newcommand{\eeqa}{\end{eqnarray*}}
\newcommand{\beqan}{\begin{eqnarray}}
\newcommand{\eeqan}{\end{eqnarray}}
\newcommand{\Tr}{\mathrm{Tr}}
\def\cC{{\mathcal C}}
\def\scB{\Scr B}
\def\scS{\Scr S}
\def\Cl{\mathrm{Cl}}
\def\Spin{\mathrm{Spin}}
\def\Spin{\mathrm{Spin}}
\def\SO{\mathrm{SO}}
\def\U{\mathrm{U}}
\def\cA{\mathcal{A}}
\def\cE{\mathcal{E}}
\def\hcE{\widehat{\mathcal{E}}}
\def\cI{\mathcal{I}}
\def\cP{\mathcal{P}}
\def\cT{\mathcal{T}}
\def\cP{\mathcal{P}}
\def\cF{\mathcal{F}}
\def\cC{\mathcal{C}}
\def\SU{\mathrm{SU}}
\def\G_2{\mathrm{G_2}}
\def\cV{\mathcal{V}}
\newcommand{\Hom}{{\rm Hom}}
\def\Aut{\mathrm{Aut}}
\def\Re{\mathrm{Re}}
\def\Im{\mathrm{Im}}
\def\G{\mathrm{G}}
\def\R{\mathbb{R}}
\def\dd{\mathrm{d}}
\newcommand{\abs}[1]{\left\lvert#1\right\rvert}
\newcommand{\escal}[1]{\langle#1\rangle}
\def\Span{\mathrm{Span}}
\def\frc{\mathfrak{F}}
\def\frc{\mathfrak{c}}
\newcolumntype{P}[1]{>{\centering\arraybackslash}p{#1}}
\begin{document}

%\nocite{*}

\title[The algebraic square of an irreducible complex spinor]{The algebraic square of an irreducible complex spinor}

\author[Alejandro Gil-García]{Alejandro Gil-García \orcidlink{0000-0002-9370-241X}}
\address{Beijing Institute of Mathematical Sciences and Applications, Beijing, China}
\email{alejandrogilgarcia@bimsa.cn}

\author[C. S. Shahbazi]{C. S. Shahbazi \orcidlink{0000-0003-1185-9569}}
\address{Departamento de Matem\'aticas, Universidad UNED - Madrid, Reino de Espa\~na}
\email{cshahbazi@mat.uned.es}  
 
\begin{abstract}
We characterize, in every dimension and signature, the \emph{algebraic squares} of an irreducible complex spinor as a pair of exterior forms satisfying a prescribed system of algebraic relations that we present in terms of the geometric product of the underlying quadratic vector space. As a result, we obtain a general correspondence between irreducible complex spinors and algebraically constrained exterior forms, which clarifies the subtle relationship between spinors and exterior forms and contributes towards the understanding of spinors as the \emph{square root} of geometry. We use this formalism to construct the squares of an irreducible complex spinor in Euclidean dimensions up to six, and also to construct the squares of a generic, possibly non-pure and non-unit, irreducible complex chiral spinor in eight Euclidean dimensions. Elaborating on this result, we consider a natural notion of \emph{spinorial instanton} that we study for connections on a principal bundle with a complex structure group as well as for curvings of a $\mathbb{C}^{\ast}$-bundle gerbe defined on a Lorentzian six-manifold.\bigskip

\noindent
\emph{Keywords: Clifford algebras, complex spinors, spinorial forms, bundle gerbes, curvings}\medskip

\noindent
\emph{MSC2020: Primary 53C27; Secondary 53C10, 53C50, 15A66, 15A75}

\end{abstract}

\maketitle

\setcounter{tocdepth}{1} %doesn't display subsections in TOC 

\tableofcontents

% % % % % % % % % % % % % % % % % % % % % % % % % % % % % % % % % % % % % %
% % % % % % % % % % % % % % % % % % % % % % % % % % % % % % % % % % % % % %

\section{Introduction}

% % % % % % % % % % % % % % % % % % % % % % % % % % % % % % % % % % % % % %
% % % % % % % % % % % % % % % % % % % % % % % % % % % % % % % % % % % % % %

The main purpose of this article is to construct and algebraically characterize the \emph{squares} of an irreducible complex spinor $\eta\in\Sigma$ for an irreducible complex Clifford module $(\Sigma,\gamma)$ associated to a real quadratic vector space $(V,h)$\footnote{Here $V$ is a real vector space of dimension $d$ and $h$ a non-degenerate metric of signature $(p,q)$ of $p$ \emph{pluses} and $q$ \emph{minuses}.}. By \emph{construct} we mean establishing the existence of a quadratic map, depending on a choice of an admissible pairing on $(\Sigma,\gamma)$, that to every irreducible complex spinor assigns a complex exterior form, that is:
\begin{equation*}
\Sigma \ni \eta \mapsto \alpha_{\eta}\in \wedge V^{\ast}_{\mathbb{C}}.
\end{equation*}

\noindent
This defines a map, a \emph{spinor square map}, whose image consists of a very special class of exterior forms that translates the geometric information contained in $\eta$ into the language of exterior forms. By \emph{completely characterizing} the square of an irreducible complex spinor, we mean characterizing the image of this map inside $\wedge V^{\ast}_{\mathbb{C}}$ as the solution set of a system of explicit algebraic conditions. We achieve this goal through a set of algebraic equations and a fundamental inequality, all of which are neatly packed in terms of the complex linear extension of the geometric product $\diamond$ in $(V,h)$. Spinor square maps sit at the core of the current understanding of spinors as the \emph{square root} of geometry, that is, as the square root of differential forms, as exemplified for instance in \cite[\textsection 14]{HarveyBook} and \cite[\textsection IV]{Spin89}. The study of the intimate relationship between irreducible spinors and differential forms has a long tradition that can be traced back to Cartan \cite{Cartan1938} and Chevalley \cite{Chev54}. It has been revisited in more modern times by Trautman \cite{Trautman1997} and Atiyah \cite{AtiyahSeminar}, and further developed in \cite{Bhoja:2023dgk,BryantParallel,CharltonThesis,HarveyBook,MeinrenkenBook}. In particular, it was already noted by Cartan that a pure spinor could be used to construct a decomposable exterior form of middle degree \cite{Cartan1938}. The main goal at that time was to understand spinors globally, a problem considered formidably difficult during the first half of the 20th century. Our primary motivation for delving into the relation between spinors and exterior forms is to develop a general geometric framework for studying parallel complex spinors in terms of an associated algebraic-differential equivalent system for their square, in the spirit of the formalism presented in \cite{CLS21,Sha24} for parallel irreducible real spinors of real type. In this article, we consider the \emph{algebraic side} of this problem, which we believe to be of interest by itself, and in a separate publication, we will consider the \emph{differential} side of the problem together with several of its applications in the study of complex parallel spinors. It should be remarked that the purpose of this formalism is not to obtain a framework for studying the stabilizers of irreducible complex spinors, although it could be useful also in this regard. \emph{Au contraire}, the goal of this formalism is to provide a general algebraic framework to work with spinors that does not require knowing either the orbit stratification of the spinor space or the stabilizer of a given spinor, and can therefore be applied generally. This is especially convenient for our ultimate goal of studying parallel spinors and their moduli in various dimensions and signatures.

% % % % % % % % % % % % % % % % % % % % % % % % % % % % % % % % % % % % % %

\subsection*{Outline and main results}

% % % % % % % % % % % % % % % % % % % % % % % % % % % % % % % % % % % % % %

Our initial observation is that there exist \emph{two} natural squares associated to a given irreducible complex spinor, which are defined as follows:
\begin{itemize}
\item A \emph{Hermitian square} obtained via a choice of an admissible Hermitian pairing $\scS$ on $(\Sigma,\gamma)$.
\item A \emph{complex-bilinear square} obtained via a choice of an admissible complex-bilinear pairing $\scB$ on $(\Sigma,\gamma)$.
\end{itemize}

\noindent
In the following, we will denote by $s\in \mathbb{Z}_2$ the adjoint type of the given admissible pairing, whereas we will denote by $\sigma\in \mathbb{Z}_2$ its symmetry type. We will generically refer to these squares as \emph{spinorial exterior forms}, or \emph{spinorial forms} for short. The inverse of the Hermitian square of an irreducible complex spinor recovers the original spinor modulo a multiplicative unitary complex number. On the other hand, the inverse of the complex-bilinear square of an irreducible complex spinor recovers the original spinor modulo a sign. Hence, the complex-bilinear square of a spinor contains essentially all information about the original spinor. In contrast, the Hermitian square loses some information, but in exchange, it is occasionally more convenient in applications, such as in the study of algebraic constraint equations for spinors. Our main results, which we summarize below, concern the algebraic characterization of both the Hermitian and the complex-bilinear squares of a complex irreducible spinor, together with the algebraic characterization of their mutual compatibility.\medskip

Section \ref{sec:vectorsasendos} is devoted to proving several algebraic results on the characterization of certain classes of rank one endomorphisms of a complex vector space, which are naturally associated with a Hermitian or complex-bilinear pairing. These results will be key to characterizing the squares of an irreducible complex spinor in the subsequent sections.\medskip

In Section \ref{section:even_forms} we begin our study of irreducible complex spinors, considering first the even-dimensional case. We first identify the complex Clifford algebra $\C\mathrm{l}(V^*,h^*)$ with the \emph{Kähler-Atiyah algebra} $(\wedge V^*_\C,\diamond)$, see Appendix \ref{app:KA} for details. Then, we fix an irreducible Clifford module $(\Sigma,\gamma)$ equipped with an admissible complex-bilinear pairing $\scB$ and a compatible admissible Hermitian pairing $\scS$, associated to a quadratic vector space $(V,h)$ of arbitrary signature and even dimension. This allows us to obtain the image of the spinor square maps inside $\wedge V^*_\C$. A streamlined version of our first theorem, namely Theorem \ref{thm:even_Hermitian_forms}, reads:

\begin{thm}
A complex exterior form $\alpha\in\wedge V^*_\C$ is the Hermitian square of an irreducible complex spinor in even dimension $d$ if and only if it satisfies the following algebraic system:   
\begin{equation*}
\alpha\diamond\alpha = 2^{\frac{d}{2}}\alpha^{(0)}\alpha,\qquad(\pi^{\frac{1-s}{2}}\circ\tau)(\bar\kappa\alpha) = \kappa \bar{\alpha}\, , \qquad \alpha \diamond \beta \diamond \alpha=2^{\frac{d}{2}}(\alpha\diamond\beta)^{(0)}\alpha
\end{equation*}

\noindent
for an exterior form $\beta\in\wedge V^*_\C$ satisfying $(\alpha\diamond\beta)^{(0)} \neq 0$ and a unit complex number $\kappa\in \U(1)$.
\end{thm}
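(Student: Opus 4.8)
The plan is to realize the Hermitian square explicitly as the image of a rank-one endomorphism and then translate the endomorphism-level statements into the Kähler-Atiyah language. First I would recall the construction of the Hermitian square map: under the isomorphism $\C\mathrm{l}(V^*,h^*)\cong(\wedge V^*_\C,\diamond)$ the Clifford module $\Sigma$ becomes a module over $(\wedge V^*_\C,\diamond)$, and for $\eta\in\Sigma$ the endomorphism $E_\eta := \eta\otimes(\eta,-)_{\scS}\in\End(\Sigma)$ is rank one (or zero), self-adjoint up to the adjoint-type twist recorded by $s$. Pulling $E_\eta$ back through $\gamma$ produces a form $\alpha_\eta\in\wedge V^*_\C$ whose components are the usual bilinear covariants $(\eta,\gamma(e_{i_1}\diamond\cdots\diamond e_{i_k})\eta)_{\scS}$, up to normalization. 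The content of the theorem is then: (i) such $\alpha_\eta$ solve the displayed system; (ii) conversely any solution arises this way. I expect the hard direction to be (ii).

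**Forward direction.**
For (i), the first equation $\alpha\diamond\alpha = 2^{d/2}\alpha^{(0)}\alpha$ is the Kähler-Atiyah shadow of the identity $E_\eta\circ E_\eta = (\eta,\eta)_{\scS}\, E_\eta$ for a rank-one projector-type endomorphism, together with the fact that $\gamma$ intertwines $\diamond$ and composition and that $\alpha^{(0)}$ is (a multiple of) the trace / the value $(\eta,\eta)_{\scS}$; the factor $2^{d/2}$ is exactly the dimension-dependent normalization relating $\tr$ on $\End(\Sigma)$ to $\alpha^{(0)}$ in even dimension $d$. The reality condition $(\pi^{(1-s)/2}\circ\tau)(\bar\kappa\,\alpha)=\kappa\,\bar\alpha$ is the translation, via the dictionary of Appendix \ref{app:KA}, of the statement that $E_\eta$ is self-adjoint with respect to $\scS$ with the twist governed by the adjoint type $s$ and the admissibility data of $\scB$ relating $\scS$ to complex conjugation; the arbitrary phase $\kappa\in\U(1)$ appears because rescaling $\eta\mapsto e^{i\theta}\eta$ leaves $E_\eta$ fixed, and conversely this is the only ambiguity (``inverse recovers $\eta$ up to a unitary scalar''). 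The third, ``polarized'' equation $\alpha\diamond\beta\diamond\alpha = 2^{d/2}(\alpha\diamond\beta)^{(0)}\alpha$ with $(\alpha\diamond\beta)^{(0)}\neq 0$ for some $\beta$ is just $E_\eta\circ A\circ E_\eta = \tr_{\!*}(E_\eta\circ A)\,E_\eta$ for $A=\gamma(\beta)$, which holds for every $A$ when $\mathrm{rk}\,E_\eta\le 1$; one needs existence of a witnessing $\beta$ making the scalar nonzero, which holds precisely when $E_\eta\neq 0$, i.e. when $\eta\neq 0$.

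**Converse direction and the main obstacle.**
For (ii), given $\alpha$ satisfying the system, set $E := \gamma(\alpha)\in\End(\Sigma)$. The polarized equation for all $\beta$ in the range where the scalar is controlled, combined with irreducibility of $(\Sigma,\gamma)$ (so that $\gamma$ is surjective onto $\End(\Sigma)$, or onto one chiral block in the relevant even-dimensional case), forces $E\circ A\circ E$ to be a scalar multiple of $E$ for \emph{all} $A\in\End(\Sigma)$; a standard linear-algebra lemma then yields $\mathrm{rk}\,E\le 1$. The nondegeneracy hypothesis $(\alpha\diamond\beta)^{(0)}\neq 0$ for some $\beta$ rules out $E=0$, so $E$ is exactly rank one, $E=\eta\otimes\phi$ for some $\eta\in\Sigma$, $\phi\in\Sigma^*$. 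The reality/adjointness equation, decoded through the Appendix \ref{app:KA} dictionary, says precisely that $E$ is $\scS$-self-adjoint of the correct twist, which forces $\phi = \kappa^{-1}(\eta,-)_{\scS}$ up to the phase $\kappa$ already present in the equation; absorbing $\kappa$ into the normalization of $\eta$ gives $E = E_\eta$, and the first equation $\alpha\diamond\alpha = 2^{d/2}\alpha^{(0)}\alpha$ pins the overall scale so that $\alpha$ is genuinely the Hermitian square of $\eta$ (not merely proportional to it). I expect the main obstacle to be bookkeeping the signs and twists: carefully matching the operators $\pi$, $\tau$, complex conjugation, the adjoint type $s$, and the admissibility constants relating $\scS$ and $\scB$ so that the single clean equation $(\pi^{(1-s)/2}\circ\tau)(\bar\kappa\,\alpha)=\kappa\,\bar\alpha$ really is equivalent to $\scS$-self-adjointness of $\gamma(\alpha)$ — this is where the results of Section \ref{sec:vectorsasendos} on rank-one endomorphisms associated to Hermitian pairings do the heavy lifting, and I would invoke them rather than reprove the endomorphism characterization from scratch.
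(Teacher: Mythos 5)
Your proposal is correct and follows essentially the same route as the paper: transport the algebraic system through the unital algebra isomorphism $\Psi_\gamma$, identify $2^{\frac{d}{2}}(\,\cdot\,)^{(0)}$ with the Kähler--Atiyah trace and the condition $(\pi^{\frac{1-s}{2}}\circ\tau)(\bar\kappa\alpha)=\kappa\bar\alpha$ with $(\bar\kappa E)^\dagger=\bar\kappa E$ via Lemma \ref{lemma:exterior_form_dagger}, and then invoke the rank-one characterization of Proposition \ref{prop:algebraichHermitian}, exactly as in the paper's proof of Theorem \ref{thm:even_Hermitian_forms}. The only cosmetic imprecision is in the converse, where you phrase the step as first obtaining $E\circ A\circ E=\Tr(E\circ A)E$ for all $A$ before concluding $\rk E\le 1$; in fact the single witnessing $\beta$ together with $\alpha\diamond\alpha=2^{\frac{d}{2}}\alpha^{(0)}\alpha$ is precisely hypothesis (c) of Proposition \ref{prop:algebraichHermitian}, whose proof (the $A_\varepsilon$ perturbation argument) is what yields rank one, and since you defer to that proposition anyway the argument is sound.
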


\noindent
This theorem characterizes the image of the spinor square map associated with an admissible Hermitian pairing in terms of a system of algebraic equations together with the fundamental inequality $(\alpha\diamond\beta)^{(0)} \neq 0$, where $(\alpha\diamond\beta)^{(0)}\in\C$ denotes the degree zero component of the exterior form $\alpha\diamond\beta$. Furthermore, we show that if the underlying quadratic vector space is Euclidean\footnote{Namely, equipped with a positive-definite metric.}, then it is enough to take $\beta = 1$, and hence in this case the characterization simplifies notably. The analogous result for the complex-bilinear square of an irreducible complex spinor in even dimensions is given in Theorem \ref{thm:bilinearsquare}, whose streamlined version is as follows:

\begin{thm} 
A complex exterior form $\alpha\in\wedge V^*_\C$ is the complex-bilinear square of an irreducible complex spinor in even dimension $d$ if and only if it satisfies the following algebraic system:   
\begin{eqnarray*}
\alpha\diamond\alpha = 2^{\frac{d}{2}} \alpha^{(0)}\alpha\,, \qquad (\pi^{\frac{1-s}{2}}\circ\tau) (\alpha) = \sigma \alpha\, , \qquad \alpha \diamond \beta \diamond \alpha = 2^{\frac{d}{2}}(\alpha\diamond\beta)^{(0)}\alpha
\end{eqnarray*}

\noindent
for an exterior form $\beta\in\wedge V^*_\C$ satisfying $(\alpha\diamond\beta)^{(0)} \neq 0$.
\end{thm}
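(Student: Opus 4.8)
The plan is to transport the entire problem to the endomorphism algebra $\End_\C(\Sigma)$ by means of the Clifford representation. In even dimension $d$ the morphism $\gamma\colon (\wedge V^*_\C,\diamond)\to\End_\C(\Sigma)$ is an isomorphism of associative algebras onto the full matrix algebra $\Mat(2^{d/2},\C)$; under it, taking the degree-zero component becomes the normalised trace, $\mu^{(0)}=2^{-d/2}\tr\gamma(\mu)$, and the operator $\pi^{\frac{1-s}{2}}\circ\tau$ becomes the transpose with respect to $\scB$, that is $\gamma((\pi^{\frac{1-s}{2}}\circ\tau)(\mu))=\gamma(\mu)^{\scB}$, where $M^{\scB}$ is defined by $\scB(M\psi,\chi)=\scB(\psi,M^{\scB}\chi)$ (see Appendix~\ref{app:KA} and Section~\ref{sec:vectorsasendos}). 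By construction, $\alpha$ is the complex-bilinear square of a spinor $\eta\in\Sigma$ exactly when $\gamma(\alpha)$ is the rank-one endomorphism $\eta\otimes\scB(\eta,\cdot)$ (with the normalisation fixed by the definition of the square map). Setting $A=\gamma(\alpha)$ and $B=\gamma(\beta)$ and using these dictionaries, the three displayed equations become $A^2=\tr(A)\,A$, $A^{\scB}=\sigma A$ and $ABA=\tr(AB)\,A$, while the inequality becomes $\tr(AB)\neq 0$; hence it suffices to prove that these conditions on $A$ are equivalent to $A=\eta\otimes\scB(\eta,\cdot)$ for some $\eta\in\Sigma$.

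The first step is to squeeze rank one out of the last equation together with the inequality. Set $P=\tr(AB)^{-1}AB$; then $ABA=\tr(AB)\,A$ yields $P^2=P$ and $\tr P=1$, so $P$ is an idempotent of rank one, and the same relation gives $A=PA$, whence $\rk A\le\rk P=1$; since $\tr(AB)\neq 0$ forces $A\neq 0$, we conclude $\rk A=1$ (the equation $A^2=\tr(A)\,A$ is then automatic, and conversely any $A=\eta\otimes\scB(\eta,\cdot)$ with $\eta\neq 0$ satisfies $ABA=\tr(AB)\,A$ for every $B$ and admits some $B$ with $\tr(AB)\neq 0$, because $\scB$ is non-degenerate). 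The second and only delicate step is to decide which rank-one endomorphisms are $\scB$-(anti)self-adjoint: writing $A=v\otimes\phi$ with $v\neq 0$ and $\phi\neq 0$, the condition $A^{\scB}=\sigma A$ unwinds, after using the symmetry type of $\scB$, to $\phi(\psi)\,\scB(v,\chi)=\phi(\chi)\,\scB(v,\psi)$ for all $\psi,\chi$, that is, to $\phi$ being a nonzero multiple of $\scB(v,\cdot)$; this is precisely the classification of rank-one endomorphisms attached to a complex-bilinear pairing established in Section~\ref{sec:vectorsasendos}. Choosing $\lambda\in\C^{\times}$ with $\phi=\lambda^2\,\scB(v,\cdot)$ and putting $\eta=\lambda v$ gives $A=\eta\otimes\scB(\eta,\cdot)$, the sign ambiguity $\lambda\mapsto-\lambda$ being exactly the loss of information recorded in the Introduction.

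Combining the two steps proves both implications: if $\alpha$ solves the system and $(\alpha\diamond\beta)^{(0)}\neq 0$, then $A=\gamma(\alpha)$ has rank one and the prescribed symmetry, hence equals $\eta\otimes\scB(\eta,\cdot)$ and $\alpha$ is the complex-bilinear square of $\eta$; conversely, if $\alpha$ is the square of a spinor $\eta\neq 0$, then all three equations hold — the symmetry equation directly from $\gamma((\pi^{\frac{1-s}{2}}\circ\tau)(\cdot))=\gamma(\cdot)^{\scB}$ and the symmetry type of $\scB$ — and one selects $\beta$ with $(\alpha\diamond\beta)^{(0)}=2^{-d/2}\tr(AB)\neq 0$. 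I expect the main difficulty to be bookkeeping rather than conceptual: keeping the adjoint type $s$ and the symmetry type $\sigma$ coherent through the operator $\pi^{\frac{1-s}{2}}\circ\tau$ and its image under $\gamma$, and matching the constant $2^{d/2}$ appearing in all three equations with the one implicit in the definition of the spinor square map. One should also note that the identification of $\gamma$ with a full matrix algebra — hence the entire reduction — is exactly where even dimensionality enters.
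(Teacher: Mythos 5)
Your proposal is correct and follows essentially the paper's own route: transport everything to $\End(\Sigma)$ via the algebra isomorphism $\Psi_\gamma$, identify the degree-zero component with the (normalized) K\"ahler--Atiyah trace and $\pi^{\frac{1-s}{2}}\circ\tau$ with the $\scB$-transpose (Lemma \ref{lemma:exterior_form_t}), and then characterize the rank-one endomorphisms $\eta\otimes\scB(-,\eta)$ exactly as in Proposition \ref{prop:algebraichbilinear}. Your rank-one step, taking $P=\tr(AB)^{-1}AB$ as a trace-one idempotent with $A=PA$, is a slightly cleaner version of the paper's perturbation argument, and the slot-convention discrepancy between $\scB(\eta,\cdot)$ and $\scB(\cdot,\eta)$ is harmless, since for $\sigma=-1$ the sign is absorbed by replacing $\eta$ with $i\eta$.
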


\noindent
These theorems utilize crucially the geometric product $\diamond$ defined by the underlying metric to express the algebraic equations neatly. Their simple appearance may be deceiving: as we will see below, the way these equations capture the geometric information contained in the corresponding spinor when expanded in terms of the wedge and inner products can be remarkably subtle, and becomes exponentially complicated as the dimension increases.\medskip

Section \ref{section:odd_forms} deals with the odd-dimensional case. To obtain the analogous results in odd dimensions, and following \cite{LBC13,LB13,LBC16}, we need to consider a modification of the geometric product $\diamond$ that is defined on a \emph{truncation} of $\wedge V^{\ast}_\C$ given by: 
\begin{equation*}
\wedge^<V^*_\C:=\bigoplus_{k=0}^{\frac{d-1}{2}}\wedge^kV^*_\C    
\end{equation*}

\noindent
and equipped with the product $\vee$ defined in \eqref{eq:vee_product}. The streamlined version of our next theorem, namely Theorem \ref{thm_Hermitian_square_odd}, is the following:

\begin{thm} 
A complex exterior form $\alpha\in\wedge^< V^*_\C$ is the Hermitian square of an irreducible complex spinor in odd dimension $d$ if and only if it satisfies the following algebraic system:    
\begin{eqnarray*}
\alpha\vee\alpha = 2^{\frac{d-1}{2}}\alpha^{(0)}\alpha, \qquad(\pi^{\frac{1-s}{2}}\circ\tau)(\bar\kappa\alpha) = \kappa \bar{\alpha}\, , \qquad\alpha\vee\beta\vee\alpha=2^{\frac{d-1}{2}}(\alpha\vee\beta)^{(0)}\alpha
\end{eqnarray*}

\noindent
for an exterior form $\beta \in \wedge^< V^*_\C$ satisfying $(\alpha\vee\beta)^{(0)}\neq0$ and a unit complex number $\kappa \in \U(1)$.  
\end{thm}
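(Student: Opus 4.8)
The plan is to reduce the odd-dimensional statement to the already-proved even-dimensional Theorem~\ref{thm:even_Hermitian_forms} via the standard isomorphism between the complex Clifford algebra $\mCl(V^*,h^*)$ in odd dimension $d$ and the even part $\mCl(\widetilde{V}^*,\widetilde{h}^*)^{\mathrm{even}}$ of the Clifford algebra of a quadratic space $\widetilde{V}$ of dimension $d+1$; under the K\"ahler-Atiyah picture this is realized by identifying $\wedge^< V^*_\C$ (equipped with $\vee$, as defined in \eqref{eq:vee_product}) with $(\wedge^{\mathrm{even}}\widetilde{V}^*_\C,\diamond)$. First I would recall that an irreducible complex Clifford module $(\Sigma,\gamma)$ for $(V,h)$ in odd dimension, together with an admissible Hermitian pairing $\scS$ of adjoint type $s$, corresponds under this identification to one of the two half-spinor modules of $\widetilde{V}$ — equivalently, to the restriction along $\mCl(\widetilde{V}^*)^{\mathrm{even}}\hookrightarrow\mCl(\widetilde{V}^*)$ of an irreducible module of the even-dimensional Clifford algebra — and that $\scS$ lifts to a compatible admissible Hermitian pairing with the same adjoint type on the even-dimensional side. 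The key point is that the spinor square map in odd dimensions, landing in $\wedge^< V^*_\C$, is by construction the composition of the even-dimensional spinor square map with the projection $\wedge\widetilde{V}^*_\C\to\wedge^{\mathrm{even}}\widetilde{V}^*_\C$ followed by the truncation isomorphism; so its image is exactly the image of the even-dimensional "half-spinor" square map transported to $\wedge^< V^*_\C$.

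With this set-up in place, the argument proceeds in three steps. Step one: translate the three even-dimensional conditions of Theorem~\ref{thm:even_Hermitian_forms} — the equations $\widetilde\alpha\diamond\widetilde\alpha=2^{\frac{d+1}{2}}\widetilde\alpha^{(0)}\widetilde\alpha$, the reality/adjoint condition $(\pi^{\frac{1-s}{2}}\circ\tau)(\bar\kappa\widetilde\alpha)=\kappa\overline{\widetilde\alpha}$, and the "rank one" equation $\widetilde\alpha\diamond\widetilde\beta\diamond\widetilde\alpha=2^{\frac{d+1}{2}}(\widetilde\alpha\diamond\widetilde\beta)^{(0)}\widetilde\alpha$ — into statements about their images under the truncation. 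Because $\widetilde\alpha$ is $\diamond$-even here, the $\diamond$-product of two even forms is again even, and the truncation isomorphism carries $\diamond$ restricted to $\wedge^{\mathrm{even}}\widetilde{V}^*_\C$ precisely to $\vee$ on $\wedge^< V^*_\C$; hence the first and third equations become $\alpha\vee\alpha=2^{\frac{d-1}{2}}\alpha^{(0)}\alpha$ and $\alpha\vee\beta\vee\alpha=2^{\frac{d-1}{2}}(\alpha\vee\beta)^{(0)}\alpha$, noting $2^{\frac{d+1}{2}}$ halves to $2^{\frac{d-1}{2}}$ because the truncation restricts the $(d+1)$-dimensional module to an irreducible $d$-dimensional one of half the rank (this factor bookkeeping is the one genuinely delicate point and mirrors the passage carried out in Section~\ref{section:odd_forms}). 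Step two: check that the reality condition is preserved verbatim — the operators $\pi$ and $\tau$ on $\wedge\widetilde{V}^*_\C$ restrict to the corresponding operators on $\wedge^< V^*_\C$ under the identification (here one uses that $\tau$ and $\pi$ commute with the even/odd grading appropriately and that the chiral volume element of $\widetilde V$ intertwines the two half-spinor modules), so $(\pi^{\frac{1-s}{2}}\circ\tau)(\bar\kappa\alpha)=\kappa\overline\alpha$ with the \emph{same} $s$ and the \emph{same} $\kappa\in\U(1)$. Step three: conversely, given $\alpha\in\wedge^< V^*_\C$ satisfying the odd-dimensional system for some $\beta$ with $(\alpha\vee\beta)^{(0)}\neq0$, lift $\alpha,\beta$ to $\diamond$-even forms $\widetilde\alpha,\widetilde\beta\in\wedge^{\mathrm{even}}\widetilde V^*_\C$, observe the even-dimensional system of Theorem~\ref{thm:even_Hermitian_forms} holds (with $\widetilde\beta$ playing the role of the auxiliary form and $(\widetilde\alpha\diamond\widetilde\beta)^{(0)}=(\alpha\vee\beta)^{(0)}\neq0$), conclude $\widetilde\alpha$ is the Hermitian square of an irreducible complex spinor $\eta$ for $\widetilde V$, and finally argue that such an $\eta$ is necessarily a half-spinor — this is forced because $\widetilde\alpha$ is $\diamond$-even, and the $\diamond$-even part of $\wedge\widetilde V^*_\C$ acts on $\Sigma_{\widetilde V}$ preserving the chirality decomposition, so $\widetilde\alpha$ being a rank-one endomorphism supported on $\Sigma_{\widetilde V}$ implies it is supported on a single chiral summand — hence it descends to the Hermitian square of an irreducible complex spinor for $(V,h)$.

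The main obstacle I anticipate is \textbf{not} the homological or representation-theoretic translation, which is standard, but rather the precise bookkeeping of the scalar normalization factors — the powers of $2$, the sign $\sigma$ versus its possible change, and especially the behavior of the adjoint type $s$ under the dimensional shift $d\rightsquigarrow d+1$ combined with the restriction to $\mCl^{\mathrm{even}}$. One must verify that the admissible pairing chosen downstairs matches the one used in Theorem~\ref{thm:even_Hermitian_forms} upstairs with consistent $(s,\sigma)$ data; the classification of admissible pairings (recalled in the appendix/earlier sections) shifts in a known but signature-dependent way under $d\mapsto d+1$, and getting the exponent $\frac{1-s}{2}$ to come out identical on both sides requires care. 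A secondary subtlety is ensuring the auxiliary form $\beta$ can always be chosen within the truncation $\wedge^< V^*_\C$ rather than merely in the full $\wedge\widetilde V^*_\C$: this follows because, the endomorphism algebra being generated by $\mCl^{\mathrm{even}}$, one may always replace any $\widetilde\beta$ realizing $(\widetilde\alpha\diamond\widetilde\beta)^{(0)}\neq0$ by its $\diamond$-even part without changing the relevant degree-zero component, but this replacement step should be stated explicitly. Once these normalization and admissibility points are pinned down, the equivalence follows formally from Theorem~\ref{thm:even_Hermitian_forms}.
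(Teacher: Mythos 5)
Your reduction to the even-dimensional case is a genuinely different route from the paper's, but as written it contains a concrete error and leaves unproved exactly the points on which it can fail. First, the identification on which everything rests is false by a dimension count: $\dim_\C \wedge^< V^*_\C = 2^{d-1}$ while $\dim_\C \wedge^{\mathrm{even}}\widetilde V^*_\C = 2^{d}$. The truncated algebra $(\wedge^< V^*_\C,\vee)$ is isomorphic not to the even subalgebra of the $(d+1)$-dimensional Clifford algebra but to a \emph{single simple ideal} of $\mCl^{\mathrm{ev}}(\widetilde V^*)\cong\mCl(V^*,h^*)=\mCl_+\oplus\mCl_-$, cut out by the complex volume idempotent $\tfrac12(1+\ell\nu_\C)$ (this is precisely what the projection $\cP_\ell$ and the definition \eqref{eq:vee_product} encode). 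Consequently, in your Step 3 a lift of $\alpha$ to a merely $\diamond$-even form $\widetilde\alpha$ is neither unique nor sufficient: to invoke the even-dimensional statement and land back in the odd one you must impose on $\widetilde\alpha$ the volume/chirality eigenvalue condition of Corollary \ref{cor:sq_S_chiral}, an extra linear equation that does not appear in the odd system and whose automatic validity is exactly the content of the truncation machinery you are trying to bypass. Relatedly, $(\widetilde\alpha\diamond\widetilde\beta)^{(0)}$ does not equal $(\alpha\vee\beta)^{(0)}$ for a naive even lift, since the degree-zero part of $\cP_\ell(\alpha\diamond\beta)$ picks up the top-degree component of $\alpha\diamond\beta$; the trace bookkeeping only closes once the correct ideal is used.

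Second, and more seriously, your "forced chirality" argument in Step 3 and the verbatim transfer of the reality condition in Step 2 silently assume that the chiral splitting upstairs is $\scS_{\widetilde V}$-orthogonal. This is signature dependent: in the paper's own Lorentzian examples (dimensions four and six) the splitting is isotropic for the admissible pairing used, the Hermitian square of a chiral spinor is then an \emph{odd} form, and among even forms there are no nonzero rank-one Hermitian squares at all — so for a wrong choice of signature extension $\widetilde h$ and adjoint type the reduction simply breaks. A complete proof along your lines must therefore (i) choose the extension $d\rightsquigarrow d+1$ and an admissible pairing upstairs so that the splitting is orthogonal, (ii) show its restriction to the half-spinor module is admissible for the odd Clifford structure with the \emph{same} adjoint type $s$ as in the statement (recall Proposition \ref{prop:Hermitian_admissibleodd}: in odd dimensions $s$ is forced by $p$ modulo $4$), and (iii) check that complex conjugation is compatible with the ideal constraint, which involves the explicitly imaginary factor $i^{q+m}$ in $\nu_\C$. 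You flag these as "bookkeeping", but they are the substance, and none of them is carried out. For comparison, the paper avoids the dimensional shift entirely: having already built the unital algebra isomorphism $\Psi^<_\ell$ of \eqref{eq:truncated_iso}, the truncated trace $\cT_\ell(\alpha)=2^{\frac{d-1}{2}}\alpha^{(0)}$, and the adjoint relation $\gamma_\ell(z)^\dagger=\gamma_\ell(\overline{(\pi^{\frac{1-s}{2}}\circ\tau)(z)})$, it transports Proposition \ref{prop:algebraichHermitian} through $\Psi^<_\ell$ word for word, exactly as in the proof of Theorem \ref{thm:even_Hermitian_forms}; that one-step translation sidesteps all of the compatibility questions your detour creates.
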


\noindent
For the complex-bilinear square of an irreducible complex spinor in odd dimensions, we obtain the analogous characterization in Theorem \ref{thm:odd_complex-bilinear_square}, whose streamlined version is as follows:

\begin{thm}
A complex exterior form $\alpha\in\wedge^< V^*_\C$ is the complex-bilinear square of an irreducible complex spinor in odd dimension $d$ if and only if it satisfies the following algebraic system:    
\begin{eqnarray*}
\alpha\vee\alpha=2^{\frac{d-1}{2}}\alpha^{(0)}\alpha,\qquad(\pi^{\frac{1-s}{2}}\circ\tau)(\alpha)=\sigma\alpha,\qquad\alpha\vee\beta\vee\alpha=2^{\frac{d-1}{2}}(\alpha\vee\beta)^{(0)}\alpha   
\end{eqnarray*}
for an exterior form $\beta\in\wedge^<V^*_\C$ satisfying $(\alpha\vee\beta)^{(0)}\neq 0$.
\end{thm}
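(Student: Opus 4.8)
The plan is to reduce the odd-dimensional complex-bilinear case to the even-dimensional complex-bilinear characterization (Theorem \ref{thm:bilinearsquare}) by exploiting the standard relationship between $\C\mathrm{l}(V^*,h^*)$ in dimension $d=2m+1$ and the even Clifford algebra $\C\mathrm{l}^{\mathrm{even}}(V^*\oplus\R,(h^*)\oplus\langle\pm1\rangle)$ in dimension $d+1$, which on the level of Kähler-Atiyah algebras is implemented by the truncation map $\wedge V^*_\C \to \wedge^< V^*_\C$ together with the modified product $\vee$ of \eqref{eq:vee_product}. First I would fix an irreducible Clifford module $(\Sigma,\gamma)$ for $(V,h)$ with an admissible complex-bilinear pairing $\scB$, and recall that the spinor square map sends $\eta\in\Sigma$ to the exterior form $\alpha_\eta$ whose components encode the tensors $\scB(\eta,\gamma(e_{i_1\cdots i_k})\eta)$; truncating to degrees $\le m$ loses no information because the top half of the form is determined by the bottom half through the volume element (here one uses that in odd dimensions the volume form acts as a scalar on $\Sigma$). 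This gives the $\vee$-product reformulation of the three structural identities the square must satisfy.

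The forward direction is then essentially a translation: if $\alpha=\alpha_\eta$, the identity $\alpha\vee\alpha = 2^{\frac{d-1}{2}}\alpha^{(0)}\alpha$ comes from the rank-one endomorphism identity $E_\eta\circ E_\eta = \scB(\eta,\eta)\,E_\eta$ (where $E_\eta$ is the endomorphism corresponding to $\alpha_\eta$ under the Clifford-algebra/endomorphism-algebra isomorphism) combined with the trace normalization $\alpha^{(0)} = 2^{-\frac{d-1}{2}}\scB(\eta,\eta)$; the symmetry condition $(\pi^{\frac{1-s}{2}}\circ\tau)(\alpha)=\sigma\alpha$ is exactly the statement that $E_\eta$ is self-adjoint (up to the type-$s$, symmetry-$\sigma$ twist) with respect to $\scB$, which is precisely the content of the rank-one endomorphism results proved in Section \ref{sec:vectorsasendos}; and the mixed identity $\alpha\vee\beta\vee\alpha = 2^{\frac{d-1}{2}}(\alpha\vee\beta)^{(0)}\alpha$ is the endomorphism identity $E_\eta\circ F\circ E_\eta = \frac{\tr(E_\eta\circ F)}{\tr(E_\eta)}E_\eta$ valid for any endomorphism $F$, rewritten via the dictionary with $F$ corresponding to $\beta$. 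The nondegeneracy $(\alpha\vee\beta)^{(0)}\neq 0$ is arranged by choosing $F$ not annihilating $\eta$, e.g. $F=\id$ when $\scB(\eta,\eta)\ne0$, and in general some $F$ with $\scB(\eta,F\eta)\ne0$, which exists because $\eta\neq0$ and $\scB$ is nondegenerate.

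For the converse, suppose $\alpha\in\wedge^<V^*_\C$ satisfies the three identities for some $\beta$ with $(\alpha\vee\beta)^{(0)}\ne0$. I would first pass back to the endomorphism algebra: let $E\in\End_\C(\Sigma)$ be the image of $\alpha$ (using the truncated $\vee$-product precisely so that this is an algebra map in odd dimensions). The first identity says $E\circ E = c\,E$ with $c=2^{\frac{d-1}{2}}\alpha^{(0)}$, and the third, applied with the $F$ corresponding to $\beta$, forces $E$ to have rank exactly one: indeed $E\circ F\circ E = \lambda E$ with $\lambda = \frac{\tr(E\circ F)}{2^{-(d-1)/2}}$ nonzero, and together with $E^2=cE$ a short linear-algebra argument (the image of $E$ is one-dimensional, since otherwise one can choose $F$ detecting a larger image and contradict the quadratic relation — this is exactly where Section \ref{sec:vectorsasendos} is invoked) shows $E = \eta\otimes\xi$ for some $\eta\in\Sigma$, $\xi\in\Sigma^*$. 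The symmetry identity then forces $\xi = \scB(\eta,-)$ up to the appropriate twist, i.e. $E=E_\eta$, so $\alpha$ is the complex-bilinear square of $\eta$; finally the claim that inverting the square recovers $\eta$ up to a sign, already asserted in the introduction, pins down the correspondence.

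The main obstacle I expect is the bookkeeping in odd dimensions: unlike the even case, $\C\mathrm{l}(V^*,h^*)$ is not simple but has two inequivalent irreducible modules distinguished by the action of the volume element, and the Kähler-Atiyah model requires the truncation $\wedge^<V^*_\C$ with the non-associative-looking $\vee$-product; one must check carefully that the algebra isomorphism $(\wedge^<V^*_\C,\vee)\cong\End_\C(\Sigma)$ intertwines $\tau$ and $\pi$ with the adjoint and parity operations on endomorphisms, and that the scalar $2^{\frac{d-1}{2}}$ (rather than $2^{\frac{d}{2}}$) is the correct trace normalization. Once this dictionary is pinned down, everything else runs exactly parallel to the proof of Theorem \ref{thm:bilinearsquare}, with the $\vee$-product replacing $\diamond$ throughout; the rank-one endomorphism results of Section \ref{sec:vectorsasendos} do all the heavy lifting for both directions.
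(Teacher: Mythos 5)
Your proposal follows essentially the paper's own route: the theorem is obtained exactly as in the even-dimensional case, by transporting the three conditions through the unital algebra isomorphism $\Psi^<_\ell\colon(\wedge^<V^*_\C,\vee)\to\End(\Sigma)$, using the truncated K\"ahler--Atiyah trace $\mathcal{T}_\ell(\alpha)=2^{\frac{d-1}{2}}\alpha^{(0)}$ and the odd-dimensional analogue of Lemma \ref{lemma:exterior_form_t}, and then invoking Proposition \ref{prop:algebraichbilinear} on rank-one endomorphisms of $(\Sigma,\scB)$ --- which is what you do (your framing via the even Clifford algebra in dimension $d+1$ is never actually needed, since the volume-form splitting underlying $\vee$ already provides the isomorphism). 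The only blemishes are normalization slips --- the rank-one identity is $E_\eta\circ F\circ E_\eta=\Tr(E_\eta\circ F)\,E_\eta$ with no division by $\Tr(E_\eta)$ (which can vanish for $\scB$-isotropic spinors), and the scalar in your converse is $\Tr(E\circ F)=2^{\frac{d-1}{2}}(\alpha\vee\beta)^{(0)}$ --- neither of which affects the structure of the argument, since the displayed $\vee$-identities and the results of Section \ref{sec:vectorsasendos} you invoke are the correct ones.
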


\noindent
These four theorems contain the complete algebraic characterization of the square of an irreducible complex spinor in every dimension and signature. We supplement these theorems with several results, which are useful in practical examples and applications. More precisely:
\begin{itemize}
\item In Propositions \ref{prop:compatibilitysquares} and \ref{prop:compatibilitysquares_odd}, we obtain if and only if algebraic conditions for a Hermitian and a complex-bilinear square to be the square of the \emph{same} irreducible complex spinor. This is particularly relevant in applications, since in practical situations one is typically interested in computing the Hermitian and complex-bilinear squares of the same spinor in order to use them complementarily.

\item In Propositions \ref{prop:Hermitiansquareconjugate}, \ref{prop:Bilinearsquareconjugate}, \ref{prop:Hermitiansquareconjugate_odd}, and \ref{prop:Bilinearsquareconjugate_odd}, we obtain the square of a conjugate spinor from both the Hermitian and complex-bilinear squares of the original given spinor. The notion of conjugate spinor is naturally determined by a compatible pair $\scB$ and $\scS$ through the unique complex anti-linear map that relates them. 

\item In Lemmas \ref{lemma:constrainedspinoreven} and \ref{lemma:constrainedspinorodd}, we obtain if and only if algebraic conditions for a spinor to lie in the kernel of a given endomorphism in terms of any of its squares. This result is particularly beneficial to study natural gauge theoretic conditions that are defined in terms of constraints on spinors, as illustrated in the next subsection with the notion of \emph{spinorial instanton}. 
\end{itemize}

\noindent
In Section \ref{sec:lowdimensions} we explore the general framework of spinorial exterior forms associated to complex irreducible spinors through several applications and examples. First, we use the theorems above to explicitly construct the Hermitian and complex-bilinear squares of an irreducible complex spinor $\eta$, not necessarily \emph{normalized}, in Euclidean dimensions ranging from two to six. Our goal with these computations is to illustrate how efficiently we can systematically construct these squares and highlight the importance of distinguishing between the Hermitian and complex-bilinear squares and their compatibility conditions. In particular, in these dimensions, we recover and refine the result obtained by Wang in \cite{Wang89} regarding the Hermitian and complex-bilinear squares of $\eta$, providing an alternative proof that does not rely on group-theoretic arguments. Wang proved that the Hermitian square of $\eta$ is a sum of powers of a Kähler form, and that the complex-bilinear square of $\eta$ is a complex volume plus certain multiples of a Kähler form, which we show to vanish in the cases we consider. It should be remarked that the algebraic characterization we obtain allows us to compute the squares of an irreducible complex spinor not necessarily normalized. This seemingly minor point becomes very relevant when applying this theory to the study of parallel spinors and other spinorial equations, whose solutions are typically not preserved by conformal transformations of the spinor. We also note that the construction of the complex-bilinear square that we present in Section \ref{sec:lowdimensions}, which relies on extracting the Plücker relations from the algebraic relations of Theorem \ref{thm:bilinearsquare}, can be generalized to arbitrary dimensions, as we will discuss in a separate article \cite{PinoGilSha}.\medskip

\noindent
In Section \ref{sec:lowdimensions}, we apply the algebraic theory of spinorial exterior forms to study a natural notion of a \emph{special connection} determined in terms of a nowhere vanishing complex spinor. This notion of special connection, to which we generically refer as a possibly higher \emph{spinorial instanton}, applies to connections defined on principal bundles as well as their categorifications, such as curvings on bundle gerbes and other notions of connections on categorified principal bundles. Roughly speaking, a \emph{spinorial connection} is a pair $(\eta,\Phi)$ consisting of a nowhere vanishing irreducible complex spinor $\eta$ and a connection $\Phi$ on a given, possibly categorified, principal bundle, satisfying:
\begin{equation*}
\cF_{\Phi}\cdot \eta = 0,
\end{equation*}

\noindent
where $\cF_{\Phi}$ denotes the appropriate notion of curvature associated to $\Phi$, and \emph{dot} denotes Clifford multiplication. For principal bundles, the definition of a spinorial instanton is as follows. Let $P$ be a principal $G$-bundle on $(M,g)$. A \emph{spinorial instanton} on $(P,S,M,g)$ is a pair $(\eta,A)$ consisting of a nowhere vanishing spinor $\eta\in \Gamma(S)$ on $(M,g)$ and a connection $A$ on $P$ satisfying:
\begin{equation*}
F_A \cdot \eta = 0,
\end{equation*}

\noindent
where $F_A \in \Omega^2(M,\frg_P)$ is the curvature of $A$, $\frg_P$ is the adjoint bundle of $P$, and \emph{dot} denotes Clifford multiplication of the two-form part of $F_A$. The following table summarizes these conditions in low Euclidean dimensions\footnote{The symbol $\mu\in\Z_2$ denotes the chirality of the spinor $\eta$. The symbol $\ell\in\Z_2$ denotes the eigenvalue of the complex volume form in odd dimensions. The symbol $\triangle_k$ denotes the generalized product of $(V,h)$, see Appendix \ref{app:KA}.}.

\begin{table}[!ht]
\centering
\caption{Spinorial instanton condition in Euclidean dimensions two to six}
\label{tab:F_conditions_simple}
\renewcommand{\arraystretch}{2.5} 
\begin{tabular}{|c|c|}
\hline
\emph{Dimension} & \emph{Spinorial instanton condition} \\
\hline
$d=2$ & 
$F_A = 0$
\\
\hline
$d=3$ & 
$\sqrt{\langle\vartheta,\vartheta\rangle}\ast F_A = i\ell\,\iota_{\vartheta^\sharp}F_A$
\\
\hline
$d=4$ & 
$F_A\wedge\omega=0,\qquad    F_A+\mu \ast F_A = i \sqrt{\frac{2}{ \escal{\omega,\omega}}} F_A\triangle_1\omega$
\\
\hline
$d=5$ & 
$\sqrt{\frac{\escal{\omega,\omega}}{2}} F_A + \ell*(F_A\wedge\theta)-iF_A\triangle_1\omega=0,$\\
& $F_A\triangle_1\theta+i\ell*(F_A\wedge\omega)=0,\qquad\escal{F_A,\omega}=0$
\\
\hline
$d=6$ & 
$\mu\sqrt{\frac{\escal{\omega,\omega}}{3}} \ast F_A + F_A\wedge\omega = i\mu*(F_A\triangle_1\omega) ,\qquad \escal{F_A,\omega}=0$
\\
\hline
\end{tabular}
\end{table}

For \emph{real} principal bundles defined on a low-dimensional Riemannian manifold, the notion of spinorial connections reduces to that of a standard instanton for the geometry determined by an irreducible unit complex spinor \cite{Carrion}. However, the notion of spinorial instanton applies generally in every dimension and signature, and for every type of connection on a possibly categorified principal bundle. Even more, when $\eta$ is \emph{isotropic}, the notion of spinorial instanton introduced above can differ from the notion of instanton given in \cite{Carrion} on a principal bundle. Note that we are considering the Lie algebra $\frg$ of $G$ to be a complex vector space, whose complex structure \emph{interacts} with the complex character of the spinor $\eta$ itself. This is why we obtain more complicated conditions than expected: if we assume that $\frg$ is \emph{real}, then the conditions in the previous table reduce to the standard instanton conditions on the given dimension. Conveniently enough, the framework of spinorial exterior forms that we use here can be efficiently applied to the more general complex case without resorting to the underlying representation theory.\medskip

\noindent
We also consider spinorial instantons on an abelian bundle gerbe. Let $(M,g)$ be a pseudo-Riemannian manifold equipped with a bundle of irreducible and chiral complex Clifford modules $S$ over the bundle of Clifford algebras of $(M,g)$. Let $(\cP, Y, \cA)$ be a $\mathbb{C}^{\ast}$-bundle gerbe with connective structure, where $\cP\to Y\times_M Y$ is a principal $\C^*$-bundle defined on the fibered product of the smooth submersion $Y\to M$ with itself, and $\cA$ is a connection on $\cP$ satisfying adequate compatibility conditions \cite{Brylinski1993,Bunk2021,Giraud1971,Murray1996}. A \emph{spinorial curving} on $(\cP, Y, \cA,M,g)$ is a pair $(\eta,b)$ consisting of $\eta\in \Gamma(S)$ and $b$ satisfying:
\begin{equation*}
H_b\cdot \eta = 0.
\end{equation*}

\noindent
The following table summarizes our characterization of spinorial curvings in low Euclidean dimensions. 
\begin{table}[!ht]
\centering
\caption{Spinorial curving condition in Euclidean dimensions three to six.}
\label{tab:H_conditions_simple}
\renewcommand{\arraystretch}{2.5} 
\begin{tabular}{|c|c|}
\hline
\emph{Dimension} & \emph{Spinorial curving condition} \\
\hline
$d=3$ & 
$H_b = 0$
\\
\hline
$d=4$ & 
$H_b+i \sqrt{\frac{2}{\escal{\omega,\omega}}}  H_b\triangle_1\omega=0,\qquad \ast H_b = i  \mu \sqrt{\frac{2}{ \escal{\omega,\omega}}} H_b\triangle_2\omega$
\\
\hline
$d=5$ & 
$\sqrt{\frac{\escal{\omega,\omega}}{2}} \ast H_b + \ell H_b\triangle_1\theta-i(*H_b)\triangle_1\omega=0,$\\
&$*(H_b\wedge\theta)+i\ell H_b\triangle_2\omega=0,\quad \escal{*H_b,\omega}=0$
\\
\hline
$d=6$ & 
$(H_b+i\mu \ast H_b )\wedge\omega=0,$\\
&$\mu \sqrt{\frac{\escal{\omega,\omega}}{3}} \ast H_b + H_b\triangle_1\omega+i  ( \mu \ast( H_b\triangle_1\omega) - \sqrt{\frac{\escal{\omega,\omega}}{3}} H_b)=0$
\\
\hline
\end{tabular}
\end{table}

\noindent
In Section \ref{section:8dRiemannian} we consider the squares of an irreducible complex and chiral spinor in eight Euclidean dimensions. This case is particularly interesting because eight is the smallest dimension for which \emph{impure}, also called \emph{non-simple}, irreducible complex chiral spinors exist. We compute both the Hermitian and complex-bilinear squares of the most general impure complex and irreducible chiral spinor, not necessarily normalized, complementing and extending some of the results of \cite{Krasnov2024} in the case of eight Euclidean dimensions. The Hermitian square is of the form:
\begin{equation*}
\widehat{\alpha} = \sqrt{\frac{2\escal{\omega,\omega}+\escal{\Theta,\Theta}}{14}} + i\omega + \Theta-i\mu*\omega +  \sqrt{\frac{2\escal{\omega,\omega}+\escal{\Theta,\Theta}}{14 \escal{\Theta,\Theta}^2 }}\Theta \wedge \Theta
\end{equation*}

\noindent
for a certain two-form $\omega$ and four-form $\Theta$ satisfying the conditions given in Corollary \ref{cor:Hermitian8dimpure}. On the other hand, the complex-bilinear square is of the form:
\begin{equation*}
\alpha =  \frac{\langle \Omega , \Omega \rangle^{\frac{1}{2}}}{\sqrt{14}} + \Omega + \frac{\Omega\wedge\Omega}{\sqrt{14} \langle \Omega , \Omega \rangle^{\frac{1}{2}}}
\end{equation*}

\noindent
for a certain complex four-form $\Omega$ satisfying the following algebraic conditions:
\begin{equation*}
\ast \Omega = \mu \Omega\, , \qquad \sqrt{14}\Omega\triangle_2\Omega + 12  \langle \Omega , \Omega \rangle^{\frac{1}{2}} \Omega = 0,
\end{equation*} 
 
\noindent
where $\mu\in\mathbb{Z}_2$ is the chirality of the corresponding spinor. When such a spinor is a complex multiple of a real and irreducible spinor, we recover the intrinsic algebraic conditions obtained in \cite{LS24_Spin7} to characterize Cayley forms. We apply the construction of the Hermitian square of an irreducible and chiral complex spinor to obtain the algebraic characterization of both a spinorial instanton on a principal bundle and a spinorial curving on an abelian bundle gerbe, similarly to how we proceeded in Section \ref{sec:lowdimensions} in dimensions ranging from two to six. Equations \eqref{eq:algebraicF8d} give the following result.

\begin{cor}
Let $P$ be a principal bundle with complex structure group $G$ defined on an eight-dimensional Riemannian manifold. A pair $(\eta,A)$ is a spinorial instanton on $(P,S,M,g)$ if and only if:
\begin{gather*}
\langle F_A , \omega \rangle = 0,\qquad F_A\wedge \omega  + i F_A\triangle_1 \Theta  +  \mu \ast (F_A\wedge \omega)   = 0,\\
14^{-\frac{1}{2}}\sqrt{ 2\escal{\omega,\omega}+\escal{\Theta,\Theta}} \,\ast F_A = \mu F_A\wedge \Theta + i \ast (F_A\triangle_1\omega),
\end{gather*}

\noindent
where:
\begin{equation*}
\widehat{\alpha} = \sqrt{\frac{2\escal{\omega,\omega}+\escal{\Theta,\Theta}}{14}} + i\omega + \Theta-i\mu*\omega +  \sqrt{\frac{2\escal{\omega,\omega}+\escal{\Theta,\Theta}}{14 \escal{\Theta,\Theta}^2 }}\Theta \wedge \Theta,\quad \omega\in\Omega^2(M),\,\Theta\in\Omega^4(M),
\end{equation*}

\noindent
is the Hermitian square of $\eta\in\Gamma(S)$.
\end{cor}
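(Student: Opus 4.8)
The plan is to reduce the statement to a direct application of Lemma \ref{lemma:constrainedspinoreven}, which characterizes membership of a spinor in the kernel of an endomorphism in terms of its Hermitian square, combined with the explicit form of the Hermitian square of an impure irreducible chiral spinor in eight Euclidean dimensions obtained in Corollary \ref{cor:Hermitian8dimpure}. The spinorial instanton condition $F_A\cdot\eta=0$ is precisely the statement that $\eta$ lies in the kernel of the endomorphism $\gamma(F_A)$, where $F_A$ is viewed fiberwise as a two-form with values in the complex Lie algebra $\frg_P$ and Clifford-multiplied as such. Hence the entire content is to translate, at each point of $M$, the equation $\gamma(F_A)\eta = 0$ into an equation on the Hermitian square $\widehat{\alpha}$ via Lemma \ref{lemma:constrainedspinoreven}, and then expand the resulting geometric-product identity into its homogeneous components using the Kähler-Atiyah calculus of Appendix \ref{app:KA}.

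In more detail, I would proceed as follows. First, recall that Lemma \ref{lemma:constrainedspinoreven} states (in the even-dimensional case, here $d=8$) that $\gamma(Q)\eta=0$ for an endomorphism $Q$ represented by an exterior form $q\in\wedge V^*_\C$ if and only if $q\diamond\widehat{\alpha}=0$, where $\widehat{\alpha}$ is the Hermitian square of $\eta$; this is the key input. Second, substitute $q=F_A$ (pointwise, absorbing the $\frg_P$-coefficients, which play no role in the Clifford-algebraic manipulation since the complex structure on $\frg$ commutes with $\gamma$) and $\widehat{\alpha}$ as displayed in the statement, namely $\widehat{\alpha} = c + i\omega + \Theta - i\mu\ast\omega + c\,\escal{\Theta,\Theta}^{-2}\,\Theta\wedge\Theta$ with $c=\sqrt{(2\escal{\omega,\omega}+\escal{\Theta,\Theta})/14}$. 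Third, expand the geometric product $F_A\diamond\widehat{\alpha}$ using the decomposition of $\diamond$ into generalized products $\triangle_k$: since $F_A$ is a two-form, only $\triangle_0=\wedge$, $\triangle_1$, and $\triangle_2$ contribute, and one collects the result by form degree, using the self-duality relations $\ast\omega$, $\ast\Theta = \mu\Theta$ (from $\Theta$ being the middle-degree self-dual component), and the contraction identities relating $F_A\triangle_1(\Theta\wedge\Theta)$, $F_A\triangle_2\omega$, etc., back to $F_A\triangle_1\omega$, $F_A\wedge\omega$, and $F_A\wedge\Theta$. Setting each homogeneous component of $F_A\diamond\widehat{\alpha}$ to zero then yields the three displayed equations: the degree-two (equivalently, via Hodge duality, degree-six) part gives $F_A\wedge\omega + iF_A\triangle_1\Theta + \mu\ast(F_A\wedge\omega)=0$, the degree-zero part gives $\escal{F_A,\omega}=0$, and the degree-four part gives the relation involving $\ast F_A$, $F_A\wedge\Theta$, and $\ast(F_A\triangle_1\omega)$ with the coefficient $14^{-1/2}\sqrt{2\escal{\omega,\omega}+\escal{\Theta,\Theta}}$.

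The main obstacle I anticipate is purely computational rather than conceptual: correctly bookkeeping the generalized-product expansion of $F_A\diamond\widehat{\alpha}$ across all relevant degrees, in particular handling the term $F_A\diamond(\Theta\wedge\Theta)$ (a product of a two-form with an eight-form that is itself a square of a four-form) and resolving the various $\ast$-duality identities so that all equations are expressed in the minimal redundant set $\{F_A\wedge\omega,\ F_A\triangle_1\omega,\ F_A\triangle_1\Theta,\ F_A\wedge\Theta,\ \escal{F_A,\omega}\}$ and their Hodge duals. One must also verify that no independent equation is lost or double-counted: a priori $F_A\diamond\widehat{\alpha}=0$ has components in degrees $0,2,4,6,8$, but Hodge self-duality of $\widehat{\alpha}$ (inherited from the chirality of $\eta$) pairs degree $k$ with degree $8-k$, so the genuinely independent equations live in degrees $0$, $2$, and $4$ — matching the three displayed conditions. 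Finally, one should note the converse direction follows automatically, since Lemma \ref{lemma:constrainedspinoreven} is an "if and only if" statement and $\widehat{\alpha}$ is, by Corollary \ref{cor:Hermitian8dimpure}, genuinely the Hermitian square of a nowhere-vanishing spinor precisely when $\omega$ and $\Theta$ satisfy the constraints recorded there; thus no additional regularity hypothesis beyond "$\eta$ nowhere vanishing" is needed.
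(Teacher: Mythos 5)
Your proposal is correct and follows essentially the same route as the paper: pointwise application of Lemma \ref{lemma:constrainedspinoreven} to replace $F_A\cdot\eta=0$ by $F_A\diamond\widehat{\alpha}=0$, insertion of the Hermitian square from Corollary \ref{cor:Hermitian8dimpure} (whose top term reduces to $\mu r\nu$), expansion via the generalized products $\triangle_0,\triangle_1,\triangle_2$, and separation by degree, exactly as in the paper's derivation of \eqref{eq:algebraicF8d}, with the redundancy among the four homogeneous components correctly attributed to the self-duality inherited from the chirality of $\eta$. The only slip is in your degree bookkeeping: the second displayed equation is the degree-four component (not degree two) and the third is the degree-six component (Hodge dual, via $\ast\Theta=\mu\Theta$, of the omitted degree-two equation $14^{-1/2}\sqrt{2\escal{\omega,\omega}+\escal{\Theta,\Theta}}\,F_A=iF_A\triangle_1\omega+F_A\triangle_2\Theta$), which does not affect the validity of the argument.
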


\noindent
These equations hold for a general irreducible and chiral complex spinor, which interacts with the complex structure of the Lie algebra of $G$. If we assume that $\eta$ is a complex multiple of a real and irreducible spinor, and hence stabilized by $\Spin(7)$, then $\omega = 0$ and after normalizing $\eta$ these equations reduce to:
\begin{eqnarray*}
\ast F_A = \mu F_A\wedge \Theta   \, , \qquad F_A\triangle_1 \Theta = 0.
\end{eqnarray*}

\noindent
The first equation recovers the well-known condition for a connection on a principal bundle to be a $\Spin(7)$-instanton on a \emph{topological} $\Spin(7)$ manifold \cite{Carrion}, as expected. Interestingly enough, we also obtain an extra condition, namely  $F_A\triangle_1 \Theta = 0$, which is not considered in the literature on $\Spin(7)$-instantons. However, a finer inspection of the algebraic identities satisfied by a $\Spin(7)$-structure reveals that $F_A\triangle_1 \Theta = 0$ is an identity and therefore can be omitted. This identity is certainly non-trivial, which illustrates the refined and deep manners in which the algebraic conditions contained in the previous general theorems can manifest in explicit examples.  \medskip

We also consider the algebraic characterization of a \emph{spinorial curving} on a $\C^{\ast}$-bundle gerbe defined on a Riemannian eight-manifold $(M,g)$. Equations \eqref{eq:algebraicH8d} imply the following result.

\begin{cor}
A pair $(\eta,b)$ is a spinorial curving on $(\cP, Y, \cA,M,g)$ if and only if:
\begin{equation*}
i H_b \triangle_2 \omega + H_b \triangle_3 \Theta = 0\, , \qquad 14^{-\frac{1}{2}}\sqrt{ 2\escal{\omega,\omega}+\escal{\Theta,\Theta}} H_b = H_b\triangle_2 \Theta  -  i H_b \triangle_1 \omega    -  i \mu \ast (H_b\wedge \omega),
\end{equation*}

\noindent
where:
\begin{equation*}
\widehat{\alpha} = \sqrt{\frac{2\escal{\omega,\omega}+\escal{\Theta,\Theta}}{14}} + i\omega + \Theta-i\mu*\omega +  \sqrt{\frac{2\escal{\omega,\omega}+\escal{\Theta,\Theta}}{14 \escal{\Theta,\Theta}^2 }}\Theta \wedge \Theta,\quad \omega\in\Omega^2(M),\,\Theta\in\Omega^4(M),
\end{equation*}

\noindent
is the Hermitian square of $\eta\in\Gamma(S)$.
\end{cor}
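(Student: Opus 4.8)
The plan is to reduce the statement to the algebraic characterization of the Hermitian square already established for eight Euclidean dimensions and then unwind the Clifford multiplication condition $H_b\cdot\eta=0$ into exterior-algebraic equations using Lemma \ref{lemma:constrainedspinoreven}. First I would recall that, by hypothesis, $H_b$ is the curvature three-form of the curving $b$, so that at each point $H_b\cdot\eta$ denotes the Clifford action of the three-form part of $H_b$ on $\eta\in\Gamma(S)$. The key conceptual step is that $H_b\cdot\eta=0$ is equivalent, by the constrained-spinor lemma applied to the endomorphism $\gamma(H_b)$, to a pair of equations relating $\gamma(H_b)$-contracted versions of the Hermitian square $\widehat\alpha$ of $\eta$ back to $\widehat\alpha$ itself; concretely, one obtains $\gamma(H_b)\,\widehat\alpha = 0$ together with its adjoint counterpart $\widehat\alpha\,\gamma(H_b)^{\dagger}=0$, or the single combined identity that Lemma \ref{lemma:constrainedspinoreven} packages.

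Next I would compute the left-hand side $\gamma(H_b)\,\widehat\alpha$ under the \KA identification $\C\mathrm{l}(V^*,h^*)\cong(\wedge V^*_\C,\diamond)$, so that Clifford multiplication becomes the geometric product and the endomorphism becomes $H_b\diamond\widehat\alpha$. Substituting the explicit form of $\widehat\alpha$ in eight dimensions, namely $\widehat\alpha=\sqrt{(2\escal{\omega,\omega}+\escal{\Theta,\Theta})/14}+i\omega+\Theta-i\mu*\omega+\sqrt{(2\escal{\omega,\omega}+\escal{\Theta,\Theta})/(14\escal{\Theta,\Theta}^2)}\,\Theta\wedge\Theta$, the product $H_b\diamond\widehat\alpha$ expands into generalized products $H_b\triangle_k\omega$ and $H_b\triangle_k\Theta$ of a three-form with the two- and four-forms and their Hodge duals. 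Collecting terms by form degree and using $*\omega$ in place of the top piece, together with the self-duality relation $*\Theta=\mu\Theta$ type identities for the Cayley-like four-form in the reducible case, one isolates the homogeneous components of $H_b\diamond\widehat\alpha=0$. The two displayed equations in the corollary — one an equation in a single fixed degree (giving $iH_b\triangle_2\omega+H_b\triangle_3\Theta=0$) and the other relating $H_b$ to $H_b\triangle_2\Theta - iH_b\triangle_1\omega - i\mu*(H_b\wedge\omega)$ scaled by the normalization constant $14^{-1/2}\sqrt{2\escal{\omega,\omega}+\escal{\Theta,\Theta}}$ — should emerge as exactly the independent degree-graded pieces, after the adjoint/conjugate equation is seen to be redundant or equivalent (this is where the Hermitian, rather than complex-bilinear, nature of the square matters: the pairing's adjoint type $s$ controls which $\pi$ and $\tau$ twists appear).

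The main obstacle I anticipate is bookkeeping the Hodge-star and generalized-product identities so that the expansion of $H_b\diamond\widehat\alpha$ collapses to precisely two equations rather than a longer redundant list. In eight dimensions the geometric product of a three-form with a four-form has components in degrees $1,3,5,7$, and with a two-form in degrees $1,3,5$; one must verify that Poincaré duality on an eight-manifold pairs the high-degree components with the low-degree ones, and that the chirality projector built from the complex volume form (which acts as $\pm$ on the module) forces half of them to vanish identically given $*\omega=-\mu\ast\omega$ sign conventions already fixed by the form of $\widehat\alpha$. A secondary subtlety is checking that the conditions on $\omega$ and $\Theta$ from Corollary \ref{cor:Hermitian8dimpure} (which guarantee $\widehat\alpha$ really is a Hermitian square) are exactly what is needed to discard the otherwise-spurious terms; once those are in hand, the identity $\widehat\alpha\diamond\beta\diamond\widehat\alpha=2^{d/2}(\widehat\alpha\diamond\beta)^{(0)}\widehat\alpha$ from Theorem \ref{thm:even_Hermitian_forms} can be invoked to close any remaining gap, since it encodes that $\widehat\alpha$ acts as a rank-one projector up to scale, which is precisely what makes "$H_b$ annihilates $\eta$" equivalent to "$H_b\diamond\widehat\alpha=0$." After that, the statement follows by reading off \eqref{eq:algebraicH8d}.
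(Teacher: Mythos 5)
Your proposal follows essentially the same route as the paper: by Lemma \ref{lemma:constrainedspinoreven} the condition $H_b\cdot\eta=0$ is equivalent to the single equation $H_b\diamond\widehat{\alpha}=0$, which one expands in the K\"ahler--Atiyah algebra using the explicit $\widehat{\alpha}$ of Corollary \ref{cor:Hermitian8dimpure} and the generalized products, and then isolates by degree to obtain \eqref{eq:algebraicH8d}, the two displayed equations being the independent components once the degree-five and degree-seven pieces are recognized as Hodge duals of the others via $\ast\Theta=\mu\Theta$. The only inessential wrinkles are your invocation of an ``adjoint counterpart'' $\widehat{\alpha}\,\gamma(H_b)^{\dagger}=0$ and of the rank-one identity of Theorem \ref{thm:even_Hermitian_forms} to justify the equivalence with $H_b\diamond\widehat{\alpha}=0$: neither is needed, since Lemma \ref{lemma:constrainedspinoreven} already provides that equivalence outright.
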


\noindent
We believe that these equations define natural conditions in abelian higher gauge theory and together with their non-abelian generalization may lead to interesting geometric problems.\medskip

In Section \ref{section:SDGerbe} we apply the theory of spinorial exterior forms to first construct both the Hermitian and complex-bilinear squares of an irreducible and chiral complex spinor in six Lorentzian dimensions and then use the result to characterize the spinorial curvings of a $\mathbb{C}^{\ast}$-bundle gerbe defined on a Lorentzian six-manifold. Due to the deep geometric relation between the self-duality condition for a three-form in six Lorentzian dimensions and the spinorial curving condition, this is a particularly interesting case to consider. The main result of this section is as follows.

\begin{thm}
A pair $(\eta,b)$ is a spinorial curving on $(\cC,M,g)$ if and only if there exists an isotropic one-form $v\in \Omega^1(M)$ conjugate to the Dirac current $u$ of $\eta$ such that the following conditions are satisfied:
\begin{gather*}
H_b(u^{\sharp},v^{\sharp}) = -\mu \ast_{uv} H_b^\perp - iH_b^\perp\triangle_2^\perp\omega - i\omega(H_b(u^{\sharp},v^{\sharp})^\sharp),\\
(H_b(u^\sharp,v^\sharp)\wedge u+H_b(u^\sharp))\wedge\omega=0,\\
\mu \ast_{uv} (H_b(u^\sharp,v^\sharp)\wedge u+H_b(u^\sharp)) = H_b(u^\sharp,v^\sharp)\wedge u+H_b(u^\sharp) - i (H_b(u^\sharp,v^\sharp)\wedge u+H_b(u^\sharp)) \triangle^{\perp}_1 \omega,
\end{gather*}

\noindent
where $\ast_{uv}$ denotes the Hodge dual induced by $h$ on the orthogonal complement $V_{uv}$ to $u$ and $v$ and the Hermitian square of $\eta$ is given by $\widehat{\alpha} = u + i u\wedge \omega - \mu \ast u$. 
\end{thm}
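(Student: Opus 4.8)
The plan is to reduce the spinorial curving condition $H_b \cdot \eta = 0$ on the Lorentzian six-manifold to a purely algebraic statement about the Hermitian square $\widehat{\alpha} = u + i\,u\wedge\omega - \mu\ast u$ of $\eta$, and then to unpack that algebraic statement fibrewise into the three displayed equations. The first step is to recall the general algebraic criterion for a spinor to lie in the kernel of an endomorphism: by Lemma \ref{lemma:constrainedspinoreven} (applied fibrewise, in the chiral even-dimensional setting $d=6$), the condition $H_b\cdot\eta=0$ is equivalent to an identity of the form $\rho(H_b)\diamond\widehat{\alpha}=0$, where $\rho(H_b)$ denotes the image of the two-form part of $H_b$ under the Kähler-Atiyah realization of Clifford multiplication, acting by the geometric product $\diamond$ on $\widehat{\alpha}\in\wedge V^*_\C$. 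So the whole content is to expand $H_b\diamond\widehat{\alpha}=0$ with $\widehat{\alpha}=u+i\,u\wedge\omega-\mu\ast u$ and collect terms by exterior degree.

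The second step is to exploit the specific structure of an \emph{isotropic} Dirac current in Lorentzian signature $(1,5)$. Since $u$ is null, $u\diamond u = \escal{u,u} = 0$, and the stabilizer geometry forces $\omega$ to live transversally: one picks an isotropic one-form $v$ conjugate to $u$ (i.e. $\escal{u,v}=1$, $\escal{v,v}=0$), decomposes $V = \Span(u^\sharp,v^\sharp)\oplus V_{uv}$, and writes $\omega$ and the relevant components of $H_b$ in terms of this splitting. The Hodge star $\ast$ on $V$ then decomposes in terms of $\ast_{uv}$ on the four-dimensional Euclidean complement $V_{uv}$, interior products with $u^\sharp,v^\sharp$, and wedging with $u,v$; this is exactly what produces the quantities $H_b(u^\sharp,v^\sharp)$, $H_b(u^\sharp)$, and the transverse operations $\triangle^\perp_1$, $\triangle^\perp_2$, $\ast_{uv}$ appearing in the statement. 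Concretely, I would write $H_b = u\wedge v\wedge H_b(u^\sharp,v^\sharp) + u\wedge H_b(u^\sharp)^{\mathrm{something}} + \dots + H_b^\perp$ and compute $\diamond$-products using $u\diamond x = u\wedge x + \iota_{u^\sharp}x$ together with the nilpotency relations $u\wedge u = 0$, $\iota_{u^\sharp}u = 0$, and $\iota_{u^\sharp}(u\wedge\cdot) + u\wedge\iota_{u^\sharp}(\cdot) = \escal{u,u}=0$.

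The third step is bookkeeping: after substituting, $H_b\diamond\widehat{\alpha}=0$ splits into several homogeneous components (degrees $1$ through $5$, say), but because $\widehat{\alpha}$ is built from $u$, $u\wedge\omega$ and $\ast u$, many of these components coincide or are proportional once one uses that $\ast u = \mu$ times a fixed $5$-form and that $\omega$ is self-dual-type on $V_{uv}$ in the sense dictated by the $d=4$ Euclidean analysis of Section \ref{sec:lowdimensions}. Matching independent components yields precisely: (i) the scalar/transverse equation relating $H_b(u^\sharp,v^\sharp)$ to $\ast_{uv}H_b^\perp$, $H_b^\perp\triangle_2^\perp\omega$ and $\omega(H_b(u^\sharp,v^\sharp)^\sharp)$; (ii) the wedge-with-$\omega$ equation forcing $(H_b(u^\sharp,v^\sharp)\wedge u + H_b(u^\sharp))\wedge\omega = 0$; and (iii) the duality equation relating $\ast_{uv}(H_b(u^\sharp,v^\sharp)\wedge u + H_b(u^\sharp))$ to itself plus its $\triangle^\perp_1\omega$ correction. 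Conversely, one checks that these three equations are not merely necessary but also sufficient by reassembling them into $H_b\diamond\widehat{\alpha}=0$ and invoking Lemma \ref{lemma:constrainedspinoreven} in the other direction; the existence of the conjugate isotropic $v$ is guaranteed because $u$ is the (necessarily isotropic) Dirac current of a chiral spinor in this signature.

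The main obstacle I expect is the third step: the degree-by-degree expansion of $H_b\diamond\widehat{\alpha}$ in the null splitting is genuinely intricate, because the geometric product mixes degrees and the self-duality constraint on $\omega$ must be used repeatedly to collapse what naively looks like five independent equations down to three. Getting the signs and the powers of $\mu$ right — and correctly identifying which transverse operation ($\triangle^\perp_1$ versus $\triangle^\perp_2$ versus $\ast_{uv}$) each term produces — is where the real care is needed; the conceptual reduction via Lemma \ref{lemma:constrainedspinoreven} is, by contrast, immediate once the Hermitian square $\widehat{\alpha} = u + i\,u\wedge\omega - \mu\ast u$ has been established (which is itself done earlier in Section \ref{section:SDGerbe} by applying Theorem \ref{thm:even_Hermitian_forms}).
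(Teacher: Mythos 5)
Your proposal is correct and follows essentially the same route as the paper: the paper likewise reduces $H_b\cdot\eta=0$ to $H_b\diamond\widehat{\alpha}=0$ via Lemma \ref{lemma:constrainedspinoreven} (using $\widehat{\alpha}=u\diamond(1+i\omega-\mu\nu)$), then decomposes $H_b$ in the null splitting determined by $u$ and a conjugate isotropic $v$ as $H_b=u\wedge v\wedge\beta+u\wedge\chi_u+v\wedge\chi_v+H_b^{\perp}$, separates by degree, and uses $\ast_{uv}\omega=-\mu\omega$ to collapse the system to the three stated equations (this is the content of Lemmas \ref{lemma:spinorialcurving}, \ref{lemma:HDeltaomega} and \ref{lemma:spinorialcurvingII}, with $\beta=-H_b(u^{\sharp},v^{\sharp})$ and $\chi_v=H_b(u^{\sharp},v^{\sharp})\wedge u+H_b(u^{\sharp})$). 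The only caveat is that your plan defers exactly this degree-by-degree bookkeeping, which is where the paper's actual work lies; note also that $H_b$ is a three-form, so what enters the dequantized equation is $H_b$ itself, not its ``two-form part.''
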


\noindent
The reader is referred to Theorem \ref{thm:spinorialcurving} for more details. Several corollaries can be extracted from this result, of which we reproduce the following, which is particularly simple in its form.

\begin{cor}
Let $b$ be a curving whose curvature satisfies $\ast H_b = \mu H_b$ on an oriented Lorentzian six-dimensional manifold $(M,g)$. Then, the pair $(\eta,b)$ is a spinorial curving on $(\cC,M,g)$ for every chiral irreducible complex spinor $\eta$ of chirality $\mu\in \mathbb{Z}_2$ on $(M,g)$.
\end{cor}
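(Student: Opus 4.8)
The plan is to show that the self-duality condition $\ast H_b = \mu H_b$ forces the spinorial curving condition $H_b \cdot \eta = 0$ to be automatically satisfied, for \emph{any} chiral irreducible complex spinor of chirality $\mu$. The key is to translate $H_b \cdot \eta = 0$ into an equivalent algebraic condition on the Hermitian square $\widehat{\alpha}$ of $\eta$ using Lemma \ref{lemma:constrainedspinoreven} (or its odd analogue, as appropriate for dimension six; here $d=6$ is even, so Lemma \ref{lemma:constrainedspinoreven} applies): the spinor $\eta$ lies in the kernel of the endomorphism $\gamma(H_b)$ if and only if a corresponding geometric-product identity holds between the form $H_b$ (viewed in the \KA algebra) and $\widehat{\alpha}$. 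So the first step is to write down that equivalent identity explicitly for a three-form $H_b$ in six dimensions.

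Second, I would exploit the chirality of $\eta$. Since $\eta$ is chiral of chirality $\mu$, the complex volume form $\nu$ acts on $\eta$ as multiplication by (a normalization of) $\mu$, i.e. $\gamma(\nu)\eta = \mu \, c\, \eta$ for the appropriate constant $c$. Consequently, the Clifford action of $H_b$ on $\eta$ only depends on the \emph{anti-self-dual part} (or self-dual part, depending on sign conventions) of $H_b$: decomposing $H_b = H_b^+ + H_b^-$ into its $\pm$-eigencomponents under $\ast$, the component with eigenvalue $\mu$ acts on $\eta$ proportionally to the component with eigenvalue $-\mu$, and in fact one of these two actions can be rewritten in terms of the other via $\gamma(\nu)$. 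The hypothesis $\ast H_b = \mu H_b$ says precisely that $H_b = H_b^{+\mu}$ is purely in the $\mu$-eigenspace, so that $H_b^{-\mu} = 0$. The third step is then to check that a three-form lying entirely in the $\mu$-eigenspace of $\ast$ annihilates every chiral spinor of chirality $\mu$ under Clifford multiplication — this is the standard fact that, on a chiral spinor bundle in six dimensions, Clifford multiplication by a three-form factors through the $(-\mu)$-self-dual part. One then verifies this matches the algebraic identity from the first step: substituting $\ast H_b = \mu H_b$ into the geometric-product equation and using the chirality relation for $\widehat{\alpha}$ (which encodes that $\eta$ is chiral) collapses both sides to the same expression, so the identity holds identically.

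Alternatively, and perhaps more cleanly, I would derive the corollary directly from the main Theorem \ref{thm:spinorialcurving} of the section: that theorem characterizes spinorial curvings via three equations involving $H_b(u^\sharp, v^\sharp)$, $H_b(u^\sharp)$, $H_b^\perp$, and the decomposition of $H_b$ relative to the Dirac current $u$ and a conjugate isotropic one-form $v$. The strategy is to show that if $\ast H_b = \mu H_b$, then each of those three equations is satisfied for a suitable choice of $v$. The self-duality $\ast H_b = \mu H_b$ relates the "longitudinal" pieces of $H_b$ (those involving $u$ and $v$) to the "transverse" piece $H_b^\perp$ via the lower-dimensional Hodge star $\ast_{uv}$; plugging these relations into the three equations of Theorem \ref{thm:spinorialcurving} should make them reduce to tautologies, using the identities satisfied by $\omega$ (the transverse Kähler-type form appearing in $\widehat{\alpha} = u + iu\wedge\omega - \mu\ast u$) from the $\Spin$-structure on the transverse $\R^4$.

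\textbf{Main obstacle.} The delicate point will be the bookkeeping of sign and duality conventions: matching the chirality $\mu$, the eigenvalue of the complex volume form, the sign in $\widehat{\alpha} = u + iu\wedge\omega - \mu\ast u$, and the orientation conventions entering $\ast_{uv}$ must all be coordinated so that the self-duality hypothesis lands in exactly the eigenspace that annihilates $\eta$. In particular, one must confirm that the three-form $H_b$ with $\ast H_b = \mu H_b$ (and not $\ast H_b = -\mu H_b$) is the one that Clifford-annihilates chirality-$\mu$ spinors; getting this backwards would give the opposite corollary. Once the conventions are pinned down, the computation itself is a short substitution, so the real content is verifying compatibility of conventions rather than any substantive new estimate.
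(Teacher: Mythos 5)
Your proposal is correct, and it actually contains two workable arguments. Your ``alternative'' route is precisely the paper's own proof: the paper expresses $\ast H_b=\mu H_b$ in the $(u,v)$-decomposition of $H_b$ as $\ast_{uv}\chi_u=-\mu\chi_u$, $\ast_{uv}\chi_v=\mu\chi_v$, $H_b(u^{\sharp},v^{\sharp})=-\mu\ast_{uv}H_b^{\perp}$, and then checks that the three conditions of Theorem \ref{thm:spinorialcurving} (equivalently Lemma \ref{lemma:spinorialcurvingII}) hold identically, using $\ast_{uv}\omega=-\mu\omega$ exactly as you anticipate. Your first route is genuinely different and in fact shorter, and the sign issue you flag as the main obstacle does close in the right direction with the paper's conventions: in signature $(5,1)$ one has $\nu_{\C}=i^{q+\frac{d}{2}}\nu=\nu$, and for a three-form Lemma \ref{lemma:product_volume_form} gives $H_b\diamond\nu=\ast\tau(H_b)=-\ast H_b$, so for $\eta\in\Sigma^{\mu}$ one computes $H_b\cdot\eta=\mu\, H_b\cdot(\nu\cdot\eta)=\mu\,(H_b\diamond\nu)\cdot\eta=-\mu\,(\ast H_b)\cdot\eta=-H_b\cdot\eta$, whence $H_b\cdot\eta=0$; that is, three-forms in the $\mu$-eigenspace of $\ast$ annihilate chirality-$\mu$ spinors, which is exactly the corollary (and for this you do not even need Lemma \ref{lemma:constrainedspinoreven} or the Hermitian square --- your phrase about the $\mu$-component acting ``proportionally'' to the $-\mu$-component is loose, but the mechanism you intend is this one). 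Comparing the two: your volume-form argument is convention-independent once the identity $H_b\diamond\nu=-\ast H_b$ is fixed, requires no choice of conjugate one-form $v$, and isolates the conceptual reason the corollary holds; the paper's route, while more computational, comes for free from the already-established Theorem \ref{thm:spinorialcurving} and makes explicit which components $\chi_v$, $H_b^{\perp}$, $H_b(u^{\sharp},v^{\sharp})$ interact with $\omega$, which is the form of the statement used in Corollary \ref{cor:U(1)spinorialb} and in the explicit example that follows.
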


\noindent
By the previous corollary, the notion of spinorial curving on a six-dimensional Lorentzian manifold is a natural generalization of the notion of self-duality for curvings.

\begin{ack}
The work of AGG is supported by the Beijing Institute of Mathematical Sciences and Applications (BIMSA). The work of CSS was partially supported by the Leonardo grant LEO22-2-2155 of the BBVA Foundation and the research grant PID2023-152822NB-I00 of the Ministry of Science of the government of Spain. We would like to thank I.\ Chrysikos, D.\ Conti, and K.\ Krasnov for interesting suggestions and comments. 
\end{ack}

\begin{center}
\begin{tikzpicture}
    \node[
        draw,                  % Draws a border
        minimum width=12cm,     % Sets the minimum width to 4cm
        minimum height=2cm,    % Sets the minimum height to 2cm (or any other value)
        align=center,          % Allows for multi-line centered text
        line width=1pt,        % Sets the border thickness
        fill=white!10           % A light blue background fill
    ] 
    at (0,0) % Position of the node
    {\quad \emph{Achtung}: we define Clifford algebras using the plus convention $v^2 = h(v,v)$!\quad}; % The text inside
\end{tikzpicture}
\end{center}

% % % % % % % % % % % % % % % % % % % % % % % % % % % % % % % % % % % % % %
% % % % % % % % % % % % % % % % % % % % % % % % % % % % % % % % % % % % % %

\section{Complex vectors as endomorphisms}
\label{sec:vectorsasendos}

% % % % % % % % % % % % % % % % % % % % % % % % % % % % % % % % % % % % % %
% % % % % % % % % % % % % % % % % % % % % % % % % % % % % % % % % % % % % %

In this section, we algebraically characterize a class of rank one endomorphisms that is naturally associated with the choice of either a Hermitian or a complex-bilinear pairing on a complex vector space. We will use this algebraic characterization in the remainder of the article to study the squares of an irreducible complex spinor. The results of Subsection \ref{subsec:complexbilinearvectors} can be considered as the complex analog of the results presented in \cite[Section 2]{CLS21}, where the algebraic structure of rank one real endomorphisms of a real \emph{paired} vector space is examined in detail. 

% % % % % % % % % % % % % % % % % % % % % % % % % % % % % % % % % % % % % %

\subsection{Hermitian pairings}

% % % % % % % % % % % % % % % % % % % % % % % % % % % % % % % % % % % % % %

Let $\Sigma$ be a complex vector space equipped with a non-degenerate \emph{Hermitian} pairing $\scS\colon\Sigma\times\Sigma\to\C$, namely a non-degenerate sesquilinear pairing satisfying: $$\scS(\xi_1 , \xi_2) = \overline{\scS(\xi_2 , \xi_1)}$$ for every $\xi_1 , \xi_2 \in \Sigma$. We follow the convention that $\scS$ is complex anti-linear in the second entry, that is:
\begin{equation*}
\scS(\xi_1,c\xi_2)=\bar{c}\scS(\xi_1,\xi_2)
\end{equation*}

\noindent
for all $c\in\C$. Note that we do not assume $\scS$ to be necessarily positive-definite. We will refer to $(\Sigma,\scS)$ as a \emph{Hermitian vector space}. Let $(\End(\Sigma) , \circ)$ be the unital associative complex algebra of complex linear endomorphisms of $\Sigma$, where $\circ$ denotes the standard composition of linear maps. Given $E\in\End(\Sigma)$, we denote by $E^\dagger$ the adjoint of $E$ with respect to $\scS$, which is uniquely determined by the condition: 
\begin{equation*}
\scS(E^\dagger\xi_1,\xi_2)=\scS(\xi_1,E\xi_2)    
\end{equation*}

\noindent
for every $\xi_1 , \xi_2 \in \Sigma$.

\begin{definition}
The \emph{Hermitian square maps} of a Hermitian vector space $(\Sigma,\scS)$ are the following quadratic maps:
\begin{equation*}
\hcE_\kappa\colon\Sigma\to\End(\Sigma)\, , \qquad \xi\mapsto \hcE_\kappa(\xi):=\kappa\,\xi\otimes\xi^{\ast}\, , \qquad \kappa\in \U(1),
\end{equation*}

\noindent
parametrized by $\U(1)$, where $\xi^* := \scS(-,\xi)\in\Sigma^{\ast} = \Hom(\Sigma,\C)$.
\end{definition}

\noindent
Our main goal in this subsection is to obtain the algebraic characterization of the endomorphisms in the image $\Im(\hcE_\kappa)\subset \End(\Sigma)$ of the Hermitian square maps. We will extensively use such characterization in later sections to describe the \emph{Hermitian square} of an irreducible complex spinor, see Definition \ref{def:squares}. The Hermitian square maps satisfy the following general identities, which are crucial for our purposes.

\begin{lemma}
\label{lemma:properties_cE}
Let $(\Sigma,\scS)$ be a Hermitian vector space. Then:
\begin{equation*}
\hcE_\kappa(\xi)\circ A \circ \hcE_\kappa(\xi) =\Tr\big(\hcE_\kappa(\xi)\circ A \big)\hcE_\kappa(\xi)\, , \qquad (\bar\kappa \hcE_\kappa(\xi))^\dagger = \bar\kappa \hcE_\kappa(\xi)
\end{equation*}

\noindent
for every $A\in \End(\Sigma)$ and $\kappa \in \U(1)$.
\end{lemma}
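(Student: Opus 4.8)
The plan is to verify the two identities directly from the definition $\hcE_\kappa(\xi) = \kappa\,\xi\otimes\xi^{\ast}$, where $\xi^{\ast} = \scS(-,\xi)$, using only the elementary algebra of rank-one endomorphisms together with the defining property of the adjoint with respect to $\scS$. The key computational fact is that for a rank-one endomorphism written as $\xi\otimes\varphi$ (acting by $\zeta\mapsto \varphi(\zeta)\,\xi$) and any $A\in\End(\Sigma)$, one has $(\xi\otimes\varphi)\circ A\circ(\xi\otimes\varphi) = \varphi(A\xi)\,(\xi\otimes\varphi)$, and moreover $\Tr(\xi\otimes\varphi) = \varphi(\xi)$, hence $\Tr\big((\xi\otimes\varphi)\circ A\big) = \varphi(A\xi)$.

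First I would establish the trace identity. Applying $\hcE_\kappa(\xi)\circ A\circ \hcE_\kappa(\xi)$ to an arbitrary $\zeta\in\Sigma$: we get $\kappa^2\,\scS(\zeta,\xi)\,\scS(A\xi,\xi)\,\xi$. On the other hand, $\Tr\big(\hcE_\kappa(\xi)\circ A\big)\hcE_\kappa(\xi)$ applied to $\zeta$ gives $\kappa\,\Tr\big(\hcE_\kappa(\xi)\circ A\big)\,\scS(\zeta,\xi)\,\xi$, so it suffices to check $\Tr\big(\hcE_\kappa(\xi)\circ A\big) = \kappa\,\scS(A\xi,\xi)$. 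This follows since $\hcE_\kappa(\xi)\circ A$ is the rank-one endomorphism $\zeta\mapsto \kappa\,\scS(A\zeta,\xi)\,\xi$, i.e. $\kappa\,\xi\otimes(A^{\dagger}\xi)^{\ast}$, whose trace is $\kappa\,\scS(\xi, A^{\dagger}\xi)$... wait, I should be careful: more cleanly, the trace of $\zeta\mapsto \lambda(\zeta)\,\xi$ for a linear functional $\lambda$ is $\lambda(\xi)$, so the trace is $\kappa\,\scS(A\xi,\xi)$. Comparing the two expressions yields the first identity.

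For the second identity, I would compute the adjoint of $\hcE_\kappa(\xi) = \kappa\,\xi\otimes\xi^{\ast}$. Using $\scS\big(\hcE_\kappa(\xi)^{\dagger}\xi_1,\xi_2\big) = \scS\big(\xi_1, \hcE_\kappa(\xi)\xi_2\big) = \scS\big(\xi_1, \kappa\,\scS(\xi_2,\xi)\,\xi\big) = \overline{\kappa\,\scS(\xi_2,\xi)}\,\scS(\xi_1,\xi) = \bar\kappa\,\scS(\xi,\xi_2)\,\scS(\xi_1,\xi)$, where in the last step I used the Hermitian symmetry $\overline{\scS(\xi_2,\xi)} = \scS(\xi,\xi_2)$ and anti-linearity in the second slot. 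Recognizing $\scS(\xi,\xi_2)\,\scS(\xi_1,\xi) = \scS\big(\scS(\xi_1,\xi)\,\xi,\xi_2\big) = \scS\big(\bar\kappa^{-1}\cdot\bar\kappa\,\xi\otimes\xi^{\ast}(\xi_1),\xi_2\big)$, one reads off $\hcE_\kappa(\xi)^{\dagger} = \bar\kappa\,\xi\otimes\xi^{\ast} = |\kappa|^{-2}\,\bar\kappa\,\hcE_\kappa(\xi)\cdot\kappa = \bar\kappa^2\kappa^{-1}\cdots$; more transparently, since $|\kappa|=1$, $\hcE_\kappa(\xi)^{\dagger} = \bar\kappa\,\xi\otimes\xi^{\ast}$ while $\hcE_\kappa(\xi) = \kappa\,\xi\otimes\xi^{\ast}$, so $\big(\bar\kappa\,\hcE_\kappa(\xi)\big)^{\dagger} = \kappa\,\hcE_\kappa(\xi)^{\dagger} = \kappa\bar\kappa\,\xi\otimes\xi^{\ast} = \xi\otimes\xi^{\ast} = \bar\kappa\,\hcE_\kappa(\xi)$, which is the claimed self-adjointness. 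I do not anticipate a genuine obstacle here; the only subtlety — and thus the "hard part" in a purely bookkeeping sense — is keeping the conjugations and the placement of $\kappa$ versus $\bar\kappa$ straight, given the convention that $\scS$ is anti-linear in the \emph{second} entry, so I would write out the sesquilinearity explicitly at each step rather than manipulate the $\otimes$-notation formally.
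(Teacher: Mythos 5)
Your argument is correct and follows essentially the same route as the paper: a direct evaluation of $\hcE_\kappa(\xi)\circ A\circ\hcE_\kappa(\xi)$ on an arbitrary vector combined with the trace formula $\Tr(\zeta\mapsto\lambda(\zeta)\,\xi)=\lambda(\xi)$, and a sesquilinearity computation showing $\hcE_\kappa(\xi)^{\dagger}=\bar\kappa\,\xi\otimes\xi^{\ast}=\tfrac{\bar\kappa}{\kappa}\hcE_\kappa(\xi)$, which is exactly the paper's conclusion rewritten. The momentary bookkeeping hesitations you flag are resolved correctly in your final expressions, so no gap remains.
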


\begin{proof}
Let $\chi\in\Sigma$. Then: 
\begin{align*}
\hcE_\kappa(\xi)\circ A \circ \hcE_\kappa(\xi) \chi &= \hcE_\kappa(\xi)\big(\kappa\scS(\chi,\xi) A(\xi) \big) = \kappa^2 \scS(\chi,\xi)\scS(A(\xi),\xi)\xi\\
&=\Tr\big(\hcE_\kappa(\xi) \circ A\big)\kappa\scS(\chi,\xi)\xi=\Tr\big(\hcE_\kappa(\xi) \circ A \big)\hcE_\kappa(\xi)\chi,
\end{align*}

\noindent
where we have used that $\Tr\big(\hcE_\kappa(\xi) \circ A\big)=\kappa\scS(A(\xi),\xi)$. This gives the first equation in the statement of the lemma. For the second equation, let $\eta_1 , \eta_2\in\Sigma$ and set $E=\hcE_\kappa(\xi)$ for some $\kappa\in\U(1)$ and $\xi\in\Sigma$. We compute: 
\begin{align*}
\scS(E^\dagger \eta_1,\eta_2) & = \scS(\eta_1,E\eta_2) = \scS(\eta_1,\kappa\scS(\eta_2,\xi)\xi) = \bar{\kappa} \overline{\scS(\eta_2,\xi)} \scS(\eta_1,\xi) = \bar{\kappa} \scS(\xi,\eta_2)\scS(\eta_1,\xi)\\
&=\scS(\bar{\kappa} \scS(\eta_1,\xi)\xi,\eta_2)=\scS(\tfrac{\bar\kappa}{\kappa} \kappa \scS(\eta_1,\xi)\xi,\eta_2) = \scS(\tfrac{\bar\kappa}{\kappa}E\eta_1,\eta_2).
\end{align*}

\noindent
Then $E^\dagger=\frac{\bar\kappa}{\kappa}E$. This can be equivalently written as $(\bar\kappa \hcE_\kappa(\xi))^\dagger = \bar\kappa \hcE_\kappa(\xi)$ and hence we conclude.
\end{proof}

\begin{lemma}
\label{lemma:Z=calE(Sigma)}
Let $\kappa_1 , \kappa_2 \in \U(1)$. We have $\hcE_{\kappa_1}(\Sigma)\cap \hcE_{\kappa_2}(\Sigma)=\{0\}$ if and only if $\kappa_1\neq\kappa_2$. 
\end{lemma}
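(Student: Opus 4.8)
The plan is to prove both implications directly from the definition $\hcE_\kappa(\xi) = \kappa\,\xi\otimes\xi^*$, where $\xi^* = \scS(-,\xi)$. First I would observe that for $\xi\neq 0$ the endomorphism $\hcE_\kappa(\xi)$ has rank exactly one (since $\scS$ is non-degenerate, $\xi^*\neq 0$), and $\hcE_\kappa(0) = 0$; so the intersection $\hcE_{\kappa_1}(\Sigma)\cap\hcE_{\kappa_2}(\Sigma)$ always contains $0$, and the content of the statement is whether it contains a \emph{nonzero} element.

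For the ``if'' direction (contrapositive of one side): suppose the intersection contains a nonzero $E$. Then $E = \hcE_{\kappa_1}(\xi_1) = \hcE_{\kappa_2}(\xi_2)$ for some nonzero $\xi_1,\xi_2\in\Sigma$. Since $E$ has rank one, its image is the line $\C\xi_1 = \C\xi_2$, so $\xi_2 = c\,\xi_1$ for some nonzero $c\in\C$. Substituting, $\kappa_2\,\xi_2\otimes\xi_2^* = \kappa_2 |c|^2\,\xi_1\otimes\xi_1^*$ (using sesquilinearity of $\scS$: $\xi_2^* = \scS(-,c\xi_1) = \bar c\,\xi_1^*$, and $\xi_2 = c\xi_1$, giving the factor $c\bar c = |c|^2$). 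Comparing with $\kappa_1\,\xi_1\otimes\xi_1^*$ and using that $\xi_1\otimes\xi_1^*\neq 0$, we get $\kappa_1 = |c|^2\kappa_2$; since $\kappa_1,\kappa_2\in\U(1)$ and $|c|^2>0$, taking absolute values forces $|c|^2 = 1$ and hence $\kappa_1 = \kappa_2$.

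For the ``only if'' direction: if $\kappa_1 = \kappa_2 =: \kappa$, then trivially $\hcE_{\kappa_1}(\Sigma) = \hcE_{\kappa_2}(\Sigma)$, so the intersection is this whole set, which is certainly not $\{0\}$ (pick any nonzero $\xi$). Assembling the two directions gives the equivalence as stated. The only mild subtlety — the step I would treat most carefully — is the bookkeeping of the conjugation when rescaling $\xi$, i.e.\ tracking that $\xi\otimes\xi^*$ scales by $|c|^2$ rather than $c^2$ under $\xi\mapsto c\xi$; everything else is immediate from rank considerations and the group structure of $\U(1)$. There is no real obstacle here, but I would make sure to invoke non-degeneracy of $\scS$ explicitly at the one point it is needed, namely to conclude $\xi^*\neq 0$ for $\xi\neq 0$.
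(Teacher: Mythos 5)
Your proof is correct and takes essentially the same approach as the paper: both argue that a nonzero element of $\hcE_{\kappa_1}(\Sigma)\cap\hcE_{\kappa_2}(\Sigma)$ forces $\xi_2=c\,\xi_1$ with $\vert c\vert^2=1$ and hence $\kappa_1=\kappa_2$, the trivial direction being immediate. The only cosmetic difference is that you obtain the proportionality from the rank-one image of $E$, whereas the paper evaluates both rank-one expressions on a test vector $\chi$ with $\xi_1^{\ast}(\chi)\neq 0$ and $\xi_2^{\ast}(\chi)\neq 0$.
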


\begin{proof}
If $\hcE_{\kappa_1}(\Sigma)\cap \hcE_{\kappa_2}(\Sigma)=\{0\}$ then clearly $\kappa_1\neq\kappa_2$ since otherwise $\hcE_{\kappa_1}(\Sigma) = \hcE_{\kappa_2}(\Sigma) \neq \{0\}$. For the converse, assume that there exists a non-zero element in $\hcE_{\kappa_1}(\Sigma)\cap \hcE_{\kappa_2}(\Sigma)$ with $\kappa_1\neq\kappa_2$. Then, there exist non-zero elements $\xi_1 , \xi_2 \in \Sigma$ satisfying: 
\begin{equation*}
\kappa_1\xi_1\otimes\xi_1^* = \kappa_2\xi_2\otimes\xi_2^*.
\end{equation*}

\noindent
Choose an element $\chi\in\Sigma$ such that $\xi^{\ast}_1(\chi)\neq 0$ and $\xi^{\ast}_2(\chi)\neq 0$, which exists since otherwise $\xi_1 = \xi_2 = 0$. Evaluating the previous equation on such $\chi$ we obtain:
\begin{equation*}
 \xi_1  = \frac{\kappa_2}{\kappa_1}\frac{\xi_2^{\ast}(\chi)}{\xi_1^{\ast}(\chi)}\xi_2,
\end{equation*}

\noindent
which plugged back into the previous equation gives:
\begin{equation*}
\left(\kappa_2 - \kappa_1 \left\vert\frac{\xi_2^{\ast}(\chi)}{\xi_1^{\ast}(\chi)}\right\vert^2\right)\xi_2\otimes\xi_2^* = 0,
\end{equation*}

\noindent
which immediately implies $\kappa_1 = \kappa_2$ and $\vert \xi_1^{\ast}(\chi) \vert^2 = \vert \xi_2^{\ast}(\chi)\vert^2$ since $\kappa_1,\kappa_2\in\mathrm{U}(1)$. This in turn implies $\xi_1 = \kappa^{\prime} \xi_2$ for a phase $\kappa^{\prime}\in \U(1)$ and hence we conclude.
\end{proof}

\noindent
Elaborating on the previous lemmas, we obtain the desired characterization of those endomorphisms that belong to $\Im(\hcE_\kappa)\subset\End(\Sigma)$ for a given $\kappa \in \U(1)$.
\begin{prop}
\label{prop:algebraichHermitian}
Let $(\Sigma,\scS)$ be a Hermitian vector space and let $E\in \End(\Sigma)$ be a non-zero endomorphism. The following statements are equivalent:   

\begin{enumerate}
\item[\normalfont(a)] There exists an element $\xi\in \Sigma$ and a phase $\kappa \in \U(1)$ such that $E = \hcE_{\kappa}(\xi) = \kappa\, \xi \otimes \xi^{\ast}$. That is, $E\in \Im (\hcE_{\kappa})$ for a uniquely determined $\kappa \in \U(1)$.
\item[\normalfont(b)]  There exists a phase $\kappa\in \U(1)$ such that the following equations are satisfied:
\begin{equation*}
E \circ A \circ E=\Tr(E\circ A)E\, , \qquad (\bar\kappa E)^\dagger = \bar\kappa E
\end{equation*}

\noindent
for every endomorphism $A\in \End(\Sigma)$.

\item[\normalfont(c)]  There exists an endomorphism $A\in\End(\Sigma)$ such that $\Tr(E\circ A)\neq 0$ and the following equations are satisfied:
\begin{equation*}
E\circ E=\Tr(E)E\, , \qquad E \circ A \circ E=\Tr(E\circ A)E\, , \qquad (\bar\kappa E)^\dagger = \bar\kappa E
\end{equation*}

\noindent
for a phase $\kappa\in \U(1)$.
\end{enumerate}
\end{prop}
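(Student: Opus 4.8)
The plan is to prove the chain of implications $(a)\Rightarrow(b)\Rightarrow(c)\Rightarrow(a)$, which is the most economical route since $(a)\Rightarrow(b)$ is essentially already done.

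First, $(a)\Rightarrow(b)$ is immediate from Lemma \ref{lemma:properties_cE}: if $E = \hcE_\kappa(\xi)$, then the two displayed identities in that lemma give exactly the two equations in (b), with the same $\kappa$. Uniqueness of $\kappa$, which is the content of the parenthetical remark in (a), follows from Lemma \ref{lemma:Z=calE(Sigma)}: if $E\in \hcE_{\kappa_1}(\Sigma)\cap\hcE_{\kappa_2}(\Sigma)$ and $E\neq 0$, then $\kappa_1=\kappa_2$.

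Next, $(b)\Rightarrow(c)$ is nearly formal. Given (b), take $A = \id_\Sigma$ in the first equation of (b) to obtain $E\circ E = \Tr(E)E$; the second equation of (b) is literally the third equation of (c). The only genuine point is to exhibit some $A$ with $\Tr(E\circ A)\neq 0$: since $E\neq 0$, pick vectors so that $E$ has a nonzero matrix entry, i.e.\ choose $A$ to be a rank-one endomorphism $\chi\otimes\psi^\ast$ (for a suitable functional $\psi^\ast$ and vector $\chi$) placed so that $\Tr(E\circ A) = \psi^\ast(E\chi)\neq 0$, which is possible precisely because $E\neq 0$. Then the first two equations of (c) hold for this $A$ (the middle one is the first equation of (b) specialized), and the third holds by (b).

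The main work is $(c)\Rightarrow(a)$, and this is where I expect the real obstacle. From $E\circ A\circ E = \Tr(E\circ A)E$ with $\Tr(E\circ A)\neq 0$ one deduces that $E':= \Tr(E\circ A)^{-1}\,E\circ A$ is idempotent: $(E\circ A)\circ(E\circ A) = \Tr(E\circ A)\,(E\circ A)$. Combined with $E\circ E = \Tr(E)E$ — which, if $\Tr(E)\neq 0$, already says $\Tr(E)^{-1}E$ is idempotent — one wants to conclude $\mathrm{rank}(E) = 1$. The cleanest argument: $E\circ A\circ E = \Tr(E\circ A)E$ shows that $\im(E)$ is $(A$-then-$E)$-invariant and that on $\im(E)$ the map $E\circ A$ acts as the scalar $\Tr(E\circ A)$; since $\Tr(E\circ A)\neq 0$, $E$ is injective on $A(\im E)$ hence $\dim\im(E)=\dim E(A(\im E))\le \dim A(\im E)\le \dim\im E$, forcing equality, and then the idempotent $E'$ restricted to $\im E$ is invertible, so $\im E\subseteq \im E'$; but $\mathrm{rank}(E')\le \mathrm{rank}(E)$... one must be careful here. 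The robust way is: pick $\chi_0$ with $E\chi_0\neq 0$ and set $\xi := E\chi_0$; the equation $E\circ A\circ E = \Tr(E\circ A)E$ evaluated at $\chi_0$ gives $E(A\xi) = \Tr(E\circ A)\,\xi$, so $\xi$ is an eigenvector of $E\circ A$ with nonzero eigenvalue; more importantly, applying the relation to an arbitrary $\chi$ gives $E(A(E\chi)) = \Tr(E\circ A)E\chi$, i.e.\ $E\circ A$ is the scalar $\Tr(E\circ A)$ on all of $\im E$, so $\im E$ is spanned by eigenvectors, and then using $E\circ E = \Tr(E)E$ one shows every $E\chi$ is proportional to $\xi$ (consider $E$ applied to $E\chi - c\,\xi$ for appropriate $c$ and use injectivity of $E\circ A$ on $\im E$), whence $\mathrm{rank}(E)=1$, say $E = \xi\otimes\ell$ for some $\ell\in\Sigma^\ast$. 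Finally, the Hermiticity condition $(\bar\kappa E)^\dagger = \bar\kappa E$ together with non-degeneracy of $\scS$ forces $\ell$ to be a multiple of $\xi^\ast = \scS(-,\xi)$: writing $\ell = \scS(-,\zeta)$ for the unique $\zeta$ (non-degeneracy), the self-adjointness of $\bar\kappa\,\xi\otimes\scS(-,\zeta)$ unravels to $\bar\kappa\,\zeta\otimes\scS(-,\xi) = \bar\kappa\,\xi\otimes\scS(-,\zeta)$, forcing $\zeta = \lambda\xi$ for some $\lambda\in\C$, and then reality/positivity bookkeeping (the diagonal entry $\bar\kappa\,\scS(\xi,\zeta) = \bar\kappa\lambda\,\overline{\scS(\xi,\xi)}$ must equal its own conjugate appropriately) pins $\lambda$ down to a positive real, which can be absorbed by rescaling $\xi\mapsto\sqrt{\lambda}\,\xi$; this changes $\kappa$ by nothing and yields $E = \kappa\,\xi\otimes\xi^\ast = \hcE_\kappa(\xi)$. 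The delicate points requiring care are the rank-one reduction (making sure the degenerate case $\Tr(E)=0$ is handled — it is, because the middle equation of (c) with its nonvanishing trace does all the work independently of $\Tr E$) and the final identification of the functional with $\xi^\ast$ including the sign/positivity of the scalar $\lambda$, where one must use that $\kappa\in\U(1)$ and the Hermitian symmetry of $\scS$ rather than positive-definiteness.
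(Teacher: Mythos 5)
Your chain $(a)\Rightarrow(b)\Rightarrow(c)$ coincides with the paper's argument (Lemma \ref{lemma:properties_cE} for $(a)\Rightarrow(b)$, uniqueness of $\kappa$ from Lemma \ref{lemma:Z=calE(Sigma)}, and $A=\Id$ plus non-degeneracy of the trace pairing for $(b)\Rightarrow(c)$), so those steps are fine.

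The genuine gap is in $(c)\Rightarrow(a)$, at the rank-one reduction. The mechanism you sketch --- ``$E\circ A$ acts as the scalar $\Tr(E\circ A)$ on $\im E$, then consider $E$ applied to $E\chi-c\,\xi$ and use injectivity of $E\circ A$ on $\im E$'' --- never exploits the crucial numerical coincidence that the scalar by which $E\circ A$ acts on $\im E$ is \emph{equal to} $\Tr(E\circ A)$ (nor, in the other case, that the scalar in $E\circ E=\Tr(E)E$ equals the trace of $E$). Without that coincidence the conclusion is simply false: take $E$ an $\scS$-orthogonal projection of rank two and $A=\lambda\,\Id$; then $E\circ A\circ E=\lambda E$, $E\circ A$ is the scalar $\lambda$ on $\im E$, and $E\circ A$ is injective on $\im E$, yet $\rk(E)=2$ --- the only thing excluding this is the identity $\lambda=\Tr(E\circ A)$, which here would force $\lambda=2\lambda$. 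Moreover, in your case $\Tr(E)=0$ one has $E^2=0$, so $E(E\chi-c\,\xi)=0$ identically and your parenthetical argument yields no information; your remark that ``the middle equation does all the work independently of $\Tr E$'' is true, but nothing in your text proves it. The identity must enter through a trace-of-an-idempotent count. The paper does this by perturbing: with $A_\varepsilon=\Id+\tfrac{\varepsilon}{\Tr(E\circ A)}A$ one gets $E_\varepsilon:=E\circ A_\varepsilon$ with $E_\varepsilon^2=\varepsilon E_\varepsilon$ and $\Tr(E_\varepsilon)=\varepsilon$, so $\varepsilon^{-1}E_\varepsilon$ is an idempotent of trace one, whence $\rk(E)=\rk(E_\varepsilon)=1$ since $A_\varepsilon$ is invertible for small $\varepsilon$. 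Even more directly, composing $E\circ A\circ E=\Tr(E\circ A)E$ with $A$ on the right gives $(E\circ A)^2=\Tr(E\circ A)(E\circ A)$, so $\Tr(E\circ A)^{-1}E\circ A$ is an idempotent of trace one, hence of rank one, and then $E=\Tr(E\circ A)^{-1}E\circ A\circ E$ has rank at most one.

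Once rank one is secured, your identification $E=\xi\otimes\scS(-,\zeta)$ with $\zeta=\lambda\xi$ via non-degeneracy of $\scS$ is correct, but the ``positivity bookkeeping'' is not: since $\scS$ is not assumed definite, $\scS(\xi,\xi)$ may vanish or be negative, and self-adjointness of $\bar\kappa E$ only forces $\kappa\lambda\in\R$, not $\lambda>0$. After absorbing $\abs{\lambda}^{1/2}$ into $\xi$ the surviving phase is $\pm\kappa$ rather than $\kappa$. This is harmless for statement (a), which only asserts existence of some phase (this is also how the paper proceeds, ending with the phase $c\abs{c}^{-1}$), but as written your final step invokes a positivity that is not available.
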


\begin{proof}
The implication $(a) \Rightarrow (b)$ follows from Lemma \ref{lemma:properties_cE}. The implication $(b) \Rightarrow (c)$ follows from the fact that the trace defines a complex non-degenerate bilinear pairing on $\End(\Sigma)$. To get $E^2=\Tr(E)E$ we just set $A=\Id$. We prove then that $(c) \Rightarrow (a)$. Hence, we assume that $A\in\End(\Sigma)$ satisfies $E\circ A\circ E=\Tr(E\circ A)E$ with $\Tr(E\circ A)\neq0$. Assume first that $\Tr(E) = 0$. Define:
\begin{equation*}
A_\varepsilon:=\Id+\frac{\varepsilon}{\Tr(E\circ A)}A,
\end{equation*}

\noindent
where $\varepsilon\in\R_{>0}$. For $\varepsilon>0$ small enough, the endomorphism $A_\varepsilon$ is invertible. Define $E_\varepsilon:=E\circ A_\varepsilon$. Then $\Tr(E_\varepsilon)=\varepsilon$ and $E_\varepsilon^2=\varepsilon E_\varepsilon$, where we have used that $E^2=\Tr(E)E=0$. Hence $P:=\frac{1}{\varepsilon}E_\varepsilon$ satisfies $P^2=P$ and $\Tr(P)=1$. Consequently, $\rk(E_\varepsilon)=\rk(P)=\Tr(P)=1$. Since $A_\varepsilon$ is invertible, this implies $\rk(E)=1$. Since $E$ is of rank one, there exists a non-zero vector $\xi\in\Sigma$ and a non-zero linear functional $\beta\in\Sigma^*$ such that $E=\xi\otimes\beta$. Furthermore, since the Hermitian pairing $\scS$ is non-degenerate, there exists a unique non-zero $\xi_0\in\Sigma$ such that $\beta = \scS(-,\xi_0) = \xi_0^*$. Condition $(\bar\kappa E)^\dagger = \bar\kappa E$ is now equivalent to:
\begin{equation*}
\bar\kappa\scS(-,\xi_0)\xi=\kappa\scS(-,\xi)\xi_0.
\end{equation*}

\noindent
Since $\scS$ is non-degenerate, there exists $\chi \in\Sigma$ such that $\scS(\chi,\xi)=1$. Then, evaluating the equation above on $\chi$ we obtain: 
\begin{equation*}
\xi_0=\frac{\bar\kappa}{\kappa}\scS(\chi,\xi_0)\xi\quad\text{and}\quad E=\frac{\kappa}{\bar\kappa}\overline{\scS(\chi,\xi_0)}\xi\otimes\xi^*.
\end{equation*}

\noindent
Let $c:=\frac{\kappa}{\bar\kappa}\overline{\scS(\chi,\xi_0)}\in\C\setminus\{0\}$ and $\hat\xi:=\abs{c}^{1/2}\xi$. Then: 
\begin{equation*}
E=c\, \xi\otimes\xi^{\ast}=\frac{c}{\abs{c}}\hat\xi\otimes\hat\xi^*.
\end{equation*}

\noindent
Therefore $E=\hcE_{c\abs{c}^{-1}}(\hat\xi)$ and $\hat\xi\in\Sigma$. This proves $(c) \Rightarrow (a)$ in the case $\Tr(E) = 0$. If $\Tr(E) \neq 0$ we simply define $P = \Tr(E)^{-1} E$ which directly satisfies $P^2=P$ and hence we conclude.
\end{proof}

\noindent
Define:
\begin{equation*}
\hcE(\Sigma) := \bigcup_{\kappa\in\U(1)}{\hcE}_\kappa(\Sigma) \subset \End(\Sigma)
\end{equation*}

\noindent
as the disjoint union of the images of the Hermitian square maps for all $\kappa\in\U(1)$. We have:
\begin{equation*}
\hcE (\Sigma) = \{E\in\End(\Sigma) \, \mid \rk(E) \leq 1\, , \,(\bar\kappa E)^\dagger= \bar\kappa E\, , \,  \kappa \in \U(1)\}.
\end{equation*}

\noindent
Hence, $\Im(\hcE)\subset \End(\Sigma)$ is the union of a family of \emph{real} linear subspaces, parametrized by $\kappa \in \U(1)$, of the determinantal variety of rank one complex endomorphisms of $\Sigma$. If $\scS$ is a Hermitian inner product on $\Sigma$, namely if $\scS$ is positive-definite, then Proposition \ref{prop:algebraichHermitian} simplifies as follows.

\begin{cor} 
Let $(\Sigma,\scS)$ be a Hermitian vector space with $\scS$ positive-definite, and let $E\in \End(\Sigma)$ be a non-zero endomorphism. The following statements are equivalent:   

\begin{enumerate}
\item[\normalfont(a)] There exists an element $\xi\in \Sigma$ and a phase $\kappa \in \U(1)$ such that $E = \hcE_{\kappa}(\xi) = \kappa\, \xi \otimes \xi^{\ast}$. That is, $E\in \Im (\hcE_{\kappa})$ for a uniquely determined $\kappa \in \U(1)$.

\item[\normalfont(b)]  The following equations are satisfied:
\begin{equation*}
E \circ  E=\Tr(E)E\, , \qquad (\bar\kappa E)^\dagger = \bar\kappa E
\end{equation*}

\noindent
for a phase $\kappa\in \U(1)$.
\end{enumerate}
\end{cor}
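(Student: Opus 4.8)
The plan is to deduce this corollary directly from Proposition \ref{prop:algebraichHermitian} by showing that, in the positive-definite case, the apparently weaker hypothesis in (b) of the corollary — namely $E\circ E=\Tr(E)E$ together with $(\bar\kappa E)^\dagger=\bar\kappa E$ — already forces the existence of an endomorphism $A$ satisfying the third condition $\Tr(E\circ A)\neq 0$ appearing in part (c) of the proposition. Once that is established, part (c) of Proposition \ref{prop:algebraichHermitian} applies verbatim (its condition $E\circ A\circ E=\Tr(E\circ A)E$ follows from $E\circ E=\Tr(E)E$ once one knows $E$ has rank one, or can be recovered a posteriori), and we conclude (a). Conversely, (a) $\Rightarrow$ (b) is immediate from Lemma \ref{lemma:properties_cE}, exactly as in the proof of the proposition (set $A=\Id$ in the first identity there).

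Concretely, for the implication (b) $\Rightarrow$ (a) I would argue as follows. Assume $E\neq 0$ satisfies $E\circ E=\Tr(E)E$ and $(\bar\kappa E)^\dagger=\bar\kappa E$ for some $\kappa\in\U(1)$. The self-adjointness condition says $E^\dagger=\tfrac{\bar\kappa}{\kappa}E$, so $\tilde E:=\bar\kappa E$ is genuinely self-adjoint, $\tilde E^\dagger=\tilde E$. Since $\scS$ is positive-definite, $\tilde E$ is diagonalizable over $\R$ with an $\scS$-orthonormal eigenbasis. The relation $E\circ E=\Tr(E)E$ transfers to $\tilde E\circ\tilde E=\bar\kappa\,\Tr(\tilde E)\,\tilde E$; writing $\lambda:=\bar\kappa\,\Tr(\tilde E)$ (which is the trace of $E$ up to the phase and is real since $\Tr(\tilde E)\in\R$), every eigenvalue $t$ of $\tilde E$ satisfies $t^2=\lambda t$, i.e. $t\in\{0,\lambda\}$. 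As $E\neq 0$, $\lambda\neq 0$ and $\lambda$ occurs as an eigenvalue; let $m\geq 1$ be its multiplicity, so $\Tr(\tilde E)=m\lambda$, while also $\Tr(\tilde E)=\bar\kappa^{-1}\lambda=\kappa\lambda$. Hence $m\lambda=\kappa\lambda$, forcing $\kappa=m\in\Z_{>0}\cap\U(1)=\{1\}$, so $\kappa=1$ and $m=1$: thus $\tilde E$ is $\lambda$ times an $\scS$-orthogonal projection onto a one-dimensional subspace. Choosing a unit vector $\xi_0$ spanning that line and replacing $\xi_0$ by $\sqrt{|\lambda|}\,\xi_0$ (with a further phase to absorb the sign of $\lambda$) gives $\tilde E=\xi\otimes\xi^\ast$, i.e. $E=\kappa\,\xi\otimes\xi^\ast=\hcE_\kappa(\xi)$ with $\kappa=1$; uniqueness of $\kappa$ is Lemma \ref{lemma:Z=calE(Sigma)}. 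Alternatively, and more in line with the proposition's proof, one can simply observe that rank-one-ness of $E$ is already visible: $P:=\Tr(E)^{-1}E$ (note $\Tr(E)\neq 0$, since a nilpotent self-adjoint operator on a positive-definite space vanishes) satisfies $P^2=P$, so $\rk(E)=\Tr(P)=1$, and then the last paragraph of the proof of Proposition \ref{prop:algebraichHermitian} applies word for word.

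The main point — and the only place where positive-definiteness is essential — is exactly the step that eliminates the auxiliary endomorphism $A$ from the hypotheses: in the indefinite case $E$ could be a rank-one \emph{nilpotent} (isotropic) endomorphism, for which $\Tr(E)=0$ and $E\circ E=0=\Tr(E)E$ holds trivially while $E\notin\Im(\hcE_\kappa)$ for any $\kappa$, so the condition $E\circ E=\Tr(E)E$ alone is genuinely insufficient there and the extra $A$ is needed. When $\scS$ is positive-definite, self-adjointness upgrades the operator to a real-diagonalizable one, a nonzero self-adjoint operator cannot be nilpotent, and the case distinction $\Tr(E)=0$ versus $\Tr(E)\neq0$ in the proposition collapses to the second alternative. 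I expect the write-up to be short: cite Proposition \ref{prop:algebraichHermitian}(c), verify that positive-definiteness of $\scS$ makes $E$ automatically rank one (equivalently, that $\Tr(E)\neq 0$ and hence $P=\Tr(E)^{-1}E$ is idempotent), and then invoke the final paragraph of that proof; the forward implication is a direct quotation of Lemma \ref{lemma:properties_cE}. The only subtlety to state carefully is that $\kappa$ is forced to be $1$ here — a pleasant rigidity coming from the fact that the rank of a projection is a nonnegative integer — though one may equally leave $\kappa$ general in the statement and let Lemma \ref{lemma:Z=calE(Sigma)} handle uniqueness without computing its value.
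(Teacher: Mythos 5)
Your overall strategy coincides with the paper's proof: the forward implication is Lemma \ref{lemma:properties_cE}, and for the converse the only point to establish is that positive-definiteness forces $\Tr(E)\neq 0$, after which the $\Tr(E)\neq 0$ branch of the proof of Proposition \ref{prop:algebraichHermitian} finishes the argument. Your "alternative" paragraph is exactly the paper's argument: an operator that is self-adjoint up to a phase with respect to a positive-definite $\scS$ is diagonalizable with real eigenvalues, hence cannot be a nonzero nilpotent, so $\Tr(E)=0$ together with $E^2=\Tr(E)E=0$ would force $E=0$; the paper phrases the same point as $\Tr(\bar\kappa E)^2=\sum_i r_i^2$. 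So the proposal is correct in substance and follows the paper's route.

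However, your primary spectral computation contains a phase-bookkeeping error leading to a false side conclusion. With $\tilde E=\bar\kappa E$, the relation $E^2=\Tr(E)E$ transfers to $\tilde E^2=\Tr(\tilde E)\,\tilde E$ (the extra $\bar\kappa$ you insert cancels, since $\Tr(E)=\kappa\,\Tr(\tilde E)$). The eigenvalues of $\tilde E$ then lie in $\{0,\Tr(\tilde E)\}$ and the nonzero one has multiplicity one, so rank one does follow, but there is no identity of the form $m\lambda=\kappa\lambda$ and hence no rigidity forcing $\kappa=1$: for any unit $\xi$ and any $\kappa\in\U(1)$ the endomorphism $E=\kappa\,\xi\otimes\xi^{\ast}$ satisfies both equations in (b) with that $\kappa$, so the claim that $\kappa$ is forced to be $1$ (repeated in your final paragraph as the "only subtlety") must be dropped; at most the sign of the real eigenvalue can be absorbed into the phase, so only $\pm\kappa$ is pinned down. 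Relatedly, your remark about the indefinite case is inaccurate: a rank-one endomorphism satisfying $(\bar\kappa E)^{\dagger}=\bar\kappa E$ is always of the form $c\,\xi\otimes\xi^{\ast}$ and hence lies in $\Im(\hcE_{c/\vert c\vert})$ even when $\xi$ is isotropic; what genuinely fails without positive-definiteness is that $E\circ E=\Tr(E)E$ no longer forces rank at most one, e.g.\ $E=\xi_1\otimes\xi_1^{\ast}+\xi_2\otimes\xi_2^{\ast}$ with $\xi_1,\xi_2$ isotropic and mutually orthogonal is self-adjoint, satisfies $E\circ E=0=\Tr(E)E$, yet has rank two. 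Neither slip affects the validity of your main argument, which is carried by the alternative paragraph.
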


\begin{proof}
By the proof of Proposition \ref{prop:algebraichHermitian}, it is enough to prove that if a non-zero endomorphism $E\in \End(\Sigma)$ satisfies (b), then its trace is non-vanishing. Since $E^2=\Tr(E)E$ it follows that:
\begin{equation*}
(\bar\kappa E)^2=\Tr(\bar\kappa E)\bar\kappa E.
\end{equation*}

\noindent
Equation $(\bar\kappa E)^\dagger = \bar\kappa E$ implies that $\bar\kappa E$ is \emph{diagonalizable} with real eigenvalues $r_i$, with $i = 1 , \hdots , \dim_\C\Sigma$. Hence, taking the trace of the previous equation, we obtain:
\begin{equation*}
\Tr(\bar\kappa E)^2=\Tr((\bar\kappa E)^2)=\sum_{i = 1}^{\dim_\C\Sigma}r_i^2.
\end{equation*}

\noindent
Since the right-hand side is a sum of positive squares, it vanishes if and only if all $r_i$ vanish, that is, if and only if $  E=0$, and thus we conclude.
\end{proof}

% % % % % % % % % % % % % % % % % % % % % % % % % % % % % % % % % % % % % %

\subsection{Complex-bilinear pairings}
\label{subsec:complexbilinearvectors}

% % % % % % % % % % % % % % % % % % % % % % % % % % % % % % % % % % % % % %

Let $\Sigma$ be a complex vector space equipped with a non-degenerate complex-bilinear pairing $\scB\colon\Sigma\times\Sigma\to\C$. We say that $(\Sigma,\scB)$ is a \emph{paired vector space}. We assume $\scB$ to be either symmetric or skew-symmetric and we say that $\scB$, or $(\Sigma,\scB)$, has symmetry type $\sigma\in\Z_2$ if:
\begin{equation*}
\scB(\xi_1 ,\xi_2)=\sigma\scB(\xi_2,\xi_1).
\end{equation*}

\noindent
We denote by $E^t$ the adjoint of an endomorphism $E\in\End(\Sigma)$ with respect to the complex-bilinear pairing $\scB$, which is determined by the condition: 
\begin{equation*}
\scB(E^t\xi_1,\xi_2)=\scB(\xi_1,E\xi_2)\, , \qquad \forall \,\, \xi_1 , \xi_2 \in \Sigma.
\end{equation*}

\noindent
We define the \emph{complex-bilinear square map} $\cE\colon\Sigma\to\End(\Sigma)$ of a paired vector space $(\Sigma,\scB)$ by:
\begin{equation*}
\cE(\xi):= \xi\otimes\xi^{\ast},
\end{equation*}

\noindent
where $\xi^*:=\scB(-,\xi)\in\Sigma^*$. 

\begin{remark}
In contrast with the Hermitian case considered in the previous subsection, any multiplicative factor included in the definition of $\cE$ can be reabsorbed by a \emph{complex} homothety of $\xi$ and therefore is irrelevant for our purposes.
\end{remark}

\noindent
Similarly to the Hermitian case considered in the previous subsection, we have:
\begin{align*}
    \cE(\xi) \circ A \circ \cE(\xi)\chi  &=  \cE(\xi) \big(\scB(\chi,\xi) A\xi\big) = \scB(\chi,\xi)\scB(A\xi,\xi) \xi\\
    & = \Tr\big(\cE(\xi) \circ A \big) \scB(\chi,\xi)\xi=\Tr\big(\cE(\xi) \circ A\big)\cE(\xi) \chi
\end{align*}

\noindent
for every $A\in \End(\Sigma)$ and $\xi, \chi\in\Sigma$. Furthermore:  
\begin{align*}
    \scB(\cE(\xi)^t \eta_1,\eta_2) &= \scB(\eta_1,\cE(\xi)\eta_2)=\scB(\eta_1, \scB(\eta_2,\xi)\xi)= \scB(\eta_2,\xi)\scB(\eta_1,\xi)\\
    & = \sigma\scB(\xi,\eta_2)\scB(\eta_1,\xi)=\scB(\sigma \scB(\eta_1,\xi)\xi,\eta_2)=\scB(\sigma \cE(\xi)\eta_1,\eta_2)
\end{align*}

\noindent
for every $\eta_1, \eta_2\in\Sigma$. This proves the following analog of Lemma \ref{lemma:properties_cE}.
\begin{lemma}
\label{lemma:propertiesbilinearI}
Let $(\Sigma,\scB)$ be a paired vector space of symmetry type $\sigma\in\Z_2$. Then:
\begin{equation*}
\cE (\xi)\circ A \circ \cE (\xi) =\Tr\big(\cE (\xi)\circ A \big)\cE (\xi)\, , \qquad  \cE(\xi)^t = \sigma \cE (\xi)
\end{equation*}

\noindent
for every $A\in \End(\Sigma)$.
\end{lemma}

\noindent
The proof of the following proposition, which we will use to characterize the complex-bilinear square of an irreducible complex spinor, see Definition \ref{def:squares}, is completely analogous to that of Proposition \ref{prop:algebraichHermitian} and is therefore left to the reader.
\begin{prop}
\label{prop:algebraichbilinear}
Let $(\Sigma,\scB)$ be a paired vector space of symmetry type $\sigma\in\Z_2$ and let $E\in \End(\Sigma)$ be a non-zero endomorphism. The following statements are equivalent:   

\begin{enumerate}
\item[\normalfont(a)] There exists an element $\xi\in \Sigma$ such that $E = \cE(\xi) = \xi \otimes \xi^{\ast}$. That is, $E\in \Im (\cE)$.

\item[\normalfont(b)]  The following equations are satisfied:
\begin{equation*}
E \circ A \circ E=\Tr(E\circ A)E\, , \qquad E^t=\sigma E
\end{equation*}

\noindent
for every endomorphism $A\in \End(\Sigma)$.

\item[\normalfont(c)]  The following equations are satisfied:
\begin{equation*}
E\circ E=\Tr(E)E\, , \qquad E \circ A \circ E=\Tr(E\circ A)E\, , \qquad E^t=\sigma E
\end{equation*}

\noindent
for an endomorphism $A\in\End(\Sigma)$ such that $\Tr(E\circ A)\neq 0$.
\end{enumerate}
\end{prop}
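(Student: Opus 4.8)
The plan is to follow, step by step, the structure of the proof of Proposition \ref{prop:algebraichHermitian}, making the obvious simplifications that arise because a complex-bilinear pairing carries no phase ambiguity. The implication $(a) \Rightarrow (b)$ is immediate from Lemma \ref{lemma:propertiesbilinearI}: the first equation is the first identity of that lemma, and $E^t = \sigma E$ is the second. For $(b) \Rightarrow (c)$ one simply invokes the fact that the trace form $(E_1,E_2) \mapsto \Tr(E_1 \circ E_2)$ is a non-degenerate complex-bilinear pairing on $\End(\Sigma)$, so that since $E \neq 0$ there exists $A$ with $\Tr(E \circ A) \neq 0$; specializing the first equation of $(b)$ to $A = \Id$ yields $E \circ E = \Tr(E) E$.

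The substantive direction is $(c) \Rightarrow (a)$, and here I would reproduce the rank-reduction argument verbatim with $\C$ in place of $\R_{>0}$. Assume first $\Tr(E) = 0$. Pick $A$ with $\Tr(E \circ A) \neq 0$ and set $A_\varepsilon := \Id + \frac{\varepsilon}{\Tr(E \circ A)} A$, which is invertible for all $\varepsilon$ in a punctured neighbourhood of $0$ in $\C$; fix such an $\varepsilon \neq 0$. Put $E_\varepsilon := E \circ A_\varepsilon$. Then $\Tr(E_\varepsilon) = \varepsilon$ and, using $E \circ E = \Tr(E) E = 0$ together with $E \circ A \circ E = \Tr(E\circ A) E$, a direct computation gives $E_\varepsilon \circ E_\varepsilon = \varepsilon E_\varepsilon$, so $P := \varepsilon^{-1} E_\varepsilon$ is idempotent with $\Tr(P) = 1$, whence $\rk(E_\varepsilon) = \rk(P) = 1$ and therefore $\rk(E) = 1$ because $A_\varepsilon$ is invertible. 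Writing $E = \xi \otimes \beta$ for a non-zero $\xi \in \Sigma$ and $\beta \in \Sigma^*$, non-degeneracy of $\scB$ gives a unique non-zero $\xi_0$ with $\beta = \scB(-,\xi_0) = \xi_0^*$. The condition $E^t = \sigma E$ then reads $\scB(-,\xi_0)\, \xi = \sigma\, \scB(-,\xi)\, \xi_0$ as maps $\Sigma \to \Sigma$; evaluating on a $\chi$ with $\scB(\chi,\xi) = 1$ gives $\xi_0 = \sigma\, \scB(\chi,\xi_0)\, \xi$, so $\xi_0$ is a scalar multiple of $\xi$ and hence $E = c\, \xi \otimes \xi^*$ for some $c \in \C \setminus \{0\}$. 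Now, unlike the Hermitian case, $c$ is reabsorbed outright: replacing $\xi$ by $\sqrt{c}\,\xi$ (any branch of the square root) yields $E = \cE(\sqrt{c}\,\xi)$, so $E \in \Im(\cE)$. Finally, if $\Tr(E) \neq 0$, set $P := \Tr(E)^{-1} E$; the equation $E \circ E = \Tr(E) E$ gives $P^2 = P$, and one is reduced to the rank-one analysis as before, so again $E = c\,\xi\otimes\xi^*$ with $c \neq 0$ and the same square-root reabsorption concludes.

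I do not anticipate a genuine obstacle: the only place where the Hermitian proof used that $\kappa \in \U(1)$ (and hence that $\bar\kappa/\kappa$ was a phase one had to track) disappears here, since the bilinear square map has no phase parameter and any nonzero scalar is absorbed by a complex homothety of $\xi$, exactly as recorded in the Remark preceding Lemma \ref{lemma:propertiesbilinearI}. The one point that deserves a line of care is the choice of branch of $\sqrt{c}$: it does not matter, because $\xi$ and $-\xi$ have the same image under $\cE$. Since the argument is, mutatis mutandis, identical to that of Proposition \ref{prop:algebraichHermitian}, it is reasonable to leave it to the reader as the authors propose, or to include only the short computation $E_\varepsilon \circ E_\varepsilon = \varepsilon E_\varepsilon$ and the reabsorption of $c$, which are the two steps where the bilinear case visibly diverges from the Hermitian one.
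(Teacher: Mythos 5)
Your proof is correct and is essentially the paper's own argument: the paper explicitly leaves this proposition to the reader as ``completely analogous'' to Proposition \ref{prop:algebraichHermitian}, and your adaptation --- including the key simplification that the nonzero scalar $c$ is reabsorbed by replacing $\xi$ with $\sqrt{c}\,\xi$, which is exactly where the bilinear case sheds the phase bookkeeping of the Hermitian one --- is precisely that adaptation. One immaterial slip: with the paper's convention the transpose of $\xi\otimes\xi_0^{\ast}$ is $\sigma\,\xi_0\otimes\xi^{\ast}$, so the condition $E^t=\sigma E$ reads $\scB(-,\xi_0)\,\xi=\scB(-,\xi)\,\xi_0$ without the extra $\sigma$ you wrote; either version forces $\xi_0$ proportional to $\xi$, so the conclusion is unaffected.
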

 
% % % % % % % % % % % % % % % % % % % % % % % % % % % % % % % % % % % % % %

\subsection{The square of a conjugate element}
\label{sec:conjugatevector}

% % % % % % % % % % % % % % % % % % % % % % % % % % % % % % % % % % % % % %

Let $\Sigma$ be a complex vector space equipped with both a Hermitian pairing $\scS$ and a non-degenerate complex-bilinear pairing $\scB$. We will say that $\scS$ and $\scB$ are \emph{compatible} if for every $\eta_1 , \eta_2 \in \Sigma$ the following relation holds:
\begin{equation*}
\scS(\eta_1 , \eta_2) = \scB(\eta_1 , K\eta_2) 
\end{equation*}

\noindent
for a unique complex anti-linear map $K\colon \Sigma \to \Sigma$ satisfying $K^2 = \epsilon\, \Id$ with either $\epsilon = +1$ or $\epsilon = -1$. This set-up will occur in later sections in our study of the square of an irreducible complex spinor. Note that if $\scS$ and $\scB$ are compatible, then it follows that:
\begin{equation*}
\scS(K\eta_1 , K\eta_2) = \epsilon \sigma  \overline{\scS(\eta_1 , \eta_2)}\, ,\qquad  \eta_1 , \eta_2 \in \Sigma,
\end{equation*}

\noindent
where $\sigma\in \mathbb{Z}_2$ denotes the symmetry type of $\scB$. 
\begin{definition}
\label{def:conjugatevector}
Let $\Sigma$ be a complex vector space equipped with compatible pairings $\scS$ and $\scB$. The \emph{conjugate} of $\xi \in \Sigma$ is $K(\xi) \in \Sigma$. 
\end{definition}

\noindent
With these provisos in mind, it is natural to wonder about the algebraic characterization of $\hcE_{\kappa}(K\xi)$ or $\cE(K\xi)$ as elements in $\End(\Sigma)$ assuming that we know the algebraic characterization of $\hcE_{\kappa}(\xi) \in \End(\Sigma)$ or $\cE(\xi) \in \End(\Sigma)$. For any endomorphism $A\in \End(\Sigma)$ and any non-zero vector $\xi\in \Sigma \setminus \{ 0 \}$ we obtain the following identities:
\begin{align}
\hcE_{\kappa}(\xi) \circ A\circ \hcE_{\kappa}(K\xi) &=   \kappa^2 \overline{\scB(A^{\dagger}(\xi),\xi)}\, \cE(\xi), \label{eq:conjugatemix1}\\
\cE(\xi) \circ A\circ \cE(K\xi) &= \bar{\kappa} \sigma  \scS( A^t(\xi) , \xi) \, \hcE_\kappa(\xi). \label{eq:conjugatemix2}
\end{align}

\noindent
This computation motivates the following result. 

\begin{prop}
\label{prop:conjugatevectorsquaresesqui}
Let $\xi \in \Sigma\setminus \{ 0\}$ and $T\in \Im(\hcE_{\kappa})\subset \End(\Sigma)$. We have $T = \hcE_{\kappa}(K \xi)$ if and only if:
\begin{equation}
\label{eq:conjugatevectorsquaresesqui}
\hcE_{\kappa}(\xi) \circ A\circ T = \kappa^2 \overline{\Tr(\cE(\xi) \circ A^{\dagger})}\, \cE(\xi)
\end{equation}

\noindent
for an endomorphism $A\in \End(\Sigma)$ satisfying $\Tr(\cE(\xi) \circ A^{\dagger}) \neq 0$.
\end{prop}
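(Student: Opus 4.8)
The plan is to prove both implications by exploiting the identity \eqref{eq:conjugatemix1}, which already tells us that $\hcE_{\kappa}(\xi) \circ A \circ \hcE_{\kappa}(K\xi) = \kappa^2 \overline{\scB(A^{\dagger}(\xi),\xi)}\,\cE(\xi)$, together with the fact that $\Tr(\cE(\xi)\circ A^\dagger) = \scB(A^\dagger(\xi),\xi)$ (which is the complex-bilinear analog of the trace formula used in the proof of Lemma~\ref{lemma:properties_cE}). The forward direction is then essentially immediate: if $T = \hcE_{\kappa}(K\xi)$, substitute into \eqref{eq:conjugatemix1} to get \eqref{eq:conjugatevectorsquaresesqui}; the side condition $\Tr(\cE(\xi)\circ A^\dagger)\neq 0$ is exactly what is needed to make the right-hand side non-zero (and it can always be arranged, since $\xi\neq 0$ and $\scB$ is non-degenerate, so some $A$ works — though we only need that such an $A$ is given).

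For the converse, I would start from the hypothesis that $T\in\Im(\hcE_{\kappa})$, so by Proposition~\ref{prop:algebraichHermitian} we may write $T = \hcE_{\kappa}(\zeta) = \kappa\,\zeta\otimes\zeta^{\ast}$ for some non-zero $\zeta\in\Sigma$ (note the phase is forced to be the same $\kappa$ since $T\in\Im(\hcE_\kappa)$). The goal is to show $\zeta = c\,K\xi$ for a unit complex number $c\in\U(1)$ — which by the remark that $\hcE_\kappa$ is invariant under $\U(1)$-rescaling of its argument would give $T = \hcE_{\kappa}(K\xi)$. Plugging $T=\hcE_\kappa(\zeta)$ into the left-hand side of \eqref{eq:conjugatevectorsquaresesqui} and evaluating on an arbitrary $\chi\in\Sigma$, the left-hand side becomes $\kappa^2\,\scS(\chi,\xi)\,\scS(A(\zeta),\xi)\,\xi$ up to keeping track of the pairing conventions, while the right-hand side is $\kappa^2\,\overline{\Tr(\cE(\xi)\circ A^\dagger)}\,\scB(\chi,\xi)\,\xi$. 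Since $\xi\neq 0$, choosing $\chi$ with $\scS(\chi,\xi)\neq 0$ and $\scB(\chi,\xi)\neq 0$ and comparing scalars yields a relation of the form $\scS(A(\zeta),\xi) = \lambda\,\overline{\Tr(\cE(\xi)\circ A^\dagger)}$ for an explicit non-zero constant $\lambda$ depending on $\chi$; one then reinterprets $\overline{\Tr(\cE(\xi)\circ A^\dagger)} = \overline{\scB(A^\dagger(\xi),\xi)}$ using the compatibility relation $\scS(\eta_1,\eta_2)=\scB(\eta_1,K\eta_2)$ to rewrite it in terms of $\scS$ and $K$, arriving at an identity $\scS(A(\zeta),\xi) = \mu\,\scS(\xi, A(K\xi))$ — actually $\scS(A(\zeta),\xi) = \mu\,\overline{\scS(A(K\xi),\xi)}$ for a scalar $\mu$ — holding for the given $A$. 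The remaining work is to upgrade this from a single $A$ to enough $A$'s to conclude $\zeta$ is proportional to $K\xi$: here I would use the already-established equation $T = \hcE_\kappa(\zeta)$ in conjunction with \eqref{eq:conjugatemix1} applied to $K\xi$, i.e. compare $\hcE_\kappa(\xi)\circ A\circ\hcE_\kappa(\zeta)$ against $\hcE_\kappa(\xi)\circ A\circ\hcE_\kappa(K\xi)$; both are scalar multiples of $\cE(\xi)$, and the hypothesis forces the scalars to agree, which after varying $A$ over a spanning set of $\End(\Sigma)$ pins down $\zeta\otimes\zeta^\ast$ as a multiple of $(K\xi)\otimes(K\xi)^\ast$, hence $\zeta\in\U(1)\cdot K\xi$ by the rank-one rigidity argument already used in Lemma~\ref{lemma:Z=calE(Sigma)}.

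The main obstacle I anticipate is the bookkeeping of the three pairings $\scS$, $\scB$, and the anti-linear intertwiner $K$ — in particular making sure the complex-conjugation and the $\sigma$, $\epsilon$ signs from $\scS(K\eta_1,K\eta_2)=\epsilon\sigma\,\overline{\scS(\eta_1,\eta_2)}$ are tracked correctly, since a sign or conjugation error would be invisible in the rank-one structure but fatal for identifying the precise element $K\xi$ (as opposed to, say, $\bar\kappa^2 K\xi$ or some conjugate thereof). The conceptual content, by contrast, is light: it is the same "evaluate on a test vector, compare rank-one tensors, use non-degeneracy" mechanism that drives Lemmas~\ref{lemma:properties_cE} and~\ref{lemma:Z=calE(Sigma)} and Proposition~\ref{prop:algebraichHermitian}, now run once more with $K$ inserted. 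I would therefore organize the proof so that the algebraic identity \eqref{eq:conjugatemix1} does all the heavy lifting and the rest is a short uniqueness argument, rather than re-deriving the rank-one characterization from scratch.
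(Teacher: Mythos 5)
Your forward direction coincides with the paper's: it is \eqref{eq:conjugatemix1} together with the identification $\Tr(\cE(\xi)\circ A^{\dagger})=\scB(A^{\dagger}(\xi),\xi)$. The converse, however, contains a genuine gap, and it originates in your evaluation step. Writing $T=\hcE_{\kappa}(\zeta)$ and applying the left-hand side of \eqref{eq:conjugatevectorsquaresesqui} to a test vector $\chi$ gives $\kappa^{2}\,\scS(\chi,\zeta)\,\scS(A(\zeta),\xi)\,\xi$, \emph{not} $\kappa^{2}\,\scS(\chi,\xi)\,\scS(A(\zeta),\xi)\,\xi$: the $\chi$-dependence goes through $\zeta$, and that dependence is exactly the information which identifies $\zeta$. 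With the correct expression, the hypothesis becomes the functional identity $\scS(A(\zeta),\xi)\,\scS(-,\zeta)=\overline{\scB(A^{\dagger}(\xi),\xi)}\,\scB(-,\xi)$; since $\scS(-,\zeta)=\scB(-,K\zeta)$ and $\scB$ is non-degenerate, one reads off at once that $K\zeta$ is a non-zero multiple of $\xi$, i.e.\ $\zeta=\lambda\,K\xi$ with $\lambda\in\C\setminus\{0\}$, and substituting back (using $K^{2}=\epsilon\,\Id$ and $\scS(A^{\dagger}\xi,K\xi)=\epsilon\,\scB(A^{\dagger}\xi,\xi)$) forces $\vert\lambda\vert=1$, hence $T=\hcE_{\kappa}(K\xi)$. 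This is the paper's argument, and it uses only the single endomorphism $A$ supplied by the hypothesis.

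Your slip matters because the repair you propose is not available: you suggest upgrading a scalar relation "from a single $A$ to enough $A$'s" by varying $A$ over a spanning set of $\End(\Sigma)$, but the proposition assumes \eqref{eq:conjugatevectorsquaresesqui} for \emph{one} endomorphism $A$ with $\Tr(\cE(\xi)\circ A^{\dagger})\neq 0$, not for all $A$, so that comparison is not an argument you are entitled to make. Likewise, the intermediate claim that $\hcE_{\kappa}(\xi)\circ A\circ\hcE_{\kappa}(\zeta)$ is automatically a scalar multiple of $\cE(\xi)$ is false for general $\zeta$: that operator equals $\kappa^{2}\,\scS(A\zeta,\xi)\,\xi\otimes\scB(-,K\zeta)$, which is proportional to $\cE(\xi)=\xi\otimes\scB(-,\xi)$ only when $K\zeta$ is proportional to $\xi$ --- precisely the conclusion to be proved, and precisely what the hypothesis encodes. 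Once the $\chi$-dependence is tracked correctly, the single-$A$ argument closes the proof and no spanning-set step, nor any comparison with \eqref{eq:conjugatemix1} applied to $K\xi$, is needed.
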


\begin{proof}
The \emph{only if} direction follows from Equation \eqref{eq:conjugatemix1} together with the non-degeneracy of the trace operator. For the converse, we set $T = \hcE_{\kappa}(\eta)$ for $\eta \in \Sigma\setminus \{ 0\}$ and plug it into Equation \eqref{eq:conjugatevectorsquaresesqui}, obtaining:
\begin{equation*}
\hcE_{\kappa}(\xi) \circ A\circ T = \kappa^2 \scS(A(\eta) , \xi) \, \scS( - , \eta) \xi =  \kappa^2 \overline{\scB(A^{\dagger}(\xi),\xi)}\, \scB( - , \xi) \xi,
\end{equation*}

\noindent
which implies:
\begin{equation*}
\overline{\scS(A^{\dagger}(\xi) , \eta)} \, \scS( - , \eta)   = \overline{\scB(A^{\dagger}(\xi),\xi)}\, \scB( - , \xi) 
\end{equation*}

\noindent
or, equivalently:
\begin{equation*}
\overline{\scS(A^{\dagger}(\xi) , \eta)} \, K(\eta)  = \overline{\scB(A^{\dagger}(\xi),\xi)}\, \xi.
\end{equation*}

\noindent
Since $\scB(A^{\dagger}(\xi),\xi) \neq 0$ by assumption, it can be seen that $\eta\in  \Sigma\setminus \{ 0\}$ is a solution of the previous equation if and only if $\eta \in  \{ \mu\, K(\xi) \mid \mu \in \U(1) \}$ and hence we conclude.
\end{proof}

\noindent
Proposition \ref{prop:conjugatevectorsquaresesqui} determines the Hermitian square $\hcE(K\xi)$ of the conjugate of $\xi\in \Sigma$ in terms of the Hermitian $\hcE(\xi)$ and complex-bilinear $\cE(\xi)$ squares of the latter. We proceed similarly to determine the complex-bilinear square $\cE(K\xi)$ of a conjugate element in terms of its Hermitian square $\hcE(\xi)$ and its complex-bilinear square $\cE(\xi)$.

\begin{prop}
\label{prop:conjugatevectorsquare}
Let $\xi \in \Sigma\setminus \{ 0\}$ and $T\in \Im(\cE)$. We have $T = \cE(K\xi)$ if and only if:
\begin{equation*}
\cE(\xi) \circ A\circ T = \sigma \bar{\kappa}^{2}\Tr(\hcE_\kappa(\xi) \circ A^{t}) \, \hcE_\kappa(\xi) 
\end{equation*}

\noindent
for an endomorphism $A\in \End(\Sigma)$ satisfying $\Tr(\hcE_\kappa(\xi) \circ A^{t}) \neq 0$.
\end{prop}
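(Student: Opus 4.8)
The plan is to follow the proof of Proposition~\ref{prop:conjugatevectorsquaresesqui} essentially verbatim, with Equation~\eqref{eq:conjugatemix2} supplying the computational input. First I would put that identity into the $\Tr$-form that appears in the statement: since $\hcE_\kappa(\xi)=\kappa\,\xi\otimes\scS(-,\xi)$, one has $\Tr(\hcE_\kappa(\xi)\circ A^{t})=\kappa\,\scS(A^{t}\xi,\xi)$, so Equation~\eqref{eq:conjugatemix2} becomes $\cE(\xi)\circ A\circ\cE(K\xi)=\sigma\,\bar\kappa^{2}\,\Tr(\hcE_\kappa(\xi)\circ A^{t})\,\hcE_\kappa(\xi)$ for \emph{every} $A\in\End(\Sigma)$ (and one checks in passing that the right-hand side does not actually depend on the auxiliary phase $\kappa$). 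This settles the \emph{only if} direction at once: if $T=\cE(K\xi)$ the displayed equation holds for all $A$, and at least one such $A$ satisfies $\Tr(\hcE_\kappa(\xi)\circ A^{t})\neq 0$, because transposition is a linear involution of $\End(\Sigma)$, the trace is a non-degenerate bilinear form on $\End(\Sigma)$, and $\hcE_\kappa(\xi)\neq 0$ since $\xi\neq 0$.

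For the \emph{if} direction I would reason as in Proposition~\ref{prop:conjugatevectorsquaresesqui}. Because the right-hand side of the displayed equation is non-zero, $T\neq 0$, hence $T=\cE(\eta)$ for some $\eta\in\Sigma\setminus\{0\}$. Substituting $T=\cE(\eta)$ and evaluating both sides on an arbitrary $\chi\in\Sigma$, using $\cE(\eta)\chi=\scB(\chi,\eta)\eta$, $\hcE_\kappa(\xi)\chi=\kappa\scS(\chi,\xi)\xi$ and $\Tr(\hcE_\kappa(\xi)\circ A^{t})=\kappa\scS(A^{t}\xi,\xi)$, together with the symmetry rule $\scB(A\eta,\xi)=\sigma\scB(A^{t}\xi,\eta)$ and the compatibility identity $\scS(-,\xi)=\scB(-,K\xi)$, the equation reduces, after reading off the coefficient of the common vector $\xi$ and cancelling $\sigma$, to the equality of linear functionals $\scB(A^{t}\xi,\eta)\,\scB(-,\eta)=\scB(A^{t}\xi,K\xi)\,\scB(-,K\xi)$. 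Non-degeneracy of $\scB$ upgrades this to the vector equation $\scB(A^{t}\xi,\eta)\,\eta=\scB(A^{t}\xi,K\xi)\,K\xi$, whose coefficient $\scB(A^{t}\xi,K\xi)=\scS(A^{t}\xi,\xi)=\bar\kappa\,\Tr(\hcE_\kappa(\xi)\circ A^{t})$ is non-zero by hypothesis. Hence $\eta=c\,K\xi$ for some $c\in\C\setminus\{0\}$, and substituting this back forces $c^{2}=1$, i.e.\ $\eta=\pm K\xi$; since $\cE(-K\xi)=\cE(K\xi)$ we conclude $T=\cE(K\xi)$.

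Since the whole argument runs parallel to the sesquilinear case already treated, I do not anticipate a real obstacle; the work is entirely bookkeeping. The points that demand care are keeping the two adjoints apart ($\dagger$ for $\scS$ versus $t$ for $\scB$), invoking the anti-linearity of $K$ and the identity $\scS(-,\xi)=\scB(-,K\xi)$ in exactly the right slots, and --- the single mildly delicate step --- noticing that the \emph{exact} vector equation, rather than merely its projectivization, forces the scalar $c$ to be $\pm1$ and not just non-zero, after which the identity $\cE(-K\xi)=\cE(K\xi)$ closes the proof.
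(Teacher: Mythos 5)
Your proposal is correct and follows essentially the same route as the paper's proof: the only-if direction from Equation \eqref{eq:conjugatemix2} plus non-degeneracy of the trace, and the converse by writing $T = \cE(\eta)$, reducing via $\scS(-,\xi)=\scB(-,K\xi)$ and non-degeneracy of $\scB$ to the vector equation $\scB(A^{t}\xi,\eta)\,\eta=\scB(A^{t}\xi,K\xi)\,K\xi$, whose non-vanishing coefficient forces $\eta=\pm K\xi$ and hence $T=\cE(K\xi)$. Your spelled-out verification that the scalar must satisfy $c^{2}=1$ is just a more explicit rendering of the paper's statement that the only solutions are $\pm K(\xi)$.
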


\begin{proof}
The \emph{only if} direction follows from Equation \eqref{eq:conjugatemix2} together with the non-degeneracy of the trace operator. For the converse, we set $T = \cE(\eta)$ for $\eta \in \Sigma\setminus \{ 0\}$ and we plug it into Equation \eqref{eq:conjugatemix2}, obtaining:
\begin{equation*}
\cE(\xi) \circ A\circ T = \scB(A(\eta) , \xi) \scB(- , \eta) \xi  =  \sigma  \scS( A^t(\xi) , \xi) \, \scS( - , \xi) \xi,
\end{equation*}

\noindent
which implies:
\begin{equation*}
\scB(A(\eta) , \xi) \eta  = \sigma \scS( A^t(\xi) , \xi)  K(\xi) = \scB(A(K(\xi)) , \xi) K(\xi).
\end{equation*}

\noindent
Since $\scB(A(K(\xi)) , \xi) \neq 0$ by assumption, the only solutions to this equation are  $K(\xi)$ and $ - K(\xi)$ and thus we conclude.  
\end{proof}

\noindent
We will use these propositions in later sections to characterize the \emph{square} of a conjugate spinor in terms of its complex-bilinear and Hermitian squares.

% % % % % % % % % % % % % % % % % % % % % % % % % % % % % % % % % % % % % %

\subsection{Compatibility of square maps}

% % % % % % % % % % % % % % % % % % % % % % % % % % % % % % % % % % % % % %

We have introduced two types of \emph{square maps}, namely:
\begin{equation*}
\hcE_{\kappa}\colon \Sigma \to \End(\Sigma), \qquad \cE\colon \Sigma \to \End(\Sigma),
\end{equation*}

\noindent
respectively associated with a choice of Hermitian $\scS$ and complex-bilinear $\scB$ non-degenerate pairings on $\Sigma$. Propositions \ref{prop:algebraichHermitian} and \ref{prop:algebraichbilinear} guarantee that a given endomorphism belongs to the image of $\hcE_{\kappa}$ or $\cE$, respectively. However, even if an endomorphism satisfies the conditions in both propositions, there is no guarantee that they are the image of the \emph{same} element in $\Sigma$, an issue that we examine in this subsection. We will assume that $\scS$ and $\scB$ are compatible in the sense introduced in the previous subsection. For every $\xi\in\Sigma$ we have:
\begin{equation*}
\hcE_\kappa^{-1}(\hcE_\kappa(\xi)) = \{ \mu \xi \mid \mu\in \U(1) \}\, , \qquad \cE^{-1}(\cE(\xi))= \{ \xi , - \xi\}
\end{equation*}

\noindent
and therefore an  element in $\Im(\hcE_\kappa) \subset \End(\Sigma)$ determines a vector in $\Sigma$ modulo a unitary phase, while an element in $\Im(\cE)$ determines an element in $\Sigma$ modulo a sign. The Hermitian and complex-bilinear squares of the \emph{same} element $\xi \in \Sigma$ satisfy natural compatibility identities. Indeed, a direct computation shows that: 
\begin{equation}
\label{eq:comp_sq_maps}
\hcE_\kappa(\xi)\circ A \circ \cE(\xi)=\Tr (\hcE_\kappa(\xi) \circ A )\cE(\xi)\, , \qquad \cE(\xi)\circ A \circ \hcE_\kappa(\xi)=\Tr (\cE(\xi)\circ A )\hcE_\kappa(\xi) 
\end{equation}

\noindent
as well as:
\begin{eqnarray}
\label{eq:comp_sq_mapsK}
\hcE_\kappa(\xi)\circ A \circ \hcE_\kappa(K\xi)= \kappa^2 \overline{\Tr\big(\cE(\xi) \circ A^{\dagger}\big)}\cE(\xi)\, , \qquad \cE(\xi) \circ A\circ \cE(K(\xi)) = \sigma \bar{\kappa}^{2} \Tr(\hcE_{\kappa}(\xi) \circ A^{t}) \, \hcE_{\kappa}(\xi)
\end{eqnarray}

\noindent
for every endomorphism $A\in\End(\Sigma)$. Our next goal is to determine the Hermitian square of an element $\xi\in \Sigma$, assuming we know the complex-bilinear squares of $\xi$ and its conjugate $K(\xi)$.

\begin{prop}
\label{prop:endoquadraticbi}
Let $\xi \in \Sigma \setminus \{ 0\}$. An endomorphism $\widehat{E}\in \Im(\hcE_\kappa)$ is the Hermitian square of $\xi$ if and only if:
\begin{equation} 
\label{eq:comp_sq_mapsII}
\widehat{E} \circ A \circ \cE(\xi)  =\Tr (\widehat{E} \circ A) \cE(\xi) \, , \qquad   \cE(\xi) \circ A\circ \cE(K(\xi)) = \sigma \bar{\kappa}^{2}\Tr(\widehat{E} \circ A^{t}) \, \widehat{E}
\end{equation}

\noindent
for an endomorphism $A\in \End(\Sigma)$ such that $\Tr (\widehat{E} \circ A ) \neq 0$.
\end{prop}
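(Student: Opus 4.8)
The plan is to follow the same two-direction strategy used in Propositions \ref{prop:conjugatevectorsquaresesqui} and \ref{prop:conjugatevectorsquare}, exploiting the compatibility identities \eqref{eq:comp_sq_maps} and \eqref{eq:comp_sq_mapsK}. For the \emph{only if} direction, suppose $\widehat{E} = \hcE_\kappa(\xi)$. The first equation in \eqref{eq:comp_sq_mapsII} is then exactly the first identity in \eqref{eq:comp_sq_maps}, and the second equation is exactly the second identity in \eqref{eq:comp_sq_mapsK}; since $\cE(\xi)\circ A\circ\cE(K\xi) = \sigma\bar\kappa^2\Tr(\hcE_\kappa(\xi)\circ A^t)\hcE_\kappa(\xi)$ holds for all $A$, in particular it holds for any $A$ with $\Tr(\widehat{E}\circ A)\neq 0$, so such an $A$ exists by non-degeneracy of the trace pairing on $\End(\Sigma)$, and the two displayed equations follow.

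For the \emph{if} direction, I would write $\widehat{E} = \hcE_\kappa(\eta)$ for some non-zero $\eta\in\Sigma$ — this is legitimate because we assume $\widehat{E}\in\Im(\hcE_\kappa)$ — and show that the two equations force $\eta\in\{\mu\xi\mid\mu\in\U(1)\}$, which is precisely the statement that $\widehat{E}=\hcE_\kappa(\xi)$. The first equation, $\hcE_\kappa(\eta)\circ A\circ\cE(\xi) = \Tr(\hcE_\kappa(\eta)\circ A)\cE(\xi)$, expanded via the rank-one form $\cE(\xi) = \xi\otimes\xi^*$ and $\hcE_\kappa(\eta) = \kappa\,\eta\otimes\eta^*$, becomes an equality of rank-one operators $\kappa\scB(A\xi,\eta)\,\scS(-,\eta)\,\xi = \kappa\scS(A\xi,\eta)\,\scB(-,\xi)\,\xi$ (using $\Tr(\hcE_\kappa(\eta)\circ A) = \kappa\scS(A\xi,\eta)$ after evaluating, or rather reading off the trace on the appropriate vector). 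The upshot is that the functionals $\scS(-,\eta)$ and $\scB(-,\xi)$ are proportional; since $\scS(-,\eta) = \scB(-,K\eta)$ by compatibility, this yields $K\eta \parallel \xi$, hence $\eta \parallel K\xi$ (using $K^2 = \epsilon\,\Id$). This already pins down $\eta$ up to a nonzero complex scalar; writing $\eta = \lambda K\xi$ and substituting into the second equation, $\cE(\xi)\circ A\circ\cE(K\xi) = \sigma\bar\kappa^2\Tr(\hcE_\kappa(\eta)\circ A^t)\hcE_\kappa(\eta)$, together with the known identity \eqref{eq:comp_sq_mapsK} for the genuine conjugate (so the left side equals $\sigma\bar\kappa^2\Tr(\hcE_\kappa(\xi)\circ A^t)\hcE_\kappa(\xi)$), I would compare both sides as rank-one operators to extract $|\lambda|^2 = 1$, i.e. $\lambda\in\U(1)$, exactly as in the final step of Proposition \ref{prop:conjugatevectorsquaresesqui}. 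The non-degeneracy hypothesis $\Tr(\widehat{E}\circ A)\neq 0$ guarantees the scalar coefficients appearing are nonzero so the comparison is not vacuous.

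The main obstacle, such as it is, will be bookkeeping the conjugations and the phase $\kappa$ carefully: because $\widehat{E}$ is only assumed to lie in $\Im(\hcE_\kappa)$ for the \emph{given} $\kappa$, I must make sure the parametrization $\widehat{E} = \hcE_\kappa(\eta)$ uses the same $\kappa$ throughout and that the anti-linearity of $K$ and the sesquilinearity of $\scS$ are tracked correctly when converting between $\scS$-functionals and $\scB$-functionals. There is no genuinely new idea needed beyond those already deployed in the preceding three propositions; the proof is essentially a combination of the argument of Proposition \ref{prop:conjugatevectorsquaresesqui} (to nail down $\eta$ up to a complex scalar from the mixed identity) and the phase-fixing trick used there (to cut the scalar down to $\U(1)$). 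For this reason I would expect the authors to state that the proof is analogous to the previous ones and leave the detailed verification to the reader, or give only the short chain of identities above.
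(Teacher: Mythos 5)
Your overall architecture --- settle the \emph{only if} direction via the first identity in \eqref{eq:comp_sq_maps} and the second in \eqref{eq:comp_sq_mapsK}, then parametrize $\widehat{E}=\hcE_\kappa(\eta)$ and use the two equations in \eqref{eq:comp_sq_mapsII} to pin $\eta$ down to a unit multiple of $\xi$ --- is exactly the paper's, and your \emph{only if} direction is correct. However, the decisive step of your converse is expanded incorrectly and lands on the wrong conclusion. Computing properly, for every $\chi\in\Sigma$ one has
\[
\bigl(\hcE_\kappa(\eta)\circ A\circ \cE(\xi)\bigr)(\chi)=\kappa\,\scB(\chi,\xi)\,\scS(A\xi,\eta)\,\eta,
\qquad
\Tr\bigl(\hcE_\kappa(\eta)\circ A\bigr)\,\cE(\xi)(\chi)=\kappa\,\scS(A\eta,\eta)\,\scB(\chi,\xi)\,\xi,
\]
so both sides are rank-one operators with the \emph{same} functional $\scB(-,\xi)$, and the first equation in \eqref{eq:comp_sq_mapsII} is equivalent to $\scS(A\xi,\eta)\,\eta=\scS(A\eta,\eta)\,\xi$. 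Since $\Tr(\widehat{E}\circ A)=\kappa\,\scS(A\eta,\eta)\neq 0$ (note your identity $\Tr(\hcE_\kappa(\eta)\circ A)=\kappa\scS(A\xi,\eta)$ is wrong), this forces $\eta=c\,\xi$ for some $c\in\C\setminus\{0\}$. Your expansion instead produces a proportionality between the functionals $\scS(-,\eta)$ and $\scB(-,\xi)$, from which you conclude $\eta\parallel K\xi$. That is false: substituting $\eta=\lambda K\xi$ back into the first equation would require $\scS(A\xi,K\xi)\,K\xi=\scS(AK\xi,K\xi)\,\xi$, which fails for generic $A$ unless $K\xi$ happens to be proportional to $\xi$. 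Worse, even if your subsequent phase-fixing step went through, it would identify $\widehat{E}$ with the Hermitian square of the \emph{conjugate} spinor $K\xi$ rather than of $\xi$, which is not the statement of the proposition; so as written the converse does not prove what is claimed.

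The repair is short and is precisely the paper's argument: from the first equation conclude $\eta=c\xi$; then substitute into the second equation, whose left-hand side equals $\sigma\bar{\kappa}^{2}\Tr(\hcE_\kappa(\xi)\circ A^{t})\,\hcE_\kappa(\xi)$ by \eqref{eq:comp_sq_mapsK}, while the right-hand side becomes $\sigma\bar{\kappa}^{2}\Tr(\hcE_\kappa(c\xi)\circ A^{t})\,\hcE_\kappa(c\xi)$ with $\hcE_\kappa(c\xi)=\abs{c}^{2}\hcE_\kappa(\xi)$; comparing the two yields $\abs{c}^{2}=1$, hence $c\in\U(1)$ and $\widehat{E}=\hcE_\kappa(c\xi)=\hcE_\kappa(\xi)$. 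So keep your strategy, but redo the rank-one expansion: the first equation compares the \emph{vectors} $\eta$ and $\xi$ (same $\scB$-functional on both sides), not the functionals $\scS(-,\eta)$ and $\scB(-,\xi)$, and no appeal to $K$ or to the compatibility $\scS(-,\eta)=\scB(-,K\eta)$ is needed at that stage.
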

 
\begin{proof}
The only if direction follows from the first equation in \eqref{eq:comp_sq_maps} and the second equation in \eqref{eq:comp_sq_mapsK} together with the non-degeneracy of the trace operator. For the converse, suppose that we set $\widehat{E} = \hcE_\kappa(\eta)$ for a non-zero element $\eta \in \Sigma\setminus\{0\}$. Plugging this expression into the first equation in \eqref{eq:comp_sq_mapsII} we obtain:
\begin{equation*}
\scS(A \eta , \eta)\xi = \scS(A \xi , \eta) \eta.
\end{equation*}

\noindent
Since $\Tr (\widehat{E} \circ A ) = \kappa\scS(A\eta,\eta) \neq 0$, we conclude that $\eta = c \xi$ for a non-vanishing complex number $c\in\C\setminus\{0\}$. Substituting $\eta = c \xi$  into the second equation in \eqref{eq:comp_sq_mapsII} we obtain $\abs{c}^2 = 1$ and thus $c \in \U(1)$ is of unit norm. Since $\hcE_\kappa(\xi) = \hcE_\kappa(\mu\xi)$ for every $\mu\in\U(1)$, we obtain that $\hcE_\kappa(\xi) = \hcE_\kappa(\eta)$ and thus we conclude. 
\end{proof}

\noindent
The approach of Proposition \ref{prop:endoquadraticbi} cannot a priori be applied to determine the complex-bilinear square of an element $\xi\in\Sigma$ from the Hermitian squares of $\xi$ and its conjugate $K(\xi)$. This is because the Hermitian square of an element in $\Sigma$ only determines it modulo a unitary number rather than a sign, as it happens with the complex-bilinear square of an element in $\Sigma$.

% % % % % % % % % % % % % % % % % % % % % % % % % % % % % % % % % % % % % %

\subsection{Algebraically constrained squares}
\label{subsec:constrainedsquares}

% % % % % % % % % % % % % % % % % % % % % % % % % % % % % % % % % % % % % %

With a view towards understanding \emph{constrained} differential spinors and the spinorial notion of instanton that we will consider in later sections, we are interested in studying the kernel of a fixed endomorphism of $\Sigma$ in terms of the squares of the elements in $\Sigma$, as stated in the following result. 

\begin{lemma}
\label{lemma:constrainedvector}
Let $Q\in \End(\Sigma)$. Then, $\eta\in \Sigma$ satisfies $Q(\eta) = 0$ if and only if $Q\circ \cE(\eta) = 0$, if and only if $Q\circ \hcE_{\kappa}(\eta) = 0$.
\end{lemma}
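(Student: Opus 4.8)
The plan is to prove the two equivalences $Q(\eta)=0 \iff Q\circ\cE(\eta)=0$ and $Q(\eta)=0 \iff Q\circ\hcE_\kappa(\eta)=0$ by exploiting the fact that $\cE(\eta)$ and $\hcE_\kappa(\eta)$ are rank-one endomorphisms whose image is exactly the line $\C\eta$ spanned by $\eta$. The key structural observation is that for any endomorphism $F$ of $\Sigma$ whose image is $\C\eta$, the composition $Q\circ F$ vanishes if and only if $Q$ annihilates the image of $F$, i.e.\ if and only if $Q(\eta)=0$. So the argument reduces to the two elementary facts: (i) $Q\circ\cE(\eta)=0$ precisely when $\cE(\eta)\chi\in\ker Q$ for all $\chi\in\Sigma$, and $\cE(\eta)\chi = \scB(\chi,\eta)\eta$ ranges over all of $\C\eta$ as $\chi$ varies (using non-degeneracy of $\scB$ to pick $\chi$ with $\scB(\chi,\eta)\neq 0$, provided $\eta\neq 0$); (ii) the analogous statement for $\hcE_\kappa(\eta)\chi = \kappa\,\scS(\chi,\eta)\eta$, using non-degeneracy of $\scS$ and $\kappa\in\U(1)$ so that $\kappa\neq 0$.

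Concretely, I would proceed as follows. First dispatch the trivial case $\eta=0$: then $\cE(\eta)=0$ and $\hcE_\kappa(\eta)=0$, and $Q(\eta)=0$ automatically, so all three conditions hold simultaneously. Now assume $\eta\neq 0$. For the forward direction, if $Q(\eta)=0$, then for every $\chi\in\Sigma$ we have $Q(\cE(\eta)\chi) = Q(\scB(\chi,\eta)\eta) = \scB(\chi,\eta)Q(\eta)=0$, hence $Q\circ\cE(\eta)=0$; the same computation with $\scB$ replaced by $\kappa\,\scS$ gives $Q\circ\hcE_\kappa(\eta)=0$. For the converse, suppose $Q\circ\cE(\eta)=0$. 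Since $\scB$ is non-degenerate and $\eta\neq 0$, there is $\chi_0\in\Sigma$ with $\scB(\chi_0,\eta)\neq 0$; then $0 = Q(\cE(\eta)\chi_0) = \scB(\chi_0,\eta)Q(\eta)$ forces $Q(\eta)=0$. Likewise, if $Q\circ\hcE_\kappa(\eta)=0$, non-degeneracy of $\scS$ yields $\chi_0$ with $\scS(\chi_0,\eta)\neq 0$, and $0 = \kappa\,\scS(\chi_0,\eta)Q(\eta)$ gives $Q(\eta)=0$ since $\kappa\neq 0$.

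There is essentially no obstacle here — this is a routine unwinding of definitions — but the one point that deserves a careful mention is the use of non-degeneracy of the respective pairing to produce a test vector $\chi_0$ on which the relevant covector $\eta^\ast = \scB(-,\eta)$ (resp.\ $\scS(-,\eta)$) does not vanish; this is exactly where the hypothesis $\eta\neq 0$ enters, and it is the only place the argument could fail if one were careless. I would phrase the write-up uniformly by noting that both $\cE(\eta)$ and $\hcE_\kappa(\eta)$ are of the form $\eta\otimes\lambda$ for a covector $\lambda\in\Sigma^\ast$ that is nonzero exactly when $\eta$ is nonzero (because the pairing is non-degenerate), so that $\im(\eta\otimes\lambda) = \C\eta$, and then the equivalence $Q\circ(\eta\otimes\lambda)=0 \iff Q(\eta)=0$ is immediate; this handles both cases at once and keeps the proof short.
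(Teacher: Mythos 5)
Your proof is correct and follows essentially the same route as the paper: evaluate $Q\circ\cE(\eta)$ (resp.\ $Q\circ\hcE_\kappa(\eta)$) on a test vector and use non-degeneracy of $\scB$ (resp.\ $\scS$) to find $\chi_0$ with $\scB(\chi_0,\eta)\neq 0$, forcing $Q(\eta)=0$. Your explicit handling of the trivial case $\eta=0$ is a minor extra precaution that the paper leaves implicit, but otherwise the arguments coincide.
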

 
\begin{proof}
It is enough to observe that $Q\circ \cE(\eta) = 0$ is equivalent to:
\begin{equation*}
\scB( \chi ,\eta) Q(\eta) = 0
\end{equation*}

\noindent
for every $\chi\in \Sigma$. Since $\scB$ is non-degenerate, choosing $\chi\in \Sigma$ such that $\scB( \chi ,\eta) \neq 0$, we conclude that $Q(\eta) = 0$ if and only if $Q\circ \cE(\eta) = 0$. The equivalence with $Q\circ \hcE_{\kappa}(\eta) = 0$ is proven analogously.
\end{proof}

\noindent
Lemma \ref{lemma:constrainedvector} shows that the algebraic equation $Q(\eta) = 0$ for $\eta \in \Sigma$ can be equivalently studied in terms of the Hermitian squares of the elements in $\Sigma$, even though these do not encode all the \emph{information} contained in $\eta$. This is a consequence of the simple fact that condition $Q(\eta) = 0$ implies $Q(c \eta) = 0$ for every $c \in \C$.

% % % % % % % % % % % % % % % % % % % % % % % % % % % % % % % % % % % % % %
% % % % % % % % % % % % % % % % % % % % % % % % % % % % % % % % % % % % % %

\section{Spinorial forms in even dimensions}
\label{section:even_forms}

% % % % % % % % % % % % % % % % % % % % % % % % % % % % % % % % % % % % % %
% % % % % % % % % % % % % % % % % % % % % % % % % % % % % % % % % % % % % %

In this section, we introduce the \emph{square maps} for complex irreducible spinors in even dimension and characterize their image in terms of a certain natural system of algebraic equations together with a key inequality. These quadratic maps are valued in the exterior algebra of the underlying quadratic vector space and define our notion of \emph{square} of a spinor. 

% % % % % % % % % % % % % % % % % % % % % % % % % % % % % % % % % % % % % %

\subsection{The spinor square maps}

% % % % % % % % % % % % % % % % % % % % % % % % % % % % % % % % % % % % % %

Let $(V,h)$ be a quadratic vector space of even dimension $d$. We will use the \emph{Kähler-Atiyah model} $(\wedge V^*,\diamond)$ for the universal Clifford algebra $\Cl(V^*,h^*)$ of the quadratic vector space $(V^*,h^*)$ dual to $(V,h)$. We briefly review this model in Appendix \ref{app:KA}. Let: 
\begin{equation*}
\mathrm{Cl}(V^*,h^*)\to\End(\Sigma)   
\end{equation*}

\noindent
be an irreducible complex representation of $\mathrm{Cl}(V^*,h^*)$ on a complex vector space $\Sigma$, whence $\dim_\C\Sigma=2^{\frac{d}{2}}$. Consider the complexification of the real Clifford algebra $\mathrm{Cl}(V^*,h^*)$: 
\begin{equation*}
\C\mathrm{l}(V^*,h^*):=\mathrm{Cl}(V^*,h^*)\otimes_\R\C\cong\mathrm{Cl}(V^*_\C,h^*_\C),
\end{equation*}

\noindent
where $(V^*_\C,h^*_\C)$ denotes the complexification of the quadratic vector space $(V^*,h^*)$. Then, since $d$ is even, the map:
\begin{equation*}
\gamma\colon\C\mathrm{l}(V^*,h^*)\to\End(\Sigma)
\end{equation*}

\noindent
is an isomorphism of unital and associative algebras.

\begin{remark}
Every element $z\in\C\mathrm{l}(V^*,h^*)$ is of the form $z=\sum_j x_j\otimes c_j$ where $x_j\in\mathrm{Cl}(V^*,h^*)$ and $c_j\in\C$. The conjugate of $z$ is well-defined and given by $\bar{z}=\sum_jx_j\otimes\bar{c}_j$.
\end{remark}

\noindent
Composing $\gamma$ with the complexification of the Chevalley-Riesz isomorphism \eqref{eq:ChevRiesz_iso}, which we denote by $\Psi:=\Psi_0\otimes\C$, gives an isomorphism of unital associative algebras which we denote by: 
\begin{equation*}
    \Psi_\gamma:=\gamma\circ\Psi\colon(\wedge V^*_\C,\diamond)\to(\End(\Sigma),\circ).
\end{equation*}

\noindent
Using the isomorphism $\Psi_\gamma$ we can define a trace in the Kähler-Atiyah algebra.

\begin{definition}
The \emph{Kähler-Atiyah trace} is the linear functional given by: 
\begin{equation*}
\mathcal{T}\colon \wedge V^*_\C\to\C,\quad\alpha\mapsto\Tr(\Psi_\gamma(\alpha)).
\end{equation*}
\end{definition}

\noindent
Since $\Psi_\gamma\colon\wedge V^*_\C\to\End(\Sigma)$ is a unital isomorphism of algebras, we have that:
\begin{equation*}
 \mathcal{T}(1)=\Tr(\Psi_\gamma(1))=\Tr(\Id_\Sigma)=\dim_\C\Sigma=2^{\frac{d}{2}}\quad\text{and}\quad\mathcal{T}(\alpha_1\diamond\alpha_2)=\mathcal{T}(\alpha_2\diamond\alpha_1),
\end{equation*}

\noindent
where $1\in\C=\wedge^0V^*_\C$.

\begin{prop}
For any $\alpha\in\wedge V^*_\C$, the Kähler-Atiyah trace is given by: 
\begin{equation*}
 \mathcal{T}(\alpha)=2^{\frac{d}{2}}\alpha^{(0)},
\end{equation*}

\noindent
where $\alpha^{(0)}$ denotes the degree zero component of the exterior form $\alpha$.
\end{prop}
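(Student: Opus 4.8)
The plan is to compute the Kähler-Atiyah trace on the homogeneous components $\wedge^k V^*_\C$ and show it vanishes for $k > 0$, leaving only the contribution from $\wedge^0 V^*_\C = \C$, which was already computed before the statement. Since $\mathcal{T}(1) = 2^{d/2}$ and $\mathcal{T}$ is linear, it suffices to prove $\mathcal{T}(\alpha) = 0$ for every homogeneous $\alpha \in \wedge^k V^*_\C$ with $1 \le k \le d$; then for general $\alpha = \sum_k \alpha^{(k)}$ we get $\mathcal{T}(\alpha) = \mathcal{T}(\alpha^{(0)}) = \alpha^{(0)}\,\mathcal{T}(1) = 2^{d/2}\alpha^{(0)}$.

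For the homogeneous vanishing, I would use the standard fact that $\Tr(\gamma(e_{i_1}\cdots e_{i_k})) = 0$ for any Clifford monomial of positive length in an irreducible module in even dimension. The cleanest argument: pick an orthonormal-type basis $e_1,\dots,e_d$ of $V^*_\C$ (orthogonal with $h^*_\C(e_i,e_i) = \pm 1$, or after complexification one may even normalize to $+1$). For a length-$k$ monomial $e_I = e_{i_1}\diamond\cdots\diamond e_{i_k}$ with $k \ge 1$, choose an index $j$: if $j \in I$, then $e_j \diamond e_I = \pm e_{I \setminus \{j\}} \cdot(\text{scalar})$ has a different length parity or, more usefully, pick $j \notin I$ (possible if $k < d$) so that $e_j \diamond e_I \diamond e_j^{-1} = (-1)^k e_I$, while if $k = d$ pick $j \in I$ and get $e_j \diamond e_I \diamond e_j^{-1} = (-1)^{d-1} e_I = (-1)^{k-1}e_I$; in either case the sign is $-1$ since $k \ge 1$ and $d$ is even. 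Using $\Psi_\gamma$ is an algebra isomorphism and the cyclicity $\mathcal{T}(\alpha_1 \diamond \alpha_2) = \mathcal{T}(\alpha_2 \diamond \alpha_1)$ already recorded in the excerpt, we get $\mathcal{T}(e_I) = \mathcal{T}(e_j \diamond e_I \diamond e_j^{-1}) = -\mathcal{T}(e_I)$, hence $\mathcal{T}(e_I) = 0$. Since these monomials span $\wedge^{\ge 1}V^*_\C$, linearity finishes the claim.

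Alternatively, and perhaps more in the spirit of the paper, I would observe that $\Psi_\gamma$ intertwines the degree-zero projection with a normalized trace: concretely, the Kähler-Atiyah algebra carries the canonical linear functional $\alpha \mapsto \alpha^{(0)}$ which satisfies $\alpha^{(0)} = (\alpha \diamond \beta)^{(0)}$-type cyclicity and is the unique (up to scale) trace on the simple algebra $\C\mathrm{l}(V^*_\C,h^*_\C) \cong \End(\Sigma)$; since $\Tr$ on $\End(\Sigma)$ is also such a trace, they must be proportional, and the proportionality constant is fixed by evaluating on $1$, giving $\mathcal{T}(\alpha) = 2^{d/2}\alpha^{(0)}$. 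I would probably present the explicit monomial computation as the main argument since it is self-contained and does not require invoking uniqueness of traces on central simple algebras.

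The main obstacle is purely bookkeeping: making the conjugation-by-$e_j$ argument airtight across the two cases $k < d$ and $k = d$, and being careful that after complexification the basis vectors are genuinely invertible in the Clifford algebra (which holds since $h^*_\C$ is nondegenerate, so $e_i \diamond e_i = h^*_\C(e_i,e_i) \ne 0$). No genuine difficulty is expected — the result is essentially the statement that the matrix trace, transported through the Chevalley-Riesz isomorphism, is the degree-zero projection up to the normalization $\dim_\C \Sigma = 2^{d/2}$.
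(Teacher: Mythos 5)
Your overall strategy---reduce to $\mathcal{T}(1)=2^{\frac{d}{2}}$ plus vanishing of $\mathcal{T}$ on Clifford monomials of positive length, established by a conjugation/cyclicity trick in an orthonormal basis---is exactly the paper's, but your case split contains a sign error that leaves a real gap. Conjugating $e_I = e^{i_1}\diamond\cdots\diamond e^{i_k}$ by a generator $e^j$ with $j\notin I$ gives $e^j\diamond e_I\diamond (e^j)^{-1}=(-1)^k e_I$, which equals $-e_I$ only when $k$ is odd; your claim that ``in either case the sign is $-1$ since $k\ge 1$ and $d$ is even'' is false for even $k$ with $1\le k<d$ (take $k=2$, $d=4$: conjugating $e^1\diamond e^2$ by $e^3$ returns $+\,e^1\diamond e^2$, and cyclicity then yields only the tautology $\mathcal{T}(e_I)=\mathcal{T}(e_I)$). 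So, as written, your argument does not prove that $\mathcal{T}$ vanishes on monomials of even degree $2\le k\le d-2$, which in even dimension is a substantial part of what must be shown.

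The repair is immediate and is essentially what the paper does: the correct dichotomy is by the parity of $k$, not by whether $k<d$ or $k=d$. For even $k\ge 2$ conjugate by a generator $e^j$ with $j\in I$, so that $e^j\diamond e_I\diamond (e^j)^{-1}=(-1)^{k-1}e_I=-e_I$ (equivalently, as the paper phrases it, use cyclicity to move the last factor of $e_I$ to the front, picking up $(-1)^{k-1}$); for odd $k$ conjugate by $e^j$ with $j\notin I$, which exists because $k<d$ since $d$ is even, giving $(-1)^k e_I=-e_I$. With that correction your main argument coincides with the paper's proof. Your alternative route---both $\alpha\mapsto\alpha^{(0)}$ and $\mathcal{T}$ are traces on the simple algebra $(\wedge V^*_\C,\diamond)\cong\End(\Sigma)$, hence proportional, with the constant fixed by evaluating at $1$---is also sound and sidesteps the bookkeeping, but it requires justifying the uniqueness of the trace up to scale and the trace property of the degree-zero projection, neither of which is recorded in the paper, so presenting the corrected monomial computation as the main argument is the right call.
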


\begin{proof}
Let $\{e^1, \hdots, e^d\}$ be an orthonormal basis of $(V^{\ast},h^{\ast})$. For $i,j = 1 , \hdots, n$, $i\neq j$, we have $e^i\diamond e^j = - e^j\diamond e^i$ and hence $(e^i)^{-1}\diamond e^j\diamond e^i = - e^j$. Let $0 < k \leq d$ and $1\leq i_1 < \cdots < i_k \leq d$. If $k$ is even we have:
\begin{equation*}
\cT(e^{i_1}\diamond \dots \diamond e^{i_{k}}) = \cT(e^{i_k}\diamond
e^{i_1}\diamond \dots \diamond e^{i_{k-1}}) = (-1)^{k-1}
\cT(e^{i_1}\diamond \dots \diamond e^{i_{k}})
\end{equation*}

\noindent
and hence $\cT(e^{i_1}\diamond \dots \diamond e^{i_{k}}) = 0$. Here we used the cyclicity of the Kähler-Atiyah trace and the fact that $e^{i_{k}}$ anti-commutes with $e^{i_1}, \dots, e^{i_{k-1}}$. If $k$ is odd, let $j\in\{1,\dots,d\}$ be such that $j\not\in\{i_1,\dots,i_k\}$. Note that such a $j$ exists since $k<d$. We have:
\begin{equation*}
\cT(e^{i_1}\diamond \dots \diamond e^{i_{k}}) = \cT(( e^j)^{-1} \diamond e^{i_1}\diamond \dots 
\diamond e^{i_{k}}\diamond e^j) = -  \cT( e^{i_1}\diamond \dots \diamond e^{i_{k}}),
\end{equation*}

\noindent
which implies that $\mathcal{T}(e^{i_1}\diamond \dots \diamond e^{i_k})=0$ and hence we conclude.
\end{proof}

\noindent
To define the spinor square maps we combine the isomorphism $\Psi_\gamma\colon\wedge V^*_\C\to\End(\Sigma)$ that we introduced above together with the squaring maps $\cE\colon\Sigma\to\End(\Sigma)$ and $\hcE_\kappa\colon\Sigma\to\End(\Sigma)$ introduced in the previous section as quadratic maps associated to a certain choice of non-degenerate complex-bilinear pairing $\scB$ and a certain choice of Hermitian pairing $\scS$ on $\Sigma$, respectively. For our purposes, we cannot use an arbitrary choice of non-degenerate complex-bilinear or Hermitian pairings on $\Sigma$. Instead, it is convenient to work with non-degenerate pairings on $\Sigma$ that are adapted to its structure as a complex Clifford module. This leads to the notion of \emph{admissible pairing}, introduced in \cite{AC97,ACDVP05}. In our case, we must distinguish between Hermitian and complex-bilinear admissible pairings.

\begin{definition}\label{def:admissible_pairing}
Let $(\Sigma,\gamma)$ be an irreducible complex Clifford module. An \emph{admissible Hermitian pairing} on $(\Sigma,\gamma)$ is a non-degenerate Hermitian pairing $\scS$ on $\Sigma$ satisfying:
\begin{equation*}
\scS(\gamma(z)\xi,\chi)=\scS(\xi,\gamma(\overline{(\pi^{\frac{1-s}{2}}\circ\tau)(z)})\chi),\quad s\in\mathbb{Z}_2
\end{equation*}

\noindent
for all $z\in\C\mathrm{l}(V^*,h^*)$ and $\xi,\chi\in\Sigma$. Similarly, an \emph{admissible complex-bilinear pairing} on $(\Sigma,\gamma)$ is a non-degenerate complex-bilinear pairing $\scB$ on $\Sigma$ satisfying:
\begin{equation*}
\scB(\gamma(z)\xi,\chi)=\scB(\xi,\gamma((\pi^{\frac{1-s}{2}}\circ\tau)(z))\chi),\quad s\in\mathbb{Z}_2
\end{equation*}

\noindent
again for all $z\in\C\mathrm{l}(V^*,h^*)$ and $\xi,\chi\in\Sigma$. The sign factor $s$ is called the \emph{adjoint type} of the admissible pairing. We say that an admissible pairing is of \emph{positive adjoint type} if $s=+1$ and of \emph{negative adjoint type} if $s=-1$.
\end{definition}

\noindent
The existence of both Hermitian and complex-bilinear admissible pairings on every irreducible complex Clifford module is respectively proven in Proposition \ref{prop:Hermitian_admissible} and Proposition \ref{prop:complex_bilinear_admissible} using an idea that can be traced back to \cite{Baum81}, see also \cite{HarveyBook}. We will always assume that, given an irreducible complex Clifford module $(\Sigma,\gamma)$, the square maps $\cE\colon\Sigma\to\End(\Sigma)$ and $\hcE_\kappa\colon\Sigma\to\End(\Sigma)$ are constructed using an admissible pairing. We will refer to such triples $(\Sigma, \gamma ,\scS)$ and $(\Sigma, \gamma ,\scB)$ as \emph{Hermitian} and \emph{complex-bilinear paired Clifford modules}. We have the following diagram: 
\begin{equation*}
\begin{tikzcd}
{\mathbb{C}\mathrm{l}(V^*,h^*)} \arrow[r, "\gamma"]  & \mathrm{End}(\Sigma) & \Sigma \arrow[l, " \widehat{\mathcal{E}}_\kappa / \mathcal{E}"'] \\
\wedge V^*_{\mathbb{C}} \arrow[u, "\Psi"] \arrow[ru, "\Psi_\gamma=\gamma\circ\Psi"'] &                      &             
\end{tikzcd}
\end{equation*}

\noindent
Consequently, we define the \emph{complex-bilinear} and \emph{Hermitian square spinor maps} respectively as follows:
\begin{equation*}
\cE_\gamma:=\Psi_\gamma^{-1}\circ\cE\colon\Sigma\to\wedge V^*_\C,\qquad \hcE_\gamma^\kappa:=\Psi_\gamma^{-1}\circ\hcE_\kappa\colon\Sigma\to\wedge V^*_\C\, .
\end{equation*}

\noindent
We define the square of an even-dimensional irreducible complex spinor in terms of these square spinor maps. 

\begin{definition}
\label{def:squares}
The \emph{complex-bilinear square} of a complex irreducible spinor $\xi \in \Sigma$ in even dimensions is the complex exterior form $\cE_\gamma(\xi)\in \wedge V^*_{\mathbb{C}}$. The \emph{Hermitian square} of a complex irreducible spinor $\xi \in \Sigma$ in even dimensions is the complex exterior form $\hcE_\gamma^\kappa(\xi)\in \wedge V^*_{\mathbb{C}}$ for $\kappa\in\U(1)$.
\end{definition}

\begin{remark}
Note that an element in the image of $\hcE_\gamma^\kappa$ determines a complex spinor uniquely modulo a multiplicative unitary complex number. On the other hand, an element in the image of $\cE_\gamma$ determines a complex spinor uniquely modulo a sign. Hence, the complex-bilinear square of a complex spinor contains \emph{more} information than its Hermitian square and is equivalent to the spinor itself modulo a sign. This fact is key to developing the theory of \emph{complex spinorial forms} and its applications to the study of spinors parallel under a general connection on the spinor bundle.
\end{remark}

\noindent
We introduce the following terminology, which we will use to study spinors that lie in the kernel of a given endomorphism in terms of their Hermitian or complex-bilinear square.

\begin{definition}
Let $(\Sigma,\gamma)$ be an irreducible complex Clifford module. The \emph{dequantization} $\frq \in \wedge V^{\ast}_{\C}$ of an endomorphism $Q\in \End(\Sigma)$ is $\frq := \Psi_{\gamma}^{-1}(Q)$.
\end{definition}

\noindent
As an immediate consequence of Lemma \ref{lemma:constrainedvector}, we have:

\begin{lemma}
\label{lemma:constrainedspinoreven}
Let $Q\in \End(\Sigma)$. Then, $\eta\in \Sigma$ satisfies $Q(\eta) = 0$ if and only if $\mathfrak{q}\diamond \cE_{\gamma}(\eta) = 0$, if and only if $\mathfrak{q}\diamond \hcE^{\kappa}_{\gamma}(\eta) = 0$.
\end{lemma}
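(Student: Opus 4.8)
The plan is to reduce the statement to Lemma~\ref{lemma:constrainedvector} by transporting everything through the algebra isomorphism $\Psi_\gamma\colon (\wedge V^*_\C,\diamond)\to(\End(\Sigma),\circ)$. First I would recall that, by the very definition of the square spinor maps, $\Psi_\gamma(\cE_\gamma(\eta)) = \cE(\eta)$ and $\Psi_\gamma(\hcE^\kappa_\gamma(\eta)) = \hcE_\kappa(\eta)$, and that $\frq = \Psi_\gamma^{-1}(Q)$ by the definition of dequantization, so that $\Psi_\gamma(\frq) = Q$. Since $\Psi_\gamma$ is an isomorphism of unital associative algebras, it sends the geometric product $\diamond$ to composition $\circ$ and, crucially, it is injective; hence $\frq\diamond\cE_\gamma(\eta) = 0$ if and only if $\Psi_\gamma(\frq\diamond\cE_\gamma(\eta)) = 0$, i.e. if and only if $Q\circ\cE(\eta) = 0$. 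The same argument applies verbatim to the Hermitian square, giving $\frq\diamond\hcE^\kappa_\gamma(\eta)=0$ iff $Q\circ\hcE_\kappa(\eta)=0$.

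Having made these translations, the proof is immediate: Lemma~\ref{lemma:constrainedvector} states precisely that, for $Q\in\End(\Sigma)$ and $\eta\in\Sigma$, one has $Q(\eta)=0$ iff $Q\circ\cE(\eta)=0$ iff $Q\circ\hcE_\kappa(\eta)=0$. Chaining this with the two equivalences above yields
\[
Q(\eta)=0 \iff Q\circ\cE(\eta)=0 \iff \frq\diamond\cE_\gamma(\eta)=0
\]
and likewise $Q(\eta)=0 \iff Q\circ\hcE_\kappa(\eta)=0 \iff \frq\diamond\hcE^\kappa_\gamma(\eta)=0$, which is exactly the claim. So the whole argument is a one-line dictionary translation once the conventions are unwound.

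There is essentially no obstacle here; the only point requiring minimal care is bookkeeping: making sure that $\Psi_\gamma$ is applied consistently (it intertwines $\diamond$ with $\circ$ and is bijective, both already established when it was introduced), and that the identifications $\Psi_\gamma\circ\cE_\gamma = \cE$, $\Psi_\gamma\circ\hcE^\kappa_\gamma = \hcE_\kappa$ and $\Psi_\gamma(\frq)=Q$ hold by definition rather than requiring a computation. Since $d$ is even these are genuine isomorphisms (as recorded earlier in this section), so no truncation subtleties arise — those are deferred to the odd-dimensional analogue, Lemma~\ref{lemma:constrainedspinorodd}. Thus the proof reads simply: apply $\Psi_\gamma$, use that it is an algebra isomorphism, and invoke Lemma~\ref{lemma:constrainedvector}.
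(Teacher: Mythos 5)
Your proposal is correct and follows exactly the route the paper intends: the paper states this lemma as an immediate consequence of Lemma \ref{lemma:constrainedvector}, the point being precisely that $\Psi_\gamma$ is a unital algebra isomorphism identifying $\frq$, $\cE_\gamma(\eta)$, $\hcE^\kappa_\gamma(\eta)$ with $Q$, $\cE(\eta)$, $\hcE_\kappa(\eta)$ and $\diamond$ with $\circ$. Nothing is missing; your careful bookkeeping of these identifications is exactly what the paper leaves implicit.
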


\noindent
By definition, complex-bilinear spinorial forms are elements in $\Im(\cE_\gamma)\subset \wedge V^*_{\mathbb{C}}$, whereas Hermitian spinorial forms are elements in $\Im(\hcE_\gamma^\kappa)\subset \wedge V^*_{\mathbb{C}}$. In the following subsections, we consider these two \emph{squares} separately.

% % % % % % % % % % % % % % % % % % % % % % % % % % % % % % % % % % % % % %

\subsection{Hermitian spinorial forms}
\label{subsec:Hermitian}

% % % % % % % % % % % % % % % % % % % % % % % % % % % % % % % % % % % % % %

In this subsection, we give the algebraic characterization of the complex spinorial forms associated to a Hermitian paired Clifford module $(\Sigma,\gamma,\scS)$. We begin by proving the existence of admissible Hermitian pairings on every irreducible complex Clifford module $(\Sigma,\gamma)$.  

\begin{prop}
\label{prop:Hermitian_admissible}
Every irreducible complex Clifford module $(\Sigma,\gamma)$ admits two non-degenerate Hermitian pairings $\scS_\pm \colon \Sigma \times \Sigma \to\C$ satisfying: 
\begin{equation}
\label{eq:Hermitian_admissible_Ss}
\scS_+(\gamma(z)\xi,\chi)=\scS_+(\xi,\gamma(\overline{\tau(z)})\chi)\quad\text{and}\quad\scS_-(\gamma(z)\xi,\chi)=\scS_-(\xi,\gamma(\overline{(\pi\circ\tau)(z)})\chi)
\end{equation} 

\noindent
for every $z\in\C\mathrm{l}(V^*,h^*)$ and $\xi,\chi\in\Sigma$. 
\end{prop}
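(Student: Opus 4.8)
The plan is to construct the two pairings explicitly, following the classical idea going back to Baum and Harvey, starting from an \emph{arbitrary} positive-definite Hermitian inner product on $\Sigma$ and then averaging it over a suitable finite subgroup of the Clifford group so as to force the desired adjunction relations. Concretely, fix any orthonormal basis $\{e^1,\dots,e^d\}$ of $(V^*,h^*)$ and let $\epsilon_i = h^*(e^i,e^i) \in \{+1,-1\}$. The elements $\gamma(e^{i_1}\diamond\dots\diamond e^{i_k})$ for ordered multi-indices, together with scalar multiples, generate a finite group $G \subset \GL(\Sigma)$ (a finite extension of an elementary abelian $2$-group, of $\pm$- and possibly $\pm i$-type depending on $d$ and the $\epsilon_i$). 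Starting from \emph{any} positive-definite Hermitian form $\scS_0$ on $\Sigma$, I would set
\begin{equation*}
\scS_+(\xi,\chi) := \sum_{g \in G} \scS_0(g\xi, g\chi)
\end{equation*}
up to normalization; this is positive-definite Hermitian and invariant under $G$, which is equivalent to saying that each $\gamma(v)$ for $v\in V^*$ is \emph{unitary} or \emph{anti-unitary} with respect to $\scS_+$ according to the sign $\epsilon$. One then checks that the adjoint of $\gamma(e^i)$ with respect to $\scS_+$ equals $\epsilon_i \gamma(e^i)^{-1} = \gamma(e^i)$ (using $\gamma(e^i)^2 = \epsilon_i$), and propagating this through products and using that $\tau$ reverses the order of a product of one-forms yields exactly $\scS_+(\gamma(z)\xi,\chi) = \scS_+(\xi,\gamma(\overline{\tau(z)})\chi)$ on the real Clifford algebra, hence on its complexification by complex-anti-linear extension in the second slot. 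Then I would define $\scS_-(\xi,\chi) := \scS_+(\gamma(\nu)\xi,\chi)$ (or a variant twisted by the chirality/volume element), where $\nu$ is a fixed product of all the $e^i$ normalized so that $\gamma(\nu)$ is $\scS_+$-self-adjoint and squares to a sign; conjugating the $\scS_+$-relation by $\gamma(\nu)$ and using that $\nu$ anticommutes with each $e^i$ converts $\tau$ into $\pi\circ\tau$, giving the second identity of \eqref{eq:Hermitian_admissible_Ss}.

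In more detail, the key steps in order are: \textbf{(1)} identify the finite group $G$ generated by the $\gamma(e^i)$ and verify it is finite (this is where one records whether scalars $\pm i$ are needed, i.e.\ it depends on $d \bmod 2$ and on $q$); \textbf{(2)} average an arbitrary positive-definite Hermitian form over $G$ to obtain $\scS_+$, positive-definite and $G$-invariant; \textbf{(3)} translate $G$-invariance into the statement that $\gamma(e^i)^\dagger = \gamma(e^i)$ with respect to $\scS_+$ for each $i$, using $\gamma(e^i)^2 = h^*(e^i,e^i)\,\Id$; \textbf{(4)} extend multiplicatively: for $z = e^{i_1}\diamond\dots\diamond e^{i_k}$ a product of distinct basis one-forms, $\gamma(z)^\dagger = \gamma(e^{i_k})\circ\dots\circ\gamma(e^{i_1}) = \gamma(e^{i_k}\diamond\dots\diamond e^{i_1}) = \gamma(\tau(z))$, then extend by real-linearity to $\mathrm{Cl}(V^*,h^*)$ and by the conjugation rule of the Remark to $\C\mathrm{l}(V^*,h^*)$, which produces the $\overline{\tau(z)}$ on the right; \textbf{(5)} define $\scS_-$ by twisting with $\gamma(\nu)$ where $\nu := e^1\diamond\dots\diamond e^d$ (renormalized by a unit scalar so that $\gamma(\nu)$ is $\scS_+$-self-adjoint — possible since its $\scS_+$-adjoint is a unit scalar times itself), and compute $\scS_-(\gamma(z)\xi,\chi)$ by moving $\gamma(\nu)$ past $\gamma(z)$: since $\nu e^i = (-1)^{d-1} e^i \nu$ the parity operator $\pi$ appears, yielding $\scS_-(\gamma(z)\xi,\chi) = \scS_-(\xi,\gamma(\overline{(\pi\circ\tau)(z)})\chi)$; \textbf{(6)} note $\scS_-$ inherits non-degeneracy (it is $\scS_+$ precomposed with the invertible $\gamma(\nu)$) and Hermitian symmetry (since $\gamma(\nu)$ is $\scS_+$-self-adjoint), though it need not stay positive-definite — which is exactly why the statement only claims non-degeneracy.

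The main obstacle I expect is step \textbf{(1)} together with the bookkeeping in step \textbf{(5)}: getting the scalar normalizations right so that $\gamma$ of the relevant generators and of $\nu$ are honestly \emph{unitary} (not just unitary up to a phase) with respect to $\scS_+$, and honestly self-adjoint in the case of $\nu$, is dimension- and signature-dependent and is the part of the argument that is easy to get wrong. In particular $\gamma(\nu)^2$ equals $(-1)^{q}(-1)^{d(d-1)/2}$, so one must absorb a fourth root of unity to arrange $\gamma(\nu)^\dagger = \gamma(\nu)$, and one must check this absorption does not spoil the finiteness of $G$ used in the averaging. Everything else — the averaging argument, the multiplicative extension, and the passage to the complexification — is routine once the group-theoretic setup is pinned down, so I would present step (1) carefully (perhaps invoking the standard structure of Clifford groups, or citing \cite{Baum81,HarveyBook}) and treat steps (2)–(6) briskly.
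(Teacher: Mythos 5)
There is a genuine gap at the heart of your construction of $\scS_+$. Averaging a positive-definite Hermitian form over the finite group generated by the $\gamma(e^i)$ only makes each $\gamma(e^i)$ \emph{unitary} for the averaged product $\escal{\cdot,\cdot}$, so its adjoint is its inverse: $\gamma(e^i)^{\dagger}=\gamma(e^i)^{-1}=\epsilon_i\,\gamma(e^i)$ (your step (3) slips here: you wrote $\epsilon_i\gamma(e^i)^{-1}=\gamma(e^i)$, but the adjoint is $\gamma(e^i)^{-1}$, not $\epsilon_i\gamma(e^i)^{-1}$). Hence for a timelike generator, $\gamma(e^j)^2=-\Id$ forces $\gamma(e^j)^{\dagger}=-\gamma(e^j)$, and the bare average satisfies \emph{neither} relation in \eqref{eq:Hermitian_admissible_Ss} as soon as the signature is indefinite: the spacelike generators come out self-adjoint and the timelike ones anti-self-adjoint. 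This is not a bookkeeping issue that more care in your step (1) can repair: no positive-definite Hermitian form whatsoever can make an operator with square $-\Id$ self-adjoint, so in any signature with $q>0$ the admissible pairing of positive adjoint type is necessarily indefinite and cannot be the averaged inner product itself. Your proposal therefore proves the proposition only in the Euclidean case.

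The paper's proof uses the same averaging step but then twists the invariant product by the Clifford image of a \emph{partial} volume element: $\scS_{\pm}(\xi,\chi)$ is a fixed power of $i$ times $\escal{\Psi_\gamma(\nu_{+})\xi,\chi}$ or $\escal{\Psi_\gamma(\nu_{-})\xi,\chi}$, where $\nu_{+}$ (resp.\ $\nu_{-}$) is the product of the $p$ positive (resp.\ $q$ negative) orthonormal one-forms; commuting a generator past $\Psi_\gamma(\nu_{\pm})$ produces a sign $(-1)^{p-1}$ or $(-1)^{q}$ that is the \emph{same} for all generators, which is exactly what restores a uniform adjoint type, and the power of $i$ restores Hermitian symmetry. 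Which twist yields which adjoint type depends on the parities of $p$ and $q$, which is why the paper's formulas split into the cases $p$ odd and $p$ even. Your step (5), passing from $\scS_+$ to $\scS_-$ by the full volume form, is sound in even dimension and is essentially Proposition \ref{prop:relationhermitianproducts} of the paper, but it cannot rescue the argument because the input $\scS_+$ is not admissible to begin with. To repair your proposal, replace the claim that the average is already $\scS_+$ by the $\nu_{\pm}$-twist described above (or restrict your averaging argument to the definite case and then twist).
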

 
\begin{proof}
Let $\{e^1,\ldots,e^{d}\}$ be an $h^*$-orthonormal basis of $V^*$, $d=\dim_\R V$, and let:
\begin{equation}\label{eq:group_units_K}
\mathfrak{K} := \{1\}\cup\{\pm e^{i_1}\cdots e^{i_k}\mid 1\leq i_1<\cdots<i_k\leq d,\,1\leq k\leq d\}
\end{equation}

\noindent
be the finite multiplicative subgroup of $\C\mathrm{l}(V^*,h^*)$ generated by $\pm e^i$. Note that $\C\mathrm{l}(V^*,h^*)=\Span_\C\{ \mathfrak{K} \}$. Let $\beta \colon \Sigma \times \Sigma \to \C$ be a non-degenerate positive-definite Hermitian pairing. Then we can construct a $\mathfrak{K}$-invariant Hermitian inner product on $\Sigma$ by averaging over $\mathfrak{K}$:

\begin{equation*}
\escal{\xi,\chi}:=\frac{1}{\abs{\mathfrak{K}}}\sum_{k\in \mathfrak{K}}\beta(\gamma(k)\xi,\gamma(k)\chi).
\end{equation*}

\noindent
This pairing satisfies $\escal{\gamma(k)\xi,\gamma(k)\chi}=\escal{\xi,\chi}$ for all $k\in \mathfrak{K}$. Write $V^*=V^*_+\oplus V^*_-$, where $V^*_+$ is a $p$-dimensional subspace on which $h^*$ is positive-definite and $V^*_-$ is a $q$-dimensional subspace on which $h^*$ is negative-definite. Fix volume forms $\nu_+\in\wedge^pV^*_+$ and $\nu_-\in\wedge^qV^*_-$ so that $\nu = \nu_+\wedge\nu_-$ for the pseudo-Riemannian volume form $\nu$ on $(V,h)$. Define the non-degenerate pairings $\scS_{+} \colon\Sigma\times\Sigma\to\C$ and $\scS_{-} \colon\Sigma\times\Sigma\to\C$ by:

\begin{align*}
\scS_{+}(\xi,\chi)&:=\begin{cases}
    i^{\frac{p(p-1)}{2}}\escal{\Psi_\gamma(\nu_{+})\xi,\chi}&\text{ if $p$ (and hence $q$) is odd},\\
    i^{\frac{q(q+1)}{2}} \escal{\Psi_\gamma(\nu_{-})\xi,\chi}&\text{ if $p$ (and hence $q$) is even},
    \end{cases}\\
\scS_{-}(\xi,\chi)&:=\begin{cases}
    i^{\frac{q(q+1)}{2}} \escal{\Psi_\gamma(\nu_{-})\xi,\chi}&\text{ if $p$ (and hence $q$) is odd},\\
    i^{\frac{p(p-1)}{2}}\escal{\Psi_\gamma(\nu_{+})\xi,\chi}&\text{ if $p$ (and hence $q$) is even}
    \end{cases}
\end{align*}

\noindent
for all $\xi,\chi\in\Sigma$. A direct computation shows that both $\scS_{+}$ and $\scS_{+}$ are non-degenerate Hermitian pairings on $\Sigma$. Furthermore, for every $e^i\in V^*$ and every $\xi , \chi \in \Sigma$ we have:
\begin{eqnarray*}
\scS_{+}(\Psi_\gamma(e^i)\xi , \chi) = \scS_{+}( \xi , \Psi_\gamma(e^i)\chi)\, , \qquad \scS_{-}(\Psi_\gamma(e^i)\xi , \chi) = - \scS_{-}( \xi , \Psi_\gamma(e^i)\chi)
\end{eqnarray*}

\noindent
and hence we conclude.
\end{proof}

\noindent
A direct computation proves the following result.

\begin{prop}
\label{prop:relationhermitianproducts}
The admissible Hermitian pairings $\scS_+$ and $\scS_-$ on $(\Sigma,\gamma)$ are related as follows: 
\begin{equation*}
\scS_-(\xi,\chi) = i^{\binom{p}{2}}i^{\binom{q+1}{2}}\scS_+(\Psi_\gamma(\nu)\xi,\chi)
\end{equation*}

\noindent
for every $\xi,\chi \in \Sigma$.
\end{prop}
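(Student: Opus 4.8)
The plan is to reduce the claimed identity to a statement about the two admissible Hermitian pairings $\scS_+$ and $\scS_-$ evaluated against the action of the volume form $\Psi_\gamma(\nu)$, exploiting the explicit formulas from the proof of Proposition \ref{prop:Hermitian_admissible}. First I would recall from that proof that, according to the parity of $p$ (equivalently of $q$, since $d=p+q$ is even), exactly one of $\scS_\pm$ is a phase multiple of $\escal{\Psi_\gamma(\nu_+)-,-}$ and the other is a phase multiple of $\escal{\Psi_\gamma(\nu_-)-,-}$, where $\escal{-,-}$ is the $\mathfrak{K}$-invariant positive-definite pairing. Since $\nu=\nu_+\wedge\nu_-$ and $\nu_+\in\wedge^pV_+^*$, $\nu_-\in\wedge^qV_-^*$, one has $\Psi_\gamma(\nu)=\Psi_\gamma(\nu_+)\diamond\Psi_\gamma(\nu_-)$ up to the sign coming from reordering, which here is trivial because $\nu_+$ and $\nu_-$ involve disjoint orthonormal covectors; more precisely $\nu_+\diamond\nu_-=\nu_+\wedge\nu_-=\nu$ in the Kähler-Atiyah algebra. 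Thus $\Psi_\gamma(\nu_-)=\Psi_\gamma(\nu_+)^{-1}\diamond\Psi_\gamma(\nu)$, and $\Psi_\gamma(\nu_+)^{-1}$ is, up to a computable scalar depending on $p$ and $h^*$, equal to $\Psi_\gamma(\nu_+)$ itself (since $\nu_+\diamond\nu_+ = (-1)^{p(p-1)/2}h^*(\nu_+,\nu_+)=(-1)^{p(p-1)/2}$ for a positive-definite $\nu_+$, and analogously for $\nu_-$ with a sign $(-1)^q$ from negative-definiteness).

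The key steps, in order, are: (1) express $\scS_-$ in terms of $\escal{\Psi_\gamma(\nu_\mp)-,-}$ and the appropriate power of $i$, splitting into the two parity cases for $p$; (2) substitute $\Psi_\gamma(\nu_\mp) = \Psi_\gamma(\nu_\pm)^{-1}\diamond\Psi_\gamma(\nu)$ and use the unitarity of $\Psi_\gamma(k)$ for $k\in\mathfrak{K}$ with respect to $\escal{-,-}$ (which holds by construction of this pairing) to move $\Psi_\gamma(\nu_\pm)^{-1}$ across the pairing, turning $\escal{\Psi_\gamma(\nu_\mp)\xi,\chi}$ into a scalar multiple of $\escal{\Psi_\gamma(\nu)\Psi_\gamma(\nu_\pm)\xi,\chi}$ or similar; (3) reassemble the resulting scalar prefactor, collecting all powers of $i$ and all signs from $\nu_\pm\diamond\nu_\pm$, and check it equals $i^{\binom{p}{2}}i^{\binom{q+1}{2}}$ in both parity cases; (4) recognize the leftover expression as $\scS_+(\Psi_\gamma(\nu)\xi,\chi)$ by comparing with the defining formula of $\scS_+$. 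A cleaner variant of step (2)–(3): since both $\scS_+$ and $\scS_-$ are admissible of adjoint types $s=+1$ and $s=-1$ respectively, the endomorphism intertwining them must be $\Psi_\gamma$ of the unique (up to scalar) element of $\C\mathrm{l}(V^*,h^*)$ implementing the sign change $\pi$ on generators, which is the volume form $\nu$; hence $\scS_-(\xi,\chi)=\lambda\,\scS_+(\Psi_\gamma(\nu)\xi,\chi)$ for some $\lambda\in\C^*$, and it only remains to pin down $\lambda$ by a single normalization computation, e.g. evaluating both sides on a vector realizing the positivity of $\escal{-,-}$, or by comparing Hermiticity/consistency constraints (both sides must be genuinely Hermitian, which forces $\lambda\,\overline{\mathcal{T}(\nu)}$-type conditions).

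I expect the main obstacle to be purely bookkeeping: correctly tracking the powers of $i$ together with the signs produced by $\nu_+\diamond\nu_+=(-1)^{\binom{p}{2}}$ and $\nu_-\diamond\nu_-=(-1)^{\binom{q}{2}}(-1)^q=(-1)^{\binom{q+1}{2}}$, and handling the two parity cases for $p$ uniformly so that the final scalar comes out to exactly $i^{\binom{p}{2}}i^{\binom{q+1}{2}}$ rather than its inverse or its negative. The conceptual content is light — the identity is essentially the statement that multiplication by the volume form intertwines the two adjoint types — but getting the phase right requires care with the conventions fixed in the proof of Proposition \ref{prop:Hermitian_admissible} (in particular the plus convention $v^2=h(v,v)$ announced in the box, and the anti-linearity of $\scS$ in the second slot). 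To be safe I would verify the normalization in a low-dimensional example, say $d=2$ with $(p,q)=(2,0)$ and $(p,q)=(1,1)$, before asserting the general formula.
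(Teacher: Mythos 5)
Your proposal is correct and follows essentially the same route as the paper, which simply invokes the direct computation you describe: substitute the explicit case-by-case formulas for $\scS_\pm$ from the proof of Proposition \ref{prop:Hermitian_admissible}, use $\nu=\nu_+\diamond\nu_-$ together with $\Psi_\gamma(\nu_+)^2=(-1)^{\binom{p}{2}}$, $\Psi_\gamma(\nu_-)^2=(-1)^{\binom{q+1}{2}}$, and the commutation sign $(-1)^{pq}$ (which is $+1$ since $p\equiv q \bmod 2$ in even dimension). Your bookkeeping of the phases is consistent with the stated formula in both parity cases, so the argument goes through as planned.
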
 

\begin{remark}
Relations \eqref{eq:Hermitian_admissible_Ss} can be uniformly written as:
\begin{equation}
\label{eq:gamma_dagger}
\gamma(z)^\dagger=\gamma(\overline{(\pi^{\frac{1-s}{2}}\circ\tau)(z)})
\end{equation} 
for all $z\in\C\mathrm{l}(V^*,h^*)$, where $\gamma(z)^\dagger$ denotes the adjoint with respect to the admissible Hermitian pairing $\scS_s$ of \emph{adjoint type} $s\in\Z_2$.
\end{remark}

\begin{lemma}
\label{lemma:exterior_form_dagger}
Let $\alpha\in\wedge V^*_\C$ and let $\scS_s$ be an admissible Hermitian pairing of adjoint type $s\in\Z_2$. Then:
\begin{equation*}
\Psi_\gamma(\alpha)^\dagger=\Psi_\gamma(\overline{(\pi^{\frac{1-s}{2}}\circ\tau)(\alpha)}).
\end{equation*}
\end{lemma}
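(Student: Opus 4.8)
The plan is to reduce the statement to the identity \eqref{eq:gamma_dagger}, which already expresses the adjoint of $\gamma(z)$ with respect to $\scS_s$ for an arbitrary element $z\in\C\mathrm{l}(V^*,h^*)$. Since $\Psi_\gamma=\gamma\circ\Psi$ and $\Psi\colon(\wedge V^*_\C,\diamond)\to\C\mathrm{l}(V^*,h^*)$ is an isomorphism of unital associative algebras, for a given $\alpha\in\wedge V^*_\C$ we may simply set $z:=\Psi(\alpha)$, so that $\Psi_\gamma(\alpha)=\gamma(z)$. Applying \eqref{eq:gamma_dagger} gives $\Psi_\gamma(\alpha)^\dagger=\gamma(\overline{(\pi^{\frac{1-s}{2}}\circ\tau)(z)})$.

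Next I would check that the operations $\pi$, $\tau$ and complex conjugation on the Clifford algebra correspond, under $\Psi$, to the eponymous operations $\pi$, $\tau$ and conjugation on $\wedge V^*_\C$; concretely, $\Psi\circ\pi=\pi\circ\Psi$, $\Psi\circ\tau=\tau\circ\Psi$, and $\Psi$ commutes with conjugation (the last because $\Psi=\Psi_0\otimes\C$ and $\Psi_0$ is a real isomorphism, so it extends complex-linearly). These compatibilities are part of the Kähler-Atiyah formalism recalled in Appendix \ref{app:KA}; granting them, we get $(\pi^{\frac{1-s}{2}}\circ\tau)(z)=(\pi^{\frac{1-s}{2}}\circ\tau)(\Psi(\alpha))=\Psi\big((\pi^{\frac{1-s}{2}}\circ\tau)(\alpha)\big)$ and then $\overline{(\pi^{\frac{1-s}{2}}\circ\tau)(z)}=\Psi\big(\overline{(\pi^{\frac{1-s}{2}}\circ\tau)(\alpha)}\big)$. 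Feeding this back in yields
\begin{equation*}
\Psi_\gamma(\alpha)^\dagger=\gamma\Big(\Psi\big(\overline{(\pi^{\frac{1-s}{2}}\circ\tau)(\alpha)}\big)\Big)=\Psi_\gamma\big(\overline{(\pi^{\frac{1-s}{2}}\circ\tau)(\alpha)}\big),
\end{equation*}
which is exactly the claim.

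The only genuine point requiring care — the ``main obstacle'', though it is more bookkeeping than difficulty — is making sure that $\pi$, $\tau$ and conjugation really do intertwine correctly across the Chevalley-Riesz isomorphism and its complexification, since the statement is only interesting because the right-hand side is written in the exterior-algebra language rather than the Clifford one. I would handle this by noting that $\pi$ and $\tau$ are defined on $\wedge V^*_\C$ precisely so as to be the pullbacks along $\Psi$ of the grading automorphism and the reversal anti-automorphism of $\Cl(V^*_\C,h^*_\C)$ (see Appendix \ref{app:KA}), and that conjugation commutes with $\Psi=\Psi_0\otimes\C$ because $\Psi_0$ is defined over $\R$; alternatively one can verify everything on the generators $e^i\in V^*$ and extend multiplicatively, using that $\Psi(e^i)$ is the corresponding generator of the Clifford algebra. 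Either way the verification is routine, so the proof is essentially a one-line substitution into \eqref{eq:gamma_dagger} once these compatibilities are in hand.
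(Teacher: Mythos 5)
Your proof is correct and follows essentially the same route as the paper: substitute $z=\Psi(\alpha)$ into \eqref{eq:gamma_dagger} and use the compatibilities $\pi\circ\Psi=\Psi\circ\pi$, $\tau\circ\Psi=\Psi\circ\tau$, together with the fact that $\Psi=\Psi_0\otimes\C$ commutes with complex conjugation. Your extra care in justifying the intertwining of $\pi$, $\tau$ and conjugation with the Chevalley--Riesz isomorphism is exactly the point the paper also records, so there is nothing to add.
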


\begin{proof}
Note that $\pi\circ\Psi=\Psi\circ\pi$ and $\tau\circ\Psi=\Psi\circ\tau$. Using relation \eqref{eq:gamma_dagger} we get: 
\begin{eqnarray*}
 \Psi_\gamma(\alpha)^\dagger=\gamma(\Psi(\alpha))^\dagger=\gamma(\overline{(\pi^{\frac{1-s}{2}}\circ\tau)(\Psi(\alpha)}))=\gamma(\Psi(\overline{(\pi^{\frac{1-s}{2}}\circ\tau)(\alpha)}))=\Psi_\gamma(\overline{(\pi^{\frac{1-s}{2}}\circ\tau)(\alpha)})   
\end{eqnarray*}

\noindent
for every $\alpha\in\wedge V^*_\C$.
\end{proof}

\begin{thm}
\label{thm:even_Hermitian_forms}
Let $(\Sigma,\gamma)$ be an irreducible complex Clifford module equipped with an admissible Hermitian pairing $\scS_s$ of adjoint type $s\in \mathbb{Z}_2$. Then the following statements are equivalent for a complex exterior form $\alpha\in\wedge V^*_\C$: 
\begin{enumerate}
\item[\normalfont(a)] $\alpha$ is the Hermitian square of a spinor $\xi\in\Sigma$, that is, $\alpha = \hcE_\gamma^\kappa(\xi)$ for some $\kappa\in\U(1)$.
\item[\normalfont(b)] $\alpha$ satisfies the following relations:
\begin{equation*}
\alpha\diamond\alpha = 2^{\frac{d}{2}}\alpha^{(0)}\alpha,\qquad(\pi^{\frac{1-s}{2}}\circ\tau)(\bar\kappa\alpha) = \kappa \bar{\alpha}\, , \qquad \alpha \diamond \beta \diamond \alpha=2^{\frac{d}{2}}(\alpha\diamond\beta)^{(0)}\alpha
\end{equation*}

\noindent
for a fixed exterior form $\beta\in\wedge V^*_\C$ satisfying $(\alpha\diamond\beta)^{(0)}\neq0$.
\item[\normalfont(c)] The following equations hold: 
\begin{equation*}
(\pi^{\frac{1-s}{2}}\circ\tau)(\bar\kappa\alpha) = \kappa \bar{\alpha}\, , \qquad \alpha \diamond \beta \diamond \alpha=2^{\frac{d}{2}}(\alpha\diamond\beta)^{(0)}\alpha
\end{equation*}

\noindent
for every exterior form $\beta\in\wedge V^*_\C$.
\end{enumerate}
\end{thm}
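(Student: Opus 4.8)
The strategy is to transport the abstract characterization of rank-one endomorphisms from Proposition~\ref{prop:algebraichHermitian} through the isomorphism $\Psi_\gamma\colon(\wedge V^*_\C,\diamond)\to(\End(\Sigma),\circ)$ and reinterpret every condition appearing there in terms of the geometric product $\diamond$ and the Kähler-Atiyah trace $\cT$. First I would observe that, since $\Psi_\gamma$ is a unital algebra isomorphism, an exterior form $\alpha$ is the Hermitian square $\hcE_\gamma^\kappa(\xi)$ of a spinor precisely when $E:=\Psi_\gamma(\alpha)$ lies in $\Im(\hcE_\kappa)$. Then I would translate each ingredient: the trace $\Tr(E\circ A)$ becomes $\cT(\alpha\diamond\beta)=2^{\frac{d}{2}}(\alpha\diamond\beta)^{(0)}$ where $\beta:=\Psi_\gamma^{-1}(A)$ (using the Proposition computing $\cT$), the composition product $\circ$ becomes $\diamond$, the identity $\Id_\Sigma$ becomes $1\in\wedge^0 V^*_\C$, and the adjoint condition $(\bar\kappa E)^\dagger=\bar\kappa E$ becomes $(\pi^{\frac{1-s}{2}}\circ\tau)(\bar\kappa\alpha)=\kappa\bar\alpha$ by Lemma~\ref{lemma:exterior_form_dagger}. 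In particular the equation $E\circ E=\Tr(E)E$ dequantizes to $\alpha\diamond\alpha=2^{\frac{d}{2}}\alpha^{(0)}\alpha$, and $E\circ A\circ E=\Tr(E\circ A)E$ to $\alpha\diamond\beta\diamond\alpha=2^{\frac{d}{2}}(\alpha\diamond\beta)^{(0)}\alpha$.

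With this dictionary in hand, the three implications are essentially formal. For $(a)\Rightarrow(c)$: if $\alpha=\hcE_\gamma^\kappa(\xi)$ then $E\in\Im(\hcE_\kappa)$, so by Lemma~\ref{lemma:properties_cE} the identities $E\circ A\circ E=\Tr(E\circ A)E$ and $(\bar\kappa E)^\dagger=\bar\kappa E$ hold for \emph{every} $A$; applying the dictionary gives the two equations of (c) for every $\beta$. The implication $(c)\Rightarrow(b)$ follows because the Kähler-Atiyah trace pairing $(\alpha,\beta)\mapsto\cT(\alpha\diamond\beta)$ is non-degenerate (transported from the non-degeneracy of the trace form on $\End(\Sigma)$), so there exists $\beta$ with $(\alpha\diamond\beta)^{(0)}\neq0$; and taking $\beta=1$ in the third equation of (c) yields $\alpha\diamond\alpha=2^{\frac{d}{2}}\alpha^{(0)}\alpha$, which is the first equation of (b). For $(b)\Rightarrow(a)$: the equations of (b) say precisely that $E=\Psi_\gamma(\alpha)$ satisfies condition (c) of Proposition~\ref{prop:algebraichHermitian} with the choice $A=\Psi_\gamma(\beta)$ (the hypothesis $(\alpha\diamond\beta)^{(0)}\neq0$ giving $\Tr(E\circ A)\neq0$), so that proposition produces $\xi\in\Sigma$ and $\kappa\in\U(1)$ with $E=\hcE_\kappa(\xi)$, whence $\alpha=\Psi_\gamma^{-1}(E)=\hcE_\gamma^\kappa(\xi)$.

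I expect no serious obstacle here; the content of the theorem is entirely in Section~\ref{sec:vectorsasendos} and in the Kähler-Atiyah formalism, and this proof is a careful bookkeeping exercise in pushing those results through $\Psi_\gamma$. The one point that requires a little care is verifying that the adjoint $\dagger$ with respect to $\scS_s$ on $\End(\Sigma)$ corresponds under $\Psi_\gamma$ exactly to the operation $\alpha\mapsto\overline{(\pi^{\frac{1-s}{2}}\circ\tau)(\alpha)}$ on $\wedge V^*_\C$ — but this is exactly the statement of Lemma~\ref{lemma:exterior_form_dagger}, so it is already available. A second minor subtlety is the placement of the phase $\kappa$: one should check that the scalar $\bar\kappa$ commutes past $\Psi_\gamma$ and the conjugation as expected, which is immediate since $\Psi_\gamma$ is $\C$-linear and $\overline{\bar\kappa\alpha}=\kappa\bar\alpha$. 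Thus the proof reduces to stating the dictionary precisely and invoking Proposition~\ref{prop:algebraichHermitian}.
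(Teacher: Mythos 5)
Your proposal is correct and follows essentially the same route as the paper: transport everything through the unital algebra isomorphism $\Psi_\gamma$, identify the adjoint via Lemma \ref{lemma:exterior_form_dagger} and the trace via the Kähler-Atiyah trace formula, and invoke Proposition \ref{prop:algebraichHermitian} (with Lemma \ref{lemma:properties_cE}) for the equivalences. The paper's proof is exactly this dictionary argument, stated more tersely.
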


\begin{proof}
Since $\Psi_\gamma\colon\wedge V^*_\C\to\End(\Sigma)$ is a unital isomorphism of algebras, $\alpha$ satisfies the conditions in (c) if and only if: 
\begin{eqnarray*}
(\bar{\kappa}E)^\dagger = \bar{\kappa}E\quad\text{and}\quad E\circ A\circ E = \Tr(E\circ A) E \quad \forall A\in\End(\Sigma),
\end{eqnarray*}

\noindent
where $E:=\Psi_\gamma(\alpha)$ and $A:=\Psi_\gamma(\beta)$. The conclusion follows then by using Lemma \ref{lemma:exterior_form_dagger} together with the definition and properties of the Kähler-Atiyah trace, and Proposition \ref{prop:algebraichHermitian}.
\end{proof}

\noindent
By the previous theorem, the collection of all Hermitian squares of irreducible complex spinors is a semi-algebraic subset of $\wedge V^{\ast}_{\C}$ depending exclusively on $s\in \mathbb{Z}_2$ and $\kappa \in \U(1)$. Let $(\Sigma,\gamma,\scS)$ be an irreducible Hermitian paired Clifford module. Denote by $\dot{\Sigma}_{\scS} \subset \Sigma$ the open set of non-vanishing norm spinors. By Theorem \ref{thm:even_Hermitian_forms} it follows that:
\begin{equation*}
\hcE_\gamma^\kappa(\dot{\Sigma}_{\scS}) = \left\{ \alpha\in \wedge V^{\ast}_{\C} \,\, \vert\,\, \alpha^{(0)} \neq 0\, ,  \,\, \alpha\diamond\alpha = 2^{\frac{d}{2}}\alpha^{(0)}\alpha\, , \,\, (\pi^{\frac{1-s}{2}}\circ\tau)(\bar\kappa\alpha) = \kappa \bar{\alpha}\, , \,\, \kappa \in \U(1) \right\}.
\end{equation*}

\noindent
Hence, $\hcE_\gamma^\kappa(\dot{\Sigma}_{\scS})$ is a \emph{real} algebraic subset of a complex cone in an open subset of $\wedge V^{\ast}_{\C}$ determined by the condition $\alpha^{(0)} \neq 0$.\medskip

\noindent
Let $\{e^1, \hdots , e^d\}$ be an orthonormal basis of $(V^{\ast} , h^{\ast})$. For simplicity in the exposition, in the following we set:
\begin{eqnarray*}
\gamma^i := \Psi_{\gamma}(e^i)\, , \quad i= 1,\hdots , d\, , \qquad \gamma^{i_1\cdots i_k} := \gamma^{i_1} \cdots \gamma^{i_k} \, , \quad i_1, \hdots , i_k = 1, \hdots , d.
\end{eqnarray*}

\noindent
These are the celebrated \emph{gamma matrices} associated to the Clifford module $(\Sigma,\gamma)$ and the choice of orthonormal basis $\{e^1, \hdots , e^d\}$. For simplicity in the exposition, we set:
\begin{equation*}
e_i := h^{\ast}(e^i , e^i) e^i \, , \qquad \gamma_i := h^{\ast}(e^i , e^i) \gamma^i\, , \quad i= 1,\hdots , d.
\end{equation*}

\noindent
Since the map $\Psi_{\gamma} \colon \wedge V^{\ast}_{\C} \to \End(\Sigma)$ is an isomorphism of unital and associative complex algebras, the set of those endomorphisms of $\Sigma$ consisting of the identity together with all matrices of the form $\gamma^{i_1\cdots i_k}$ with $i_1 < \cdots < i_k$ and $k=1,\hdots , d$ defines a basis of $\End(\Sigma)$ that is orthogonal with respect to the natural complex-bilinear pairing on $\End(\Sigma)$ defined in terms of the complex trace. Set $\widehat{E}_{\eta} := \hcE_{\kappa}(\eta)$ and $\widehat{\alpha}_{\eta} := \hcE^{\kappa}_{\gamma}(\eta) \in \wedge V^{\ast}_{\C}$ for $\eta \in \Sigma\setminus\{ 0 \}$. Then:
\begin{eqnarray*}
\Tr(\widehat{E}_{\eta} \circ (\gamma^{i_1 \cdots i_k})^{-1}) = \kappa \, \scS((\gamma^{i_1 \cdots i_k})^{-1}\eta , \eta)\, , \qquad  i_1 < \cdots < i_k \, , \qquad k=1,\hdots , d,
\end{eqnarray*}

\noindent
which immediately implies:
\begin{eqnarray*}
\widehat{E}_{\eta} = \frac{\kappa}{2^{\frac{d}{2}}} \scS(\eta,\eta)\, \Id + \frac{\kappa}{2^{\frac{d}{2}}} \sum_{k=1}^d \sum_{i_1 < \cdots <i_k} \scS((\gamma^{i_1 \cdots i_k})^{-1}\eta , \eta)\gamma^{i_1 \cdots i_k}.
\end{eqnarray*}

\noindent
Applying $\Psi^{-1}_{\gamma} \colon \End(\Sigma) \to \wedge V^{\ast}_{\C}$ to the previous expression, and using that it is an isomorphism of unital and associative algebras, we obtain:
\begin{eqnarray}
\label{eq:explicitHermitian}
\widehat{\alpha}_{\eta} = \frac{\kappa}{2^{\frac{d}{2}}} \scS(\eta,\eta) + \frac{\kappa}{2^{\frac{d}{2}}} \sum_{k=1}^d \sum_{i_1 < \cdots < i_k} \scS((\gamma^{i_1 \cdots i_k})^{-1}\eta , \eta)\, e^{i_1}\wedge  \cdots \wedge e^{i_k}\, .
\end{eqnarray}

\noindent
This gives the explicit form of the Hermitian square of an irreducible complex spinor $\eta \in \Sigma$ in terms of an explicit orthonormal basis of $(V^{\ast} , h^{\ast})$. Since the admissible Hermitian pairings $\scS_{+}$ and $\scS_{-}$ are related as stated in Proposition \ref{prop:relationhermitianproducts}, it is natural to expect that the Hermitian square of $\eta\in\Sigma$ relative to $\scS_{+}$ is related to the Hermitian square of $\eta\in \Sigma$ relative to $\scS_{-}$. This is indeed the case. Denote the former by $\widehat{\alpha}_{\eta}^{+} \in \wedge V^{\ast}_{\C}$ and the latter by $\widehat{\alpha}_{\eta}^{-} \in \wedge V^{\ast}_{\C}$. A direct computation shows that:
\begin{equation*}
(\widehat{\alpha}_\eta^-)^{(k)} = i^{\binom{p}{2}}i^{\binom{q+1}{2}}\frac{k!}{(d-k)!}*\tau((\widehat{\alpha}_\eta^+)^{(d-k)}).
\end{equation*}

\noindent
Therefore, the choice of an admissible Hermitian pairing to construct the Hermitian square of a spinor is a matter of taste and computational convenience. 

% % % % % % % % % % % % % % % % % % % % % % % % % % % % % % % % % % % % % %

\subsection{Equivariance properties of the Hermitian square map}
\label{subsec:Hermitianequivariance}

% % % % % % % % % % % % % % % % % % % % % % % % % % % % % % % % % % % % % %

We consider now the equivariance properties of the Hermitian square map. Let us denote by:
\begin{equation*}
\Spin_o^c (p,q) := \Spin_o(p,q)\cdot \U(1) = \big(\Spin_o(p,q)\times \U(1)\big)/\Z_2 \subset \mathbb{C}\mathrm{l}(V^*,h^*)    
\end{equation*}

\noindent
the identity component of the spin$^c$ group of $(V,h)$, where $\Spin_o(p,q)$ denotes the identity component of the spin group of $(V,h)$. Hence, we can consider elements in $\Spin_o^c(p,q)$ as equivalence classes $[u,z]$ with $u\in\Spin_o(p,q)$ and $z\in\U(1)$. Note that $\scS$ is invariant under $\Spin_o^c(p,q)$. For every $\eta\in \Sigma$ and $[u,z] \in \Spin_o^c(p,q)$, we compute:
\begin{eqnarray*}
\hcE_\gamma^\kappa(\gamma([u,z])\eta) = \hcE_\gamma^\kappa(z\gamma(u)\eta) =\hcE_\gamma^\kappa( \gamma(u)\eta) = \kappa \Psi_{\gamma}^{-1}(\scS( - , \gamma(u)\eta) \gamma(u)\eta) = \Ad_u(\hcE_\gamma^\kappa(\eta)),
\end{eqnarray*}

\noindent
where $\Ad\colon \Spin_o^c(p,q) \to \SO_o(p,q)$ is the natural covering of the identity component $\SO_o(p,q)$ of the special orthogonal group of $(V,h)$ induced by its double cover $\Spin_o(p,q)$. Here we have used Equation \eqref{eq:explicitHermitian} together with the following identities:
\begin{align*}
\scS((\gamma^{i_1 \cdots i_k})^{-1}\gamma(u)\eta , \gamma(u)\eta)\, e^{i_1}\wedge  \cdots \wedge e^{i_k} &= \scS((\gamma(u) \gamma^{i_1 \cdots i_k} \gamma(u)^{-1})^{-1}\eta , \eta)\, e^{i_1}\wedge  \cdots \wedge e^{i_k}\\
&= \scS(\gamma(u \Psi(e^{i_1}) \cdots \Psi(e^{i_k}) u^{-1})^{-1}\eta , \eta)\, e^{i_1}\wedge  \cdots \wedge e^{i_k}\\
&= \scS(\gamma(u \Psi(e^{i_1}) u^{-1} \cdots u \Psi(e^{i_k}) u^{-1})^{-1}\eta , \eta)\, e^{i_1}\wedge  \cdots \wedge e^{i_k}.
\end{align*}

\noindent
Hence, we conclude that the map $\hcE_\gamma^\kappa \colon \Sigma \to \wedge V^*_\C$ is equivariant with respect to the adjoint representation $\Ad\colon \Spin_o^c(p,q) \to \SO_o(p,q)$ of $\Spin_o^c(p,q)$ on $\wedge V^{\ast}_\C$.

% % % % % % % % % % % % % % % % % % % % % % % % % % % % % % % % % % % % % %

\subsection{Complex-bilinear spinorial forms}
\label{subsec:bilinear}

% % % % % % % % % % % % % % % % % % % % % % % % % % % % % % % % % % % % % %

In this subsection, we obtain the algebraic characterization of the complex spinorial forms associated to a complex-bilinear paired Clifford module $(\Sigma,\gamma,\scB)$. We begin by proving the existence of admissible complex-bilinear pairings on every irreducible complex Clifford module $(\Sigma,\gamma)$ associated to an even-dimensional quadratic vector space $(V,h)$. In order to do so, we need to distinguish between complex Clifford modules of \emph{real} and \emph{quaternionic} type.

\begin{definition}
Let $\Sigma$ be a complex vector space. A \emph{real structure} on $\Sigma$ is a complex anti-linear map $\frc \colon \Sigma \to \Sigma$ such that $\frc^2 = \Id$. A \emph{quaternionic structure} on $\Sigma$ is a complex anti-linear map $J\colon\Sigma\to\Sigma$ satisfying $J^2=-\Id$. 
\end{definition}

\begin{definition}
Let $(\Sigma,\gamma)$ be a complex Clifford module for $\Cl(V^*,h^*)$. A \emph{real structure} on $(\Sigma,\gamma)$ is a real structure $\frc$ on $\Sigma$ such that $\gamma(z) \circ \frc = \frc\circ \gamma(\bar z)$ for all $z\in\C\mathrm{l}(V^*,h^*)$. Similarly, a \emph{quaternionic structure} on $(\Sigma,\gamma)$ is a quaternionic structure $J$ on $\Sigma$ such that $\gamma(z) \circ J = J \circ \gamma(\bar z)$. 
\end{definition}

\noindent
Every irreducible complex Clifford module $(\Sigma,\gamma)$ in even dimensions admits either a real structure, in which case we say it is of \emph{real type}, or a quaternionic structure, in which case we say it is of \emph{quaternionic type} \cite{HarveyBook}.

\begin{lemma}\label{lemma:real_or_quaternionic_type}
Let $(\Sigma,\gamma)$ be an irreducible complex Clifford module for $\Cl(V^*,h^*)$ with $(V,h)$ even-dimensional of signature $(p,q)$. Then:
\begin{itemize}
    \item If $p-q\equiv_8 0,2$ then $(\Sigma,\gamma)$ is of real type.
    \item If $p-q\equiv_8 4,6$ then $(\Sigma,\gamma)$ is of quaternionic type.
\end{itemize}
\end{lemma}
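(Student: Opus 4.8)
The plan is to recognise Lemma~\ref{lemma:real_or_quaternionic_type} as a reformulation of the classification of real Clifford algebras, translated into a statement about a Schur algebra. First I would restrict scalars along the inclusion $\Cl(V^*,h^*)\hookrightarrow\C\mathrm{l}(V^*,h^*)$, so that $\Sigma$ becomes a module over the \emph{real} Clifford algebra $\Cl(V^*,h^*)$ of real dimension $2^{\frac{d}{2}+1}$ (here I use that $d$ even forces $\C\mathrm{l}(V^*,h^*)\cong\Mat_{2^{d/2}}(\C)$, so that $\dim_\C\Sigma=2^{d/2}$). The point is that a real structure (respectively a quaternionic structure) on $(\Sigma,\gamma)$ in the sense of the definition above is exactly an $\R$-linear, $\Cl(V^*,h^*)$-equivariant endomorphism of $\Sigma$ anti-commuting with the scalar multiplication by $\sqrt{-1}$ and squaring to $+\Id$ (respectively $-\Id$): anti-commutation with $\sqrt{-1}$ is precisely complex anti-linearity, and equivariance over the real subalgebra automatically upgrades to equivariance over all of $\C\mathrm{l}(V^*,h^*)$, since every $z\in\C\mathrm{l}(V^*,h^*)$ is of the form $x\otimes c$ with $x\in\Cl(V^*,h^*)$ and $c\in\C$. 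Hence the whole question lives inside the Schur algebra $D:=\End_{\Cl(V^*,h^*)}(\Sigma)$: denoting by $i\in D$ the multiplication by $\sqrt{-1}$, we must decide whether the subspace of $D$ anti-commuting with $i$ contains a square root of $+\Id$ or a square root of $-\Id$.

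Next I would identify $D$. Since $d$ is even, $\Cl(V^*,h^*)$ is central simple over $\R$ (its volume element anti-commutes with every generator, so the centre reduces to scalars), hence by Wedderburn's theorem isomorphic to $\Mat_n(\mathbb{K})$ for a real division algebra $\mathbb{K}\in\{\R,\C,\H\}$; the case $\mathbb{K}=\C$ is excluded because it would make $\Cl(V^*,h^*)\otimes_\R\C$ non-simple, contradicting $\C\mathrm{l}(V^*,h^*)\cong\Mat_{2^{d/2}}(\C)$. Comparing real dimensions, $\mathbb{K}=\R$ forces $\Cl(V^*,h^*)\cong\Mat_{2^{d/2}}(\R)$, in which case $\Sigma$ is two copies of the unique irreducible real module and $D\cong\Mat_2(\R)$, while $\mathbb{K}=\H$ forces $\Cl(V^*,h^*)\cong\Mat_{2^{d/2-1}}(\H)$, in which case $\Sigma$ is a single copy of it and $D\cong\H$.

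The crux, and the one step where I would rely on known results rather than recompute, is deciding which of these two cases occurs. Identifying $(V^*,h^*)$ isometrically with $(V,h)$ and using the plus convention $v^2=h(v,v)$, so that $\Cl(V^*,h^*)\cong\Cl_{p,q}$, the classification of real Clifford algebras (see e.g.\ Atiyah--Bott--Shapiro, Lawson--Michelsohn, or \cite{HarveyBook}) gives, for $d=p+q$ even, that $\Cl_{p,q}\cong\Mat_{2^{d/2}}(\R)$ exactly when $p-q\equiv_8 0,2$ and $\Cl_{p,q}\cong\Mat_{2^{d/2-1}}(\H)$ exactly when $p-q\equiv_8 4,6$. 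A self-contained derivation is available by computing the square of the product $e^1\diamond\cdots\diamond e^d$ of an $h^*$-orthonormal basis and combining the resulting case distinction with the $\R$-periodicity of the family $\Cl_{p,q}$, but this is exactly the computation behind the cited tables, so I would not reproduce it.

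It then remains to assemble the conclusion. If $p-q\equiv_8 0,2$, then $D\cong\Mat_2(\R)$; the subspace of $D$ anti-commuting with $i$ is two-dimensional, and a short computation in $2\times2$ real matrices shows its elements have square a non-negative multiple of $\Id$, so it contains an element $\frc$ with $\frc^2=\Id$, which by the first paragraph is a real structure on $(\Sigma,\gamma)$, and $(\Sigma,\gamma)$ is of real type. If $p-q\equiv_8 4,6$, then $D\cong\H$ and, up to conjugation, $i=\mathbf{i}$; the subspace of $D$ anti-commuting with $\mathbf{i}$ is $\R\mathbf{j}\oplus\R\mathbf{k}$, whose elements have square a non-positive multiple of $\Id$, so it contains $J:=\mathbf{j}$ with $J^2=-\Id$, a quaternionic structure on $(\Sigma,\gamma)$, and $(\Sigma,\gamma)$ is of quaternionic type. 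The same sign dichotomy incidentally shows that the two alternatives are mutually exclusive, although the statement of the lemma does not require this.
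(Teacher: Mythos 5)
Your proposal is correct, and it runs the standard argument in the opposite direction from the paper. The paper starts from the irreducible \emph{real} Clifford module: for $p-q\equiv_8 0,2$ it complexifies it, the complexification carrying a tautological real structure, while for $p-q\equiv_8 4,6$ it uses one of the two anti-commuting complex structures $J_1,J_2$ in the quaternionic Schur algebra of the real irreducible module to turn it into the complex module, the other one then being the desired quaternionic structure; in both cases the resulting complex module is identified with the given $(\Sigma,\gamma)$ only modulo isomorphism. You instead keep the given $(\Sigma,\gamma)$ fixed, restrict scalars to $\Cl(V^*,h^*)$, and perform a commutant computation: the Schur algebra of the restriction is $\Mat(2,\R)$ or $\mathbb{H}$ according to whether $\Cl(V^*,h^*)$ is of real or quaternionic type, and the anti-commutant of multiplication by $\sqrt{-1}$ inside it contains a square root of $+\Id$ in the first case and of $-\Id$ in the second. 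Both arguments rest on exactly the same external input, namely the mod-$8$ classification of real Clifford algebras in even dimension (real type for $p-q\equiv_8 0,2$, quaternionic for $p-q\equiv_8 4,6$), which the paper likewise invokes without proof. Your reduction of Definition-level data (an anti-linear $K$ with $\gamma(z)\circ K=K\circ\gamma(\bar z)$ and $K^2=\pm\Id$) to an element of $\End_{\Cl(V^*,h^*)}(\Sigma)$ anti-commuting with $i$ is justified correctly, including the upgrade of equivariance from the real subalgebra to all of $\C\mathrm{l}(V^*,h^*)$. What your direction buys is that no transport of the structure along an isomorphism of Clifford modules is needed (a step the paper leaves implicit), and the sign dichotomy in the anti-commutant gives mutual exclusivity of the two types for free; what the paper's direction buys is an explicit description of the structure in terms of the real irreducible module, which is convenient for the later constructions that choose a concrete $\frc$ or $J$ to build the admissible complex-bilinear pairings.
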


\begin{proof}
When $p-q\equiv_8 0,2$, the irreducible complex representation $\gamma\colon\mathbb{C}\mathrm{l}(V^*,h^*) \to \End(\Sigma)$ is the complexification of the unique, modulo isomorphism, real irreducible Clifford module for $\Cl(V^*,h^*)$, a fact that immediately implies the existence of a real structure on $(\Sigma,\gamma)$. If $p-q\equiv_8 4,6$ then the corresponding irreducible real Clifford module is of quaternionic type, in the sense that its Schur algebra is of quaternionic type, and hence contains two anti-commuting complex structures $J_1$ and $J_2$. Choosing either of them, say $J_1$, to complexify the aforementioned irreducible real Clifford module, we obtain, modulo isomorphism, the irreducible complex Clifford module $(\Sigma,\gamma)$. Then, the other complex structure $J_2$ defines a quaternionic structure on $(\Sigma,\gamma)$.
\end{proof}

\noindent
Proposition \ref{prop:Hermitian_admissible} establishes the existence of a pair of admissible Hermitian pairings on $(\Sigma,\gamma)$. We establish now the existence of admissible Hermitian pairings that are \emph{compatible} with any given choice of real or quaternionic structure on $(\Sigma,\gamma)$.  

\begin{lemma}
\label{lemma:compatible_admissible_pairings}
Let $(\Sigma,\gamma)$ be an irreducible complex Clifford module equipped with a real or quaternionic structure $K\colon \Sigma \to \Sigma$ and let $s\in\mathbb{Z}_2$. Then $(\Sigma,\gamma)$ can be equipped with an admissible Hermitian pairing $\scS_s$ of adjoint type $s\in\mathbb{Z}_2$ such that:
\begin{eqnarray*}
\scS_{s}(K \xi , K \chi ) = (-1)^{\binom{p}{2}+\frac{d}{4}(1+s(-1)^p)}\, \overline{\scS_{s}( \xi , \chi )} = (-1)^{\frac{1}{4}(2p(p-1) + (1+s(-1)^p) d)}\, \overline{\scS_{s}( \xi , \chi )}
\end{eqnarray*}
 for every $\xi , \chi \in \Sigma$.
 \end{lemma}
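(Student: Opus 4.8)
The idea is to construct the admissible Hermitian pairing $\scS_s$ explicitly, following the proof of Proposition~\ref{prop:Hermitian_admissible}, and then track how it interacts with the given structure $K$. First I would recall the setup from that proof: fix an $h^*$-orthonormal basis $\{e^1,\ldots,e^d\}$, form the finite group $\mathfrak{K}$ of Equation~\eqref{eq:group_units_K}, and average a positive-definite Hermitian form $\beta$ over $\mathfrak{K}$ to obtain a $\mathfrak{K}$-invariant inner product $\escal{\cdot,\cdot}$. The admissible pairing of a given adjoint type $s$ is then $\scS_s(\xi,\chi)=c\,\escal{\Psi_\gamma(\nu_\epsilon)\xi,\chi}$ for an appropriate sign/phase $c$ and an appropriate choice $\epsilon\in\{+,-\}$ of volume form $\nu_+\in\wedge^pV^*_+$ or $\nu_-\in\wedge^qV^*_-$, dictated by the parities of $p,q$ and by $s$ exactly as in the statement of Proposition~\ref{prop:Hermitian_admissible}.

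The key computational step is the behavior of $\escal{\cdot,\cdot}$ under $K$. The first thing to establish is that we may assume $K$ \emph{commutes} with the averaged inner product, i.e. $\escal{K\xi,K\chi}=\overline{\escal{\xi,\chi}}$: indeed, because $K$ is anti-linear and satisfies $\gamma(z)\circ K = K\circ\gamma(\bar z)$, the pullback $\beta'(\xi,\chi):=\overline{\beta(K\xi,K\chi)}$ is again a positive-definite Hermitian form, and one can either average a $K$-invariant $\beta$ to begin with or check directly that $\mathfrak{K}$-averaging preserves $K$-compatibility since each $\gamma(k)$ with $k\in\mathfrak{K}$ conjugates under $K$ to $\gamma(\bar k)=\pm\gamma(k)$ (the basis elements $e^i$ are real). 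So without loss of generality $\escal{K\xi,K\chi}=\overline{\escal{\xi,\chi}}$. Next, I would compute $\scS_s(K\xi,K\chi)$ by moving $K$ past $\Psi_\gamma(\nu_\epsilon)$: since $\nu_\epsilon$ is a real exterior form, $\Psi_\gamma(\nu_\epsilon)\circ K = K\circ\Psi_\gamma(\nu_\epsilon)$, so
\[
\scS_s(K\xi,K\chi)=c\,\escal{\Psi_\gamma(\nu_\epsilon)K\xi,K\chi}=c\,\escal{K\Psi_\gamma(\nu_\epsilon)\xi,K\chi}=c\,\overline{\escal{\Psi_\gamma(\nu_\epsilon)\xi,\chi}}=\frac{c}{\bar c}\,\overline{\scS_s(\xi,\chi)}.
\]
The whole content of the lemma is therefore the identity $c/\bar c = (-1)^{\binom{p}{2}+\frac{d}{4}(1+s(-1)^p)}$, which reduces to a bookkeeping exercise: the phase $c$ is one of $i^{p(p-1)/2}$ or $i^{q(q+1)/2}$ depending on the parity of $p$ and the value of $s$, and $c/\bar c$ is $(-1)^{p(p-1)/2}=(-1)^{\binom{p}{2}}$ or $(-1)^{q(q+1)/2}=(-1)^{\binom{q+1}{2}}$ respectively. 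One then uses $d=p+q$ to rewrite $\binom{q+1}{2}$ in terms of $\binom{p}{2}$ and $d$ modulo~$2$, case by case on the parities of $p$ (equivalently of $q$, since $d$ is even forces $p\equiv q\bmod 2$) and on $s\in\{+1,-1\}$, and verifies that in each of the four cases the exponent matches $\binom{p}{2}+\tfrac{d}{4}(1+s(-1)^p)$ modulo~$2$. Finally I would check that the two displayed forms of the sign agree, i.e. $\binom{p}{2}+\tfrac{d}{4}(1+s(-1)^p)\equiv\tfrac14(2p(p-1)+(1+s(-1)^p)d)\pmod 2$, which is immediate since $2\binom{p}{2}=p(p-1)$.

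**Main obstacle.** The genuinely delicate point is not any single computation but the careful case analysis needed to show the exponent collapses to the uniform formula in the statement across all four combinations of $\mathrm{parity}(p)\in\{0,1\}$ and $s\in\{\pm1\}$ — in particular one must be careful that $\tfrac{d}{4}(1+s(-1)^p)$ is always an integer (when it is nonzero, i.e. when $s(-1)^p=+1$, the factor $1+s(-1)^p=2$ and one needs $d/2\in\Z$, which holds since $d$ is even; when $s(-1)^p=-1$ the term vanishes), and that the $i$-phases chosen in Proposition~\ref{prop:Hermitian_admissible} to make $\scS_\pm$ Hermitian (rather than, say, merely sesquilinear) are exactly the ones producing this sign. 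A secondary subtlety is justifying the reduction to a $K$-compatible averaged inner product; this is where the hypothesis $\gamma(z)K=K\gamma(\bar z)$ and the reality of the orthonormal basis enter essentially, and it must be stated cleanly before the phase computation so that $K$ can be freely commuted past $\Psi_\gamma(\nu_\epsilon)$.
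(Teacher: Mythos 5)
Your proposal is correct and follows essentially the same route as the paper: the paper also averages over the finite group $\mathfrak{K}$ and then $K$-symmetrizes the resulting inner product (via $\escal{\xi,\chi}=\tfrac12(\escal{\xi,\chi}'+\overline{\escal{K\xi,K\chi}'})$, which uses $K^2=\pm\Id$ and stays $\mathfrak{K}$-invariant) so that $\escal{K\xi,K\chi}=\overline{\escal{\xi,\chi}}$, and then builds $\scS_s$ exactly as in Proposition \ref{prop:Hermitian_admissible}. The sign computation you sketch (commuting $K$ past $\Psi_\gamma(\nu_\epsilon)$ since $\nu_\epsilon$ is real, obtaining $c/\bar c$, and checking the four parity/adjoint-type cases against $(-1)^{\binom{p}{2}+\frac{d}{4}(1+s(-1)^p)}$) is precisely the ``direct computation'' the paper leaves to the reader, and it checks out.
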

 
\begin{proof}
The proof is similar to that of Proposition \ref{prop:Hermitian_admissible}. We start with a non-degenerate positive-definite Hermitian pairing on $\Sigma$ and we construct another Hermitian inner product $\escal{\cdot,\cdot}'$ on $\Sigma$ invariant under the action of the group \eqref{eq:group_units_K}. Using the invariant Hermitian inner product $\escal{\cdot,\cdot}'$ we construct a third Hermitian inner product as follows: 
\begin{equation*}
 \escal{\xi,\chi}=\frac{1}{2}(\escal{\xi,\chi}'+\overline{\escal{K\xi,K\chi}'}) \, , \qquad \forall\,\, \xi,\chi\in\Sigma.
 \end{equation*}

\noindent
This third Hermitian inner product satisfies $\escal{K\xi,K\chi}=\overline{\escal{\xi,\chi}}$ for all $\xi,\chi\in \Sigma$. Using $\escal{\cdot,\cdot}$ we construct the Hermitian pairings $\scS_\pm$ as in the proof of Proposition \ref{prop:Hermitian_admissible} and then a direct computation shows that the claimed properties are satisfied.
\end{proof}

\noindent
By Lemma \ref{lemma:compatible_admissible_pairings}, given a choice of real or quaternionic structure $K\colon \Sigma\to \Sigma$ on $(\Sigma,\gamma)$, we can always find an admissible Hermitian pairing $\scS_s$ that is either \emph{invariant}, in the sense that:
\begin{equation*}
\scS_s(K(\xi),K(\chi))=\overline{\scS_s(\xi,\chi)}  \, , \qquad \forall\,\, \xi, \chi \in \Sigma,
\end{equation*}

\noindent
or \emph{anti-invariant}, in the sense that:
\begin{equation*}
\scS_s(K(\xi),K(\chi))= - \overline{\scS_s(\xi,\chi)}  \, , \qquad \forall\,\, \xi, \chi \in \Sigma.
\end{equation*}

\noindent
We will refer to such Hermitian pairings simply as \emph{compatible}. Note that if $(V,h)$ is Euclidean, then $\scS_{+}$ can always be chosen to be invariant with respect to $K$. On the other hand, if $h$ is negative-definite, then it is $\scS_{-}$ that can always be chosen to be invariant with respect to $K$.\medskip

\noindent
Using the compatible admissible Hermitian pairings $\scS_{+}$ and $\scS_{-}$ constructed in Lemma \ref{lemma:compatible_admissible_pairings} we obtain admissible complex-bilinear pairings on $(\Sigma,\gamma)$ as stated in the following proposition.

\begin{prop}
\label{prop:complex_bilinear_admissible}
Every irreducible complex Clifford module $(\Sigma,\gamma)$ in even dimensions admits two non-degenerate complex-bilinear pairings $\scB_{\pm} \colon \Sigma \times \Sigma \to\C$ such that:
\begin{equation}
\label{eq:complex_bilinear_admissible_Bs}
\scB_+(\gamma(z)\xi,\chi)=\scB_+(\xi,\gamma(\tau(z))\chi)\quad\text{and}\quad\scB_-(\gamma(z)\xi,\chi)=\scB_-(\xi,\gamma((\pi\circ\tau)(z))\chi)
\end{equation} 

\noindent
for all $z\in\C\mathrm{l}(V^*,h^*)$ and $\xi,\chi\in\Sigma$. 
\end{prop}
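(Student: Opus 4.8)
The plan is to \emph{transfer} the admissible Hermitian pairings $\scS_\pm$ produced by Lemma \ref{lemma:compatible_admissible_pairings} to complex-bilinear pairings by composing them with the real or quaternionic structure $K$. By Lemma \ref{lemma:real_or_quaternionic_type}, the residue of $p-q$ modulo $8$ tells us whether $(\Sigma,\gamma)$ is of real or quaternionic type, and in either case Lemma \ref{lemma:compatible_admissible_pairings} provides, for each $s\in\Z_2$, an admissible Hermitian pairing $\scS_s$ compatible with the chosen structure $K$, i.e.\ one satisfying $\scS_s(K\xi,K\chi) = \delta_s\,\overline{\scS_s(\xi,\chi)}$ for a sign $\delta_s = \pm1$ explicitly given by that lemma. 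I would then set
\begin{equation*}
\scB_\pm(\xi,\chi) := \scS_\pm(\xi, K\chi)\, , \qquad \xi,\chi\in\Sigma.
\end{equation*}
Since $\scS_\pm$ is complex anti-linear in the second slot and $K$ is complex anti-linear, the composite $\scB_\pm$ is complex-bilinear, and it is non-degenerate because $\scS_\pm$ is non-degenerate and $K$ is a bijection.

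For the adjoint-type relation, the key identity is the intertwining property $\gamma(z)\circ K = K\circ\gamma(\bar z)$ of the real/quaternionic structure, which, applied to $z' = \overline{(\pi^{\frac{1-s}{2}}\circ\tau)(z)}$, rephrases as $\gamma(\overline{(\pi^{\frac{1-s}{2}}\circ\tau)(z)})\circ K = K\circ\gamma((\pi^{\frac{1-s}{2}}\circ\tau)(z))$. Feeding this together with the Hermitian admissibility of $\scS_s$ into $\scB_s(\gamma(z)\xi,\chi) = \scS_s(\gamma(z)\xi, K\chi)$ moves the Clifford element across $K$ and yields exactly $\scB_s(\xi,\gamma((\pi^{\frac{1-s}{2}}\circ\tau)(z))\chi)$; notice that the complex conjugation appearing in the Hermitian condition is precisely what gets absorbed by $K$, which is why no conjugation survives in the bilinear condition. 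Hence $\scB_s$ inherits the adjoint type $s$ of $\scS_s$, and taking $s=+1$ and $s=-1$ produces the two relations in the statement, with $\scB_+$ coming from $\scS_+$ and $\scB_-$ from $\scS_-$.

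It then remains to confirm that each $\scB_\pm$ is a genuine complex-bilinear pairing in the sense of Subsection \ref{subsec:complexbilinearvectors}, that is, symmetric or skew-symmetric. Writing $K^2 = \epsilon\,\Id$ with $\epsilon = \pm 1$, a short computation combining the Hermitian symmetry of $\scS_s$ with the compatibility relation $\scS_s(K\xi,K\chi) = \delta_s\,\overline{\scS_s(\xi,\chi)}$ gives $\scB_\pm(\chi,\xi) = \delta_\pm\epsilon\,\scB_\pm(\xi,\chi)$, so $\scB_\pm$ has symmetry type $\sigma = \delta_\pm\epsilon$. I expect the main work — indeed essentially the only content beyond the formal manipulations above — to be the sign bookkeeping: matching the type of $K$ furnished by Lemma \ref{lemma:real_or_quaternionic_type} with the correct choice of $\scS_s$, and evaluating $\delta_\pm$ and $\epsilon$ in each class of $(p,q)$ modulo $8$, in order to check that $\scB_\pm$ is always genuinely $(\text{skew-})$symmetric and to record its symmetry type $\sigma$ for use in the characterization theorems that follow. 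This is a finite, if somewhat tedious, case analysis, which can be carried out once and tabulated.
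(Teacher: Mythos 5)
Your proposal is correct and follows essentially the same route as the paper: both define $\scB_s(\xi,\chi):=\scS_s(\xi,K\chi)$ using the real or quaternionic structure $K$ from Lemma \ref{lemma:real_or_quaternionic_type} and a compatible admissible Hermitian pairing from Lemma \ref{lemma:compatible_admissible_pairings}, with the intertwining relation $\gamma(z)\circ K=K\circ\gamma(\bar z)$ absorbing the conjugation so that $\scB_s$ inherits the adjoint type $s$. The only remark is that the final "check" is automatic, since $\sigma=\delta_\pm\epsilon\in\{\pm1\}$ always yields a genuinely symmetric or skew-symmetric pairing; the case analysis merely records which sign occurs, exactly as the paper tabulates.
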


\begin{proof}
 Let $(\Sigma,\gamma)$ be an irreducible complex Clifford module. If $(\Sigma,\gamma)$ is of real type, we choose a real structure $\frc$ and a compatible admissible Hermitian pairing $\scS$ on $(\Sigma,\gamma)$ and define:
 \begin{equation*}
\scB_{s}(\xi,\chi) := \scS_{s}(\xi,\frc(\chi)) \qquad \forall\,\, \xi, \chi \in \Sigma\, .
 \end{equation*}

\noindent
Then $\scB_{s}$ has the same adjoint type as $\scS_{s}$ and has the following symmetry type:
\begin{equation*}
\scB_{s}(\xi,\chi) = (-1)^{\binom{p}{2}+\frac{d}{4}(1+s(-1)^p)}\, \scB_{s}(\chi,\xi),
\end{equation*}

\noindent
where $s\in\mathbb{Z}_2$. Similarly, if $(\Sigma,\gamma)$ is of quaternionic type, we choose a quaternionic structure $J$ and a compatible admissible Hermitian pairing $\scS_{s}$ on $(\Sigma,\gamma)$ and define:
\begin{equation*}
\scB_{s}(\xi,\chi) := \scS_{s}(\xi,J(\chi)) 
\end{equation*}

\noindent
for every $\xi, \chi \in \Sigma$. Then it can be readily checked that $\scB_{s}$ is an admissible complex-bilinear pairing on $(\Sigma,\gamma)$, whose adjoint type is the same as that of $\scS_{s}$ and whose symmetry type is:
\begin{equation*}
\scB_{s}(\xi,\chi)  =  - (-1)^{\binom{p}{2}+\frac{d}{4}(1+s(-1)^p)}\, \scB_{s}(\chi,\xi)  
\end{equation*}

\noindent
for all $\xi, \chi \in \Sigma$.
\end{proof}

\begin{remark}
From the construction of the admissible complex-bilinear pairings $\scB_{+}$ and $\scB_{-}$ in terms of $\scS_{+}$ and $\scS_{-}$, it immediately follows that:
\begin{equation*}
\scB_-(\xi,\chi) = i^{\binom{p}{2}}i^{\binom{q+1}{2}}\scB_+(\Psi_\gamma(\nu)\xi,\chi)\, , \qquad \xi , \chi \in \Sigma,
\end{equation*}

\noindent
which is the same relation as in Proposition \ref{prop:relationhermitianproducts}.
\end{remark}

\noindent
We summarize in Table \ref{tab:symmetry_grid_singlereal_modified} and Table \ref{tab:symmetry_grid_singleq_modified} the symmetry properties of $\scB_s$, which depend on the \emph{modulo 4} value of $p$ and $d$.\medskip

\begin{table}
\centering
\caption{Symmetry type of $\scB_{s}$ when $p-q \equiv_8 0,2$}
\label{tab:symmetry_grid_singlereal_modified}
\renewcommand{\arraystretch}{1.3} % Add vertical spacing for readability with lines

% The {|c|c|c|c|} creates the vertical lines, including the exterior ones.
\begin{tabular}{|c|c|c|c|}
\hline % Top exterior line
% Header Row
\textbf{$p$ mod 4} & \textbf{$d$ mod 4} & \textbf{Symmetry of $\scB_{+}$} & \textbf{Symmetry of $\scB_{-}$} \\ \hline % Single line after header
% Data Rows
\multirow{2}{*}{\textbf{0}} & \textbf{0} & Symmetric & Symmetric \\ \cline{2-4}
& \textbf{2} & Skew-symmetric & Symmetric \\ \hline
\multirow{2}{*}{\textbf{1}} & \textbf{0} & Symmetric & Symmetric \\ \cline{2-4}
& \textbf{2} & Symmetric & Skew-symmetric \\ \hline
\multirow{2}{*}{\textbf{2}} & \textbf{0} & Skew-symmetric & Skew-symmetric \\ \cline{2-4}
& \textbf{2} & Symmetric & Skew-symmetric \\ \hline
\multirow{2}{*}{\textbf{3}} & \textbf{0} & Skew-symmetric & Skew-symmetric \\ \cline{2-4}
& \textbf{2} & Skew-symmetric & Symmetric \\ \hline % Bottom exterior line
\end{tabular}
\end{table}

\begin{table}
\centering
\caption{Symmetry type of $\scB_{s}$ when $p-q \equiv_8 4,6$}
\label{tab:symmetry_grid_singleq_modified}
\renewcommand{\arraystretch}{1.3} % Add vertical spacing for readability with lines

% The {|c|c|c|c|} creates the vertical lines, including the exterior ones.
\begin{tabular}{|c|c|c|c|}
\hline % Top exterior line
% Header Row
\textbf{$p$ mod 4} & \textbf{$d$ mod 4} & \textbf{Symmetry of $\scB_{+}$} & \textbf{Symmetry of $\scB_{-}$} \\ \hline % Single line after header
% Data Rows
\multirow{2}{*}{\textbf{0}} & \textbf{0} & Skew-symmetric & Skew-symmetric \\ \cline{2-4}
& \textbf{2} & Symmetric & Skew-symmetric \\ \hline
\multirow{2}{*}{\textbf{1}} & \textbf{0} & Skew-symmetric & Skew-symmetric \\ \cline{2-4}
& \textbf{2} & Skew-symmetric & Symmetric \\ \hline
\multirow{2}{*}{\textbf{2}} & \textbf{0} & Symmetric & Symmetric \\ \cline{2-4}
& \textbf{2} & Skew-symmetric & Symmetric \\ \hline
\multirow{2}{*}{\textbf{3}} & \textbf{0} & Symmetric & Symmetric \\ \cline{2-4}
& \textbf{2} & Symmetric & Skew-symmetric \\ \hline % Bottom exterior line
\end{tabular}
\end{table}

\noindent
In particular, if $p$ is odd, that is if $p \equiv_4 1,3$, the symmetry type of $\scB_{+}$ depends only on $p$ and is independent of $d$. On the other hand, if $p$ is even, that is if $p \equiv_4 0,2$, it is the symmetry of $\scB_{-}$ that only depends on $p$ and is independent of $d$. 

\begin{remark}
Note that the relations given in \eqref{eq:complex_bilinear_admissible_Bs} can be uniformly written as follows: 
\begin{eqnarray}
\label{eq:gamma_t}
\gamma(z)^t=\gamma((\pi^{\frac{1-s}{2}}\circ\tau)(z))    
\end{eqnarray}

\noindent
for all $z\in\C\mathrm{l}(V^*,h^*)$, where $\gamma(z)^t$ denotes the adjoint with respect to the admissible complex-bilinear pairing $\scB_s$ of adjoint type $s\in\Z_2$.
\end{remark}

\begin{lemma}
\label{lemma:exterior_form_t}
Let $\alpha\in\wedge V^*_\C$ and let $\scB_s$ be an admissible complex-bilinear pairing of adjoint type $s\in\Z_2$. Then:
\begin{equation*}
\Psi_\gamma(\alpha)^t=\Psi_\gamma((\pi^{\frac{1-s}{2}}\circ\tau)(\alpha)).
\end{equation*}
\end{lemma}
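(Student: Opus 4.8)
The plan is to reduce the statement to the already-established relation \eqref{eq:gamma_t}, exactly in the same way Lemma \ref{lemma:exterior_form_dagger} was deduced from \eqref{eq:gamma_dagger}. Recall that $\Psi_\gamma = \gamma\circ\Psi$, where $\Psi = \Psi_0\otimes\C$ is the (complexified) Chevalley--Riesz isomorphism $(\wedge V^*_\C,\diamond)\to(\Cl(V^*,h^*)\otimes_\R\C,\cdot)$. The first thing I would record is that $\Psi$ intertwines the natural involutions, i.e.\ $\pi\circ\Psi = \Psi\circ\pi$ and $\tau\circ\Psi = \Psi\circ\tau$, where on the left-hand side $\pi$ and $\tau$ denote the grading automorphism and the transpose anti-automorphism of the Clifford algebra and on the right-hand side the corresponding operations on $\wedge V^*_\C$ (parity automorphism and reversal anti-automorphism). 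This is immediate since the Chevalley--Riesz isomorphism identifies the $\Z_2$-grading and the filtration on both sides and is the identity on degree one; it therefore commutes with $\pi$ and $\tau$, and hence with the composite $\pi^{\frac{1-s}{2}}\circ\tau$ for either value of $s\in\Z_2$.

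Given this, the proof is a one-line substitution. For $\alpha\in\wedge V^*_\C$ write $z := \Psi(\alpha)\in\C\mathrm{l}(V^*,h^*)$. Then
\begin{equation*}
\Psi_\gamma(\alpha)^t = \gamma(\Psi(\alpha))^t = \gamma\big((\pi^{\frac{1-s}{2}}\circ\tau)(\Psi(\alpha))\big) = \gamma\big(\Psi((\pi^{\frac{1-s}{2}}\circ\tau)(\alpha))\big) = \Psi_\gamma\big((\pi^{\frac{1-s}{2}}\circ\tau)(\alpha)\big),
\end{equation*}
where the second equality is \eqref{eq:gamma_t} applied to $z=\Psi(\alpha)$ and the third equality uses that $\Psi$ commutes with $\pi^{\frac{1-s}{2}}\circ\tau$. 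This is valid for every $\alpha\in\wedge V^*_\C$, which is the assertion.

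I do not anticipate any genuine obstacle: the content has been entirely front-loaded into \eqref{eq:gamma_t}, which in turn packages the two defining relations \eqref{eq:complex_bilinear_admissible_Bs} of the admissible complex-bilinear pairing $\scB_s$. The only point requiring a word of justification is the compatibility of $\Psi$ with $\pi$ and $\tau$; if one wishes to be fully explicit, this can be checked on the generating set $\{e^i\}$ of $\wedge V^*_\C$ (on which $\Psi$ acts as the inclusion $V^*_\C\hookrightarrow\Cl(V^*_\C,h^*_\C)$ and both $\pi$, $\tau$ act as $-\mathrm{id}$ resp.\ $+\mathrm{id}$) and then extended multiplicatively, since $\Psi$ intertwines $\diamond$ with Clifford multiplication and $\pi$ (resp.\ $\tau$) is an algebra automorphism (resp.\ anti-automorphism) on both sides. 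The remainder is the formal manipulation above, identical in structure to the proof of Lemma \ref{lemma:exterior_form_dagger} but with $\dagger$ replaced by $t$, $\scS_s$ by $\scB_s$, and without the complex conjugation.
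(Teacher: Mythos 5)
Your proposal is correct and follows essentially the same route as the paper: note that $\Psi$ intertwines $\pi$ and $\tau$, set $z=\Psi(\alpha)$, and apply relation \eqref{eq:gamma_t}, exactly mirroring the proof of Lemma \ref{lemma:exterior_form_dagger}. The extra remark on verifying the intertwining on generators is fine but not needed beyond what the paper states.
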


\begin{proof}
Note that $\pi\circ\Psi=\Psi\circ\pi$ and $\tau\circ\Psi=\Psi\circ\tau$. Using relation \eqref{eq:gamma_t} we get: 
\begin{eqnarray*}
\Psi_\gamma(\alpha)^t = \gamma(\Psi(\alpha))^t = \gamma((\pi^{\frac{1-s}{2}}\circ\tau)(\Psi(\alpha)))=\gamma(\Psi((\pi^{\frac{1-s}{2}}\circ\tau)(\alpha)))=\Psi_\gamma((\pi^{\frac{1-s}{2}}\circ\tau)(\alpha))   
\end{eqnarray*}

\noindent
for every $\alpha\in\wedge V^*_\C$.
\end{proof}

\begin{thm}
\label{thm:bilinearsquare}
Let $(\Sigma,\gamma)$ be an irreducible complex Clifford module equipped with an admissible complex-bilinear pairing $\scB_s$ of adjoint type $s\in \mathbb{Z}_2$ and symmetry type $\sigma\in \mathbb{Z}_2$. Then the following statements are equivalent for a complex exterior form $\alpha\in\wedge V^*_\C$: 
\begin{enumerate}
\item[\normalfont(a)] $\alpha$ is the complex-bilinear square of a spinor $\xi\in\Sigma$, that is, $\alpha=\cE_\gamma(\xi)$.
\item[\normalfont(b)] $\alpha$ satisfies the following relations: 
\begin{eqnarray*}
\alpha\diamond\alpha = 2^{\frac{d}{2}} \alpha^{(0)}\alpha\,, \qquad (\pi^{\frac{1-s}{2}}\circ\tau) (\alpha) = \sigma \alpha\, , \qquad \alpha \diamond \beta \diamond \alpha = 2^{\frac{d}{2}}(\alpha\diamond\beta)^{(0)}\alpha
\end{eqnarray*}

\noindent
for a fixed exterior form $\beta\in\wedge V^*_\C$ satisfying $(\alpha\diamond\beta)^{(0)} \neq 0$.
\item[\normalfont(c)] The following relations hold: 
\begin{equation*}
(\pi^{\frac{1-s}{2}}\circ\tau)(\alpha)=\sigma\alpha\,, \qquad \alpha \diamond \beta \diamond \alpha = 2^{\frac{d}{2}}(\alpha\diamond\beta)^{(0)}\alpha
\end{equation*}

\noindent
for every exterior form $\beta\in\wedge V^*_\C$.
\end{enumerate}
\end{thm}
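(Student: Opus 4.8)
The plan is to mirror the proof of Theorem \ref{thm:even_Hermitian_forms}, transferring the purely algebraic characterization of Proposition \ref{prop:algebraichbilinear} through the unital algebra isomorphism $\Psi_\gamma\colon(\wedge V^*_\C,\diamond)\to(\End(\Sigma),\circ)$. First I would set $E:=\Psi_\gamma(\alpha)$ and, for the auxiliary form, $A:=\Psi_\gamma(\beta)$, noting that since $\Psi_\gamma$ is a bijection, letting $\beta$ range over $\wedge V^*_\C$ is the same as letting $A$ range over all of $\End(\Sigma)$; also $E$ is non-zero precisely when $\alpha\neq 0$, and $\alpha\in\Im(\cE_\gamma)$ if and only if $E\in\Im(\cE)$ because $\cE_\gamma=\Psi_\gamma^{-1}\circ\cE$ by definition. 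Since $\scB_s$ has symmetry type $\sigma$, the paired vector space $(\Sigma,\scB_s)$ has symmetry type $\sigma$, so Proposition \ref{prop:algebraichbilinear} applies with this $\sigma$.

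Next I would translate the three algebraic relations. Using Lemma \ref{lemma:exterior_form_t}, the symmetry relation $(\pi^{\frac{1-s}{2}}\circ\tau)(\alpha)=\sigma\alpha$ is equivalent to $E^t=\sigma E$, where $t$ denotes the adjoint with respect to $\scB_s$. For the cubic relations, the key input is the identity $\mathcal{T}(\zeta)=2^{\frac{d}{2}}\zeta^{(0)}$ for the Kähler-Atiyah trace together with $\mathcal{T}(\zeta)=\Tr(\Psi_\gamma(\zeta))$ and the multiplicativity of $\Psi_\gamma$: applied to $\zeta=\alpha\diamond\beta$ this yields $2^{\frac{d}{2}}(\alpha\diamond\beta)^{(0)}=\Tr(E\circ A)$, so $\alpha\diamond\beta\diamond\alpha=2^{\frac{d}{2}}(\alpha\diamond\beta)^{(0)}\alpha$ is equivalent to $E\circ A\circ E=\Tr(E\circ A)E$; taking $\beta=1$ (hence $A=\Id$) shows $\alpha\diamond\alpha=2^{\frac{d}{2}}\alpha^{(0)}\alpha$ is equivalent to $E\circ E=\Tr(E)E$; and $(\alpha\diamond\beta)^{(0)}\neq 0$ becomes $\Tr(E\circ A)\neq 0$.

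With these dictionaries in place, statement (c) of the theorem becomes verbatim condition (b) of Proposition \ref{prop:algebraichbilinear} (the relations holding for every $A\in\End(\Sigma)$), statement (b) of the theorem becomes condition (c) of the proposition (the relations holding for a fixed $A$ with $\Tr(E\circ A)\neq 0$, together with the redundant relation $E\circ E=\Tr(E)E$), and statement (a) of the theorem is condition (a) of the proposition. Hence the equivalences (a)$\Leftrightarrow$(b)$\Leftrightarrow$(c) are immediate from that proposition. There is no genuine obstacle here: the argument is the exact complex-bilinear analogue of the proof of Theorem \ref{thm:even_Hermitian_forms}, and the only points demanding care are the consistent bookkeeping of the trace normalisation $2^{\frac{d}{2}}$ and the use of Lemma \ref{lemma:exterior_form_t} to identify the transpose $\gamma(z)^t$ (with the symmetry factor $\sigma$), in place of the adjoint $\gamma(z)^\dagger$ (with the phase $\kappa$) used in the Hermitian case.
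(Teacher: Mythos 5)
Your proposal is correct and follows essentially the same route as the paper's own proof: transport everything through the unital algebra isomorphism $\Psi_\gamma$, use Lemma \ref{lemma:exterior_form_t} for the transpose and the Kähler-Atiyah trace identity $\mathcal{T}(\zeta)=2^{\frac{d}{2}}\zeta^{(0)}$ for the trace conditions, and then invoke Proposition \ref{prop:algebraichbilinear}. Your version simply spells out the dictionary between the conditions in (a)--(c) and those of the proposition in more detail than the paper does.
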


\begin{proof}
Since $\Psi_\gamma\colon\wedge V^*_\C\to\End(\Sigma)$ is a unital isomorphism of algebras, $\alpha$ satisfies the conditions in (c) if and only if: 
\begin{eqnarray*}
E^t = E\quad\text{and}\quad E\circ A\circ E = \Tr(E\circ A) E \quad \forall A\in\End(\Sigma),
\end{eqnarray*}

\noindent
where $E:=\Psi_\gamma(\alpha)$ and $A:=\Psi_\gamma(\beta)$. The conclusion follows then by using Lemma \ref{lemma:exterior_form_t} together with the definition and properties of the Kähler-Atiyah trace and Proposition \ref{prop:algebraichbilinear}.
\end{proof}

\noindent
By the previous theorem, the collection of all Hermitian squares of irreducible complex spinors is a semi-algebraic subset of $\wedge V^{\ast}_{\C}$ depending exclusively on $s\in \mathbb{Z}_2$ and $\sigma \in \mathbb{Z}_2$. Let $(\Sigma,\gamma,\scB)$ be an irreducible complex-bilinear paired Clifford module. Denote by $\dot{\Sigma}_{\scB} \subset \Sigma$ the open set of non-vanishing norm spinors with respect to $\scB$. By Theorem \ref{thm:bilinearsquare} it follows that:
\begin{equation*}
\cE_\gamma(\dot{\Sigma}_{\scB}) = \left\{ \alpha\in \wedge V^{\ast}_{\C} \,\, \vert\,\, \alpha^{(0)} \neq 0\, ,  \,\, \alpha\diamond\alpha = 2^{\frac{d}{2}} \alpha^{(0)}\alpha\, , \,\, (\pi^{\frac{1-s}{2}}\circ\tau) (\alpha) = \sigma \alpha  \right\}.
\end{equation*}

\noindent
Hence, $\cE_\gamma(\dot{\Sigma}_{\scB})$ is a linear complex subspace of a complex cone in the open subset of $\wedge V^{\ast}_{\C}$ determined by the condition $\alpha^{(0)} \neq 0$.\medskip

\noindent
Similarly to the Hermitian square of a spinor considered in the previous subsection, for the complex-bilinear square of a spinor $\eta\in \Sigma$, we have: 
\begin{eqnarray}
\label{eq:explicitbilinear}
\alpha_{\eta} = \frac{1}{2^{\frac{d}{2}}} \scB(\eta,\eta) + \frac{1}{2^{\frac{d}{2}}} \sum_{k=1}^d \sum_{i_1 < \cdots < i_k} \scB((\gamma^{i_1 \cdots i_k})^{-1}\eta , \eta)\, e^{i_1}\wedge  \cdots \wedge e^{i_k}.
\end{eqnarray}

\noindent
This gives the explicit form of the complex-bilinear square of an irreducible complex spinor $\eta \in \Sigma$ in terms of the gamma matrices associated to an explicit orthonormal basis of $(V^{\ast} , h^{\ast})$. A direct computation shows that:
\begin{equation*}
(\alpha_\eta^-)^{(k)}=i^{\binom{p}{2}}i^{\binom{q+1}{2}}\frac{k!}{(d-k)!}*\tau((\alpha_\eta^+)^{(d-k)}),
\end{equation*}

\noindent
where $\alpha_{\eta}^{+}$ is complex-bilinear square of $\eta$ relative to $\scB_{+}$ and $\alpha_{\eta}^{-}$ is complex-bilinear square of $\eta$ relative to $\scB_{-}$. As in the Hermitian case, the choice of an admissible complex-bilinear pairing to construct the complex-bilinear square of a spinor becomes a matter of taste and computational convenience.  

% % % % % % % % % % % % % % % % % % % % % % % % % % % % % % % % % % % % % %

\subsection{Equivariance properties of the complex-bilinear square map}
\label{subsec:Bilinearequivariance}

% % % % % % % % % % % % % % % % % % % % % % % % % % % % % % % % % % % % % %

We consider now the equivariance properties of the complex-bilinear square map. Note that $\scB$ is invariant under $\Spin_o(p,q)$ but not $\Spin_o^c(p,q)$. We introduce the following morphism of groups:
\begin{eqnarray*}
\Ad^c\colon \Spin_o^c(p,q) \to \SO_o(p,q) \times \U(1) \, , \qquad [u,z]\mapsto \Ad^c_{[u,z]} := (\Ad_u,z^2),
\end{eqnarray*}

\noindent
where $\Ad\colon \Spin_o(p,q) \to \SO_o(p,q)$ is the standard double cover of $\Spin_o(p,q)$. Hence, the map $\Ad^c\colon \Spin_o^c(p,q) \to \SO_o(p,q) \times \U(1)$ is a double cover of $\SO_o(p,q) \times \U(1)$. For every $\eta\in \Sigma$ and $[u,z] \in \Spin_o^c(p,q)$, we compute:
\begin{eqnarray*}
\cE_\gamma(\gamma([u,z])\eta) = \cE_\gamma (z\gamma(u)\eta) = z^2 \cE_\gamma ( \gamma(u)\eta) = z^2\Psi_{\gamma}^{-1}(\scB( - , \gamma(u)\eta) \gamma(u)\eta) = \Ad^c_{[u,z]}(\cE_\gamma (\eta)),
\end{eqnarray*}

\noindent
where $\Ad^c_{[u,z]} = (\Ad_u , z^2)$ acts on $\wedge V^{\ast}_\C$ via the standard orthogonal action of $\Ad_u$ and multiplication by $z^2$. Hence, we conclude that $\cE_\gamma \colon \Sigma \to \wedge V^*_\C$ is equivariant with respect to the  \emph{double cover representation} $\Ad^c \colon \Spin_o^c(p,q) \to \SO_o(p,q)\times \U(1)$ and its natural action on $\wedge V^*_\C$. This implies in particular that the Lie algebra of the stabilizer of $\cE_\gamma(\eta)$ in $\SO_o(p,q)\times \U(1)$ is equal to the Lie algebra of the stabilizer of $\eta$ in $\Spin^c_o(p,q)$.

% % % % % % % % % % % % % % % % % % % % % % % % % % % % % % % % % % % % % %

\subsection{The square of a complex chiral spinor}

% % % % % % % % % % % % % % % % % % % % % % % % % % % % % % % % % % % % % %

Let $V$ be an oriented real vector space of even dimension $d=p+q$ equipped with a non-degenerate metric $h$ of signature $(p,q)$. Consider the complexified Clifford algebra $\C\mathrm{l}(V^*,h^*)$ associated to $(V^*,h^*)$ and let:
\begin{equation*}
\nu_\C:=i^{q+\frac{d}{2}}\nu\in\C\mathrm{l}(V^*,h^*)    
\end{equation*}

\noindent
be its pseudo-Riemannian complex volume form. It follows that $\nu_\C$ satisfies $\nu_\C^2=1$ and it lies in the center of the even Clifford algebra $\C\mathrm{l}^{\mathrm{ev}}(V^*,h^*)$. Therefore we can decompose $\C\mathrm{l}^{\mathrm{ev}}(V^*,h^*)$ into the $\pm1$-eigenspaces of $\nu_\C$ as:
\begin{equation*}
\C\mathrm{l}^{\mathrm{ev}}(V^*,h^*)=\C\mathrm{l}^{\mathrm{ev}}_+(V^*,h^*)\oplus\C\mathrm{l}^{\mathrm{ev}}_-(V^*,h^*)\, ,    
\end{equation*}

\noindent
where:
\begin{equation*}
\C\mathrm{l}^{\mathrm{ev}}_{\pm}(V^*,h^*)=\{\alpha\in\C\mathrm{l}^{\mathrm{ev}}(V^*,h^*)\mid\nu_\C\alpha=\pm\alpha\}=\frac{1}{2}(1\pm\nu_\C)\C\mathrm{l}^{\mathrm{ev}}(V^*,h^*)\, .
\end{equation*}

\noindent
We decompose the complex representation space $\Sigma$ accordingly as:
\begin{equation*}
\Sigma=\Sigma^+ \oplus \Sigma^-\, ,
\end{equation*}

\noindent
where:
\begin{equation*}
 \Sigma^\pm=\{\eta \in \Sigma\mid\gamma(\nu_\C)\xi=\pm\xi\}=\frac{1}{2}\big(\Id\pm\gamma(\nu_\C)\big)\Sigma\, .   
\end{equation*}

\noindent
The subspaces $\Sigma^\pm\subset\Sigma$ are preserved by the restriction of $\gamma$ to $\C\mathrm{l}^{\mathrm{ev}}(V^*,h^*)$. Hence, the restriction of $\gamma$ to $\C\mathrm{l}^{\mathrm{ev}}(V^*,h^*)$ decomposes as a sum of two irreducible representations:
\begin{equation*}
\gamma^\pm\colon\C\mathrm{l}^{\mathrm{ev}}(V^*,h^*)\to\End(\Sigma^\pm)    
\end{equation*}

\noindent
distinguished by the value they take on the complex volume form $\nu_\C \in \C \mathrm{l}^{\mathrm{ev}}(V^*,h^*)$, namely: 
\begin{equation*}
\gamma^\pm(\nu_\C)=\pm\Id\, .    
\end{equation*}
 
\noindent
A spinor $\eta\in\Sigma$ is called \emph{chiral of chirality $\mu\in\mathbb{Z}_2$} if it belongs to $\Sigma^\mu\subset\Sigma$. Let $\alpha\in\wedge V^*_\C$ be either the Hermitian or complex-bilinear square of the spinor $\eta\in\Sigma$. By Lemma \ref{lemma:constrainedspinoreven}, $\eta$ being chiral is equivalent to:
\begin{equation*}
 i^{q + \frac{d}{2}}\nu\diamond\alpha=\nu_\C\diamond\alpha=\mu\alpha,
\end{equation*}

\noindent
which, by Lemma \ref{lemma:product_volume_form}, is equivalent to: 
\begin{equation*}
i^{q + \frac{d}{2}}*(\pi\circ\tau)(\alpha)=\mu\alpha.
\end{equation*}

\noindent
Combining this result with Theorems \ref{thm:even_Hermitian_forms} and \ref{thm:bilinearsquare}, we obtain the algebraic characterization of both the Hermitian and complex-bilinear squares of a chiral irreducible complex spinor, as stated in the following corollaries.

\begin{cor}
\label{cor:sq_S_chiral}
Let $(\Sigma,\gamma)$ be an irreducible complex Clifford module equipped with an admissible Hermitian pairing $\scS_s$ of adjoint type $s \in \mathbb{Z}_2$. Then the following statements are equivalent for a complex exterior form $\alpha\in\wedge V^*_\C$, where $\mu\in\mathbb{Z}_2$ is a fixed chirality type: 
\begin{enumerate}
\item[\normalfont(a)] $\alpha$ is the Hermitian square of a chiral spinor with chirality $\mu$, for some $\kappa\in\U(1)$.
\item[\normalfont(b)] $\alpha$ satisfies the following relations: 
\begin{equation*}
\alpha\diamond\alpha=2^{\frac{d}{2}}\alpha^{(0)}\alpha\, , \quad (\pi^{\frac{1-s}{2}}\circ\tau)(\bar\kappa\alpha)=  \kappa \bar{\alpha} , \quad \alpha \diamond \beta \diamond\alpha=2^{\frac{d}{2}}(\alpha\diamond\beta)^{(0)}\alpha\, ,\quad i^{q+\frac{d}{2}} \ast (\pi\circ\tau)(\alpha)=\mu\alpha
\end{equation*}

\noindent
for a fixed exterior form $\beta\in\wedge V^*_\C$ satisfying $(\alpha \diamond \beta)^{(0)} \neq 0$.
\item[\normalfont(c)] The following relations hold: 
\begin{equation*}
(\pi^{\frac{1-s}{2}}\circ\tau)(\bar\kappa\alpha)=  \kappa \bar{\alpha} , \quad \alpha \diamond \beta \diamond\alpha=2^{\frac{d}{2}}(\alpha\diamond\beta)^{(0)}\alpha\, ,\quad i^{q+\frac{d}{2}} \ast (\pi\circ\tau)(\alpha)=\mu\alpha
\end{equation*}

\noindent
for every exterior form $\beta\in\wedge V^*_\C$.
\end{enumerate}
\end{cor}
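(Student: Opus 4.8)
The plan is to deduce this corollary directly from Theorem \ref{thm:even_Hermitian_forms}, which already characterizes the Hermitian squares of arbitrary (not necessarily chiral) irreducible complex spinors, by adjoining the single extra algebraic condition that encodes chirality. As noted in the discussion preceding the statement, a spinor $\xi\in\Sigma$ has chirality $\mu$ precisely when $\gamma(\nu_\C)\xi = \mu\,\xi$, that is, when $\xi$ lies in the kernel of $Q := \gamma(\nu_\C) - \mu\,\Id\in\End(\Sigma)$, whose dequantization is $\mathfrak{q} := \Psi_\gamma^{-1}(Q) = \nu_\C - \mu\in\wedge V^*_\C$ (here we use that the Chevalley-Riesz isomorphism $\Psi$ carries the exterior volume form to the Clifford volume element). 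Applying Lemma \ref{lemma:constrainedspinoreven} to this $Q$ and rewriting $\nu_\C\diamond\alpha = i^{q+\frac d2}\,\nu\diamond\alpha = i^{q+\frac d2}\,\ast(\pi\circ\tau)(\alpha)$ by Lemma \ref{lemma:product_volume_form}, we obtain: if $\alpha = \hcE_\gamma^\kappa(\xi)$ is a Hermitian square, then $\xi$ has chirality $\mu$ if and only if $i^{q+\frac d2}\,\ast(\pi\circ\tau)(\alpha) = \mu\,\alpha$. This is the extra relation appearing in (b) and (c).

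With this reduction in hand, I would close the cycle $(a)\Rightarrow(b)\Rightarrow(c)\Rightarrow(a)$ by appealing to Theorem \ref{thm:even_Hermitian_forms}. For $(a)\Rightarrow(b)$: if $\alpha = \hcE_\gamma^\kappa(\xi)$ with $\xi$ of chirality $\mu$, then Theorem \ref{thm:even_Hermitian_forms} $(a)\Rightarrow(b)$ gives the first three relations of (b) for any fixed $\beta$ with $(\alpha\diamond\beta)^{(0)}\neq 0$, and such a $\beta$ exists since $\alpha\neq 0$, hence $E:=\Psi_\gamma(\alpha)\neq 0$, and the Kähler-Atiyah trace is non-degenerate, so some $\beta = \Psi_\gamma^{-1}(A)$ satisfies $2^{\frac d2}(\alpha\diamond\beta)^{(0)} = \cT(\alpha\diamond\beta) = \Tr(E\circ A)\neq 0$; the fourth relation of (b) holds by the chirality equivalence above. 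For $(b)\Rightarrow(c)$: the relations in (b) for a fixed $\beta$ yield, via Theorem \ref{thm:even_Hermitian_forms} $(b)\Rightarrow(c)$, the first two relations of (c) for every $\beta$, while the chirality relation is common to (b) and (c). For $(c)\Rightarrow(a)$: the first two relations of (c) hold for every $\beta$, so Theorem \ref{thm:even_Hermitian_forms} $(c)\Rightarrow(a)$ produces $\xi\in\Sigma$ and $\kappa\in\U(1)$ with $\alpha = \hcE_\gamma^\kappa(\xi)$, and the remaining relation of (c) then forces $\xi$ to have chirality $\mu$ by the equivalence above, giving (a). Note that $\alpha\diamond\alpha = 2^{\frac d2}\alpha^{(0)}\alpha$ is just the $\beta = 1$ instance of the cubic relation, which is why it may be omitted from (c).

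This argument is essentially bookkeeping on top of Theorem \ref{thm:even_Hermitian_forms}, so I do not expect a serious obstacle. The only points requiring care are the identification of the dequantization $\Psi_\gamma^{-1}(\gamma(\nu_\C))$ with the exterior complex volume form $\nu_\C\in\wedge V^*_\C$, and keeping straight the distinction between a \emph{fixed} auxiliary form $\beta$ (statements (b)) and an \emph{arbitrary} $\beta$ (statements (c)), exactly as in the proof of Theorem \ref{thm:even_Hermitian_forms}; the chirality relation commutes with everything since it does not involve $\beta$.
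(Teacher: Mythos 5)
Your proposal is correct and follows essentially the same route as the paper: the paper also obtains the extra relation $i^{q+\frac{d}{2}}\ast(\pi\circ\tau)(\alpha)=\mu\alpha$ by applying Lemma \ref{lemma:constrainedspinoreven} to $Q=\gamma(\nu_\C)-\mu\,\Id$ (whose dequantization is $\nu_\C-\mu$) together with Lemma \ref{lemma:product_volume_form}, and then combines this with Theorem \ref{thm:even_Hermitian_forms}. Your explicit bookkeeping of the cycle $(a)\Rightarrow(b)\Rightarrow(c)\Rightarrow(a)$ just spells out what the paper leaves implicit.
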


\begin{cor}
\label{cor:sq_B_chiral}
Let $(\Sigma,\gamma)$ be an irreducible complex Clifford module equipped with an admissible complex-bilinear pairing $\scB_s$ of adjoint type $s\in\mathbb{Z}_2$ and symmetry type $\sigma\in\mathbb{Z}_2$. Then the following statements are equivalent for a complex exterior form $\alpha\in\wedge V^*_\C$, where $\mu\in\mathbb{Z}_2$ is a fixed chirality type: 
\begin{enumerate}
\item[\normalfont(a)] $\alpha$ is the complex-bilinear square of a chiral spinor with chirality $\mu$.
\item[\normalfont(b)] $\alpha$ satisfies the following relations: 
\begin{equation*}
\alpha\diamond\alpha=2^{\frac{d}{2}} \alpha^{(0)}\alpha\, , \quad (\pi^{\frac{1-s}{2}}\circ\tau)(\alpha) = \sigma \alpha\, , \quad \alpha \diamond \beta\diamond\alpha=2^{\frac{d}{2}}(\alpha\diamond\beta)^{(0)}\alpha\, ,\quad i^{q+\frac{d}{2}}\ast (\pi\circ\tau)(\alpha)=\mu\alpha
\end{equation*}

\noindent
for a fixed exterior form $\beta\in\wedge V^*_\C$ satisfying $(\alpha\diamond\beta)^{(0)} \neq 0$.
\item[\normalfont(c)] The following relations hold: 
\begin{equation*}
(\pi^{\frac{1-s}{2}}\circ\tau)(\alpha) = \sigma \alpha\, , \quad \alpha \diamond \beta\diamond\alpha=2^{\frac{d}{2}}(\alpha\diamond\beta)^{(0)}\alpha\, ,\quad i^{q+\frac{d}{2}}\ast (\pi\circ\tau)(\alpha)=\mu\alpha
\end{equation*}

\noindent
for every exterior form $\beta\in\wedge V^*_\C$.
\end{enumerate}
\end{cor}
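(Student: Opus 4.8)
The plan is to obtain this corollary directly from Theorem~\ref{thm:bilinearsquare}, which already characterizes the complex-bilinear square of an arbitrary irreducible complex spinor, by appending the single algebraic equation $i^{q+\frac{d}{2}}\ast(\pi\circ\tau)(\alpha)=\mu\alpha$ that encodes the chirality constraint. The three relations common to (b) and (c) here are exactly those of Theorem~\ref{thm:bilinearsquare}, so the only genuine work is to show that, once $\alpha=\cE_\gamma(\xi)$ is known to be a complex-bilinear square, this extra equation is equivalent to $\xi$ being chiral of chirality $\mu$ --- which is the computation already carried out in the discussion preceding the statement.

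Concretely, I would first recall that $\xi\in\Sigma$ is chiral of chirality $\mu$ precisely when $(\gamma(\nu_\C)-\mu\,\Id)\xi=0$, and that the dequantization of $\gamma(\nu_\C)-\mu\,\Id$ is the exterior form $\nu_\C-\mu\in\wedge V^*_\C$, since the Chevalley--Riesz isomorphism is unital and sends the complex volume form $\nu_\C=i^{q+\frac{d}{2}}\nu$ of $(\wedge V^*_\C,\diamond)$ to $\nu_\C$. Applying Lemma~\ref{lemma:constrainedspinoreven} with $Q=\gamma(\nu_\C)-\mu\,\Id$ then gives, for $\alpha=\cE_\gamma(\xi)$, the equivalence of $\xi\in\Sigma^\mu$ with $(\nu_\C-\mu)\diamond\alpha=0$, i.e.\ $\nu_\C\diamond\alpha=\mu\alpha$; and Lemma~\ref{lemma:product_volume_form} rewrites the left-hand side $i^{q+\frac{d}{2}}\nu\diamond\alpha$ as $i^{q+\frac{d}{2}}\ast(\pi\circ\tau)(\alpha)$, yielding the fourth relation.

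With this preparation the three implications are bookkeeping. For $(a)\Rightarrow(b)$: writing $\alpha=\cE_\gamma(\xi)$ with $\xi\in\Sigma^\mu$, Theorem~\ref{thm:bilinearsquare} supplies the first three relations for any fixed $\beta$ with $(\alpha\diamond\beta)^{(0)}\neq 0$, and chirality of $\xi$ supplies the fourth. For $(b)\Rightarrow(c)$: the first three relations of (b) upgrade, by the corresponding implication of Theorem~\ref{thm:bilinearsquare}, to the first three relations of (c) holding for every $\beta$, while the fourth relation is independent of $\beta$ and hence carries over verbatim. For $(c)\Rightarrow(a)$: the first three relations yield, by Theorem~\ref{thm:bilinearsquare}, some $\xi\in\Sigma$ with $\alpha=\cE_\gamma(\xi)$, after which the fourth relation reads $\nu_\C\diamond\cE_\gamma(\xi)=\mu\,\cE_\gamma(\xi)$, so Lemma~\ref{lemma:constrainedspinoreven} forces $(\gamma(\nu_\C)-\mu\,\Id)\xi=0$, i.e.\ $\xi\in\Sigma^\mu$, which is (a). I do not expect a real obstacle: the content is entirely in Theorem~\ref{thm:bilinearsquare} and in the volume-form computation, both available. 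The only points needing care are keeping the quantifier on $\beta$ straight between (b) and (c) and the identification of the dequantization of $\gamma(\nu_\C)-\mu\,\Id$ with $\nu_\C-\mu$, which is immediate from $\Psi$ being unital and acting as the identity on the complex volume form. (The Hermitian analogue, Corollary~\ref{cor:sq_S_chiral}, is proved identically, with Theorem~\ref{thm:even_Hermitian_forms} replacing Theorem~\ref{thm:bilinearsquare}.)
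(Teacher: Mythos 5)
Your proposal is correct and follows exactly the route the paper takes: the chirality of $\xi$ is converted, via Lemma~\ref{lemma:constrainedspinoreven} applied to $Q=\gamma(\nu_\C)-\mu\,\Id$ together with Lemma~\ref{lemma:product_volume_form} (with $\pi^{d-1}=\pi$ since $d$ is even), into the relation $i^{q+\frac{d}{2}}\ast(\pi\circ\tau)(\alpha)=\mu\alpha$, and this is then appended to Theorem~\ref{thm:bilinearsquare}, which is precisely what the text preceding the corollary does. The only blemish is a harmless labeling slip in your $(b)\Rightarrow(c)$ and $(c)\Rightarrow(a)$ steps, where (c) contains three relations rather than four, so ``the first three/fourth relation'' should read ``the first two/third relation''; the argument is otherwise complete.
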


\begin{example}
In later sections, we will consider several other examples, both in Euclidean and Lorentzian signatures. For the moment, take $(V,h)$ to be the four-dimensional Minkowski space. Let $(\Sigma , \gamma)$ be a complex irreducible module of $\Cl(V^{\ast},h^{\ast})$, which is four-dimensional and of real type. Denote by: 
\begin{eqnarray*}
\Sigma = \Sigma^{+} \oplus \Sigma^{-}
\end{eqnarray*}

\noindent
the chiral decomposition of $\Sigma$ with respect to the complex volume form $\nu_{\C} = -i\nu$, which in this case is the Lorentzian volume form $\nu$ of $(V,h)$. Following Proposition \ref{prop:Hermitian_admissible}, we equip $(\Sigma,\gamma)$ with a Hermitian pairing $\scS$ of positive adjoint type. Note that the chiral splitting of $\Sigma$ is orthogonal with respect to $\scS$. By Corollary \ref{cor:sq_S_chiral}, an exterior form $\alpha \in \wedge V^{\ast}_{\C}$ is the Hermitian square of a non-zero element $\eta\in\Sigma^{\mu}$ of chirality $\mu\in\mathbb{Z}_2$ with $\kappa = 1$ if and only if:
\begin{eqnarray}
\label{eq:sesquilinear4d}
 \tau(\alpha)=   \bar{\alpha}\, ,\qquad   \ast (\pi\circ\tau)(\alpha) = i \mu\alpha\, , \qquad \alpha \diamond \beta \diamond\alpha= 4 (\alpha\diamond\beta)^{(0)}\alpha
\end{eqnarray}

\noindent
for a complex exterior form $\beta\in \wedge V^{\ast}_{\C}$ such that $(\alpha \diamond\beta)^{(0)} \neq 0$. The first two equations, which are linear in $\alpha$, are directly solved by:
\begin{equation*}
\alpha = u + i \mu \ast u,
\end{equation*}

\noindent
where $u\in V^{\ast}$ is a non-zero one-form uniquely determined by the spinor $\eta$. Regarding the third equation in \eqref{eq:sesquilinear4d}, we begin by considering it first for $\beta =1$. We obtain:
\begin{equation*}
\alpha\diamond \alpha = (u + i \mu \ast u)\diamond (u + i \mu \ast u) = 0
\end{equation*}

\noindent
and thus it is satisfied automatically for $\alpha = u + i \mu \ast u$. Taking $\beta = u$, we obtain $(\alpha\diamond \beta)^{(0)}=\escal{u,u}$ and the third equation in \eqref{eq:sesquilinear4d} reduces to $\escal{u,u}=0$. Hence $u\in V^*$ is a nowhere vanishing isotropic one-form. Choose a one-form $v\in V^*$ conjugate to $u$. That is, $v$ is a nowhere vanishing isotropic one-form satisfying $\escal{u,v}=1$. Setting $\beta = v$ in the third equation of \eqref{eq:sesquilinear4d}, we first compute:
\begin{equation*}
v\diamond \alpha = v \diamond (u + i \mu \ast u) = 1 + v\wedge u + i\mu ( \nu + \ast(u\wedge v) ).
\end{equation*}

\noindent
Hence $(v\diamond \alpha)^{(0)} = 1$ and a direct computation shows the third equation in \eqref{eq:sesquilinear4d} is automatically satisfied when $\beta = v$. We have thus proven the following result. 

\begin{prop}
A complex exterior form $\alpha\in \wedge V^{\ast}_{\mathbb{C}}$ is the Hermitian square of an irreducible complex spinor $\eta \in \Sigma^{\mu}$ of chirality $\mu$ in four-dimensional Lorentzian signature if and only if:
\begin{equation*}
\alpha = \kappa (u + i \mu \ast u)
\end{equation*}

\noindent
for a unitary number $\kappa \in \U(1)$ and a non-zero isotropic one-form $u\in V^{\ast}$.
\end{prop}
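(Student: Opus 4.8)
The plan is to read the proposition off Corollary~\ref{cor:sq_S_chiral}, with the parameter $\kappa\in\U(1)$ absorbed by a rescaling. Since $\hcE_\kappa(\eta)=\kappa\,\eta\otimes\eta^{\ast}$ one has $\hcE^{\kappa}_\gamma(\eta)=\kappa\,\hcE^{1}_\gamma(\eta)$, and a direct check shows that the three conditions of Corollary~\ref{cor:sq_S_chiral}(b) hold for $\alpha$ with parameter $\kappa$ if and only if they hold for $\bar\kappa\alpha$ with parameter $1$ (the chirality and cubic relations because they are homogeneous under complex rescaling, the reality relation $\tau(\bar\kappa\alpha)=\kappa\bar\alpha$ because it reduces to $\tau(\bar\kappa\alpha)=\overline{\bar\kappa\alpha}$). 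Specializing Corollary~\ref{cor:sq_S_chiral}(b) to $d=4$, $q=1$, $s=+1$ gives exactly the system~\eqref{eq:sesquilinear4d}, so it suffices to show that a non-zero exterior form solves~\eqref{eq:sesquilinear4d} if and only if it equals $u+i\mu\ast u$ for a non-zero isotropic one-form $u$; the proposition then follows with $\alpha=\kappa(u+i\mu\ast u)$. This last equivalence is essentially the content of the Example above, so the proof is a matter of recording those computations.

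For necessity I would expand $\alpha=\sum_{k=0}^{4}\alpha^{(k)}$ by degree and use that, in four-dimensional Lorentzian signature, $\tau$, $\pi$ and $\ast$ act on a $k$-form by $(-1)^{k(k-1)/2}$, $(-1)^{k}$ and by $\ast\ast=-\Id$ on even forms, $\ast\ast=+\Id$ on odd forms. The reality relation $\tau(\alpha)=\bar\alpha$ together with the chirality relation $\ast(\pi\circ\tau)(\alpha)=i\mu\alpha$ then forces $\alpha^{(0)},\alpha^{(4)}$ to be simultaneously real and purely imaginary, hence zero; $\alpha^{(2)}$ to be purely imaginary and a $(-i\mu)$-eigenform of $\ast$, hence again zero; and $\alpha^{(1)}=:u$ to be a real one-form with $\alpha^{(3)}=i\mu\ast u$. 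Thus $\alpha=u+i\mu\ast u$, with $u\ne0$ because $\alpha\ne0$. Isotropy of $u$ I would extract from the cubic relation with $\beta=u$: using Lemma~\ref{lemma:product_volume_form} in the form $\ast u=-\nu\diamond u$, equivalently $\alpha=(1+\mu\nu_\C)\diamond u$, a short K\"ahler-Atiyah computation (with $u\diamond u=\escal{u,u}$ and $\nu_\C^{\diamond2}=1$) gives $\alpha\diamond u\diamond\alpha=2\escal{u,u}\alpha$ and $(\alpha\diamond u)^{(0)}=\escal{u,u}$, so comparison with $\alpha\diamond u\diamond\alpha=4(\alpha\diamond u)^{(0)}\alpha$ yields $\escal{u,u}=0$.

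For sufficiency, let $\alpha=u+i\mu\ast u$ with $u$ a non-zero isotropic one-form. The two linear relations in~\eqref{eq:sesquilinear4d} hold by the sign bookkeeping above, and by Corollary~\ref{cor:sq_S_chiral}(b) it remains to verify the cubic relation for a single $\beta$ with $(\alpha\diamond\beta)^{(0)}\ne0$. I would take $\beta=v$, a one-form conjugate to $u$, which exists because the isotropic line spanned by $u$ extends to a hyperbolic plane in $(V^{\ast},h^{\ast})$. Writing $\alpha=(1+\mu\nu_\C)\diamond u$ and using that $\nu_\C$ anticommutes with one-forms while $\nu_\C^{\diamond2}=1$, one finds $\alpha\diamond v\diamond\alpha=2(1+\mu\nu_\C)\diamond(u\diamond v\diamond u)$; the wedge-plus-contraction expansion with $u\wedge u=0$ and $\escal{u,v}=1$ gives $u\diamond v\diamond u=2u$, so $\alpha\diamond v\diamond\alpha=4\alpha$, while $(\alpha\diamond v)^{(0)}=\escal{u,v}=1$, and the cubic relation holds. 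Then Corollary~\ref{cor:sq_S_chiral}(b) produces a chiral spinor of chirality $\mu$ with $\alpha$ as Hermitian square, and reinstating $\kappa$ gives $\alpha=\kappa(u+i\mu\ast u)$, as claimed. The main obstacle is precisely the verification of the cubic relation $\alpha\diamond\beta\diamond\alpha=4(\alpha\diamond\beta)^{(0)}\alpha$ for $\beta\in\{u,v\}$: it requires the wedge-plus-contraction description of $\diamond$, the relation between $\ast$ and geometric multiplication by the volume form, and the anticommutation of $\nu_\C$ with one-forms; once the rewriting $\alpha=(1+\mu\nu_\C)\diamond u$ is in hand these are short manipulations, and everything else is degree-by-degree sign analysis or a direct appeal to Corollary~\ref{cor:sq_S_chiral}.
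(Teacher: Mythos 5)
Your proposal is correct and follows essentially the same route as the paper: solve the two linear relations degree by degree to get $\alpha=u+i\mu\ast u$, extract $\escal{u,u}=0$ from the cubic relation with $\beta=u$, and confirm sufficiency with a conjugate one-form $\beta=v$, the factorization $\alpha=(1+\mu\nu_{\C})\diamond u$ being only a cosmetic repackaging of the paper's wedge-plus-contraction computations. The one point you should still record explicitly is the quadratic relation $\alpha\diamond\alpha=4\alpha^{(0)}\alpha$ appearing in Corollary~\ref{cor:sq_S_chiral}(b), which the paper checks as the case $\beta=1$ and which is immediate in your setup since $(1+\mu\nu_{\C})\diamond(1-\mu\nu_{\C})=0$.
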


\noindent
Hence, the Hermitian square of $\eta$ is given in terms of an isotropic one-form, typically called its \emph{Dirac current}. This illustrates that the Hermitian square of a spinor does not contain all information about the spinor, since, by the results of \cite{CLS21}, a real and irreducible spinor in four Lorentzian dimensions is not uniquely determined by an isotropic vector, in contrast to the situation occurring in three Lorentzian dimensions \cite{Sha24}.
\end{example}

% % % % % % % % % % % % % % % % % % % % % % % % % % % % % % % % % % % % % %

\subsection{The square of a conjugate spinor}
\label{subsec:conjugateeven}

% % % % % % % % % % % % % % % % % % % % % % % % % % % % % % % % % % % % % %

Let $(\Sigma,\gamma)$ be an irreducible complex Clifford module equipped with compatible Hermitian $\scS_s$ and complex-bilinear $\scB_s$ pairings, where $s\in \mathbb{Z}_2$ denotes their common adjoint type. Hence, there exists a unique complex anti-linear map $K\colon \Sigma \to \Sigma$ such that:
\begin{equation*}
\scS_s( \xi , \chi) = \scB_s ( \xi , K \chi)
\end{equation*}

\noindent
and such that $K^2 = \epsilon \Id$, with $\epsilon = 1$ if $(\Sigma,\gamma)$ is of real type and $\epsilon = -1$ if $(\Sigma,\gamma)$ is of quaternionic type. Hence, we are in the situation considered in Subsection \ref{sec:conjugatevector}. Mimicking Definition \ref{def:conjugatevector}, we introduce the notion of \emph{conjugate spinor}.

\begin{definition}
The \emph{conjugate} of the complex spinor $\eta\in \Sigma$ is $K\eta \in \Sigma$.
\end{definition}

\noindent
The explicit expressions given in equations \eqref{eq:explicitHermitian} and \eqref{eq:explicitbilinear} for the Hermitian and complex-bilinear squares of a complex spinor, together with the invariance properties of $\scS_s$ stated in Lemma \ref{lemma:compatible_admissible_pairings}, lead to the following result.

\begin{prop}
\label{prop:Hermitiansquareconjugate}
Let $\widehat{\alpha}_{\eta}$ and $\widehat{\alpha}_{K\eta}$ respectively denote the Hermitian squares of an irreducible complex spinor $\eta\in \Sigma$ and its conjugate $K\eta \in \Sigma$. Then:
\begin{equation*}
\widehat{\alpha}_{K\eta} = \tfrac{\kappa}{\bar\kappa}(-1)^{\binom{p}{2}+\frac{d}{4}(1+s(-1)^p)}\, \overline{\widehat{\alpha}}_{\eta},
\end{equation*}

\noindent
where $d$ is the dimension of $V$ and $p$ is the number of pluses in the signature of $h$.
\end{prop}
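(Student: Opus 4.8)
The plan is to compute $\widehat{\alpha}_{K\eta}$ directly from the explicit coordinate expression \eqref{eq:explicitHermitian}. Fix an $h^*$-orthonormal basis $\{e^1,\dots,e^d\}$ of $V^*$; then \eqref{eq:explicitHermitian} expresses $\widehat{\alpha}_\eta$ as $\kappa$ times the form whose $(i_1<\cdots<i_k)$-coefficient is $2^{-d/2}\scS((\gamma^{i_1\cdots i_k})^{-1}\eta,\eta)$, multiplying the real form $e^{i_1}\wedge\cdots\wedge e^{i_k}$, with the scalar term $2^{-d/2}\kappa\scS(\eta,\eta)$ treated separately. It therefore suffices to understand how replacing $\eta$ by $K\eta$ transforms each scalar $\scS((\gamma^{i_1\cdots i_k})^{-1}\eta,\eta)$ and $\scS(\eta,\eta)$.

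First I would record that $K$ commutes with every gamma matrix $\gamma^{i_1\cdots i_k}$, and hence with its inverse: indeed $\gamma^{i_1\cdots i_k}=\gamma(e^{i_1}\cdots e^{i_k})$ with the product taken inside $\C\mathrm{l}(V^*,h^*)$, and $e^{i_1}\cdots e^{i_k}$ is a product of the real generators $e^i\in V^*\subset\mathrm{Cl}(V^*,h^*)$, so it is fixed by the conjugation of $\C\mathrm{l}(V^*,h^*)$; since $K$ intertwines $\gamma$ with this conjugation, $\gamma(w)\circ K=K\circ\gamma(\bar w)=K\circ\gamma(w)$ for such $w$. Writing $\xi:=(\gamma^{i_1\cdots i_k})^{-1}\eta$, so that $(\gamma^{i_1\cdots i_k})^{-1}K\eta=K\xi$, the invariance identity of Lemma \ref{lemma:compatible_admissible_pairings} then yields
\begin{equation*}
\scS_s\big((\gamma^{i_1\cdots i_k})^{-1}K\eta,\,K\eta\big)=\scS_s(K\xi,K\eta)=(-1)^{\binom{p}{2}+\frac{d}{4}(1+s(-1)^p)}\,\overline{\scS_s\big((\gamma^{i_1\cdots i_k})^{-1}\eta,\eta\big)},
\end{equation*}
and likewise $\scS_s(K\eta,K\eta)=(-1)^{\binom{p}{2}+\frac{d}{4}(1+s(-1)^p)}\,\overline{\scS_s(\eta,\eta)}$ for the scalar term.

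Substituting these into \eqref{eq:explicitHermitian} applied to $K\eta$, the common scalar $c:=(-1)^{\binom{p}{2}+\frac{d}{4}(1+s(-1)^p)}$ factors out of the whole sum, and because the forms $e^{i_1}\wedge\cdots\wedge e^{i_k}$ are real, the remaining conjugations assemble into a single complex conjugation of the form $2^{-d/2}\scS(\eta,\eta)+2^{-d/2}\sum_{k=1}^d\sum_{i_1<\cdots<i_k}\scS((\gamma^{i_1\cdots i_k})^{-1}\eta,\eta)\,e^{i_1}\wedge\cdots\wedge e^{i_k}$, which by \eqref{eq:explicitHermitian} equals $\bar\kappa^{-1}\,\overline{\widehat{\alpha}}_\eta$. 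Reinstating the overall prefactor $\kappa$ from \eqref{eq:explicitHermitian} gives $\widehat{\alpha}_{K\eta}=\kappa\,c\,\bar\kappa^{-1}\,\overline{\widehat{\alpha}}_\eta=\tfrac{\kappa}{\bar\kappa}(-1)^{\binom{p}{2}+\frac{d}{4}(1+s(-1)^p)}\,\overline{\widehat{\alpha}}_\eta$, as claimed. The only point requiring care is the commutation relation $K\circ\gamma^{i_1\cdots i_k}=\gamma^{i_1\cdots i_k}\circ K$ — equivalently, that conjugation on $\C\mathrm{l}(V^*,h^*)$ fixes products of the chosen orthonormal basis one-forms; granting this, the rest is bookkeeping, and no separate treatment of the real and quaternionic types is needed since the sign $c$ already accounts for both.
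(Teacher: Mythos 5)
Your argument is correct and is essentially the paper's own proof: the statement is obtained by applying the explicit expansion \eqref{eq:explicitHermitian} to $K\eta$ and using the invariance property of $\scS_s$ from Lemma \ref{lemma:compatible_admissible_pairings}, exactly as you do. The only detail you add explicitly — that $K$ commutes with the gamma matrices $\gamma^{i_1\cdots i_k}$ because $K$ intertwines $\gamma$ with complex conjugation on $\C\mathrm{l}(V^*,h^*)$ and products of the real basis one-forms are conjugation-invariant — is left implicit in the paper, and your justification of it is sound.
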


\noindent
We obtain a similar relation for the complex-bilinear squares of a $\eta$ and $K\eta$.

\begin{prop}
\label{prop:Bilinearsquareconjugate}
Let $\alpha_{\eta}$ and $\alpha_{K\eta}$ respectively denote the complex-bilinear squares of an irreducible complex spinor $\eta\in \Sigma$ and its conjugate $K\eta \in \Sigma$. Then:
\begin{equation*}
\alpha_{K\eta} = (-1)^{\binom{p}{2}+\frac{d}{4}(1+s(-1)^p)} \, \overline{\alpha}_{\eta},
\end{equation*}

\noindent
where $d$ is the dimension of $V$ and $p$ is the number of pluses in the signature of $h$.
\end{prop}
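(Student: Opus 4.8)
The plan is to deduce this directly from the explicit formula \eqref{eq:explicitbilinear} for the complex-bilinear square, combined with the invariance property of the compatible admissible Hermitian pairing established in Lemma \ref{lemma:compatible_admissible_pairings} and the definition $\scB_s(\xi,\chi) = \scS_s(\xi,K\chi)$. First I would record, from \eqref{eq:explicitbilinear}, that the degree-$k$ component of $\alpha_{\eta}$ is governed by the coefficients $\scB_s((\gamma^{i_1\cdots i_k})^{-1}\eta,\eta)$, so it suffices to compare $\scB_s((\gamma^{i_1\cdots i_k})^{-1}K\eta, K\eta)$ with $\scB_s((\gamma^{i_1\cdots i_k})^{-1}\eta,\eta)$ for every ordered multi-index and for the degree-zero term $\scB_s(\eta,\eta)$.

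The key computation is: using $\scB_s(\xi,\chi) = \scS_s(\xi,K\chi)$ together with $K^2 = \epsilon\,\Id$ and the fact that $K$ intertwines $\gamma$ with the conjugate representation (i.e.\ $\gamma(z)\circ K = K\circ\gamma(\bar z)$), rewrite $\scB_s((\gamma^{i_1\cdots i_k})^{-1}K\eta, K\eta)$ in terms of $\scS_s$, push $K$ through the real gamma matrices $\gamma^{i_1\cdots i_k}$ (which are real, so the conjugation is harmless), and then invoke the invariance identity of Lemma \ref{lemma:compatible_admissible_pairings}, namely $\scS_s(K\xi,K\chi) = (-1)^{\binom{p}{2}+\frac{d}{4}(1+s(-1)^p)}\,\overline{\scS_s(\xi,\chi)}$. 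Converting back to $\scB_s$ and matching degrees, each coefficient of $\alpha_{K\eta}$ turns out to be the factor $(-1)^{\binom{p}{2}+\frac{d}{4}(1+s(-1)^p)}$ times the complex conjugate of the corresponding coefficient of $\alpha_{\eta}$; since $\Psi_\gamma^{-1}$ identifies these coefficients with the components of the exterior form, we get $\alpha_{K\eta} = (-1)^{\binom{p}{2}+\frac{d}{4}(1+s(-1)^p)}\,\overline{\alpha}_{\eta}$, as claimed. Note that in the complex-bilinear case there is no $\kappa$ and no $\bar\kappa$, which is precisely why the prefactor $\tfrac{\kappa}{\bar\kappa}$ appearing in Proposition \ref{prop:Hermitiansquareconjugate} is absent here.

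Alternatively — and perhaps more cleanly — I would argue at the endomorphism level: $\cE(K\eta) = K\eta\otimes (K\eta)^{\ast}$ where $(K\eta)^{\ast} = \scB_s(-,K\eta) = \scS_s(-,K^2\eta) = \epsilon\,\scS_s(-,\eta)$ up to the $K$-twisting, and then relate $\cE(K\eta)$ to the conjugate of $\cE(\eta)$ using that $K$ conjugates $\gamma$ and that the dequantization map $\Psi_\gamma^{-1}$ is compatible with complex conjugation of Clifford elements (the Remark on $\bar z$ for $z\in\C\mathrm{l}(V^*,h^*)$). The bookkeeping of signs — tracking the interplay of $\epsilon$, the symmetry type $\sigma$, and the power of $i$'s hidden in the construction of $\scB_s$ from $\scS_s$ via the volume forms $\nu_\pm$ — is the only delicate point, but it is exactly the same sign bookkeeping already carried out in Lemma \ref{lemma:compatible_admissible_pairings} and Proposition \ref{prop:complex_bilinear_admissible}, so it can be imported wholesale.

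The main obstacle I anticipate is making sure the gamma matrices $\gamma^{i_1\cdots i_k}$ really are preserved (not merely sent to $\pm$ themselves) under the conjugation induced by $K$, i.e.\ that $K\circ\gamma^{i_1\cdots i_k}\circ K^{-1} = \gamma^{i_1\cdots i_k}$ rather than its conjugate-twisted version; this is where the reality of the chosen orthonormal basis $\{e^i\}$ and the defining intertwining property $\gamma(z)\circ K = K\circ\gamma(\bar z)$ combine — since $e^{i_1}\wedge\cdots\wedge e^{i_k}$ is a real element of $\wedge V^*$, its image under $\Psi$ is fixed by conjugation, so $K$ genuinely commutes with $\gamma^{i_1\cdots i_k}$. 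Once that is clear, everything reduces to a one-line application of the invariance identity, and the degree-wise comparison is immediate.
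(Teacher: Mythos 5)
Your proposal is correct and follows essentially the same route as the paper: the paper likewise derives $\scB_s(K\xi,K\chi) = (-1)^{\binom{p}{2}+\frac{d}{4}(1+s(-1)^p)}\,\overline{\scB_s(\xi,\chi)}$ from the $K$-invariance of $\scS_s$ in Lemma \ref{lemma:compatible_admissible_pairings} (with the $\epsilon$ from $K^2=\epsilon\,\Id$ cancelling), and then reads off the claim degree by degree from the expansion \eqref{eq:explicitbilinear}, using that $K$ commutes with the real gamma matrices. Your observation that this is exactly why no $\kappa/\bar\kappa$ prefactor appears, in contrast with Proposition \ref{prop:Hermitiansquareconjugate}, is also in line with the paper.
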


\begin{proof}
The result follows from the invariance properties of $\scB_s$, which as a consequence of the invariance properties of $\scS_s$. For every $\xi , \chi\in \Sigma$ we have:
\begin{equation*}
\scB_s (K\xi , K\chi) =  \scS_s (K\xi , \chi) = \epsilon (-1)^{\binom{p}{2}+\frac{d}{4}(1+s(-1)^p)} \, \overline{\scS_s (\xi , K\chi)} = (-1)^{\binom{p}{2}+\frac{d}{4}(1+s(-1)^p)} \, \overline{\scB_s (\xi , \chi)} 
\end{equation*}

\noindent
and hence we obtain the same sign as in the Hermitian case.
\end{proof}

\noindent
Hence, the Hermitian square of a conjugate complex spinor is, modulo a global phase, the complex conjugate of the Hermitian square of the given spinor, whereas the complex-bilinear square of a conjugate complex spinor is the complex conjugate of the complex-bilinear square of the given spinor, modulo a global sign.

% % % % % % % % % % % % % % % % % % % % % % % % % % % % % % % % % % % % % %

\subsection{Compatibility of the Hermitian and complex-bilinear squares}
\label{subsec:compatibilitysquares}

% % % % % % % % % % % % % % % % % % % % % % % % % % % % % % % % % % % % % %

In the previous subsections, we have characterized the complex-bilinear and Hermitian squares of complex irreducible spinors in even dimensions as complex exterior forms satisfying several algebraic constraints in $(\wedge V^{\ast}_{\C},\diamond)$. However, for the moment we have no criteria to guarantee that a pair of elements $\widehat{\alpha} \in \Im(\hcE)$ and $\alpha \in \Im(\cE)$ are the \emph{squares} of the \emph{same} spinor in $\Sigma$. Having such criteria is convenient for applications, since in applications we are interested in computing the complex-bilinear and Hermitian squares of the \emph{same} complex spinor in order to use them in combination. Below we obtain necessary and sufficient conditions for a pair of squares to be the square of the \emph{same} spinor in $\Sigma$. As in the previous subsection, we fix an irreducible complex Clifford module $(\Sigma,\gamma)$ equipped with compatible Hermitian $\scS_s$ and complex-bilinear $\scB_s$ pairings related by the anti-linear isomorphism $K\colon \Sigma \to \Sigma$.

\begin{prop} 
\label{prop:compatibilitysquares}
Let $\alpha_{\eta} , \alpha_{K\eta} \in \wedge V^{\ast}_{\C}$ respectively denote the complex-bilinear squares of $\eta\in \Sigma$ and its conjugate $K\eta\in \Sigma$. A complex exterior form $\widehat{\alpha} \in \wedge V^{\ast}_{\C}$ is the Hermitian square of $\eta$, namely $\widehat{\alpha} = \hcE^\kappa_{\gamma}(\eta)$, if and only if:
\begin{equation*}  
\widehat{\alpha} \diamond \beta \diamond \alpha_{\eta}  = 2^{\frac{d}{2}} (\widehat{\alpha} \diamond \beta)^{(0)} \alpha_{\eta} \, , \qquad   \alpha_{\eta} \diamond \beta \diamond \alpha_{K\eta} = 2^{\frac{d}{2}} \sigma \bar{\kappa}^{2} (\widehat{\alpha} \diamond (\pi^{\frac{1-s}{2}}\circ \tau)(\beta))^{(0)} \widehat{\alpha}
\end{equation*}

\noindent
for a complex exterior form $\beta \in \wedge V^{\ast}_{\C}$ such that $(\widehat{\alpha} \diamond \beta)^{(0)} \neq 0$.
\end{prop}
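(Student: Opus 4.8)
The plan is to transport the algebraic identities of Subsection~\ref{subsec:compatibilitysquares} from $\End(\Sigma)$ to $\wedge V^*_\C$ via the isomorphism $\Psi_\gamma\colon(\wedge V^*_\C,\diamond)\to(\End(\Sigma),\circ)$, exactly as in the proofs of Theorems~\ref{thm:even_Hermitian_forms} and~\ref{thm:bilinearsquare}. Write $\widehat E:=\Psi_\gamma(\widehat\alpha)$, $E_\eta:=\Psi_\gamma(\alpha_\eta)=\cE(\eta)$, $E_{K\eta}:=\Psi_\gamma(\alpha_{K\eta})=\cE(K\eta)$, and $A:=\Psi_\gamma(\beta)$. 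Since $\Psi_\gamma$ is a unital algebra isomorphism intertwining $\diamond$ with $\circ$, and since the Kähler-Atiyah trace satisfies $\cT(\zeta)=2^{d/2}\zeta^{(0)}$, the first displayed relation $\widehat\alpha\diamond\beta\diamond\alpha_\eta=2^{d/2}(\widehat\alpha\diamond\beta)^{(0)}\alpha_\eta$ is equivalent to $\widehat E\circ A\circ E_\eta=\Tr(\widehat E\circ A)E_\eta$ with $\Tr(\widehat E\circ A)\neq0$; and by Lemma~\ref{lemma:exterior_form_t}, $\Psi_\gamma((\pi^{\frac{1-s}{2}}\circ\tau)(\beta))=A^t$, so the second relation is equivalent to $E_\eta\circ A\circ E_{K\eta}=\sigma\bar\kappa^2\,\Tr(\widehat E\circ A^t)\,\widehat E$.

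With this dictionary in hand, the statement reduces precisely to Proposition~\ref{prop:endoquadraticbi}, applied to the element $\xi=\eta$: an endomorphism $\widehat E\in\Im(\hcE_\kappa)$ is the Hermitian square of $\eta$ if and only if the two equations in~\eqref{eq:comp_sq_mapsII} hold for some $A$ with $\Tr(\widehat E\circ A)\neq0$. So I would simply invoke Proposition~\ref{prop:endoquadraticbi} directly. The only wrinkle is that the hypothesis of the present proposition assumes \emph{a priori} that $\widehat\alpha\in\Im(\hcE^\kappa_\gamma)$, which corresponds to $\widehat E\in\Im(\hcE_\kappa)$, so the hypotheses match verbatim; and the conclusion $\widehat\alpha=\hcE^\kappa_\gamma(\eta)$ is $\widehat E=\hcE_\kappa(\eta)$, again matching. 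One should also remark that $\cE(\eta)\in\Im(\cE)$ and $\cE(K\eta)\in\Im(\cE)$ automatically, so no extra condition on $\alpha_\eta,\alpha_{K\eta}$ is needed — they are genuine complex-bilinear squares by construction.

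I do not expect any serious obstacle: the proof is a pure translation, with the substantive content already packaged in Proposition~\ref{prop:endoquadraticbi}, whose proof in turn extracts from the first equation that $\eta$ is proportional to the generating vector of $\widehat E$ (using $\Tr(\widehat E\circ A)\neq0$ to rule out the zero factor) and from the second equation, via~\eqref{eq:comp_sq_mapsK}, that the proportionality constant is unimodular. The one place to be careful is bookkeeping of the scalar $\sigma\bar\kappa^2$: I would verify that the version of~\eqref{eq:comp_sq_mapsK} used, namely $\cE(\xi)\circ A\circ\cE(K\xi)=\sigma\bar\kappa^2\Tr(\hcE_\kappa(\xi)\circ A^t)\hcE_\kappa(\xi)$, dequantizes to exactly $\alpha_\eta\diamond\beta\diamond\alpha_{K\eta}=2^{d/2}\sigma\bar\kappa^2(\widehat\alpha\diamond(\pi^{\frac{1-s}{2}}\circ\tau)(\beta))^{(0)}\widehat\alpha$, which it does once one replaces $\Tr$ by $2^{d/2}(\cdot)^{(0)}$ and $A^t$ by $\Psi_\gamma((\pi^{\frac{1-s}{2}}\circ\tau)(\beta))$. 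Thus the entire argument is: apply $\Psi_\gamma^{-1}$, recognize the two conditions as~\eqref{eq:comp_sq_mapsII}, and conclude by Proposition~\ref{prop:endoquadraticbi}.
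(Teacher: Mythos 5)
Your proposal is correct and coincides with the paper's own proof: the paper likewise applies $\Psi_\gamma^{-1}$ to Equation \eqref{eq:comp_sq_mapsII} of Proposition \ref{prop:endoquadraticbi}, using Equation \eqref{eq:gamma_t} (equivalently Lemma \ref{lemma:exterior_form_t}) to identify $A^t$ with $\Psi_\gamma((\pi^{\frac{1-s}{2}}\circ\tau)(\beta))$ and the Kähler-Atiyah trace formula $\cT(\cdot)=2^{\frac{d}{2}}(\cdot)^{(0)}$ to translate the trace factors. Your bookkeeping of the scalar $\sigma\bar{\kappa}^{2}$ and of the hypothesis $(\widehat{\alpha}\diamond\beta)^{(0)}\neq 0$ matches the intended reading, so nothing further is needed.
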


\begin{proof}
Follows by applying the isomorphism of unital and associative algebras $\Psi_{\gamma}^{-1}\colon \End(\Sigma) \to \wedge V^{\ast}_{\C}$ to Equation \eqref{eq:comp_sq_mapsII} in Proposition \ref{prop:endoquadraticbi}, together with Equation \eqref{eq:gamma_t} to compute the \emph{transpose} of $\beta \in \wedge V_{\C}^{\ast}$.
\end{proof}

% % % % % % % % % % % % % % % % % % % % % % % % % % % % % % % % % % % % % %
% % % % % % % % % % % % % % % % % % % % % % % % % % % % % % % % % % % % % %

\section{Spinorial forms in odd dimensions}
\label{section:odd_forms}

% % % % % % % % % % % % % % % % % % % % % % % % % % % % % % % % % % % % % %
% % % % % % % % % % % % % % % % % % % % % % % % % % % % % % % % % % % % % %

Let $V$ be an oriented real vector space of odd dimension $d=2m+1$ equipped with a non-degenerate metric of signature $(p,q)$. Let $(V^*,h^*)$ be the quadratic vector space dual to $(V,h)$ and denote by $\mathrm{Cl}(V^*,h^*)$ the Clifford algebra of $(V^*,h^*)$ with the convention $\theta^2=h^*(\theta,\theta)$ for all $\theta\in V^*$. Consider the complexification $\C\mathrm{l}(V^*,h^*) := \mathrm{Cl}(V^*,h^*) \otimes_\R\C$. Denoting by $\nu$ the pseudo-Riemannian volume form of $\mathrm{Cl}(V^*,h^*)$, we introduce the \emph{complex volume form} in $\C\mathrm{l}(V^*,h^*)$ as follows:
\begin{equation*}
\nu_\C:=i^{q+m}\nu\in\C\mathrm{l}(V^*,h^*).
\end{equation*}

\noindent
It can be easily seen that the complex volume form satisfies $\nu_\C^2=1$ and lies in the center of $\C\mathrm{l}(V^*,h^*)$. Therefore, $\C\mathrm{l}(V^*,h^*)$ is non-simple and splits as a direct sum of unital and associative algebras: 
\begin{equation}
\label{eq:splitting_Clifford}
\C\mathrm{l}(V^*,h^*)=\C\mathrm{l}_+(V^*,h^*)\oplus\C\mathrm{l}_-(V^*,h^*),
\end{equation}

where: 
\begin{equation*}
\C\mathrm{l}_\ell(V^*,h^*)=\{z\in\C\mathrm{l}(V^*,h^*)\mid\nu_\C z=\ell z\}=\frac{1}{2}(1+\ell\nu_\C)\C\mathrm{l}(V^*,h^*),\quad\ell\in\mathbb{Z}_2.
\end{equation*}

\noindent
The algebra $\C\mathrm{l}_\ell(V^*,h^*)$ is simple and isomorphic to $\Mat(2^m,\C)$, where $m=\frac{1}{2}(d-1)$. Therefore, $\C\mathrm{l}(V^*,h^*)$ admits two irreducible left Clifford modules of complex dimension $2^m$: 
\begin{equation*}
\gamma_\ell\colon\C\mathrm{l}(V^*,h^*)\to\End(\Sigma)   
\end{equation*}

\noindent
that correspond to the projection of $\C\mathrm{l}(V^*,h^*)$ to the factor $\C\mathrm{l}_\ell(V^*,h^*)$ composed with an isomorphism of the later to the algebra $\End(\Sigma)$ of endomorphisms of a complex vector space $\Sigma$ of dimension $2^m$. These two Clifford modules are distinguished by the value they take at the complex volume form $\nu_\C\in\C\mathrm{l}(V^*,h^*)$, namely: 
\begin{equation*}
\gamma_\ell(\nu_\C)=\ell\,\Id\in\End(\Sigma) .
\end{equation*}

\noindent
We transport the splitting \eqref{eq:splitting_Clifford} of $\C\mathrm{l}(V^*,h^*)$ to $(\wedge V^*_\C,\diamond)$ through the complexification of the Chevalley-Riesz isomorphism \eqref{eq:ChevRiesz_iso}. This gives: 
\begin{equation*}
(\wedge V^*_\C,\diamond)=(\wedge_+V^*_\C,\diamond)\oplus(\wedge_- V^*_\C,\diamond)    
\end{equation*}

\noindent
where:
\begin{equation*}
\wedge_\ell V^*_\C=\{\alpha\in\wedge V^*_\C\mid\nu_\C\diamond\alpha=\ell\alpha\}=\{\alpha\in\wedge V^*_\C\mid i^{q+m}*\tau(\alpha)=\ell\alpha\},\quad\ell\in\mathbb{Z}_2 .    
\end{equation*}

\noindent
Here we have used the identity: 
\begin{equation}
\label{eq:mult_volume_form_odd}
\alpha\diamond\nu_\C=\nu_\C\diamond\alpha=i^{q+m}*\tau(\alpha)
\end{equation} 

\noindent
given by Lemma \ref{lemma:product_volume_form}. By composing the Chevalley-Riesz isomorphism $\Psi\colon(\wedge V^*_\C,\diamond)\to\C\mathrm{l}(V^*,h^*)$ with the irreducible complex representation $\gamma_\ell\colon\C\mathrm{l}(V^*,h^*)\to\End(\Sigma)$ we obtain a surjective morphism of unital associative complex algebras that we denote by: 
\begin{equation*}
 \Psi_\ell:=\gamma_\ell\circ\Psi\colon(\wedge V^*_\C,\diamond)\to\End(\Sigma) .   
\end{equation*}

\noindent
Let $\cP_\ell\colon\wedge V^*_\C\to\wedge_\ell V^*_\C$ be the natural projection given explicitly by: \begin{equation}\label{eq:projection_l}
    \cP_\ell(\alpha)=\frac{1}{2}(1+\ell\nu_\C)\diamond\alpha=\frac{1}{2}(\alpha+i^{q+m}\ell*\tau(\alpha)),
\end{equation} where we have used the identity \eqref{eq:mult_volume_form_odd}. Consider the canonical linear inclusion $\iota_\ell\colon\wedge_\ell V_\C^*\hookrightarrow\wedge V_\C^*$, which is a right inverse to $\cP_\ell$. In particular: \begin{equation*}
    \Psi_\ell\circ\iota_\ell\colon(\wedge_\ell V^*_\C,\diamond)\to\End(\Sigma)
\end{equation*} is an isomorphism of unital and associative algebras. Following \cite{LBC13,LB13,LBC16}, we define: \begin{equation}\label{eq:vee_product}
    \wedge^<V^*_\C:=\bigoplus_{k=0}^m\wedge^kV^*_\C.
\end{equation}

Note that $\wedge V^*_\C=\wedge^<V_\C^*\oplus*\wedge^<V^*_\C$. Then, restricting $\cP_\ell\colon\wedge V^*_\C\to\wedge_\ell V^*_\C$ to $\wedge^<V^*_\C$, we obtain an isomorphism of vector spaces that we use to transport the algebra product in $(\wedge_\ell V^*_\C,\diamond)$ to $\wedge^<V^*_\C$. For every pair $\alpha,\beta\in\wedge^<V^*_\C$ we define:
\begin{equation*}
    \alpha\vee\beta:=2\cP_<(\cP_\ell(\alpha\diamond\beta)),
\end{equation*} where $\cP_<\colon\wedge V^*_\C\to\wedge^<V^*_\C$ is the natural projection. By construction, $(\wedge_\ell V^*_\C,\diamond)$ and $(\wedge^<V^*_\C,\vee)$ are naturally isomorphic as unital and associative complex algebras. For further reference we introduce the following linear map: 
\begin{equation}\label{eq:truncated_iso}
\Psi^<_\ell:=\Psi_\ell\circ\iota_\ell\circ\cP_\ell\vert_{\wedge^<V^*_\C}\colon(\wedge^<V^*_\C,\vee)\to\End(\Sigma),
\end{equation} 

\noindent
which, by the previous discussion, is an isomorphism of unital and associative algebras. All together, we obtain the following commutative diagram of unital and associative algebras:
$$\begin{tikzcd}
                                                                                                                                                        &  & {\mathbb{C}\mathrm{l}_\ell(V^*,h^*)} \arrow[d]                                                                                                                                   &  &                                                                                                                                           \\
{\mathbb{C}\mathrm{l}(V^*,h^*)} \arrow[rru, "\mathrm{pr}_\ell"] \arrow[rr, "\gamma_\ell"]                                                  &  & \mathrm{End}(\Sigma)                                                                                                                                                             &  & {(\wedge^<V^*_{\mathbb{C}},\vee)} \arrow[ll, "\Psi^<_\ell"'] \arrow[lldd, "\mathcal{P}_\ell|_{\wedge^<V^*_{\mathbb{C}}}"', shift right] \\
                                                                                                                                                        &  &                                                                                                                                                                                  &  &                                                                                                                                           \\
{(\wedge V^*_{\mathbb{C}},\diamond)} \arrow[uu, "\Psi"] \arrow[rruu, "\Psi_\ell"] \arrow[rr, "\mathcal{P}_\ell", shift left] &  & {(\wedge_\ell V^*_{\mathbb{C}},\diamond)} \arrow[uu, "\Psi_\ell\circ\iota_\ell"] \arrow[ll, "\iota_\ell", shift left] \arrow[rruu, "2\mathcal{P}_<"', shift right] &  &                                                                                                                                          
\end{tikzcd}$$

Through the isomorphism \eqref{eq:truncated_iso} the trace on $\End(\Sigma)$ transfers to the \emph{truncated Kähler-Atiyah algebra} $(\wedge^<V^*_\C,\vee)$, defining the \emph{truncated Kähler-Atiyah trace} $\mathcal{T}_\ell$, explicitly given by the following linear map: 
\begin{eqnarray*}
\mathcal{T}_\ell\colon\wedge^<V^*_\C\to\C,\quad\alpha\mapsto\Tr(\Psi^<_\ell(\alpha))\, .    
\end{eqnarray*}

\noindent 
Since $\Psi^<_\ell$ is a unital morphism of algebras, we have: 
\begin{eqnarray*}
\mathcal{T}_\ell(1) = 2^{\frac{d-1}{2}}  = 2^m \quad \text{and}\quad\mathcal{T}_\ell(\alpha_1\vee\alpha_2)=\mathcal{T}_\ell(\alpha_2\vee\alpha_1)\qquad\forall \,\, \alpha_1,\alpha_2\in\wedge^<V^*_\C,
\end{eqnarray*}

\noindent
where $1\in\wedge^0V^*_\C=\C$ is the identity of $(\wedge^<V^*_\C,\vee)$.
\begin{prop}
We have that $\mathcal{T}_\ell(\alpha)=2^{\frac{d-1}{2}}\alpha^{(0)}$ for all $\alpha\in\wedge^<V^*_\C$.
\end{prop}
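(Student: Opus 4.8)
The plan is to mimic the proof of the analogous statement for the untruncated Kähler--Atiyah trace, namely the earlier proposition asserting $\mathcal{T}(\alpha) = 2^{\frac{d}{2}}\alpha^{(0)}$, adapting it to the $\vee$-product and the truncation $\wedge^< V^*_\C$. First I would fix an $h^*$-orthonormal basis $\{e^1,\dots,e^d\}$ of $V^*$ and observe that, since $\mathcal{T}_\ell$ is linear, it suffices to evaluate it on the monomials $e^{i_1}\vee\cdots\vee e^{i_k}$ with $1\le i_1 < \cdots < i_k \le d$ and $0 \le k \le m$, which span $\wedge^< V^*_\C$. The key point is that on such low-degree monomials the $\vee$-product agrees with the $\diamond$-product up to the projection data: by construction $\Psi^<_\ell = \Psi_\ell \circ \iota_\ell \circ \mathcal{P}_\ell|_{\wedge^< V^*_\C}$, so $\mathcal{T}_\ell(e^{i_1}\vee\cdots\vee e^{i_k}) = \Tr(\Psi_\ell(\mathcal{P}_\ell(e^{i_1}\diamond\cdots\diamond e^{i_k})))$, and I would use $\mathcal{P}_\ell(\alpha) = \tfrac12(\alpha + i^{q+m}\ell * \tau(\alpha))$ together with the fact that for $k \le m$ the Hodge dual $*\tau(e^{i_1}\diamond\cdots\diamond e^{i_k})$ lies in $*\wedge^< V^*_\C$, i.e. has degree $d-k \ge m+1 > m \ge k$, hence contributes nothing in degree $k$ but — more importantly — its $\Psi_\ell$-image is just $\ell$ times the $\Psi_\ell$-image of $e^{i_1}\diamond\cdots\diamond e^{i_k}$ because $\gamma_\ell(\nu_\C) = \ell\,\Id$. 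So in fact $\Psi_\ell(\mathcal{P}_\ell(e^{i_1}\diamond\cdots\diamond e^{i_k})) = \Psi_\ell(e^{i_1}\diamond\cdots\diamond e^{i_k}) = \gamma_\ell(e^{i_1})\cdots\gamma_\ell(e^{i_k})$.

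Having reduced to $\mathcal{T}_\ell(e^{i_1}\vee\cdots\vee e^{i_k}) = \Tr(\gamma_\ell(e^{i_1})\cdots\gamma_\ell(e^{i_k}))$, I would then run exactly the cyclicity-of-trace argument from the even-dimensional proposition. For $k=0$ the claim is the already-recorded normalization $\mathcal{T}_\ell(1) = 2^m = 2^{\frac{d-1}{2}}$. For $0 < k \le m$: if $k$ is even, cyclicity of the trace plus the anticommutation $e^{i_k}\diamond e^{i_j} = - e^{i_j}\diamond e^{i_k}$ for $j\ne k$ gives $\mathcal{T}_\ell(e^{i_1}\vee\cdots\vee e^{i_k}) = (-1)^{k-1}\mathcal{T}_\ell(e^{i_1}\vee\cdots\vee e^{i_k}) = -\mathcal{T}_\ell(e^{i_1}\vee\cdots\vee e^{i_k})$, forcing it to vanish; if $k$ is odd, since $k \le m < d$ there is an index $j \notin \{i_1,\dots,i_k\}$, and conjugating by $e^j$ (which anticommutes with an odd number of the $e^{i_a}$) gives $\mathcal{T}_\ell(e^{i_1}\vee\cdots\vee e^{i_k}) = -\mathcal{T}_\ell(e^{i_1}\vee\cdots\vee e^{i_k})$, again zero. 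Hence $\mathcal{T}_\ell$ vanishes on every monomial of positive degree and equals $2^{\frac{d-1}{2}}$ on the degree-zero generator, which is precisely $\mathcal{T}_\ell(\alpha) = 2^{\frac{d-1}{2}}\alpha^{(0)}$ by linearity.

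The only subtlety — and what I would treat as the main obstacle to state cleanly — is the interplay between $\vee$ and $\diamond$ on monomials: one must be sure that $e^{i_1}\vee\cdots\vee e^{i_k}$, for $k\le m$, really does map under $\Psi^<_\ell$ to the honest Clifford monomial $\gamma_\ell(e^{i_1})\cdots\gamma_\ell(e^{i_k})$, with no correction terms. This follows because $\Psi^<_\ell$ is a unital algebra homomorphism sending each $e^i \in \wedge^1 V^*_\C \subset \wedge^< V^*_\C$ to $\gamma_\ell(e^i) = \Psi_\ell(e^i)$ (the degree-one part is untouched by $\mathcal{P}_\ell$ since $*\tau(e^i)$ has degree $d-1 > 1$), so it sends any $\vee$-product of the $e^i$ to the corresponding composition of the $\gamma_\ell(e^i)$; equivalently one can invoke that $\Psi^<_\ell$ is an algebra isomorphism and that the $e^i$ generate $(\wedge^< V^*_\C,\vee)$. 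Once this is in hand, the trace computation is formally identical to the even-dimensional case, with $2^{\frac d2}$ replaced by $2^m = 2^{\frac{d-1}{2}}$ and the degree bound $k < d$ replaced by the stronger $k \le m < d$, so no new phenomena arise. I would write the proof in three or four lines referring back to the even-dimensional proposition for the repeated cyclicity argument.
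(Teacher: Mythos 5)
Your proof is correct and follows essentially the same route as the paper's: reduce by linearity to the $\vee$-monomials $e^{i_1}\vee\cdots\vee e^{i_k}$ in an orthonormal basis and kill them for $1\le k\le m<d$ using the cyclicity of $\mathcal{T}_\ell$ together with the anticommutation relations, splitting into the even and odd $k$ cases. The only (harmless) difference is that you first transport the computation to $\End(\Sigma)$ via $\Psi^<_\ell$ and $\gamma_\ell(\nu_\C)=\ell\,\Id$, whereas the paper runs the identical argument directly inside $(\wedge^<V^*_\C,\vee)$.
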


\begin{proof}
    Let $\{e^1,\ldots,e^d\}$ be an orthonormal basis of $(V^*,h^*)$. For $i\neq j$ we have $e^i\vee e^j=-e^j\vee e^i$ and hence $(e^i)^{-1}\vee e^j\vee e^i=-e^j$. Let $1\leq k\leq\frac{d-1}{2}$ and $1\leq i_1<\cdots<i_k\leq d$. If $k$ is even, then: $$\mathcal{T}_\ell(e^{i_1}\vee\cdots\vee e^{e_k})=\mathcal{T}_\ell(e^{i_k}\vee e^{i_1}\vee\cdots\vee e^{i_{k-1}})=-\mathcal{T}_\ell(e^{i_1}\vee\cdots\vee e^{e_k})$$ and hence $\mathcal{T}_\ell(e^{i_1}\vee\cdots\vee e^{e_k})=0$. Here we have used the cyclicity of the truncated Kähler-Atiyah trace and the fact that $e^{i_k}$ anti-commutes with $e^{i_1},\ldots,e^{i_{k-1}}$. If $k$ is odd, let $j\in\{1,\ldots,d\}$ be such that $j\not\in\{i_1,\ldots,i_k\}$, which exists since $k<d$. We have: $$\mathcal{T}_\ell(e^{i_1}\vee\cdots\vee e^{i_k})=-\mathcal{T}_\ell((e^j)^{-1}\vee e^{i_1}\vee\cdots\vee e^{i_k}\vee e^j)=-\mathcal{T}_\ell(e^{i_1}\vee\cdots\vee e^{i_k}),$$ which implies that $\mathcal{T}_\ell(e^{i_1}\vee\dots\vee e^{i_k})=0$ and hence we conclude.
\end{proof}

Fix $\ell\in\mathbb{Z}_2$ and let $(\Sigma,\gamma_\ell)$ be an irreducible complex Clifford module equipped with a complex-bilinear pairing $\scB$ and a Hermitian pairing $\scS$. We define the \emph{complex-bilinear} and \emph{Hermitian} square spinor maps respectively as follows:
\begin{equation*}
\cE_\ell:=(\Psi_\ell^<)^{-1}\circ\cE\colon\Sigma\to\wedge^<V^*_\C,\qquad\hcE^\kappa_\ell:=(\Psi_\ell^<)^{-1}\circ\hcE_\kappa\colon\Sigma\to\wedge^<V^*_\C.
\end{equation*}

\noindent
Recall that, similarly to the even-dimensional case considered in the previous section, a Hermitian pairing $\scS\colon\Sigma\times\Sigma\to\C$ is called admissible if it satisfies:
\begin{equation}
\label{eq:adjoint_type}
\scS(\gamma_\ell(z)\xi,\chi)=\scS(\xi,\gamma_\ell(\overline{(\pi^{\frac{1-s}{2}}\circ\tau)(z)})\chi),\quad s\in\mathbb{Z}_2,
\end{equation} 

\noindent
whereas a complex-bilinear pairing $\scB\colon\Sigma\times\Sigma\to\C$  is called admissible if it satisfies:
\begin{equation*} 
\scB(\gamma_\ell(z)\xi,\chi)=\scB(\xi,\gamma_\ell((\pi^{\frac{1-s}{2}}\circ\tau)(z))\chi),\quad s\in\mathbb{Z}_2
\end{equation*} 

\noindent
for all $z\in\C\mathrm{l}(V^*,h^*)$ and $\xi,\chi\in\Sigma$.\medskip

As in the even-dimensional case considered in the previous section, we have the following characterization of \emph{constrained} spinors.

\begin{lemma}
\label{lemma:constrainedspinorodd}
Let $Q\in \End(\Sigma)$. Then, $\eta\in \Sigma$ satisfies $Q(\eta) = 0$ if and only if $\mathfrak{q}_\ell\vee \cE_{\ell}(\eta) = 0$, if and only if $\mathfrak{q}_\ell\vee \hcE^{\kappa}_{\ell}(\eta) = 0$, where $\mathfrak{q}_\ell:=(\Psi_\ell^<)^{-1}(Q)$ is the \emph{dequantization} of $Q\in \End(\Sigma)$.
\end{lemma}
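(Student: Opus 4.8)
The statement is the odd-dimensional analogue of Lemma~\ref{lemma:constrainedspinoreven}, and I would prove it by reduction to the abstract Lemma~\ref{lemma:constrainedvector} exactly as in the even case. The key point is that $\Psi^<_\ell\colon (\wedge^<V^*_\C,\vee)\to\End(\Sigma)$ is an isomorphism of unital associative algebras, so it intertwines the product $\vee$ with composition $\circ$. First I would recall that by definition $\cE_\ell=(\Psi^<_\ell)^{-1}\circ\cE$ and $\hcE^\kappa_\ell=(\Psi^<_\ell)^{-1}\circ\hcE_\kappa$, and that $\frq_\ell=(\Psi^<_\ell)^{-1}(Q)$. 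Applying $\Psi^<_\ell$ to the equation $\frq_\ell\vee\cE_\ell(\eta)=0$ and using that $\Psi^<_\ell$ is an algebra isomorphism, one gets $\Psi^<_\ell(\frq_\ell)\circ\Psi^<_\ell(\cE_\ell(\eta))=Q\circ\cE(\eta)=0$; since $\Psi^<_\ell$ is injective the converse also holds, so $\frq_\ell\vee\cE_\ell(\eta)=0$ if and only if $Q\circ\cE(\eta)=0$. The analogous statement holds verbatim with $\cE$ replaced by $\hcE_\kappa$ and $\cE_\ell$ by $\hcE^\kappa_\ell$.

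Then I would invoke Lemma~\ref{lemma:constrainedvector} directly: for the complex-bilinear pairing $\scB$ on $\Sigma$ (which plays the role of the non-degenerate pairing there), $Q(\eta)=0$ if and only if $Q\circ\cE(\eta)=0$; and for the Hermitian pairing $\scS$, $Q(\eta)=0$ if and only if $Q\circ\hcE_\kappa(\eta)=0$. Chaining these two equivalences with the ones from the previous paragraph gives
$$
Q(\eta)=0 \iff Q\circ\cE(\eta)=0 \iff \frq_\ell\vee\cE_\ell(\eta)=0,
$$
and likewise $Q(\eta)=0\iff Q\circ\hcE_\kappa(\eta)=0\iff \frq_\ell\vee\hcE^\kappa_\ell(\eta)=0$, which is exactly the claim.

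There is essentially no obstacle here: the only thing to be careful about is that $\cE$ and $\hcE_\kappa$ are defined using the pairings $\scB$ and $\scS$ on $\Sigma$, which need not be admissible for Lemma~\ref{lemma:constrainedvector} to apply — but that lemma only uses non-degeneracy of the pairing, which is part of the standing hypotheses. One minor subtlety worth a sentence is that $\Psi^<_\ell$ is a genuine isomorphism (not merely a surjection like $\Psi_\ell$), which is why the biconditional survives the pullback along $(\Psi^<_\ell)^{-1}$; this was established in the construction of the truncated Kähler-Atiyah algebra above via the commutative diagram. I would therefore present the proof in two or three lines, observing that it is formally identical to the proof of Lemma~\ref{lemma:constrainedspinoreven} with $\diamond$ replaced by $\vee$ and $\Psi_\gamma$ replaced by $\Psi^<_\ell$.
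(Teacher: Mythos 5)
Your proof is correct and follows exactly the route the paper (implicitly) takes: transfer the statement along the unital algebra isomorphism $\Psi^<_\ell$ and apply Lemma \ref{lemma:constrainedvector}, just as Lemma \ref{lemma:constrainedspinoreven} was obtained from it via $\Psi_\gamma$ in the even case. Nothing is missing; the paper in fact states the odd lemma without further argument precisely because of this reduction.
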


% % % % % % % % % % % % % % % % % % % % % % % % % % % % % % % % % % % % % %

\subsection{Hermitian spinorial forms}

% % % % % % % % % % % % % % % % % % % % % % % % % % % % % % % % % % % % % %

In this subsection, we give the algebraic characterization of the complex spinorial forms associated to an irreducible complex Clifford module $(\Sigma,\gamma_\ell)$ equipped with an admissible Hermitian pairing $\scS$ in odd dimensions. We begin by proving the existence of admissible Hermitian pairings on every irreducible complex Clifford module $(\Sigma,\gamma_\ell)$ associated to an odd-dimensional quadratic vector space $(V,h)$.

\begin{prop}
\label{prop:Hermitian_admissibleodd}
Every irreducible complex Clifford module $(\Sigma,\gamma_\ell)$ admits a non-degenerate Hermitian pairing $\scS$ whose adjoint type is given in the following table:
\begin{table}[H]
\centering
\begin{tabular}{l|llll}
\hline
$\scS $&$p\equiv_40$&$p\equiv_41$&$p\equiv_42$&$p\equiv_43$\\
\hline
Adjoint type&Negative&Positive&Negative&Positive\\
\hline
\end{tabular}
\end{table}

\noindent
Here $p$ denotes the number of pluses in the signature of $h$.
\end{prop}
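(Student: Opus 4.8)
The plan is to mimic the strategy of Proposition~\ref{prop:Hermitian_admissible} from the even-dimensional case, adapting it to the non-simple odd-dimensional setting. First I would fix an $h^*$-orthonormal basis $\{e^1,\ldots,e^d\}$ of $V^*$ and consider the finite multiplicative subgroup $\mathfrak{K}\subset\C\mathrm{l}(V^*,h^*)$ generated by $\pm e^i$ as in \eqref{eq:group_units_K}. Starting from an arbitrary positive-definite Hermitian inner product $\beta$ on $\Sigma$, I would average over $\mathfrak{K}$ to obtain a $\mathfrak{K}$-invariant Hermitian inner product $\escal{\cdot,\cdot}$ satisfying $\escal{\gamma_\ell(k)\xi,\gamma_\ell(k)\chi}=\escal{\xi,\chi}$ for all $k\in\mathfrak{K}$, equivalently $\gamma_\ell(e^i)^\dagger=\gamma_\ell(e^i)^{-1}$ with respect to $\escal{\cdot,\cdot}$; since $e^i$ is a unit-norm generator, $\gamma_\ell(e^i)^{-1}=h^*(e^i,e^i)\gamma_\ell(e^i)$, so $\gamma_\ell(e^i)^\dagger=\pm\gamma_\ell(e^i)$ according to the sign of $h^*(e^i,e^i)$.

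The key difference from the even case is that $\gamma_\ell$ is no longer an isomorphism onto $\End(\Sigma)$: it kills the factor $\frac{1}{2}(1-\ell\nu_\C)$, and crucially $\gamma_\ell(\nu_\C)=\ell\,\Id$. This constraint is what pins down the adjoint type uniquely (unlike the even case, where both $\scS_+$ and $\scS_-$ exist). Writing $V^*=V^*_+\oplus V^*_-$ with $\dim V^*_+=p$, $\dim V^*_-=q$, and picking volume forms $\nu_\pm$ so $\nu=\nu_+\wedge\nu_-$, I would form a candidate pairing $\scS(\xi,\chi):=i^{c}\escal{\Psi_{\ell}(\theta)\xi,\chi}$ where $\theta\in\{\nu_+,\nu_-,1\}$ and $c$ is an appropriate power of $i$, chosen exactly so that $\scS$ is Hermitian (self-adjointness of $i^c\Psi_\ell(\theta)$ with respect to $\escal{\cdot,\cdot}$ fixes $c$ modulo $4$) and so that the resulting adjoint type is consistent with $\gamma_\ell(\nu_\C)=\ell\,\Id$. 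Concretely: for each choice of $\theta$ one computes $\gamma_\ell(e^i)$'s adjoint with respect to $i^c\escal{\Psi_\ell(\theta)\cdot,\cdot}$ by commuting $e^i$ past $\theta$, which produces a sign $(-1)^{\#\{\text{factors of }\theta\text{ anticommuting with }e^i\}}$ times the original sign $\pm$ from $\escal{\cdot,\cdot}$; this computation, together with the requirement $\nu_\C=i^{q+m}\nu$ acts as $\ell\,\Id$, selects which $\theta$ works and yields the adjoint type as a function of $p\bmod 4$. I expect the outcome to match the stated table: negative type for $p\equiv_4 0,2$ and positive type for $p\equiv_4 1,3$.

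The main obstacle will be the bookkeeping of powers of $i$ and signs needed to verify that the candidate $\scS$ is genuinely Hermitian (not skew-Hermitian) while simultaneously having the claimed adjoint type — this is where the odd-dimensional case is more rigid than the even one, since the two a priori choices $\scS_\pm$ that were available for $d$ even now collapse because $\nu_\C$ is forced to act as the identity. A clean way to organize this is to use the identity \eqref{eq:mult_volume_form_odd}, $\alpha\diamond\nu_\C=i^{q+m}\ast\tau(\alpha)$, to relate $\Psi_\ell(\nu_+)$ and $\Psi_\ell(\nu_-)$ up to explicit scalars, so that only one volume-form factor needs to be analyzed in detail and the other is deduced. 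I would then record the final verification as a direct computation, as the authors do elsewhere in the paper, since once the correct power of $i$ is identified the self-adjointness and the adjoint-type relation \eqref{eq:adjoint_type} both follow by the same commutation argument applied to the generators $e^i$ and extended multiplicatively to all of $\C\mathrm{l}(V^*,h^*)$.
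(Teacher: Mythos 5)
Your proposal follows essentially the same route as the paper's proof: average over the finite Clifford group $\mathfrak{K}$ to get an invariant Hermitian inner product, then twist by $\Psi^<_\ell(\nu_+)$ with a power of $i$ (the paper uses $i^{\binom{p}{2}}$, fixed exactly by the Hermiticity requirement you describe) and read off the adjoint type $(-1)^{p-1}$ by commuting generators past $\nu_+$, which yields the stated table. Your observation that the even-dimensional pair $\scS_\pm$ collapses in odd dimensions because $\nu_\C$ acts as a scalar, and that $\nu_+$ and $\nu_-$ give proportional pairings via \eqref{eq:mult_volume_form_odd}, matches the remark with which the paper closes its proof.
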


\begin{proof}
Let $\{e^1,\ldots,e^{d}\}$ be an $h^*$-orthonormal basis of $V^*$, $d=\dim_\R V$, and let: 
\begin{equation*}
\mathfrak{K} :=\{1\}\cup\{\pm e^{i_1}\cdots e^{i_k}\mid 1\leq i_1<\cdots<i_k\leq d,\,1\leq k\leq d\}
\end{equation*} 
be the finite multiplicative subgroup of $\C\mathrm{l}(V^*,h^*)$ generated by $\pm e^i$. Note that $\C\mathrm{l}(V^*,h^*)=\Span_\C\{ \mathfrak{K} \}$. Let $\beta\colon\Sigma\times\Sigma\to\C$ be a non-degenerate positive-definite Hermitian pairing. Then we can construct a $\mathfrak{K} $-invariant non-degenerate Hermitian pairing by averaging over $\mathfrak{K}$: 
\begin{equation*}
\escal{\xi,\chi}:=\frac{1}{\abs{\mathfrak{K}}}\sum_{k\in  \mathfrak{K}}\beta(\gamma_\ell(k)\xi,\gamma_\ell(k)\chi).
\end{equation*}
Then this pairing satisfies $\escal{\gamma_\ell(k)\xi,\gamma_\ell(k)\chi}=\escal{\xi,\chi}$ for all $k\in \mathfrak{K}$. Write $V^*=V^*_+\oplus V^*_-$, where $V^*_+$ is a $p$-dimensional subspace on which $h^*$ is positive-definite and $V^*_-$ is a $q$-dimensional subspace on which $h^*$ is negative-definite. Fix volume forms $\nu_+\in\wedge^pV^*_+$ and $\nu_-\in\wedge^qV^*_-$ so that $\nu = \nu_+\wedge\nu_-$ for the pseudo-Riemannian volume form $\nu$ on $(V,h)$. Define the non-degenerate Hermitian pairing $\scS \colon\Sigma\times\Sigma\to\C$ by:
\begin{eqnarray*}
\scS(\xi,\chi):= i^{\binom{p}{2}} \escal{\Psi^<_\ell(\nu_+)\xi,\chi}  \, .
\end{eqnarray*}

\noindent 
A direct computation shows that $\scS$ is a non-degenerate Hermitian pairing on $\Sigma$ and:
\begin{equation*}
\scS(\gamma_\ell(\theta)\xi,\chi)=(-1)^{p-1} \scS(\xi,\gamma_\ell (\theta)\chi)  
\end{equation*}

\noindent
for all $\theta\in V^*$, and hence we conclude. It can be seen that, if instead of $\Psi^<_\ell(\nu_{+})$, we use $\Psi^<_\ell(\nu_{-})$ together with a multiplicative factor of $i^{\binom{q+1}{2}}$ to construct $\scS$, we obtain the same Hermitian pairing modulo a multiplicative real constant.
\end{proof}

\noindent
The relation \eqref{eq:adjoint_type} can be written as: $$\gamma_\ell(z)^\dagger=\gamma_\ell(\overline{(\pi^{\frac{1-s}{2}}\circ\tau)(z)})$$ for all $z\in\C\mathrm{l}(V^*,h^*)$, where $\gamma_\ell(z)^\dagger$ denotes the adjoint with respect to the Hermitian pairing $\scS$ of adjoint type $s\in\mathbb{Z}_2$.\medskip

Arguing as in Theorem \ref{thm:even_Hermitian_forms}, we obtain the corresponding algebraic characterization of Hermitian spinorial exterior forms in odd dimensions.

\begin{thm}\label{thm_Hermitian_square_odd}
Let $(\Sigma,\gamma_\ell)$ be an irreducible complex Clifford module equipped with an admissible Hermitian pairing $\scS$ of adjoint type $s\in\mathbb{Z}_2$. Then the following statements are equivalent for a complex exterior form $\alpha\in\wedge^<V^*_\C$: 
\begin{enumerate}
\item[\normalfont(a)] $\alpha$ is the Hermitian square of a spinor $\xi\in\Sigma$, that is, $\alpha=\hcE_\ell^\kappa(\xi)$ for some $\kappa\in\U(1)$.
\item[\normalfont(b)] $\alpha$ satisfies the following relations: 
\begin{eqnarray*}
\alpha\vee\alpha = 2^m\alpha^{(0)}\alpha, \quad(\pi^{\frac{1-s}{2}}\circ\tau)(\bar\kappa\alpha) = \kappa \bar{\alpha}\, , \quad\alpha\vee\beta\vee\alpha=2^m(\alpha\vee\beta)^{(0)}\alpha
\end{eqnarray*}

\noindent
for a fixed exterior form $\beta\in\wedge^<V^*_\C$ satisfying $(\alpha\vee\beta)^{(0)}\neq0$, where $d = 2m + 1$.
\item[\normalfont(c)] The following relations hold: 
\begin{equation*}
(\pi^{\frac{1-s}{2}}\circ\tau)(\bar\kappa\alpha) = \kappa \bar{\alpha}\, , \quad\alpha\vee\beta\vee\alpha=2^m(\alpha\vee\beta)^{(0)}\alpha
\end{equation*}

\noindent
for every exterior form $\beta\in\wedge^<V^*_\C$.
\end{enumerate}
\end{thm}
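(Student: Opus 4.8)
The plan is to reduce Theorem~\ref{thm_Hermitian_square_odd} to the already-established algebraic statement for endomorphisms, namely Proposition~\ref{prop:algebraichHermitian}, by transporting everything along the isomorphism $\Psi^<_\ell\colon(\wedge^<V^*_\C,\vee)\to(\End(\Sigma),\circ)$ of unital associative algebras constructed in \eqref{eq:truncated_iso}. The point is that, since $\Psi^<_\ell$ is a unital algebra isomorphism intertwining the $\vee$-product with composition, it carries the $\vee$-multiplicative relations on exterior forms to the corresponding $\circ$-multiplicative relations on endomorphisms, and it carries the truncated K\"ahler--Atiyah trace $\mathcal{T}_\ell$ to the ordinary trace by the Proposition just proved. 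So the structural skeleton of the proof should mirror exactly that of Theorem~\ref{thm:even_Hermitian_forms}.

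Concretely, I would proceed as follows. First, set $E:=\Psi^<_\ell(\alpha)$ and $A:=\Psi^<_\ell(\beta)$; since $\Psi^<_\ell$ is a unital algebra isomorphism, $\alpha\vee\beta\vee\alpha = 2^m(\alpha\vee\beta)^{(0)}\alpha$ holds in $(\wedge^<V^*_\C,\vee)$ for every $\beta$ if and only if $E\circ A\circ E = \Tr(E\circ A)\,E$ holds in $\End(\Sigma)$ for every $A$, using that $\mathcal{T}_\ell(\alpha)=2^m\alpha^{(0)}$ so that $(\alpha\vee\beta)^{(0)} = 2^{-m}\Tr(E\circ A)$, and using that $\Psi^<_\ell$ is bijective so "for every $\beta$" matches "for every $A$". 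Second, I need the odd-dimensional analog of Lemma~\ref{lemma:exterior_form_dagger}: namely that $\Psi^<_\ell(\alpha)^\dagger = \Psi^<_\ell\big(\overline{(\pi^{\frac{1-s}{2}}\circ\tau)(\alpha)}\big)$ with respect to the admissible Hermitian pairing $\scS$ of adjoint type $s$. This follows from $\gamma_\ell(z)^\dagger = \gamma_\ell(\overline{(\pi^{\frac{1-s}{2}}\circ\tau)(z)})$ (stated just before the theorem) together with $\pi\circ\Psi=\Psi\circ\pi$ and $\tau\circ\Psi=\Psi\circ\tau$, provided one checks that $\pi^{\frac{1-s}{2}}\circ\tau$ is compatible with the truncation, i.e.\ commutes with the projection $\cP_\ell$ and preserves $\wedge^<V^*_\C$; this is where a small amount of care is needed, since $\tau$ preserves degree and $\pi$ preserves degree mod $2$, so both preserve $\wedge^<V^*_\C$, and both commute with Hodge $*$ up to sign hence with $\cP_\ell$ via \eqref{eq:projection_l}. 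Granting this, the condition $(\pi^{\frac{1-s}{2}}\circ\tau)(\bar\kappa\alpha)=\kappa\bar\alpha$ translates under $\Psi^<_\ell$ into $(\bar\kappa E)^\dagger = \bar\kappa E$ (using that $\Psi^<_\ell$ commutes with complex conjugation of coefficients).

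With these translations in hand, the three equivalences $(a)\Leftrightarrow(b)\Leftrightarrow(c)$ for $\alpha$ become precisely the equivalences $(a)\Leftrightarrow(c)\Leftrightarrow(b)$ of Proposition~\ref{prop:algebraichHermitian} for $E=\Psi^<_\ell(\alpha)$: statement $(a)$ of the theorem, that $\alpha=\hcE^\kappa_\ell(\xi)=(\Psi^<_\ell)^{-1}\circ\hcE_\kappa(\xi)$, is equivalent to $E\in\Im(\hcE_\kappa)$, which is $(a)$ of the Proposition; statement $(c)$ of the theorem, holding for \emph{every} $\beta$, matches $(b)$ of the Proposition; and statement $(b)$ of the theorem, holding for a \emph{fixed} $\beta$ with $(\alpha\vee\beta)^{(0)}\neq0$ and additionally imposing $\alpha\vee\alpha=2^m\alpha^{(0)}\alpha$, matches $(c)$ of the Proposition, where $\Tr(E\circ A)\neq 0$ corresponds to $(\alpha\vee\beta)^{(0)}\neq 0$ and $E\circ E=\Tr(E)E$ corresponds to $\alpha\vee\alpha=2^m\alpha^{(0)}\alpha$. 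I expect the main obstacle to be purely the bookkeeping verification that $\pi^{\frac{1-s}{2}}\circ\tau$ descends correctly through the truncation $\wedge^<V^*_\C\hookrightarrow\wedge V^*_\C\twoheadrightarrow\wedge_\ell V^*_\C$ and is compatible with $\vee$ in the sense needed for Lemma~\ref{lemma:exterior_form_dagger}'s odd-dimensional counterpart; everything else is a formal transport of structure along $\Psi^<_\ell$ exactly as in the even case, so the proof can be written very briefly, stating "Arguing as in Theorem~\ref{thm:even_Hermitian_forms}" and invoking the truncated trace proposition and Proposition~\ref{prop:algebraichHermitian}.
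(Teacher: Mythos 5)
Your proposal is correct and is essentially the paper's own argument: the paper proves the odd case exactly by transporting the relations along the unital algebra isomorphism $\Psi^<_\ell$, using the truncated K\"ahler--Atiyah trace and the odd analogue of Lemma \ref{lemma:exterior_form_dagger} (which holds since $\pi$, $\tau$, and conjugation preserve $\wedge^<V^*_\C$ and $\Psi^<_\ell$ agrees with $\Psi_\ell$ there), and then invoking Proposition \ref{prop:algebraichHermitian}, just as in Theorem \ref{thm:even_Hermitian_forms}.
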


\noindent
As it happened in the even-dimensional case considered in the previous section, we can explicitly expand the Hermitian square $\widehat{\alpha}_\eta\in\wedge^<V^*_\C$ of an irreducible complex spinor $\eta\in\Sigma$ in terms of an orthonormal basis of $(V^*,h^*)$. A direct computation gives:
\begin{equation}
\label{eq:oddhermitianexpansion}
 \widehat{\alpha}_{\eta} = \frac{\kappa}{2^{\frac{d-1}{2}}} \scS(\eta,\eta) + \frac{\kappa}{2^{\frac{d-1}{2}}} \sum_{k=1}^{\frac{d-1}{2}} \sum_{i_1 < \cdots < i_k} \scS((\gamma_\ell^{i_1 \cdots i_k})^{-1}\eta , \eta)\, e^{i_1}\wedge  \cdots \wedge e^{i_k},
 \end{equation}
where $\gamma_\ell^{i_1\cdots i_k}:=\gamma_\ell^{i_1}\cdots\gamma_\ell^{i_k}$ and $\gamma_\ell^i:=\Psi^<_\ell(e^i)$.

\begin{remark}
Similarly to the even-dimensional case considered in Subsection \ref{subsec:Hermitianequivariance}, the Hermitian spinor square map in odd dimensions is also equivariant with respect to the natural action of $\Spin^c_o (p,q)$.
\end{remark}

% % % % % % % % % % % % % % % % % % % % % % % % % % % % % % % % % % % % % %

\subsection{Complex-bilinear spinorial forms}

% % % % % % % % % % % % % % % % % % % % % % % % % % % % % % % % % % % % % %

In this subsection, we obtain the algebraic characterization of the complex spinorial forms associated to an irreducible complex Clifford module $(\Sigma,\gamma_\ell)$ equipped with an admissible complex-bilinear pairing $\scB$. We begin by proving the existence of admissible complex-bilinear pairings on every irreducible complex Clifford module $(\Sigma,\gamma_\ell)$. In order to do this, we need to distinguish between different signatures in odd dimensions. The case $p-q\equiv_81,5$ is analogous to the even-dimensional case discussed in Subsection \ref{subsec:bilinear}. We use the real or quaternionic structure of the complex Clifford module $(\Sigma,\gamma_\ell)$ for $\Cl(V^*,h^*)$ to construct the complex-bilinear admissible pairing on $(\Sigma,\gamma_\ell)$. Arguing as in Lemma \ref{lemma:real_or_quaternionic_type}, we obtain the following result.

\begin{lemma}\label{lemma:R_orQ_p-q=1,5}
Let $(\Sigma,\gamma_\ell)$ be an irreducible complex Clifford module for $\Cl(V^*,h^*)$ with $(V,h)$ odd-dimensional of signature $(p,q)$. Then:
\begin{itemize}
    \item If $p-q\equiv_8 1$ then $(\Sigma,\gamma_\ell)$ is of real type.
    \item If $p-q\equiv_8 5$ then $(\Sigma,\gamma_\ell)$ is of quaternionic type.
\end{itemize}
\end{lemma}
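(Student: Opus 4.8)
The plan is to follow the same strategy already used to establish Lemma \ref{lemma:real_or_quaternionic_type} in the even-dimensional case, adapting it to the odd-dimensional splitting \eqref{eq:splitting_Clifford}. First I would recall that when $d = 2m+1$ is odd, the \emph{real} Clifford algebra $\Cl(V^*,h^*)$ is Morita-classified by the Atiyah--Bott--Shapiro tables according to $p - q \bmod 8$, and that the relevant cases here are $p - q \equiv_8 1$ and $p - q \equiv_8 5$. In the first case the real Clifford algebra is of the form $\Mat(2^{m-?},\R) \oplus \Mat(2^{m-?},\R)$ with real Schur algebra $\R$, so that each irreducible real module is of real type; in the second case the real Clifford algebra has two simple factors isomorphic to matrix algebras over $\H$, so the irreducible real modules are of quaternionic type, i.e. their Schur algebra is $\H$, which contains two anti-commuting complex structures $J_1, J_2$.

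The key step is then to relate these irreducible \emph{real} modules to the irreducible \emph{complex} modules $(\Sigma,\gamma_\ell)$ appearing in the excerpt. When $p - q \equiv_8 1$, the complexification of the unique (up to isomorphism) real irreducible module yields, after projecting onto one of the two central factors $\C\mathrm{l}_\ell(V^*,h^*)$, the complex irreducible module $(\Sigma,\gamma_\ell)$; the real structure $\frc$ on the real module extends $\C$-antilinearly to a map on $\Sigma$ satisfying $\frc^2 = \Id$ and $\gamma_\ell(z)\circ \frc = \frc \circ \gamma_\ell(\bar z)$, which is precisely a real structure on $(\Sigma,\gamma_\ell)$ in the sense of the definition recalled above. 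When $p - q \equiv_8 5$, one complexifies the quaternionic-type real irreducible module using the complex structure $J_1$ (again composing with projection to $\C\mathrm{l}_\ell$), obtaining $(\Sigma,\gamma_\ell)$, and then the remaining anti-commuting complex structure $J_2$, being $\R$-linear for the real module and anti-commuting with $J_1$, descends to a $\C$-antilinear map on $\Sigma$ with $J_2^2 = -\Id$ and $\gamma_\ell(z)\circ J_2 = J_2 \circ \gamma_\ell(\bar z)$, i.e. a quaternionic structure. I would phrase this compactly by saying ``arguing exactly as in the proof of Lemma \ref{lemma:real_or_quaternionic_type}'', since the only genuinely new ingredient is bookkeeping the central decomposition \eqref{eq:splitting_Clifford}.

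The main obstacle, and the only point requiring real care, is checking that the projection onto the factor $\C\mathrm{l}_\ell(V^*,h^*)$ is compatible with the real or quaternionic structure — that is, that $\frc$ (resp. $J_2$) genuinely maps $\Sigma$ to itself rather than intertwining $\gamma_{+}$ and $\gamma_{-}$. This is where one uses that conjugation fixes the \emph{real} volume form $\nu$ and hence acts on $\nu_\C = i^{q+m}\nu$ by $\overline{\nu_\C} = (-1)^{q+m}\nu_\C$; tracking this sign shows whether the antilinear structure preserves each eigenspace of $\gamma_\ell(\nu_\C)$ or swaps the two, and one must verify it preserves them in the cases $p - q \equiv_8 1, 5$ (whereas it would swap them in the cases $p - q \equiv_8 3, 7$, which are treated separately precisely for this reason). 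Once this sign check is in place, the rest is immediate. I would therefore structure the proof as: (i) invoke the ABS classification to identify the real type of the underlying real irreducible module in each of the two congruence classes; (ii) verify the antilinear structure is compatible with the central projection via the sign of $\overline{\nu_\C}$; (iii) conclude that $(\Sigma,\gamma_\ell)$ inherits a real structure when $p-q \equiv_8 1$ and a quaternionic structure when $p-q\equiv_8 5$, exactly as in Lemma \ref{lemma:real_or_quaternionic_type}.
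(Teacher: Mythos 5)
Your proposal is correct and follows essentially the same route as the paper, which proves this lemma simply by arguing as in the even-dimensional case: identify the real type of the underlying irreducible real Clifford module from the classification by $p-q \bmod 8$ (Schur algebra $\R$ for $p-q\equiv_8 1$, $\H$ with two anti-commuting complex structures for $p-q\equiv_8 5$) and transfer the resulting real or quaternionic structure to $(\Sigma,\gamma_\ell)$. Your additional check that $\overline{\nu_\C}=(-1)^{q+m}\nu_\C=+\nu_\C$ precisely when $p-q\equiv_8 1,5$, so that the antilinear structure is compatible with the central projection onto $\C\mathrm{l}_\ell(V^*,h^*)$ rather than intertwining $\gamma_+$ and $\gamma_-$, is a correct and worthwhile detail that the paper leaves implicit.
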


\noindent
We can always consider admissible Hermitian pairings on $(\Sigma,\gamma_\ell)$ which are either invariant or anti-invariant with respect to the underlying real or quaternionic structure of $(\Sigma,\gamma_\ell)$.

\begin{lemma}
\label{lemma:compatible_admissible_pairingsodd}
Let $(\Sigma,\gamma_\ell)$ be an irreducible complex Clifford module equipped with a real or quaternionic structure $K\colon \Sigma \to \Sigma$. Then $(\Sigma,\gamma_\ell)$ can be equipped with an admissible Hermitian pairing $\scS$ such that:
\begin{equation*}
\scS(K\xi,K\chi)=(-1)^{\binom{p}{2}}\overline{\scS(\xi,\chi)}
\end{equation*}
 for every $\xi , \chi \in \Sigma$.
 \end{lemma}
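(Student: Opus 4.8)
The plan is to mimic the proof of Proposition \ref{prop:Hermitian_admissible} (and of Lemma \ref{lemma:compatible_admissible_pairings} in the even-dimensional case), adapting it to the odd-dimensional setting where only a \emph{single} admissible Hermitian pairing is available, as established in Proposition \ref{prop:Hermitian_admissibleodd}. First I would start with an arbitrary non-degenerate positive-definite Hermitian pairing on $\Sigma$ and average it over the finite group $\mathfrak{K}$ of \eqref{eq:group_units_K} to obtain a $\mathfrak{K}$-invariant Hermitian inner product $\escal{\cdot,\cdot}'$. Then, using the given real or quaternionic structure $K$, I would symmetrize once more by setting
\begin{equation*}
\escal{\xi,\chi}:=\tfrac{1}{2}\big(\escal{\xi,\chi}'+\overline{\escal{K\xi,K\chi}'}\big),
\end{equation*}
which is still $\mathfrak{K}$-invariant (since $K$ commutes with the $\gamma_\ell(k)$ up to conjugation of the defining data) and additionally satisfies $\escal{K\xi,K\chi}=\overline{\escal{\xi,\chi}}$ for all $\xi,\chi\in\Sigma$. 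From $\escal{\cdot,\cdot}$ I would then build $\scS$ exactly as in the proof of Proposition \ref{prop:Hermitian_admissibleodd}, namely $\scS(\xi,\chi):=i^{\binom{p}{2}}\escal{\Psi^<_\ell(\nu_+)\xi,\chi}$; by that proposition this $\scS$ is a non-degenerate admissible Hermitian pairing with the adjoint type dictated by $p \bmod 4$.

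The remaining point is to compute how $\scS$ interacts with $K$. Here one writes
\begin{equation*}
\scS(K\xi,K\chi)=i^{\binom{p}{2}}\escal{\Psi^<_\ell(\nu_+)K\xi,K\chi},
\end{equation*}
moves $\Psi^<_\ell(\nu_+)$ past $K$ using the compatibility $\gamma_\ell(z)\circ K=K\circ\gamma_\ell(\bar z)$ (so that $\Psi^<_\ell(\nu_+)K=K\Psi^<_\ell(\overline{\nu_+})$, and $\overline{\nu_+}=\nu_+$ since the orthonormal coframe is real), and then applies the anti-invariance $\escal{K\cdot,K\cdot}=\overline{\escal{\cdot,\cdot}}$ to get $\scS(K\xi,K\chi)=i^{\binom{p}{2}}\overline{\escal{\Psi^<_\ell(\nu_+)\xi,\chi}}=i^{\binom{p}{2}}\overline{i^{-\binom{p}{2}}\scS(\xi,\chi)}=i^{2\binom{p}{2}}\overline{\scS(\xi,\chi)}=(-1)^{\binom{p}{2}}\overline{\scS(\xi,\chi)}$, which is exactly the claimed identity. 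I would also double-check that the factor $\Psi^<_\ell(\nu_+)$ is unitary (up to the scalar $i^{\binom{p}{2}}$) with respect to $\escal{\cdot,\cdot}$, which is what makes the above manipulation legitimate; this follows from $\nu_+^2=(-1)^{\binom{p}{2}}$ in the Clifford algebra together with $\mathfrak{K}$-invariance of $\escal{\cdot,\cdot}$.

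The main obstacle I anticipate is bookkeeping the sign and phase factors: one must verify carefully that $\overline{\nu_+}=\nu_+$ under the conjugation of $\C\mathrm{l}(V^*,h^*)$ used in the excerpt, that $\nu_+$ commutes or anti-commutes with $K$ with the right sign, and that the power of $i$ produced by $i^{\binom{p}{2}}\cdot\overline{i^{-\binom{p}{2}}}$ collapses to $(-1)^{\binom{p}{2}}$ independently of the parity subtleties that distinguish the odd-dimensional case from the even one. A secondary subtlety is that in odd dimensions one does not have the freedom to switch between $\scS_+$ and $\scS_-$, so unlike Lemma \ref{lemma:compatible_admissible_pairings} there is no sign parameter $s$ in the final formula; I would make sure the statement is consistent with the single admissible adjoint type fixed by $p\bmod 4$ in Proposition \ref{prop:Hermitian_admissibleodd}, and that the case distinction between real type ($p-q\equiv_8 1$) and quaternionic type ($p-q\equiv_8 5$) of Lemma \ref{lemma:R_orQ_p-q=1,5} does not affect the computation, since $K^2=\pm\Id$ enters only through $\escal{K\cdot,K\cdot}$, whose sign is already absorbed into the symmetrization.
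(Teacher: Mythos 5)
Your proposal is correct and follows essentially the same route as the paper: the paper proves this lemma by declaring it "completely analogous" to Lemma \ref{lemma:compatible_admissible_pairings}, i.e.\ average a positive-definite Hermitian pairing over the group $\mathfrak{K}$, symmetrize with $K$ via $\escal{\xi,\chi}=\tfrac{1}{2}(\escal{\xi,\chi}'+\overline{\escal{K\xi,K\chi}'})$, and then define $\scS$ with the $i^{\binom{p}{2}}\Psi^<_\ell(\nu_+)$ factor as in Proposition \ref{prop:Hermitian_admissibleodd}. Your explicit phase computation $i^{\binom{p}{2}}\overline{i^{-\binom{p}{2}}}=(-1)^{\binom{p}{2}}$, using $\overline{\nu_+}=\nu_+$ and $\gamma_\ell(z)\circ K=K\circ\gamma_\ell(\bar z)$, correctly supplies the "direct computation" the paper leaves implicit.
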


\noindent
The proof of this lemma is completely analogous to that of Lemma \ref{lemma:compatible_admissible_pairings}. Using such \emph{compatible} admissible Hermitian pairings and the real or quaternionic structure of $(\Sigma,\gamma_\ell)$, we proceed to construct natural admissible complex-bilinear pairings on $(\Sigma,\gamma_\ell)$.

\begin{prop}
Let $(V,h)$ be a quadratic vector space of signature $(p,q)$ satisfying $p-q\equiv_81,5$ and let $(\Sigma,\gamma_\ell)$ be an irreducible complex Clifford module for $\Cl(V^*,h^*)$. Then $(\Sigma,\gamma_\ell)$ admits a complex-bilinear admissible pairing $\scB$ whose adjoint type is given in the following table: 
\begin{table}[H]
\centering
\begin{tabular}{l|llll}
\hline
$\scB $&$p\equiv_40$&$p\equiv_41$&$p\equiv_42$&$p\equiv_43$\\
\hline
Adjoint type&Negative&Positive&Negative&Positive\\
\hline
\end{tabular}
\end{table}

\noindent
Here $p$ denotes the number of pluses in the signature of $h$.
\end{prop}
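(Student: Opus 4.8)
The statement asserts, for odd dimension with $p-q\equiv_8 1,5$, the existence of an admissible complex-bilinear pairing $\scB$ with a prescribed adjoint type depending on $p\bmod 4$. The plan is to mimic the construction of Proposition \ref{prop:complex_bilinear_admissible} in the even-dimensional case, replacing the Clifford module isomorphism $\Psi_\gamma$ by the truncated isomorphism $\Psi^<_\ell$ and using the real or quaternionic structure supplied by Lemma \ref{lemma:R_orQ_p-q=1,5}. First I would fix such a structure $K\colon\Sigma\to\Sigma$ on $(\Sigma,\gamma_\ell)$, which is a real structure if $p-q\equiv_8 1$ and a quaternionic structure if $p-q\equiv_8 5$. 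By Lemma \ref{lemma:compatible_admissible_pairingsodd}, I may then choose a compatible admissible Hermitian pairing $\scS$ on $(\Sigma,\gamma_\ell)$, whose adjoint type $s\in\Z_2$ is the one determined in Proposition \ref{prop:Hermitian_admissibleodd} (negative for $p\equiv_4 0,2$; positive for $p\equiv_4 1,3$), and which satisfies $\scS(K\xi,K\chi)=(-1)^{\binom{p}{2}}\overline{\scS(\xi,\chi)}$.

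The second step is to define $\scB(\xi,\chi):=\scS(\xi,K\chi)$ and verify it is the desired pairing. Complex-bilinearity follows because $\scS$ is anti-linear in the second slot and $K$ is anti-linear, so the two anti-linearities cancel; non-degeneracy is inherited from that of $\scS$ together with the invertibility of $K$. For admissibility I would compute, for $z\in\C\mathrm{l}(V^*,h^*)$,
\begin{equation*}
\scB(\gamma_\ell(z)\xi,\chi)=\scS(\gamma_\ell(z)\xi,K\chi)=\scS(\xi,\gamma_\ell(\overline{(\pi^{\frac{1-s}{2}}\circ\tau)(z)})K\chi)=\scS(\xi,K\gamma_\ell((\pi^{\frac{1-s}{2}}\circ\tau)(z))\chi),
\end{equation*}
where the middle equality uses that $\scS$ has adjoint type $s$ and the last equality uses the defining intertwining property $\gamma_\ell(w)\circ K=K\circ\gamma_\ell(\bar w)$ of a real or quaternionic structure (noting $\pi$ and $\tau$ commute with conjugation, so that $\overline{(\pi^{\frac{1-s}{2}}\circ\tau)(z)}=(\pi^{\frac{1-s}{2}}\circ\tau)(\bar z)$ and then moving $K$ past). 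The right-hand side is $\scB(\xi,\gamma_\ell((\pi^{\frac{1-s}{2}}\circ\tau)(z))\chi)$, so $\scB$ is admissible of the same adjoint type $s$ as $\scS$; this $s$ is exactly the one recorded in the table.

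The remaining point, which is a routine check, is that the symmetry type of $\scB$ is well defined, i.e. $\scB$ is either symmetric or skew-symmetric. One computes $\scB(\chi,\xi)=\scS(\chi,K\xi)=\overline{\scS(K\xi,\chi)}=\overline{\scS(K\xi,K(K^{-1}\chi))}$ and then uses $\scS(K\eta_1,K\eta_2)=(-1)^{\binom{p}{2}}\overline{\scS(\eta_1,\eta_2)}$ together with $K^2=\epsilon\,\Id$ (with $\epsilon=+1$ in the real-type case $p-q\equiv_8 1$ and $\epsilon=-1$ in the quaternionic case $p-q\equiv_8 5$); this yields $\scB(\chi,\xi)=\epsilon(-1)^{\binom{p}{2}}\scB(\xi,\chi)$, confirming a definite symmetry type $\sigma=\epsilon(-1)^{\binom{p}{2}}$. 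I do not expect any genuine obstacle here: the only subtlety is the bookkeeping of the sign factors and making sure that the adjoint type inherited from $\scS$ matches the claimed table, which it does because Proposition \ref{prop:Hermitian_admissibleodd} gives precisely that dependence on $p\bmod 4$; the case $p-q\equiv_8 3,7$, where $\C\mathrm{l}(V^*,h^*)$ has complex rather than real/quaternionic Schur algebra, is treated separately and is not part of this proposition.
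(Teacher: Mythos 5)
Your proposal is correct and follows essentially the same route as the paper: choose the real or quaternionic structure $K$ from Lemma \ref{lemma:R_orQ_p-q=1,5}, take a compatible admissible Hermitian pairing $\scS$ as in Lemma \ref{lemma:compatible_admissible_pairingsodd}, and set $\scB(\xi,\chi):=\scS(\xi,K\chi)$, noting that $\scB$ inherits the adjoint type of $\scS$ from Proposition \ref{prop:Hermitian_admissibleodd}, which is exactly the table. Your admissibility computation and the symmetry-type bookkeeping $\sigma=\epsilon(-1)^{\binom{p}{2}}$ agree with the signs stated in the paper's proof.
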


\begin{proof}
Let $(\Sigma,\gamma_\ell)$ be an irreducible complex Clifford module. If $(\Sigma,\gamma_\ell)$ is of real type, we choose a real structure $\frc$ and a compatible admissible Hermitian pairing $\scS$ on $(\Sigma,\gamma_\ell)$ and define:
\begin{equation*}
\scB(\xi,\chi) := \scS(\xi,\frc(\chi)) \qquad \forall\,\, \xi, \chi \in \Sigma.
\end{equation*}

\noindent
Then $\scB$ has the same adjoint type as $\scS$ and has the following symmetry type:
\begin{equation*}
\scB(\xi,\chi) = (-1)^{\binom{p}{2}}\, \scB(\chi,\xi).
\end{equation*}

\noindent
Similarly, if $(\Sigma,\gamma_\ell)$ is of quaternionic type, we choose a quaternionic structure $J$ and a compatible admissible Hermitian pairing $\scS$ on $(\Sigma,\gamma_\ell)$ and define:
\begin{equation*}
\scB(\xi,\chi) := \scS(\xi,J(\chi)) \qquad \forall\,\, \xi, \chi \in \Sigma.
\end{equation*}

\noindent
Then it can be readily checked that $\scB$ is an admissible complex-bilinear pairing on $(\Sigma,\gamma_\ell)$, whose adjoint type is the same as that of $\scS$ and whose symmetry type is given as follows:
\begin{equation*}
\scB(\xi,\chi)  =  - (-1)^{\binom{p}{2}}\, \scB(\chi,\xi).
\end{equation*}
\end{proof}

\noindent
The symmetry and adjoint type of $\scB$ are summarized in Table \ref{tab:symmetry_parity1} and Table \ref{tab:symmetry_parity5}.\medskip

\begin{table}
\centering
\caption{Properties of $\scB$ when $p-q\equiv_8 1$}
\label{tab:symmetry_parity1}
\renewcommand{\arraystretch}{1.3} % Adds a bit of vertical space to rows
\begin{tabular}{|c|c|c|}
\hline
\textbf{$p$ mod 4} & \textbf{Adjoint type} & \textbf{Symmetry type} \\ \hline
0 & Negative & Symmetric \\ \hline
1 & Positive & Symmetric \\ \hline
2 & Negative & Skew-symmetric \\ \hline
3 & Positive & Skew-symmetric \\ \hline
\end{tabular}
\end{table}

\begin{table}
\centering
\caption{Properties of $\scB$ when $p-q\equiv_8 5$}
\label{tab:symmetry_parity5}
\renewcommand{\arraystretch}{1.3} % Adds a bit of vertical space to rows
\begin{tabular}{|c|c|c|}
\hline
\textbf{$p$ mod 4} & \textbf{Adjoint type} & \textbf{Symmetry type} \\ \hline
0 & Negative & Skew-symmetric \\ \hline
1 & Positive & Skew-symmetric \\ \hline
2 & Negative & Symmetric \\ \hline
3 & Positive & Symmetric \\ \hline
\end{tabular}
\end{table}

\noindent
In the case of signature $p-q\equiv_8 3,7$ we also make use of an auxiliary complex anti-linear endomorphism of $\Sigma$ to construct the admissible complex-bilinear pairings. This endomorphism is, however, not of the same type as the real and quaternionic structures considered earlier, since it does not belong to the Schur algebra of the underlying Clifford module. The next result follows from \cite[Proposition 4.4]{LS22_complex_type}.

\begin{prop}
\label{prop:D_endomorphism}
Let $(V,h)$ be a quadratic vector space of signature $(p,q)$ satisfying $p-q\equiv_83,7$ and let $(\Sigma,\gamma_\ell)$ be an irreducible complex Clifford module for $\Cl(V^*,h^*)$. Then there exists a complex anti-linear endomorphism $D\colon \Sigma \to \Sigma$ such that: \begin{itemize}
\item $\gamma_\ell(z)\circ D=D\circ\gamma_\ell(\overline{\pi(z)})$ for all $z\in\C\mathrm{l}(V^*,h^*)$.
\item $D^2=\epsilon\,\Id$, where $\epsilon=-1$ if $p-q\equiv_83$ and $\epsilon = 1$ if $p-q\equiv_87$.
\end{itemize}
\end{prop}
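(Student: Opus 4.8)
The plan is to first reduce the intertwining relation to a condition on the generators $V^*$, then produce $D$ by a conjugate-linear version of the Skolem--Noether theorem, and finally pin down the sign $\epsilon$ by identifying the real form of $\End_\C(\Sigma)$ that commutes with $D$. For the reduction, note that $\pi(\theta)=-\theta$ and $\overline{\theta}=\theta$ for $\theta\in V^*$, hence $\overline{\pi(\theta_1\cdots\theta_k)}=(-1)^k\theta_1\cdots\theta_k$; because $V^*$ generates $\C\mathrm{l}(V^*,h^*)$, because $z\mapsto\overline{\pi(z)}$ is an algebra endomorphism, and because both sides of the identity $\gamma_\ell(z)\circ D=D\circ\gamma_\ell(\overline{\pi(z)})$ depend $\C$-linearly on $z$ whenever $D$ is $\C$-anti-linear, that identity is equivalent to requiring $D$ to be $\C$-anti-linear and to anti-commute with every $\gamma_\ell(\theta)$, $\theta\in V^*$. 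The relation $D^2=\epsilon\,\Id$ with $\epsilon\in\{+1,-1\}$ will then come for free from Schur's lemma once $D$ is constructed.

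\emph{Existence.} The map $\phi\colon z\mapsto\overline{\pi(z)}$ is a $\C$-anti-linear algebra automorphism of $\C\mathrm{l}(V^*,h^*)$ with $\phi^2=\id$. A short parity count gives $\overline{\pi(\nu_\C)}=(-1)^{q+m+1}\nu_\C$ with $q+m+1\equiv\tfrac{1}{2}(p-q+1)\pmod{2}$, and $p-q\equiv_8 3,7$ forces $p-q+1\equiv_4 0$; hence $\overline{\pi(\nu_\C)}=\nu_\C$, so $\phi$ preserves the splitting $\C\mathrm{l}(V^*,h^*)=\C\mathrm{l}_+(V^*,h^*)\oplus\C\mathrm{l}_-(V^*,h^*)$. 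Transporting $\phi|_{\C\mathrm{l}_\ell(V^*,h^*)}$ through the algebra isomorphism $\gamma_\ell\colon\C\mathrm{l}_\ell(V^*,h^*)\to\End_\C(\Sigma)$ produces a $\C$-anti-linear algebra automorphism $\phi_*$ of $\End_\C(\Sigma)$ with $\phi_*^2=\id$. Composing $\phi_*$ with conjugation by any fixed $\C$-anti-linear bijection of $\Sigma$ gives a $\C$-linear algebra automorphism of $\End_\C(\Sigma)$, which is inner by Skolem--Noether; unravelling, $\phi_*=\Ad(D)$ for a $\C$-anti-linear bijection $D$ of $\Sigma$, and since $\gamma_\ell$ vanishes on $\C\mathrm{l}_{-\ell}(V^*,h^*)$ while $\overline{\pi(\cdot)}$ preserves it, this is equivalent to $\gamma_\ell(z)\circ D=D\circ\gamma_\ell(\overline{\pi(z)})$ for all $z\in\C\mathrm{l}(V^*,h^*)$. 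Irreducibility of $(\Sigma,\gamma_\ell)$ forces the $\C$-linear map $D^2$ to be a scalar $c\,\Id$; $\C$-anti-linearity of $D$ makes $c$ real, and rescaling $D$ by a complex number normalizes $c$ to $\epsilon\in\{+1,-1\}$.

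\emph{The sign.} Specializing the intertwining identity to $w$ in the even part $\Cl^{\mathrm{ev}}(V^*,h^*)$ of the real Clifford algebra, where $\overline{\pi(w)}=w$, shows that $D$ commutes with the $\R$-subalgebra $\cA:=\gamma_\ell(\Cl^{\mathrm{ev}}(V^*,h^*))\subset\End_\R(\Sigma)$. Since $\Cl^{\mathrm{ev}}(V^*,h^*)$ is simple, $\gamma_\ell$ is injective on it, and comparing dimensions identifies $\cA$ with a real form of $\End_\C(\Sigma)\cong\Mat(2^m,\C)$ that is $\R$-isomorphic to $\Cl^{\mathrm{ev}}(V^*,h^*)$. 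By the standard isomorphism $\Cl^{\mathrm{ev}}(V^*,h^*)\cong\Cl_{p,q-1}$ (or $\Cl_{q,p-1}$) and the plus-convention classification of real Clifford algebras, $\Cl^{\mathrm{ev}}(V^*,h^*)\cong\Mat(2^{m-1},\H)$ when $p-q\equiv_8 3$ and $\Cl^{\mathrm{ev}}(V^*,h^*)\cong\Mat(2^m,\R)$ when $p-q\equiv_8 7$. Both $D$ and the complex structure $i\cdot\Id_\Sigma$ of $\Sigma$ lie in the commutant $\End_\cA(\Sigma)$, which is a copy of $\H$ in the first case and of $\Mat(2,\R)$ in the second, and $\C$-anti-linearity of $D$ means precisely that $D$ anti-commutes there with the element representing $i$. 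Since a nonzero element of $\H$ anti-commuting with a unit imaginary quaternion is pure imaginary and has square a negative multiple of the identity, whereas a nonzero element of $\Mat(2,\R)$ anti-commuting with a square root of $-\Id$ has square a positive multiple of the identity, we conclude $\epsilon=-1$ for $p-q\equiv_8 3$ and $\epsilon=+1$ for $p-q\equiv_8 7$.

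The main obstacle is this last step: one must transport the $\R$-algebra type of $\Cl^{\mathrm{ev}}(V^*,h^*)$ faithfully through $\gamma_\ell$ and then read the sign of $D^2$ off the structure of the commutant, so care with the Clifford periodicity in the plus convention is essential; the same bookkeeping reappears if one instead builds $D$ explicitly as a product of the $\gamma_\ell(e^i)$ composed with a fixed conjugation and computes $D^2$ by reordering. Everything else is formal, and the statement coincides with \cite[Proposition 4.4]{LS22_complex_type}.
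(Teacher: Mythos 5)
Your proposal is correct, and it is worth noting that the paper itself offers no proof of this proposition: it simply invokes \cite[Proposition 4.4]{LS22_complex_type}. So there is no internal argument to compare against, and the self-contained derivation you give is the genuine added content. Your three main steps all check out. First, the reduction to anti-commutation of $D$ with the $\gamma_\ell(\theta)$, $\theta\in V^*$, is valid because both sides of the intertwining identity are $\C$-linear and multiplicative in $z$ and $V^*$ generates $\C\mathrm{l}(V^*,h^*)$. Second, the parity count $\overline{\pi(\nu_\C)}=(-1)^{q+m+1}\nu_\C$ with $q+m+1\equiv\tfrac12(p-q+1)\pmod 2$ is exactly where the hypothesis $p-q\equiv_8 3,7$ enters: it makes $\tfrac12(p-q+1)$ even, so the anti-linear automorphism $z\mapsto\overline{\pi(z)}$ preserves $\C\mathrm{l}_\ell(V^*,h^*)$, can be transported through $\gamma_\ell$, and Skolem--Noether (equivalently, uniqueness of the irreducible $\Mat(2^m,\C)$-module) yields the anti-linear $D$ with $D^2$ a nonzero real scalar, normalizable only up to a positive factor, so the sign is intrinsic. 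Third, the sign determination is sound: $D$ lies in the commutant of $\gamma_\ell(\Cl^{\mathrm{ev}}(V^*,h^*))$, which by simplicity of the even algebra, the count $\dim_\R\Sigma=2^{m+1}$, and the plus-convention periodicity ($\Cl^{\mathrm{ev}}\cong\Mat(2^{m-1},\H)$ for $p-q\equiv_8 3$ and $\cong\Mat(2^m,\R)$ for $p-q\equiv_8 7$) is a copy of $\H$, respectively $\Mat(2,\R)$; anti-commutation with $i\,\Id_\Sigma$ then forces $D^2$ negative, respectively positive, exactly as your quaternionic and $2\times 2$ computations show. Two small glosses deserve a sentence in a written-up version but do not affect correctness: the Skolem--Noether step naturally produces $\Ad(D)=\phi_*$ rather than the stated relation with $D$ on the other side, and the two agree precisely because $\phi_*^2=\id$ forces $D^2$ to be central; and injectivity of $\gamma_\ell$ on $\Cl^{\mathrm{ev}}$ should be justified by noting that its kernel is a proper two-sided ideal of a simple algebra. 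The low-dimensional cases $(p,q)=(3,0)$, where one can take $D=\gamma_\ell(e^2)\circ\mathrm{conj}$ with $D^2=-\Id$, and $(p,q)=(0,1)$, where $D=\mathrm{conj}$ with $D^2=\Id$, confirm your signs.
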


\noindent
Given an irreducible complex Clifford module $(\Sigma,\gamma_\ell)$ in signature $p-q \equiv_8 3,7$, for every such $D\colon \Sigma \to \Sigma$ as in Proposition \ref{prop:D_endomorphism} there exists a compatible admissible Hermitian pairing $\scS$.

\begin{lemma}
\label{lemma:compatible_admissible_pairingsD}
Let $(\Sigma,\gamma_\ell)$ be an irreducible complex Clifford module in signature $p-q \equiv_8 3,7$. Then $(\Sigma,\gamma_\ell)$ can be equipped with an admissible Hermitian pairing $\scS$ such that:
\begin{equation*}
\scS(D \xi , D \chi)=(-1)^{\binom{p+1}{2}}\overline{\scS(\xi,\chi)} 
\end{equation*}
 for every $\xi , \chi \in \Sigma$.
\end{lemma}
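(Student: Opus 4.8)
The plan is to follow the template of Lemma \ref{lemma:compatible_admissible_pairings} and Lemma \ref{lemma:compatible_admissible_pairingsodd}, replacing the real or quaternionic structure by the complex anti-linear endomorphism $D\colon\Sigma\to\Sigma$ of Proposition \ref{prop:D_endomorphism}, whose intertwining relation $\gamma_\ell(z)\circ D=D\circ\gamma_\ell(\overline{\pi(z)})$ now carries the grading automorphism $\pi$. First I would fix a non-degenerate positive-definite Hermitian pairing on $\Sigma$ and average it over the finite group $\mathfrak{K}$ of \eqref{eq:group_units_K} to obtain a $\mathfrak{K}$-invariant Hermitian inner product $\escal{\cdot,\cdot}'$, exactly as in the proof of Proposition \ref{prop:Hermitian_admissibleodd}. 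Then I would $D$-symmetrize it by setting
\[
\escal{\xi,\chi}:=\tfrac12\big(\escal{\xi,\chi}'+\overline{\escal{D\xi,D\chi}'}\big)\,,\qquad\xi,\chi\in\Sigma\,,
\]
which is again a non-degenerate positive-definite Hermitian inner product and which, since $D^2=\epsilon\,\Id$ with $\epsilon=\pm1$, satisfies $\escal{D\xi,D\chi}=\overline{\escal{\xi,\chi}}$ for all $\xi,\chi\in\Sigma$.

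The point requiring a short verification is that $\escal{\cdot,\cdot}$ remains $\mathfrak{K}$-invariant, so that the construction of Proposition \ref{prop:Hermitian_admissibleodd} applies to it verbatim. For $k\in\mathfrak{K}$ the element $\overline{\pi(k)}$ equals $(-1)^{\deg k}k\in\mathfrak{K}$, so Proposition \ref{prop:D_endomorphism} gives $D\circ\gamma_\ell(k)=(-1)^{\deg k}\gamma_\ell(k)\circ D$ as operators on $\Sigma$; hence $\escal{D\gamma_\ell(k)\xi,D\gamma_\ell(k)\chi}'=\escal{\gamma_\ell(k)D\xi,\gamma_\ell(k)D\chi}'=\escal{D\xi,D\chi}'$ and $\escal{\gamma_\ell(k)\xi,\gamma_\ell(k)\chi}'=\escal{\xi,\chi}'$ by $\mathfrak{K}$-invariance of $\escal{\cdot,\cdot}'$, so that $\escal{\gamma_\ell(k)\xi,\gamma_\ell(k)\chi}=\escal{\xi,\chi}$. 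Starting from this $\escal{\cdot,\cdot}$, the pairing defined as in Proposition \ref{prop:Hermitian_admissibleodd} by $\scS(\xi,\chi):=i^{\binom{p}{2}}\escal{\Psi^<_\ell(\nu_+)\xi,\chi}$ is an admissible Hermitian pairing of the adjoint type recorded there; here $\Psi^<_\ell(\nu_+)=\gamma_\ell(\Psi(\nu_+))$ is $\gamma_\ell$ of a real Clifford element on which $\pi$ acts by $(-1)^p$.

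Finally I would compute $\scS(D\xi,D\chi)$. Writing $N:=\Psi^<_\ell(\nu_+)$ and using Proposition \ref{prop:D_endomorphism} together with $\overline{\pi(\Psi(\nu_+))}=(-1)^p\Psi(\nu_+)$ gives $N\circ D=(-1)^p\,D\circ N$, hence, using $\escal{D\cdot\,,D\cdot}=\overline{\escal{\cdot\,,\cdot}}$ and $\overline{i^{\binom{p}{2}}}=i^{-\binom{p}{2}}$,
\[
\scS(D\xi,D\chi)=i^{\binom{p}{2}}\escal{ND\xi,D\chi}=(-1)^p\,i^{\binom{p}{2}}\,\overline{\escal{N\xi,\chi}}=(-1)^p\,i^{\,p(p-1)}\,\overline{\scS(\xi,\chi)}\,.
\]
Since $p(p-1)=2\binom{p}{2}$ is even, $i^{\,p(p-1)}=(-1)^{\binom{p}{2}}$, so the total sign equals $(-1)^{p+\binom{p}{2}}=(-1)^{\binom{p+1}{2}}$, which is the asserted identity. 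The main obstacle is purely bookkeeping: keeping track of how the $\pi$-twist in the defining relation of $D$ propagates through $\mathfrak{K}$ and through $\nu_+$ (the signs $(-1)^{\deg k}$ and $(-1)^p$) and checking that the resulting power of $i$ collapses to exactly $(-1)^{\binom{p+1}{2}}$; the rest is a routine transcription of the arguments already given in the paper.
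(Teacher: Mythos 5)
Your proposal is correct and follows exactly the template the paper intends for this lemma (which it leaves unproved as the analogue of Lemmas \ref{lemma:compatible_admissible_pairings} and \ref{lemma:compatible_admissible_pairingsodd}): average over $\mathfrak{K}$, symmetrize with the anti-linear map, and recompute the sign through the construction of Proposition \ref{prop:Hermitian_admissibleodd}. Your extra check that the $\pi$-twist in $D$'s intertwining relation only produces the harmless sign $(-1)^{\deg k}$, so that $\mathfrak{K}$-invariance survives the $D$-symmetrization, and the final bookkeeping $(-1)^{p+\binom{p}{2}}=(-1)^{\binom{p+1}{2}}$ are both accurate.
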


\noindent
Using the endomorphism $D$ and a compatible admissible Hermitian pairing $\scS$, we proceed to construct an admissible complex-bilinear pairing in the following proposition.

\begin{prop}
\label{prop:B_D_admissible}
Let $(V,h)$ be a quadratic vector space of signature $(p,q)$ satisfying $p-q\equiv_83,7$ and let $(\Sigma,\gamma_\ell)$ be an irreducible complex Clifford module for $\Cl(V^*,h^*)$. Then $(\Sigma,\gamma_\ell)$ admits a complex-bilinear admissible pairing $\scB$ whose adjoint type is given in the following table:  
\begin{table}[H]
\centering
\begin{tabular}{l|llll}
\hline
$\scB $&$p\equiv_40$&$p\equiv_41$&$p\equiv_42$&$p\equiv_43$\\
\hline
Adjoint type & Positive & Negative & Positive & Negative\\
\hline
\end{tabular}
\end{table}

\noindent
Here $p$ denotes the number of pluses in the signature of the metric $h$.
\end{prop}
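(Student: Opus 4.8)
The plan is to mirror the construction of the real- and quaternionic-type cases (the preceding proposition for $p-q\equiv_8 1,5$) but now using the complex anti-linear endomorphism $D\colon\Sigma\to\Sigma$ from Proposition \ref{prop:D_endomorphism} in place of a real or quaternionic structure. First I would fix the compatible admissible Hermitian pairing $\scS$ provided by Lemma \ref{lemma:compatible_admissible_pairingsD}, whose adjoint type $s\in\Z_2$ is the one listed in Proposition \ref{prop:Hermitian_admissibleodd} (Negative for $p\equiv_4 0,2$, Positive for $p\equiv_4 1,3$). Then I would \emph{define}
\begin{equation*}
\scB(\xi,\chi) := \scS(\xi , D(\chi))\, , \qquad \forall\,\, \xi,\chi\in\Sigma,
\end{equation*}
and check three things: (i) $\scB$ is complex-bilinear, (ii) $\scB$ is non-degenerate, and (iii) $\scB$ is admissible of the claimed adjoint type. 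Point (i) is immediate because $\scS$ is anti-linear in the second slot and $D$ is anti-linear, so the two conjugations cancel; point (ii) follows from the non-degeneracy of $\scS$ together with the invertibility of $D$ (since $D^2=\epsilon\,\Id$).

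The substance is point (iii). For every $z\in\C\mathrm{l}(V^*,h^*)$ and $\xi,\chi\in\Sigma$ I would compute, using the defining intertwining property $\gamma_\ell(z)\circ D = D\circ\gamma_\ell(\overline{\pi(z)})$ of $D$ and the adjoint relation $\gamma_\ell(z)^\dagger=\gamma_\ell(\overline{(\pi^{\frac{1-s}{2}}\circ\tau)(z)})$ for $\scS$:
\begin{align*}
\scB(\gamma_\ell(z)\xi,\chi) &= \scS(\gamma_\ell(z)\xi, D\chi) = \scS(\xi, \gamma_\ell(\overline{(\pi^{\frac{1-s}{2}}\circ\tau)(z)})\, D\chi)\\
&= \scS(\xi, D\,\gamma_\ell(\overline{\pi(\overline{(\pi^{\frac{1-s}{2}}\circ\tau)(z)})})\chi) = \scB(\xi, \gamma_\ell(\pi(\overline{(\pi^{\frac{1-s}{2}}\circ\tau)(z)}))\chi).
\end{align*}
Since $\pi$ is a real (linear over $\R$) involution, $\pi(\bar w)=\overline{\pi(w)}$; and because $\gamma_\ell$ is the complexification of a real representation twisted by a choice of complex structure, the only way the right-hand side can land on $\gamma_\ell$ of an element of $\C\mathrm{l}(V^*,h^*)$ rather than on a conjugate is to track carefully how $\pi\circ\overline{(\,\cdot\,)}$ simplifies on homogeneous elements; the upshot should be that $\pi(\overline{(\pi^{\frac{1-s}{2}}\circ\tau)(z)}) = (\pi^{\frac{1-s'}{2}}\circ\tau)(z)$ for the value $s'$ claimed in the table, i.e.\ the adjoint type flips sign relative to $s$ precisely because $D$ intertwines with $\pi(z)$ (and with a conjugate) rather than with $z$ itself. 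This sign bookkeeping — reconciling the $\pi$ twist coming from $D$ with the $\pi^{\frac{1-s}{2}}\circ\tau$ twist of $\scS$, keeping track of how $\tau$ and $\pi$ commute up to signs on $k$-vectors, and confirming the table entries are ``Positive for $p\equiv_4 0,2$, Negative for $p\equiv_4 1,3$'' (the opposite parity pattern to Propositions \ref{prop:Hermitian_admissibleodd}) — is the main obstacle. I would organize it by reducing to generators $\theta\in V^*$, where $\pi(\theta)=-\theta$ and $\tau(\theta)=\theta$, so the intertwiner gives $\scB(\gamma_\ell(\theta)\xi,\chi) = -\,(-1)^{?}\,\scB(\xi,\gamma_\ell(\theta)\chi)$, and the extra sign matches the $(-1)^{p-1}$ already appearing in the proof of Proposition \ref{prop:Hermitian_admissibleodd} up to one overall flip from $D$.

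Finally I would record the symmetry type of $\scB$ as a byproduct, exactly as in the $p-q\equiv_8 1,5$ case: from $D^2=\epsilon\,\Id$ with $\epsilon=-1$ for $p-q\equiv_8 3$ and $\epsilon=+1$ for $p-q\equiv_8 7$, together with the sign in Lemma \ref{lemma:compatible_admissible_pairingsD}, namely $\scS(D\xi,D\chi)=(-1)^{\binom{p+1}{2}}\overline{\scS(\xi,\chi)}$, one gets
\begin{equation*}
\scB(\chi,\xi) = \scS(\chi,D\xi) = \overline{\scS(D\xi,\chi)} = \tfrac{1}{\epsilon}\,\overline{\scS(D\xi, D^2\chi)} = \tfrac{1}{\epsilon}(-1)^{\binom{p+1}{2}}\,\scS(\xi,D\chi) = \tfrac{1}{\epsilon}(-1)^{\binom{p+1}{2}}\,\scB(\xi,\chi),
\end{equation*}
so $\sigma = \epsilon\,(-1)^{\binom{p+1}{2}}\in\Z_2$, which I would then tabulate against $p\bmod 4$ (and the two residues $3,7$ of $p-q\bmod 8$) in the analogue of Tables \ref{tab:symmetry_parity1} and \ref{tab:symmetry_parity5}. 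The existence claim of the proposition is then complete; the only genuinely delicate point remains the $\pi$-versus-$\tau$ sign reconciliation in (iii), and I expect it to come down to a short case check on $p\bmod 4$.
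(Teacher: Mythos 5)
Your proposal follows the paper's proof exactly: it defines $\scB(\xi,\chi)=\scS(\xi,D\chi)$ using the compatible admissible Hermitian pairing of Lemma \ref{lemma:compatible_admissible_pairingsD} and the anti-linear map $D$ of Proposition \ref{prop:D_endomorphism}, records the symmetry type from $D^2=\epsilon\,\Id$, and concludes by noting that the adjoint type of $\scB$ is opposite to that of $\scS$, which turns the table of Proposition \ref{prop:Hermitian_admissibleodd} into the stated one. The sign reconciliation you flag as the ``main obstacle'' is in fact immediate and needs no case check on $p\bmod 4$: the conjugation in the $\scS$-adjoint relation cancels against the conjugation in $\gamma_\ell(z)\circ D=D\circ\gamma_\ell(\overline{\pi(z)})$, so the argument reaching $\scB$ is $\overline{\pi\bigl(\overline{(\pi^{\frac{1-s}{2}}\circ\tau)(z)}\bigr)}=\pi\bigl((\pi^{\frac{1-s}{2}}\circ\tau)(z)\bigr)=(\pi^{\frac{1+s}{2}}\circ\tau)(z)$, giving $s_{\scB}=-s_{\scS}$ uniformly (your last displayed expression dropped the outer conjugation bar; restoring it is precisely what makes the two bars cancel).
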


\begin{proof}
Choose an admissible Hermitian pairing $\scS$ on $(\Sigma,\gamma_\ell)$ compatible with a choice of complex anti-linear isomorphism $D\colon \Sigma \to \Sigma$ and define:
\begin{equation*}
\scB(\xi,\chi) := \scS(\xi, D\chi)
\end{equation*}

\noindent
for every $\xi, \chi \in \Sigma$. If $p-q\equiv_83$, then $\scB$ has the following symmetry type:
\begin{equation*}
\scB(\xi,\chi) = -(-1)^{\binom{p+1}{2}}\, \scB(\chi,\xi).
\end{equation*}

\noindent
Similarly, if $p-q\equiv_8 7$, then $\scB$ has the following symmetry type:
\begin{equation*}
\scB(\xi,\chi) = (-1)^{\binom{p+1}{2}}\, \scB(\chi,\xi).
\end{equation*}

\noindent
The table of the proposition follows by noticing that the adjoint type of the complex-bilinear pairing $\scB$ is opposite to that of $\scS$.
\end{proof}

\noindent
The symmetry and adjoint type of $\scB$ are summarized in Table \ref{tab:symmetry_parity3} and Table \ref{tab:symmetry_parity7}.

\begin{table}
\centering
\caption{Properties of $\scB$ when $p-q\equiv_8 3$}
\label{tab:symmetry_parity3}
\renewcommand{\arraystretch}{1.3} % Adds a bit of vertical space to rows
\begin{tabular}{|c|c|c|}
\hline
\textbf{$p$ mod 4} & \textbf{Adjoint type} & \textbf{Symmetry type} \\ \hline
0 & Positive & Skew-symmetric \\ \hline
1 & Negative & Symmetric \\ \hline
2 & Positive & Symmetric \\ \hline
3 & Negative & Skew-symmetric \\ \hline
\end{tabular}
\end{table}

\begin{table}
\centering
\caption{Properties of $\scB$ when $p-q\equiv_8 7$}
\label{tab:symmetry_parity7}
\renewcommand{\arraystretch}{1.3} % Adds a bit of vertical space to rows
\begin{tabular}{|c|c|c|}
\hline
\textbf{$p$ mod 4} & \textbf{Adjoint type} & \textbf{Symmetry type} \\ \hline
0 & Positive & Symmetric \\ \hline
1 & Negative & Skew-symmetric \\ \hline
2 & Positive & Skew-symmetric \\ \hline
3 & Negative & Symmetric \\ \hline
\end{tabular}
\end{table}

\begin{thm}
\label{thm:odd_complex-bilinear_square}
Let $(\Sigma,\gamma_\ell)$ be an irreducible complex Clifford module equipped with an admissible complex-bilinear pairing $\scB$ of adjoint type $s\in\mathbb{Z}_2$ and symmetry type $\sigma\in\mathbb{Z}_2$. Then the following statements are equivalent for a complex exterior form $\alpha\in\wedge^<V^*_\C$: 
\begin{enumerate}
\item[\normalfont(a)] $\alpha$ is the complex-bilinear square of a spinor $\xi\in\Sigma$, that is, $\alpha=\cE_\ell(\xi)$.
\item[\normalfont(b)] $\alpha$ satisfies the following relations: $$\alpha\vee\alpha=2^m\alpha^{(0)}\alpha,\quad(\pi^{\frac{1-s}{2}}\circ\tau)(\alpha)=\sigma\alpha,\quad\alpha\vee\beta\vee\alpha=2^m(\alpha\vee\beta)^{(0)}\alpha$$ for a fixed exterior form $\beta\in\wedge^<V^*_\C$ satisfying $(\alpha\vee\beta)^{(0)}\neq0$, where $m=\frac{1}{2}(p+q-1)$.
\item[\normalfont(c)] The following relations hold: $$(\pi^{\frac{1-s}{2}}\circ\tau)(\alpha)=\sigma\alpha,\quad\alpha\vee\beta\vee\alpha=2^m(\alpha\vee\beta)^{(0)}\alpha$$ for every exterior form $\beta\in\wedge^<V^*_\C$.
\end{enumerate}
\end{thm}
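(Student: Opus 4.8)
The plan is to mirror verbatim the proof of Theorem~\ref{thm:bilinearsquare}, replacing the isomorphism $\Psi_\gamma\colon(\wedge V^*_\C,\diamond)\to(\End(\Sigma),\circ)$ by the truncated isomorphism $\Psi^<_\ell\colon(\wedge^<V^*_\C,\vee)\to(\End(\Sigma),\circ)$ of \eqref{eq:truncated_iso}, which the commutative diagram preceding its definition shows to be a unital isomorphism of associative algebras. The one new ingredient needed is the odd-dimensional analogue of Lemma~\ref{lemma:exterior_form_t}: for every $\alpha\in\wedge^<V^*_\C$ and every admissible complex-bilinear pairing $\scB$ of adjoint type $s\in\Z_2$ one has $\Psi^<_\ell(\alpha)^t=\Psi^<_\ell((\pi^{\frac{1-s}{2}}\circ\tau)(\alpha))$. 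To establish this I would first observe that $\cP_\ell$ corresponds under $\Psi$ to the algebra projection $\mathrm{pr}_\ell$ onto $\C\mathrm{l}_\ell(V^*,h^*)$, so that $\Psi^<_\ell(\alpha)=\gamma_\ell(\Psi(\alpha))$ because $\gamma_\ell$ annihilates $\C\mathrm{l}_{-\ell}(V^*,h^*)$; then the admissibility relation $\gamma_\ell(z)^t=\gamma_\ell((\pi^{\frac{1-s}{2}}\circ\tau)(z))$, together with $\pi\circ\Psi=\Psi\circ\pi$, $\tau\circ\Psi=\Psi\circ\tau$, and the fact that $\pi$ and $\tau$ preserve the grading and hence restrict to $\wedge^<V^*_\C$, yields the claim exactly as in the proof of Lemma~\ref{lemma:exterior_form_t}.

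With this identity in hand, and using the truncated K\"ahler-Atiyah trace $\mathcal{T}_\ell(\alpha)=2^m\alpha^{(0)}$ already established, I would set $E:=\Psi^<_\ell(\alpha)$ and $A:=\Psi^<_\ell(\beta)$. Since $\Psi^<_\ell$ is a unital algebra isomorphism, $\Tr(E)=\mathcal{T}_\ell(\alpha)=2^m\alpha^{(0)}$ and $\Tr(E\circ A)=\mathcal{T}_\ell(\alpha\vee\beta)=2^m(\alpha\vee\beta)^{(0)}$, so the relations in (c) translate into $E^t=\sigma E$ together with $E\circ A\circ E=\Tr(E\circ A)E$ for every $A\in\End(\Sigma)$ --- which is item (b) of Proposition~\ref{prop:algebraichbilinear} --- while the relations in (b) translate into $E\circ E=\Tr(E)E$, $E\circ A\circ E=\Tr(E\circ A)E$ and $E^t=\sigma E$ for a single $A$ with $\Tr(E\circ A)\neq0$ --- which is item (c) of Proposition~\ref{prop:algebraichbilinear}. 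That proposition makes both equivalent to $E\in\Im(\cE)$, i.e.\ $E=\cE(\xi)$ for some $\xi\in\Sigma$; applying $(\Psi^<_\ell)^{-1}$ and recalling $\cE_\ell=(\Psi^<_\ell)^{-1}\circ\cE$ turns this into $\alpha=\cE_\ell(\xi)$, which is (a). As in the even-dimensional case one restricts to nonzero $\alpha$, which is enforced in (b) by the inequality $(\alpha\vee\beta)^{(0)}\neq0$, so that Proposition~\ref{prop:algebraichbilinear} applies.

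I do not anticipate a genuine obstacle: the whole content is that passing from the full K\"ahler-Atiyah algebra to its truncation $(\wedge^<V^*_\C,\vee)$ is harmless, and this is guaranteed by the commutative diagram defining $\Psi^<_\ell$ together with the grading-preservation of $\pi$ and $\tau$. The only step deserving a moment's care is the identity $\Psi^<_\ell(\alpha)=\gamma_\ell(\Psi(\alpha))$ for $\alpha\in\wedge^<V^*_\C$ and the resulting transpose formula; once these are recorded the argument is formally identical to the even-dimensional one, and the same scheme --- now using Proposition~\ref{prop:algebraichHermitian} and the Hermitian analogue $\Psi^<_\ell(\alpha)^\dagger=\Psi^<_\ell(\overline{(\pi^{\frac{1-s}{2}}\circ\tau)(\alpha)})$ --- also establishes Theorem~\ref{thm_Hermitian_square_odd}.
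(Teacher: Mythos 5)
Your proposal is correct and follows exactly the route the paper intends: the paper leaves Theorem \ref{thm:odd_complex-bilinear_square} without a written proof precisely because it is the verbatim transcription of the proof of Theorem \ref{thm:bilinearsquare} through the unital algebra isomorphism $\Psi^<_\ell$, the truncated K\"ahler-Atiyah trace, and Proposition \ref{prop:algebraichbilinear}. Your one added ingredient --- the identity $\Psi^<_\ell(\alpha)=\gamma_\ell(\Psi(\alpha))$ on $\wedge^<V^*_\C$ (since $\gamma_\ell(\tfrac{1}{2}(1+\ell\nu_\C))=\Id$) and the resulting transpose formula $\Psi^<_\ell(\alpha)^t=\Psi^<_\ell((\pi^{\frac{1-s}{2}}\circ\tau)(\alpha))$, valid because $\pi$ and $\tau$ preserve degree and hence the truncation --- is precisely the odd-dimensional analogue of Lemma \ref{lemma:exterior_form_t} that the argument needs, so nothing is missing.
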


\noindent
Analogously to the Hermitian case, the complex-bilinear square $\alpha_\eta\in\wedge^<V^*_\C$ of an irreducible complex spinor $\eta\in\Sigma$ can be expanded as follows:  
\begin{equation}
\label{eq:odd_explicit_complexbilinearsquare}
\alpha_{\eta} = \frac{1}{2^{\frac{d-1}{2}}} \scB(\eta,\eta) + \frac{1}{2^{\frac{d-1}{2}}} \sum_{k=1}^{\frac{d-1}{2}} \sum_{i_1 < \cdots < i_k} \scB((\gamma_\ell^{i_1 \cdots i_k})^{-1}\eta , \eta)\, e^{i_1}\wedge  \cdots \wedge e^{i_k}
\end{equation}

\noindent
in terms of any orthonormal basis $\{e^1,\hdots , e^d\}$ of $(V^*,h^*)$, where $\gamma^i = \Psi_{\gamma}^{<}(e^i)$.

\begin{remark}
Similarly to the even-dimensional case considered in Subsection \ref{subsec:Bilinearequivariance}, the complex-bilinear spinor square map in odd dimensions is also equivariant with respect to the double cover homomorphism $\Spin^c_o (p,q) \to \SO_o (p,q) \times \U(1) $.
\end{remark}

% % % % % % % % % % % % % % % % % % % % % % % % % % % % % % % % % % % % % %

\subsection{The square of a conjugate spinor}

% % % % % % % % % % % % % % % % % % % % % % % % % % % % % % % % % % % % % %

Let $(V,h)$ be an odd-dimensional quadratic vector space of signature $(p,q)$ and let $(\Sigma,\gamma_\ell)$ be an irreducible complex Clifford module for $\Cl(V^*,h^*)$ equipped with a Hermitian pairing $\scS$ of adjoint type $s_\scS$ and a complex-bilinear pairing $\scB$ of adjoint type $s_\scB$ and symmetry type $\sigma$. We assume that these pairings are compatible, i.e.\ there exists a unique complex anti-linear map $K\colon \Sigma \to \Sigma$ such that:
\begin{equation*}
\scS( \xi , \chi) = \scB( \xi , K \chi)
\end{equation*}

\noindent
and such that $K^2 = \epsilon \Id$, with $\epsilon=1$ if $p-q\equiv_81,7$ and $\epsilon = -1$ if $p-q\equiv_83,5$. Recall that $s_\scB=s_\scS$ if $p-q\equiv_81,5$ and  $s_\scB=-s_\scS$ if $p-q\equiv_83,7$.\medskip

As in the even-dimensional case, see Subsection \ref{subsec:conjugateeven}, we define the \emph{conjugate} of a complex spinor $\eta\in\Sigma$ as $K\eta\in\Sigma$. The explicit expressions given in equations \eqref{eq:oddhermitianexpansion} and \eqref{eq:odd_explicit_complexbilinearsquare} for the Hermitian and complex-bilinear squares of a complex spinor, together with the invariance properties of $\scS$ and $\scB$, lead to the following result, which is proven analogously as in the even-dimensional case.

\begin{prop}
\label{prop:Hermitiansquareconjugate_odd}
Let $\widehat{\alpha}_{\eta}$ and $\widehat{\alpha}_{K\eta}$ respectively denote the Hermitian squares of an irreducible complex spinor $\eta\in \Sigma$ and its conjugate $K\eta \in \Sigma$. 
\begin{itemize}
\item If $p-q \equiv_8 1, 5$ then $\widehat{\alpha}_{K\eta} = \tfrac{\kappa}{\bar\kappa}(-1)^{\binom{p}{2}} \, \overline{\widehat{\alpha}}_{\eta}$.
 
\item If $p-q \equiv_8 3, 7$ then $\widehat{\alpha}_{K\eta} = \tfrac{\kappa}{\bar\kappa}(-1)^{\binom{p + 1}{2}} \, \pi(\overline{\widehat{\alpha}}_{\eta})$.
\end{itemize}

\noindent
Here $p$ is the number of pluses in the signature of $h$.
\end{prop}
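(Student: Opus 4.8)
The plan is to mimic the proof of Proposition \ref{prop:Hermitiansquareconjugate} in the even-dimensional case, working directly from the explicit expansion \eqref{eq:oddhermitianexpansion} of the Hermitian square and the behaviour of the relevant pairing under the conjugation map $K$. First I would write
\begin{equation*}
\widehat{\alpha}_{K\eta} = \frac{\kappa}{2^{\frac{d-1}{2}}}\,\scS(K\eta,K\eta) + \frac{\kappa}{2^{\frac{d-1}{2}}}\sum_{k=1}^{\frac{d-1}{2}}\sum_{i_1<\cdots<i_k}\scS\big((\gamma_\ell^{i_1\cdots i_k})^{-1}K\eta,K\eta\big)\,e^{i_1}\wedge\cdots\wedge e^{i_k},
\end{equation*}
and then the whole computation reduces to rewriting each coefficient $\scS((\gamma_\ell^{i_1\cdots i_k})^{-1}K\eta,K\eta)$ in terms of $\overline{\scS((\gamma_\ell^{i_1\cdots i_k})^{-1}\eta,\eta)}$ by commuting $K$ past the gamma-matrix product.

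The key identities needed are: the compatibility $\scS(K\xi,K\chi) = \epsilon\,\overline{\scS(K\xi,\chi)}$ inherited from the compatible pairing (which follows exactly as in the even case, using $\scS(\xi,\chi)=\scB(\xi,K\chi)$ and the symmetry type $\sigma$ of $\scB$), together with the intertwining relation for $K$. Here the two cases genuinely diverge. When $p-q\equiv_8 1,5$ the map $K$ is a real or quaternionic structure on $(\Sigma,\gamma_\ell)$, so $\gamma_\ell(z)\circ K = K\circ\gamma_\ell(\bar z)$, and since the $e^i$ are real in the orthonormal basis we get $(\gamma_\ell^{i_1\cdots i_k})^{-1}K = K(\gamma_\ell^{i_1\cdots i_k})^{-1}$ up to no extra sign; the sign $(-1)^{\binom{p}{2}}$ then comes purely from Lemma \ref{lemma:compatible_admissible_pairingsodd}, exactly paralleling Proposition \ref{prop:Hermitiansquareconjugate}. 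When $p-q\equiv_8 3,7$, however, $K$ is built from the anti-linear endomorphism $D$ of Proposition \ref{prop:D_endomorphism}, which satisfies $\gamma_\ell(z)\circ D = D\circ\gamma_\ell(\overline{\pi(z)})$; pushing $D$ through a degree-$k$ monomial therefore produces a factor $(-1)^k$, i.e. the grading involution $\pi$ appears on the whole form. Combining this with the sign $(-1)^{\binom{p+1}{2}}$ from Lemma \ref{lemma:compatible_admissible_pairingsD} yields $\widehat{\alpha}_{K\eta} = \tfrac{\kappa}{\bar\kappa}(-1)^{\binom{p+1}{2}}\,\pi(\overline{\widehat{\alpha}}_\eta)$.

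Concretely, the steps in order are: (i) record $\scS(K\xi,K\chi)$ in terms of $\overline{\scS(\xi,\chi)}$ for both signature classes using the cited lemmas; (ii) for each monomial $(\gamma_\ell^{i_1\cdots i_k})^{-1}$, commute $K$ through it using the appropriate intertwining relation, tracking the extra sign $1$ (resp. $(-1)^k$) in the $1,5$ case (resp. the $3,7$ case); (iii) substitute back into \eqref{eq:oddhermitianexpansion}, noting that $\overline{\scS((\gamma_\ell^{i_1\cdots i_k})^{-1}\eta,\eta)}$ is precisely the degree-$k$ coefficient (up to a possible $(-1)^k$ from the adjoint type, which is absorbed consistently) of $\overline{\widehat{\alpha}}_\eta$ or $\pi(\overline{\widehat{\alpha}}_\eta)$; (iv) collect the global scalar, which is $\tfrac{\kappa}{\bar\kappa}$ times the sign from step (i). I expect the main obstacle to be bookkeeping the interplay of three separate sign sources in the $3,7$ case — the adjoint-type sign in $(\gamma_\ell^{i_1\cdots i_k})^{-1}$ versus its $\scS$-adjoint, the $(-1)^k$ from $D$, and the fixed sign $(-1)^{\binom{p+1}{2}}$ — and verifying that the $k$-dependent signs assemble exactly into the grading involution $\pi$ applied to $\overline{\widehat{\alpha}}_\eta$ rather than into some twisted variant; this is the one place where a careful degree-by-degree check is unavoidable, but it is routine once the intertwining relations are in hand.
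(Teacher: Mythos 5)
Your proposal is correct and follows essentially the same route as the paper, which proves this result exactly by applying the explicit expansion \eqref{eq:oddhermitianexpansion} together with the conjugation behaviour of $\scS$ from Lemmas \ref{lemma:compatible_admissible_pairingsodd} and \ref{lemma:compatible_admissible_pairingsD} and the intertwining relations of $K$ (a real/quaternionic structure in signature $p-q\equiv_8 1,5$, and $\pm D$ with $\gamma_\ell(z)\circ D=D\circ\gamma_\ell(\overline{\pi(z)})$ in signature $p-q\equiv_8 3,7$), the $(-1)^k$ per degree in the latter case assembling into $\pi$. The only blemishes are cosmetic: the displayed compatibility identity $\scS(K\xi,K\chi)=\epsilon\,\overline{\scS(K\xi,\chi)}$ is garbled (the needed identities are exactly those of the cited lemmas, which you then use), and no extra adjoint-type sign actually arises in step (iii), so the hedge there is unnecessary.
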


\begin{prop}
\label{prop:Bilinearsquareconjugate_odd}
Let $\alpha_{\eta}$ and $\alpha_{K\eta}$ respectively denote the complex-bilinear squares of an irreducible complex spinor $\eta\in \Sigma$ and its conjugate $K\eta \in \Sigma$.  
\begin{itemize}
\item If $p-q \equiv_8 1, 5$ then $\alpha_{K\eta} = (-1)^{\binom{p}{2}} \,\overline{\alpha}_{\eta}$.
 
\item If $p-q \equiv_8 3, 7$ then $\alpha_{K\eta} = (-1)^{\binom{p + 1}{2}} \, \pi(\overline{\alpha}_{\eta})$.
\end{itemize}

\noindent
Here $p$ is the number of pluses in the signature of $h$.
\end{prop}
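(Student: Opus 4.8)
The plan is to transcribe the even-dimensional argument for Proposition \ref{prop:Bilinearsquareconjugate} into the odd-dimensional setting, comparing the coefficients in the explicit expansion \eqref{eq:odd_explicit_complexbilinearsquare} of $\alpha_\eta$ and $\alpha_{K\eta}$ in a fixed $h^*$-orthonormal basis $\{e^1,\dots,e^d\}$ of $V^*$. Since the exterior monomials $e^{i_1}\wedge\cdots\wedge e^{i_k}$ are real, it suffices to relate each coefficient $\scB((\gamma_\ell^{i_1\cdots i_k})^{-1}K\eta,K\eta)$ to the corresponding coefficient $\scB((\gamma_\ell^{i_1\cdots i_k})^{-1}\eta,\eta)$ of $\alpha_\eta$, and this splits into two independent ingredients: (i) how $K$ intertwines the gamma matrices $\gamma_\ell^i=\Psi^<_\ell(e^i)$, and (ii) the behaviour of $\scB$ under $K$.

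First I would record that $\gamma_\ell^i=\Psi^<_\ell(e^i)=\gamma_\ell(\theta^i)$, where $\theta^i\in\C\mathrm{l}(V^*,h^*)$ is the Clifford generator corresponding to $e^i$: indeed, by \eqref{eq:projection_l} and the identity \eqref{eq:mult_volume_form_odd} one has $\cP_\ell(e^i)=\tfrac{1}{2}(1+\ell\nu_\C)\diamond e^i$, and since $\gamma_\ell(\nu_\C)=\ell\,\Id$ the two summands coincide after applying $\gamma_\ell$. With this identification, for $p-q\equiv_8 1,5$ the map $K$ is the real or quaternionic structure used to build $\scB=\scS(-,K-)$, so $\gamma_\ell(z)\circ K=K\circ\gamma_\ell(\bar z)$ and, $\theta^i$ being real, $K$ commutes with each $\gamma_\ell^i$ and hence with $(\gamma_\ell^{i_1\cdots i_k})^{-1}$. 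For $p-q\equiv_8 3,7$ one instead has $K=\epsilon\,D$ with $D$ the complex anti-linear endomorphism of Proposition \ref{prop:D_endomorphism}, satisfying $\gamma_\ell(z)\circ D=D\circ\gamma_\ell(\overline{\pi(z)})$; evaluating on $z=\theta^i$ gives $\gamma_\ell^i\,D=-D\,\gamma_\ell^i$, so $K$ anti-commutes with each $\gamma_\ell^i$ and therefore $(\gamma_\ell^{i_1\cdots i_k})^{-1}K=(-1)^k K\,(\gamma_\ell^{i_1\cdots i_k})^{-1}$.

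For ingredient (ii), in the first case I would combine $\scB(\xi,\chi)=\scS(\xi,K\chi)$ with Lemma \ref{lemma:compatible_admissible_pairingsodd} applied with second argument $K\chi$, obtaining $\scB(K\xi,K\chi)=(-1)^{\binom{p}{2}}\overline{\scB(\xi,\chi)}$; in the second case $\scB(\xi,\chi)=\scS(\xi,D\chi)$, $K=\epsilon D$ and bilinearity give $\scB(K\xi,K\chi)=\scB(D\xi,D\chi)$, and Lemma \ref{lemma:compatible_admissible_pairingsD} applied with second argument $D\chi$ yields $\scB(D\xi,D\chi)=(-1)^{\binom{p+1}{2}}\overline{\scB(\xi,\chi)}$. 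Combining (i) and (ii): for $p-q\equiv_8 1,5$ every coefficient of $\alpha_{K\eta}$ equals $(-1)^{\binom{p}{2}}$ times the complex conjugate of the corresponding coefficient of $\alpha_\eta$, hence $\alpha_{K\eta}=(-1)^{\binom{p}{2}}\overline{\alpha}_\eta$; for $p-q\equiv_8 3,7$ the degree-$k$ coefficient picks up an extra factor $(-1)^k$, and since multiplication by $(-1)^k$ on $\wedge^kV^*_\C$ is exactly the grading involution $\pi$, one gets $\alpha_{K\eta}=(-1)^{\binom{p+1}{2}}\pi(\overline{\alpha}_\eta)$, which is the claim.

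The argument is essentially bookkeeping, and I expect the only delicate points to be: making the reduction $\Psi^<_\ell(e^i)=\gamma_\ell(\theta^i)$ precise so that the commutation/anti-commutation of $K$ with \emph{products} of gamma matrices (and their inverses) is tracked correctly, and handling the anti-linearity of $K$ and $D$ when converting the Hermitian-pairing identities of Lemmas \ref{lemma:compatible_admissible_pairingsodd} and \ref{lemma:compatible_admissible_pairingsD} into complex-bilinear identities for $\scB$. Neither is a genuine obstacle; this is a direct odd-dimensional transcription of the proof of Proposition \ref{prop:Bilinearsquareconjugate}, the sole structural novelty being the appearance of $\pi$ in signatures $p-q\equiv_8 3,7$, which is traceable entirely to the $\pi$ in the defining relation for $D$.
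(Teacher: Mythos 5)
Your argument is correct and is exactly the route the paper intends (and leaves implicit): compare the coefficients in the expansion \eqref{eq:odd_explicit_complexbilinearsquare} using the invariance of $\scB$ under $K$ (via Lemmas \ref{lemma:compatible_admissible_pairingsodd} and \ref{lemma:compatible_admissible_pairingsD}) together with the commutation of $K$ with the gamma matrices for $p-q\equiv_8 1,5$ and the anticommutation of $D$ for $p-q\equiv_8 3,7$, the latter producing the factor $(-1)^k$ in degree $k$, i.e.\ the involution $\pi$. The only cosmetic caveat is that the map $K$ defined by $\scS(\xi,\chi)=\scB(\xi,K\chi)$ may differ from the chosen real/quaternionic structure or from $D$ by a sign ($K=-J$ or $K=\epsilon D$), but since the square map is quadratic this does not affect $\alpha_{K\eta}$, as you in effect use.
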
 

\noindent
Hence, the Hermitian square of a conjugate complex spinor is, modulo a global phase, the complex conjugate of the Hermitian square of the given spinor, whereas the complex-bilinear square of a conjugate complex spinor is the complex conjugate of the complex-bilinear square of the given spinor, modulo a global sign.

% % % % % % % % % % % % % % % % % % % % % % % % % % % % % % % % % % % % % %

\subsection{Compatibility of the Hermitian and complex-bilinear squares}

% % % % % % % % % % % % % % % % % % % % % % % % % % % % % % % % % % % % % %

We fix an irreducible complex Clifford module $(\Sigma,\gamma_\ell)$ equipped with compatible Hermitian $\scS$ and complex-bilinear $\scB$ pairings related by the anti-linear isomorphism $K\colon \Sigma \to \Sigma$. The analog to Proposition \ref{prop:compatibilitysquares} is the following result.

\begin{prop}
\label{prop:compatibilitysquares_odd}
Let $\alpha_{\eta} , \alpha_{K\eta} \in \wedge^<V^{\ast}_{\C}$ respectively denote the complex-bilinear squares of $\eta\in \Sigma$ and its conjugate $K\eta\in \Sigma$. A complex exterior form $\widehat{\alpha} \in \wedge^<V^{\ast}_{\C}$ is the Hermitian square of $\eta$ if and only if:
\begin{equation*}  
\widehat{\alpha} \vee \beta \vee \alpha_{\eta}  = 2^{\frac{d-1}{2}} (\widehat{\alpha} \vee \beta)^{(0)} \alpha_{\eta} \, , \qquad   \alpha_{\eta} \vee \beta \vee \alpha_{K\eta} = 2^{\frac{d-1}{2}} \sigma \bar{\kappa}^{2} (\widehat{\alpha} \vee (\pi^{\frac{1-s_\scB}{2}}\circ \tau)(\beta))^{(0)}  \widehat{\alpha}
\end{equation*}

\noindent
for a complex exterior form $\beta \in \wedge^<V^{\ast}_{\C}$ such that $(\widehat{\alpha} \vee \beta)^{(0)} \neq 0$.
\end{prop}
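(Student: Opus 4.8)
The plan is to mirror the even-dimensional proof of Proposition~\ref{prop:compatibilitysquares}, transporting the endomorphism-level identity of Proposition~\ref{prop:endoquadraticbi} through the truncated isomorphism $\Psi^<_\ell$ instead of $\Psi_\gamma$. First I would set $\widehat{E} := \Psi^<_\ell(\widehat\alpha)$, $E_\eta := \Psi^<_\ell(\alpha_\eta)$, $E_{K\eta} := \Psi^<_\ell(\alpha_{K\eta})$, and $A := \Psi^<_\ell(\beta)$, noting that $E_\eta = \cE(\eta)$ and $E_{K\eta} = \cE(K\eta)$ by the definition of $\cE_\ell = (\Psi^<_\ell)^{-1}\circ\cE$, and similarly that $\widehat E \in \Im(\hcE_\kappa)$ precisely when $\widehat\alpha \in \Im(\hcE^\kappa_\ell)$. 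Since $\Psi^<_\ell\colon(\wedge^<V^*_\C,\vee)\to(\End(\Sigma),\circ)$ is a unital isomorphism of algebras, the two displayed $\vee$-identities in the statement are equivalent to the two displayed $\circ$-identities \eqref{eq:comp_sq_mapsII} of Proposition~\ref{prop:endoquadraticbi}, provided the ``transpose'' appearing there matches the image under $\Psi^<_\ell$ of $(\pi^{\frac{1-s_\scB}{2}}\circ\tau)(\beta)$. Invoking Proposition~\ref{prop:endoquadraticbi} then yields the equivalence with $\widehat E = \hcE_\kappa(\eta)$, i.e.\ $\widehat\alpha = \hcE^\kappa_\ell(\eta)$.

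The key technical step that has no exact counterpart printed in the odd-dimensional section is the analog of Lemma~\ref{lemma:exterior_form_t}: I must verify that for $\beta\in\wedge^<V^*_\C$ one has $\Psi^<_\ell(\beta)^t = \Psi^<_\ell\big((\pi^{\frac{1-s_\scB}{2}}\circ\tau)(\beta)\big)$, where the transpose is taken with respect to the admissible complex-bilinear pairing $\scB$ of adjoint type $s_\scB$. This follows from the relation $\gamma_\ell(z)^t = \gamma_\ell((\pi^{\frac{1-s_\scB}{2}}\circ\tau)(z))$ (the odd-dimensional version of \eqref{eq:gamma_t}, valid by admissibility of $\scB$) together with $\Psi^<_\ell = \Psi_\ell\circ\iota_\ell\circ\cP_\ell|_{\wedge^<V^*_\C}$ and the facts $\pi\circ\Psi=\Psi\circ\pi$, $\tau\circ\Psi=\Psi\circ\tau$; one also needs that $\cP_\ell$ and $\iota_\ell$ intertwine $\pi$ and $\tau$ appropriately, which holds because $\nu_\C$ is central and $(\pi^{\frac{1-s_\scB}{2}}\circ\tau)(\nu_\C) = \pm\nu_\C$ preserves each eigenspace $\wedge_\ell V^*_\C$. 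I would state this as an intermediate lemma (or simply cite ``arguing as in Lemma~\ref{lemma:exterior_form_t}'') and then the rest is bookkeeping with the truncated Kähler-Atiyah trace $\mathcal T_\ell(\cdot) = 2^{\frac{d-1}{2}}(\cdot)^{(0)}$, which replaces the factor $2^{\frac{d}{2}}$ by $2^{\frac{d-1}{2}}$ throughout.

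The remaining substitutions are routine: the condition $(\widehat\alpha\vee\beta)^{(0)}\neq 0$ corresponds to $\mathcal T_\ell(\widehat E\circ A)\neq 0$, i.e.\ $\Tr(\widehat E\circ A)\neq 0$, which is exactly the hypothesis of Proposition~\ref{prop:endoquadraticbi}; and the sign $\sigma$ together with the factor $\bar\kappa^2$ carry over unchanged since they originate from the symmetry type of $\scB$ and the $\U(1)$-parameter of $\hcE_\kappa$, neither of which is affected by the truncation. The main obstacle I anticipate is purely a matter of care rather than depth: making sure that the transpose identity for $\Psi^<_\ell$ is correct with the right power $\frac{1-s_\scB}{2}$ (note $s_\scB$ rather than $s_\scS$ appears, reflecting that $\scB$'s adjoint type may differ from $\scS$'s when $p-q\equiv_8 3,7$), and checking that the truncation/projection operations $\cP_\ell, \iota_\ell, \cP_<$ genuinely commute past $\pi$ and $\tau$ on $\wedge^<V^*_\C$. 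Once that lemma is in place, the proof is a one-line transport statement, exactly parallel to the proof of Proposition~\ref{prop:compatibilitysquares}.
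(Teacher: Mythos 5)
Your proposal is correct and is essentially the argument the paper intends: the paper omits the proof, presenting the proposition as the direct analog of Proposition \ref{prop:compatibilitysquares}, whose proof is exactly your transport of Equation \eqref{eq:comp_sq_mapsII} from Proposition \ref{prop:endoquadraticbi} through the unital algebra isomorphism $\Psi^<_\ell$, using the odd-dimensional counterpart of Lemma \ref{lemma:exterior_form_t} (i.e.\ \eqref{eq:gamma_t} for $\gamma_\ell$) to identify the transpose of $\beta$ with $(\pi^{\frac{1-s_\scB}{2}}\circ\tau)(\beta)$ and the truncated trace $\mathcal{T}_\ell$ to produce the factor $2^{\frac{d-1}{2}}$. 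Your auxiliary lemma is fine; note it can be obtained even more directly from the observation that $\Psi^<_\ell$ agrees with $\Psi_\ell$ on $\wedge^< V^*_\C$ (since $\gamma_\ell(\nu_\C)=\ell\,\Id$) together with the fact that $\pi$ and $\tau$ preserve degree, and that admissibility of $\scB$ for the fixed $\gamma_\ell$ forces $(\pi^{\frac{1-s_\scB}{2}}\circ\tau)(\nu_\C)=+\nu_\C$, so the sign ambiguity you flag does not arise.
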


% % % % % % % % % % % % % % % % % % % % % % % % % % % % % % % % % % % % % %
% % % % % % % % % % % % % % % % % % % % % % % % % % % % % % % % % % % % % %

\section{Irreducible spinor squares in low dimensions}
\label{sec:lowdimensions}

% % % % % % % % % % % % % % % % % % % % % % % % % % % % % % % % % % % % % %
% % % % % % % % % % % % % % % % % % % % % % % % % % % % % % % % % % % % % %

In this section, we apply the formalism developed in the previous sections to completely characterize the complex-bilinear and Hermitian squares of an irreducible complex spinor and its conjugate in the case where $(V,h)$ is an Euclidean vector space of dimension $d=2,\ldots,6$. Furthermore, we will obtain the algebraic conditions arising from imposing that $\eta$ lies in the kernel of a given complex two-form or three-form. These conditions will be interpreted in the introduction as \emph{instanton conditions} on a principal-bundle connection and a bundle gerbe curving, respectively.

% % % % % % % % % % % % % % % % % % % % % % % % % % % % % % % % % % % % % %

\subsection{The square of an irreducible chiral complex spinor in dimension two}

% % % % % % % % % % % % % % % % % % % % % % % % % % % % % % % % % % % % % %

Let $(V,h)$ be a two-dimensional Euclidean vector space. The Clifford algebra $\mathrm{Cl}(V^*,h^*)$ is isomorphic to $\Mat(2,\R)$ and the irreducible complex Clifford module $(\Sigma,\gamma)$ is of real type by Lemma \ref{lemma:real_or_quaternionic_type}. We endow $\Sigma$ with compatible admissible pairings $\scS$ and $\scB$ of positive symmetry and adjoint type, which are related by a compatible real structure $\mathfrak{c}\colon\Sigma\to\Sigma$.

% % % % % % % % % % % % % % % % % % % % % % % % % % % % % % % % % % % % % %

\subsubsection{The Hermitian square}

% % % % % % % % % % % % % % % % % % % % % % % % % % % % % % % % % % % % % %

We proceed to compute the Hermitian square of $\eta$. Set $\kappa=1$ for simplicity. By Corollary \ref{cor:sq_S_chiral}, a complex exterior form $\widehat{\alpha}\in\wedge V^*_\C$ is the Hermitian square of a spinor $\eta\in\Sigma$ with chirality $\mu\in\Z_2$ if and only if: 
\begin{eqnarray*}
\widehat{\alpha} \diamond \widehat{\alpha} = 2\widehat{\alpha}^{(0)} \widehat{\alpha}\, , \quad \tau(\widehat{\alpha}) = \overline{\widehat{\alpha}}\, , \quad i*(\pi\circ\tau)(\widehat{\alpha}) = \mu \widehat{\alpha}.
\end{eqnarray*}

\noindent
Let $\widehat{\alpha} = \widehat{\alpha}^{(0)} + \widehat{\alpha}^{(1)} + \widehat{\alpha}^{(2)}$. The linear condition $\tau(\widehat{\alpha})=\overline{\widehat{\alpha}}$ implies that $\widehat{\alpha}^{(0)}$ and $\widehat{\alpha}^{(1)}$ are real and that $\widehat{\alpha}^{(2)}$ is imaginary. The other linear condition $i*(\pi\circ\tau)(\widehat{\alpha})=\mu\widehat{\alpha}$ implies that $i\ast\widehat{\alpha}^{(0)}=\mu\widehat{\alpha}^{(2)}$ and $\widehat{\alpha}^{(1)}=0$. Set $r:=\widehat{\alpha}^{(0)}\in\R$. Hence $\widehat{\alpha} = r+i\mu r\nu$ and the quadratic equation $\widehat{\alpha}\diamond\widehat{\alpha} = 2\widehat{\alpha}^{(0)} \widehat{\alpha}$ automatically holds.

\begin{cor}
A complex exterior form $\widehat{\alpha} \in\wedge V^*_\C$ is the Hermitian square of a complex irreducible spinor $\eta\in\Sigma^{\mu}$ with chirality $\mu$ if and only if: $$\widehat{\alpha} = r+i\mu r\nu$$ for a non-zero real number $r\in\R\setminus\{0\}$.
\end{cor}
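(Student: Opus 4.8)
## Proof proposal

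The plan is to apply Corollary \ref{cor:sq_S_chiral} in the present setting (Euclidean dimension $d=2$, admissible Hermitian pairing $\scS$ of positive adjoint type, $\kappa = 1$), and then to extract the explicit shape of $\widehat{\alpha}$ by solving the resulting algebraic system degree by degree. The system reads
\begin{equation*}
\widehat{\alpha} \diamond \widehat{\alpha} = 2\,\widehat{\alpha}^{(0)} \widehat{\alpha}, \qquad \tau(\widehat{\alpha}) = \overline{\widehat{\alpha}}, \qquad i\ast(\pi\circ\tau)(\widehat{\alpha}) = \mu \widehat{\alpha},
\end{equation*}
where the middle equation is the adjoint-type condition $(\pi^{\frac{1-s}{2}}\circ\tau)(\bar\kappa\alpha)=\kappa\bar\alpha$ specialized to $s=+1$, $\kappa=1$, and the last is the chirality condition $i^{q+d/2}\ast(\pi\circ\tau)(\alpha)=\mu\alpha$ with $q=0$, $d=2$, so $i^{q+d/2}=i$. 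Note that in dimension two the inequality-governed equation $\widehat\alpha\diamond\beta\diamond\widehat\alpha = 2(\widehat\alpha\diamond\beta)^{(0)}\widehat\alpha$ with $(\widehat\alpha\diamond\beta)^{(0)}\neq 0$ can be handled by taking $\beta = 1$ once we know $\widehat\alpha^{(0)}\neq 0$; this is precisely the Euclidean simplification mentioned after Theorem \ref{thm:even_Hermitian_forms}, so the quadratic relation $\widehat\alpha\diamond\widehat\alpha = 2\widehat\alpha^{(0)}\widehat\alpha$ together with $\widehat\alpha^{(0)}\neq 0$ is the full content of that line.

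First I would decompose $\widehat{\alpha} = \widehat{\alpha}^{(0)} + \widehat{\alpha}^{(1)} + \widehat{\alpha}^{(2)}$ into homogeneous components and analyze the two linear equations. The reality condition $\tau(\widehat{\alpha})=\overline{\widehat{\alpha}}$: since $\tau$ acts as $+1$ on degrees $0,1$ and as $-1$ on degree $2$ (in dimension two, $\tau$ reverses, so degree $k$ picks up $(-1)^{k(k-1)/2}$, giving $+,+,-$ for $k=0,1,2$), this forces $\widehat{\alpha}^{(0)}\in\R$, $\widehat{\alpha}^{(1)}$ real, and $\widehat{\alpha}^{(2)}$ purely imaginary. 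Next, the chirality condition $i\ast(\pi\circ\tau)(\widehat{\alpha})=\mu\widehat{\alpha}$: I would compute $\ast(\pi\circ\tau)$ on each degree using that $\pi$ is $(+,-,+)$ on degrees $(0,1,2)$, $\tau$ is $(+,+,-)$, and $\ast$ swaps degree $k$ with degree $2-k$. On degree $0$ this reads $i\ast\widehat{\alpha}^{(0)} = \mu\widehat{\alpha}^{(2)}$, i.e. $\widehat{\alpha}^{(2)} = i\mu\,\widehat{\alpha}^{(0)}\nu$ where $\nu = \ast 1$; on degree $1$ the one-form part gets an eigenvalue incompatible with $+\mu$ (the sign works out so that $\widehat\alpha^{(1)}$ must vanish), and on degree $2$ it reproduces the degree-$0$ relation. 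Setting $r := \widehat{\alpha}^{(0)}\in\R$, this pins down $\widehat{\alpha} = r + i\mu r\,\nu$.

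Then I would check that the quadratic equation $\widehat{\alpha}\diamond\widehat{\alpha} = 2\widehat{\alpha}^{(0)}\widehat{\alpha}$ holds automatically for $\widehat\alpha = r(1+i\mu\nu)$: using $\nu\diamond\nu = 1$ in Euclidean dimension two and $\nu^{(0)}$-bookkeeping, $(1+i\mu\nu)\diamond(1+i\mu\nu) = 1 + 2i\mu\nu + (i\mu)^2\nu\diamond\nu = 2i\mu\nu + (1 - 1)\cdot(\ldots)$ — one verifies this equals $2(1+i\mu\nu)$ after matching the degree-zero coefficient, i.e. the relation is an identity on the one-parameter family. For the converse direction, given $r\in\R\setminus\{0\}$ the form $\widehat\alpha = r+i\mu r\nu$ satisfies all three equations (the first by the computation just described, the two linear ones by inspection) and has $\widehat\alpha^{(0)} = r\neq 0$, so by Corollary \ref{cor:sq_S_chiral} it is the Hermitian square of some $\eta\in\Sigma^\mu$; conversely any Hermitian square with $\kappa=1$ has the stated form with $r = \widehat\alpha^{(0)}$, and $r\neq 0$ because a nonzero spinor has $\scS(\eta,\eta)\neq 0$ in the positive-definite case (equivalently, $\widehat\alpha^{(0)}\neq 0$ is part of the characterization). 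I expect no genuine obstacle here — the whole argument is bookkeeping with $\pi$, $\tau$, $\ast$ and the Clifford relation $\nu^2=1$; the only point demanding care is getting the signs of $\tau$ and $\pi$ on each degree right and confirming that the degree-one eigenvalue under $i\ast(\pi\circ\tau)$ is genuinely $\neq\mu$ so that $\widehat\alpha^{(1)}$ is forced to zero.
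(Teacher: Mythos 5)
Your route is the same as the paper's: specialize Corollary \ref{cor:sq_S_chiral} to $d=2$, $q=0$, $s=+1$, $\kappa=1$, solve the two linear conditions degree by degree, and observe that the quadratic condition is automatic (with $\beta=1$ sufficing in the Euclidean, positive-definite case, whence $r=\widehat{\alpha}^{(0)}\neq 0$ for $\eta\neq 0$). However, two of your verifications are wrong as written. First, in Euclidean dimension two one has $\nu\diamond\nu=-1$, not $+1$ (this is exactly why $\nu_\C=i\nu$ squares to one): with your sign the check $(1+i\mu\nu)\diamond(1+i\mu\nu)=2(1+i\mu\nu)$ would actually fail, and indeed your displayed computation lands on $2i\mu\nu$, which is not $2(1+i\mu\nu)$; with the correct sign one gets $1+2i\mu\nu+(i\mu)^2\,\nu\diamond\nu=2+2i\mu\nu$, and the identity holds. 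Second, the vanishing of $\widehat{\alpha}^{(1)}$ is not an eigenvalue mismatch: the degree-one piece of the chirality condition reads $\ast\widehat{\alpha}^{(1)}=i\mu\,\widehat{\alpha}^{(1)}$, and since $\ast^2=-1$ on one-forms the operator $-i\ast$ has eigenvalues $\pm 1$ on complex one-forms, so the eigenvalue $\mu$ \emph{is} attained --- this eigenspace is precisely where the complex-bilinear square $\theta$ with $\ast\theta=i\mu\theta$ lives in the next subsection, so the verification you flag as "the only point demanding care" would come out the opposite way. What kills $\widehat{\alpha}^{(1)}$ is the reality you already extracted from $\tau(\widehat{\alpha})=\overline{\widehat{\alpha}}$: a real one-form whose (real) Hodge dual equals $i\mu$ times itself must vanish. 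With these two repairs your argument coincides with the paper's proof.
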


Given an irreducible complex spinor $\eta\in \Sigma$, consider now the following equation:
\begin{eqnarray*}
F\cdot \eta := \Psi_{\gamma}(F)(\eta) = 0
\end{eqnarray*}

\noindent
for a complex two-form $F\in \wedge^2 V^{\ast}_{\mathbb{C}}$. This equation \emph{models} the spinorial instanton condition described in the introduction. By Lemma \ref{lemma:constrainedspinoreven}, a two-form $F$ satisfies the previous equation if and only if:
\begin{eqnarray*}
F\diamond \widehat{\alpha} = F\diamond (r+i\mu r\nu) = 0,
\end{eqnarray*}

\noindent
where $\widehat{\alpha} = r+i\mu r\nu$ is the Hermitian square of $\eta\in \Sigma$. This equation is equivalent to $F = 0$, and thus we conclude that chiral spinorial instantons on an oriented Riemann surface are precisely flat connections, as expected.
 
% % % % % % % % % % % % % % % % % % % % % % % % % % % % % % % % % % % % % %

\subsubsection{The complex-bilinear square}

% % % % % % % % % % % % % % % % % % % % % % % % % % % % % % % % % % % % % %

Let $\mu\in\mathbb{Z}_2$. By Corollary \ref{cor:sq_B_chiral}, an exterior form $\alpha\in\wedge V^*_\C$ is the complex-bilinear square of a spinor $\eta \in\Sigma$ with chirality $\mu$ if and only if:
\begin{eqnarray*}
\alpha\diamond\alpha=2\alpha^{(0)}\alpha,\quad\tau(\alpha)=\alpha,\quad\alpha\diamond\beta\diamond\alpha=2(\alpha\diamond\beta)^{(0)}\alpha,\quad i*(\pi\circ\tau)(\alpha)=\mu\alpha    
\end{eqnarray*}

\noindent
for a fixed exterior form $\beta\in\wedge V^*_\C$ satisfying $(\alpha\diamond\beta)^{(0)}\neq0$. Let $\alpha = \alpha^{(0)} + \alpha^{(1)} +\alpha^{(2)}$, where $\alpha^{(k)}\in\wedge^kV^*_\C$. The linear condition $\tau(\alpha)=\alpha$ implies that $\alpha^{(2)}=0$ and the \emph{chirality} linear condition $i*(\pi\circ\tau)(\alpha)=\mu\alpha$ implies that $\alpha^{(0)}=0$ and $*\alpha^{(1)}=i\mu\alpha^{(1)}$. Set $\theta:=\alpha^{(1)}$. Equation $\alpha\diamond\alpha=2\alpha^{(0)}\alpha$ reduces to:
\begin{eqnarray*}
\theta\diamond\theta=\theta\wedge\theta+\escal{\theta,\theta}=0   
\end{eqnarray*}

\noindent
and thus $\escal{\theta,\theta}=0$. Note that the condition $*\theta=i\mu\theta$ implies that $\escal{\theta,\theta}=0$ since the Hodge star operator is an isometry. Since $\alpha^{(0)}=0$, taking $\beta=1$ does not suffice to characterize the square of the spinor. Let $\beta=\bar\theta\in V^*_\C$. Then $\alpha \diamond \beta = \theta \diamond \bar\theta = \theta \wedge \bar \theta +\escal{\theta,\bar\theta}$ and $(\alpha \diamond \beta)^{(0)} =\escal{\theta,\bar\theta} = \abs{\Re(\theta)}^2 + \abs{\Im(\theta)}^2 \neq0$ since $\theta\neq 0$. Using $\theta\diamond\bar\theta+\bar\theta\diamond\theta=2\escal{\theta,\bar\theta}$ and $\theta\diamond\theta=0$ we compute:
\begin{eqnarray*}
\alpha\diamond\beta\diamond\alpha=\theta\diamond\bar\theta\diamond\theta=(-\bar\theta\diamond\theta+2\escal{\theta,\bar\theta})\diamond\theta=2\escal{\theta,\bar\theta}\theta=2(\alpha\diamond\beta)^{(0)}\alpha.
\end{eqnarray*}
 
\noindent
Therefore, we conclude:

\begin{cor}
A complex exterior form $\alpha\in\wedge V^*_\C$ is the complex-bilinear square of a complex irreducible spinor $\eta\in\Sigma^{\mu}$ with chirality $\mu$ if and only if: $$\alpha=\theta$$ for a uniquely determined complex one-form $\theta\in V^*_\C$ satisfying $*\theta=i\mu\theta$.
\end{cor}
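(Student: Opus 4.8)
The plan is to combine Corollary \ref{cor:sq_B_chiral} in dimension $d=2$ with an explicit degree-by-degree analysis of the three linear conditions and the single quadratic condition it provides, exactly in parallel with the Hermitian computation just carried out. Concretely, I would start from the characterization: $\alpha\in\wedge V^*_\C$ is the complex-bilinear square of a chiral spinor $\eta\in\Sigma^\mu$ of chirality $\mu$ if and only if
\begin{equation*}
\alpha\diamond\alpha = 2\alpha^{(0)}\alpha,\qquad \tau(\alpha)=\alpha,\qquad \alpha\diamond\beta\diamond\alpha = 2(\alpha\diamond\beta)^{(0)}\alpha,\qquad i\ast(\pi\circ\tau)(\alpha)=\mu\alpha
\end{equation*}
for some $\beta\in\wedge V^*_\C$ with $(\alpha\diamond\beta)^{(0)}\neq 0$; here I use $s=+1$ and $\sigma=+1$, which is the adjoint and symmetry type fixed for the chosen admissible pairing $\scB$ in this dimension. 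Writing $\alpha=\alpha^{(0)}+\alpha^{(1)}+\alpha^{(2)}$, the first task is to read off what $\tau(\alpha)=\alpha$ forces: since $\tau$ acts as $+1$ on degrees $0,1$ and as $-1$ on degree $2$ (in $d=2$), this kills $\alpha^{(2)}$.

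Next I would impose the chirality condition $i\ast(\pi\circ\tau)(\alpha)=\mu\alpha$. Using that $\pi$ acts as $(-1)^k$ and $\tau$ as computed above on $\wedge^k V^*_\C$, together with $\ast\colon\wedge^0\to\wedge^2$ and $\ast\colon\wedge^1\to\wedge^1$, this relates $\alpha^{(0)}$ and $\alpha^{(2)}$; combined with $\alpha^{(2)}=0$ from the previous step, it forces $\alpha^{(0)}=0$ as well, and restricts the one-form part by $\ast\theta = i\mu\theta$, where I set $\theta:=\alpha^{(1)}$. So after the linear conditions, $\alpha=\theta$ with $\theta$ a complex one-form satisfying the self-duality-type constraint $\ast\theta=i\mu\theta$; note that applying $\ast$ twice and using that $\ast$ is a linear isometry immediately gives $\langle\theta,\theta\rangle=0$, which I would record.

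The remaining work is to verify that the quadratic conditions are automatically satisfied, so that no further constraint on $\theta$ appears. First, $\alpha\diamond\alpha=\theta\diamond\theta=\theta\wedge\theta+\langle\theta,\theta\rangle=0$ since $\theta\wedge\theta=0$ and $\langle\theta,\theta\rangle=0$, which matches $2\alpha^{(0)}\alpha=0$. For the third condition I cannot take $\beta=1$ because $(\alpha\diamond 1)^{(0)}=\alpha^{(0)}=0$, so I would choose $\beta=\bar\theta$. Then $\alpha\diamond\beta=\theta\diamond\bar\theta=\theta\wedge\bar\theta+\langle\theta,\bar\theta\rangle$, whose degree-zero part is $\langle\theta,\bar\theta\rangle=\abs{\Re(\theta)}^2+\abs{\Im(\theta)}^2>0$ whenever $\theta\neq 0$, so the admissibility hypothesis $(\alpha\diamond\beta)^{(0)}\neq 0$ holds. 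Using the polarization identity $\theta\diamond\bar\theta+\bar\theta\diamond\theta=2\langle\theta,\bar\theta\rangle$ and $\theta\diamond\theta=0$, I compute $\alpha\diamond\beta\diamond\alpha=\theta\diamond\bar\theta\diamond\theta=(2\langle\theta,\bar\theta\rangle-\bar\theta\diamond\theta)\diamond\theta=2\langle\theta,\bar\theta\rangle\,\theta=2(\alpha\diamond\beta)^{(0)}\alpha$, as required. The uniqueness of $\theta$ follows because the complex-bilinear square determines the spinor up to sign, hence the form $\theta$ is uniquely determined by $\eta$. I do not anticipate a genuine obstacle here; the only point requiring slight care is bookkeeping the signs in how $\pi$, $\tau$, and $\ast$ act in signature $(2,0)$ when extracting the linear conditions, and confirming that the specific adjoint/symmetry types $s=\sigma=+1$ used are indeed those guaranteed by Proposition \ref{prop:complex_bilinear_admissible} in this dimension, after which the argument is a short direct calculation mirroring the Hermitian case.
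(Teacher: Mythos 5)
Your proposal is correct and follows essentially the same route as the paper: expand $\alpha$ by degree, use $\tau(\alpha)=\alpha$ and the chirality condition $i\ast(\pi\circ\tau)(\alpha)=\mu\alpha$ to reduce to $\alpha=\theta$ with $\ast\theta=i\mu\theta$, then verify the quadratic conditions with the choice $\beta=\bar\theta$ (since $\beta=1$ fails), exactly as in the paper's computation.
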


\noindent
Let $\theta=\theta^R+i\theta^I$ and take $\mu =1$ for definiteness. Since $\escal{\theta,\bar\theta}\neq 0$ and $*\theta=i \theta$, then $\theta \wedge \bar\theta \neq 0$, which implies that $\theta^R,\theta^I$ are linearly independent real one-forms. We thus have $V^*=\Span_\R\{\theta^R,\theta^I\}$. Note that equation $*\theta=i\theta$ implies $*\theta^I = \theta^R$. We define the complex structure $I$ on $V$ by $I(u^I):= u^R$, where $u^I:=(\theta^I)^\sharp$, $u^R:=(\theta^R)^\sharp$. One can check that $I$ is $h$-orthogonal and therefore $(V,h,I)$ becomes a Hermitian vector space. Furthermore, the complex one-form $\theta$ is of type $(1,0)$ with respect to the complex structure $I$. We conclude that there is a one-to-one correspondence between irreducible chiral spinors on $(V,h)$ and triples $(h,I,\theta)$ consisting of an Euclidean metric $h$, a complex structure $I$ and a \emph{holomorphic} volume form $\theta \in \wedge^{1,0}_I V^{\ast}$, where $V^{\ast} \otimes \C = \wedge^{1,0}_I V^{\ast} \oplus \wedge^{0,1}_I V^{\ast}$ is decomposed with respect to $I$. This is of course, nothing but the simplest manifestation of the correspondence between \emph{pure} irreducible and chiral complex spinors and $\SU(n)$-structures, this case corresponding to $n=1$. We will see in the following that in higher dimensions this correspondence becomes more intricate, as the $\SU(n)$-structure associated to the spinor appears in an arguably non-standard form.

% % % % % % % % % % % % % % % % % % % % % % % % % % % % % % % % % % % % % %

\subsubsection{Compatibility conditions and the conjugate square}

% % % % % % % % % % % % % % % % % % % % % % % % % % % % % % % % % % % % % %

In the previous subsections we have established that the complex-bilinear square of an irreducible and chiral complex spinor is an isotropic complex one-form $\alpha=\theta \in V^{\ast}_{\C}$ satisfying $\ast\theta=i\mu\theta$, whereas the Hermitian square of such a spinor is of the form $\widehat{\alpha} = r+i\mu r\nu$ for a real constant $r \in\R$. In the following, we take $\mu = 1$ for simplicity in the exposition. The goal of this subsection is to characterize the complex-bilinear and Hermitian squares of the \emph{same} irreducible complex and chiral spinor $\eta\in\Sigma^+$, and the complex-bilinear and Hermitian squares of its conjugate, namely $\frc(\eta)\in\Sigma^-$ since $(\Sigma,\gamma)$ is of real type in two Euclidean dimensions. By Proposition \ref{prop:Hermitiansquareconjugate} and Proposition \ref{prop:Bilinearsquareconjugate}, we know that the Hermitian $\widehat{\alpha}_c$ and complex-bilinear $\alpha_c$ squares of $\frc(\eta) \in \Sigma^{-}$ are given by:
\begin{equation*}
\widehat{\alpha}_c =  r - i  r\nu\, , \qquad \alpha_c = \bar{\theta}.
\end{equation*}

\noindent
Furthermore, by Proposition \ref{prop:compatibilitysquares}, the Hermitian and complex-bilinear squares are the squares of the \emph{same} spinor $\eta\in \Sigma^{+}$ if and only if the following algebraic relations are satisfied: 
\begin{equation*}  
(1 + i\nu) \diamond \beta \diamond \theta = 2 ((1 + i\nu) \diamond \beta)^{(0)} \theta \, , \qquad   \theta \diamond \beta \diamond \bar{\theta} = 2 r^2  ((1 + i\nu) \diamond  \tau(\beta))^{(0)}  (1 + i\nu)
\end{equation*}

\noindent
for every $\beta\in \wedge V^{\ast}_{\C}$ or, equivalently, a choice of $\beta\in \wedge V^{\ast}_{\C}$ such that $((1 + i\nu) \diamond \beta)^{(0)}  \neq 0$. Taking $\beta = 1$, this system of equations reduces to:
\begin{equation*}
\theta\wedge \bar{\theta} + \escal{\theta,\bar{\theta}} = 2 r^2 (1 + i \nu),
\end{equation*}

\noindent
which in turn is equivalent to $\escal{\theta,\bar{\theta}}=2r^2$.  It can be seen that this necessary condition is also sufficient according to the previous criteria for the choice of $\beta$, and hence we obtain the following result. 
\begin{cor}
A pair of complex forms $\widehat{\alpha}, \alpha \in \wedge V^{\ast}_{\mathbb{C}}$, and a pair of complex forms $\widehat{\alpha}_c, \alpha_c \in \wedge V^{\ast}_{\mathbb{C}}$ are respectively the Hermitian and complex-bilinear squares of a spinor $\eta\in \Sigma^{+}$ and its conjugate $\frc(\eta) \in \Sigma^{-}$ if and only if:
\begin{equation*}
\widehat{\alpha} = \sqrt{\frac{\escal{\theta,\bar\theta}}{2}} (1 +   \frac{\theta\wedge \bar{\theta}}{\escal{\theta,\bar\theta}})\, , \qquad \alpha = \theta\, , \qquad \widehat{\alpha}_c = \sqrt{\frac{\escal{\theta,\bar\theta}}{2}}(1 - \frac{\theta\wedge \bar{\theta}}{\escal{\theta,\bar\theta}})\, , \qquad \alpha_c = \bar\theta ,
\end{equation*}

\noindent
where $\theta \in V^{\ast}_{\mathbb{C}}$ is an isotropic complex one-form satisfying $\ast\theta = i\theta$.
\end{cor}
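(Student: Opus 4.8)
The plan is to assemble the corollary directly from the results already established in this subsection, so that the proof is essentially a bookkeeping argument combining four prior statements. First I would recall that, by the earlier corollaries in this subsection, any irreducible complex and chiral spinor $\eta\in\Sigma^{+}$ (with $\mu=1$) has complex-bilinear square of the form $\alpha=\theta$ for an isotropic complex one-form $\theta\in V^*_\C$ with $\ast\theta=i\theta$, and Hermitian square of the form $\widehat\alpha=r+ir\nu$ for some nonzero real $r$. The conjugate spinor $\frc(\eta)\in\Sigma^{-}$ is then handled by Proposition \ref{prop:Bilinearsquareconjugate} and Proposition \ref{prop:Hermitiansquareconjugate}, which in this signature and dimension (with $\kappa=1$, $p=2$, $d=2$, $s=+1$, and the sign factor equal to $1$) give $\alpha_c=\bar\theta$ and $\widehat\alpha_c=r-ir\nu$. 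So the first half of the work is simply recording which forms can appear, with the single undetermined real parameter $r$.

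Next I would invoke the compatibility criterion of Proposition \ref{prop:compatibilitysquares}, instantiated with the data at hand: $\widehat\alpha=1+i\nu$ up to scale, $\alpha_\eta=\theta$, $\alpha_{K\eta}=\bar\theta$, $\sigma=+1$, $s=+1$, and $\kappa=1$. As indicated just before the statement, it suffices to test the two compatibility equations on a single $\beta$ with $((1+i\nu)\diamond\beta)^{(0)}\neq 0$, and the natural choice is $\beta=1$. With $\beta=1$ the first equation $(1+i\nu)\diamond\theta=2\theta$ is automatic (one checks $\nu\diamond\theta=-i\theta$ using $\ast\theta=i\theta$, which is part of the content of the complex-bilinear square corollary), and the second equation reduces, via $\theta\diamond\bar\theta=\theta\wedge\bar\theta+\langle\theta,\bar\theta\rangle$ and $\theta\wedge\bar\theta=-i\langle\theta,\bar\theta\rangle\,\nu$ (the two-form $\theta\wedge\bar\theta$ in a two-dimensional space being a multiple of $\nu$, with the coefficient fixed by $\ast\theta=i\theta$), to the scalar condition $\langle\theta,\bar\theta\rangle=2r^2$. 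This pins down $r$ in terms of $\theta$ as $r=\sqrt{\langle\theta,\bar\theta\rangle/2}$ (taking the positive root; the sign of $r$ is irrelevant since it only rescales the spinor, and the statement is about which forms occur as squares).

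Finally I would substitute $r=\sqrt{\langle\theta,\bar\theta\rangle/2}$ back into $\widehat\alpha=r+ir\nu$ and $\widehat\alpha_c=r-ir\nu$, and rewrite $ir\nu=r\cdot(i\nu)$ as $r\cdot\frac{\theta\wedge\bar\theta}{\langle\theta,\bar\theta\rangle}$ using the identity $i\nu=\theta\wedge\bar\theta/\langle\theta,\bar\theta\rangle$ noted above. This yields exactly the displayed expressions for $\widehat\alpha$, $\alpha$, $\widehat\alpha_c$, $\alpha_c$, and the converse direction is immediate: given $\theta$ isotropic with $\ast\theta=i\theta$, the forms so defined satisfy all the algebraic characterizations (the chiral complex-bilinear corollary for $\alpha$ and $\alpha_c$, the chiral Hermitian corollary for $\widehat\alpha$ and $\widehat\alpha_c$) and the compatibility equations, hence arise from a single spinor and its conjugate.

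I do not expect a genuine obstacle here; the only mildly delicate points are keeping track of the sign conventions — the overall sign $(-1)^{\binom{p}{2}+\frac{d}{4}(1+s(-1)^p)}$ in Propositions \ref{prop:Bilinearsquareconjugate} and \ref{prop:Hermitiansquareconjugate} must be evaluated correctly at $p=d=2$, $s=+1$ to confirm it equals $+1$ — and verifying the two small geometric product identities $\nu\diamond\theta=-i\theta$ and $\theta\wedge\bar\theta=-i\langle\theta,\bar\theta\rangle\nu$ for a $\mu=1$ isotropic one-form, both of which follow mechanically from $\ast\theta=i\theta$ and the definition of $\diamond$ in a two-dimensional Euclidean space. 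Everything else is substitution.
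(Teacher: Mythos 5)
Your argument follows the paper's proof essentially verbatim: quote the two corollaries for the squares of $\eta$, apply Propositions \ref{prop:Hermitiansquareconjugate} and \ref{prop:Bilinearsquareconjugate} (with the sign $(-1)^{\binom{p}{2}+\frac{d}{4}(1+s(-1)^p)}=+1$ at $p=d=2$, $s=+1$, as you correctly evaluate) for the conjugate, and then test Proposition \ref{prop:compatibilitysquares} on $\beta=1$, reducing everything to $\escal{\theta,\bar\theta}=2r^2$; the converse is the same bookkeeping in reverse. The one slip is the identity you state as $\theta\wedge\bar\theta=-i\escal{\theta,\bar\theta}\,\nu$: for $\ast\theta=i\theta$ (e.g.\ $\theta\propto e^1-ie^2$, $\nu=e^1\wedge e^2$) one finds $\theta\wedge\bar\theta=+i\escal{\theta,\bar\theta}\,\nu$, and it is this sign that the degree-two part of $\theta\diamond\bar\theta=2r^2(1+i\nu)$ requires — with your stated sign that equation would force $\escal{\theta,\bar\theta}=0$, a contradiction. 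Since in the final paragraph you in fact use the correct relation $i\nu=\theta\wedge\bar\theta/\escal{\theta,\bar\theta}$, the proof goes through once this sign is fixed; everything else is sound and matches the paper.
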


\noindent
In particular, the previous corollary implies that $\eta$ is of unit norm if and only if $2 \escal{\theta,\bar{\theta}} =1$.

% % % % % % % % % % % % % % % % % % % % % % % % % % % % % % % % % % % % % %

\subsection{The square of an irreducible complex spinor in dimension three}

% % % % % % % % % % % % % % % % % % % % % % % % % % % % % % % % % % % % % %

Let $(V,h)$ be an Euclidean vector space of real dimension three. Its associated Clifford algebra $\Cl(V^*,h^*)$ is isomorphic to $\Mat(2,\mathbb{C})$ and the irreducible complex Clifford module $(\Sigma,\gamma_\ell)$ admits an anti-linear endomorphism $D\colon \Sigma \to \Sigma$ satisfying $D^2=-\Id$. We consider $\Sigma$ to be endowed with an admissible skew-symmetric complex-bilinear pairing $\scB$ of negative adjoint type, and with a compatible Hermitian pairing $\scS$ of positive adjoint type, which is related to $\scB$ through the endomorphism $D$. The truncated Kähler-Atiyah algebra is defined on the following complex vector space: 
\begin{eqnarray*}
\wedge^<V^*_\C=\C\oplus V^*_\C\, ,
\end{eqnarray*}

\noindent
endowed with the \emph{truncated} geometric product:
\begin{eqnarray*}
\alpha_1\vee\alpha_2=\cP_<(\alpha_1\diamond\alpha_2+i\ell*\tau(\alpha_1\diamond\alpha_2))  
\end{eqnarray*}

\noindent
for every $\alpha_1,\alpha_2\in\wedge^<V^*_\C$.

% % % % % % % % % % % % % % % % % % % % % % % % % % % % % % % % % % % % % %

\subsubsection{The Hermitian square}

% % % % % % % % % % % % % % % % % % % % % % % % % % % % % % % % % % % % % %

We set $\kappa=1$ for simplicity. By Theorem \ref{thm_Hermitian_square_odd}, a complex exterior form $\widehat{\alpha}\in\wedge^<V^*_\C$ is the Hermitian square of a spinor $\eta \in\Sigma$ if and only if: 
\begin{eqnarray*}
\widehat{\alpha}\vee\widehat{\alpha}=2\widehat{\alpha}^{(0)}\widehat{\alpha},\quad \tau(\widehat{\alpha})=\overline{\widehat{\alpha}},
\end{eqnarray*}

\noindent
where we have used that $\scS$ is positive-definite. Let $\widehat{\alpha}=\widehat{\alpha}^{(0)}+\widehat{\alpha}^{(1)}\in\wedge^<V^*_\C$. The linear condition $\tau(\widehat{\alpha})=\overline{\widehat{\alpha}}$ implies that $\widehat{\alpha}^{(0)}$ and $\widehat{\alpha}^{(1)}$ are real. Set $r:=\widehat{\alpha}^{(0)}\in\R$ and $\vartheta:=\widehat{\alpha}^{(1)}\in V^*$. Hence $\widehat{\alpha}=r+\vartheta$ and the quadratic equation $\widehat{\alpha}\vee\widehat{\alpha}=2\widehat{\alpha}^{(0)}\widehat{\alpha}$ is equivalent to $\escal{\vartheta,\vartheta}=r^2$.

\begin{cor}
A complex exterior form $\widehat{\alpha}\in\wedge^<V^*_\C$ is the Hermitian square of a complex irreducible spinor $\eta\in\Sigma$ if and only if: $$\textstyle \widehat{\alpha}=\sqrt{\escal{\vartheta,\vartheta}}+\vartheta$$ for a real one-form $\vartheta\in V^*$.
\end{cor}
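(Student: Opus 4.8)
The plan is to apply Theorem \ref{thm_Hermitian_square_odd} in the case $d = 3$, $\kappa = 1$, with the admissible Hermitian pairing $\scS$ of positive adjoint type (so $s = +1$) that is moreover positive-definite, since $(V,h)$ is Euclidean of dimension three. For $d = 3$ we have $m = 1$, so $2^m = 2$, and the truncated exterior algebra is $\wedge^< V^*_\C = \C \oplus V^*_\C$. First I would write a general element as $\widehat{\alpha} = \widehat{\alpha}^{(0)} + \widehat{\alpha}^{(1)}$ and unwind the three equations of statement (b) of the theorem.

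The reality condition $(\pi^{\frac{1-s}{2}} \circ \tau)(\bar\kappa\,\widehat{\alpha}) = \kappa\,\overline{\widehat{\alpha}}$ with $s = +1$ and $\kappa = 1$ becomes simply $\tau(\widehat{\alpha}) = \overline{\widehat{\alpha}}$. Since $\tau$ acts trivially on degrees $0$ and $1$, this forces both $\widehat{\alpha}^{(0)} =: r$ and $\widehat{\alpha}^{(1)} =: \vartheta$ to be real, i.e. $r \in \R$ and $\vartheta \in V^*$. Next I would argue that the third equation $\widehat{\alpha} \vee \beta \vee \widehat{\alpha} = 2(\widehat{\alpha}\vee\beta)^{(0)}\widehat{\alpha}$ is redundant here: because $\scS$ is positive-definite, the corollary to Proposition \ref{prop:algebraichHermitian} applies and only the two equations $E \circ E = \Tr(E) E$ and $(\bar\kappa E)^\dagger = \bar\kappa E$ are needed — this is precisely the reduction already invoked in the excerpt's text preceding the statement ("where we have used that $\scS$ is positive-definite"). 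So it suffices to impose $\widehat{\alpha} \vee \widehat{\alpha} = 2\widehat{\alpha}^{(0)}\widehat{\alpha} = 2r\widehat{\alpha}$ together with the reality condition.

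It then remains to compute $\widehat{\alpha}\vee\widehat{\alpha} = (r + \vartheta)\vee(r+\vartheta)$ using the truncated product $\vee$ on $\wedge^< V^*_\C$. Expanding, $(r+\vartheta)\vee(r+\vartheta) = r^2 + 2r\vartheta + \vartheta\vee\vartheta$, and since $\vartheta\vee\vartheta$ is obtained from $\vartheta\diamond\vartheta = \vartheta\wedge\vartheta + \escal{\vartheta,\vartheta} = \escal{\vartheta,\vartheta}$ by projecting onto $\wedge^< V^*_\C$ (the degree-two part being in $*\wedge^< V^*_\C$ and hence discarded after applying $\cP_\ell$ and $\cP_<$), one gets $\vartheta\vee\vartheta = \escal{\vartheta,\vartheta}$. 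Thus $\widehat{\alpha}\vee\widehat{\alpha} = r^2 + \escal{\vartheta,\vartheta} + 2r\vartheta$, and equating with $2r^2 + 2r\vartheta$ gives the single scalar equation $r^2 + \escal{\vartheta,\vartheta} = 2r^2$, i.e. $\escal{\vartheta,\vartheta} = r^2$. Since $\widehat{\alpha}$ must be a nonzero (hence here $r \neq 0$) square, we can take $r = \sqrt{\escal{\vartheta,\vartheta}}$, which matches the claimed form $\widehat{\alpha} = \sqrt{\escal{\vartheta,\vartheta}} + \vartheta$; conversely any such form satisfies all the equations of Theorem \ref{thm_Hermitian_square_odd}(b), so it is the Hermitian square of some $\eta \in \Sigma$.

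The main obstacle — really the only subtle point — is the careful bookkeeping of the truncated geometric product $\vee$ and the justification that the cubic equation involving $\beta$ can be dropped; both are already prepared for in the excerpt (the explicit formula for $\vee$ in dimension three and the positive-definiteness reduction), so the argument is essentially a short computation. One should also note the sign choice in extracting $r = +\sqrt{\escal{\vartheta,\vartheta}}$: the opposite sign corresponds to replacing $\vartheta$ by $-\vartheta$, so no generality is lost, but it is worth a one-line remark.
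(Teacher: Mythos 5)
Your proposal is correct and follows essentially the same route as the paper: write $\widehat{\alpha}=r+\vartheta$, use $\tau(\widehat{\alpha})=\overline{\widehat{\alpha}}$ to force reality, drop the cubic condition in $\beta$ via the positive-definite corollary to Proposition \ref{prop:algebraichHermitian}, and reduce $\widehat{\alpha}\vee\widehat{\alpha}=2r\widehat{\alpha}$ to $\escal{\vartheta,\vartheta}=r^2$. The only slight slip is your closing sign remark: replacing $\vartheta$ by $-\vartheta$ does not account for a putative $r<0$ (that would give $-\sqrt{\escal{\vartheta,\vartheta}}+\vartheta$); the positive root is instead forced because $r=\widehat{\alpha}^{(0)}=\tfrac{1}{2}\scS(\eta,\eta)>0$ for $\kappa=1$ and $\scS$ positive-definite, a point the paper also leaves implicit.
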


Given an irreducible complex spinor $\eta\in \Sigma$, consider now the following equation:
\begin{equation*}
F\cdot\eta:=(\Psi_{\ell}^< \circ 2\cP_{<} \circ \cP_{\ell})(F)(\eta)=0
\end{equation*}

\noindent
for a complex two-form $F\in \wedge^2 V^{\ast}_{\mathbb{C}}$. A computation gives:
\begin{equation*}
2\cP_<(\cP_\ell(F))=-i\ell*F
\end{equation*} 

\noindent
and thus by Lemma \ref{lemma:constrainedspinorodd}, a complex two-form $F$ satisfies the previous equation if and only if:
\begin{equation*}
\textstyle (-i\ell *F)\vee\widehat{\alpha}=(-i\ell*F)\vee(\sqrt{\escal{\vartheta,\vartheta}}+\vartheta)=0.
\end{equation*}

\noindent
Expanding the equation and multiplying by $i\ell$, we obtain:
\begin{equation*}
\textstyle \sqrt{\escal{\vartheta,\vartheta}} *F +\escal{*F,\vartheta}-i\ell*((*F)\wedge\vartheta)=0.    
\end{equation*}
 
\noindent
Separating by degree, this equation is equivalent to:
\begin{equation*}
\textstyle \sqrt{\escal{\vartheta,\vartheta}} \ast F  =  i\ell\,\iota_{\vartheta^\sharp}F,
\end{equation*}

\noindent
which gives the necessary and sufficient conditions for a complex two-form to \emph{annihilate} $\eta$ via Clifford multiplication. This condition can be reduced to a self-duality condition in two dimensions. Indeed, take $\escal{\vartheta,\vartheta} = 1$ for simplicity and split $V_\C^{\ast}$ orthogonally as follows:
\begin{eqnarray*}
V_\C^{\ast} = \langle \mathbb{C} \vartheta \rangle \oplus \langle \mathbb{C} \vartheta \rangle^{\perp}.
\end{eqnarray*}

\noindent
In this splitting, $F$ can be written as follows $F = \vartheta\wedge \beta + c \nu_o$, where $\beta\in \langle \mathbb{C} \vartheta \rangle^{\perp}$, $c\in \mathbb{C}$ is a constant, and $\nu_o$ is the induced Riemannian volume form on $\langle \mathbb{C} \vartheta \rangle^{\perp}$. Then, it can be seen that the previous condition for $F$ is equivalent to $c=0$ and an $\ast\beta = i\ell \beta$, where $\beta$ is understood as a one-form on the oriented Euclidean vector space $\langle \mathbb{C} \vartheta \rangle^{\perp}$.
\medskip

Now consider equation $H\cdot\eta=0$ for a complex three-form $H\in\wedge^3V_\C^*$ and irreducible complex spinor $\eta\in \Sigma$. Proceeding as in the previous case, it follows that $H$ satisfies $H\cdot\eta=0$ if and only if:
\begin{equation*}
\textstyle (-i\ell *H)\vee\widehat{\alpha}=(-i\ell*H)\vee(\sqrt{\escal{\vartheta,\vartheta}}+\vartheta)=0,
\end{equation*}

\noindent
which implies that $H=0$.

% % % % % % % % % % % % % % % % % % % % % % % % % % % % % % % % % % % % % %

\subsubsection{The complex-bilinear square}

% % % % % % % % % % % % % % % % % % % % % % % % % % % % % % % % % % % % % %

By Theorem \ref{thm:odd_complex-bilinear_square}, a complex exterior form $\alpha\in\wedge^< V^*_\C$ is the complex-bilinear square of an irreducible complex spinor $\eta \in \Sigma$ if and only if:
\begin{equation*}
\alpha\vee\alpha=2\alpha^{(0)}\alpha,\qquad(\pi\circ\tau)(\alpha) =-\alpha \, , \qquad \alpha\vee\beta\vee\alpha=2(\alpha\vee\beta)^{(0)}\alpha    
\end{equation*}

\noindent
for a fixed exterior form $\beta\in\wedge^<V^*_\C$ satisfying $(\alpha\vee\beta)^{(0)} \neq 0$. Let $\alpha = \alpha^{(0)} + \alpha^{(1)} \in \wedge^<V^*_\C$. The linear condition $(\pi\circ\tau)(\alpha)=-\alpha$ implies that $\alpha^{(0)}=0$. Set $\theta:=\alpha^{(1)}$. Equation $\alpha\vee\alpha=2\alpha^{(0)}\alpha$ then reduces to:
\begin{equation*}
\alpha\vee\alpha=\escal{\theta,\theta}=0\, .
\end{equation*}

\noindent
Since $\alpha^{(0)}=0$, taking $\beta=1$ in Theorem \ref{thm:odd_complex-bilinear_square} does not suffice to characterize the square of the spinor. Take $\beta=\bar\theta\in V^*_\C$. Then $\alpha\vee\beta=\theta\vee\bar\theta=\escal{\theta,\bar\theta}-i\ell*(\theta\wedge\bar\theta)$ and $(\alpha\vee\beta)^{(0)} = \escal{\theta,\bar\theta} = \abs{\Re(\theta)}^2 + \abs{\Im(\theta)}^2 \neq 0$ since $\theta\neq0$. This shows that choosing $\beta=\bar\theta $ in Theorem \ref{thm:odd_complex-bilinear_square} gives the necessary and sufficient conditions for $\theta$ to be the complex-bilinear square of $\eta\in\Sigma$. Using $\theta\vee\bar\theta+\bar\theta\vee\theta=2\escal{\theta,\bar\theta}$ and $\theta\vee\theta=0$ we compute: 
\begin{equation*}
\alpha\vee\beta\vee\alpha = \theta\vee\bar\theta\vee\theta=(2\escal{\theta,\bar\theta}-\bar\theta \vee \theta) \vee \theta = 2 \escal{\theta,\bar\theta} \theta= 2(\alpha\vee\beta)^{(0)}\alpha.
\end{equation*}

\begin{cor}
A complex exterior form $\alpha\in\wedge^< V^*_\C$ is the complex-bilinear square of a complex irreducible spinor $\eta\in\Sigma$ if and only if: $$\alpha=\theta$$ for an isotropic complex one-form $\theta\in V^{\ast}_{\C}$.
\end{cor}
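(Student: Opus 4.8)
The plan is to apply Theorem~\ref{thm:odd_complex-bilinear_square} with the specific data of three Euclidean dimensions, exactly as was done for the Hermitian square just above. First I would record the relevant invariants: in signature $(3,0)$ we have $m=1$, so $2^m=2$, and by the Remark preceding this computation the admissible complex-bilinear pairing $\scB$ is of negative adjoint type ($s=-1$, so $\pi^{\frac{1-s}{2}}\circ\tau=\pi\circ\tau$) and skew-symmetric ($\sigma=-1$); see Table~\ref{tab:symmetry_parity3}. Thus the three conditions of Theorem~\ref{thm:odd_complex-bilinear_square}(b) read $\alpha\vee\alpha=2\alpha^{(0)}\alpha$, $(\pi\circ\tau)(\alpha)=-\alpha$, and $\alpha\vee\beta\vee\alpha=2(\alpha\vee\beta)^{(0)}\alpha$ for some $\beta\in\wedge^<V^*_\C=\C\oplus V^*_\C$ with $(\alpha\vee\beta)^{(0)}\neq0$.

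Next I would solve the linear constraint. Writing $\alpha=\alpha^{(0)}+\alpha^{(1)}$, the map $\pi$ acts as $+1$ on degree $0$ and $-1$ on degree $1$, while $\tau$ is trivial on degrees $0$ and $1$; hence $(\pi\circ\tau)(\alpha)=\alpha^{(0)}-\alpha^{(1)}$, and the equation $(\pi\circ\tau)(\alpha)=-\alpha$ forces $\alpha^{(0)}=0$. Set $\theta:=\alpha^{(1)}\in V^*_\C$. The quadratic equation $\alpha\vee\alpha=2\alpha^{(0)}\alpha=0$ then becomes $\theta\vee\theta=0$; using the definition of $\vee$ and the fact that $\theta\wedge\theta=0$ on a one-form together with $*\tau(\theta\wedge\theta)=0$, this reduces to $\escal{\theta,\theta}=0$, i.e.\ $\theta$ is isotropic. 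The remaining task is to verify the third, $\beta$-dependent equation; since $\alpha^{(0)}=0$ the choice $\beta=1$ is useless (it would give $(\alpha\vee\beta)^{(0)}=\escal{\theta,\theta}=0$), so I would take $\beta=\bar\theta$. Then $\alpha\vee\beta=\theta\vee\bar\theta$, whose degree-zero part is $\escal{\theta,\bar\theta}=\abs{\Re(\theta)}^2+\abs{\Im(\theta)}^2>0$ whenever $\theta\neq0$, so the nondegeneracy hypothesis $(\alpha\vee\beta)^{(0)}\neq0$ holds. Finally, using the polarization identity $\theta\vee\bar\theta+\bar\theta\vee\theta=2\escal{\theta,\bar\theta}$ and $\theta\vee\theta=0$, a two-line computation gives $\alpha\vee\beta\vee\alpha=\theta\vee\bar\theta\vee\theta=(2\escal{\theta,\bar\theta}-\bar\theta\vee\theta)\vee\theta=2\escal{\theta,\bar\theta}\,\theta=2(\alpha\vee\beta)^{(0)}\alpha$, so the third condition is automatic. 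Conversely, any isotropic $\theta\in V^*_\C$ evidently satisfies all three conditions by the same computation, so by Theorem~\ref{thm:odd_complex-bilinear_square} it is the complex-bilinear square of some $\eta\in\Sigma$.

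There is no real obstacle here: every ingredient — the adjoint and symmetry types of $\scB$ in dimension three, the explicit form of the truncated product $\vee$, and Theorem~\ref{thm:odd_complex-bilinear_square} — is already available, and the only mild subtlety is remembering that $\beta=1$ does not suffice and that one must exhibit the witness $\beta=\bar\theta$ with $(\alpha\vee\beta)^{(0)}\neq0$. The argument is essentially a transcription of the Hermitian computation immediately preceding it, with the linear constraint $\tau(\widehat\alpha)=\overline{\widehat\alpha}$ replaced by $(\pi\circ\tau)(\alpha)=-\alpha$ and the reality conclusion on $\widehat\alpha^{(0)}$ replaced by the vanishing conclusion $\alpha^{(0)}=0$; the quadratic and cubic parts of the argument are identical in form to the two-dimensional complex-bilinear case. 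Uniqueness of $\theta$ given $\alpha$ is immediate since $\theta=\alpha^{(1)}$, matching the statement that the complex-bilinear square determines the spinor up to sign.
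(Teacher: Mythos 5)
Your argument is correct and follows essentially the same route as the paper: specialize Theorem \ref{thm:odd_complex-bilinear_square} with $s=\sigma=-1$, use $(\pi\circ\tau)(\alpha)=-\alpha$ to kill $\alpha^{(0)}$, reduce $\alpha\vee\alpha=0$ to $\escal{\theta,\theta}=0$, and verify the cubic condition with the witness $\beta=\bar\theta$ via $\theta\vee\bar\theta+\bar\theta\vee\theta=2\escal{\theta,\bar\theta}$. The only cosmetic point is that the reduction of $\theta\vee\theta$ to $\escal{\theta,\theta}$ is best justified by noting that $\cP_<$ projects out the degree-three term $i\ell\escal{\theta,\theta}\nu$, rather than by invoking $*\tau(\theta\wedge\theta)=0$, but this does not affect the proof.
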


% % % % % % % % % % % % % % % % % % % % % % % % % % % % % % % % % % % % % %

\subsubsection{Compatibility conditions and the conjugate square}

% % % % % % % % % % % % % % % % % % % % % % % % % % % % % % % % % % % % % %

We have determined that the complex-bilinear square of an irreducible complex spinor $\eta\in\Sigma$ is a complex one-form $\alpha_\eta=\theta\in V^*_\C$ satisfying $\escal{\theta,\theta}=0$, whereas the Hermitian square of $\eta$ is of the form $\widehat{\alpha}_\eta=r+\vartheta$ with $r\in\R$ and $\vartheta\in V^*$ satisfying $\escal{\vartheta,\vartheta}=r^2$. We consider now the complex-bilinear $\alpha_c$ and Hermitian $\widehat{\alpha}_c$ squares of the spinor \emph{conjugate} to $\eta$, namely $D\eta$, where $D\colon\Sigma\to\Sigma$ is the anti-linear endomorphism of Proposition \ref{prop:D_endomorphism}. By Proposition \ref{prop:Hermitiansquareconjugate_odd} and Proposition \ref{prop:Bilinearsquareconjugate_odd}, we have:
\begin{equation*}
\widehat{\alpha}_c = r - \vartheta\, , \qquad \alpha_c = - \bar{\theta}.
\end{equation*}

\noindent
On the other hand, by Proposition \ref{prop:compatibilitysquares_odd}, the Hermitian and complex-bilinear squares are the squares of the \emph{same} irreducible complex spinor $\eta\in \Sigma$ if and only if the following algebraic relations are satisfied:
\begin{equation*}  
(r + \vartheta) \vee \beta \vee \theta  = 2  ((r + \vartheta) \vee \beta)^{(0)} \theta \, , \qquad    \theta \vee \beta \vee \bar{\theta} = 2    ((r + \vartheta)\vee (\pi \circ \tau)(\beta))^{(0)}  (r + \vartheta)
\end{equation*}

\noindent
for every $\beta \in \wedge V^{\ast}_{\C}$, or, equivalently, for a choice of $\beta \in \wedge V^{\ast}_{\C}$ such that $((r + \vartheta) \vee \beta)^{(0)} \neq 0$. Setting $\beta = 1$ in the previous equations, we obtain: 
\begin{equation*}  
(r + \vartheta)  \vee \theta  = 2  r  \theta \, , \qquad    \theta  \vee \bar{\theta} = 2    r    (r + \vartheta).
\end{equation*}
 
\noindent
These equations are solved by: 
\begin{equation*}
\escal{\theta,\bar{\theta}} = 2r^2    \, , \qquad \vartheta = - \frac{i\ell}{2r}*(\theta\wedge \bar{\theta})\, , \qquad \theta=-\frac{i\ell}{r}*(\vartheta\wedge\theta).
\end{equation*}

\noindent
Since $\theta$ is isotropic, the second and third equations are equivalent. Furthermore, it can be seen that these necessary conditions are also sufficient according to the previous criteria for the choice of $\beta$. Hence, we obtain the following result.

\begin{cor}
A pair of complex forms $\widehat{\alpha}_\eta, \alpha_\eta \in \wedge^< V^{\ast}_{\mathbb{C}}$, and a pair of complex forms $\widehat{\alpha}_{D\eta}, \alpha_{D\eta} \in \wedge^< V^{\ast}_{\mathbb{C}}$ are respectively the Hermitian and complex-bilinear squares of a spinor $\eta\in\Sigma$ and its conjugate $D\eta\in\Sigma$ if and only if:
\begin{equation*}
\begin{aligned}
\widehat{\alpha}_\eta & =\sqrt{\frac{\escal{\theta,\bar\theta}}{2}}\left(1-\frac{i\ell}{\escal{\theta,\bar\theta}}*(\theta\wedge\bar\theta)\right),\quad \alpha_\eta=\theta,\\
\widehat{\alpha}_{D\eta}&=\sqrt{\frac{\escal{\theta,\bar\theta}}{2}}\left(1+\frac{i\ell}{\escal{\theta,\bar\theta}}*(\theta\wedge\bar\theta)\right),\quad \alpha_{D\eta}=-\bar\theta,
\end{aligned}
\end{equation*}

where $\theta\in V^*_\C$ is isotropic.
\end{cor}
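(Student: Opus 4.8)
The plan is to assemble the statement from the three corollaries already proved in this subsection together with the general conjugation and compatibility propositions of Section~\ref{section:odd_forms}. First I would record the shapes: by the corollary on the complex-bilinear square, $\alpha_\eta=\theta$ for an isotropic $\theta\in V^*_\C$ (necessarily $\theta\neq 0$ since $\eta\neq 0$); by the corollary on the Hermitian square, $\widehat\alpha_\eta = r+\vartheta$ with $\vartheta\in V^*$ and $\escal{\vartheta,\vartheta}=r^2$, and here $r=\tfrac12\scS(\eta,\eta)>0$ because $\scS$ is positive-definite and $\kappa=1$. Applying Propositions~\ref{prop:Hermitiansquareconjugate_odd} and \ref{prop:Bilinearsquareconjugate_odd} in signature $p-q\equiv_8 3$ --- where $\epsilon=-1$, $\binom{p+1}{2}=\binom{4}{2}$ is even, and the grading involution $\pi$ acts by $+1$ on scalars and $-1$ on one-forms --- yields at once $\widehat\alpha_{D\eta}=\pi(\overline{\widehat\alpha_\eta})=r-\vartheta$ and $\alpha_{D\eta}=\pi(\overline{\alpha_\eta})=-\bar\theta$. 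This already pins down the form of all four exterior forms; what remains is to relate $r$ and $\vartheta$ to $\theta$.

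For the ``only if'' direction I would invoke Proposition~\ref{prop:compatibilitysquares_odd}: $\widehat\alpha_\eta$ and $\alpha_\eta=\theta$ (together with $\alpha_{D\eta}=-\bar\theta$) are the squares of one and the same spinor precisely when the two $\vee$-identities of that proposition hold for some admissible test form $\beta\in\wedge^<V^*_\C$ with $(\widehat\alpha_\eta\vee\beta)^{(0)}\neq 0$. Since $(\widehat\alpha_\eta)^{(0)}=r\neq 0$, the choice $\beta=1$ is admissible; bookkeeping the symmetry type $\sigma=-1$ and adjoint type of $\scB$ in this signature, the two identities collapse to $(r+\vartheta)\vee\theta=2r\theta$ and $\theta\vee\bar\theta=2r(r+\vartheta)$. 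Expanding these with the truncated product $\vee$ introduced at the start of the section and separating by degree, one obtains $\escal{\theta,\bar\theta}=2r^2$ and $\vartheta=-\tfrac{i\ell}{2r}*(\theta\wedge\bar\theta)$, the would-be third relation $\theta=-\tfrac{i\ell}{r}*(\vartheta\wedge\theta)$ being redundant by isotropy of $\theta$. Because $\escal{\theta,\bar\theta}=|\Re\theta|^2+|\Im\theta|^2>0$, we may solve $r=\sqrt{\escal{\theta,\bar\theta}/2}$, and substituting back into $\widehat\alpha_\eta$ and into the conjugate formulas produces exactly the four displayed expressions.

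For the ``if'' direction I would reverse the argument: starting from an isotropic $\theta\neq 0$, set $r:=\sqrt{\escal{\theta,\bar\theta}/2}$ and $\vartheta:=-\tfrac{i\ell}{2r}*(\theta\wedge\bar\theta)$, check that $\vartheta$ is real and satisfies $\escal{\vartheta,\vartheta}=r^2$, so that $\widehat\alpha_\eta=r+\vartheta$ and $\alpha_\eta=\theta$ meet the hypotheses of the Hermitian- and complex-bilinear-square corollaries and are therefore each the square of some spinor; then verify that the two compatibility identities of Proposition~\ref{prop:compatibilitysquares_odd} hold for the admissible test form $\beta=\bar\theta$ --- these reduce to the identities $\theta\vee\bar\theta+\bar\theta\vee\theta=2\escal{\theta,\bar\theta}$ and $\theta\vee\theta=0$ already used a few lines above in computing the complex-bilinear square --- to conclude that $\widehat\alpha_\eta$ and $\alpha_\eta$ are squares of \emph{the same} spinor $\eta$; the conjugate formulas then follow once more from Propositions~\ref{prop:Hermitiansquareconjugate_odd} and \ref{prop:Bilinearsquareconjugate_odd}. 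The main obstacle is purely computational: carrying out the $\vee$-product expansions cleanly --- in particular tracking the factor $i\ell$ and which degree components survive --- so that the $\beta=1$ reduction genuinely yields the stated scalar and one-form identities, and dually that the choice $\beta=\bar\theta$ makes the full system hold rather than only its $\beta=1$ shadow.
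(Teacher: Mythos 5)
Your ``only if'' direction follows the paper's own route almost verbatim: read off the shapes from the two square corollaries, get $\widehat{\alpha}_{D\eta}=r-\vartheta$ and $\alpha_{D\eta}=-\bar\theta$ from Propositions \ref{prop:Hermitiansquareconjugate_odd} and \ref{prop:Bilinearsquareconjugate_odd} (your sign bookkeeping in signature $p-q\equiv_8 3$ is correct), and then specialize Proposition \ref{prop:compatibilitysquares_odd} to $\beta=1$, which is admissible because $\widehat{\alpha}_\eta^{(0)}=r\neq 0$; the resulting identities $(r+\vartheta)\vee\theta=2r\theta$ and $\theta\vee\bar\theta=2r(r+\vartheta)$ give $\escal{\theta,\bar\theta}=2r^2$ and $\vartheta=-\tfrac{i\ell}{2r}*(\theta\wedge\bar\theta)$, with the leftover degree-one relation redundant by isotropy --- exactly as in the paper.

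The gap is in your ``if'' direction: $\beta=\bar\theta$ is \emph{not} an admissible test form for Proposition \ref{prop:compatibilitysquares_odd}. The admissibility condition there is $(\widehat{\alpha}\vee\beta)^{(0)}\neq 0$, computed against the Hermitian square, not the condition $(\alpha\vee\beta)^{(0)}\neq 0$ from Theorem \ref{thm:odd_complex-bilinear_square} that you used earlier to characterize the bilinear square; and here $(\widehat{\alpha}_\eta\vee\bar\theta)^{(0)}=\escal{\vartheta,\bar\theta}=0$, since $\vartheta\propto *(\theta\wedge\bar\theta)$ is orthogonal to both $\Re\theta$ and $\Im\theta$. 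Worse, with this choice both compatibility identities collapse to $0=0$ for \emph{any} $\vartheta$ with $\escal{\vartheta,\vartheta}=r^2$: one has $\bar\theta\vee\theta=2r(r-\vartheta)$ and $(r+\vartheta)\vee(r-\vartheta)=r^2-\escal{\vartheta,\vartheta}=0$ for the first identity, and $\bar\theta\vee\bar\theta=\escal{\bar\theta,\bar\theta}=0$ for the second, so these identities cannot distinguish $r+\vartheta$ from $r-\vartheta$, i.e.\ the Hermitian square of $\eta$ from that of $D\eta$, and hence do not prove that $\widehat{\alpha}_\eta$ and $\alpha_\eta$ square the \emph{same} spinor. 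Your claimed reduction to the generic identities $\theta\vee\bar\theta+\bar\theta\vee\theta=2\escal{\theta,\bar\theta}$ and $\theta\vee\theta=0$ is symptomatic: those hold for every isotropic $\theta$ regardless of $\vartheta$. The repair is exactly the paper's sufficiency argument: keep $\beta=1$ (admissible since $\widehat{\alpha}_\eta^{(0)}=\sqrt{\escal{\theta,\bar\theta}/2}\neq 0$) and verify directly that, with $\vartheta:=-\tfrac{i\ell}{2r}*(\theta\wedge\bar\theta)$, the two identities hold; the only nontrivial piece is the degree-one part of the first, namely $\escal{\vartheta,\theta}=0$ and $\theta=-\tfrac{i\ell}{r}*(\vartheta\wedge\theta)$, both of which follow from the isotropy of $\theta$ --- this is precisely the paper's remark that the ``third relation'' is equivalent to the second.
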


% % % % % % % % % % % % % % % % % % % % % % % % % % % % % % % % % % % % % %

\subsection{The square of an irreducible chiral complex spinor in dimension four}
\label{subsec:square4dEuclidean}

% % % % % % % % % % % % % % % % % % % % % % % % % % % % % % % % % % % % % %

Let $(V,h)$ be a four-dimensional Euclidean vector space. Its associated real Clifford algebra $\mathrm{Cl}(V^*,h^*)$ is isomorphic to $\Mat(2,\mathbb{H})$ and the irreducible complex Clifford module $(\Sigma,\gamma)$ is of quaternionic type by Lemma \ref{lemma:real_or_quaternionic_type}. We endow $\Sigma$ with an admissible skew-symmetric complex-bilinear pairing $\scB$ of positive adjoint type and with a compatible Hermitian pairing $\scS$ of positive adjoint type, which are related by a compatible quaternionic structure $J\colon\Sigma\to\Sigma$.

% % % % % % % % % % % % % % % % % % % % % % % % % % % % % % % % % % % % % %

\subsubsection{The Hermitian square}

% % % % % % % % % % % % % % % % % % % % % % % % % % % % % % % % % % % % % %

Set $\kappa=1$ for simplicity. By Corollary \ref{cor:sq_S_chiral}, a complex exterior form $\widehat{\alpha}\in\wedge V^*_\C$ is the Hermitian square of a spinor $\eta \in \Sigma^{\mu}$ with chirality $\mu\in\Z_2$ if and only if: 
\begin{equation*}
\widehat{\alpha}\diamond\widehat{\alpha}=4\widehat{\alpha}^{(0)}\widehat{\alpha}, \qquad \tau(\widehat{\alpha}) = \overline{\widehat{\alpha}}, \qquad \widehat{\alpha} \diamond \beta \diamond \widehat{\alpha} = 4 (\widehat{\alpha}\diamond\beta)^{(0)} \widehat{\alpha},\qquad -*(\pi\circ\tau)(\widehat{\alpha})=\mu\widehat{\alpha}
\end{equation*}

\noindent
for a fixed exterior form $\beta\in\wedge V^*_\C$ satisfying $(\widehat{\alpha}\diamond\beta)^{(0)}\neq0$. Let $\widehat{\alpha}=\sum_{k=0}^4\widehat{\alpha}^{(k)}\in\wedge V^*_\C$. The linear condition $\tau(\widehat{\alpha}) = \overline{\widehat{\alpha}}$ implies that $\widehat{\alpha}^{(0)}$, $\widehat{\alpha}^{(1)}$ and $\widehat{\alpha}^{(4)}$ are real and that $\widehat{\alpha}^{(2)}$ and $\widehat{\alpha}^{(3)}$ are imaginary. The remaining linear condition $-*(\pi\circ\tau)(\widehat{\alpha})=\mu\widehat{\alpha}$ implies that $-*\widehat{\alpha}^{(0)}=\mu\widehat{\alpha}^{(4)}$, $*\widehat{\alpha}^{(2)}=\mu\widehat{\alpha}^{(2)}$ and $\widehat{\alpha}^{(1)}=\widehat{\alpha}^{(3)}=0$. Set $r:=\widehat{\alpha}^{(0)}\in\R$ and $i\omega:=\widehat{\alpha}^{(2)}$ for $\omega\in\wedge^2V^*$. Hence $\widehat{\alpha}=r+i\omega-\mu r\nu$. We compute: 
\begin{equation*}
 \widehat{\alpha}\diamond\widehat{\alpha}=(r+i\omega-\mu r\nu)\diamond(r+i\omega-\mu r\nu)=2r^2+4ir\omega-2\mu r^2\nu-\omega\wedge\omega+\escal{\omega,\omega}.  
\end{equation*}

\noindent
Then the equation $\widehat{\alpha}\diamond\widehat{\alpha}=4\widehat{\alpha}^{(0)}\widehat{\alpha}$ reduces to:
\begin{equation*}
 \escal{\omega,\omega}=2r^2,\quad\omega\wedge\omega=2\mu r^2\nu.   
\end{equation*}

\noindent
Note that these two equations are equivalent since $*\omega=\mu\omega$. Hence, we obtain the following result.

\begin{cor}
\label{cor:Hermitian4d}
A complex exterior form $\widehat{\alpha} \in\wedge V^*_\C$ is the Hermitian square of a complex irreducible spinor $\eta\in\Sigma^{\mu}$ with chirality $\mu$ if and only if:
\begin{equation*}
\widehat{\alpha} =   \sqrt{\frac{ \escal{\omega,\omega}}{2}} + i\omega-\mu  \sqrt{\frac{ \escal{\omega,\omega}}{2}} \nu =  \sqrt{\frac{ \escal{\omega,\omega}}{2}} + i\omega - \frac{\omega\wedge \omega}{\sqrt{2\escal{\omega,\omega}}}  
\end{equation*}

\noindent
for a real two-form $\omega\in\wedge^2V^*$ satisfying $\ast \omega=\mu\omega$.
\end{cor}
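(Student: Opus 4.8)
The plan is to apply Corollary \ref{cor:sq_S_chiral} directly, turning the problem into a finite-dimensional linear-plus-one-quadratic-equation computation on the graded pieces of $\widehat{\alpha}\in\wedge V^*_\C$, exactly as was done in the dimension two example. First I would write $\widehat{\alpha}=\sum_{k=0}^{4}\widehat{\alpha}^{(k)}$ and extract from the two \emph{linear} constraints — the reality constraint $\tau(\widehat{\alpha})=\overline{\widehat{\alpha}}$ coming from $\scS$ having positive adjoint type with $\kappa=1$, and the chirality constraint $i^{q+d/2}\ast(\pi\circ\tau)(\widehat{\alpha})=\mu\widehat{\alpha}$, which in this Euclidean four-dimensional case ($q=0$, $d=4$, so $i^{q+d/2}=i^2=-1$) reads $-\ast(\pi\circ\tau)(\widehat{\alpha})=\mu\widehat{\alpha}$ — the shape of $\widehat{\alpha}$. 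The reality constraint forces $\widehat{\alpha}^{(0)},\widehat{\alpha}^{(1)},\widehat{\alpha}^{(4)}$ real and $\widehat{\alpha}^{(2)},\widehat{\alpha}^{(3)}$ purely imaginary (using $\tau$'s sign pattern $+,+,-,-,+$ on degrees $0,\dots,4$). The chirality constraint, degree by degree, kills the odd parts $\widehat{\alpha}^{(1)}=\widehat{\alpha}^{(3)}=0$, imposes $\ast\widehat{\alpha}^{(2)}=\mu\widehat{\alpha}^{(2)}$, and relates $\widehat{\alpha}^{(4)}=-\mu\ast\widehat{\alpha}^{(0)}$. Writing $r:=\widehat{\alpha}^{(0)}\in\R$ and $i\omega:=\widehat{\alpha}^{(2)}$ with $\omega$ a real self-dual (for $\mu=+1$) or anti-self-dual (for $\mu=-1$) two-form, this yields $\widehat{\alpha}=r+i\omega-\mu r\nu$.

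Next I would handle the quadratic relation $\widehat{\alpha}\diamond\widehat{\alpha}=2^{d/2}\widehat{\alpha}^{(0)}\widehat{\alpha}=4r\,\widehat{\alpha}$. Expanding $(r+i\omega-\mu r\nu)\diamond(r+i\omega-\mu r\nu)$ using the Kähler-Atiyah product rules from Appendix \ref{app:KA} — in particular $1\diamond\beta=\beta$, $\nu\diamond\nu=1$ (Euclidean, $d=4$), $\omega\diamond\nu=\ast\tau(\omega)=\ast\omega$ since $\tau$ fixes two-forms, and the expansion $\omega\diamond\omega = \omega\wedge\omega - \omega\triangle_1\omega + \escal{\omega,\omega}$ with the $\triangle_1$ term vanishing because $\omega$ is proportional to its own Hodge dual and hence "commutes with itself" appropriately — I expect the cross terms to collect into $2r^2 + 4ir\omega - 2\mu r^2\nu - \omega\wedge\omega + \escal{\omega,\omega}$. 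Matching this against $4r^2 + 4ir\omega - 4\mu r^2\nu$ gives precisely the degree-zero identity $\escal{\omega,\omega}=2r^2$ and the degree-four identity $\omega\wedge\omega=2\mu r^2\nu$; one then observes these two are equivalent given $\ast\omega=\mu\omega$, since $\omega\wedge\omega=\escal{\omega,\ast\omega}\nu=\mu\escal{\omega,\omega}\nu$. Solving $r^2=\escal{\omega,\omega}/2$ and substituting yields the stated closed form, including the alternative expression $r=\omega\wedge\omega/(\sqrt{2\escal{\omega,\omega}}\,\nu)$, i.e. identifying $\mu r\nu$ with $\omega\wedge\omega/\sqrt{2\escal{\omega,\omega}}$.

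Finally, I must close the loop by verifying the third relation in Corollary \ref{cor:sq_S_chiral}, namely $\widehat{\alpha}\diamond\beta\diamond\widehat{\alpha}=4(\widehat{\alpha}\diamond\beta)^{(0)}\widehat{\alpha}$ for \emph{some} $\beta$ with $(\widehat{\alpha}\diamond\beta)^{(0)}\neq0$; here since $\widehat{\alpha}^{(0)}=r\neq0$ one can simply take $\beta=1$, and then the third relation is just the quadratic relation already verified, with $(\widehat{\alpha}\diamond 1)^{(0)}=r$. This makes the four-dimensional case simpler than dimension two (where $\widehat{\alpha}^{(0)}=0$ forced a nontrivial choice of $\beta$): the inequality $\widehat{\alpha}^{(0)}\neq0$ is automatic once $\omega\neq0$, and conversely a nonzero Hermitian square must have $\omega\neq0$ since otherwise $\widehat{\alpha}=0$. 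I anticipate the only real bookkeeping obstacle is getting all the signs and Hodge-star conventions right in the Kähler-Atiyah expansion of $\omega\diamond\omega$ and $\omega\diamond\nu$ — in particular confirming that the generalized product term $\omega\triangle_1\omega$ genuinely vanishes for a (anti-)self-dual $\omega$ in dimension four, which is where the self-duality hypothesis is essential and which one checks either by the identity $\omega\triangle_1\ast\omega = \ast(\omega\triangle_{d-2}\omega)=0$ for a two-form, or by direct computation in an orthonormal basis adapted to $\omega$.
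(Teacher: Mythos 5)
Your proposal is correct and follows essentially the same route as the paper's proof: reduce via Corollary \ref{cor:sq_S_chiral} with $\kappa=1$, use the two linear conditions to force $\widehat{\alpha}=r+i\omega-\mu r\nu$ with $\ast\omega=\mu\omega$, expand the quadratic condition to obtain $\escal{\omega,\omega}=2r^2$ and $\omega\wedge\omega=2\mu r^2\nu$ (equivalent by self-duality), and dispose of the third relation by taking $\beta=1$, which is legitimate in the Euclidean case since $r\neq 0$. Be aware, however, that several of your auxiliary identities carry sign slips that contradict your (correct) final expansion: $\tau$ acts as minus the identity on two-forms, so $\omega\diamond\nu=\ast\tau(\omega)=-\ast\omega$; the scalar term in $\omega\diamond\omega$ is $-\escal{\omega,\omega}$, not $+\escal{\omega,\omega}$; the vanishing $\omega\triangle_1\omega=0$ holds for \emph{every} two-form by the graded symmetry $\alpha\triangle_k\beta=(-1)^{(a-k)(b-k)}\beta\triangle_k\alpha$ of Proposition \ref{prop:appendix_properties}, not because of self-duality; and in dimension two it is the complex-bilinear square, not the Hermitian one, whose vanishing scalar part forces a nontrivial choice of $\beta$.
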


Given an irreducible complex spinor $\eta\in \Sigma$, consider now the following equation:
\begin{equation*}
F\cdot \eta := \Psi_{\gamma}(F)(\eta) = 0
\end{equation*}

\noindent
for a complex two-form $F\in \wedge^2 V_\C^{\ast}$. By Lemma \ref{lemma:constrainedspinoreven}, a two-form $F$ satisfies the previous equation if and only if: 
\begin{equation*}
 F\diamond\widehat{\alpha}=F\diamond(r+i\omega-\mu r\nu)=0.   
\end{equation*} 

\noindent
Separating by degree, this equation is equivalent to:
\begin{equation*}
F\wedge\omega=0,\qquad    F+\mu \ast F = i \sqrt{\frac{2}{ \escal{\omega,\omega}}} F\triangle_1\omega ,\qquad \escal{F,\omega}=0\, .    
\end{equation*}

\noindent
Note that the first and third equations are equivalent since $*\omega=\mu\omega$.\medskip

Now consider a complex three-form $H\in\wedge^3V_\C^*$. It satisfies the equation $H\cdot\eta=0$ if and only if: $$H\diamond\widehat{\alpha}=H\diamond(r+i\omega-\mu r\nu)=0.$$

Separating by degree, this equation is equivalent to:  
\begin{equation*}
H+i \sqrt{\frac{2}{ \escal{\omega,\omega}}}  H\triangle_1\omega=0,\qquad \ast H = i  \mu \sqrt{\frac{2}{ \escal{\omega,\omega}}} H\triangle_2\omega.
\end{equation*}

\noindent
The bilinear operations $\triangle_1$ and $\triangle_2$ are defined in Appendix \ref{app:KA}.

% % % % % % % % % % % % % % % % % % % % % % % % % % % % % % % % % % % % % %

\subsubsection{The complex-bilinear square}

% % % % % % % % % % % % % % % % % % % % % % % % % % % % % % % % % % % % % %

Let $\mu\in\mathbb{Z}_2$. By Corollary \ref{cor:sq_B_chiral}, an exterior form $\alpha\in\wedge V^*_\C$ is the complex-bilinear square of a spinor $\eta\in\Sigma$ with chirality $\mu$ if and only if:
\begin{equation*}
 \alpha\diamond\alpha=4\alpha^{(0)}\alpha,\quad\tau(\alpha)=-\alpha,\quad\alpha\diamond\beta\diamond\alpha=4(\alpha\diamond\beta)^{(0)}\alpha,\quad-*(\pi\circ\tau)(\alpha)=\mu\alpha   
\end{equation*}

\noindent
for a fixed exterior form $\beta\in\wedge V^*_\C$ satisfying $(\alpha\diamond\beta)^{(0)}\neq0$. Let $\alpha=\sum_{k=0}^4\alpha^{(k)}\in\wedge V^*_\C$. The linear condition $\tau(\alpha)=-\alpha$ implies that $\alpha^{(0)} = \alpha^{(1)} = \alpha^{(4)}=0$ and the other linear condition $-*(\pi\circ\tau)(\alpha)=\mu\alpha$ implies that $\alpha^{(3)}=0$ and $*\alpha^{(2)}=\mu\alpha^{(2)}$. Set $\omega:=\alpha^{(2)}$. Equation $\alpha\diamond\alpha=4\alpha^{(0)}\alpha$ then reduces to:
\begin{eqnarray*}
 \omega\diamond\omega=\omega\wedge\omega-\escal{\omega,\omega}=0,
\end{eqnarray*}

\noindent
thus $\omega\wedge\omega=0$ and $\escal{\omega,\omega}=0$. Equation $\omega\wedge\omega=0$ implies that $\omega$ is decomposable, that is $\omega=\theta_1\wedge\theta_2$ for $\theta_1,\theta_2\in V^*_\C$. Then, equation $\escal{\omega,\omega}=0$ becomes $\abs{\theta_1}^2\abs{\theta_2}^2=\escal{\theta_1,\theta_2}^2$.\medskip

\noindent
Since $\alpha^{(0)}=0$, taking $\beta=1$ does not suffice to characterize the square of the spinor. We take $\beta=\bar\omega=\bar\theta_1\wedge\bar\theta_2\in\wedge^2V^*_\C$. Then:
\begin{equation*}
\alpha\diamond\beta=\omega\diamond\bar\omega=\omega\wedge\bar\omega-\omega\triangle_1\bar\omega-\escal{\omega,\bar\omega}
\end{equation*}

\noindent
and $(\alpha\diamond\beta)^{(0)}=-\escal{\omega,\bar\omega}=-\abs{\Re(\omega)}^2-\abs{\Im(\omega)}^2\neq0$ since $\omega\neq0$. After a long but straightforward computation, we get: 
\begin{eqnarray*}
 \omega\diamond\bar\omega\diamond\omega=(\bar\omega\diamond\omega-2\omega\triangle_1\bar\omega)\diamond\omega=-4\escal{\omega,\bar\omega}\omega-2\mathbb{A}(\theta_1,\theta_2),
\end{eqnarray*}

\noindent
where: 
\begin{align*}
\mathbb{A}(\theta_1,\theta_2)&:=\big(\abs{\theta_2}^2\escal{\theta_1,\bar\theta_2}-\escal{\theta_1,\theta_2}\escal{\theta_2,\bar\theta_2}\big)\theta_1\wedge\bar\theta_1+\big(\escal{\theta_1,\theta_2}\escal{\theta_2,\bar\theta_1}-\abs{\theta_2}^2\escal{\theta_1,\bar\theta_1}\big)\theta_1\wedge\bar\theta_2\\
&\quad+\big(\abs{\theta_1}^2\escal{\theta_2,\bar\theta_2}-\escal{\theta_1,\theta_2}\escal{\theta_1,\bar\theta_2}\big)\theta_2\wedge\bar\theta_1+\big(\escal{\theta_1,\theta_2}\escal{\theta_1,\bar\theta_1}-\abs{\theta_1}^2\escal{\theta_2,\bar\theta_1}\big)\theta_2\wedge\bar\theta_2.
\end{align*}

\noindent
Equation $\alpha\diamond\beta\diamond\alpha=4(\alpha\diamond\beta)^{(0)}\alpha$ holds if and only if $\mathbb{A}=0$, which amounts to: 
$$\begin{aligned}
    \abs{\theta_1}^2\escal{\theta_2,\bar\theta_1}&=\escal{\theta_1,\theta_2}\escal{\theta_1,\bar\theta_1},\\
    \abs{\theta_1}^2\escal{\theta_2,\bar\theta_2}&=\escal{\theta_1,\theta_2}\escal{\theta_1,\bar\theta_2},
\end{aligned}\qquad\begin{aligned}
    \abs{\theta_2}^2\escal{\theta_1,\bar\theta_1}&=\escal{\theta_1,\theta_2}\escal{\theta_2,\bar\theta_1},\\
    \abs{\theta_2}^2\escal{\theta_1,\bar\theta_2}&=\escal{\theta_1,\theta_2}\escal{\theta_2,\bar\theta_2}.
\end{aligned}$$

\noindent
Note that $\escal{\theta_i,\bar\theta_i}\neq0$ since $\theta_i\neq0$, $i=1,2$. Manipulating the above expressions we obtain: 
\begin{equation*}
 \escal{\theta_1,\theta_2}\big(\lvert\escal{\theta_1,\bar\theta_2}\rvert^2-\escal{\theta_1,\bar\theta_1}\escal{\theta_2,\bar\theta_2}\big)=0.
\end{equation*}
 
\noindent
If $\lvert\escal{\theta_1,\bar\theta_2}\rvert^2=\escal{\theta_1,\bar\theta_1}\escal{\theta_2,\bar\theta_2}$, then $\theta_2=c\theta_1$ for $c\in\C^*$ by the Cauchy-Schwarz inequality since $\escal{\cdot,\bar\cdot}$ is an inner product in $V^*_\C$. However, this would imply that $\omega=\theta_1\wedge\theta_2=0$. Therefore $\escal{\theta_1,\theta_2}=0$, so $\abs{\theta_1}=\abs{\theta_2}=0$ and $\mathbb{A}=0$.\medskip

We conclude the following:

\begin{cor}
\label{cor:Bilinear4d}
A complex exterior form $\alpha\in\wedge V^*_\C$ is the complex-bilinear square of a complex irreducible spinor $\eta\in\Sigma^\mu$ with chirality $\mu$ if and only if: 
\begin{equation*}
\alpha=\theta_1\wedge\theta_2    
\end{equation*}

\noindent
for complex one-forms $\theta_1,\theta_2 \in V^{\ast}_{\C}$ satisfying:
\begin{equation*}
*(\theta_1\wedge\theta_2)=\mu(\theta_1\wedge\theta_2)\quad\text{and}\quad\abs{\theta_1}=\abs{\theta_2}=\escal{\theta_1,\theta_2}=0.
\end{equation*}
\end{cor}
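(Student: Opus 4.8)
The plan is to apply Corollary \ref{cor:sq_B_chiral} with $d=4$, so that $2^{d/2}=4$, and to use the fact that $(V,h)$ is four-dimensional Euclidean with a skew-symmetric admissible complex-bilinear pairing $\scB$ of positive adjoint type (i.e.\ $s=+1$, so $\pi^{\frac{1-s}{2}}=\id$, hence the adjoint-type relation reads $\tau(\alpha)=\sigma\alpha$ with $\sigma=-1$). Since the complex volume form is $\nu_\C=\nu$ with $i^{q+\frac{d}{2}}=i^{0+2}=-1$, the chirality condition becomes $-\ast(\pi\circ\tau)(\alpha)=\mu\alpha$. Thus by Corollary \ref{cor:sq_B_chiral} an exterior form $\alpha$ is the complex-bilinear square of a chiral spinor of chirality $\mu$ if and only if it satisfies the four relations displayed at the start of the complex-bilinear subsection: $\alpha\diamond\alpha=4\alpha^{(0)}\alpha$, $\tau(\alpha)=-\alpha$, $\alpha\diamond\beta\diamond\alpha=4(\alpha\diamond\beta)^{(0)}\alpha$ for some $\beta$ with $(\alpha\diamond\beta)^{(0)}\neq0$, and $-\ast(\pi\circ\tau)(\alpha)=\mu\alpha$.

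First I would solve the two \emph{linear} equations. The relation $\tau(\alpha)=-\alpha$ kills the components on which $\tau$ acts by $+1$: in dimension four $\tau$ acts as $+1$ on degrees $0,1,4$ and as $-1$ on degrees $2,3$, so $\alpha^{(0)}=\alpha^{(1)}=\alpha^{(4)}=0$. Combining with the chirality relation $-\ast(\pi\circ\tau)(\alpha)=\mu\alpha$, which in degree three forces $\alpha^{(3)}=0$ (it must pair with $\alpha^{(1)}=0$) and in degree two gives $\ast\alpha^{(2)}=\mu\alpha^{(2)}$, we are left with $\alpha=\omega$, a single self-dual or anti-self-dual complex two-form $\omega:=\alpha^{(2)}$ with $\ast\omega=\mu\omega$. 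Next I would impose the quadratic equation $\alpha\diamond\alpha=4\alpha^{(0)}\alpha=0$: expanding $\omega\diamond\omega=\omega\wedge\omega-\escal{\omega,\omega}$ (using the Kähler--Atiyah product formula from Appendix \ref{app:KA}, noting the degree-zero and degree-four contributions) and separating by degree yields $\omega\wedge\omega=0$ and $\escal{\omega,\omega}=0$. The first says $\omega$ is \emph{decomposable}, so $\omega=\theta_1\wedge\theta_2$ for complex one-forms $\theta_1,\theta_2\in V^*_\C$; the second becomes the Plücker-type relation $\abs{\theta_1}^2\abs{\theta_2}^2=\escal{\theta_1,\theta_2}^2$.

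The main obstacle, and the bulk of the work, is the cubic equation $\alpha\diamond\beta\diamond\alpha=4(\alpha\diamond\beta)^{(0)}\alpha$. Since $\alpha^{(0)}=0$, the choice $\beta=1$ is useless, so I would take $\beta=\bar\omega=\bar\theta_1\wedge\bar\theta_2$, and first check the inequality $(\alpha\diamond\beta)^{(0)}=-\escal{\omega,\bar\omega}=-\abs{\Re\omega}^2-\abs{\Im\omega}^2\neq0$ since $\omega\neq0$ — this is where I use that Corollary \ref{cor:sq_B_chiral}(c) lets me test on \emph{every} $\beta$, equivalently (by the argument in the proof of Proposition \ref{prop:algebraichbilinear}) on a single admissible $\beta$. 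Then I would compute $\omega\diamond\bar\omega\diamond\omega$ using $\bar\omega\diamond\omega+\omega\diamond\bar\omega=2\escal{\omega,\bar\omega}-2\omega\triangle_1\bar\omega+\dots$ and the vanishing $\omega\diamond\omega=0$, arriving at $\omega\diamond\bar\omega\diamond\omega=-4\escal{\omega,\bar\omega}\omega-2\mathbb{A}(\theta_1,\theta_2)$ for an explicit two-form $\mathbb{A}(\theta_1,\theta_2)$ that is bilinear in the sixteen inner products $\escal{\theta_i,\bar\theta_j}$, $\escal{\theta_i,\theta_j}$ and is displayed in the excerpt. The cubic equation then reduces to $\mathbb{A}(\theta_1,\theta_2)=0$, a system of four scalar equations; manipulating them gives $\escal{\theta_1,\theta_2}\bigl(\lvert\escal{\theta_1,\bar\theta_2}\rvert^2-\escal{\theta_1,\bar\theta_1}\escal{\theta_2,\bar\theta_2}\bigr)=0$. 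By Cauchy--Schwarz for the positive-definite Hermitian inner product $\escal{\cdot,\bar\cdot}$ on $V^*_\C$, the second factor vanishes only if $\theta_1,\theta_2$ are proportional, which would force $\omega=0$; hence $\escal{\theta_1,\theta_2}=0$, and then the Plücker relation $\abs{\theta_1}^2\abs{\theta_2}^2=\escal{\theta_1,\theta_2}^2=0$ forces $\abs{\theta_1}=\abs{\theta_2}=0$, which in turn makes $\mathbb{A}=0$. Conversely, if $\abs{\theta_1}=\abs{\theta_2}=\escal{\theta_1,\theta_2}=0$ and $\ast(\theta_1\wedge\theta_2)=\mu(\theta_1\wedge\theta_2)$, all four relations of Corollary \ref{cor:sq_B_chiral} are verified, so $\alpha=\theta_1\wedge\theta_2$ is indeed a complex-bilinear square. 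Assembling the ``only if'' and ``if'' directions gives the stated corollary.
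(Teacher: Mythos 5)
Your proposal is correct and follows essentially the same route as the paper: solve the two linear conditions to reduce $\alpha$ to a two-form $\omega$ with $\ast\omega=\mu\omega$, use $\omega\diamond\omega=0$ to obtain decomposability ($\omega=\theta_1\wedge\theta_2$) together with the relation $\abs{\theta_1}^2\abs{\theta_2}^2=\escal{\theta_1,\theta_2}^2$, then test the cubic relation against $\beta=\bar\omega$ and use Cauchy--Schwarz on $\escal{\cdot,\bar\cdot}$ to force $\escal{\theta_1,\theta_2}=\abs{\theta_1}=\abs{\theta_2}=0$, exactly as in the paper. The only blemish is the intermediate identity you quote: the $\triangle_1$ term sits in the commutator, $\omega\diamond\bar\omega-\bar\omega\diamond\omega=-2\,\omega\triangle_1\bar\omega$, whereas the anticommutator equals $2\,\omega\wedge\bar\omega-2\escal{\omega,\bar\omega}$; your resulting formula $\omega\diamond\bar\omega\diamond\omega=-4\escal{\omega,\bar\omega}\omega-2\mathbb{A}(\theta_1,\theta_2)$ and the rest of the argument nevertheless coincide with the paper's proof.
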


\noindent
As a consequence of this theorem, we recover the following well-known correspondence between irreducible complex chiral spinors in four dimensions and $\SU(2)$-structures.

\begin{prop}
There is a one-to-one correspondence between $\SU(2)$-structures and chiral spinorial forms on a four-dimensional Euclidean vector space $(V,h)$.
\end{prop}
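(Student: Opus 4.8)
The plan is to extract the statement from Corollary \ref{cor:Bilinear4d} and recognize the data $(\theta_1,\theta_2)$ as encoding an $\SU(2)$-structure via the associated complex structure and special forms. First I would set $\mu=1$ for definiteness and, starting from a complex-bilinear square $\alpha=\theta_1\wedge\theta_2$ with $\ast(\theta_1\wedge\theta_2)=\theta_1\wedge\theta_2$ and the isotropy conditions $\abs{\theta_1}^2=\abs{\theta_2}^2=\escal{\theta_1,\theta_2}=0$, decompose $\theta_j=\theta_j^R+i\theta_j^I$ into real and imaginary parts. The vanishing of $\escal{\theta_1,\theta_2}$, $\abs{\theta_1}^2$, $\abs{\theta_2}^2$ over $\C$ translates into a system of real equations; combined with $\theta_1\wedge\theta_2\neq 0$ (hence nondegeneracy of the $(1,0)$-span), a linear-algebra argument shows that $\{\theta_1^R,\theta_1^I,\theta_2^R,\theta_2^I\}$ is an $h$-orthogonal basis of $V^\ast$ with all four vectors of equal norm. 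This produces a canonical orthonormal coframe, up to scale.

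Next I would define the almost complex structure $I$ on $V$ declaring $\theta_1,\theta_2$ to be of type $(1,0)$; equivalently $I$ is fixed by $I(\theta_j^I{}^\sharp)=\theta_j^R{}^\sharp$. One checks $I$ is $h$-orthogonal from the orthogonality relations just derived, so $(h,I)$ is a Hermitian structure with Kähler form $\omega_I$, and the complex volume form $\Omega:=\theta_1\wedge\theta_2\in\wedge^{2,0}_I V^\ast$ satisfies the normalization linking $\Omega\wedge\bar\Omega$ to $\omega_I^2$; the self-duality $\ast\Omega=\Omega$ is automatic since $(2,0)$-forms are self-dual in four Euclidean dimensions with the orientation induced by $I$. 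The triple $(\omega_I,\Re\Omega,\Im\Omega)$ — equivalently the reduction of the frame bundle to $\SU(2)$ — is then the $\SU(2)$-structure. Conversely, given an $\SU(2)$-structure one reads off $\Omega$, writes $\Omega=\theta_1\wedge\theta_2$ in a unitary coframe, and verifies the conditions of Corollary \ref{cor:Bilinear4d}, so the assignment is a bijection. I would also note that $\eta$ is recovered from $\alpha$ modulo sign by Theorem \ref{thm:bilinearsquare}, which is why the correspondence is with spinors up to sign, matching the standard statement; if one wants genuine spinors one phrases it with the chiral spinor line.

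The main obstacle is the bookkeeping in the intermediate linear-algebra step: showing that the \emph{complex} isotropy relations $\escal{\theta_i,\theta_j}=\delta_{ij}$-type vanishing force the eight real one-forms $\theta_i^R,\theta_i^I$ to assemble into a genuine orthogonal frame of equal-length vectors, rather than something degenerate, and doing so without assuming normalization. This is where the decomposability constraint $\omega\wedge\omega=0$ (already used in the proof of Corollary \ref{cor:Bilinear4d}) and the nonvanishing of $\escal{\theta_i,\bar\theta_i}$ must be invoked carefully; the real and imaginary parts of each scalar equation must be tracked, and one must rule out the collinear case $\theta_2=c\theta_1$ exactly as in the corollary's proof. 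Once the orthonormal coframe is in hand, the identification of $(I,\omega_I,\Omega)$ and the verification that self-duality and the volume normalization are equivalent to the spinorial conditions are short and essentially formal, so I would not belabor them.
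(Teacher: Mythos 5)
Your overall route (split $\theta_j=\theta_j^R+i\theta_j^I$, define $I$ by declaring $\theta_1,\theta_2$ of type $(1,0)$, identify $(h,I,\theta_1\wedge\theta_2)$ as the $\SU(2)$-structure) is the same as the paper's, but your key intermediate claim is false: the conditions $\abs{\theta_1}=\abs{\theta_2}=\escal{\theta_1,\theta_2}=0$ together with $\theta_1\wedge\theta_2\neq 0$ do \emph{not} force $\{\theta_1^R,\theta_1^I,\theta_2^R,\theta_2^I\}$ to be an $h$-orthogonal basis of equal-norm covectors. The pair $(\theta_1,\theta_2)$ is not determined by the square $\alpha=\theta_1\wedge\theta_2$; for instance, with $e^1,\dots,e^4$ orthonormal, take $\theta_1=e^1+ie^2$ and $\theta_2=a\,\theta_1+e^3+ie^4$ for any $a\in\C\setminus\{0\}$. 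All hypotheses of Corollary \ref{cor:Bilinear4d} hold (isotropy, orthogonality, self-duality of $\theta_1\wedge\theta_2$), yet $\escal{\theta_1^R,\theta_2^R}=\Re(a)\neq 0$ in general and $\abs{\theta_2^R}^2=1+\abs{a}^2\neq\abs{\theta_1^R}^2$. So there is no ``canonical orthonormal coframe'' extracted this way, and the step on which you base the orthogonality of $I$ fails as stated.

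What the complex relations actually give is weaker: $\abs{\theta_i^R}=\abs{\theta_i^I}$ and $\escal{\theta_i^R,\theta_i^I}=0$ for each $i$, together with $\escal{\theta_1^R,\theta_2^R}=\escal{\theta_1^I,\theta_2^I}$ and $\escal{\theta_1^R,\theta_2^I}+\escal{\theta_1^I,\theta_2^R}=0$; linear independence of the four real covectors follows from $\escal{\alpha,\bar\alpha}\neq 0$ and $\ast\alpha=\alpha$ (so $\alpha\wedge\bar\alpha\neq 0$), not from orthogonality. These weaker identities are exactly what is needed to check directly that $I$ (defined by $I(\theta_i^I)=\theta_i^R$, $I(\theta_i^R)=-\theta_i^I$) is $h$-orthogonal, which is how the paper proceeds; alternatively, you could repair your version by first replacing $(\theta_1,\theta_2)$ by a suitably normalized basis of the isotropic plane $\Span_\C\{\theta_1,\theta_2\}$ (orthonormal for the Hermitian pairing $\escal{\cdot,\bar{\cdot}}$), since only the plane, not the frame, is canonically attached to $\alpha$. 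The remaining parts of your argument (the $(2,0)$-type of $\alpha$, self-duality, and the converse direction reading off $\Omega$ from an $\SU(2)$-structure) are fine and consistent with the paper once this step is corrected.
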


\begin{proof}
Let $\theta_i=\theta_i^R+i\theta_i^I$ for $i=1,2$ and set $\mu = 1$ for definiteness. Since $\escal{\alpha,\bar\alpha}\neq0$ and $*\alpha=\alpha$, then $\alpha\wedge\bar\alpha\neq0$ and this implies that $\theta_1^R,\theta_1^I,\theta_2^R,\theta_2^I$ are linearly independent real one-forms. We thus have $V^*=\Span_\R\{\theta_1^R,\theta_1^I,\theta_2^R,\theta_2^I\}$. The conditions $\abs{\theta_1}=\abs{\theta_2}=\escal{\theta_1,\theta_2}=0$ imply that:
\begin{eqnarray*}
\abs{\theta_i^R}=\abs{\theta_i^I},\quad\escal{\theta_i^R,\theta_i^I}=0
\end{eqnarray*}

\noindent
for $i=1,2$ and:
\begin{eqnarray*}
\escal{\theta_1^R,\theta_2^R}=\escal{\theta_1^I,\theta_2^I},\quad\escal{\theta_1^R,\theta_2^I}+\escal{\theta_1^I,\theta_2^R}=0.
\end{eqnarray*}

\noindent
We define the complex structure $I$ on $V^*$ by $I(\theta_i^I):=\theta_i^R$ and $I(\theta_i^R):=-\theta_i^I$ for $i=1,2$. One can check that $I$ is $h$-orthogonal and therefore $(h,I,\theta_1\wedge\theta_2)$ defines a $\SU(2)$-structure on $V$. Note that the complex two-form $\theta_1\wedge\theta_2$ is of type $(2,0)$ with respect to the complex structure $I$.
\end{proof}

\noindent
Using the fact that $\SU(2)$ is connected and simply connected, together with the equivariance of the spinor square map, we recover the following well-known result.

\begin{cor}
The stabilizer in $\Spin(4)$ of an irreducible and chiral complex spinor is $\SU(2)$.
\end{cor}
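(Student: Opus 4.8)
The plan is to derive the corollary from the equivariance of the complex-bilinear square map established in Subsection \ref{subsec:Bilinearequivariance}, combined with the correspondence between chiral spinorial forms and $\SU(2)$-structures proven just above. Fix a nonzero spinor $\eta\in\Sigma^{\mu}$, say with $\mu=1$, and let $\alpha=\alpha_\eta=\theta_1\wedge\theta_2\in\wedge V^*_\C$ be its complex-bilinear square as in Corollary \ref{cor:Bilinear4d}. Since the admissible pairing $\scB$ is invariant under $\Spin(4)$, we have $\cE_\gamma(\gamma(u)\eta)=\Ad_u(\cE_\gamma(\eta))$ for all $u\in\Spin(4)$, so $\Ad$ maps $\Stab_{\Spin(4)}(\eta)$ into $\Stab_{\SO(4)}(\alpha)$. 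Moreover $\Ad$ is injective on $\Stab_{\Spin(4)}(\eta)$: its kernel is $\{\pm1\}$ and $\gamma(-1)=-\Id_\Sigma$, so $-1\notin\Stab_{\Spin(4)}(\eta)$.

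First I would identify $\Stab_{\SO(4)}(\alpha)$. By the preceding proposition, $\alpha$ is equivalent to an $\SU(2)$-structure $(h,I,\theta_1\wedge\theta_2)$ on $(V,h)$, and conversely every such structure arises this way; since $\Spin(4)$ acts transitively on the nonzero spinors of $\Sigma^+$ of a fixed norm (these form a single orbit because $\Sigma^+$ is, up to isomorphism, the standard module of one of the two $\SU(2)$-factors of $\Spin(4)$), the equivariant map $\cE_\gamma$ shows that $\SO(4)$ acts transitively on the corresponding chiral spinorial forms, with the stabilizer of one of them being by construction the structure group $\SU(2)\subset\SO(4)$. Hence $\Stab_{\SO(4)}(\alpha)\cong\SU(2)$.

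Next I would pass back upstairs using that $\SU(2)$ is connected and simply connected. The preimage $\Ad^{-1}(\Stab_{\SO(4)}(\alpha))\subset\Spin(4)$ is a double cover of $\SU(2)$, hence trivial, so its identity component $H_0$ is mapped isomorphically onto $\SU(2)$ by $\Ad$. For $u\in H_0$ we have $\Ad_u(\alpha)=\alpha$, hence $\cE_\gamma(\gamma(u)\eta)=\cE_\gamma(\eta)$ and therefore $\gamma(u)\eta=\pm\eta$; the resulting continuous map $H_0\to\Z_2$ equals $+1$ at the identity and is thus identically $+1$, so $H_0\subseteq\Stab_{\Spin(4)}(\eta)$. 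Combining $H_0\cong\SU(2)$ with the injectivity of $\Ad$ on $\Stab_{\Spin(4)}(\eta)$ and with the inclusion $\Ad(\Stab_{\Spin(4)}(\eta))\subseteq\Stab_{\SO(4)}(\alpha)\cong\SU(2)$ forces $\Stab_{\Spin(4)}(\eta)=H_0\cong\SU(2)$; equivariance then shows the answer is independent of the chosen chiral spinor. The main obstacle is the second step: identifying $\Stab_{\SO(4)}(\alpha)$ with $\SU(2)$ requires either invoking the classical fact that the automorphism group of an $\SU(2)$-structure is $\SU(2)$, or establishing the transitivity of the $\SO(4)$-action on chiral spinorial forms of fixed norm; everything else is a soft argument about coverings of connected groups, available precisely because $\SU(2)$ is simply connected.
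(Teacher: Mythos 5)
Your proof is correct and follows essentially the same route the paper sketches: it uses the equivariance of the (complex-bilinear) spinor square map to transfer the stabilizer question to $\SO(4)$, identifies the stabilizer of the square with $\SU(2)$ via the $\SU(2)$-structure correspondence, and then lifts back to $\Spin(4)$ using connectedness and simple connectedness of $\SU(2)$. You merely fill in details (injectivity of $\Ad$ on the stabilizer, the sign/connectedness argument, the triviality of the double cover over $\SU(2)$) that the paper leaves implicit.
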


% % % % % % % % % % % % % % % % % % % % % % % % % % % % % % % % % % % % % %

\subsubsection{Compatibility conditions and the conjugate square}

% % % % % % % % % % % % % % % % % % % % % % % % % % % % % % % % % % % % % %

The goal of this subsection is to characterize the complex-bilinear and Hermitian squares of the \emph{same} irreducible complex and chiral spinor $\eta\in\Sigma^{\mu}$, as well as the complex-bilinear and Hermitian squares of its conjugate, namely $J(\eta)\in\Sigma^{\mu}$ since $(\Sigma,\gamma)$ is of quaternionic type in four Euclidean dimensions. By Proposition \ref{prop:Hermitiansquareconjugate} and Proposition \ref{prop:Bilinearsquareconjugate}, together with Corollary \ref{cor:Hermitian4d} and Corollary \ref{cor:Bilinear4d}, we know that the Hermitian $\widehat{\alpha}_c$ and complex-bilinear $\alpha_c$ squares of $J(\eta) \in \Sigma^{\mu}$ are given by:
\begin{equation*}
\widehat{\alpha}_c =  \sqrt{\frac{ \escal{\omega,\omega}}{2}} - i\omega - \frac{\omega\wedge \omega}{\sqrt{2\escal{\omega,\omega}}}  \, , \qquad \alpha_c = \bar{\theta}_1 \wedge \bar{\theta}_2,
\end{equation*}

\noindent
where $\theta_1 , \theta_2 \in V^{\ast}_{\C}$ are complex one-forms satisfying $\ast (\theta_1\wedge\theta_2) = \mu (\theta_1\wedge\theta_2)$ and $\abs{\theta_1}=\abs{\theta_2}=\escal{\theta_1,\theta_2}=0$.

\noindent
Furthermore, by Proposition \ref{prop:compatibilitysquares}, the Hermitian and complex-bilinear squares are the squares of the \emph{same} spinor $\eta\in \Sigma^{\mu}$ if and only if the following algebraic relations are satisfied: 
\begin{equation*}  
\widehat{\alpha} \diamond \beta \diamond \alpha   = 4 (\widehat{\alpha} \diamond \beta)^{(0)} \alpha \, , \qquad   \alpha \diamond \beta \diamond \alpha_c = -  4    (\widehat{\alpha} \diamond   \tau(\beta))^{(0)} \widehat{\alpha}
\end{equation*} 

\noindent
for every $\beta\in \wedge V^{\ast}_{\C}$ or, equivalently, a choice of $\beta\in \wedge V^{\ast}_{\C}$ such that $(\widehat{\alpha} \diamond \beta)^{(0)}  \neq 0$. Taking $\beta = 1$, this system of equations reduces to:
\begin{eqnarray*}
\sqrt{2\escal{\omega,\omega}}\, \theta_1 \wedge \theta_2 =  i \omega \diamond (\theta_1 \wedge \theta_2)    \, , \qquad  \frac{(\theta_1 \wedge \theta_2) \diamond (\bar{\theta}_1 \wedge \bar{\theta}_2)}{ \sqrt{8 \escal{\omega,\omega}}} = - \sqrt{\frac{ \escal{\omega,\omega}}{2}} - i\omega + \mu \sqrt{\frac{ \escal{\omega,\omega}}{2}} \nu.
\end{eqnarray*}

\noindent
We compute:
\begin{align*}
\omega \diamond (\theta_1\wedge \theta_2) &= \omega \wedge \theta_1 \wedge \theta_2 - \omega(\theta_1^{\sharp}) \wedge \theta_2 + \omega(\theta_2^{\sharp}) \wedge \theta_1 - \omega(\theta_1^{\sharp} , \theta_2^{\sharp}),\\
(\theta_1 \wedge \theta_2) \diamond (\bar{\theta}_1 \wedge \bar{\theta}_2) &= \theta_1 \wedge \theta_2 \wedge\bar{\theta}_1 \wedge \bar{\theta}_2\\
& \quad - \langle \theta_2 , \bar{\theta}_2 \rangle \theta_1 \wedge \bar{\theta}_1 - \langle \theta_1 , \bar{\theta}_1 \rangle \theta_2 \wedge \bar{\theta}_2 - \langle \theta_1 , \bar{\theta}_2 \rangle \bar{\theta}_1 \wedge \theta_2 + \langle \bar{\theta}_1 , \theta_2 \rangle \theta_1 \wedge \bar{\theta}_2 \\
&\quad - \langle \theta_1 , \bar{\theta}_1 \rangle \langle \theta_2 , \bar{\theta}_2 \rangle + \langle \bar{\theta}_1 , \theta_2 \rangle \langle \theta_1 , \bar{\theta}_2 \rangle.
\end{align*}

\noindent
Hence, we obtain the following conditions:
\begin{eqnarray*}
& \omega \wedge \theta_1 \wedge \theta_2 = 0\, , \qquad  \sqrt{2\escal{\omega,\omega}}\, i \theta_1 \wedge \theta_2  =  -\omega(\theta_2^{\sharp}) \wedge \theta_1 + \omega(\theta_1^{\sharp}) \wedge \theta_2\, , \qquad \omega(\theta_1^{\sharp} , \theta_2^{\sharp}) = 0,\\
&  \sqrt{8 \escal{\omega,\omega}}\, i \omega =  \langle \theta_2 , \bar{\theta}_2 \rangle \theta_1 \wedge \bar{\theta}_1 + \langle \theta_1 , \bar{\theta}_1 \rangle \theta_2 \wedge \bar{\theta}_2 + \langle \theta_1 , \bar{\theta}_2 \rangle \bar{\theta}_1 \wedge \theta_2 - \langle \bar{\theta}_1 , \theta_2 \rangle \theta_1 \wedge \bar{\theta}_2,\\
& 2 \escal{\omega,\omega} =  \langle \theta_1 , \bar{\theta}_1 \rangle \langle \theta_2 , \bar{\theta}_2 \rangle - \langle \bar{\theta}_1 , \theta_2 \rangle \langle \theta_1 , \bar{\theta}_2 \rangle\, , \qquad \theta_1\wedge\theta_2\wedge\bar\theta_1\wedge\bar\theta_2=2\mu\escal{\omega,\omega}\nu.
\end{eqnarray*}

\noindent
These are solved by:
\begin{equation}
\label{eq:omega_in_terms_of_thetas}
2i \omega = \frac{\langle \theta_2 , \bar{\theta}_2 \rangle \theta_1 \wedge \bar{\theta}_1 + \langle \theta_1 , \bar{\theta}_1 \rangle \theta_2 \wedge \bar{\theta}_2 + \langle \theta_1 , \bar{\theta}_2 \rangle \bar{\theta}_1 \wedge \theta_2 - \langle \bar{\theta}_1 , \theta_2 \rangle \theta_1 \wedge \bar{\theta}_2}{\sqrt{\langle \theta_1 , \bar{\theta}_1 \rangle \langle \theta_2 , \bar{\theta}_2 \rangle - \langle \bar{\theta}_1 , \theta_2 \rangle \langle \theta_1 , \bar{\theta}_2 \rangle}},
\end{equation}

\noindent
which determines the Hermitian square of $\eta \in \Sigma^{\mu}$ in terms of its complex-bilinear square. Hence, we arrive at the following final characterization of the squares of an irreducible and chiral complex spinor in four Euclidean dimensions.

\begin{cor}
A pair of complex forms $\widehat{\alpha}_\eta, \alpha_\eta \in \wedge V^{\ast}_{\mathbb{C}}$, and a pair of complex forms $\widehat{\alpha}_{J\eta}, \alpha_{J\eta} \in \wedge V^{\ast}_{\mathbb{C}}$ are respectively the Hermitian and complex-bilinear squares of a spinor $\eta\in\Sigma^\mu$ and its conjugate $J\eta\in\Sigma^\mu$ if and only if:
\begin{equation*}
\begin{aligned}
\widehat{\alpha}_{\eta} =    \frac{1}{2} \sqrt{\langle \theta_1 , \bar{\theta}_1 \rangle \langle \theta_2 , \bar{\theta}_2 \rangle - \vert \langle \bar{\theta}_1 , \theta_2 \rangle\vert^2} + i\omega-\frac{\theta_1 \wedge \theta_2 \wedge \bar{\theta}_1 \wedge \bar{\theta}_2}{2\sqrt{\langle \theta_1 , \bar{\theta}_1 \rangle \langle \theta_2 , \bar{\theta}_2 \rangle - \vert \langle \bar{\theta}_1 , \theta_2 \rangle\vert^2}} \, , \qquad \alpha_{\eta} = \theta_1 \wedge \theta_2,\\
\widehat{\alpha}_{J\eta} =    \frac{1}{2} \sqrt{\langle \theta_1 , \bar{\theta}_1 \rangle \langle \theta_2 , \bar{\theta}_2 \rangle - \vert \langle \bar{\theta}_1 , \theta_2 \rangle\vert^2} - i\omega-\frac{\theta_1 \wedge \theta_2 \wedge \bar{\theta}_1 \wedge \bar{\theta}_2}{2\sqrt{\langle \theta_1 , \bar{\theta}_1 \rangle \langle \theta_2 , \bar{\theta}_2 \rangle - \vert \langle \bar{\theta}_1 , \theta_2 \rangle\vert^2}} \, , \qquad \alpha_{J\eta} = \bar{\theta}_1 \wedge \bar{\theta}_2,
\end{aligned}
\end{equation*}

\noindent
where $\theta_1 , \theta_2 \in V^*_\C$ are orthogonal and isotropic and $\omega$ is as in Equation \eqref{eq:omega_in_terms_of_thetas}.
\end{cor}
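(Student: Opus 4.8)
The plan is to assemble the claimed characterization from three pieces that are already in place: the explicit form of the Hermitian square $\widehat{\alpha}_\eta$ in Corollary \ref{cor:Hermitian4d}, the explicit form of the complex-bilinear square $\alpha_\eta$ in Corollary \ref{cor:Bilinear4d}, and the conjugate-square formulas of Propositions \ref{prop:Hermitiansquareconjugate} and \ref{prop:Bilinearsquareconjugate}; the remaining content is the compatibility criterion of Proposition \ref{prop:compatibilitysquares}, which ties $\widehat{\alpha}$ to $\alpha_\eta$ and $\alpha_{J\eta}$. Concretely, I would first record that Corollary \ref{cor:Bilinear4d} forces $\alpha_\eta = \theta_1\wedge\theta_2$ with $\theta_1,\theta_2\in V^*_\C$ orthogonal and isotropic and $*(\theta_1\wedge\theta_2) = \mu(\theta_1\wedge\theta_2)$; then Propositions \ref{prop:Bilinearsquareconjugate} and \ref{prop:Hermitiansquareconjugate} (with the sign $(-1)^{\binom{p}{2}+\frac{d}{4}(1+s(-1)^p)} = 1$ for $p=4,d=4,s=+1$, and noting $J^2 = -\Id$ for quaternionic type but that the sign in these propositions is independent of $\epsilon$) give $\alpha_{J\eta} = \bar\theta_1\wedge\bar\theta_2$ and $\widehat{\alpha}_{J\eta} = \overline{\widehat{\alpha}}_\eta$, so the four forms in the statement are not independent once $\widehat{\alpha}_\eta$ is known.

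Next I would invoke Proposition \ref{prop:compatibilitysquares} with the pair $(\scS_+,\scB_+)$ of common adjoint type $s=+1$: an exterior form $\widehat{\alpha}$ is the Hermitian square of $\eta$ if and only if
\begin{equation*}
\widehat{\alpha}\diamond\beta\diamond\alpha_\eta = 4(\widehat{\alpha}\diamond\beta)^{(0)}\alpha_\eta,\qquad \alpha_\eta\diamond\beta\diamond\alpha_{K\eta} = 4\sigma\bar\kappa^2(\widehat{\alpha}\diamond\tau(\beta))^{(0)}\widehat{\alpha}
\end{equation*}
for some $\beta$ with $(\widehat{\alpha}\diamond\beta)^{(0)}\neq0$, where $\sigma=-1$ and $K=J$. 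I would then carry out the two displayed computations (already essentially recorded in the subsection above): setting $\beta=1$ reduces the system to
\begin{equation*}
\sqrt{2\escal{\omega,\omega}}\,\theta_1\wedge\theta_2 = i\,\omega\diamond(\theta_1\wedge\theta_2),\qquad \frac{(\theta_1\wedge\theta_2)\diamond(\bar\theta_1\wedge\bar\theta_2)}{\sqrt{8\escal{\omega,\omega}}} = -\sqrt{\tfrac{\escal{\omega,\omega}}{2}} - i\omega + \mu\sqrt{\tfrac{\escal{\omega,\omega}}{2}}\nu,
\end{equation*}
and expanding $\diamond$ via the wedge/inner decomposition yields, after separating by degree, the six scalar/form equations displayed in the text. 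The degree-two component of the second equation is precisely Equation \eqref{eq:omega_in_terms_of_thetas}, giving $\omega$ explicitly in terms of $\theta_1,\theta_2$; the degree-zero and degree-four components give $2\escal{\omega,\omega} = \langle\theta_1,\bar\theta_1\rangle\langle\theta_2,\bar\theta_2\rangle - \vert\langle\bar\theta_1,\theta_2\rangle\vert^2$, which upon substituting into Corollary \ref{cor:Hermitian4d} produces the stated normalization $\tfrac12\sqrt{\langle\theta_1,\bar\theta_1\rangle\langle\theta_2,\bar\theta_2\rangle - \vert\langle\bar\theta_1,\theta_2\rangle\vert^2}$ for $\widehat{\alpha}^{(0)}_\eta$ and the stated degree-four term $-\tfrac{\theta_1\wedge\theta_2\wedge\bar\theta_1\wedge\bar\theta_2}{2\sqrt{\cdots}}$.

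It then remains to check that the remaining equations ($\omega\wedge\theta_1\wedge\theta_2 = 0$, $\omega(\theta_1^\sharp,\theta_2^\sharp)=0$, and the off-diagonal constraint equating the two expressions for $i\,\theta_1\wedge\theta_2$) are automatically satisfied once $\omega$ is given by \eqref{eq:omega_in_terms_of_thetas} and $\theta_1,\theta_2$ are orthogonal and isotropic with $*(\theta_1\wedge\theta_2)=\mu(\theta_1\wedge\theta_2)$; this is a direct contraction computation using $\theta_i(\theta_i^\sharp)=\langle\theta_i,\theta_i\rangle=0$ and $\theta_1(\theta_2^\sharp)=\langle\theta_1,\theta_2\rangle=0$. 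Finally I would verify that the choice $\beta=1$ is legitimate, i.e.\ $(\widehat{\alpha}\diamond 1)^{(0)} = \widehat{\alpha}^{(0)} = \tfrac12\sqrt{\langle\theta_1,\bar\theta_1\rangle\langle\theta_2,\bar\theta_2\rangle - \vert\langle\bar\theta_1,\theta_2\rangle\vert^2}\neq0$, which holds because $\theta_1\wedge\bar\theta_1\neq0$ and $\theta_2$ cannot be a complex multiple of $\theta_1$ (else $\omega=\theta_1\wedge\theta_2=0$), so the Cauchy--Schwarz inequality for the positive-definite inner product $\langle\cdot,\bar\cdot\rangle$ on $V^*_\C$ is strict. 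Conversely, any $\widehat{\alpha},\alpha,\widehat{\alpha}_{J\eta},\alpha_{J\eta}$ of the stated form satisfy these algebraic relations, so by Proposition \ref{prop:compatibilitysquares} they are the squares of a common spinor $\eta$ and its conjugate. The main obstacle I anticipate is purely computational bookkeeping: correctly expanding the triple geometric product $(\theta_1\wedge\theta_2)\diamond(\bar\theta_1\wedge\bar\theta_2)$ and $\omega\diamond(\theta_1\wedge\theta_2)$ in terms of $\triangle_k$ and keeping track of all the inner-product cross terms $\langle\theta_i,\bar\theta_j\rangle$ — the subsection preceding the corollary already does this, so the proof is essentially a matter of citing and organizing those identities rather than a conceptual difficulty.
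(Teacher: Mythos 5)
Your proposal is correct and follows essentially the same route as the paper: it combines Corollaries \ref{cor:Hermitian4d} and \ref{cor:Bilinear4d} with Propositions \ref{prop:Hermitiansquareconjugate} and \ref{prop:Bilinearsquareconjugate} for the conjugate squares, then applies Proposition \ref{prop:compatibilitysquares} with $\beta=1$, expands the geometric products, separates by degree, and extracts Equation \eqref{eq:omega_in_terms_of_thetas} together with $2\escal{\omega,\omega}=\langle\theta_1,\bar\theta_1\rangle\langle\theta_2,\bar\theta_2\rangle-\vert\langle\bar\theta_1,\theta_2\rangle\vert^2$. Your explicit verification that the remaining degree components are automatic and that $(\widehat{\alpha}\diamond 1)^{(0)}\neq0$ (via strict Cauchy--Schwarz) only makes explicit what the paper leaves implicit, so there is no gap.
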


% % % % % % % % % % % % % % % % % % % % % % % % % % % % % % % % % % % % % %

\subsection{The square of an irreducible complex spinor in dimension five}

% % % % % % % % % % % % % % % % % % % % % % % % % % % % % % % % % % % % % %

Let $(V,h)$ be an Euclidean vector space of real dimension five. Its associated real Clifford algebra $\Cl(V^*,h^*)$ is isomorphic to $\Mat(2,\mathbb{H})\oplus\Mat(2,\mathbb{H})$ and the irreducible complex Clifford module $(\Sigma,\gamma_\ell)$ is quaternionic by Lemma \ref{lemma:R_orQ_p-q=1,5}. We endow $\Sigma$ with an admissible skew-symmetric complex-bilinear pairing $\scB$ of positive adjoint type and with a compatible Hermitian pairing $\scS$ of positive adjoint type, which are related by a compatible quaternionic structure $J\colon\Sigma\to\Sigma$. The truncated Kähler-Atiyah algebra is isomorphic to: $$\wedge^<V^*_\C=\C\oplus V^*_\C\oplus\wedge^2V^*_\C,$$
and $$\alpha_1\vee\alpha_2=\cP_<(\alpha_1\diamond\alpha_2-\ell*\tau(\alpha_1\diamond\alpha_2))$$ for all $\alpha_1,\alpha_2\in\wedge^<V^*_\C$.

% % % % % % % % % % % % % % % % % % % % % % % % % % % % % % % % % % % % % %

\subsubsection{The Hermitian square}

% % % % % % % % % % % % % % % % % % % % % % % % % % % % % % % % % % % % % %

We set $\kappa=1$ for simplicity. By Theorem \ref{thm_Hermitian_square_odd}, a complex exterior form $\widehat{\alpha}\in\wedge^<V^*_\C$ is the Hermitian square of a spinor $\eta\in\Sigma$ if and only if: $$\widehat{\alpha}\vee\widehat{\alpha}=4\widehat{\alpha}^{(0)}\widehat{\alpha},\quad\tau(\widehat{\alpha})=\overline{\widehat{\alpha}},\quad\widehat{\alpha}\vee\beta\vee\widehat{\alpha}=4(\widehat{\alpha}\vee\beta)^{(0)}\widehat{\alpha}$$ for a fixed exterior form $\beta\in\wedge^<V^*_\C$ satisfying $(\widehat{\alpha}\vee\beta)^{(0)}\neq0$. Let $\widehat{\alpha}=\widehat{\alpha}^{(0)}+\widehat{\alpha}^{(1)}+\widehat{\alpha}^{(2)}\in\wedge^<V^*_\C$. The linear condition $\tau(\widehat{\alpha})=\overline{\widehat{\alpha}}$ implies that $\widehat{\alpha}^{(0)}$ and $\widehat{\alpha}^{(1)}$ are real and that $\widehat{\alpha}^{(2)}$ is imaginary. Set $r:=\widehat{\alpha}^{(0)}\in\R$, $\theta:=\widehat{\alpha}^{(1)}\in V^*$ and $i\omega:=\widehat{\alpha}^{(2)}$ for $\omega\in\wedge^2V^*$. Hence $\widehat{\alpha}=r+\theta+i\omega$. Now we compute:
\begin{align*}
\widehat{\alpha}\vee\widehat{\alpha}&=(r+\theta+i\omega)\vee(r+\theta+i\omega)\\
&=r^2+\escal{\theta,\theta}+\escal{\omega,\omega}+2r\theta+\ell*(\omega\wedge\omega)+2ir\omega+2i\ell*(\theta\wedge\omega).
\end{align*}

Then the equation $\widehat{\alpha}\vee\widehat{\alpha}=4\widehat{\alpha}^{(0)}\widehat{\alpha}$ reduces to: $$\escal{\theta,\theta}+\escal{\omega,\omega}=3r^2,\quad\ell*(\omega\wedge\omega)=2r\theta,\quad\ell*(\theta\wedge\omega)=r\omega.$$

From the second equation we get $\theta=\frac{\ell}{2r}*(\omega\wedge\omega)$. Plugging this into the last equation and taking the Hodge dual of it gives us $2r^2*\omega=\omega\wedge*(\omega\wedge\omega)$. Taking the wedge with $\omega$ of this expression implies that $\escal{\omega\wedge\omega,\omega\wedge\omega}=2r^2\escal{\omega,\omega}$. Using this formula we obtain $\escal{\theta,\theta}=\frac{1}{2}\escal{\omega,\omega}$. Hence the first equation above implies $\escal{\omega,\omega}=2r^2$.\medskip

We conclude the following:

\begin{cor}
\label{cor:Hermitian5d}
A complex exterior form $\widehat{\alpha}\in\wedge^<V^*_\C$ is the Hermitian square of a complex irreducible spinor $\eta\in\Sigma$ if and only if:
\begin{equation*}
\widehat{\alpha} = \sqrt{\frac{\escal{\omega,\omega}}{2}} + \frac{\ell *(\omega\wedge\omega)}{\sqrt{2\escal{\omega,\omega}}}+i\omega
\end{equation*}

\noindent
for a real two-form $\omega\in\wedge^2V^*$ satisfying $\escal{\omega,\omega} \ast \omega = \omega\wedge*(\omega\wedge\omega)$.
\end{cor}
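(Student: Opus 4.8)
The plan is to deduce the characterization directly from Theorem~\ref{thm_Hermitian_square_odd} specialized to $d=5$, where $m=2$ and $2^m=4$. The chosen admissible Hermitian pairing $\scS$ has positive adjoint type, so $\pi^{\frac{1-s}{2}}=\id$, and it is positive-definite; hence the Euclidean simplification valid for positive-definite pairings (the odd-dimensional counterpart, through the algebra isomorphism $\Psi^<_\ell$, of the Corollary following Proposition~\ref{prop:algebraichHermitian}) reduces the statement ``$\widehat\alpha$ is a Hermitian square'' to the two relations
\begin{equation*}
\tau(\widehat\alpha)=\overline{\widehat\alpha}\, ,\qquad \widehat\alpha\vee\widehat\alpha=4\,\widehat\alpha^{(0)}\,\widehat\alpha\, ,
\end{equation*}
where I set $\kappa=1$ without loss of generality. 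First I would decompose $\widehat\alpha=\widehat\alpha^{(0)}+\widehat\alpha^{(1)}+\widehat\alpha^{(2)}$ according to $\wedge^<V^*_\C=\C\oplus V^*_\C\oplus\wedge^2V^*_\C$. Since $\tau$ acts as $+\id$ on $\wedge^0$ and $\wedge^1$ and as $-\id$ on $\wedge^2$, the reality relation $\tau(\widehat\alpha)=\overline{\widehat\alpha}$ forces $\widehat\alpha^{(0)}$ and $\widehat\alpha^{(1)}$ to be real and $\widehat\alpha^{(2)}$ to be imaginary, so I may write $r:=\widehat\alpha^{(0)}\in\R$, $\theta:=\widehat\alpha^{(1)}\in V^*$ and $i\omega:=\widehat\alpha^{(2)}$ with $\omega\in\wedge^2V^*$.

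The core computation is then to expand $\widehat\alpha\vee\widehat\alpha$ using the truncated product $\alpha_1\vee\alpha_2=\cP_<(\alpha_1\diamond\alpha_2-\ell*\tau(\alpha_1\diamond\alpha_2))$, i.e.\ to expand the geometric products $\theta\diamond\theta$, $\theta\diamond\omega$, $\omega\diamond\theta$, $\omega\diamond\omega$ in terms of the wedge product, contractions, and the generalized products $\triangle_k$ of Appendix~\ref{app:KA}, and then apply the truncation twist; the Hodge star enters precisely through the $-\ell*\tau$ term, producing a degree-one contribution $*(\omega\wedge\omega)$ and a degree-two contribution $*(\theta\wedge\omega)$. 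Equating $\widehat\alpha\vee\widehat\alpha$ with $4r\widehat\alpha$ degree by degree gives the system
\begin{equation*}
\escal{\theta,\theta}+\escal{\omega,\omega}=3r^2\, ,\qquad \ell*(\omega\wedge\omega)=2r\theta\, ,\qquad \ell*(\theta\wedge\omega)=r\omega\, .
\end{equation*}
Note that $r>0$ whenever the underlying spinor is nonzero, since $r=\tfrac14\scS(\eta,\eta)$ and $\scS$ is positive-definite, and then the first relation forces $\omega\neq0$; so I may assume $r\neq0$ and $\omega\neq0$. The second relation yields $\theta=\tfrac{\ell}{2r}*(\omega\wedge\omega)$; substituting into the third relation and applying the Hodge star (with $**=\id$ in five Euclidean dimensions) gives $2r^2*\omega=\omega\wedge*(\omega\wedge\omega)$. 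Wedging this with $\omega$ and using $\omega\wedge*\omega=\escal{\omega,\omega}\nu$ and $\omega\wedge\omega\wedge*(\omega\wedge\omega)=\escal{\omega\wedge\omega,\omega\wedge\omega}\nu$ produces $\escal{\omega\wedge\omega,\omega\wedge\omega}=2r^2\escal{\omega,\omega}$, whence $\escal{\theta,\theta}=\tfrac{1}{4r^2}\escal{\omega\wedge\omega,\omega\wedge\omega}=\tfrac12\escal{\omega,\omega}$ because the Hodge star is an isometry; the first relation then collapses to $\escal{\omega,\omega}=2r^2$. This pins down $r=\sqrt{\escal{\omega,\omega}/2}$ and $\theta=\ell*(\omega\wedge\omega)/\sqrt{2\escal{\omega,\omega}}$, and recasts $2r^2*\omega=\omega\wedge*(\omega\wedge\omega)$ as the asserted constraint $\escal{\omega,\omega}*\omega=\omega\wedge*(\omega\wedge\omega)$.

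For the converse I would check that the chain of implications is reversible. Given a nonzero real two-form $\omega$ with $\escal{\omega,\omega}*\omega=\omega\wedge*(\omega\wedge\omega)$, put $r=\sqrt{\escal{\omega,\omega}/2}$, $\theta=\tfrac{\ell}{2r}*(\omega\wedge\omega)$ and $\widehat\alpha=r+\theta+i\omega$. Wedging the constraint with $\omega$ gives $\escal{\omega\wedge\omega,\omega\wedge\omega}=\escal{\omega,\omega}^2=2r^2\escal{\omega,\omega}$, hence $\escal{\theta,\theta}=\tfrac12\escal{\omega,\omega}$ and the first equation of the system holds; the second holds by the definition of $\theta$; and for the third one rewrites $\theta\wedge\omega=\tfrac{\ell}{2r}*(\omega\wedge\omega)\wedge\omega=\tfrac{\ell}{2r}\,\omega\wedge*(\omega\wedge\omega)=\tfrac{\ell}{2r}\escal{\omega,\omega}*\omega$ and applies $*$ together with $**=\id$, $\ell^2=1$ and $\escal{\omega,\omega}=2r^2$ to get $\ell*(\theta\wedge\omega)=r\omega$. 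Since moreover $\tau(\widehat\alpha)=\overline{\widehat\alpha}$ holds by construction, $\widehat\alpha$ satisfies $\widehat\alpha\vee\widehat\alpha=4\widehat\alpha^{(0)}\widehat\alpha$, and the Euclidean form of Theorem~\ref{thm_Hermitian_square_odd} shows that $\widehat\alpha$ is the Hermitian square of some $\eta\in\Sigma$. I expect the main obstacle to be the middle step: carrying out the expansion of $\widehat\alpha\vee\widehat\alpha$ correctly, keeping track of the interplay among $\diamond$, the projection $\cP_<$, and the $-\ell*\tau$ twist, and then performing the elimination (in particular the passage through $\escal{\omega\wedge\omega,\omega\wedge\omega}=2r^2\escal{\omega,\omega}$) cleanly enough that the equivalence with the single constraint $\escal{\omega,\omega}*\omega=\omega\wedge*(\omega\wedge\omega)$ is transparent and visibly reversible.
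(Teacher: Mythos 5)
Your proposal is correct and follows essentially the same route as the paper: reduce to the odd-dimensional square theorem, use the reality condition to write $\widehat\alpha = r+\theta+i\omega$, expand $\widehat\alpha\vee\widehat\alpha$ to get the system $\escal{\theta,\theta}+\escal{\omega,\omega}=3r^2$, $\ell*(\omega\wedge\omega)=2r\theta$, $\ell*(\theta\wedge\omega)=r\omega$, and eliminate via $\escal{\omega\wedge\omega,\omega\wedge\omega}=2r^2\escal{\omega,\omega}$ to land on $\escal{\omega,\omega}\ast\omega=\omega\wedge*(\omega\wedge\omega)$ with $r=\sqrt{\escal{\omega,\omega}/2}$. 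The only additions are points the paper leaves implicit, namely the explicit appeal to positive-definiteness of $\scS$ to discard the $\beta$-dependent condition and the explicit verification that the elimination is reversible for the converse.
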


\noindent
Given an irreducible complex spinor $\eta\in \Sigma$, consider now the following equation:
\begin{equation*}
F\cdot\eta:=(\Psi_{\ell}^< \circ 2\cP_{<} \circ \cP_{\ell})(F)(\eta)=0
\end{equation*}

\noindent
for a complex two-form $F\in \wedge^2 V^{\ast}_{\mathbb{C}}$. Since $F\in\wedge^<V_\C^*$, we have $2\cP_<(\cP_\ell(F))=F$. By Lemma \ref{lemma:constrainedspinorodd}, a complex two-form $F$ satisfies the previous equation if and only if:
\begin{equation*}
F\vee\widehat{\alpha}=F\vee(r+\theta+i\omega)=0.
\end{equation*}

\noindent
Separating by degree, this equation is equivalent to: $$\sqrt{\frac{\escal{\omega,\omega}}{2}} F + \ell*(F\wedge\theta)-iF\triangle_1\omega=0,\quad F\triangle_1\theta+i\ell*(F\wedge\omega)=0,\quad\escal{F,\omega}=0.$$

Now consider a complex three-form $H\in\wedge^3V_\C^*$. It satisfies the equation $H\cdot\eta=0$ if and only if:
\begin{equation*}
(\ell*H)\vee\widehat{\alpha}=(\ell*H)\vee(r+\theta+i\omega)=0.
\end{equation*}

\noindent
Separating by degree, this equation is equivalent to: 
\begin{eqnarray*}
\sqrt{\frac{\escal{\omega,\omega}}{2}} \ast H + \ell H\triangle_1\theta-i(*H)\triangle_1\omega=0,\quad *(H\wedge\theta)+i\ell H\triangle_2\omega=0,\quad \escal{*H,\omega}=0.    
\end{eqnarray*}

\noindent
The bilinear operations $\triangle_1$ and $\triangle_2$ are defined in Appendix \ref{app:KA}.

% % % % % % % % % % % % % % % % % % % % % % % % % % % % % % % % % % % % % %

\subsubsection{The complex-bilinear square}

% % % % % % % % % % % % % % % % % % % % % % % % % % % % % % % % % % % % % %

By Theorem \ref{thm:odd_complex-bilinear_square}, a complex exterior form $\alpha\in\wedge^< V^*_\C$ is the complex-bilinear square of a spinor $\eta\in\Sigma$ if and only if:
$$\alpha\vee\alpha=4\alpha^{(0)}\alpha,\quad\tau(\alpha)=-\alpha,\quad\alpha\vee\beta\vee\alpha=4(\alpha\vee\beta)^{(0)}\alpha$$ for a fixed exterior form $\beta\in\wedge^<V^*_\C$ satisfying $(\alpha\vee\beta)^{(0)}\neq0$. Let $\alpha=\alpha^{(0)}+\alpha^{(1)}+\alpha^{(2)}\in\wedge^<V^*_\C$. The linear condition $\tau(\alpha)=-\alpha$ implies that $\alpha^{(0)}=\alpha^{(1)}=0$. Set $\omega:=\alpha^{(2)}$. Equation $\alpha\vee\alpha=4\alpha^{(0)}\alpha$ then reduces to:
$$\omega\vee\omega=-\escal{\omega,\omega}-\ell*(\omega\wedge\omega)=0,$$ thus $\escal{\omega,\omega}=0$ and $\omega\wedge\omega=0$. Equation $\omega\wedge\omega=0$ implies that $\omega$ is decomposable, that is $\omega=\theta_1\wedge\theta_2$ for $\theta_1,\theta_2\in V^*_\C$. Then, equation $\escal{\omega,\omega}=0$ becomes $\abs{\theta_1}^2\abs{\theta_2}^2=\escal{\theta_1,\theta_2}^2$.\medskip

Since $\alpha^{(0)}=0$, taking $\beta=1$ does not suffice to characterize the square of the spinor. We take $\beta=\bar\omega=\bar\theta_1\wedge\bar\theta_2\in\wedge^2V^*_\C$. Then: $$\alpha\vee\beta=\omega\vee\bar\omega=-\escal{\omega,\bar\omega}-\ell*(\omega\wedge\bar\omega)-\omega\triangle_1\bar\omega$$ and $(\alpha\vee\beta)^{(0)}=-\escal{\omega,\bar\omega}=-\abs{\Re(\omega)}^2-\abs{\Im(\omega)}^2\neq0$ since $\omega\neq0$. Using $\omega\vee\bar\omega-\bar\omega\vee\omega=-2\omega\triangle_1\bar\omega$ and $\omega\vee\omega=0$ we get:
$$\omega\vee\bar\omega\vee\omega=-2(\omega\triangle_1\bar\omega)\vee\omega=2\escal{\omega\triangle_1\bar\omega,\omega}+2\ell*((\omega\triangle_1\bar\omega)\wedge\omega)+2(\omega\triangle_1\bar\omega)\triangle_1\omega,$$
where: $$\omega\triangle_1\bar\omega=\escal{\theta_1,\bar\theta_1}\theta_2\wedge\bar\theta_2-\escal{\theta_1,\bar\theta_2}\theta_2\wedge\bar\theta_1-\escal{\theta_2,\bar\theta_1}\theta_1\wedge\bar\theta_2+\escal{\theta_2,\bar\theta_2}\theta_1\wedge\bar\theta_1.$$

A similar computation as the one performed in the four-dimensional case gives us that the equation $\omega\vee\bar\omega\vee\omega=-4\escal{\omega,\bar\omega}\omega$ holds if and only if $\abs{\theta_1}=\abs{\theta_2}=\escal{\theta_1,\theta_2}=0$.

\begin{cor}
\label{cor:Bilinear5d}
A complex exterior form $\alpha\in\wedge^< V^*_\C$ is the complex-bilinear square of a complex irreducible spinor $\eta\in\Sigma$ if and only if: $$\alpha=\theta_1\wedge\theta_2$$ for complex one-forms $\theta_1,\theta_2 \in V^{\ast}_{\C}$ satisfying $\abs{\theta_1}=\abs{\theta_2}=\escal{\theta_1,\theta_2}=0$.
\end{cor}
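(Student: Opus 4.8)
The plan is to follow the template already laid out in the four-dimensional case, i.e.\ to apply Theorem \ref{thm:odd_complex-bilinear_square} directly and extract the algebraic content degree by degree. First I would fix the admissible complex-bilinear pairing $\scB$ of positive adjoint type $s=+1$ and skew-symmetry type $\sigma=-1$, so that the relevant linear condition reads $\tau(\alpha)=-\alpha$. Writing $\alpha=\alpha^{(0)}+\alpha^{(1)}+\alpha^{(2)}\in\wedge^<V^*_\C$ (recall that in dimension five $\wedge^<V^*_\C=\C\oplus V^*_\C\oplus\wedge^2V^*_\C$), I would note that $\tau$ acts as $(-1)^{k(k-1)/2}$ on $\wedge^k$, hence $\tau(\alpha)=-\alpha$ forces $\alpha^{(0)}=\alpha^{(1)}=0$, leaving $\alpha=\omega:=\alpha^{(2)}$. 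Then I would impose the quadratic relation $\alpha\vee\alpha=4\alpha^{(0)}\alpha$, which since $\alpha^{(0)}=0$ becomes $\omega\vee\omega=0$; computing $\omega\vee\omega$ in the $\vee$-product of dimension five yields $\omega\vee\omega=-\escal{\omega,\omega}-\ell\ast(\omega\wedge\omega)$, so separating degrees gives the two scalar conditions $\escal{\omega,\omega}=0$ and $\omega\wedge\omega=0$. Since $\omega\wedge\omega=0$ in a five-dimensional space forces $\omega$ to be decomposable, $\omega=\theta_1\wedge\theta_2$ with $\theta_1,\theta_2\in V^*_\C$, and then $\escal{\omega,\omega}=0$ translates into $\abs{\theta_1}^2\abs{\theta_2}^2=\escal{\theta_1,\theta_2}^2$.

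The remaining and genuinely substantial step is the third relation $\alpha\vee\beta\vee\alpha=4(\alpha\vee\beta)^{(0)}\alpha$. Because $\alpha^{(0)}=0$, the choice $\beta=1$ is useless, so I would take $\beta=\bar\omega=\bar\theta_1\wedge\bar\theta_2$, exactly mirroring the four-dimensional argument. One first checks $(\alpha\vee\beta)^{(0)}=\escal{\omega,\bar\omega}=\abs{\Re\omega}^2+\abs{\Im\omega}^2\neq0$, which validates this choice of $\beta$ in Theorem \ref{thm:odd_complex-bilinear_square}. Then, using $\omega\vee\bar\omega-\bar\omega\vee\omega=-2\,\omega\triangle_1\bar\omega$ together with $\omega\vee\omega=0$, I would reduce $\omega\vee\bar\omega\vee\omega$ to $-2(\omega\triangle_1\bar\omega)\vee\omega$ and expand the explicit formula for $\omega\triangle_1\bar\omega$ in terms of the pairings $\escal{\theta_i,\bar\theta_j}$. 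The equation $\omega\vee\bar\omega\vee\omega=-4\escal{\omega,\bar\omega}\omega$ then becomes a system of polynomial identities in the $\theta_i$, structurally identical to the four-dimensional system governed by the tensor $\mathbb{A}(\theta_1,\theta_2)$; the same Cauchy--Schwarz argument (if $\lvert\escal{\theta_1,\bar\theta_2}\rvert^2=\escal{\theta_1,\bar\theta_1}\escal{\theta_2,\bar\theta_2}$ then $\theta_2\parallel\theta_1$, forcing $\omega=0$) rules out the degenerate branch and leaves $\escal{\theta_1,\theta_2}=0$, whence $\abs{\theta_1}=\abs{\theta_2}=0$ as well.

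The main obstacle I anticipate is purely computational: faithfully transcribing the five-dimensional $\vee$-product and verifying that the extra Hodge-dual terms $\ell\ast(\cdots)$ appearing in $\alpha\vee\beta\vee\alpha$ do not obstruct the reduction, i.e.\ that after expanding $(\omega\triangle_1\bar\omega)\vee\omega$ the pieces $\escal{\omega\triangle_1\bar\omega,\omega}$, $\ell\ast((\omega\triangle_1\bar\omega)\wedge\omega)$, and $(\omega\triangle_1\bar\omega)\triangle_1\omega$ recombine so that the vanishing of the four-form and scalar parts is again equivalent to $\mathbb{A}=0$. This is a matter of bookkeeping rather than new ideas; the line \emph{A similar computation as the one performed in the four-dimensional case gives} is exactly where this is invoked. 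Finally, assembling the three conditions — $\omega=\theta_1\wedge\theta_2$ decomposable, $\escal{\omega,\omega}=0$, and the vanishing of $\mathbb{A}$ — one reads off that $\alpha=\theta_1\wedge\theta_2$ with $\abs{\theta_1}=\abs{\theta_2}=\escal{\theta_1,\theta_2}=0$, and conversely any such $\alpha$ satisfies all the relations of Theorem \ref{thm:odd_complex-bilinear_square} with $\beta=\bar\omega$, completing the equivalence. Note that, in contrast to the four-dimensional case, no self-duality constraint on $\omega$ survives, since in odd dimension five there is no chirality condition to impose.
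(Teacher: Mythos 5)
Your proposal is correct and follows essentially the same route as the paper: degree-by-degree use of Theorem \ref{thm:odd_complex-bilinear_square} to get $\alpha=\omega\in\wedge^2V^*_\C$ with $\escal{\omega,\omega}=0$ and $\omega\wedge\omega=0$, decomposability of $\omega$, and then the choice $\beta=\bar\omega$ together with the four-dimensional Cauchy--Schwarz argument to force $\abs{\theta_1}=\abs{\theta_2}=\escal{\theta_1,\theta_2}=0$. The only blemish is the sign of the scalar part, which is $(\omega\vee\bar\omega)^{(0)}=-\escal{\omega,\bar\omega}$ rather than $+\escal{\omega,\bar\omega}$; since only its non-vanishing is used, and your target equation $\omega\vee\bar\omega\vee\omega=-4\escal{\omega,\bar\omega}\omega$ already carries the correct sign, this does not affect the argument.
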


Let $\theta_i=\theta_i^R+i\theta_i^I$ for $i=1,2$. Define the following real one-form on $V$: 
\begin{equation*}
\vartheta:=*(\theta_1^R\wedge\theta_1^I\wedge\theta_2^R\wedge\theta_2^I)=\tfrac{1}{4}*(\omega\wedge\bar\omega)\neq0    
\end{equation*}

\noindent
as well as the following real two-forms on $V$: 
\begin{equation*}
\varpi_1:=\theta_1^R\wedge\theta_1^I+\theta_2^R\wedge\theta_2^I,\quad\varpi_2:=\theta_1^R\wedge\theta_2^R+\theta_2^I\wedge\theta_1^I=\Re(\omega),\quad\varpi_3:=\theta_1^R\wedge\theta_2^I+\theta_1^I\wedge\theta_2^R=\Im(\omega).    
\end{equation*}
 
\noindent
The tuple $(\vartheta,\varpi_1,\varpi_2,\varpi_3)$ defines a $\SU(2)$-structure on the five-dimensional vector space $V$ as explained in \cite[Proposition 1.1]{CS07}. Therefore, we nicely recover the following well-known result. 

\begin{cor}
There is a one-to-one correspondence between $\SU(2)$-structures and irreducible complex spinors on a five-dimensional Euclidean vector space $(V,h)$.
\end{cor}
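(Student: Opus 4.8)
The plan is to establish the bijective correspondence by exhibiting maps in both directions and checking they are mutually inverse, following the pattern already used in the two- and four-dimensional cases. Concretely, given an irreducible complex spinor $\eta \in \Sigma$, Corollary \ref{cor:Bilinear5d} associates to it the complex-bilinear square $\alpha_\eta = \theta_1 \wedge \theta_2$ with $\theta_1, \theta_2 \in V^*_\C$ satisfying $\abs{\theta_1} = \abs{\theta_2} = \escal{\theta_1, \theta_2} = 0$; from the real and imaginary parts $\theta_i = \theta_i^R + i \theta_i^I$ I would form the real one-form $\vartheta := \tfrac{1}{4} *(\omega \wedge \bar\omega)$ and the three real two-forms $\varpi_1, \varpi_2, \varpi_3$ as in the statement preceding the corollary. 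The map $\eta \mapsto (\vartheta, \varpi_1, \varpi_2, \varpi_3)$ is the candidate for one direction; the inverse direction reconstructs $\eta$ (up to the relevant ambiguity) from the $\SU(2)$-structure via the complex-bilinear square map, using that $\cE_\ell$ determines $\eta$ modulo sign.

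The key steps, in order, would be: (1) verify that the tuple $(\vartheta, \varpi_1, \varpi_2, \varpi_3)$ built from a spinor square actually satisfies the defining algebraic relations of an $\SU(2)$-structure in dimension five — this is precisely the content of \cite[Proposition 1.1]{CS07}, so it amounts to checking that the normalization conditions $\abs{\theta_1} = \abs{\theta_2} = \escal{\theta_1,\theta_2} = 0$ from Corollary \ref{cor:Bilinear5d} translate into the hypotheses of that proposition (compatibility of the $\varpi_i$, the volume normalization $\varpi_i \wedge \varpi_j = \delta_{ij}\, v \wedge \vartheta$ for the appropriate four-form $v$, and $\iota_{\vartheta^\sharp}\varpi_i = 0$); (2) check that the assignment is well-defined on spinors, i.e.\ independent of the chosen decomposition $\omega = \theta_1 \wedge \theta_2$ and invariant under $\eta \mapsto -\eta$, which is immediate since $\vartheta$ and the $\varpi_i$ are built from $\omega = \alpha_\eta$ and $\bar\omega$ and these depend only on $\alpha_\eta$, hence only on $\eta$ up to sign; (3) conversely, given an $\SU(2)$-structure, recover the decomposable isotropic complex two-form $\alpha = \varpi_2 + i \varpi_3$ together with the normalization forced by $\vartheta$, observe it satisfies the conditions of Corollary \ref{cor:Bilinear5d}, and conclude it is $\cE_\ell(\eta)$ for an essentially unique $\eta$; (4) verify the two constructions are mutually inverse, which reduces to the algebraic identities relating $\omega$, $\bar\omega$, $\vartheta$ and the $\varpi_i$.

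The main obstacle I anticipate is step (1): extracting from the three scalar conditions $\abs{\theta_1} = \abs{\theta_2} = \escal{\theta_1,\theta_2} = 0$ the full set of relations among $\varpi_1, \varpi_2, \varpi_3$ and $\vartheta$ required by the $\SU(2)$-structure formalism of \cite{CS07}. Splitting $\theta_i = \theta_i^R + i\theta_i^I$, the condition $\abs{\theta_i}^2 = 0$ gives $\abs{\theta_i^R} = \abs{\theta_i^I}$ and $\escal{\theta_i^R, \theta_i^I} = 0$, while $\escal{\theta_1,\theta_2} = 0$ gives $\escal{\theta_1^R,\theta_2^R} = \escal{\theta_1^I,\theta_2^I}$ and $\escal{\theta_1^R,\theta_2^I} = -\escal{\theta_1^I,\theta_2^R}$ — exactly as in the four-dimensional case, with the fifth direction $\vartheta^\sharp$ orthogonal to all of $\theta_i^{R}, \theta_i^{I}$ by construction. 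One then needs to confirm that $\varpi_2, \varpi_3$ are primitive $(2,0)+(0,2)$-type forms and $\varpi_1$ the $(1,1)$-part of a compatible metric on the four-plane $\escal{\mathbb{R}\vartheta}^\perp$, and that the Hodge-star relation $*\vartheta = \tfrac{1}{2}\varpi_1 \wedge \varpi_1$ (or its analogue in the conventions of \cite{CS07}) holds. These are linear-algebra identities on the four-plane orthogonal to $\vartheta^\sharp$, morally identical to those verified in Subsection \ref{subsec:square4dEuclidean}, so I expect no conceptual difficulty, only bookkeeping. Finally, the correspondence is bijective rather than merely surjective because the reconstruction in step (3) pins down $\alpha$ uniquely, and hence $\eta$ uniquely once one passes to the quotient by sign implicit in the phrase ``irreducible complex spinor'' used throughout this section.
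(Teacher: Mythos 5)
Your proposal follows essentially the same route as the paper: the paper's argument consists precisely of building the tuple $(\vartheta,\varpi_1,\varpi_2,\varpi_3)$ from the complex-bilinear square $\alpha_\eta=\theta_1\wedge\theta_2$ of Corollary \ref{cor:Bilinear5d} and invoking \cite[Proposition 1.1]{CS07}, with the inverse direction supplied by the fact that $\cE_\ell$ determines the spinor up to sign. Your additional bookkeeping in step (1) and your explicit remark about the sign ambiguity are consistent with (indeed slightly more careful than) the paper's own brief treatment.
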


% % % % % % % % % % % % % % % % % % % % % % % % % % % % % % % % % % % % % %

\subsubsection{Compatibility conditions and the conjugate square}

% % % % % % % % % % % % % % % % % % % % % % % % % % % % % % % % % % % % % %

The goal of this subsection is to characterize the complex-bilinear and Hermitian squares of the \emph{same} irreducible complex spinor $\eta\in\Sigma$, as well as the complex-bilinear and Hermitian squares of its conjugate, namely $J\eta\in\Sigma$, since $(\Sigma,\gamma_\ell)$ is of quaternionic type in five Euclidean dimensions. By Proposition \ref{prop:Hermitiansquareconjugate_odd} and Proposition \ref{prop:Bilinearsquareconjugate_odd}, together with Corollary \ref{cor:Hermitian5d} and Corollary \ref{cor:Bilinear5d}, we know that the Hermitian $\widehat{\alpha}_c$ and the complex-bilinear $\alpha_c$ squares of $J\eta\in\Sigma$ are given by:
\begin{equation*}
\widehat{\alpha}_c=\sqrt{\frac{\escal{\omega,\omega}}{2}} + \frac{\ell *(\omega\wedge\omega)}{\sqrt{2\escal{\omega,\omega}}}-i\omega,\qquad \alpha_c=\bar\theta_1\wedge\bar\theta_2,
\end{equation*}

\noindent
where $\theta_1,\theta_2\in V^{\ast}_{\C}$ are complex one-forms satisfying and $\abs{\theta_1}=\abs{\theta_2}=\escal{\theta_1,\theta_2}=0$. Furthermore, by Proposition \ref{prop:compatibilitysquares_odd}, the Hermitian and complex-bilinear squares are the squares of the \emph{same} spinor $\eta\in\Sigma$ if and only if the following algebraic relations are satisfied:
\begin{equation*}
\widehat{\alpha} \vee \beta \vee \alpha   = 4 (\widehat{\alpha} \vee \beta)^{(0)} \alpha \, , \qquad   \alpha \vee \beta \vee \alpha_c = -  4    (\widehat{\alpha} \vee   \tau(\beta))^{(0)} \widehat{\alpha}
\end{equation*}

\noindent
for every $\beta\in \wedge^<V^{\ast}_{\C}$ or, equivalently, a choice of $\beta\in \wedge^< V^{\ast}_{\C}$ such that $(\widehat{\alpha} \vee \beta)^{(0)}  \neq 0$. Taking $\beta = 1$, this system of equations reduces to:
\begin{align*}
\left(\frac{\ell *(\omega\wedge\omega)}{\sqrt{2\escal{\omega,\omega}}}+i\omega\right)\vee(\theta_1\wedge\theta_2)&=3\sqrt{\frac{\escal{\omega,\omega}}{2}}\theta_1\wedge\theta_2,\\
\frac{(\theta_1 \wedge \theta_2) \vee (\bar{\theta}_1 \wedge \bar{\theta}_2)}{ \sqrt{8 \escal{\omega,\omega}}}&=-\sqrt{\frac{\escal{\omega,\omega}}{2}} - \frac{\ell *(\omega\wedge\omega)}{\sqrt{2\escal{\omega,\omega}}} - i\omega.
\end{align*}

We compute:
\begin{align*}
*(\omega\wedge\omega)\vee(\theta_1\wedge\theta_2)&=(*(\omega\wedge\omega))\triangle_1(\theta_1\wedge\theta_2)+\ell(\omega\wedge\omega)\triangle_2(\theta_1\wedge\theta_2),\\
\omega\vee(\theta_1\wedge\theta_2)&=-\escal{\omega,\theta_1\wedge\theta_2}-\ell*(\omega\wedge\theta_1\wedge\theta_2)-\omega\triangle_1(\theta_1\wedge\theta_2),\\
(\theta_1 \wedge \theta_2) \vee (\bar{\theta}_1 \wedge \bar{\theta}_2)&=\escal{\theta_1,\bar\theta_2}\escal{\theta_2,\bar\theta_1}-\escal{\theta_1,\bar\theta_1}\escal{\theta_2,\bar\theta_2}-\ell*(\theta_1\wedge\theta_2\wedge\bar\theta_1\wedge\bar\theta_2)\\
&\quad-\escal{\theta_2,\bar\theta_2}\theta_1\wedge\bar\theta_1+\escal{\theta_2,\bar\theta_1}\theta_1\wedge\bar\theta_2+\escal{\theta_1,\bar\theta_2}\theta_2\wedge\bar\theta_1-\escal{\theta_1,\bar\theta_1}\theta_2\wedge\bar\theta_2.
\end{align*}

Hence, we obtain the following conditions:
\begin{gather*}
\escal{\omega,\theta_1\wedge\theta_2}=0,\qquad \sqrt{2\escal{\omega,\omega}}i*(\omega\wedge\theta_1\wedge\theta_2)=(*(\omega\wedge\omega))\triangle_1(\theta_1\wedge\theta_2),\\
(\omega\wedge\omega)\triangle_2(\theta_1\wedge\theta_2)-\sqrt{2\escal{\omega,\omega}}i\omega\triangle_1(\theta_1\wedge\theta_2)=3\escal{\omega,\omega}\theta_1\wedge\theta_2,\\
2\escal{\omega,\omega}=\escal{\theta_1,\bar\theta_1}\escal{\theta_2,\bar\theta_2}-\vert\escal{\theta_1,\bar\theta_2}\vert^2,\qquad *(\omega\wedge\omega)=\tfrac{1}{2}*(\theta_1\wedge\theta_2\wedge\bar\theta_1\wedge\bar\theta_2),\\
\sqrt{8 \escal{\omega,\omega}}i\omega=\escal{\theta_2,\bar\theta_2}\theta_1\wedge\bar\theta_1-\escal{\theta_2,\bar\theta_1}\theta_1\wedge\bar\theta_2-\escal{\theta_1,\bar\theta_2}\theta_2\wedge\bar\theta_1+\escal{\theta_1,\bar\theta_1}\theta_2\wedge\bar\theta_2.
\end{gather*}

These are solved by:
\begin{equation}
\label{eq:omega_in_terms_of_thetas_5d}
2i\omega=\frac{\escal{\theta_2,\bar\theta_2}\theta_1\wedge\bar\theta_1-\escal{\theta_2,\bar\theta_1}\theta_1\wedge\bar\theta_2-\escal{\theta_1,\bar\theta_2}\theta_2\wedge\bar\theta_1+\escal{\theta_1,\bar\theta_1}\theta_2\wedge\bar\theta_2}{\sqrt{\escal{\theta_1,\bar\theta_1}\escal{\theta_2,\bar\theta_2}-\vert\escal{\theta_1,\bar\theta_2}\vert^2}},
\end{equation}

\noindent
which determines the Hermitian square of $\eta\in\Sigma$ in terms of its complex-bilinear square. Hence, we arrive at the following final characterization of the squares of an irreducible complex spinor in five Euclidean dimensions.

\begin{cor}
A pair of complex forms $\widehat{\alpha}_\eta, \alpha_\eta \in \wedge^< V^{\ast}_{\mathbb{C}}$, and a pair of complex forms $\widehat{\alpha}_{J\eta}, \alpha_{J\eta} \in \wedge^< V^{\ast}_{\mathbb{C}}$ are respectively the Hermitian and complex-bilinear squares of a spinor $\eta\in\Sigma$ and its conjugate $J\eta\in\Sigma$ if and only if:
\begin{gather*}
\widehat{\alpha}_\eta=\frac{1}{2}\sqrt{\escal{\theta_1,\bar\theta_1}\escal{\theta_2,\bar\theta_2}-\vert\escal{\theta_1,\bar\theta_2}\vert^2}+\frac{\ell*(\theta_1\wedge\theta_2\wedge\bar\theta_1\wedge\bar\theta_2)}{2\sqrt{\escal{\theta_1,\bar\theta_1}\escal{\theta_2,\bar\theta_2}-\vert\escal{\theta_1,\bar\theta_2}\vert^2}}+i\omega,\qquad \alpha_\eta=\theta_1\wedge\theta_2,\\
\widehat{\alpha}_{J\eta}=\frac{1}{2}\sqrt{\escal{\theta_1,\bar\theta_1}\escal{\theta_2,\bar\theta_2}-\vert\escal{\theta_1,\bar\theta_2}\vert^2}+\frac{\ell*(\theta_1\wedge\theta_2\wedge\bar\theta_1\wedge\bar\theta_2)}{2\sqrt{\escal{\theta_1,\bar\theta_1}\escal{\theta_2,\bar\theta_2}-\vert\escal{\theta_1,\bar\theta_2}\vert^2}}-i\omega,\qquad \alpha_{J\eta}=\bar\theta_1\wedge\bar\theta_2,
\end{gather*}

\noindent
where $\theta_1 , \theta_2 \in V^*_\C$ are orthogonal and isotropic and $\omega$ is as in Equation \eqref{eq:omega_in_terms_of_thetas_5d}.
\end{cor}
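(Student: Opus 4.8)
The plan is to assemble the final corollary from four ingredients already established in this subsection: the explicit form of the Hermitian and complex-bilinear squares of a generic spinor $\eta\in\Sigma$ in five Euclidean dimensions (Corollary \ref{cor:Hermitian5d} and Corollary \ref{cor:Bilinear5d}), the behaviour of these squares under the conjugation $J$ (Proposition \ref{prop:Hermitiansquareconjugate_odd} and Proposition \ref{prop:Bilinearsquareconjugate_odd}), and the compatibility criterion of Proposition \ref{prop:compatibilitysquares_odd} that pins down when a Hermitian square and a complex-bilinear square belong to the \emph{same} spinor. Since the relevant computations have in fact already been carried out in the running text preceding the corollary, the proof is essentially a matter of recording and combining them; I will present it as such.

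First I would invoke Corollary \ref{cor:Bilinear5d}: any complex-bilinear square is of the form $\alpha_\eta=\theta_1\wedge\theta_2$ with $\theta_1,\theta_2\in V^*_\C$ orthogonal and isotropic, i.e.\ $\abs{\theta_1}=\abs{\theta_2}=\escal{\theta_1,\theta_2}=0$. Then, applying Proposition \ref{prop:Bilinearsquareconjugate_odd} in the signature $p-q\equiv_8 5$ (so the relevant sign is $(-1)^{\binom{p}{2}}$, which here equals $+1$ since $p=5$), I get $\alpha_{J\eta}=\bar\theta_1\wedge\bar\theta_2$. Next, by Corollary \ref{cor:Hermitian5d} the Hermitian square of $\eta$ is $\widehat\alpha_\eta=\sqrt{\escal{\omega,\omega}/2}+\ell*(\omega\wedge\omega)/\sqrt{2\escal{\omega,\omega}}+i\omega$ for a real two-form $\omega$, and Proposition \ref{prop:Hermitiansquareconjugate_odd} then gives $\widehat\alpha_{J\eta}$ by flipping the sign of $i\omega$ while keeping the scalar and top-degree pieces fixed (the phase factor $\kappa/\bar\kappa$ is trivial since we have set $\kappa=1$). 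This already produces all four forms in the statement \emph{modulo} identifying $\omega$, the scalar normalisation, and the degree-four piece in terms of $\theta_1,\theta_2$.

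The remaining point is to carry out this identification, which is exactly what the displayed computation in the body does: feeding $\widehat\alpha=\widehat\alpha_\eta$, $\alpha=\alpha_\eta$, $\alpha_c=\alpha_{J\eta}$ into the two algebraic relations of Proposition \ref{prop:compatibilitysquares_odd} and specialising to $\beta=1$, one obtains the system whose solution is recorded in Equation \eqref{eq:omega_in_terms_of_thetas_5d}, namely
\begin{equation*}
2i\omega=\frac{\escal{\theta_2,\bar\theta_2}\theta_1\wedge\bar\theta_1-\escal{\theta_2,\bar\theta_1}\theta_1\wedge\bar\theta_2-\escal{\theta_1,\bar\theta_2}\theta_2\wedge\bar\theta_1+\escal{\theta_1,\bar\theta_1}\theta_2\wedge\bar\theta_2}{\sqrt{\escal{\theta_1,\bar\theta_1}\escal{\theta_2,\bar\theta_2}-\vert\escal{\theta_1,\bar\theta_2}\vert^2}},
\end{equation*}
together with $2\escal{\omega,\omega}=\escal{\theta_1,\bar\theta_1}\escal{\theta_2,\bar\theta_2}-\vert\escal{\theta_1,\bar\theta_2}\vert^2$ and $*(\omega\wedge\omega)=\tfrac12*(\theta_1\wedge\theta_2\wedge\bar\theta_1\wedge\bar\theta_2)$. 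Substituting these into the forms above turns the scalar component into $\tfrac12\sqrt{\escal{\theta_1,\bar\theta_1}\escal{\theta_2,\bar\theta_2}-\vert\escal{\theta_1,\bar\theta_2}\vert^2}$ and the top piece into $\ell*(\theta_1\wedge\theta_2\wedge\bar\theta_1\wedge\bar\theta_2)/(2\sqrt{\cdots})$, which is precisely the asserted expression. For the converse direction one checks that any data $(\theta_1,\theta_2,\omega)$ satisfying the orthogonality/isotropy conditions and \eqref{eq:omega_in_terms_of_thetas_5d} does satisfy the $\beta=1$ relations, and — as argued in the body — that $\beta=1$ is an admissible choice in Proposition \ref{prop:compatibilitysquares_odd} because $(\widehat\alpha\vee 1)^{(0)}=\widehat\alpha^{(0)}=\sqrt{\escal{\omega,\omega}/2}\neq0$; hence the conditions are sufficient as well as necessary.

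The main obstacle, and the only genuinely computational step, is the middle one: verifying the $\vee$-product identities $*(\omega\wedge\omega)\vee(\theta_1\wedge\theta_2)$, $\omega\vee(\theta_1\wedge\theta_2)$ and $(\theta_1\wedge\theta_2)\vee(\bar\theta_1\wedge\bar\theta_2)$, expanding them into their homogeneous components with the generalized products $\triangle_1,\triangle_2$, and then checking that the resulting system is actually solved by \eqref{eq:omega_in_terms_of_thetas_5d}. This is analogous to — though somewhat heavier than — the four-dimensional computation in Subsection \ref{subsec:square4dEuclidean}, and the key algebraic input making it work is that $\theta_1,\theta_2$ span an isotropic plane, so that $\escal{\theta_i,\theta_j}=0$ and all mixed wedge products simplify drastically; the Cauchy--Schwarz-type argument (that $\escal{\theta_1,\bar\theta_1}\escal{\theta_2,\bar\theta_2}-\vert\escal{\theta_1,\bar\theta_2}\vert^2>0$ since $\theta_1,\theta_2$ are linearly independent) guarantees the square roots are real and positive, so the formulas are well defined. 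Everything else is bookkeeping with the already-proven propositions.
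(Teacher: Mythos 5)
Your proposal is correct and follows essentially the same route as the paper: the paper's own argument is precisely the running computation preceding the corollary, namely combining Corollaries \ref{cor:Hermitian5d} and \ref{cor:Bilinear5d} with Propositions \ref{prop:Hermitiansquareconjugate_odd} and \ref{prop:Bilinearsquareconjugate_odd} for the conjugate squares, and then feeding $\beta=1$ into Proposition \ref{prop:compatibilitysquares_odd} to solve for $\omega$, $\escal{\omega,\omega}$, and $*(\omega\wedge\omega)$ in terms of $\theta_1,\theta_2$, exactly as you describe. (Only a terminological slip: the piece of $\widehat{\alpha}_\eta$ that stays fixed under conjugation besides the scalar is the \emph{one-form} $\ell*(\omega\wedge\omega)/\sqrt{2\escal{\omega,\omega}}$, not a ``top-degree'' component.)
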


% % % % % % % % % % % % % % % % % % % % % % % % % % % % % % % % % % % % % %

\subsection{The square of an irreducible chiral complex spinor in dimension six}

% % % % % % % % % % % % % % % % % % % % % % % % % % % % % % % % % % % % % %

Let $(V,h)$ be a six-dimensional Euclidean vector space. Its associated real Clifford algebra $\mathrm{Cl}(V^*,h^*)$ is isomorphic to $\Mat(4,\mathbb{H})$ and the irreducible complex Clifford module $(\Sigma,\gamma)$ is of quaternionic type by Lemma \ref{lemma:real_or_quaternionic_type}. We endow $\Sigma$ with an admissible skew-symmetric complex-bilinear pairing $\scB$ of positive adjoint type and with a compatible Hermitian pairing $\scS$ of positive adjoint type, which are related by a compatible quaternionic structure $J\colon\Sigma\to\Sigma$.

% % % % % % % % % % % % % % % % % % % % % % % % % % % % % % % % % % % % % %

\subsubsection{The Hermitian square}

% % % % % % % % % % % % % % % % % % % % % % % % % % % % % % % % % % % % % %

Set $\kappa=1$ for simplicity. By Corollary \ref{cor:sq_S_chiral}, a complex exterior form $\widehat{\alpha}\in\wedge V^*_\C$ is the Hermitian square of a spinor $\eta\in\Sigma$ with chirality $\mu\in\Z_2$ if and only if: $$\widehat{\alpha}\diamond\widehat{\alpha}=8\widehat{\alpha}^{(0)}\widehat{\alpha},\quad\tau(\widehat{\alpha})=\overline{\widehat{\alpha}},\quad\widehat{\alpha}\diamond\beta\diamond\widehat{\alpha}=8(\widehat{\alpha}\diamond\beta)^{(0)}\widehat{\alpha},\quad -i*(\pi\circ\tau)(\widehat{\alpha})=\mu\widehat{\alpha}$$ for a fixed exterior form $\beta\in\wedge V^*_\C$ satisfying $(\widehat{\alpha}\diamond\beta)^{(0)}\neq0$. Let $\widehat{\alpha}=\sum_{k=0}^6\widehat{\alpha}^{(k)}\in\wedge V^*_\C$. The linear condition $\tau(\widehat{\alpha})=\overline{\widehat{\alpha}}$ implies that $\widehat{\alpha}^{(0)}$, $\widehat{\alpha}^{(1)}$, $\widehat{\alpha}^{(4)}$ and $\widehat{\alpha}^{(5)}$ are real and that $\widehat{\alpha}^{(2)}$, $\widehat{\alpha}^{(3)}$ and $\widehat{\alpha}^{(6)}$ are imaginary. The other linear condition $-i*(\pi\circ\tau)(\widehat{\alpha})=\mu\widehat{\alpha}$ implies that $-i*\widehat{\alpha}^{(0)}=\mu\widehat{\alpha}^{(6)}$, $i*\widehat{\alpha}^{(2)}=\mu\widehat{\alpha}^{(4)}$ and $\widehat{\alpha}^{(1)}=\widehat{\alpha}^{(3)}=\widehat{\alpha}^{(5)}=0$. Set $r:=\widehat{\alpha}^{(0)}\in\R$ and $i\omega:=\widehat{\alpha}^{(2)}$ for $\omega\in\wedge^2V^*$. Hence:
\begin{equation*}
\widehat{\alpha}=r+i\omega-\mu*\omega-i\mu r\nu=(r+i\omega)\diamond(1-i\mu\nu).
\end{equation*}

\noindent
Now we compute: $$\widehat{\alpha}\diamond\widehat{\alpha}=2r^2+4ir\omega-4\mu r*\omega-2i\mu r^2\nu-2\omega\wedge\omega+2\escal{\omega,\omega}+2i\mu*(\omega\wedge\omega)-2i\mu\escal{\omega,\omega}\nu.$$

Then the equation $\widehat{\alpha}\diamond\widehat{\alpha}=8\widehat{\alpha}^{(0)}\widehat{\alpha}$ reduces to: $$\escal{\omega,\omega}=3r^2,\quad\omega\wedge\omega=2\mu r*\omega.$$

\begin{cor}
\label{cor:Hermitian6d}
A complex exterior form $\widehat{\alpha} \in\wedge V^*_\C$ is the Hermitian square of a complex irreducible spinor $\eta\in\Sigma^{\mu}$ with chirality $\mu$ if and only if:
\begin{equation*}
\widehat{\alpha} = \sqrt{\frac{\escal{\omega,\omega}}{3}} + i\omega-\mu*\omega-i\mu\sqrt{\frac{\escal{\omega,\omega}}{3}}\nu
\end{equation*}

\noindent
for a uniquely determined real two-form $\omega\in\wedge^2V^*$ satisfying $\sqrt{\escal{\omega,\omega}}*\omega=\mu\frac{\sqrt{3}}{2}\omega\wedge\omega$.
\end{cor}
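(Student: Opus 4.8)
The plan is to apply Corollary \ref{cor:sq_S_chiral} with $d=6$, $q=0$, $s=+1$ (since we have equipped $\Sigma$ with a Hermitian pairing of positive adjoint type), and $\kappa=1$, and then unwind the resulting algebraic system degree by degree. Concretely, an exterior form $\widehat{\alpha}\in\wedge V^*_\C$ is the Hermitian square of a chiral spinor of chirality $\mu$ if and only if $\widehat{\alpha}\diamond\widehat{\alpha}=8\widehat{\alpha}^{(0)}\widehat{\alpha}$, $\tau(\widehat{\alpha})=\overline{\widehat{\alpha}}$, $\widehat{\alpha}\diamond\beta\diamond\widehat{\alpha}=8(\widehat{\alpha}\diamond\beta)^{(0)}\widehat{\alpha}$ for some $\beta$ with $(\widehat{\alpha}\diamond\beta)^{(0)}\neq0$, and $-i*(\pi\circ\tau)(\widehat{\alpha})=\mu\widehat{\alpha}$ (using $i^{q+d/2}=i^3=-i$ and $*(\pi\circ\tau)$ as in the corollary). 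The first step is to impose the two \emph{linear} constraints: $\tau(\widehat{\alpha})=\overline{\widehat{\alpha}}$ fixes the reality/imaginarity of each homogeneous component according to the sign $(-1)^{\binom{k}{2}}$, and $-i*(\pi\circ\tau)(\widehat{\alpha})=\mu\widehat{\alpha}$ relates components of complementary degree and kills $\widehat{\alpha}^{(1)},\widehat{\alpha}^{(3)},\widehat{\alpha}^{(5)}$. Writing $r:=\widehat{\alpha}^{(0)}\in\R$ and $i\omega:=\widehat{\alpha}^{(2)}$ with $\omega\in\wedge^2 V^*$ real, one deduces $\widehat{\alpha}^{(4)}=-\mu*\omega$ and $\widehat{\alpha}^{(6)}=-i\mu r\nu$, i.e. $\widehat{\alpha}=(r+i\omega)\diamond(1-i\mu\nu)$.

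The second step is to expand the quadratic equation $\widehat{\alpha}\diamond\widehat{\alpha}=8\widehat{\alpha}^{(0)}\widehat{\alpha}=8r\,\widehat{\alpha}$ using the Kähler-Atiyah product formulas from Appendix \ref{app:KA}. Since $\nu$ is central in even dimension and $(1-i\mu\nu)\diamond(1-i\mu\nu)=1-2i\mu\nu-\nu\diamond\nu=2(1-i\mu\nu)$ (because $\nu\diamond\nu=-\nu^2$ with the appropriate sign in dimension six Euclidean, here $\nu\diamond\nu=-1$), it is cleanest to compute $(r+i\omega)\diamond(r+i\omega)=r^2+2ir\omega-\omega\diamond\omega$ and use $\omega\diamond\omega=\omega\wedge\omega-\omega\triangle_1\omega-\escal{\omega,\omega}$; for a two-form, $\omega\triangle_1\omega$ vanishes by symmetry, so $\omega\diamond\omega=\omega\wedge\omega-\escal{\omega,\omega}$. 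Collecting by degree, the identity $\widehat{\alpha}\diamond\widehat{\alpha}=8r\widehat{\alpha}$ reduces to the two scalar/form equations $\escal{\omega,\omega}=3r^2$ and $\omega\wedge\omega=2\mu r*\omega$, which immediately give $r=\pm\sqrt{\escal{\omega,\omega}/3}$ and, after choosing the sign consistent with the orientation conventions, $\sqrt{\escal{\omega,\omega}}\,*\omega=\tfrac{\sqrt3}{2}\mu\,\omega\wedge\omega$. This yields the claimed normal form.

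The third step is to verify that, given the linear constraints and these two quadratic relations, the cubic condition $\widehat{\alpha}\diamond\beta\diamond\widehat{\alpha}=8(\widehat{\alpha}\diamond\beta)^{(0)}\widehat{\alpha}$ holds for a suitable test form $\beta$ — following the pattern of the lower-dimensional subsections, one takes $\beta=1$ and checks that $\widehat{\alpha}\diamond\widehat{\alpha}=8r\widehat{\alpha}$ together with $(\widehat{\alpha})^{(0)}=r\neq0$ already does the job, so no extra condition on $\omega$ arises. One should also confirm uniqueness of $\omega$: the map $\eta\mapsto\widehat{\alpha}$ is $\Spin^c_o(6)$-equivariant and the $\wedge^2$-component recovers $\eta$ up to the available phase ambiguity, so $\omega$ is determined by $\eta$ (this is the content of the word "uniquely determined" in the statement).

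The main obstacle I expect is bookkeeping in the second step: correctly tracking the signs and constants in the Kähler-Atiyah product — in particular the value of $\nu\diamond\nu$ and the $i^{q+d/2}$ phase in the chirality constraint for $d=6$, $q=0$ — and making sure the degree-four and degree-six pieces of $\widehat{\alpha}\diamond\widehat{\alpha}=8r\widehat{\alpha}$ are automatically satisfied once the degree-zero and degree-two pieces are, rather than imposing new constraints. The factorization $\widehat{\alpha}=(r+i\omega)\diamond(1-i\mu\nu)$ is the device that makes this transparent, since it reduces everything to manipulations with $r+i\omega$ and the central idempotent-like element $\tfrac12(1-i\mu\nu)$; modulo that, the computation is entirely routine and parallels Corollary \ref{cor:Hermitian4d} and Corollary \ref{cor:Hermitian5d}.
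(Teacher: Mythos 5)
Your proposal is correct and follows essentially the same route as the paper: impose the two linear constraints to get $\widehat{\alpha}=(r+i\omega)\diamond(1-i\mu\nu)$, expand the quadratic condition $\widehat{\alpha}\diamond\widehat{\alpha}=8r\widehat{\alpha}$ via the Kähler-Atiyah product to obtain $\escal{\omega,\omega}=3r^2$ and $\omega\wedge\omega=2\mu r\ast\omega$, and observe that $\beta=1$ suffices because $r=\widehat{\alpha}^{(0)}\neq0$ in the Euclidean case. Two cosmetic corrections: in even dimension $\nu$ is central only on the even subalgebra (which is all you use, since every factor here is even), and the sign $r=+\sqrt{\escal{\omega,\omega}/3}$ is forced by the positive-definiteness of $\scS$, not by orientation conventions.
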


Given an irreducible complex spinor $\eta\in \Sigma$, consider now the following equation:
\begin{equation*}
F\cdot \eta := \Psi_{\gamma}(F)(\eta) = 0
\end{equation*}

\noindent
for a complex two-form $F\in \wedge^2 V_\C^{\ast}$. By Lemma \ref{lemma:constrainedspinoreven}, a two-form $F$ satisfies the previous equation if and only if: 
\begin{equation*}
F\diamond\widehat{\alpha}=F\diamond(r+i\omega-\mu*\omega-i\mu r\nu)=0.
\end{equation*}

\noindent
Separating by degree, this equation is equivalent to: 
\begin{equation*}
\mu \sqrt{\frac{\escal{\omega,\omega}}{3}} \ast F + F\wedge\omega = i\mu*(F\triangle_1\omega) ,\qquad \escal{F,\omega}=0.    
\end{equation*}
 
\noindent
Now consider a complex three-form $H\in\wedge^3V_\C^*$. It satisfies the equation $H\cdot\eta=0$ if and only if: 
\begin{equation*}
H\diamond\widehat{\alpha}=H\diamond(r+i\omega-\mu*\omega-i\mu r\nu)=0.
\end{equation*}

\noindent
Separating by degree, this equation is equivalent to: 
\begin{eqnarray*}
(H+i\mu \ast H )\wedge\omega=0,\quad \mu \sqrt{\frac{\escal{\omega,\omega}}{3}} \ast H + H\triangle_1\omega+i  ( \mu \ast( H\triangle_1\omega) - \sqrt{\frac{\escal{\omega,\omega}}{3}} H)=0.
\end{eqnarray*}

\noindent
The bilinear operations $\triangle_1$ and $\triangle_2$ are defined in Appendix \ref{app:KA}.

% % % % % % % % % % % % % % % % % % % % % % % % % % % % % % % % % % % % % %

\subsubsection{The complex-bilinear square}

% % % % % % % % % % % % % % % % % % % % % % % % % % % % % % % % % % % % % %

Let $\mu\in\mathbb{Z}_2$. By Corollary \ref{cor:sq_B_chiral}, an exterior form $\alpha\in\wedge V^*_\C$ is the complex-bilinear square of a spinor $\eta\in\Sigma$ with chirality $\mu$ if and only if:
\begin{equation*}
\alpha\diamond\alpha=8\alpha^{(0)}\alpha,\quad\tau(\alpha) =-\alpha\, , \quad \alpha \diamond \beta \diamond \alpha = 8 (\alpha\diamond\beta)^{(0)} \alpha,\quad-i*(\pi\circ\tau)(\alpha)=\mu\alpha  
\end{equation*}

\noindent
for a fixed exterior form $\beta\in\wedge V^*_\C$ satisfying $(\alpha\diamond\beta)^{(0)}\neq0$. Let $\alpha=\sum_{k=0}^6\alpha^{(k)}\in\wedge V^*_\C$. The linear condition $\tau(\alpha)=-\alpha$ implies that $\alpha^{(0)}=\alpha^{(1)}=\alpha^{(4)}=\alpha^{(5)}=0$ and the other linear condition $-i*(\pi\circ\tau)(\alpha)=\mu\alpha$ implies that $\alpha^{(2)}=\alpha^{(6)}=0$ and $*\alpha^{(3)}=i\mu\alpha^{(3)}$. Set $\rho:=\alpha^{(3)}$. The quadratic equation $\alpha\diamond\alpha=8\alpha^{(0)}\alpha$ is then:
\begin{equation*}
\rho\diamond\rho=\rho\triangle_1\rho-\escal{\rho,\rho}=0,
\end{equation*}

\noindent
thus $\rho\triangle_1\rho=0$ and $\escal{\rho,\rho}=0$. Note that the condition $*\rho=i\mu\rho$ already implies that $\rho\diamond\rho=0$. Indeed, on one hand we have: 
\begin{equation*}
*\rho\diamond*\rho=(i\mu\rho)\diamond(i\mu\rho)=-\rho\diamond\rho.
\end{equation*}

\noindent
On the other hand, using Lemma \ref{lemma:product_volume_form} and $\nu\diamond\nu=-1$ we have: 
\begin{equation*}
*\rho\diamond*\rho=-*\tau(\rho)\diamond*(\pi\circ\tau)(\rho)=-\rho\diamond\nu\diamond\nu\diamond\rho=\rho\diamond\rho.
\end{equation*}

\noindent
Since $\alpha^{(0)}=0$, taking $\beta=1$ does not suffice to characterize the square of the spinor. We consider $\beta = \bar \rho \in \wedge^3V^*_\C$. Then:
\begin{equation*}
\alpha\diamond\beta=\rho\diamond\bar\rho=\rho\wedge\bar\rho+\rho\triangle_1\bar\rho-\rho\triangle_2\bar\rho-\escal{\rho,\bar\rho}  
\end{equation*}

\noindent
and $(\alpha\diamond\beta)^{(0)}=-\escal{\rho,\bar\rho}=-\abs{\Re(\rho)}^2-\abs{\Im(\rho)}^2\neq0$ since $\rho\neq 0$.

\begin{cor}
\label{cor:6dbilinear}
A complex exterior form $\alpha\in\wedge V^*_\C$ is the complex-bilinear square of a complex irreducible spinor $\eta\in\Sigma^\mu$ with chirality $\mu$ if and only if: 
\begin{equation*}
\alpha=\theta_1\wedge\theta_2\wedge\theta_3    
\end{equation*}

\noindent
for complex one-forms $\theta_1,\theta_2,\theta_3\in V^*_\C$ isotropic and orthogonal satisfying: 
\begin{equation*}
*(\theta_1\wedge\theta_2\wedge\theta_3)=i\mu(\theta_1\wedge\theta_2\wedge\theta_3).
\end{equation*}
\end{cor}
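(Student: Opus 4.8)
The plan is to combine Corollary~\ref{cor:sq_B_chiral} with the reduction carried out in the paragraphs just before the statement. There it is shown that the linear conditions in Corollary~\ref{cor:sq_B_chiral} force $\alpha$ to equal its degree-three component $\rho:=\alpha^{(3)}\in\wedge^3V^*_\C$ with $\ast\rho=i\mu\rho$, that the quadratic relation $\alpha\diamond\alpha=8\alpha^{(0)}\alpha$ is then automatic, and that $\beta=\bar\rho$ is an admissible choice because $(\rho\diamond\bar\rho)^{(0)}=-\escal{\rho,\bar\rho}\neq0$. Hence ``$\alpha$ is the complex-bilinear square of a chiral spinor of chirality $\mu$'' amounts to ``$\alpha=\rho$ is a three-form with $\ast\rho=i\mu\rho$ of the form $\rho=\cE_\gamma(\eta)$ for some $\eta\in\Sigma^\mu$'', and I would prove the equivalence with decomposability directly at the level of spinors rather than by expanding the cubic relation, the central tool being Lemma~\ref{lemma:constrainedspinoreven}: for $v\in V^*_\C$ one has $\Psi_\gamma(v)\eta=0$ iff $v\diamond\cE_\gamma(\eta)=0$, and since $v\diamond\rho=v\wedge\rho+\iota_{v^\sharp}\rho$ this is equivalent to $v\wedge\rho=0$ and $\iota_{v^\sharp}\rho=0$ holding simultaneously.

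For the implication ``square $\Rightarrow$ decomposable, isotropic, orthogonal'' I would take $\rho=\cE_\gamma(\eta)$ with $\eta\in\Sigma^\mu$ nonzero and use the classical fact that in six dimensions every irreducible complex chiral spinor is \emph{pure} (see \cite{Chev54,HarveyBook}; \cite{Cartan1938} for the original observation relating pure spinors to decomposable forms). Its annihilator $L_\eta:=\{v\in V^*_\C\mid\Psi_\gamma(v)\eta=0\}$ is then a totally isotropic subspace of $(V^*_\C,h^*_\C)$ of complex dimension $\tfrac{d}{2}=3$. By the previous paragraph every $v\in L_\eta$ satisfies $v\wedge\rho=0$, and an elementary exterior-algebra argument --- a nonzero three-form killed under $\wedge$ by a three-dimensional space of one-forms is a scalar multiple of the wedge of a basis of that space --- gives $\rho=\theta_1\wedge\theta_2\wedge\theta_3$ for a basis $\{\theta_i\}$ of $L_\eta$, the scalar absorbed into $\theta_1$. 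Since $L_\eta$ is totally isotropic, $\escal{\theta_i,\theta_j}=0$ for all $i,j$, i.e.\ the $\theta_i$ are isotropic and pairwise orthogonal, and $\ast\rho=i\mu\rho$ is already part of the hypotheses. For the converse, given $\rho=\theta_1\wedge\theta_2\wedge\theta_3$ with the $\theta_i$ isotropic, pairwise orthogonal and $\ast\rho=i\mu\rho$, I would set $L:=\Span_\C\{\theta_1,\theta_2,\theta_3\}$, a maximal totally isotropic subspace (three-dimensional as $\rho\neq0$), choose a nonzero pure spinor $\eta$ annihilated by $\Psi_\gamma(\theta_1),\Psi_\gamma(\theta_2),\Psi_\gamma(\theta_3)$ --- which exists by the Chevalley construction and is automatically chiral --- and apply Lemma~\ref{lemma:constrainedspinoreven} to get $\theta_i\wedge\cE_\gamma(\eta)=0$. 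Since $\cE_\gamma(\eta)$, being the complex-bilinear square of a chiral spinor, is a nonzero three-form by the same reduction, the exterior-algebra argument forces $\cE_\gamma(\eta)=c\rho$ with $c\in\C^\times$; replacing $\eta$ by $c^{-1/2}\eta$ yields $\cE_\gamma(c^{-1/2}\eta)=\rho$, and the chirality of this spinor equals $\mu$ because $\ast\rho=i\mu\rho$.

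The step I expect to be the main obstacle is exactly the passage to decomposability of $\rho$. In dimensions four and five this was immediate from the quadratic relation $\alpha\diamond\alpha=8\alpha^{(0)}\alpha$ (there reading $\omega\wedge\omega=0$), but in dimension six that relation is vacuous once chirality is imposed, so decomposability must be drawn out of the cubic relation. Routing it through the purity of chiral spinors together with Lemma~\ref{lemma:constrainedspinoreven}, as above, is the most economical argument I see; a self-contained alternative, closer to the ``Plücker relations'' perspective announced in the introduction, is to expand $\rho\diamond\beta\diamond\rho-8(\rho\diamond\beta)^{(0)}\rho=0$ against a spanning family of test forms $\beta$ --- legitimate since, once $\rho$ is a square, the cubic relation holds for every $\beta$ by Corollary~\ref{cor:sq_B_chiral}\,(a)$\Rightarrow$(c) --- and to recognize the resulting system as the Plücker relations for a three-form in six dimensions, the isotropy and orthogonality of the $\theta_i$ then following as in the four-dimensional computation; this route is considerably more computational, the analogue of the tensor $\mathbb{A}(\theta_1,\theta_2)$ having many more terms.
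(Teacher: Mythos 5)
Your proposal is correct, but it takes a genuinely different route from the paper's. The paper stays entirely inside its own algebraic characterization: for the forward direction it feeds $\beta=v^\flat$ into the cubic relation (valid for every $\beta$ by Corollary \ref{cor:sq_B_chiral}, (a)$\Rightarrow$(c)), extracts $(\iota_v\rho)\wedge\rho=0$ for all $v\in V_\C$, and concludes decomposability from the classical Pl\"ucker relations, with isotropy and orthogonality then read off from $\ast\rho=i\mu\rho$ by contracting with the $\theta_i^\sharp$; for the converse it picks a basis of isotropic one-forms conjugate in pairs, takes $\beta=\bar\rho$ so that $(\alpha\diamond\beta)^{(0)}=-1$, and verifies the cubic relation by a direct (admittedly tedious) computation before invoking (b)$\Rightarrow$(a). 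You instead import the classical pure-spinor theory --- purity of every chiral spinor in dimension six and Chevalley's correspondence between maximal totally isotropic subspaces of $V^*_\C$ and spinor lines --- and use Lemma \ref{lemma:constrainedspinoreven} to translate annihilation into $\theta_i\wedge\rho=0$ and $\iota_{\theta_i^\sharp}\rho=0$, with an elementary divisibility argument finishing both directions; the rescaling by $c^{-1/2}$ and the chirality matching via $\ast\rho=i\mu'\rho$ are in order, so there is no gap. What your route buys is brevity and conceptual transparency (the annihilator is visibly the span of the $\theta_i$, and the paper's unverified ``tedious but straightforward computation'' in the converse is replaced by a structural argument); what it gives up is the self-containedness the formalism is explicitly designed for --- this corollary is in effect an intrinsic re-derivation of Cartan--Chevalley purity in dimension six, so deriving its forward direction from purity makes that half nearly tautological and reintroduces exactly the representation-theoretic input the paper avoids. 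Your sketched alternative via the cubic relation with $\beta=v^\flat$ is precisely the paper's argument.
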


\begin{proof}
By the previous discussion we know that necessarily $\alpha=\rho\in\wedge^3V^*_\C$ satisfying $*\rho=i\mu\rho$. Let us show that $\rho$ is decomposable. Let $v\in V_\C$ be any complex vector and take $\beta=v^\flat\in V^*_\C$. Then $(\alpha\diamond\beta)^{(0)}=(\rho\diamond v^\flat)^{(0)}=0$ and the quadratic equation $\alpha\diamond\beta\diamond\alpha=8(\alpha\diamond\beta)^{(0)}\alpha$ becomes: 
\begin{equation*}
(\iota_v\rho)\wedge\rho-(\iota_v\rho)\triangle_1\rho-(\iota_v\rho)\triangle_2\rho=0,  
\end{equation*}

\noindent
where we have used $\rho\diamond v^\flat+v^\flat\diamond\rho=2\iota_v\rho$ and $\rho\diamond\rho=0$. In particular, we have $(\iota_v\rho)\wedge\rho=0$ for all $v\in V_\C$, hence $\rho$ is decomposable by classical Plücker relations \cite{EM00}. Hence, we can write:
\begin{equation*}
\rho = \theta_1 \wedge \theta_2 \wedge \theta_3
\end{equation*}

\noindent
for complex one-forms $\theta_1 , \theta_2 , \theta_3 \in V^{\ast}_{\C}$. Then, the condition $\ast\rho = i\mu \rho$ translates into $\ast (\theta_1 \wedge \theta_2 \wedge \theta_3) = i \mu (\theta_1 \wedge \theta_2 \wedge \theta_3)$. Plugging $\theta_1^{\sharp}$, $ \theta_2^{\sharp}$, and $\theta_3^{\sharp}$ into the latter, we conclude that the one-forms are isotropic and mutually orthogonal. We choose a basis:
\begin{equation*}
\{\theta_1,\theta_2,\theta_3,\bar\theta_1,\bar\theta_2,\bar\theta_3\}  
\end{equation*}

\noindent
of $(V^*_\C,h^*_\C)$ given by isotropic one-forms that are conjugate in pairs, that is, they are mutually orthogonal except for:
\begin{equation*}
\escal{\theta_1,\bar\theta_1}=\escal{\theta_2,\bar\theta_2}=\escal{\theta_3,\bar\theta_3}=1.    
\end{equation*}

\noindent 
Then, setting $\beta = \bar\rho = \bar{\theta}_1 \wedge \bar{\theta}_2 \wedge \bar{\theta}_3$ we have $(\alpha\diamond\beta)^{(0)}=-\escal{\rho,\bar\rho}=-1$ and a tedious but straightforward computation shows that the equation $\alpha \diamond \beta \diamond \alpha = -8 \alpha$ is automatically satisfied by $\alpha$ as given in the statement.
\end{proof}

\noindent
As in the previous cases, the complex-bilinear square of an irreducible and chiral complex spinor, as obtained in the corollary above, defines naturally a $\SU(3)$-structure. We then recover the following well-known correspondence.  

\begin{cor}
There is a one-to-one correspondence between $\SU(3)$-structures and irreducible and chiral complex spinors on a six-dimensional Euclidean vector space $(V,h)$.
\end{cor}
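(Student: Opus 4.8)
The plan is to leverage Corollary \ref{cor:6dbilinear}, which already establishes the bijection between complex-bilinear squares of irreducible chiral spinors and decomposable complex three-forms $\rho = \theta_1\wedge\theta_2\wedge\theta_3$ built from isotropic, mutually orthogonal one-forms satisfying $\ast\rho = i\mu\rho$. So what remains is purely to translate that algebraic data into an $\SU(3)$-structure on $(V,h)$ and to check the correspondence is one-to-one. First I would write $\theta_j = \theta_j^R + i\theta_j^I$ and unpack the isotropy and orthogonality conditions $\abs{\theta_j}^2 = 0$ and $\escal{\theta_j,\theta_k} = 0$ into real bilinear identities: $\abs{\theta_j^R} = \abs{\theta_j^I}$, $\escal{\theta_j^R,\theta_j^I}=0$, together with the cross-pairings $\escal{\theta_j^R,\theta_k^R} = \escal{\theta_j^I,\theta_k^I}$ and $\escal{\theta_j^R,\theta_k^I} + \escal{\theta_j^I,\theta_k^R} = 0$ for $j\neq k$. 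The condition $\ast\rho = i\mu\rho$ (equivalently that $\rho$ is of type $(3,0)$) forces $\rho\wedge\bar\rho\neq 0$, hence the six real one-forms $\{\theta_j^R,\theta_j^I\}_{j=1,2,3}$ are linearly independent and span $V^\ast$.

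Next I would define the candidate complex structure $I$ on $V^\ast$ by $I(\theta_j^I) := \theta_j^R$ and $I(\theta_j^R) := -\theta_j^I$ for $j=1,2,3$, exactly mimicking the four-dimensional argument. The identities extracted above show that $I$ is $h^\ast$-orthogonal, so $(h,I)$ is a Hermitian structure on $V$ with associated fundamental two-form $\omega_I$, and $\rho = \theta_1\wedge\theta_2\wedge\theta_3$ is a nowhere-vanishing $(3,0)$-form. The remaining check is the normalization compatibility between $\omega_I$ and $\rho$ that upgrades a Hermitian $+$ $(3,0)$-form pair to a genuine $\SU(3)$-structure, namely that $\rho\wedge\bar\rho$ is a fixed multiple of $\omega_I^3$; this is a pointwise statement that follows from the scaling freedom in $\rho$ (one may rescale $\theta_1$, say, to fix the norm $\escal{\rho,\bar\rho}$), so up to the usual conformal rescaling the data $(h,I,\rho)$ is precisely an $\SU(3)$-structure. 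Conversely, any $\SU(3)$-structure $(h,I,\rho)$ has a $(3,0)$-form which is automatically decomposable (being a top holomorphic form on a three-complex-dimensional space) into isotropic orthogonal one-forms, and $\ast\rho = i\mu\rho$ holds for the appropriate orientation $\mu$; this recovers an element of $\Im(\cE_\gamma)$ by Corollary \ref{cor:6dbilinear}. Injectivity on both sides follows because $\rho$ determines $I$ (its kernel as a contraction operator cuts out the $(1,0)$-space) and conversely $I$ together with $\rho$'s phase determines $\rho$.

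I expect the only genuinely delicate point to be keeping track of the chirality label $\mu$ and the orientation convention so that the Hodge-star eigenvalue equation $\ast\rho = i\mu\rho$ matches the standard $\SU(3)$ normalization, and making precise the sense in which "one-to-one correspondence" is meant — whether spinors are taken modulo sign (as the complex-bilinear square is, per the remark after Definition \ref{def:squares}) and whether $\SU(3)$-structures are taken up to the residual rescaling. Given the parallel results already proved in dimensions two, four, and five, the cleanest exposition is simply to cite Corollary \ref{cor:6dbilinear}, reproduce the short linear-algebra lemma defining $I$, and invoke the standard fact (as in \cite{CS07} for the five-dimensional case, or directly) that a Hermitian vector space together with a compatible decomposable $(n,0)$-form is the same as an $\SU(n)$-structure, specialized to $n=3$.
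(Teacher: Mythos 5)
Your proposal is correct and follows essentially the same route as the paper, which in dimension six gives no separate argument but simply invokes the pattern of the lower-dimensional cases (define $I$ on $V^{\ast}$ by $I(\theta_j^I)=\theta_j^R$, $I(\theta_j^R)=-\theta_j^I$, check $h$-orthogonality from the isotropy/orthogonality identities, and observe that $\rho=\theta_1\wedge\theta_2\wedge\theta_3$ is the compatible $(3,0)$-form); your write-up just makes that explicit, including the genuine subtleties about chirality/orientation and the sign/scale ambiguities that the paper also leaves implicit. One small slip: contraction $\iota_v\rho=0$ cuts out the $(0,1)$-space (the common kernel of the $\theta_j$), not the $(1,0)$-space, though this does not affect the conclusion that $\rho$ determines $I$.
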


% % % % % % % % % % % % % % % % % % % % % % % % % % % % % % % % % % % % % %

\subsubsection{Compatibility conditions and the conjugate square}

% % % % % % % % % % % % % % % % % % % % % % % % % % % % % % % % % % % % % %

The goal of this subsection is to characterize the complex-bilinear and Hermitian squares of the \emph{same} irreducible complex and chiral spinor $\eta\in\Sigma^\mu$, and the complex-bilinear and Hermitian squares of its conjugate, namely $J\eta\in\Sigma^{-\mu}$ since $(\Sigma,\gamma)$ is of quaternionic type in six Euclidean dimensions. By Proposition \ref{prop:Hermitiansquareconjugate} and Proposition \ref{prop:Bilinearsquareconjugate}, together with Corollary \ref{cor:Hermitian6d} and Corollary \ref{cor:6dbilinear}, we know that the Hermitian $\widehat{\alpha}_c$ and complex-bilinear $\alpha_c$ squares of $J\eta\in\Sigma^{-\mu}$ are given by:
\begin{equation*}
\widehat{\alpha}_c = \sqrt{\frac{\escal{\omega,\omega}}{3}} - i\omega - \mu*\omega + i\mu\sqrt{\frac{\escal{\omega,\omega}}{3}}\nu,\qquad \alpha_c = \bar\theta_1\wedge\bar\theta_2\wedge\bar\theta_3,
\end{equation*}

\noindent
where $\theta_1,\theta_2,\theta_3\in V^*_\C$ are isotropic and orthogonal complex one-forms satisfying the duality condition $*(\theta_1\wedge\theta_2\wedge\theta_3)=i\mu(\theta_1\wedge\theta_2\wedge\theta_3)$. Furthermore, by Proposition \ref{prop:compatibilitysquares}, the Hermitian and complex-bilinear squares are the squares of the \emph{same} spinor $\eta\in\Sigma^{\mu}$ if and only if the following algebraic relations are satisfied:
\begin{equation*}  
\widehat{\alpha} \diamond \beta \diamond \alpha   = 8 (\widehat{\alpha} \diamond \beta)^{(0)} \alpha \, , \qquad   \alpha \diamond \beta \diamond \alpha_c = -  8    (\widehat{\alpha} \diamond   \tau(\beta))^{(0)} \widehat{\alpha}
\end{equation*}

\noindent
for every $\beta\in \wedge V^{\ast}_{\C}$ or, equivalently, a choice of $\beta\in \wedge V^{\ast}_{\C}$ such that $(\widehat{\alpha} \diamond \beta)^{(0)}  \neq 0$. Taking $\beta=1$ is sufficient since $\widehat{\alpha}^{(0)}=3^{-\frac{1}{2}}\sqrt{\escal{\omega,\omega}}\neq0$. We compute:
\begin{equation*}
\alpha\diamond\alpha_c=\alpha\wedge\alpha_c+\alpha\triangle_1\alpha_c-\alpha\triangle_2\alpha_c-\escal{\alpha,\alpha_c}
\end{equation*}

\noindent
and the equation $\alpha\diamond\alpha_c=-8\widehat{\alpha}^{(0)}\widehat{\alpha}$ gives us:
\begin{equation*}
8\escal{\omega,\omega}=3\escal{\alpha,\alpha_c},\qquad i\omega=\frac{\alpha\triangle_2\alpha_c}{\sqrt{8\escal{\alpha,\alpha_c}}},\qquad \mu*\omega=\frac{\alpha\triangle_1\alpha_c}{\sqrt{8\escal{\alpha,\alpha_c}}},\qquad i\mu\nu=\frac{\alpha\wedge\alpha_c}{\escal{\alpha,\alpha_c}}.
\end{equation*}

These equations determine the Hermitian square of $\eta\in\Sigma^\mu$ in terms of its complex-bilinear square. Hence, we arrive at the following final characterization of the squares of an irreducible and chiral complex spinor in six Euclidean dimensions.

\begin{cor}
A pair of complex forms $\widehat{\alpha}_\eta, \alpha_\eta \in \wedge V^{\ast}_{\mathbb{C}}$, and a pair of complex forms $\widehat{\alpha}_{J\eta}, \alpha_{J\eta} \in \wedge V^{\ast}_{\mathbb{C}}$ are respectively the Hermitian and complex-bilinear squares of a spinor $\eta\in\Sigma^\mu$ and its conjugate $J\eta\in\Sigma^{-\mu}$ if and only if:
\begin{gather*}
\widehat{\alpha}_\eta=\sqrt{\frac{\escal{\alpha_\eta,\alpha_{J\eta}}}{8}}+\frac{\alpha_\eta\triangle_2\alpha_{J\eta}}{\sqrt{8\escal{\alpha_\eta,\alpha_{J\eta}}}}-\frac{\alpha_\eta\triangle_1\alpha_{J\eta}}{\sqrt{8\escal{\alpha_\eta,\alpha_{J\eta}}}}-\frac{\alpha_\eta\wedge\alpha_{J\eta}}{\sqrt{8\escal{\alpha_\eta,\alpha_{J\eta}}}},\qquad \alpha_\eta=\theta_1\wedge\theta_2\wedge\theta_3,\\
\widehat{\alpha}_{J\eta}=\sqrt{\frac{\escal{\alpha_\eta,\alpha_{J\eta}}}{8}}-\frac{\alpha_\eta\triangle_2\alpha_{J\eta}}{\sqrt{8\escal{\alpha_\eta,\alpha_{J\eta}}}}-\frac{\alpha_\eta\triangle_1\alpha_{J\eta}}{\sqrt{8\escal{\alpha_\eta,\alpha_{J\eta}}}}+\frac{\alpha_\eta\wedge\alpha_{J\eta}}{\sqrt{8\escal{\alpha_\eta,\alpha_{J\eta}}}},\qquad \alpha_{J\eta}=\bar\theta_1\wedge\bar\theta_2\wedge\bar\theta_3,
\end{gather*}

\noindent
where $\theta_1,\theta_2,\theta_3\in V^*_\C$ are orthogonal and isotropic.
\end{cor}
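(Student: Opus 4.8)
The plan is to assemble the final corollary from the pieces already established in this subsection, organizing the argument around the compatibility criterion of Proposition \ref{prop:compatibilitysquares} specialized to six Euclidean dimensions. First I would recall that, by Corollary \ref{cor:6dbilinear}, the complex-bilinear square of $\eta\in\Sigma^\mu$ is $\alpha_\eta=\theta_1\wedge\theta_2\wedge\theta_3$ for isotropic, mutually orthogonal complex one-forms, and by Proposition \ref{prop:Bilinearsquareconjugate} (using $p=6$, $d=6$, $s=+1$, so the sign $(-1)^{\binom{6}{2}+\frac{6}{4}(1+(-1)^6)}=(-1)^{15+3}=1$) the complex-bilinear square of $J\eta$ is $\alpha_{J\eta}=\bar\theta_1\wedge\bar\theta_2\wedge\bar\theta_3$; similarly Corollary \ref{cor:Hermitian6d} and Proposition \ref{prop:Hermitiansquareconjugate} give the stated forms of $\widehat{\alpha}_\eta$ and $\widehat{\alpha}_{J\eta}$ in terms of the two-form $\omega$. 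So the content of the corollary is precisely the explicit identification of $\omega$, and of the degree-zero components, in terms of $\alpha_\eta$ and $\alpha_{J\eta}$.

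Next I would invoke Proposition \ref{prop:compatibilitysquares} with $d=6$, $\sigma=-1$ (skew-symmetric $\scB$), $s=+1$ (so $\pi^{\frac{1-s}{2}}\circ\tau=\tau$), and $\kappa=1$, which says $\widehat{\alpha}$ is the Hermitian square of the same spinor whose complex-bilinear square is $\alpha_\eta$ if and only if
\begin{equation*}
\widehat{\alpha} \diamond \beta \diamond \alpha_\eta = 8 (\widehat{\alpha} \diamond \beta)^{(0)} \alpha_\eta\, , \qquad \alpha_\eta \diamond \beta \diamond \alpha_{J\eta} = -8 (\widehat{\alpha} \diamond \tau(\beta))^{(0)} \widehat{\alpha}
\end{equation*}
for some $\beta$ with $(\widehat{\alpha}\diamond\beta)^{(0)}\neq 0$. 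Since $\widehat{\alpha}^{(0)}=\sqrt{\langle\omega,\omega\rangle/3}\neq 0$ for a nonzero spinor, the choice $\beta=1$ is admissible, and the first equation is then automatically satisfied (it is the first relation in \eqref{eq:comp_sq_maps} transported through $\Psi_\gamma^{-1}$, equivalently the quadratic relation in Corollary \ref{cor:Hermitian6d} combined with the product identity for $\widehat{\alpha}\diamond\alpha_\eta$). The substantive step is the second equation with $\beta=1$: expanding $\alpha_\eta\diamond\alpha_{J\eta}=\alpha_\eta\wedge\alpha_{J\eta}+\alpha_\eta\triangle_1\alpha_{J\eta}-\alpha_\eta\triangle_2\alpha_{J\eta}-\langle\alpha_\eta,\alpha_{J\eta}\rangle$ (the degree $6,4,2,0$ pieces of the $\diamond$-product of two three-forms) and matching against $-8\widehat{\alpha}^{(0)}\widehat{\alpha}=-8\sqrt{\langle\omega,\omega\rangle/3}\,(\sqrt{\langle\omega,\omega\rangle/3}+i\omega-\mu*\omega-i\mu\sqrt{\langle\omega,\omega\rangle/3}\,\nu)$ degree by degree yields exactly the four scalar/tensor identities
\begin{equation*}
8\langle\omega,\omega\rangle=3\langle\alpha_\eta,\alpha_{J\eta}\rangle,\quad i\omega=\frac{\alpha_\eta\triangle_2\alpha_{J\eta}}{\sqrt{8\langle\alpha_\eta,\alpha_{J\eta}\rangle}},\quad \mu*\omega=\frac{\alpha_\eta\triangle_1\alpha_{J\eta}}{\sqrt{8\langle\alpha_\eta,\alpha_{J\eta}\rangle}},\quad i\mu\nu=\frac{\alpha_\eta\wedge\alpha_{J\eta}}{\langle\alpha_\eta,\alpha_{J\eta}\rangle},
\end{equation*}
using the already-derived relations $\langle\omega,\omega\rangle=3r^2$ and $\sqrt{\langle\omega,\omega\rangle}*\omega=\mu\frac{\sqrt 3}{2}\omega\wedge\omega$ from Corollary \ref{cor:Hermitian6d} to rewrite the degree-four component. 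Substituting these back into the expression for $\widehat{\alpha}_\eta$ from Corollary \ref{cor:Hermitian6d}, and doing the same with $-\omega$ for $\widehat{\alpha}_{J\eta}$, produces the two displayed formulas in the statement; the complex-bilinear squares were already recorded above.

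I would close by noting that the converse direction is immediate: given $\theta_1,\theta_2,\theta_3$ orthogonal and isotropic, one sets $\alpha_\eta=\theta_1\wedge\theta_2\wedge\theta_3$, $\alpha_{J\eta}=\bar\theta_1\wedge\bar\theta_2\wedge\bar\theta_3$, defines $\omega$ by the second identity above, checks that Corollary \ref{cor:6dbilinear} applies to $\alpha_\eta$ (so it is a genuine complex-bilinear square), and verifies the compatibility equations of Proposition \ref{prop:compatibilitysquares} hold with $\beta=1$, which by that proposition guarantees all four forms are squares of a single spinor and its conjugate. The main obstacle I anticipate is purely computational: the degree-by-degree expansion of $\alpha_\eta\diamond\alpha_{J\eta}$ and the verification that the resulting $\triangle_1$ and $\triangle_2$ contractions reproduce $\mu*\omega$ and $i\omega$ consistently with the normalization $\langle\alpha_\eta,\alpha_{J\eta}\rangle=\frac{8}{3}\langle\omega,\omega\rangle$. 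This is best handled in the conjugate-pair basis $\{\theta_1,\theta_2,\theta_3,\bar\theta_1,\bar\theta_2,\bar\theta_3\}$ introduced in the proof of Corollary \ref{cor:6dbilinear}, where $\langle\theta_i,\bar\theta_i\rangle=1$ trivializes the normalization, exactly as was done there for the relation $\alpha\diamond\bar\rho\diamond\alpha=-8\alpha$; no new conceptual input is needed beyond that computation and Appendix \ref{app:KA}.
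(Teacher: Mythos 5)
Your proposal is correct and follows essentially the same route as the paper: it reads off the conjugate squares from Propositions \ref{prop:Hermitiansquareconjugate} and \ref{prop:Bilinearsquareconjugate} (with the sign $(-1)^{18}=1$), applies Proposition \ref{prop:compatibilitysquares} with $\beta=1$ (admissible since $\widehat{\alpha}^{(0)}=\sqrt{\escal{\omega,\omega}/3}\neq 0$), expands $\alpha_\eta\diamond\alpha_{J\eta}$ degree by degree against $-8\,\widehat{\alpha}^{(0)}\widehat{\alpha}$ to obtain the four identities for $\escal{\omega,\omega}$, $i\omega$, $\mu\ast\omega$, $i\mu\nu$, and substitutes back into Corollary \ref{cor:Hermitian6d}, which is exactly the paper's computation. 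If anything, you are slightly more explicit than the paper about the converse direction and about why the first compatibility equation is automatic in the forward direction.
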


We conclude by summarizing the results of this section in Table \ref{tab:spinor_squares_plain}.

\begin{table}[!ht]
\centering
\caption{Complex-bilinear and Hermitian square of an irreducible complex spinor (chiral when possible) in Euclidean dimensions two to six}
\label{tab:spinor_squares_plain}
\renewcommand{\arraystretch}{1.8} 
\begin{tabular}{|c|p{6cm}|p{6cm}|}
\hline
\emph{Dimension} & \emph{Complex-bilinear square $\alpha$} & \emph{Hermitian square $\widehat{\alpha}$} \\
\hline
$d=2$ & 
$\alpha=\theta\in V^*_\C$ satisfying $*\theta=i\mu\theta$. & 
$\widehat{\alpha}=r(1+i\mu\nu)$, where $r\in\R\setminus\{0\}$.\\
\hline
$d=3$ & 
$\alpha=\theta\in V^*_\C$ satisfying $\escal{\theta,\theta}=0$. & 
$\widehat{\alpha}=\sqrt{\escal{\vartheta,\vartheta}}+\vartheta$, where $\vartheta\in V^*$.\\
\hline
$d=4$ & 
$\alpha=\theta_1\wedge\theta_2\in \wedge^2V^*_\C$ satisfying& 
$\widehat{\alpha}=\sqrt{\frac{ \escal{\omega,\omega}}{2}} + i\omega - \frac{\omega\wedge \omega}{\sqrt{2\escal{\omega,\omega}}},$\\
&$*(\theta_1\wedge\theta_2)=\mu(\theta_1\wedge\theta_2)$.&where $\omega\in\wedge^2V^*$ satisfies $*\omega=\mu\omega$.\\
\hline
$d=5$ & 
$\alpha=\theta_1\wedge\theta_2\in \wedge^2V^*_\C$ satisfying& 
$\widehat{\alpha} = \sqrt{\frac{\escal{\omega,\omega}}{2}} + \frac{\ell *(\omega\wedge\omega)}{\sqrt{2\escal{\omega,\omega}}}+i\omega,$\\
&$\abs{\theta_1}=\abs{\theta_2}=\escal{\theta_1,\theta_2}=0$.&where $\omega\in\wedge^2V^*$ satisfies $\escal{\omega,\omega} \ast \omega = \omega\wedge*(\omega\wedge\omega)$.\\
\hline
$d=6$ & 
$\alpha=\theta_1\wedge\theta_2\wedge\theta_3\in \wedge^3V^*_\C$ satisfying& 
$\widehat{\alpha} = \sqrt{\frac{\escal{\omega,\omega}}{3}} + i\omega-\mu*\omega-i\mu\sqrt{\frac{\escal{\omega,\omega}}{3}}\nu,$\\
&$*(\theta_1\wedge\theta_2\wedge\theta_3)=i\mu(\theta_1\wedge\theta_2\wedge\theta_3)$.&where $\omega\in\wedge^2V^*$ satisfies $\sqrt{\escal{\omega,\omega}}*\omega=\mu\frac{\sqrt{3}}{2}\omega\wedge\omega$.\\
\hline
\end{tabular}
\end{table}

% % % % % % % % % % % % % % % % % % % % % % % % % % % % % % % % % % % % % %
% % % % % % % % % % % % % % % % % % % % % % % % % % % % % % % % % % % % % %

\section{Complex chiral spinors on a Riemannian eight-manifold}
\label{section:8dRiemannian}

% % % % % % % % % % % % % % % % % % % % % % % % % % % % % % % % % % % % % %
% % % % % % % % % % % % % % % % % % % % % % % % % % % % % % % % % % % % % %

In this section, we examine the squares of an irreducible complex chiral spinor in eight Euclidean dimensions. This case is particularly important because of its potential geometric applications in the theory of $\Spin(7)$ and $\SU(4)$-structures and the fact that this is the lowest dimension for which \emph{impure} chiral spinors can exist. As we will see below, this is neatly reflected in the algebraic structure of the corresponding squares. Prior results on the square of a normalized impure spinor in an explicit basis have been obtained in \cite{Bhoja:2023dgk,Krasnov2024}. Throughout this section, we fix an oriented eight-dimensional Euclidean vector space $(V,h)$ and an irreducible complex Clifford module $(\Sigma,\gamma)$ with chiral decomposition $\Sigma=\Sigma^{+}\oplus\Sigma^{-}$ in terms of eigenspaces of the complex volume form $\nu_{\C} = \nu$. Since $(V,h)$ is an eight-dimensional Euclidean vector space, its associated real Clifford algebra $\mathrm{Cl}(V^*,h^*)$ is isomorphic to $\Mat(16,\R)$ and the irreducible complex Clifford module $(\Sigma,\gamma)$ is of real type by Lemma \ref{lemma:real_or_quaternionic_type}. We endow $\Sigma$ with compatible admissible pairings $\scS$ and $\scB$ of positive symmetry and adjoint type, which are related by a compatible real structure $\mathfrak{c}\colon\Sigma\to\Sigma$. Note that the chiral splitting of $\Sigma$ is orthogonal with respect to $\scS$ and $\scB$.

% % % % % % % % % % % % % % % % % % % % % % % % % % % % % % % % % % % % % %

\subsection{The Hermitian square}

% % % % % % % % % % % % % % % % % % % % % % % % % % % % % % % % % % % % % %

We proceed to compute first the Hermitian square of $\eta \in \Sigma^{\mu}$. Set $\kappa=1$ for simplicity. By Corollary \ref{cor:sq_S_chiral}, a complex exterior form $\widehat{\alpha}\in\wedge V^*_\C$ is the Hermitian square of a spinor $\eta\in\Sigma^{\mu}$ with chirality $\mu\in\Z_2$ if and only if:
\begin{equation*}
\widehat{\alpha}\diamond\widehat{\alpha}=16\widehat{\alpha}^{(0)}\widehat{\alpha},\qquad \tau(\widehat{\alpha})=\overline{\widehat{\alpha}},\qquad *(\pi\circ\tau)(\widehat{\alpha})=\mu\widehat{\alpha}.   
\end{equation*}

\noindent
We expand $\widehat{\alpha}$ by degree as follows:
\begin{equation*}
\widehat{\alpha}=\sum_{k=0}^8\widehat{\alpha}^{(k)}\in\wedge V^*_\C.
\end{equation*}

\noindent
The linear condition $\tau(\widehat{\alpha})=\overline{\widehat{\alpha}}$ implies that $\widehat{\alpha}^{(0)}$, $\widehat{\alpha}^{(1)}$, $\widehat{\alpha}^{(4)}$, $\widehat{\alpha}^{(5)}$, $\widehat{\alpha}^{(8)}$ are real and that $\widehat{\alpha}^{(2)}$, $\widehat{\alpha}^{(3)}$, $\widehat{\alpha}^{(6)}$, $\widehat{\alpha}^{(7)}$ are imaginary. The other linear condition $*(\pi\circ\tau)(\widehat{\alpha})=\mu\widehat{\alpha}$ implies that $\widehat{\alpha}^{(1)}=\widehat{\alpha}^{(3)}=\widehat{\alpha}^{(5)}=\widehat{\alpha}^{(7)}=0$, $\widehat{\alpha}^{(8)}=\mu*\widehat{\alpha}^{(0)}$, $\widehat{\alpha}^{(6)}=-\mu*\widehat{\alpha}^{(2)}$ and $*\widehat{\alpha}^{(4)}=\mu\widehat{\alpha}^{(4)}$. Set $r:=\widehat{\alpha}^{(0)}\in\R$, $i\omega:=\widehat{\alpha}^{(2)}$ for $\omega\in\wedge^2V^*$, and $\Theta:=\widehat{\alpha}^{(4)}\in\wedge^4V^*$. The term $\widehat{\alpha}^{(8)}$ is a multiple of the volume form $\nu$, that is $\widehat{\alpha}^{(8)}=c\nu$ for some $c\in\R$. From the equation $c\nu=\mu*r=\mu r*1=\mu r\nu$ we get $c=\mu r$. Hence: 
\begin{equation*}
\widehat{\alpha}=r+i\omega+\Theta-i\mu*\omega+\mu r\nu.
\end{equation*}

\noindent
Using the properties of the geometric product together with Lemma \ref{lemma:product_volume_form} we compute: 
\begin{align*}
\widehat{\alpha}\diamond\widehat{\alpha}&=2r^2+4ir\omega+4r\Theta-4i\mu r*\omega+2\mu r^2\nu\\
&\quad-2\omega\wedge\omega+2\escal{\omega,\omega}-2\mu*(\omega\wedge\omega)+2\mu\escal{\omega,\omega}\nu\\
&\quad+4i\omega\wedge\Theta-4i\omega\triangle_2\Theta+\Theta\wedge\Theta-\Theta\triangle_2\Theta+\escal{\Theta,\Theta}.
\end{align*}

\noindent
Then, the quadratic equation $\widehat{\alpha}\diamond\widehat{\alpha}=16\widehat{\alpha}^{(0)}\widehat{\alpha}$ reduces to:
\begin{equation*}
2\escal{\omega,\omega}+\escal{\Theta,\Theta}=14r^2,\quad 2\omega\wedge\omega+2\mu*(\omega\wedge\omega)+\Theta\triangle_2\Theta+12r\Theta=0,\quad \omega\wedge\Theta+3\mu r*\omega=0.    
\end{equation*}

\noindent
This leads to the following algebraic characterization of the Hermitian square of $\eta$.

\begin{cor}
\label{cor:Hermitian8dimpure}
A complex exterior form $\widehat{\alpha} \in\wedge V^*_\C$ is the Hermitian square of a complex irreducible spinor $\eta\in\Sigma^{\mu}$ with chirality $\mu$ if and only if:
\begin{equation*}
\widehat{\alpha} = \sqrt{\frac{2\escal{\omega,\omega}+\escal{\Theta,\Theta}}{14}} + i\omega + \Theta-i\mu*\omega +  \sqrt{\frac{2\escal{\omega,\omega}+\escal{\Theta,\Theta}}{14 \escal{\Theta,\Theta}^2 }}\Theta \wedge \Theta
\end{equation*}

\noindent
for a real two-form $\omega\in\wedge^2V^*$ and a real four-form $\Theta\in\wedge^4V^*$ satisfying $*\Theta=\mu\Theta$ and the following algebraic system:
\begin{equation*}
\begin{gathered}
2\omega\wedge\omega+2\mu*(\omega\wedge\omega)+\Theta\triangle_2\Theta+12 \sqrt{\frac{2\escal{\omega,\omega}+\escal{\Theta,\Theta}}{14}}\Theta=0,\\
\omega\wedge\Theta+3\mu \sqrt{\frac{2\escal{\omega,\omega}+\escal{\Theta,\Theta}}{14}}*\omega=0.
\end{gathered}
\end{equation*} 
\end{cor}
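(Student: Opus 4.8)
The plan is to translate the algebraic characterization of Corollary~\ref{cor:sq_S_chiral} into the exterior algebra of $(V,h)$ using the explicit expansion by degree, exactly as was done in the lower-dimensional cases in Section~\ref{sec:lowdimensions}. Concretely, I would start from the three conditions of Corollary~\ref{cor:sq_S_chiral}(b) with $\kappa=1$, specialized to $d=8$: the reality condition $\tau(\widehat\alpha)=\overline{\widehat\alpha}$, the chirality condition $\ast(\pi\circ\tau)(\widehat\alpha)=\mu\widehat\alpha$ (here $q=0$, $d/2=4$, so $i^{q+d/2}=1$), and the quadratic relations $\widehat\alpha\diamond\widehat\alpha = 16\,\widehat\alpha^{(0)}\widehat\alpha$ and $\widehat\alpha\diamond\beta\diamond\widehat\alpha = 16(\widehat\alpha\diamond\beta)^{(0)}\widehat\alpha$. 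The first step is the linear analysis: expanding $\widehat\alpha=\sum_{k=0}^8\widehat\alpha^{(k)}$, the reality condition fixes the parity of each component (real in degrees $0,1,4,5,8$, imaginary in degrees $2,3,6,7$), and the self-duality condition $\ast(\pi\circ\tau)(\widehat\alpha)=\mu\widehat\alpha$ — using $(\pi\circ\tau)$ acts on $\wedge^k$ by the sign $(-1)^{k(k+1)/2}$ — kills the odd-degree parts and relates $\widehat\alpha^{(8)}=\mu\ast\widehat\alpha^{(0)}$, $\widehat\alpha^{(6)}=-\mu\ast\widehat\alpha^{(2)}$, and $\ast\widehat\alpha^{(4)}=\mu\widehat\alpha^{(4)}$. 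Writing $r:=\widehat\alpha^{(0)}$, $i\omega:=\widehat\alpha^{(2)}$, $\Theta:=\widehat\alpha^{(4)}$, this produces the ansatz $\widehat\alpha=r+i\omega+\Theta-i\mu\ast\omega+\mu r\nu$.

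The second step is to feed this ansatz into the quadratic equation $\widehat\alpha\diamond\widehat\alpha=16r\widehat\alpha$. Here I would use the standard expansion of the geometric product $\diamond$ in terms of the wedge product and the generalized contractions $\triangle_k$ (Appendix~\ref{app:KA}), together with Lemma~\ref{lemma:product_volume_form} to handle the products with $\nu$ and with $\ast\omega$; note that in eight dimensions $\nu\diamond\nu=1$, so the volume-form terms behave well. Collecting by degree, the degree-$0$ component gives $2\escal{\omega,\omega}+\escal{\Theta,\Theta}=14r^2$; the degree-$2$ part gives a relation that is automatically consistent; the degree-$4$ part gives $2\omega\wedge\omega+2\mu\ast(\omega\wedge\omega)+\Theta\triangle_2\Theta+12r\Theta=0$; and the degree-$6$ (equivalently degree-$2$, via self-duality) part gives $\omega\wedge\Theta+3\mu r\ast\omega=0$. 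Solving the degree-$0$ relation for $r$ and substituting, and using $\ast\Theta=\mu\Theta$ to rewrite $\mu r\nu$ and $\Theta\wedge\Theta$ in the stated normalized form (i.e.\ $\mu r\nu = \sqrt{(2\escal{\omega,\omega}+\escal{\Theta,\Theta})/(14\escal{\Theta,\Theta}^2)}\,\Theta\wedge\Theta$, which follows from $\Theta\wedge\Theta=\escal{\Theta,\ast\Theta}\nu = \mu\escal{\Theta,\Theta}\nu$), yields precisely the expression and the two algebraic constraints in the statement.

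The third and more delicate step is to show that the \emph{third} equation of Corollary~\ref{cor:sq_S_chiral}(b), namely $\widehat\alpha\diamond\beta\diamond\widehat\alpha=16(\widehat\alpha\diamond\beta)^{(0)}\widehat\alpha$ for some (equivalently, every) $\beta$ with $(\widehat\alpha\diamond\beta)^{(0)}\neq0$, is automatically implied once the two quadratic constraints above hold, so that it need not appear in the final characterization. Since $r=\widehat\alpha^{(0)}\neq0$ for a nonzero spinor, one may take $\beta=1$, for which $(\widehat\alpha\diamond 1)^{(0)}=r\neq0$; the third equation then reads $\widehat\alpha\diamond\widehat\alpha=16r\widehat\alpha$, which is the second equation and is therefore already imposed. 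This is the clean way to dispose of the cube relation: the inequality $(\widehat\alpha\diamond\beta)^{(0)}\neq0$ is met by $\beta=1$ precisely because $\widehat\alpha$ is a Hermitian square of a nonzero spinor, which has $\widehat\alpha^{(0)}=\frac{1}{16}\scS(\eta,\eta)\neq0$ by positive-definiteness of $\scS$ (recall $(V,h)$ is Euclidean, so $\scS$ can be taken positive-definite). Thus the converse direction is essentially automatic, and for the forward direction one invokes Corollary~\ref{cor:sq_S_chiral} directly.

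I expect the main obstacle to be the bookkeeping in the degree-by-degree expansion of $\widehat\alpha\diamond\widehat\alpha$: there are cross-terms $i\omega\diamond\Theta$, $\Theta\diamond\Theta$, $\omega\diamond\ast\omega$, and the volume-form products, each of which expands into several $\triangle_k$-pieces living in different degrees, and one must carefully verify that the degree-$2$ and degree-$6$ components are genuinely equivalent under $\ast$ (so that no independent constraint is hidden there) and that the degree-$4$ component collects correctly into $2\omega\wedge\omega+2\mu\ast(\omega\wedge\omega)+\Theta\triangle_2\Theta+12r\Theta$. This is routine but lengthy Clifford-algebra computation; conceptually there is no difficulty, since everything is governed by the identities of Appendix~\ref{app:KA} and Lemma~\ref{lemma:product_volume_form}, and the only subtlety — the automatic vanishing of the cube relation — is handled by the $\beta=1$ observation above.
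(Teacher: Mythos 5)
Your proposal is correct and follows essentially the same route as the paper: the linear conditions $\tau(\widehat\alpha)=\overline{\widehat\alpha}$ and $\ast(\pi\circ\tau)(\widehat\alpha)=\mu\widehat\alpha$ yield the ansatz $\widehat\alpha=r+i\omega+\Theta-i\mu\ast\omega+\mu r\nu$, the quadratic relation $\widehat\alpha\diamond\widehat\alpha=16r\widehat\alpha$ is expanded by degree to produce exactly the three constraints (with the degree~$2$ and degree~$8$ components equivalent to the degree~$6$ and degree~$0$ ones via $\ast\Theta=\mu\Theta$), and the cube relation is disposed of with $\beta=1$ using positive-definiteness of $\scS$ in Euclidean signature, which is precisely the simplification the paper invokes when it lists only three conditions at the start of Section~\ref{section:8dRiemannian}. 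The only wording to tighten is that the degree~$2$ component is not vacuous but equivalent under the Hodge star to the degree~$6$ one, a point you in fact acknowledge later in the argument.
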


\noindent
Note that if $\omega=0$, then we recover the algebraic conditions obtained in \cite[Theorem 3.22]{LS24_Spin7} to characterize a conformal $\Spin(7)$-structure on $(V,h)$. Hence, the case $\omega = 0$ corresponds to the case for which $\widehat{\alpha}$ is the Hermitian square of a possibly complex multiple of an irreducible chiral real spinor in eight Euclidean dimensions \cite{Spin89}. On the other hand, if we set $\Theta = 0$, then $\omega = 0$ and therefore we conclude that $\Theta\neq 0$ necessarily. In order to obtain another type of solution to the previous equations for the square of $\eta$, consider $\omega\in\wedge^2V^*$ to be a Kähler form on $(V,h)$, not necessarily normalized. Then the following identities hold: 
\begin{equation*}
*(\omega\wedge\omega)=\omega\wedge\omega,\quad *\omega=\frac{2}{3\escal{\omega,\omega}}\omega\wedge\omega\wedge\omega,\quad(\omega\wedge\omega)\triangle_2(\omega\wedge\omega)=2\escal{\omega,\omega}\omega\wedge\omega.    
\end{equation*}

\noindent
Set: 
\begin{equation*}
\Theta = -\frac{\omega\wedge\omega }{\escal{\omega,\omega}^{\frac{1}{2}}}.
\end{equation*}

\noindent
Then the complex exterior form: 
\begin{equation*}
\widehat{\alpha}=\frac{1}{2}\escal{\omega,\omega}^{\frac{1}{2}}+i\omega -\frac{\omega\wedge\omega }{\escal{\omega,\omega}^{\frac{1}{2}}} -i*\omega+\frac{1}{2}\escal{\omega,\omega}^{\frac{1}{2}}\nu    
\end{equation*}

\noindent
is the Hermitian square of a complex irreducible spinor $\eta\in\Sigma^+$. This corresponds to the Hermitian square of a pure irreducible and chiral complex spinor, and gives a precise formula in eight dimensions for the statement made in \cite{Wang89} regarding the structure of the Hermitian square of an irreducible and chiral pure spinor.\medskip

Given an irreducible complex spinor $\eta\in \Sigma^{\mu}$, we consider now the following equation:
\begin{equation*}
F\cdot \eta = 0
\end{equation*}

\noindent
for a complex two-form $F\in \wedge^2 V^{\ast}_{\C}$. By Lemma \ref{lemma:constrainedspinoreven}, this equation is equivalent to:
\begin{align*}
F\diamond \widehat{\alpha} & = F\diamond (r+i\omega+\Theta-i\mu*\omega+\mu r\nu ) \\
& = rF + i F\diamond \omega + F\diamond \Theta + i\mu F\diamond \omega\diamond \nu - \mu r \ast F\\
& = r F + i F\wedge \omega - i F \triangle_1 \omega - i \langle F , \omega \rangle + F\wedge \Theta - F\triangle_1 \Theta - F\triangle_2 \Theta \\
&\quad + i \mu \ast (F\wedge \omega) + i\mu \ast (F\triangle_1\omega) - i \mu \langle F , \omega \rangle \nu - \mu r\ast F = 0.
\end{align*}

\noindent
Isolating by degree, this equation reduces to:
\begin{equation}
\label{eq:algebraicF8d}
\begin{gathered}
\langle F , \omega \rangle = 0\, , \qquad   14^{-\frac{1}{2}}\sqrt{ 2\escal{\omega,\omega}+\escal{\Theta,\Theta}} \, F  = i F \triangle_1 \omega  + F\triangle_2 \Theta,\\
F\wedge \omega  + i F\triangle_1 \Theta  +  \mu \ast (F\wedge \omega)   = 0\, , \qquad 14^{-\frac{1}{2}}\sqrt{ 2\escal{\omega,\omega}+\escal{\Theta,\Theta}} \,\ast F = \mu F\wedge \Theta + i \ast (F\triangle_1\omega),
\end{gathered}
\end{equation}

\noindent
which we will interpret in the introduction as the condition characterizing a \emph{spinorial instanton} in eight Euclidean dimensions.\medskip

Given an irreducible complex spinor $\eta\in \Sigma^{\mu}$, we consider now the following equation:
\begin{equation*}
H \cdot \eta = 0
\end{equation*}

\noindent
for a complex three-form $H\in \wedge^3 V^{\ast}_{\C}$. By Lemma \ref{lemma:constrainedspinoreven}, this equation is equivalent to:
\begin{align*}
H \diamond \widehat{\alpha} & = H \diamond (r+i\omega+\Theta-i\mu*\omega+\mu r\nu )\\
& = r H + i H \diamond \omega + H \diamond \Theta + i\mu H \diamond \omega\diamond \nu - \mu r \ast H\\
& = r H + i H\wedge \omega +  i H \triangle_1 \omega - i H \triangle_2 \omega + H \wedge \Theta + H \triangle_1 \Theta - H\triangle_2 \Theta - H \triangle_3 \Theta \\
&\quad  + i \mu \ast (H\wedge \omega) - i \mu \ast ( H \triangle_1 \omega ) - i \mu \ast ( H \triangle_2 \omega) - \mu r\ast H = 0.
\end{align*}

\noindent
Isolating by degree, this equation reduces to:
\begin{equation}
\label{eq:algebraicH8d}
\begin{gathered}
i H \triangle_2 \omega + H \triangle_3 \Theta = 0,\qquad H \wedge \Theta   = i \mu \ast ( H \triangle_2 \omega),\\
14^{-\frac{1}{2}}\sqrt{ 2\escal{\omega,\omega}+\escal{\Theta,\Theta}} H +  i H \triangle_1 \omega    - H\triangle_2 \Theta   + i \mu \ast (H\wedge \omega) = 0,\\
14^{-\frac{1}{2}} \mu \sqrt{ 2\escal{\omega,\omega}+\escal{\Theta,\Theta}} \ast H = i H\wedge \omega + H \triangle_1 \Theta - i \mu \ast ( H \triangle_1 \omega ).
\end{gathered}
\end{equation}

\noindent
As explained in the introduction, these conditions can be interpreted as characterizing a \emph{spinorial curving} on a $\C^{\ast}$-bundle gerbe defined on an eight-dimensional Riemannian manifold.

% % % % % % % % % % % % % % % % % % % % % % % % % % % % % % % % % % % % % %

\subsection{The complex-bilinear square}

% % % % % % % % % % % % % % % % % % % % % % % % % % % % % % % % % % % % % %

Let $\mu\in\mathbb{Z}_2$. By Corollary \ref{cor:sq_B_chiral}, a complex exterior form $\alpha\in\wedge V^*_\C$ is the complex-bilinear square of a spinor $\eta\in\Sigma$ with chirality $\mu$ if and only if:
\begin{equation*}
\alpha\diamond\alpha=16\alpha^{(0)}\alpha,\quad\tau(\alpha)=\alpha,\quad\alpha\diamond\beta\diamond\alpha=16(\alpha\diamond\beta)^{(0)}\alpha,\quad *(\pi\circ\tau)(\alpha)=\mu\alpha
\end{equation*}

\noindent
for a fixed exterior form $\beta\in\wedge V^*_\C$ satisfying $(\alpha\diamond\beta)^{(0)}\neq0$. We expand $\alpha$ by degree as follows:
\begin{equation*}
\alpha=\sum_{k=0}^8\alpha^{(k)}\in\wedge V^*_\C.
\end{equation*}

\noindent
The linear condition $\tau(\alpha)=\alpha$ implies that $\alpha^{(2)} = \alpha^{(3)} = \alpha^{(6)} = \alpha^{(7)} = 0$ and the remaining condition $*(\pi\circ\tau)(\alpha)=\mu\alpha$ implies that $\alpha^{(1)}=\alpha^{(5)}=0$, $*\alpha^{(0)}=\mu\alpha^{(8)}$ and $*\alpha^{(4)}=\mu\alpha^{(4)}$. Set $\lambda := \alpha^{(0)}$ and $\Omega:=\alpha^{(4)}$. The term $\alpha^{(8)}$ is a multiple of the volume form $\nu$, that is $\alpha^{(8)}=c\nu$ for some $c\in\C$. From $*\lambda=\mu c\nu$ we get $c^2=\lambda^2$. Now we compute:
\begin{align*}
\alpha\diamond\alpha&=(\lambda + \Omega+c\nu)\diamond(\lambda + \Omega + c\nu)\\
&=2\lambda^2+\escal{\Omega,\Omega}+(2\lambda+2c\mu)\Omega-\Omega\triangle_2\Omega+2 \lambda c\nu+\Omega\wedge\Omega.
\end{align*}

\noindent
The quadratic equation $\alpha\diamond\alpha=16\alpha^{(0)}\alpha$ holds if and only if:
\begin{equation*}
\escal{\Omega,\Omega}=14\lambda^2,\qquad(14\lambda - 2c\mu)\Omega+\Omega\triangle_2\Omega=0,\qquad\Omega\wedge\Omega=14 \lambda c\nu.   
\end{equation*}

\noindent
The first and third equations imply that $c=\mu \lambda$, and then they become equivalent. Hence, the above system of equations reduces to: 
\begin{equation}
\label{eq:Spin(7)_type_equation}
\escal{\Omega,\Omega}=14\lambda^2\quad\text{and}\quad 12\lambda\Omega+\Omega\triangle_2\Omega=0.
\end{equation}

\noindent
Hence, we arrive at the following result.

\begin{cor}
\label{cor:Bilinear8d}
A complex exterior form $\alpha\in\wedge V^*_\C$ is the complex-bilinear square of a complex irreducible spinor $\eta\in\Sigma^\mu$ with chirality $\mu$ if and only if:
\begin{equation}
\label{eq:alphaSU(4)}
\alpha =  \frac{\langle \Omega , \Omega \rangle^{\frac{1}{2}}}{\sqrt{14}}  + \Omega + \frac{\mu\langle \Omega , \Omega \rangle^{\frac{1}{2}}}{\sqrt{14}}   \nu
\end{equation}

\noindent
for a complex four-form $\Omega \in \wedge^4 V^{\ast}_{\C}$ satisfying $*\Omega=\mu\Omega$ and:
\begin{equation}
\label{eq:SU(4)invariance}
\sqrt{14}\Omega\triangle_2\Omega + 12 \langle \Omega , \Omega \rangle^{\frac{1}{2}} \Omega = 0.
\end{equation}

\noindent
Furthermore, if $\scB(\eta,\eta)\neq0$ these conditions are also sufficient.
\end{cor}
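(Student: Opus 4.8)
The plan is to derive Corollary \ref{cor:Bilinear8d} from Corollary \ref{cor:sq_B_chiral} in exactly the same style as the preceding computation for the Hermitian square. First I would unwind the two linear constraints $\tau(\alpha)=\alpha$ and $\ast(\pi\circ\tau)(\alpha)=\mu\alpha$ degree by degree: since the paper works in eight Euclidean dimensions with $\scB_s$ of positive adjoint type, $s=+1$, and $\pi^{\frac{1-s}{2}}\circ\tau = \tau$; the symmetry type of $\scB$ in this case is $\sigma=+1$ (as recorded in Table \ref{tab:symmetry_grid_singlereal_modified} for $p\equiv_4 0$, $d\equiv_4 0$, real type), which is why the second displayed constraint in the proof is $\tau(\alpha)=\alpha$ rather than $\tau(\alpha)=-\alpha$. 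The adjoint-type condition kills $\alpha^{(2)},\alpha^{(3)},\alpha^{(6)},\alpha^{(7)}$, and then the chirality condition $\ast(\pi\circ\tau)(\alpha)=\mu\alpha$ (which from Corollary \ref{cor:sq_B_chiral} reads $i^{q+\frac d2}\ast(\pi\circ\tau)(\alpha)=\mu\alpha$ with $q=0$, $d=8$, so $i^{q+d/2}=i^4=1$) removes $\alpha^{(1)},\alpha^{(5)}$, fixes the top form $\alpha^{(8)}=c\nu$ with $\ast\lambda = \mu c\nu$ forcing $c^2=\lambda^2$, and imposes $\ast\Omega=\mu\Omega$ on the middle form $\Omega:=\alpha^{(4)}$.

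Next I would compute $\alpha\diamond\alpha$ for $\alpha=\lambda+\Omega+c\nu$ using the generalized-product expansion of the geometric product from Appendix \ref{app:KA} together with Lemma \ref{lemma:product_volume_form}; this is the routine but slightly lengthy calculation the excerpt already states explicitly. Extracting the components of $\alpha\diamond\alpha = 16\alpha^{(0)}\alpha = 16\lambda\alpha$ by degree yields the three relations $\escal{\Omega,\Omega}=14\lambda^2$, $(14\lambda-2c\mu)\Omega+\Omega\triangle_2\Omega=0$, and $\Omega\wedge\Omega=14\lambda c\nu$. Comparing the first and third (using $\ast\Omega=\mu\Omega$, so $\Omega\wedge\Omega=\escal{\Omega,\Omega}\,\mu\nu$) forces $c=\mu\lambda$, after which the middle relation collapses to $12\lambda\Omega+\Omega\triangle_2\Omega=0$ — i.e.\ equation \eqref{eq:Spin(7)_type_equation}. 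Solving $\escal{\Omega,\Omega}=14\lambda^2$ for $\lambda$ gives $\lambda=\langle\Omega,\Omega\rangle^{1/2}/\sqrt{14}$ (up to an irrelevant sign absorbed by a complex homothety of $\eta$, per the Remark following the definition of $\cE$), producing the claimed normal form \eqref{eq:alphaSU(4)} and the invariance equation \eqref{eq:SU(4)invariance} after multiplying through by $\sqrt{14}$.

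Finally, for the sufficiency clause I would invoke the structure of the proof of Theorem \ref{thm:bilinearsquare}: conditions (b) and (c) there differ only by the quadratic equation $\alpha\diamond\beta\diamond\alpha=2^{d/2}(\alpha\diamond\beta)^{(0)}\alpha$, which for $d=8$ is $\alpha\diamond\beta\diamond\alpha=16(\alpha\diamond\beta)^{(0)}\alpha$. I would argue, exactly as in the six-dimensional proof of Corollary \ref{cor:6dbilinear}, that once $\alpha$ has the form \eqref{eq:alphaSU(4)} with $\ast\Omega=\mu\Omega$ and \eqref{eq:SU(4)invariance} holding, then whenever $\scB(\eta,\eta)\neq0$ one has $\alpha^{(0)}=\scB(\eta,\eta)/2^{d/2}\neq0$, so it suffices to verify the rank-one relation for $\beta=1$; but for $\beta=1$ that relation is precisely $\alpha\diamond\alpha = 16\alpha^{(0)}\alpha$, which we have just shown is equivalent to \eqref{eq:Spin(7)_type_equation}. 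Hence (b) of Corollary \ref{cor:sq_B_chiral} is satisfied with $\beta=1$, and by that corollary $\alpha$ is the complex-bilinear square of a chiral spinor of chirality $\mu$. The main obstacle I anticipate is purely computational: reliably expanding $\Omega\diamond\Omega$ in terms of $\Omega\wedge\Omega$, $\Omega\triangle_2\Omega$, and $\escal{\Omega,\Omega}$ with the correct signs and combinatorial coefficients, and likewise tracking the $i^{q+d/2}$ and symmetry-type factors so that the linear constraints come out as $\tau(\alpha)=\alpha$; there is no conceptual difficulty beyond what is already handled in the $d=6$ and Hermitian $d=8$ cases.
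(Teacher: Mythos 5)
Your proposal is correct and follows essentially the same route as the paper: apply Corollary \ref{cor:sq_B_chiral}, use the linear conditions $\tau(\alpha)=\alpha$ and $\ast(\pi\circ\tau)(\alpha)=\mu\alpha$ to reduce $\alpha$ to $\lambda+\Omega+c\nu$ with $\ast\Omega=\mu\Omega$, extract $\escal{\Omega,\Omega}=14\lambda^2$, $c=\mu\lambda$ and $12\lambda\Omega+\Omega\triangle_2\Omega=0$ from $\alpha\diamond\alpha=16\alpha^{(0)}\alpha$, and observe that when $\scB(\eta,\eta)\neq0$ (equivalently $\escal{\Omega,\Omega}\neq0$) the choice $\beta=1$ already realizes the rank-one condition, giving sufficiency. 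The sign/branch bookkeeping for $\langle\Omega,\Omega\rangle^{1/2}$ and the identification $\sigma=s=+1$ are handled at the same level of detail as in the paper, so no gap remains.
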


\begin{remark}
Note that $\scB(\eta,\eta)\neq0$ if and only if $\escal{\Omega,\Omega}\neq0$.
\end{remark}

\noindent
Therefore, we formally obtain the very same algebraic equation that was obtained in \cite[Theorem 4.5]{LS24_Spin7} to characterize the square of an irreducible and chiral real spinor in eight Euclidean dimensions, albeit for a complex four-form instead of a real four-form. This leads to the following algebraic characterization of complex four-forms stabilized by $\SU(4)\subset \mathrm{GL}(8,\mathbb{R})$.

\begin{prop}
A complex four-form $\Omega\in \wedge^4 V^{\ast}_{\C}$ is stabilized by a group isomorphic to $\SU(4)\subset \mathrm{GL}(8,\mathbb{R})$ if and only if there exists a metric $h$ on $V$ such that $*\Omega=\mu\Omega$ and the pair $(\Omega,h)$ satisfies Equation \eqref{eq:SU(4)invariance}.
\end{prop}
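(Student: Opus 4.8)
The plan is to deduce this characterization of $\SU(4)$-stabilized complex four-forms directly from Corollary \ref{cor:Bilinear8d} together with the equivariance of the complex-bilinear square map established in Subsection \ref{subsec:Bilinearequivariance}. The key observation is that the two conditions appearing in the statement — namely $\ast\Omega=\mu\Omega$ relative to some metric $h$ and Equation \eqref{eq:SU(4)invariance} — are exactly (a rescaled version of) the conditions \eqref{eq:Spin(7)_type_equation} characterizing the nonzero-degree-four part of a complex-bilinear square $\alpha$ of a chiral spinor $\eta\in\Sigma^\mu$ with $\scB(\eta,\eta)\neq 0$. So the proof is really an identification of two orbits inside $\wedge^4 V^\ast_\C$.

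\textbf{Step 1: From $\Omega$ to a spinor.} Suppose $(\Omega,h)$ satisfies $\ast\Omega=\mu\Omega$ and \eqref{eq:SU(4)invariance}. First note $\escal{\Omega,\Omega}\neq 0$: if it vanished, then \eqref{eq:SU(4)invariance} would force $\Omega\triangle_2\Omega=0$, and then (as in the analysis preceding Corollary \ref{cor:Bilinear8d}) the whole form $\alpha$ built from $\Omega$ would be $\diamond$-nilpotent with $\alpha^{(0)}=0$, hence $\Omega=0$. Set $\lambda := 14^{-1/2}\escal{\Omega,\Omega}^{1/2}$ and $\alpha := \lambda + \Omega + \mu\lambda\nu$ as in \eqref{eq:alphaSU(4)}. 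By construction $\escal{\Omega,\Omega}=14\lambda^2$ and, after multiplying \eqref{eq:SU(4)invariance} by $14^{-1/2}$, also $12\lambda\Omega+\Omega\triangle_2\Omega=0$, so \eqref{eq:Spin(7)_type_equation} holds and $\alpha^{(0)}=\lambda\neq 0$. By Corollary \ref{cor:Bilinear8d} (the sufficiency direction, which applies precisely because $\escal{\Omega,\Omega}\neq 0$), $\alpha = \cE_\gamma(\eta)$ for some $\eta\in\Sigma^\mu$ with $\scB(\eta,\eta)\neq 0$. By the equivariance of $\cE_\gamma$ under $\Ad^c\colon \Spin_o^c(8)\to \SO(8)\times\U(1)$ and the remark that the Lie algebra of the stabilizer of $\cE_\gamma(\eta)$ in $\SO(8)\times\U(1)$ equals that of the stabilizer of $\eta$ in $\Spin_o^c(8)$, the stabilizer of $\alpha$ in $\SO(8)$ is (a Lie group with the Lie algebra of) the image in $\SO(8)$ of $\Stab_{\Spin(8)}(\eta)$; for an impure-or-pure chiral spinor in eight dimensions with nonzero $\scB$-norm this stabilizer is a group isomorphic to $\SU(4)$. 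Finally, since the degree-zero and degree-eight parts of $\alpha$ are determined by $\Omega$ and $h$, the stabilizer of $\Omega$ inside $\SO(8)$ coincides with that of $\alpha$; and because $\SU(4)\subset\SO(8)$ in fact fixes $h$, it is also the full stabilizer of $\Omega$ inside $\GL(8,\R)$ modulo the scalars acting on $\Omega$ — here one should be slightly careful and phrase the conclusion as: the subgroup of $\GL(8,\R)$ preserving $\Omega$ contains a copy of $\SU(4)$, and conversely (Step 2) any $\Omega$ with such a stabilizer arises this way.

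\textbf{Step 2: Converse.} Conversely, suppose $\Omega\in\wedge^4V^\ast_\C$ is stabilized by a subgroup $G\subset\GL(8,\R)$ isomorphic to $\SU(4)$. One must produce a metric $h$ with $\ast_h\Omega=\mu\Omega$ and \eqref{eq:SU(4)invariance}. The idea is to use the (essentially unique) invariant: the embedding $\SU(4)\hookrightarrow\GL(8,\R)$ preserves, up to scale, a unique metric $h$ (the one whose holonomy-type reduction is $\SU(4)$), and the space of $\SU(4)$-invariant complex four-forms on $\R^8$ is low-dimensional and spanned by the self-dual primitive pieces $\Re\Omega_{\C}$, $\Im\Omega_{\C}$ (the real and imaginary parts of the holomorphic volume form of the induced $\SU(4)$-structure) together with $\omega_I\wedge\omega_I$ and $\nu$, where $\omega_I$ is the Kähler form. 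Writing $\Omega$ in this basis and imposing $G$-invariance, one checks that the resulting form is, relative to $h$, self-dual up to the chirality sign $\mu$ and satisfies the quadratic identity \eqref{eq:SU(4)invariance}; this is exactly the content of the real-spinor computation in \cite[Theorem 4.5]{LS24_Spin7}, complexified. Alternatively, and more cleanly, one invokes Step 1 run backwards: the $\SU(4)$-invariant four-forms realizing \eqref{eq:Spin(7)_type_equation} are exactly the degree-four parts of complex-bilinear squares of chiral spinors with nonzero norm, and since $\SU(4)$ is the stabilizer of such a spinor, any $\SU(4)$-invariant $\Omega$ of the correct algebraic type (self-dual, in the fixed irreducible piece) must coincide up to scale with such a degree-four part, hence satisfies the equation.

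\textbf{Main obstacle.} The delicate point — and where I would spend most of the effort — is Step 2, specifically pinning down that \emph{every} $\Omega$ fixed by some copy of $\SU(4)$ (rather than just the model $\Omega$ coming from a fixed $\SU(4)$-structure) automatically has self-duality type $\mu$ relative to the associated metric and satisfies \eqref{eq:SU(4)invariance}; this requires knowing the decomposition of $\wedge^4(\R^8)^\ast_\C$ into $\SU(4)$-irreducibles and verifying that the locus cut out by \eqref{eq:Spin(7)_type_equation} inside the invariant subspace is nonempty and consists of a single $\GL(8,\R)$-orbit (equivalently, that all chiral spinors of nonzero norm are $\Spin(8)$-equivalent up to scale — true in dimension eight but needing an explicit or cited argument, e.g. the orbit analysis behind \cite{Wang89} or \cite{LS24_Spin7}). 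The forward direction (Step 1) is essentially bookkeeping once Corollary \ref{cor:Bilinear8d} and the equivariance statement are granted.
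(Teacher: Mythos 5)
Your Step 1 is essentially the paper's own forward argument: build $\alpha$ from $(\Omega,h)$ via \eqref{eq:alphaSU(4)}, invoke Corollary \ref{cor:Bilinear8d} to realize it as the complex-bilinear square of some $\eta\in\Sigma^{\mu}$, and use the equivariance of $\cE_\gamma$ from Subsection \ref{subsec:Bilinearequivariance} to transfer the spinor's invariance to $\Omega$. Two small inaccuracies there: your claim that $\escal{\Omega,\Omega}=0$ together with $\Omega\triangle_2\Omega=0$ forces $\Omega=0$ is false (a decomposable self-dual form $\theta_1\wedge\theta_2\wedge\theta_3\wedge\theta_4$ with isotropic, mutually orthogonal factors is a nonzero counterexample), and the stabilizer of a chiral spinor with $\scB(\eta,\eta)\neq0$ is not always exactly $\SU(4)$ (for complex multiples of real spinors it is $\Spin(7)$); neither is fatal, since the paper also implicitly works with $\escal{\Omega,\Omega}\neq 0$ and since $\SU(4)\subset\Spin(7)$ still gives the needed invariance.

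The genuine gap is in your Step 2, and it is precisely at the point you flag as the ``main obstacle.'' Your route (i) asserts that writing $\Omega$ in the basis of $\SU(4)$-invariant four-forms and imposing invariance already yields \eqref{eq:SU(4)invariance}. This is false: the space of $\SU(4)$-invariant complex four-forms is three-complex-dimensional, spanned by $\omega\wedge\omega$ and the holomorphic volume form $\theta_1\wedge\theta_2\wedge\theta_3\wedge\theta_4$ together with its conjugate (there is no invariant four-form ``$\nu$''; that is the volume form, of degree eight), and a generic invariant element does not satisfy the quadratic identity. Concretely, for $\Omega=\omega\wedge\omega$ one has $\Omega\triangle_2\Omega=2\escal{\omega,\omega}\,\Omega$ with $\escal{\omega,\omega}=4$ and $\escal{\Omega,\Omega}=24$ relative to the (unique, up to scale) invariant metric, so $\sqrt{14}\,\Omega\triangle_2\Omega+12\escal{\Omega,\Omega}^{1/2}\Omega\neq 0$ for either branch of the square root, and the identity is scale-invariant, so no rescaling repairs it. Hence invariance plus self-duality alone cannot produce \eqref{eq:SU(4)invariance}; the whole content of the converse is to show that the \emph{given} stabilized $\Omega$ lies in the special locus of invariant forms that are degree-four parts of spinor squares, and your route (ii) simply presupposes this (``of the correct algebraic type \ldots must coincide up to scale'') without argument. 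The paper's converse closes exactly this hole by a different mechanism: it lifts $\SU(4)\subset\SO(V,h)$ through the spin cover, invokes the two-dimensional subspace of $\SU(4)$-invariant spinors in $\Sigma^{\mu}$ from \cite{Wang89}, and then argues that the complex-bilinear square of an invariant spinor must reproduce the form $\alpha$ of \eqref{eq:alphaSU(4)} associated to $(\Omega,h)$, which is what forces $\Omega$ to satisfy \eqref{eq:Spin(7)_type_equation} and hence \eqref{eq:SU(4)invariance}. Your proposal never supplies this identification step, so as written the converse does not go through.
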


\begin{proof}
Suppose that $(\Omega,h)$ satisfies Equation \eqref{eq:SU(4)invariance} and $*\Omega=\mu\Omega$. Then, by Corollary \ref{cor:sq_B_chiral} together with Corollary \ref{cor:Bilinear8d}, it follows that the exterior form $\alpha$ given as in Equation \eqref{eq:alphaSU(4)} in terms of $(\Omega,h)$ is the square of a complex chiral irreducible spinor $\eta\in \Sigma^{\mu}$. Since the latter is stabilized by $\SU(4)\subset \Spin(V,h)$, which is connected and simply connected, and the spinor square map is equivariant with respect to the adjoint action, see Subsection \ref{subsec:Bilinearequivariance}, we conclude that $\alpha$ is invariant under $\SU(4)\subset \SO(V,h)$, which in turn implies that $\Omega$ is stabilized by $\SU(4)$. Conversely, suppose that $\Omega$ is stabilized by $\SU(4)\subset \SO(V,h)$ for an Euclidean metric $h$ on $V$ and consider the exterior form $\alpha$ as defined in Equation \eqref{eq:alphaSU(4)}. Then, the preimage of $\SU(4)\subset \SO(V,h)$ by the spin double cover morphism defines a subgroup $\SU(4)\subset \Spin(V,h)$, which in turn defines a two-dimensional subspace of invariant spinors in $\Sigma^{\mu}$ \cite{Wang89}. We must have that the complex-bilinear square of an element of this invariant subspace is precisely $\alpha$ as given in Equation \eqref{eq:alphaSU(4)} in terms of $(\Omega,h)$, for otherwise there would be two different pairs $(\Omega,h)$ and $(\Omega^{\prime},h^{\prime})$ stabilized both by the same $\SU(4)\subset\Aut(V)$.
\end{proof}

\begin{remark}
Note that the analogy with the algebraic description of $\Spin(7)$-structures given in \cite{LS24_Spin7} cannot naively be taken any further. This is because the main results obtained in \cite{LS24_Spin7} regarding the existence of a cubic potential whose critical points are conformal $\Spin(7)$-structures rely on the decomposition of the space of real four-forms in irreducible $\Spin(7)$-representations, which is different from the decomposition of the space of complex four-forms in $\SU(4)$-representations.
\end{remark}

\noindent
We distinguish in the following between the case $\scB(\eta,\eta) = 0$, equivalently $\langle \Omega , \Omega \rangle = 0$, which corresponds to $\eta$ being a pure spinor, and the generic case $\scB(\eta,\eta) \neq 0$, equivalently $\langle \Omega , \Omega \rangle \neq 0$, which corresponds to $\eta$ being \emph{impure} in the terminology of \cite{CharltonThesis}.

% % % % % % % % % % % % % % % % % % % % % % % % % % % % % % % % % % % % % %

\subsubsection{Case $\scB(\eta,\eta)=0$.}

% % % % % % % % % % % % % % % % % % % % % % % % % % % % % % % % % % % % % %
 
If $\scB(\eta,\eta) = 0$, then as stated in Corollary \ref{cor:sq_B_chiral} and Corollary \ref{cor:Bilinear8d}, taking $\beta=1$ does not suffice to characterize the square of the spinor. Choosing $\beta = \bar\Omega \in \wedge^4V^*_\C$, we have: 
\begin{eqnarray*}
\alpha\diamond\beta=\Omega\diamond\bar\Omega=\Omega\wedge\bar\Omega-\Omega\triangle_1\bar\Omega-\Omega\triangle_2\bar\Omega+\Omega\triangle_3\bar\Omega+\escal{\Omega,\bar\Omega}
\end{eqnarray*}

\noindent
and $(\alpha\diamond\beta)^{(0)}=\escal{\Omega,\bar\Omega}=\abs{\Re(\Omega)}^2+\abs{\Im(\Omega)}^2\neq0$ since $\Omega\neq0$.\medskip

We now proceed as in the six-dimensional case to prove the eight-dimensional analogue of Corollary \ref{cor:6dbilinear}.

\begin{cor}
A complex exterior form $\alpha\in\wedge V^*_\C$ is the complex-bilinear square of a complex irreducible spinor $\eta\in\Sigma^\mu$ with chirality $\mu$ if and only if: $$\alpha=\theta_1\wedge\theta_2\wedge\theta_3\wedge\theta_4$$ for complex one-forms $\theta_1,\theta_2,\theta_3,\theta_4\in V^*_\C$ isotropic and orthogonal satisfying: $$*(\theta_1\wedge\theta_2\wedge\theta_3\wedge\theta_4)=\mu(\theta_1\wedge\theta_2\wedge\theta_3\wedge\theta_4).$$
\end{cor}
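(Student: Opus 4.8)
The plan is to mimic the structure of the six-dimensional case (the proof of Corollary \ref{cor:6dbilinear}), adapting it to the four-form setting in dimension eight. By Corollary \ref{cor:Bilinear8d} we already know that the complex-bilinear square of $\eta\in\Sigma^{\mu}$ with $\scB(\eta,\eta)=0$ reduces, after the linear constraints $\tau(\alpha)=\alpha$ and $\ast(\pi\circ\tau)(\alpha)=\mu\alpha$, to a self-dual complex four-form $\alpha=\Omega\in\wedge^4V^*_{\C}$ with $\lambda=\alpha^{(0)}=0$ (since $\langle\Omega,\Omega\rangle=14\lambda^2=0$ forces $\lambda=0$), satisfying the quadratic relation $\Omega\triangle_2\Omega=0$ together with $\alpha\diamond\beta\diamond\alpha=16(\alpha\diamond\beta)^{(0)}\alpha$ for $\beta$ with $(\alpha\diamond\beta)^{(0)}\neq0$. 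The goal is to show this exterior form is necessarily \emph{decomposable}, $\Omega=\theta_1\wedge\theta_2\wedge\theta_3\wedge\theta_4$, and then to extract the isotropy and orthogonality of the $\theta_i$ from the self-duality condition exactly as in dimension six.

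First I would establish decomposability. The idea is to test the quadratic equation $\alpha\diamond\beta\diamond\alpha=16(\alpha\diamond\beta)^{(0)}\alpha$ against $\beta=v^\flat\in V^*_{\C}$ for an arbitrary $v\in V_{\C}$. Using $\Omega\diamond v^\flat+v^\flat\diamond\Omega=2\iota_v\Omega$ together with $\Omega\diamond\Omega=0$ (which holds because $\lambda=0$ and $\Omega\triangle_2\Omega=0$ by \eqref{eq:Spin(7)_type_equation}), one gets $\Omega\diamond v^\flat\diamond\Omega=(2\iota_v\Omega-v^\flat\diamond\Omega)\diamond\Omega=2(\iota_v\Omega)\diamond\Omega$. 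Since $(\alpha\diamond v^\flat)^{(0)}=(\Omega\diamond v^\flat)^{(0)}=0$, the equation forces $(\iota_v\Omega)\diamond\Omega=0$, and isolating the top-degree (degree seven) component yields $(\iota_v\Omega)\wedge\Omega=0$ for every $v\in V_{\C}$. By the classical Plücker relations \cite{EM00}, this is precisely the condition that $\Omega$ be decomposable, so $\Omega=\theta_1\wedge\theta_2\wedge\theta_3\wedge\theta_4$ for some $\theta_1,\theta_2,\theta_3,\theta_4\in V^*_{\C}$.

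Next I would impose the self-duality $\ast\Omega=\mu\Omega$. Plugging $\theta_i^{\sharp}$ into both sides of $\ast(\theta_1\wedge\theta_2\wedge\theta_3\wedge\theta_4)=\mu(\theta_1\wedge\theta_2\wedge\theta_3\wedge\theta_4)$ and contracting: the right-hand side vanishes identically when contracted with any $\theta_i^{\sharp}$, whereas the left-hand side, when contracted with $\theta_i^{\sharp}$, produces (up to sign and the Hodge star on the orthogonal complement) a term involving $h^*(\theta_i,\theta_j)$; requiring all these to vanish forces $h^*(\theta_i,\theta_j)=0$ for all $i,j$, i.e.\ the $\theta_i$ are isotropic and mutually orthogonal. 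Conversely, one completes $\{\theta_1,\dots,\theta_4\}$ to a basis $\{\theta_1,\dots,\theta_4,\bar\theta_1,\dots,\bar\theta_4\}$ of conjugate-isotropic pairs, takes $\beta=\bar\Omega=\bar\theta_1\wedge\bar\theta_2\wedge\bar\theta_3\wedge\bar\theta_4$ (for which $(\alpha\diamond\beta)^{(0)}=\langle\Omega,\bar\Omega\rangle\neq0$ since $\Omega\neq0$, as already noted above), and checks by a direct computation that $\alpha\diamond\bar\Omega\diamond\alpha=16\langle\Omega,\bar\Omega\rangle\alpha$ holds automatically, so the listed conditions are also sufficient by Corollary \ref{cor:Bilinear8d}.

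\textbf{Main obstacle.} The delicate point is the sufficiency direction: verifying that $\alpha\diamond\bar\Omega\diamond\alpha=16(\alpha\diamond\bar\Omega)^{(0)}\alpha$ for $\alpha=\theta_1\wedge\theta_2\wedge\theta_3\wedge\theta_4$ with the $\theta_i$ conjugate-isotropic. Expanding $\Omega\diamond\bar\Omega$ involves all the generalized products $\triangle_k$ up to $k=4$, and then $\diamond$-multiplying again by $\Omega$ produces a lengthy expansion; controlling the cancellations requires the combinatorics of contractions in the conjugate-pair basis. This is entirely analogous to the ``tedious but straightforward'' computation in the six-dimensional proof, but the number of terms grows, so care is needed. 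A secondary subtlety is justifying that the Plücker criterion ``$(\iota_v\Omega)\wedge\Omega=0$ for all $v$'' genuinely characterizes decomposability of a $4$-form in an $8$-dimensional space over $\C$ — here one invokes \cite{EM00}, noting this is exactly the borderline case $2k=\dim V$ where one must check the contracted form rather than $\Omega\wedge\Omega=0$ alone.
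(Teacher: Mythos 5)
Your overall skeleton follows the paper's strategy (test the quadratic relation against low-degree forms $\beta$, deduce a Plücker-type relation, conclude decomposability, read off isotropy and orthogonality from $\ast\Omega=\mu\Omega$, and verify sufficiency with $\beta=\bar\Omega$ in a conjugate-isotropic basis), and the sufficiency discussion and the extraction of $h^*(\theta_i,\theta_j)=0$ are essentially the paper's steps. However, the decomposability step has a genuine gap. By taking $\beta=v^\flat$ only for \emph{vectors} $v\in V_{\C}$ you obtain $(\iota_v\Omega)\wedge\Omega=0$ for all $v$, and you then assert that this is "precisely" the Plücker criterion for a four-form, citing \cite{EM00}. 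That is false: the classical Plücker relations require contraction by $(k-1)$-vectors, and the refinement in \cite{EM00} (which is what the paper uses, both for the three-form in dimension six and the four-form here) requires contraction by $(k-2)$-vectors, i.e.\ by \emph{bi-vectors} when $k=4$. Single-vector contractions do not suffice: the four-form $\Omega_0=e^{1234}+e^{1256}=e^{12}\wedge(e^{34}+e^{56})$ satisfies $(\iota_v\Omega_0)\wedge\Omega_0=0$ for every vector $v$ (the term with $\iota_v e^{12}$ dies against $e^{12}$, the other against $e^{12}\wedge e^{12}$), yet it is not decomposable. Whether your additional constraints ($\ast\Omega=\mu\Omega$, $\escal{\Omega,\Omega}=0$, $\Omega\triangle_2\Omega=0$, or the lower-degree components of $(\iota_v\Omega)\diamond\Omega=0$) could rescue a vectors-only argument is not addressed, so as written the step fails. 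The paper avoids this precisely by choosing $\beta=v^\flat$ for $v\in\wedge^2V_{\C}$ a bi-vector: using $\Omega\diamond v^\flat+v^\flat\diamond\Omega=2(\Omega\wedge v^\flat-\Omega\triangle_2 v^\flat)$ and then the self-duality $\ast\Omega=\mu\Omega$ to convert $(\Omega\wedge v^\flat)\diamond\Omega$ into $-(v^\flat\triangle_2\Omega)\diamond\Omega$, it arrives at $(v^\flat\triangle_2\Omega)\wedge\Omega=(\iota_{\tau(v)}\Omega)\wedge\Omega=0$ for \emph{all} bi-vectors, which is the correct criterion of \cite{EM00}. You should redo the argument with two-form test elements along these lines.

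Two smaller points. First, your algebraic identity is mislabeled: for the even-degree form $\Omega$ one has $\Omega\diamond v^\flat+v^\flat\diamond\Omega=2\,v^\flat\wedge\Omega$, while it is the commutator $v^\flat\diamond\Omega-\Omega\diamond v^\flat=2\,\iota_v\Omega$ that produces the contraction; since $\Omega\diamond\Omega=0$, your conclusion $(\iota_v\Omega)\diamond\Omega=0$ still follows (up to sign), so this is only a slip of bookkeeping, not the source of the gap. Second, in the isotropy/orthogonality step you have the two sides interchanged: it is the contraction of $\ast\Omega$ that vanishes identically (via $\iota_{\theta_i^{\sharp}}\ast\Omega=\pm\ast(\theta_i\wedge\Omega)=0$), while the contraction of $\mu\Omega$ produces the terms $h^*(\theta_i,\theta_j)\,\theta_1\wedge\cdots\widehat{\theta_j}\cdots\wedge\theta_4$, whose vanishing (by linear independence of the $\theta_j$) gives isotropy and orthogonality; the idea is right, but state it the right way around.
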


\begin{proof}
By the previous discussion we know that necessarily $\alpha=\Omega\in\wedge^4V^*_\C$ satisfying $*\Omega=\mu\Omega$. Let us show that $\Omega$ is decomposable. Let $v\in\wedge^2V_\C$ be any complex bi-vector and take $\beta=v^\flat\in\wedge^2V^*_\C$. Then $(\alpha\diamond\beta)^{(0)}=(\Omega\diamond v^\flat)^{(0)}=0$ and the quadratic equation $\alpha\diamond\beta\diamond\alpha=16(\alpha\diamond\beta)^{(0)}\alpha$ becomes:
\begin{equation}\label{eq:8d_norm_zero_auxiliar}
(\Omega\wedge v^\flat-\Omega\triangle_2 v^\flat)\diamond\Omega=0,
\end{equation}

\noindent
where we have used $\Omega\diamond v^\flat+v^\flat\diamond\Omega=2(\Omega\wedge v^\flat-\Omega\triangle_2 v^\flat)$ and $\Omega\diamond\Omega=0$. Note that since $*\Omega=\mu\Omega$ we have: $$*(\Omega\wedge v^\flat)=v^\flat\triangle_2(*\Omega)=\mu v^\flat\triangle_2\Omega.$$

Hence: $$(\Omega\wedge v^\flat)\diamond\Omega=\mu(\Omega\wedge v^\flat)\diamond(*\Omega)=\mu(\Omega\wedge v^\flat)\diamond\nu\diamond\Omega=-\mu*(\Omega\wedge v^\flat)\diamond\Omega=-(v^\flat\triangle_2\Omega)\diamond\Omega.$$

Then \eqref{eq:8d_norm_zero_auxiliar} becomes: $$(v^\flat\triangle_2\Omega)\diamond\Omega=(v^\flat\triangle_2\Omega)\wedge\Omega-(v^\flat\triangle_2\Omega)\triangle_1\Omega-(v^\flat\triangle_2\Omega)\triangle_2\Omega=0.$$

In particular, we have $(v^\flat\triangle_2\Omega)\wedge\Omega=0$ for all $v\in\wedge^2V_\C$, hence $\Omega$ is decomposable by \cite{EM00} since $v^\flat\triangle_2\Omega=\iota_{\tau(v)}\Omega$. Hence, we can write: $$\Omega=\theta_1\wedge\theta_2\wedge\theta_3\wedge\theta_4$$ for complex one-forms $\theta_1,\theta_2,\theta_3,\theta_4\in V^*_\C$. Then, the condition $*\Omega=\mu\Omega$ translates into $*(\theta_1\wedge\theta_2\wedge\theta_3\wedge\theta_4)=\mu(\theta_1\wedge\theta_2\wedge\theta_3\wedge\theta_4)$. Plugging $\theta_1^\sharp$, $\theta_2^\sharp$, $\theta_3^\sharp$, and $\theta_4^\sharp$ into the latter, we conclude that the one-forms are isotropic and mutually orthogonal. We choose a basis:
\begin{equation*}
\{\theta_1,\theta_2,\theta_3,\theta_4,\bar\theta_1,\bar\theta_2,\bar\theta_3,\bar\theta_4\}  
\end{equation*}

\noindent
of $(V^*_\C,h^*_\C)$ given by isotropic one-forms that are conjugate in pairs, that is, they are mutually orthogonal except for:
\begin{equation*}
\escal{\theta_1,\bar\theta_1}=\escal{\theta_2,\bar\theta_2}=\escal{\theta_3,\bar\theta_3}=\escal{\theta_4,\bar\theta_4}=1.    
\end{equation*}

\noindent
Then, setting $\beta=\bar\Omega=\bar\theta_1\wedge\bar\theta_2\wedge\bar\theta_3\wedge\bar\theta_4$ we have $(\alpha\diamond\beta)^{(0)}=\escal{\Omega,\bar\Omega}=1$ and a tedious but straightforward computation shows that the equation $\alpha\diamond\beta\diamond\alpha=16\alpha$ is automatically satisfied by $\alpha$ as given in the statement.
\end{proof}

\begin{remark}
It is interesting to observe how the Plücker relations emerge from the algebraic relations that characterize the square of an irreducible chiral complex spinor. This illustrates how the algebraic and geometric information contained in the general algebraic conditions for the square of an irreducible complex spinor manifests in different intricate ways depending on the dimension and signature.  
\end{remark}

% % % % % % % % % % % % % % % % % % % % % % % % % % % % % % % % % % % % % %

\subsubsection{Case $\scB(\eta,\eta)\neq0$.}

% % % % % % % % % % % % % % % % % % % % % % % % % % % % % % % % % % % % % %

This case corresponds to the square of an irreducible and chiral complex spinor that is \emph{impure} \cite{CharltonThesis}. Corollary \ref{cor:Bilinear8d} can be refined in this case as follows.

\begin{cor}
\label{cor:bilinear8dimpure}
A complex exterior form $\alpha\in \wedge V^*_{\C}$ is the complex-bilinear square of a complex irreducible impure spinor $\eta\in\Sigma^\mu$ of chirality $\mu$ if and only if:
\begin{eqnarray*}
\alpha =  \frac{\langle \Omega , \Omega \rangle^{\frac{1}{2}}}{\sqrt{14}} + \Omega + \frac{\Omega\wedge\Omega}{\sqrt{14} \langle \Omega , \Omega \rangle^{\frac{1}{2}}}
\end{eqnarray*}

\noindent
for a complex four-form $\Omega \in \wedge^4 V^*_\C$ satisfying $*\Omega=\mu\Omega$ and $\sqrt{14}\Omega\triangle_2\Omega + 12  \langle \Omega , \Omega \rangle^{\frac{1}{2}} \Omega = 0$.
\end{cor}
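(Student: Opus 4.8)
The plan is to obtain this refinement as an essentially cosmetic rewriting of Corollary~\ref{cor:Bilinear8d}. Since $\eta$ is assumed impure we have $\scB(\eta,\eta)\neq 0$, which by the Remark following Corollary~\ref{cor:Bilinear8d} is equivalent to $\langle\Omega,\Omega\rangle\neq 0$; in particular the conditions $*\Omega=\mu\Omega$ and $\sqrt{14}\,\Omega\triangle_2\Omega+12\langle\Omega,\Omega\rangle^{\frac12}\Omega=0$ listed there are simultaneously necessary and sufficient for $\alpha$ to be the complex-bilinear square of a chiral spinor $\eta\in\Sigma^\mu$, and there is nothing to prove about the quadratic constraint. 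So the whole task reduces to re-expressing the top-degree component of
\[
\alpha = \frac{\langle\Omega,\Omega\rangle^{\frac12}}{\sqrt{14}} + \Omega + \frac{\mu\langle\Omega,\Omega\rangle^{\frac12}}{\sqrt{14}}\,\nu
\]
in terms of $\Omega\wedge\Omega$.

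The key step is the identity $\Omega\wedge\Omega=\mu\,\langle\Omega,\Omega\rangle\,\nu$, valid for any self-dual complex four-form with $*\Omega=\mu\Omega$. This follows immediately from the defining relation $\beta\wedge *\gamma=\langle\beta,\gamma\rangle\,\nu$ for the complex-bilinear extension of the inner product to $\wedge^\bullet V^*_\C$ (Appendix~\ref{app:KA}): using $\mu^2=1$,
\[
\Omega\wedge\Omega = \mu\,\Omega\wedge(\mu\Omega) = \mu\,\Omega\wedge *\Omega = \mu\,\langle\Omega,\Omega\rangle\,\nu .
\]
The same identity is in fact already buried inside the proof of Corollary~\ref{cor:Bilinear8d}, in the form $\Omega\wedge\Omega=14\lambda c\,\nu$ together with $c=\mu\lambda$ and $\langle\Omega,\Omega\rangle=14\lambda^2$. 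Because $\langle\Omega,\Omega\rangle\neq 0$ one may then divide and substitute
\[
\frac{\mu\langle\Omega,\Omega\rangle^{\frac12}}{\sqrt{14}}\,\nu = \frac{\mu\langle\Omega,\Omega\rangle\,\nu}{\sqrt{14}\,\langle\Omega,\Omega\rangle^{\frac12}} = \frac{\Omega\wedge\Omega}{\sqrt{14}\,\langle\Omega,\Omega\rangle^{\frac12}},
\]
turning the displayed $\alpha$ into exactly the formula in the statement. For the converse, I would read this substitution backwards: given $\Omega\in\wedge^4V^*_\C$ with $*\Omega=\mu\Omega$, $\langle\Omega,\Omega\rangle\neq 0$, and $\sqrt{14}\,\Omega\triangle_2\Omega+12\langle\Omega,\Omega\rangle^{\frac12}\Omega=0$, the identity $\Omega\wedge\Omega=\mu\langle\Omega,\Omega\rangle\,\nu$ converts the asserted form of $\alpha$ back to the one appearing in Corollary~\ref{cor:Bilinear8d}, whose sufficiency clause then produces the desired impure chiral spinor $\eta\in\Sigma^\mu$.

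I do not expect any genuine obstacle here; the argument is bookkeeping layered on Corollary~\ref{cor:Bilinear8d}. The one point deserving a remark is the consistency of the square-root conventions: one must fix the branch of $\langle\Omega,\Omega\rangle^{\frac12}$ to be the value $\sqrt{14}\,\lambda$ with $\lambda=\alpha^{(0)}$ as in the proof of Corollary~\ref{cor:Bilinear8d}, so that the degree-zero term, the degree-eight term, and the quadratic constraint all refer to the same root; the opposite choice of root corresponds to replacing $\Omega$ by $-\Omega$, equivalently $\eta$ by $-\eta$, and leaves the characterization unchanged.
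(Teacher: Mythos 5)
Your proposal is correct and matches the paper's (implicit) argument: the paper presents Corollary \ref{cor:bilinear8dimpure} as an immediate refinement of Corollary \ref{cor:Bilinear8d}, obtained exactly by the substitution $\Omega\wedge\Omega=\mu\langle\Omega,\Omega\rangle\nu$ (already contained in the derivation as $\Omega\wedge\Omega=14\lambda c\,\nu$ with $c=\mu\lambda$, $\langle\Omega,\Omega\rangle=14\lambda^2$) together with the nonvanishing of $\langle\Omega,\Omega\rangle$ guaranteed by impurity. Your remark on fixing the branch of $\langle\Omega,\Omega\rangle^{\frac12}$ consistently with $\lambda=\alpha^{(0)}$ is also consonant with the paper's conventions.
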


\noindent
We can obtain an alternative point of view by returning to the characterization of the complex-bilinear square of $\eta\in \Sigma^{\mu}$ as a complex exterior form:
\begin{equation*}
\alpha=\lambda + \Omega + \mu \lambda \nu    
\end{equation*}

\noindent
satisfying Equation \eqref{eq:Spin(7)_type_equation}. Writing $\lambda = \lambda_{R} + i \lambda_{I}$ and $\Omega = \Omega_R + i \Omega_I$ for uniquely determined $\lambda_{R} , \lambda_I \in \mathbb{R}$ and $\Omega_R , \Omega_I \in \wedge^4 V^{\ast}$, Equation \eqref{eq:Spin(7)_type_equation} reduces to the following system of \emph{real} algebraic equations:
\begin{eqnarray}
& \vert \Omega_R \vert^2 + 14 \lambda_I^2 = \vert \Omega_I \vert^2 + 14 \lambda_R^2\, , \qquad \langle \Omega_R , \Omega_I\rangle = 14 \lambda_R \lambda_I, \label{eq:impure1}\\
& 6 (\lambda_R \Omega_I + \lambda_I \Omega_R) + \Omega_R \triangle_2 \Omega_I = 0\, , \qquad 12(\lambda_R \Omega_R - \lambda_I \Omega_I) + \Omega_R \triangle_2 \Omega_R - \Omega_I \triangle_2 \Omega_I = 0. \label{eq:impure2}
\end{eqnarray}

\noindent
Assume that $\lambda_R  \neq 0$. Isolating in the second equation in \eqref{eq:impure1}, we obtain $\lambda_I = 14^{-1} \lambda_R^{-1} \langle \Omega_R , \Omega_I\rangle$, which plugged into the first equation in \eqref{eq:impure1}, gives:
\begin{eqnarray*}
(14 \lambda_R^2)^2 + 14 \lambda_R^2 ( \vert \Omega_I \vert^2 - \vert \Omega_R \vert^2) = \langle \Omega_R , \Omega_I\rangle^2.
\end{eqnarray*}

\noindent
The only admissible solution to this algebraic equation reads:
\begin{equation*}
28 \lambda_R^2 =    \vert \Omega_R \vert^2 - \vert \Omega_I \vert^2 + \sqrt{(\vert \Omega_R \vert^2 - \vert \Omega_I \vert^2)^2 + 4 \langle \Omega_R , \Omega_I\rangle^2},
\end{equation*}

\noindent
which immediately implies:
\begin{eqnarray*}
28\lambda^2_I = \frac{4 \langle \Omega_R , \Omega_I\rangle^2}{\vert \Omega_R \vert^2 - \vert \Omega_I \vert^2 + \sqrt{(\vert \Omega_R \vert^2 - \vert \Omega_I \vert^2)^2 + 4 \langle \Omega_R , \Omega_I\rangle^2}},\end{eqnarray*}

\noindent
hence solving both equations in \eqref{eq:impure1}. Plugging this back into the Equation \eqref{eq:impure2} gives an algebraic system depending exclusively on $\Omega_R$ and $\Omega_I$ whose solutions are the real and imaginary parts of the complex four-form $\Omega$ occurring in the complex-bilinear square of an impure irreducible and chiral complex spinor. If we \emph{normalize} $\eta$ with respect to $\scB$, namely we assume that $\scB(\eta,\eta) =16$, then $\lambda_R = 1$, $\lambda_I = 0$, and the algebraic system \eqref{eq:impure1} and \eqref{eq:impure2} reduces to:
\begin{eqnarray*}
& \vert \Omega_R \vert^2  = \vert \Omega_I \vert^2 + 14 \, , \qquad \langle \Omega_R , \Omega_I\rangle = 0,\\
& 6  \Omega_I +   \Omega_R \triangle_2 \Omega_I = 0\, , \qquad 12 \Omega_R   + \Omega_R \triangle_2 \Omega_R - \Omega_I \triangle_2 \Omega_I = 0.
\end{eqnarray*}

\noindent
It can be seen that the first line above recovers the relations found in \cite[Section 3.7]{Krasnov2024} in their computation of the square of a normalized complex and irreducible chiral spinor in eight Euclidean dimensions. The second line in the previous equation seems to be new and guarantees, in an intrinsic way that uses no explicit basis or choices, that both $\Omega_R$ and $\Omega_I$ are the real and imaginary parts of a complex four-form arising from the square of $\eta$ as determined in Corollary \ref{cor:bilinear8dimpure}.

% % % % % % % % % % % % % % % % % % % % % % % % % % % % % % % % % % % % % %
% % % % % % % % % % % % % % % % % % % % % % % % % % % % % % % % % % % % % %

\section{Spinorial instantons and self-dual bundle gerbes}
\label{section:SDGerbe}

% % % % % % % % % % % % % % % % % % % % % % % % % % % % % % % % % % % % % %
% % % % % % % % % % % % % % % % % % % % % % % % % % % % % % % % % % % % % %

In this section, we consider a particular class of spinorial instantons, to which we refer as \emph{spinorial curvings}, on a $\mathbb{C}^{\ast}$-bundle gerbe on a Lorentzian six-dimensional manifold $(M,g)$, a dimension and signature in which spinorial curvings are intimately related to the notion of \emph{self-duality} for three-forms. We begin by computing the Hermitian square of an irreducible complex spinor in six Lorentzian dimensions. This square was considered explicitly in \cite{Figueroa_Lorentzian_6d_2018}, where a set of necessary conditions was obtained for its characterization. Let $(V,h)$ denote a six-dimensional Minkowski space and let $(\Sigma , \gamma)$ be a complex irreducible module of $\Cl(V^{\ast},h^{\ast})$. Hence, $(\Sigma , \gamma)$ is eight-dimensional and of quaternionic type. Denote by: 
\begin{eqnarray*}
\Sigma = \Sigma^{+} \oplus \Sigma^{-}
\end{eqnarray*}

\noindent
the chiral decomposition of $\Sigma$ with respect to the complex volume form $\nu_{\C} = \nu$, which in this case is the \emph{real} Lorentzian volume form $\nu$ of $(V,h)$. Following Proposition \ref{prop:Hermitian_admissible} and Proposition \ref{prop:complex_bilinear_admissible}, we equip $(\Sigma,\gamma)$ with a Hermitian pairing $\scS$ of positive adjoint type together with a compatible complex-bilinear pairing $\scB$, also of positive adjoint type, which by construction is skew-symmetric and related to $\scS$ through a quaternionic structure $J \colon \Sigma \to \Sigma$ on $(\Sigma , \gamma)$. Note that the chiral splitting of $\Sigma$ is isotropic with respect to both $\scS$ and $\scB$. 

% % % % % % % % % % % % % % % % % % % % % % % % % % % % % % % % % % % % % %

\subsection{The chiral Hermitian square}

% % % % % % % % % % % % % % % % % % % % % % % % % % % % % % % % % % % % % %

By Corollary \ref{cor:sq_S_chiral}, a complex exterior form $\widehat{\alpha} \in \wedge V^{\ast}_{\C}$ is the Hermitian square of a non-zero element $\eta\in\Sigma^{\mu}$ of chirality $\mu\in\mathbb{Z}_2$ with $\kappa = 1$ if and only if:
\begin{eqnarray}
\label{eq:sesquilinear6d}
\tau(\widehat{\alpha})=\overline{\widehat{\alpha}}\, ,\qquad   \ast (\pi\circ\tau)(\widehat{\alpha}) =   \mu\widehat{\alpha}\, , \qquad \widehat{\alpha} \diamond \beta \diamond\widehat{\alpha}= 8 (\widehat{\alpha}\diamond\beta)^{(0)}\widehat{\alpha}
\end{eqnarray}

\noindent
for a complex exterior form $\beta\in \wedge V^{\ast}_{\C}$ such that $(\widehat{\alpha} \diamond\beta)^{(0)} \neq 0$. The first two equations, which are linear in $\widehat{\alpha}$, are immediately solved by:
\begin{eqnarray*}
\widehat{\alpha} = u + i \rho  - \mu \ast u\, , \qquad \ast \rho = \mu \rho,
\end{eqnarray*}

\noindent
where $u\in V^{\ast}$ is a real one-form and $\rho\in \wedge^3 V^{\ast}$ is a real three-form, both uniquely determined by $\eta$. Hence, it only remains to solve the third equation in \eqref{eq:sesquilinear6d} for a choice of $\beta$ such that $(\widehat{\alpha} \diamond\beta)^{(0)} \neq 0$. Note that this equation is automatically satisfied by the previous expression of $\widehat{\alpha}$ when $\beta = 1$. Hence, to proceed further, we need to make more educated choices of $\beta \in \wedge V^{\ast}_{\C}$, as the following lemma illustrates. 

\begin{lemma}
Let $\widehat{\alpha} = u + i \rho  - \mu \ast u$ be the Hermitian square of a non-zero element $\eta\in \Sigma^{\mu}$. Then $u\neq 0$.
\end{lemma}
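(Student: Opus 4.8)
The plan is to argue by contradiction: suppose $u = 0$, so that $\widehat{\alpha} = i\rho$ is purely a three-form (times $i$). The strategy is to feed this into the quadratic relation $\widehat{\alpha} \diamond \beta \diamond \widehat{\alpha} = 8(\widehat{\alpha}\diamond\beta)^{(0)}\widehat{\alpha}$ from \eqref{eq:sesquilinear6d} for suitably chosen test forms $\beta$, and to exploit the structure of the geometric product on three-forms to deduce that $\widehat{\alpha} = 0$, contradicting the fact that $\widehat{\alpha}$ is the Hermitian square of a nonzero spinor (hence nonzero, being $\Psi_\gamma^{-1}(\hcE_1(\eta))$ with $\eta \neq 0$). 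Equivalently, one can work with the endomorphism $\widehat{E} = \Psi_\gamma(\widehat{\alpha}) = \eta \otimes \eta^*$ directly: if $u=0$ then $\widehat{\alpha}$ is annihilated by the degree-zero projection against $\beta = 1$, i.e. $\widehat{\alpha}^{(0)} = 0$, which means $\Tr(\widehat{E}) = 2^{3}\widehat{\alpha}^{(0)} = 0$, i.e. $\scS(\eta,\eta) = 0$.

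First I would record that $u = 0$ forces $\widehat{\alpha}^{(0)} = 0$ and $(\ast u)$-term to vanish, so $\widehat{\alpha} = i\rho$ with $\ast\rho = \mu\rho$; then $\scS(\eta,\eta) = \Tr(\widehat{E}) = 2^{\frac{d}{2}}\widehat{\alpha}^{(0)} = 0$. The key computation is then the square $\rho \diamond \rho$. Exactly as in the six-dimensional Euclidean case treated in Corollary \ref{cor:6dbilinear}, a self-dual (here $\ast\rho = \mu\rho$, real) three-form satisfies $\rho\diamond\rho = \rho\triangle_1\rho - \escal{\rho,\rho} \nu^{(0)}$-type terms, and the self-duality relation combined with Lemma \ref{lemma:product_volume_form} and $\nu\diamond\nu = -1$ (Lorentzian signature: one checks the sign) gives $\ast\rho\diamond\ast\rho = \rho\diamond\nu\diamond\nu\diamond\rho$, which pins down $\rho\diamond\rho$. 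Using this, taking $\beta = v^\flat$ for an arbitrary $v\in V_\C$ — so that $(\widehat{\alpha}\diamond\beta)^{(0)} = (i\rho\diamond v^\flat)^{(0)} = 0$ — the quadratic relation collapses to $(\iota_v\rho)\wedge\rho - (\iota_v\rho)\triangle_1\rho - (\iota_v\rho)\triangle_2\rho = 0$, and in particular $(\iota_v\rho)\wedge\rho = 0$ for all $v$. By the classical Plücker relations \cite{EM00}, $\rho$ is decomposable: $\rho = \theta_1\wedge\theta_2\wedge\theta_3$. But a real decomposable three-form on a six-dimensional Lorentzian space satisfying $\ast\rho = \mu\rho$ cannot exist unless $\rho = 0$: plugging $\theta_i^\sharp$ into $\ast(\theta_1\wedge\theta_2\wedge\theta_3) = \mu\,\theta_1\wedge\theta_2\wedge\theta_3$ forces the $\theta_i$ to be mutually orthogonal and isotropic, and three mutually orthogonal isotropic real vectors in signature $(1,5)$ (or $(5,1)$) must be linearly dependent since a real isotropic subspace has dimension at most $1$; hence $\rho = 0$ and $\widehat{\alpha} = 0$, a contradiction.

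Alternatively, and perhaps more cleanly, I would use the inverse spinor map: since $\widehat{\alpha} = \hcE^1_\gamma(\eta)$ with $\eta \neq 0$, the endomorphism $\widehat{E} = \eta\otimes\eta^*$ has rank one, so $\Tr(\widehat{E}^\dagger \widehat{E}) = \vert\scS(\eta,\eta)\vert^2 \cdots$ — rather, more to the point, $\widehat{E} \neq 0$ and $\widehat{E}$ determines $\eta$ up to a phase; if $\scS(\eta,\eta) = 0$ then $\eta$ is isotropic, but the one-form $u$ is precisely the Dirac current of $\eta$, which for a chiral spinor in six Lorentzian dimensions is null and nonzero whenever $\eta \neq 0$ (this is the standard fact that the Dirac current of a nonzero chiral spinor in $d=6$, $(1,5)$ is a nonzero null vector). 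Concretely, $\escal{u,u} = (\widehat{\alpha}\diamond\widehat{\alpha})$-component can be extracted, or one notes $u^{(1)} = \widehat{\alpha}^{(1)} \propto \scS(\gamma_i \eta, \eta) e^i$, and $u = 0$ would mean all these vanish together with $\scS(\eta,\eta)$, forcing $\widehat{E}$ to have vanishing degree-$\leq 1$ parts; combined with the self-duality constraint $\ast\rho = \mu\rho$ relating degree $3$ to itself and the already-shown vanishing of the degree $0, 1, 2$ parts, one gets $\widehat{\alpha}$ supported purely in degree $3$ and self-dual, which as above forces decomposability and then vanishing.

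The main obstacle I anticipate is the decomposability-to-vanishing step: one must be careful that the argument "a self-dual real decomposable middle-degree-adjacent form is zero" genuinely uses the Lorentzian signature. In Euclidean signature a nonzero self-dual decomposable three-form does \emph{not} exist either (for the same isotropy reason over $\C$), so the contradiction is signature-robust, but the clean statement is: $\rho = \theta_1\wedge\theta_2\wedge\theta_3 \neq 0$ with $\ast\rho = \mu\rho$ forces $\escal{\theta_i,\theta_j} = 0$ for all $i,j$, and a three-dimensional totally isotropic subspace of a real six-dimensional quadratic space of signature $(1,5)$ is impossible since the maximal dimension of a totally isotropic real subspace is $\min(p,q) = 1$. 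I would make sure to invoke this index bound explicitly. A secondary, purely bookkeeping obstacle is getting the signs right in $\nu\diamond\nu$ and in Lemma \ref{lemma:product_volume_form} for Lorentzian signature; these only affect intermediate formulas, not the final contradiction, so I would state the needed sign and defer the verification.
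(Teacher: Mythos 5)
Your proposal is correct and follows essentially the same route as the paper: assume $u=0$, test the quadratic relation with $\beta = v^\flat$ to get $(\iota_v\rho)\wedge\rho=0$ for all $v$, invoke the Plücker relations to write $\rho$ as a decomposable real three-form, and conclude $\rho=0$ from the fact that self-duality forces its factors to span a totally isotropic subspace of dimension three, impossible in signature $(5,1)$ where $\min(p,q)=1$. The alternative sketch via the Dirac current is unnecessary (and, as stated, essentially assumes the nonvanishing being proved), but the main argument matches the paper's proof, with your explicit appeal to the isotropy-index bound merely making precise the paper's remark that three mutually orthogonal isotropic real one-forms cannot exist in Lorentzian signature.
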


\begin{proof}
Suppose that $\eta\neq 0$ and $u = 0$. Then, the Hermitian square of $\eta$ is of the form $\widehat{\alpha} = i\rho$ with $\ast\rho = \mu \rho$. Let $v\in V$ be and take $\beta=v^\flat\in V^*$. Then $(\widehat{\alpha}\diamond\beta)^{(0)}=i(\rho\diamond v^\flat)^{(0)}=0$ and the quadratic equation $\widehat{\alpha} \diamond \beta \diamond\widehat{\alpha}= 8 (\widehat{\alpha}\diamond\beta)^{(0)}\widehat{\alpha}$ becomes: 
\begin{equation*}
(\iota_v\rho)\wedge\rho-(\iota_v\rho)\triangle_1\rho-(\iota_v\rho)\triangle_2\rho=0,  
\end{equation*}

\noindent
where we have used $\rho\diamond v^\flat+v^\flat\diamond\rho=2\iota_v\rho$ and $\rho\diamond\rho=0$. In particular, we have $(\iota_v\rho)\wedge\rho=0$ for all $v\in V$. Hence, $\rho$ is decomposable by classical Plücker relations \cite{EM00}, and consequently we can write:
\begin{equation*}
\rho = \vartheta_1 \wedge \vartheta_2 \wedge \vartheta_3
\end{equation*}

\noindent
for real one-forms $\vartheta_1 , \vartheta_2 , \vartheta_3 \in V^{\ast}$. Arguing as in the proof of Corollary \ref{cor:6dbilinear}, we conclude that $\vartheta_1$, $\vartheta_2$, and $\vartheta_3$ must be isotropic and mutually orthogonal, which is not possible for real one-forms in Lorentzian signature. Hence $\rho = 0$ and thus $\widehat{\alpha} = 0$, in contradiction with $\eta \neq 0$.
\end{proof}

\noindent
Therefore, in the following, we will assume that $ u \neq 0$. Note, however, that at this point nothing prevents having $\rho = 0$ in the Hermitian square of $\eta \in \Sigma^{\mu}$. We will see later that indeed if $\eta \neq 0$ then $\rho\neq 0$ as well, and thus $\eta \neq 0$ if and only if $u \neq 0$ and $\rho\neq 0$.

\begin{lemma}
Using the notation introduced above, let $\widehat{\alpha} = u + i \rho  - \mu \ast u$ be the Hermitian square of an irreducible complex and chiral spinor $\eta\in \Sigma^{\mu}\setminus \{ 0 \}$. Then:
\begin{equation*}
\escal{u,u}= 0 \, , \qquad u\wedge \rho = 0\, , \qquad \rho(u^\sharp) = 0.
\end{equation*}
\end{lemma}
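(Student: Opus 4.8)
The plan is to read off all three identities from the quadratic part of Corollary \ref{cor:sq_S_chiral}, applied with $\kappa=1$. As already recorded in the excerpt, the two linear conditions in \eqref{eq:sesquilinear6d} force $\widehat{\alpha}=u+i\rho-\mu\ast u$ with $u\in V^{\ast}$ real, $\rho\in\wedge^{3}V^{\ast}$ real and $\ast\rho=\mu\rho$, so the only remaining content is the relation $\widehat{\alpha}\diamond\beta\diamond\widehat{\alpha}=8(\widehat{\alpha}\diamond\beta)^{(0)}\widehat{\alpha}$, which must hold for \emph{every} $\beta\in\wedge V^{\ast}_{\C}$. Taking $\beta=1$ gives $\widehat{\alpha}\diamond\widehat{\alpha}=0$ automatically, as noted. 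Since $\widehat{\alpha}$ is a purely \emph{odd} element of the Kähler--Atiyah algebra in the even dimension $d=6$, for a one-form $\beta=v^{\flat}$ one has the anticommutation identity $v^{\flat}\diamond\widehat{\alpha}+\widehat{\alpha}\diamond v^{\flat}=2\iota_{v^{\sharp}}\widehat{\alpha}$, which together with $\widehat{\alpha}\diamond\widehat{\alpha}=0$ yields $\widehat{\alpha}\diamond v^{\flat}\diamond\widehat{\alpha}=2(\iota_{v^{\sharp}}\widehat{\alpha})\diamond\widehat{\alpha}$. Using $(\widehat{\alpha}\diamond v^{\flat})^{(0)}=\escal{u,v}$ and $\iota_{v^{\sharp}}\widehat{\alpha}=\escal{u,v}+i\,\rho(v^{\sharp})\mp\mu\ast(v^{\flat}\wedge u)$ (the last term via Lemma \ref{lemma:product_volume_form}), the $\beta=v^{\flat}$ relation collapses to
\[
\big(i\,\rho(v^{\sharp})\mp\mu\ast(v^{\flat}\wedge u)\big)\diamond\widehat{\alpha}=3\,\escal{u,v}\,\widehat{\alpha}\,,\qquad\forall\,v\in V\,. \qquad(\dagger)
\]

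The second step is to extract the three assertions from $(\dagger)$, using crucially that it holds for \emph{all} $v$, not merely for one convenient $\beta$. I would expand the left-hand side of $(\dagger)$ into homogeneous components, converting the Hodge duals into wedge and $\triangle_{k}$ terms via Lemma \ref{lemma:product_volume_form} and using elementary identities such as $u\wedge u=0$ and $u\wedge\ast u=\escal{u,u}\nu$. Comparing the top (degree $5$, equivalently degree $1$ by the relation $\ast\rho=\mu\rho$ and the reality conditions) components produces, for general $v$, a linear identity of the schematic form $A\,v^{\flat}+B\,\escal{u,v}\,u=0$ with scalars $A,B$ built from $\escal{u,u}$, $\escal{\rho,\rho}$ and $\rho(u^{\sharp})$; choosing $v$ with $v^{\flat}\not\parallel u$ and $\escal{u,v}=0$ forces $A=0$, and then $B\,\escal{u,v}\,u=0$ for all $v$ together with $u\ne0$ (the preceding lemma) and non-degeneracy of $h$ forces $B=0$. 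These two scalar equations give $\escal{u,u}=0$ and $\rho(u^{\sharp})=0$; feeding these back into the remaining components of $(\dagger)$ (or into the specialization $v^{\flat}=u$, which reads $i\,\rho(u^{\sharp})\diamond\widehat{\alpha}=3\,\escal{u,u}\,\widehat{\alpha}$) confirms consistency. Finally $u\wedge\rho=0$ is immediate from $\rho(u^{\sharp})=0$ and $\ast\rho=\mu\rho$, since $\ast(\iota_{u^{\sharp}}\rho)=\pm\, u\wedge\ast\rho=\pm\mu\, u\wedge\rho$ and $\ast$ is injective; so in fact only $\escal{u,u}=0$ and $\rho(u^{\sharp})=0$ need genuine proof.

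The main obstacle I anticipate is the bookkeeping in the degree-by-degree expansion of $(\dagger)$: tracking the several $\triangle_{k}$ contractions, and above all the Hodge-star signs in Lorentzian signature, where $\ast^{2}$ and the interaction of $\ast$ with interior products carry signs that must be kept consistent with the conventions encoded in Lemma \ref{lemma:product_volume_form}. A secondary, more conceptual subtlety is that the argument must exploit the full strength of ``$(\dagger)$ holds for all $v$'': there need not exist a single $\beta$ with $(\widehat{\alpha}\diamond\beta)^{(0)}\ne0$ among the obvious candidates, because a priori both $u$ and $\rho$ could be isotropic, so one cannot shortcut via a ``non-degenerate'' instance of the quadratic relation. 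One should also double-check that the degenerate branch $\rho=0$ is not prematurely excluded here — it is ruled out only later, once $\escal{u,u}=0$ and $u\ne0$ are in hand.
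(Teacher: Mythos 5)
Your reduction to the quadratic relation with a one-form $\beta=v^{\flat}$, the identity $(\dagger)$, and the observation that $u\wedge\rho=0$ and $\rho(u^{\sharp})=0$ are equivalent once $\ast\rho=\mu\rho$ holds, are all correct; the isotropy $\escal{u,u}=0$ can indeed be obtained from the specialization $v^{\flat}=u$ (this is exactly the paper's choice $\beta=u$: separating real and imaginary parts gives $\escal{u,u}\,\rho=0$, and in the branch $\rho=0$ it gives $\escal{u,u}(u-\mu\ast u)=0$, hence $\escal{u,u}=0$ either way). The genuine gap is in your second step. The homogeneous components of $(\dagger)$ do \emph{not} have the schematic form $A\,v^{\flat}+B\,\escal{u,v}\,u=0$ with scalars $A,B$ built from $\escal{u,u}$, $\escal{\rho,\rho}$ and $\rho(u^{\sharp})$: the real degree-one component reads, up to signs, $(\iota_{v^{\sharp}}\rho)\triangle_2\rho = c_1\escal{u,v}\,u + c_2\escal{u,u}\,v^{\flat}$, i.e.\ it involves the full symmetric tensor $v\mapsto(\iota_{v^{\sharp}}\rho)\triangle_2\rho$ (in indices, $\rho_{acd}\rho_b{}^{cd}$), while the degree-five component involves $(\iota_{v^{\sharp}}\rho)\wedge\rho$, the Pl\"ucker data of $\rho$. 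Neither reduces to the scalars you list, so the proposed extraction (choose $v$ orthogonal to $u$ to kill $A$, then kill $B$) does not apply as stated, and in any case two scalar equations cannot deliver the two-form identity $\rho(u^{\sharp})=0$.

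What is missing is the step where the Lorentzian signature enters in an essential, non-bookkeeping way. From relations of the above type one only extracts degenerate-norm statements such as $\escal{\rho(u^{\sharp}),\rho(u^{\sharp})}=0$, which in signature $(5,1)$ do not force vanishing. The paper's proof handles precisely this point: having shown $\escal{u,u}=0$ and $\rho(u^{\sharp})\wedge\rho=0$ (together with the $\triangle_1$ and $\triangle_2$ analogues coming from $\rho\diamond u\diamond\rho=0$), it chooses a conjugate isotropic one-form $v$, decomposes $\rho=u\wedge v\wedge\phi+u\wedge\omega+v\wedge\varrho+\rho^{\perp}$, translates $\ast\rho=\mu\rho$ into $\ast_{uv}\varrho=\mu\varrho$ and $\ast_{uv}\rho^{\perp}=\mu\phi$ on the positive-definite screen space $V_{uv}$, and then uses positive-definiteness of $h_{uv}$ to conclude $\varrho=0$, then $\rho^{\perp}=0$ and $\phi=0$, hence $\rho=u\wedge\omega$ and $\rho(u^{\sharp})=0=u\wedge\rho$. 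Some argument of this kind (a null decomposition plus positivity on the screen space, or an equivalent mechanism) is indispensable, and it is absent from your sketch; as written, the central assertion $\rho(u^{\sharp})=0$ is not established.
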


\begin{proof}
Suppose $u\neq 0$ and set $\beta = u\in V^*$ in the third equation of \eqref{eq:sesquilinear6d}. We compute:
\begin{equation*}
\widehat{\alpha}\diamond u=\escal{u,u}+i(\rho\wedge u+\rho(u^\sharp))+\mu\escal{u,u}\nu.
\end{equation*}

\noindent
From this, we obtain that the third equation in \eqref{eq:sesquilinear6d} is equivalent to:
\begin{equation}
\label{eq:3rdII}
\widehat{\alpha}\diamond u\diamond\widehat{\alpha}=2\escal{u,u}(u+2i\rho-\mu*u)-\rho\diamond u\diamond\rho=8\escal{u,u}(u + i \rho  - \mu \ast u)=8 (\widehat{\alpha}\diamond u)^{(0)}\widehat{\alpha}.
\end{equation}

\noindent
Regarding the three-form $\rho$, we have two possibilities: either $\rho = 0$ or $\rho \neq 0$. If $\rho = 0$ then the previous equation reduces to $\escal{u,u} = 0$. On the other hand, if $\rho\neq  0$ then the imaginary part of Equation \eqref{eq:3rdII} gives $\escal{u,u}\rho = 0$ and hence $\escal{u,u} = 0$ again. Hence $\escal{u,u} = 0$ and plugging this condition into Equation \eqref{eq:3rdII}, we obtain:
\begin{equation*}
\rho \diamond u\diamond \rho   =  2 \rho(u^\sharp) \diamond \rho = 2 (\rho(u^\sharp) \wedge \rho - \rho(u^\sharp) \triangle_1 \rho - \rho(u^\sharp) \triangle_2 \rho) = 0
\end{equation*}
 
\noindent
or, equivalently:
\begin{equation*}
\rho(u^\sharp) \wedge \rho  = 0\, , \qquad  \rho(u^\sharp) \triangle_1 \rho = 0\, , \qquad  \rho(u^\sharp) \triangle_2 \rho = 0.
\end{equation*}

\noindent
To proceed further, using that we have proven $u$ to be isotropic,  we choose a one-form $v\in V^*$ \emph{conjugate} to $u$. That is, $v$ is a nowhere vanishing isotropic one-form satisfying $\langle u , v \rangle = 1$. Hence:
\begin{equation*}
V  = \langle \mathbb{R} u^{\sharp} \rangle \oplus \langle \mathbb{R} v^{\sharp} \rangle \oplus (\langle \mathbb{R} u^{\sharp} \rangle \oplus \langle \mathbb{R} v^{\sharp} \rangle)^{\perp},
\end{equation*}

\noindent
where $V_{uv}:=(\langle \mathbb{R} u^{\sharp} \rangle \oplus \langle \mathbb{R} v^{\sharp} \rangle)^{\perp} \subset V$ denotes the orthogonal complement of $\langle \mathbb{R} u^{\sharp} \rangle \oplus \langle \mathbb{R} v^{\sharp} \rangle \subset V$. With respect to the previous splitting, we can decompose $\rho\in\wedge^3V^*$ as follows:
\begin{equation*}
\rho=u\wedge v\wedge\phi+u\wedge\omega+v\wedge\varrho+\rho^\perp,
\end{equation*}

\noindent
where $\phi\in V_{uv}^*$, $\omega,\varrho\in\wedge^2V_{uv}^*$, and $\rho^\perp\in\wedge^3V_{uv}^*$. Note that the metric $h$ on $V$ induces a positive-definite metric $h_{uv}$ on $V_{uv}$. Moreover, we define a volume form $\nu_{uv}$ on $V_{uv}$ by the relation $\nu=u\wedge v\wedge \nu_{uv}$, and consequently a Hodge star operator $*_{uv}$ on $V_{uv}$. Therefore, the duality condition $*\rho=\mu\rho$ becomes:
\begin{equation*}
*_{uv}\omega=-\mu\omega,\qquad *_{uv}\varrho=\mu\varrho,\qquad *_{uv}\rho^\perp=\mu\phi.
\end{equation*}

\noindent
We then have $\rho(u^\sharp)=-u\wedge\phi+\varrho$ and the equation $\rho(u^\sharp)\wedge\rho=0$ becomes: $$\phi\wedge\varrho=0,\qquad \phi\wedge\rho^\perp-\omega\wedge\varrho=0,\qquad \varrho\wedge\varrho=0.$$

Using that $*_{uv}\varrho=\mu\varrho$, the third equation above implies $\escal{\varrho,\varrho}=0$, which in turn implies $\varrho=0$ since the metric $h_{uv}$ on $V_{uv}$ is positive-definite. Using $*_{uv}\rho^\perp=\mu\phi$, the second equation above implies that $\rho^\perp=0$, so $\phi=0$ and $\rho=u\wedge\omega$. This immediately gives:
\begin{equation*}
\rho(u^\sharp)=0,\qquad u\wedge\rho=0
\end{equation*}

\noindent
and thus we conclude.
\end{proof}

\noindent
Hence, we conclude that $u$ is a non-zero isotropic one-form, and furthermore there exists a two-form $\omega\in \wedge^2 V^{\ast}$ such that $\rho = u \wedge \omega$ and $\omega(u^\sharp) = 0$. Rather crucially, such $\omega$ is unique modulo transformations of the form $\omega \mapsto \omega + u \wedge \theta$ for a one-form $\theta \in V^{\ast}$ orthogonal to $u$.\medskip

Since $u$ is isotropic, the choice $\beta = u$ in Theorem \ref{thm:even_Hermitian_forms} does not suffice to characterize the Hermitian square of $\eta\in\Sigma^{\mu}$. Instead, we consider a conjugate isotropic one-form $v\in V^{\ast}$ satisfying $\langle u , v\rangle =1$ and we set $\beta = v$ in the third equation of \eqref{eq:sesquilinear6d}. We compute: 
\begin{equation*}
\widehat{\alpha}\diamond v=(u+i\rho-\mu*u)\diamond v=u\wedge v+\escal{u,v}+i\rho\wedge v+i\rho(v^\sharp)-\mu*(u\wedge v)+\mu\escal{u,v}\nu.   
\end{equation*}

\noindent
Hence $(\widehat{\alpha}\diamond v)^{(0)}=\escal{u,v}=1$, and therefore plugging the choice $\beta = v$ in the third equation of \eqref{eq:sesquilinear6d} gives the necessary and sufficient conditions for $\widehat{\alpha}$ to be the Hermitian square of $\eta$.

\begin{remark}
It can be seen that if $v, v^{\prime} \in V^{\ast}$ are both conjugate to $u\in V^{\ast}$ then there exists a unique element $w\in \Span_\R\{u,v\}^{\perp}$ such that:
\begin{equation*}
v^{\prime} = v - \frac{\escal{w,w}}{2} u + w,
\end{equation*}

\noindent
where $\Span_\R\{u,v\}^{\perp} \subset V^{\ast}$ is the orthogonal complement of the real span of $u$ and $v$. Hence, the set of one-forms conjugate to a given non-zero isotropic one-form $u\in V^{\ast}$, which we denote by $\cI_u$, is a torsor over $\mathbb{R}^4$. The four-dimensional analog of this formula was used in \cite{Shahbazi:2025qnl} for the algebraic description of the square of an irreducible and real spinor in four Lorentzian dimensions.
\end{remark}

\noindent
We compute:
\begin{equation*}
\widehat{\alpha}\diamond v\diamond \widehat{\alpha} = (u + i \rho  - \mu \ast u) \diamond v \diamond (u + i \rho  - \mu \ast u)  = 4 u + 8 i \rho - 4\mu \ast u - \rho \diamond v\diamond \rho.
\end{equation*}

\noindent
On the other hand, we have:
\begin{equation*}
\rho \diamond v\diamond \rho =(u\wedge \omega) \diamond v \diamond (u\wedge \omega) = 2 u \wedge \omega\wedge \omega - 2\escal{\omega,\omega}u
\end{equation*}

\noindent
and thus the third equation in \eqref{eq:sesquilinear6d} reduces to:
\begin{equation*}
\widehat{\alpha}\diamond v\diamond \widehat{\alpha} = (4 + 2\escal{\omega,\omega})u + 8 i \rho - 4\mu \ast u - 2u \wedge \omega \wedge \omega = 8(u + i \rho  - \mu \ast u).
\end{equation*}

\noindent
This implies the following characterization of the Hermitian square of $\eta \in \Sigma^{\mu}$.

\begin{prop}
\label{prop:HermitiansquareLorentz6d}
A complex exterior form $\widehat{\alpha} \in \wedge V^{\ast}_{\mathbb{C}}$ is the Hermitian square of an irreducible and chiral non-zero complex spinor $\eta\in\Sigma^\mu$ if and only if:
\begin{equation*}
\widehat{\alpha} = u + i u\wedge \omega - \mu \ast u
\end{equation*}

\noindent
for a unique non-zero isotropic real one-form $u\in V^{\ast}$ and a two-form $\omega\in \wedge^{2} V^{\ast}$ satisfying:
\begin{equation}
\label{eq:omegaconditions}
\omega(u^\sharp) = 0\, , \qquad \escal{\omega,\omega}=2 \, , \qquad 2\mu \ast u = u\wedge \omega \wedge \omega.
\end{equation}

\noindent
In particular, $\omega$ is non-zero if and only if $\eta$ is non-zero. 
\end{prop}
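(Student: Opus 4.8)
The plan is to work through the third equation in \eqref{eq:sesquilinear6d} by testing it against a carefully chosen sequence of exterior forms $\beta$, exactly as set up in the preceding lemmas, and to collect the resulting constraints on $u$ and $\omega$. By Corollary \ref{cor:sq_S_chiral} and the two lemmas above, we already know that any Hermitian square of a non-zero $\eta\in\Sigma^\mu$ is of the form $\widehat{\alpha}=u+i\rho-\mu\ast u$ with $\ast\rho=\mu\rho$, and moreover that $u\neq 0$, $\escal{u,u}=0$, $u\wedge\rho=0$, $\rho(u^\sharp)=0$, and that $\rho=u\wedge\omega$ for some $\omega\in\wedge^2 V^*$ with $\omega(u^\sharp)=0$, unique modulo $\omega\mapsto\omega+u\wedge\theta$ with $\theta\perp u$. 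So the content of the proposition is the pair of remaining relations $\escal{\omega,\omega}=2$ and $2\mu\ast u=u\wedge\omega\wedge\omega$, together with the fact that these, in conjunction with $\ast\rho=\mu\rho$, are also sufficient.

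First I would pick an isotropic one-form $v$ conjugate to $u$, so $\escal{u,v}=1$, and set $\beta=v$. As recorded just before the statement, $(\widehat{\alpha}\diamond v)^{(0)}=\escal{u,v}=1\neq 0$, so this is an admissible choice of $\beta$ and it therefore yields a genuine if-and-only-if condition. The computation $\widehat{\alpha}\diamond v\diamond\widehat{\alpha}=4u+8i\rho-4\mu\ast u-\rho\diamond v\diamond\rho$ together with $\rho\diamond v\diamond\rho=(u\wedge\omega)\diamond v\diamond(u\wedge\omega)=2u\wedge\omega\wedge\omega-2\escal{\omega,\omega}u$ reduces the third equation of \eqref{eq:sesquilinear6d} to
\begin{equation*}
(4+2\escal{\omega,\omega})u+8i\rho-4\mu\ast u-2u\wedge\omega\wedge\omega=8u+8i\rho-8\mu\ast u.
\end{equation*}
Separating this by exterior degree: the degree-one part gives $(4+2\escal{\omega,\omega})u=8u$, hence $\escal{\omega,\omega}=2$; the degree-three part is automatic; and the degree-five part gives $-4\mu\ast u-2u\wedge\omega\wedge\omega=-8\mu\ast u$, i.e. $2\mu\ast u=u\wedge\omega\wedge\omega$. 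This establishes the necessity of \eqref{eq:omegaconditions}. For sufficiency, one observes that if $\widehat{\alpha}=u+iu\wedge\omega-\mu\ast u$ with $u$ isotropic and nonzero, $\ast\rho=\mu\rho$ for $\rho=u\wedge\omega$, and \eqref{eq:omegaconditions} holding, then the linear equations $\tau(\widehat{\alpha})=\overline{\widehat{\alpha}}$ and $\ast(\pi\circ\tau)(\widehat{\alpha})=\mu\widehat{\alpha}$ hold by inspection, and the quadratic equation with $\beta=v$ holds by the above display read backwards; since $v$ is an admissible test form, Corollary \ref{cor:sq_S_chiral}(c)$\Leftrightarrow$(a) — or rather the version with a single admissible $\beta$ — gives that $\widehat{\alpha}$ is the Hermitian square of some $\eta\in\Sigma^\mu$. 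One should also record that the duality condition $\ast\rho=\mu\rho$ for $\rho=u\wedge\omega$ with $\omega(u^\sharp)=0$ is equivalent, in the $u,v$ splitting with $V_{uv}:=\Span\{u^\sharp,v^\sharp\}^\perp$, to $\ast_{uv}\omega=-\mu\omega$ on $V_{uv}$, which in fact forces $\omega(u^\sharp)=0$ automatically and hence need not be imposed separately; I would include this remark to streamline the statement.

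The last clause, $\omega\neq 0$ iff $\eta\neq 0$, follows because if $\omega=0$ then $\escal{\omega,\omega}=0\neq 2$, contradicting \eqref{eq:omegaconditions}, so $\widehat{\alpha}$ cannot be the Hermitian square of any nonzero $\eta$; conversely for $\eta\neq 0$ we have just shown $\escal{\omega,\omega}=2$, so $\omega\neq 0$, which also retroactively shows $\rho=u\wedge\omega\neq 0$ (using $\omega(u^\sharp)=0$, so $u\wedge\omega=0$ would force $\omega\in\langle u\rangle\wedge V^*$, incompatible with $\escal{\omega,\omega}=2$ and $\omega(u^\sharp)=0$ over a Lorentzian $V$ once one notes $u$ is null). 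The only mildly delicate point — the main obstacle — is bookkeeping in the geometric-product computations $\widehat{\alpha}\diamond v\diamond\widehat{\alpha}$ and especially $(u\wedge\omega)\diamond v\diamond(u\wedge\omega)$, where one must carefully use $\omega(u^\sharp)=0$, $\escal{u,v}=1$, $\escal{u,u}=0$, and the contraction identities from Appendix \ref{app:KA}; but this is routine Kähler-Atiyah algebra and no conceptual difficulty arises. I would also double-check that the normalization of $\widehat{\alpha}$ is independent of the choice of conjugate $v$, using the remark that any two such $v$ differ by $v'=v-\tfrac{\escal{w,w}}{2}u+w$ with $w\perp u,v$, which changes neither $u$ nor the class of $\omega$ modulo $u\wedge\theta$.
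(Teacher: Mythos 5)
Your proposal is correct and follows essentially the same route as the paper: after invoking the two preceding lemmas to reduce to $\widehat{\alpha}=u+iu\wedge\omega-\mu\ast u$, both arguments test the quadratic condition of \eqref{eq:sesquilinear6d} against a conjugate isotropic one-form $v$ (admissible since $(\widehat{\alpha}\diamond v)^{(0)}=\escal{u,v}=1$) and read off $\escal{\omega,\omega}=2$ and $2\mu\ast u=u\wedge\omega\wedge\omega$ from the identities $\widehat{\alpha}\diamond v\diamond\widehat{\alpha}=4u+8i\rho-4\mu\ast u-\rho\diamond v\diamond\rho$ and $\rho\diamond v\diamond\rho=2u\wedge\omega\wedge\omega-2\escal{\omega,\omega}u$. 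Your additional remarks (sufficiency by reading the same display backwards, independence of the choice of $v$, and the reformulation of $\ast\rho=\mu\rho$ as $\ast_{uv}\omega=-\mu\omega$ on the screen space) are consistent with the paper's subsequent discussion and do not change the argument.
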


\noindent
Note that if $\omega$ satisfies equations \eqref{eq:omegaconditions}, then every other two-form of the form $\omega + u\wedge \theta$, with $\theta$ orthogonal to $u$, also satisfies \eqref{eq:omegaconditions}. Given a non-zero isotropic one-form $u\in V^{\ast}$, we have a canonically defined five-dimensional vector subspace $\mathrm{Ker}(u) \subset V$ given by the kernel of $u$. Since $u$ is isotropic, its dual belongs to $\mathrm{Ker}(u)$ and we obtain a canonically defined four-dimensional vector space by taking the quotient with the line spanned by the metric dual of $u$:
\begin{equation*}
\cV_u := \frac{\mathrm{Ker}(u)}{\langle \mathbb{R} u^\sharp\rangle}.
\end{equation*}

\noindent
This is the standard algebraic model for the \emph{screen bundle} canonically associated to every Lorentzian manifold equipped with a nowhere vanishing isotropic vector field. Every two-form $\omega$ occurring as $\rho = u\wedge \omega$ descends to $\cV_u$ as a consequence of the condition $\omega (u^\sharp) = 0$. Hence, associated to every irreducible and chiral complex spinor $\eta\in \Sigma^{\mu}$ we obtain a two-form $\omega_u \in \wedge^2 \cV_u^{\ast}$ defined by $\omega_u(v_1 , v_2) = \omega(v_1,v_2)$ for every $v_1 , v_2 \in \mathrm{Ker}(u)$. Similarly, the Lorentzian metric $h$ on $V$ induces naturally a Riemannian metric $h_u$ on $\cV_u$. Every choice of conjugate one-form $v\in \cI_u$ defines a canonical isomorphism:
\begin{equation*}
\cV_u \xrightarrow{\simeq} V_{uv} := \mathrm{Ker}(u,v) \subset V \, , \qquad [w] \mapsto w,
\end{equation*}

\noindent
where $w\in [w]$ is the unique representative in $[w] \in \cV_u$ that satisfies $v(w) = 0$. Similarly, there exists a unique representative $\omega_{uv} \in \omega_u$ that satisfies $\omega_{uv}(v^\sharp) = 0$ and consequently defines a two-form on $\omega_{uv} \in \wedge^2 V^{\ast}_{uv} $. Let $\nu$ be the Lorentzian volume form of $(V,h)$. Then, $u$ and $v$ determine a natural volume form $\nu_{uv} \in \wedge^4 V^{\ast}$ on $V_{uv}$ through the following relation:
\begin{equation}
\label{eq:nunuuv}
\nu = u\wedge v \wedge \nu_{uv}.
\end{equation} 

\noindent
The volume form $\nu_{uv}$ on $V_{uv}$ corresponds to the induced volume form $\nu_u$ on $\cV_u$ via the aforementioned isomorphism $\cV_u \xrightarrow{\simeq} V_{uv}$. Analogously, the metric $h_u$ on $\cV_u$ maps by the previous isomorphism to a Euclidean metric $h_{uv}$ on $V_{uv}$.

\begin{lemma}
A two-form $\omega\in \wedge^2 V^{\ast}$ satisfies equations \eqref{eq:omegaconditions} only if $\omega_u \in \wedge^2 \cV_u^{\ast}$ satisfies:
\begin{equation*}
\escal{\omega_u,\omega_u} = 2\, , \qquad \omega_u\wedge\omega_u = -2 \mu \nu_u,
\end{equation*}

\noindent
where $\nu_u$ is the Riemannian volume on $\cV_u$ induced by $h$.
\end{lemma}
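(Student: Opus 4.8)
The strategy is to translate the three conditions in \eqref{eq:omegaconditions} into the orthogonal splitting $V = \langle \R u^\sharp\rangle \oplus \langle \R v^\sharp\rangle \oplus V_{uv}$ for a choice of conjugate one-form $v\in \cI_u$, and then push the resulting identities on $V_{uv}$ forward to $\cV_u$ via the canonical isomorphism $\cV_u\xrightarrow{\simeq} V_{uv}$. First I would fix a conjugate one-form $v$ and let $\omega_{uv}\in \wedge^2 V^\ast_{uv}$ be the unique representative of $\omega_u$ satisfying $\omega_{uv}(v^\sharp)=0$. Since $\omega(u^\sharp)=0$ already, the only components of $\omega$ that can survive in the decomposition $\omega = \alpha\, u\wedge v + u\wedge\theta_1 + v\wedge\theta_2 + \omega^\perp$ (with $\theta_i\in V^\ast_{uv}$, $\omega^\perp\in\wedge^2 V^\ast_{uv}$) are those annihilated by $\iota_{u^\sharp}$; a short computation with $\escal{u,v}=1$ shows this forces $\alpha = 0$ and $\theta_2 = 0$, so $\omega = u\wedge\theta_1 + \omega^\perp$ and hence $\omega_{uv} = \omega^\perp$.

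Next I would compute the two quantities appearing in the conclusion directly in this splitting. For the norm: since $u$ is isotropic and orthogonal to $V_{uv}$, the cross terms $\escal{u\wedge\theta_1, u\wedge\theta_1}$ and $\escal{u\wedge\theta_1,\omega^\perp}$ vanish (the first because $\escal{u,u}=0$, the second because it pairs a form containing $u$ against one that does not), so $\escal{\omega,\omega} = \escal{\omega^\perp,\omega^\perp} = \escal{\omega_u,\omega_u}$, where the last equality is the isometry between $(V_{uv},h_{uv})$ and $(\cV_u,h_u)$. Thus the first equation of \eqref{eq:omegaconditions} gives $\escal{\omega_u,\omega_u}=2$ immediately. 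For the wedge square: expanding $\omega\wedge\omega = (u\wedge\theta_1 + \omega^\perp)\wedge(u\wedge\theta_1+\omega^\perp)$, the terms $u\wedge\theta_1\wedge u\wedge\theta_1 = 0$, so $\omega\wedge\omega = 2\, u\wedge\theta_1\wedge\omega^\perp + \omega^\perp\wedge\omega^\perp$, and therefore $u\wedge\omega\wedge\omega = u\wedge\omega^\perp\wedge\omega^\perp$ since $u\wedge u = 0$ kills the first term. Now I would use the relation \eqref{eq:nunuuv}, namely $\nu = u\wedge v\wedge\nu_{uv}$, together with the third equation $2\mu\ast u = u\wedge\omega\wedge\omega = u\wedge(\omega^\perp\wedge\omega^\perp)$. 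Since $\ast u$ is a $5$-form and $u$ is isotropic, one has $\ast u = \pm\, v\wedge\nu_{uv}$ with a sign fixed by the orientation convention $\nu = u\wedge v\wedge\nu_{uv}$; contracting $u\wedge\ast u$ and $u\wedge v\wedge\nu_{uv}$ against $\nu$ pins down $u\wedge\ast u = \escal{u,u}\,\nu = 0$ but more usefully $u\wedge(\ast u) = \langle u,u\rangle\nu$, which is degenerate, so instead I would compare the two $6$-forms $u\wedge\omega^\perp\wedge\omega^\perp$ and $u\wedge v\wedge(\omega^\perp\wedge\omega^\perp)$-type expressions: writing $\ast u = v\wedge\nu_{uv}$ (valid up to the fixed orientation sign, which is $+$ in our conventions because $u$ is future-pointing isotropic and $\escal{u,v}=1$), the identity $2\mu\, u\wedge v\wedge\nu_{uv} = u\wedge\omega^\perp\wedge\omega^\perp$ holds as an equality of $6$-forms on $V$; stripping off the common factor $u\wedge$ and restricting to $V_{uv}$ (i.e.\ using that a $6$-form $u\wedge\Xi$ vanishes iff $\iota_{v^\sharp}\Xi$ restricted to $V_{uv}$ vanishes) yields $\omega^\perp\wedge\omega^\perp = -2\mu\,\nu_{uv}$ on $V_{uv}$; the minus sign arises from moving $v$ past the $4$-form $\nu_{uv}$, i.e.\ from $u\wedge v\wedge\nu_{uv} = -\, u\wedge\nu_{uv}\wedge v$ is not the issue, rather from $v\wedge\nu_{uv}$ being a top form on $\mathrm{Ker}(u)$ and the sign bookkeeping in $\ast$. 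Transporting along $\cV_u\xrightarrow{\simeq}V_{uv}$ then gives $\omega_u\wedge\omega_u = -2\mu\,\nu_u$, which is the second claimed equation.

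The main obstacle I anticipate is the sign in $\omega_u\wedge\omega_u = -2\mu\nu_u$: it requires careful tracking of the orientation conventions through the Hodge star on $V$, the factorization $\nu = u\wedge v\wedge\nu_{uv}$, and the reordering of $u$, $v$, and the $4$-form $\nu_{uv}$ (which produces a factor $(-1)^4=1$ when $v$ is moved past $\nu_{uv}$, but a factor $(-1)$ when $u$ is moved past $v$). I would handle this by working out the single scalar equation obtained by pairing both sides of $2\mu\ast u = u\wedge\omega\wedge\omega$ with the fixed bivector $u^\sharp\wedge v^\sharp$ and with $\nu_{uv}$, reducing everything to the known value $\escal{\ast u\wedge u,\nu}$ and the definition of $\nu_u$; the rest of the argument is the routine bilinear algebra sketched above. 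Note that only the "only if" direction is asserted, so no converse construction is needed — it suffices to derive the two displayed identities from \eqref{eq:omegaconditions}, which the above accomplishes.
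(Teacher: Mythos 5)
Your treatment of the first identity is correct and essentially the paper's own argument: since $u$ is isotropic and orthogonal to $V^{\ast}_{uv}$, the norm of $\omega$ is unchanged by adding $u\wedge\theta$, so $\escal{\omega_u,\omega_u}=\escal{\omega,\omega}=2$. The problem lies in your derivation of the second identity. The claim $\ast u = \pm\, v\wedge \nu_{uv}$ is false precisely because $u$ is isotropic: from $u\wedge \ast u = \escal{u,u}\,\nu = 0$ (or directly from the paper's convention $\ast u = \iota_{u^{\sharp}}\nu = \iota_{u^{\sharp}}(u\wedge v\wedge\nu_{uv})$) one finds $\ast u = -\,u\wedge \nu_{uv}$, i.e.\ the Hodge dual of a null one-form is divisible by $u$ itself, not by its conjugate $v$. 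As a result your displayed ``identity of $6$-forms'' $2\mu\, u\wedge v\wedge \nu_{uv} = u\wedge \omega^{\perp}\wedge\omega^{\perp}$ is not even dimensionally consistent (a $6$-form equated with a $5$-form), the ``stripping off $u\wedge$'' step has no meaning for the left-hand side as you wrote it, and the sign in $\omega_u\wedge\omega_u = -2\mu\,\nu_u$ — which is the entire nontrivial content of the second identity — is never actually derived; you explicitly defer it to ``sign bookkeeping'' and to an orientation heuristic (``future-pointing'') that plays no role here.

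The gap is repairable along the line you sketch: with the correct formula the third condition in \eqref{eq:omegaconditions} reads $-2\mu\, u\wedge\nu_{uv} = u\wedge\omega^{\perp}\wedge\omega^{\perp}$, and since wedging with $u$ is injective on $\wedge^{4}V^{\ast}_{uv}$ (contract with $v^{\sharp}$ and use $\escal{u,v}=1$), one gets $\omega^{\perp}\wedge\omega^{\perp} = -2\mu\,\nu_{uv}$ and hence the claim on $\cV_u$. The paper proceeds slightly differently but equivalently: it picks the representative with $\omega(v^{\sharp})=0$ and contracts $2\mu\ast u = u\wedge\omega\wedge\omega$ with $v^{\sharp}$, using $\iota_{v^{\sharp}}\ast u = \ast(u\wedge v) = -\nu_{uv}$ (via \eqref{eq:nunuuv}), which produces the sign in one line. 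As written, however, your argument does not establish the second identity.
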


\begin{proof}
Suppose that $\omega$ satisfies equations \eqref{eq:omegaconditions}. Since:
\begin{eqnarray*}
\escal{\omega + u \wedge \theta,\omega + u \wedge \theta} = \escal{\omega,\omega} + 2\escal{\omega , u \wedge \theta} + \escal{u\wedge \theta,u\wedge \theta} = \escal{\omega,\omega}
\end{eqnarray*}

\noindent
it follows that $\escal{\omega_u,\omega_u} = 2$. Choose now the unique representative $\omega \in \omega_u$ that satisfies $\omega(v^\sharp) = 0$. Evaluating the third equation in \eqref{eq:omegaconditions} on $v$, we obtain:
\begin{equation*}
2\mu \ast(u\wedge v) = \omega \wedge \omega
\end{equation*}

\noindent
or, equivalently:
\begin{equation*}
2 \mu \nu_{uv} + \omega \wedge \omega = 0,
\end{equation*}

\noindent
where we have used Equation \eqref{eq:nunuuv}.
\end{proof}

\noindent
Given a complex chiral spinor $\eta\in \Sigma^{\mu}$ with Hermitian square $\widehat{\alpha} = u + i u\wedge \omega - \mu \ast u$, by the previous lemma it follows that $\omega_u$ is a \emph{Kähler form} on $(\cV_u , h_u)$ and on every explicit realization $(V_{uv} , h_{uv})$ determined by the choice of a one-form $v\in V^{\ast}$ conjugate to $u\in V^{\ast}$. This Kähler form determines the same orientation as $\nu_{uv}$ if $\eta$ has negative chirality, and the opposite orientation if $\eta$ has positive chirality. In particular:
\begin{eqnarray*}
\ast_u \omega_u = -\mu \omega_u,
\end{eqnarray*}

\noindent
where $\ast_u$ denotes the Hodge dual of the induced metric and orientation on $\cV_u$. The standard definition of a \emph{Kähler form} implicitly assumes that the underlying orientation is induced by the top power of the Kähler form. In the present situation we need to consider the slight generalization that allows for this top power to induce the \emph{opposite} orientation to the fixed underlying orientation in terms of which the Hodge dual is defined.  

\begin{prop}
There is a one-to-one correspondence between complex irreducible chiral non-zero spinors up to phase, that is $[\eta]\in(\Sigma^\mu\setminus\{0\})/\U(1)$, and pairs $(u,\omega_u)$ consisting of an isotropic one-form $u\in V^{\ast}$ and a two-form $\omega_u \in \wedge^2 \cV_u^{\ast}$ on $\cV_u$ satisfying $\escal{\omega_u,\omega_u}=2$ and $2\mu*u=u\wedge\omega_u\wedge\omega_u$.
\end{prop}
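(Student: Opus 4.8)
The plan is to exhibit the bijection through the Hermitian square map with $\kappa$ fixed equal to $1$. Given $[\eta]\in(\Sigma^{\mu}\setminus\{0\})/\U(1)$, I would attach to it the form $\widehat{\alpha}_\eta := \hcE_\gamma^1(\eta) = \Psi_\gamma^{-1}(\eta\otimes\eta^{\ast})$; this depends only on the $\U(1)$-orbit of $\eta$ because $\hcE_1(c\eta) = c\bar c\,\eta\otimes\eta^{\ast} = \eta\otimes\eta^{\ast}$ when $\lvert c\rvert = 1$, and it is non-zero since $\scS$ is non-degenerate. By Proposition~\ref{prop:HermitiansquareLorentz6d}, $\widehat{\alpha}_\eta = u + i\,u\wedge\omega - \mu\ast u$ for the unique non-zero isotropic one-form $u$ occurring in it and a two-form $\omega$ unique modulo $\omega\mapsto\omega + u\wedge\theta$ with $\theta$ orthogonal to $u$; as recalled just before the statement, $\omega$ then descends to a well-defined non-zero $\omega_u\in\wedge^2\cV_u^{\ast}$, and by the lemma preceding the statement $(u,\omega_u)$ satisfies $\escal{\omega_u,\omega_u}=2$ and $2\mu\ast u = u\wedge\omega_u\wedge\omega_u$. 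This produces the candidate map $\Phi\colon[\eta]\mapsto(u,\omega_u)$ into the asserted set of pairs, with the chirality $\mu$ carried along untouched, as it enters only through the linear constraint $\ast(\pi\circ\tau)(\widehat{\alpha})=\mu\widehat{\alpha}$ fixing the shape of $\widehat{\alpha}$.

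For injectivity I would observe that $(u,\omega_u)$ in turn determines $\widehat{\alpha}_\eta$: the three-form $u\wedge\omega$ is insensitive to $\omega\mapsto\omega+u\wedge\theta$, hence equals $u\wedge\widetilde{\omega}$ for any lift $\widetilde{\omega}\in\wedge^2V^{\ast}$ of $\omega_u$, while $\ast u$ depends only on $u$; so $\widehat{\alpha}_\eta = u + i\,u\wedge\widetilde{\omega} - \mu\ast u$ is recovered from $(u,\omega_u)$. Thus $\Phi([\eta])=\Phi([\eta'])$ implies $\hcE_1(\eta) = \hcE_1(\eta')$, and since $\hcE_1^{-1}(\hcE_1(\eta)) = \{c\eta\mid c\in\U(1)\}$ — the remark following Definition~\ref{def:squares}, itself an instance of the analysis in Section~\ref{sec:vectorsasendos} — we conclude $[\eta]=[\eta']$.

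For surjectivity I would take a pair $(u,\omega_u)$ as in the statement, with $u$ a non-zero isotropic one-form, $\escal{\omega_u,\omega_u}=2$ and $2\mu\ast u = u\wedge\omega_u\wedge\omega_u$; note that $\omega_u\neq 0$ is then automatic. Choosing a conjugate one-form $v\in\cI_u$ and letting $\omega\in\wedge^2V^{\ast}_{uv}\subset\wedge^2V^{\ast}$ be the representative of $\omega_u$ under the isometry $\cV_u\cong V_{uv}$, one has $\omega(u^{\sharp})=0$ (as $\omega$ is supported on $V_{uv}$), $\escal{\omega,\omega}=\escal{\omega_u,\omega_u}=2$, and $u\wedge\omega\wedge\omega = u\wedge\omega_u\wedge\omega_u = 2\mu\ast u$ since wedging with $u$ kills the lift ambiguity; hence $\omega$ satisfies Equations~\eqref{eq:omegaconditions}. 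By the \emph{if} direction of Proposition~\ref{prop:HermitiansquareLorentz6d}, $\widehat{\alpha}:=u + i\,u\wedge\omega - \mu\ast u$ is the Hermitian square of some non-zero $\eta\in\Sigma^{\mu}$ with $\kappa=1$, and by construction $\Phi([\eta])=(u,\omega_u)$. Together with injectivity this makes $\Phi$ a bijection.

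I expect the only genuinely delicate point to be the clean passage to the screen space: one must check that the residual ambiguity $\omega\mapsto\omega+u\wedge\theta$ of Proposition~\ref{prop:HermitiansquareLorentz6d} is \emph{exactly} the kernel of the descent $\{\omega\in\wedge^2V^{\ast}:\omega(u^{\sharp})=0\}\to\wedge^2\cV_u^{\ast}$, so that $\omega_u$ is well defined on the spinor side and, conversely, every screen-space datum satisfying the two stated conditions lifts back to a solution of \eqref{eq:omegaconditions}. Both directions rest on the identifications fixed just before the statement — in particular on the isometry $\cV_u\cong V_{uv}$ carrying $\nu_u$ to $\nu_{uv}$ and $h_u$ to $h_{uv}$ — after which the argument reduces to Proposition~\ref{prop:HermitiansquareLorentz6d} and to the fact, from Section~\ref{sec:vectorsasendos}, that $\hcE_\gamma^1$ is injective modulo a global phase.
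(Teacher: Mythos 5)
Your proposal is correct and is essentially the argument the paper intends: the proposition is a direct repackaging of Proposition \ref{prop:HermitiansquareLorentz6d} together with the preceding screen-space discussion, using that the Hermitian square map with $\kappa=1$ is injective modulo a phase and that the residual ambiguity $\omega\mapsto\omega+u\wedge\theta$ is exactly the kernel of the descent to $\wedge^2\cV_u^{\ast}$. Your explicit verification of that last point, and of the lift-independence of $u\wedge\omega_u\wedge\omega_u$, supplies the only details the paper leaves implicit, so there is nothing to correct.
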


\noindent
Such a pair $(u,\omega_u)$ gives a natural geometric generalization of the notion of a \emph{parabolic pair} introduced in \cite{Murcia:2020zig,Murcia:2021dur} in the study of the square of a real and irreducible spinor in four Lorentzian dimensions. 

\begin{remark}
\label{remark:specialform6d} 
Note that the square $\widehat{\alpha} = u + i u\wedge \omega - \mu \ast u$ of an irreducible complex and chiral spinor can always be written as follows:
\begin{eqnarray*}
\widehat{\alpha} = u \diamond (1 + i \omega - \mu \nu),
\end{eqnarray*}

\noindent
which is sometimes useful for computations.
\end{remark}

% % % % % % % % % % % % % % % % % % % % % % % % % % % % % % % % % % % % % %

\subsection{The chiral complex-bilinear square}

% % % % % % % % % % % % % % % % % % % % % % % % % % % % % % % % % % % % % %

Let $\mu\in\mathbb{Z}_2$. By Corollary \ref{cor:sq_B_chiral}, an exterior form $\alpha\in\wedge V^*_\C$ is the complex-bilinear square of a spinor $\eta\in\Sigma$ with chirality $\mu$ if and only if:
\begin{equation*}
\alpha\diamond\alpha=8\alpha^{(0)}\alpha,\quad\tau(\alpha) =-\alpha\, , \quad \alpha \diamond \beta \diamond \alpha = 8 (\alpha\diamond\beta)^{(0)} \alpha,\quad *(\pi\circ\tau)(\alpha)=\mu\alpha  
\end{equation*}

\noindent
for a fixed exterior form $\beta\in\wedge V^*_\C$ satisfying $(\alpha\diamond\beta)^{(0)}\neq0$. Let $\alpha=\sum_{k=0}^6\alpha^{(k)}\in\wedge V^*_\C$. The first linear condition $\tau(\alpha)=-\alpha$ implies that $\alpha^{(0)}=\alpha^{(1)}=\alpha^{(4)}=\alpha^{(5)}=0$, while the second linear condition $*(\pi\circ\tau)(\alpha)=\mu\alpha$ implies that $\alpha^{(2)}=\alpha^{(6)}=0$ and $*\alpha^{(3)}=\mu\alpha^{(3)}$. Set $\rho:=\alpha^{(3)}$. The quadratic equation $\alpha\diamond\alpha=8\alpha^{(0)}\alpha$ reduces to $\rho\diamond\rho=0$, which is automatically satisfied since $\ast\rho = \mu \rho$. Since $\alpha^{(0)}=0$, taking $\beta=1$ in Theorem \ref{thm:bilinearsquare} does not suffice to characterize the square of the spinor. Note that in this case we also have $\rho\diamond\bar\rho=0$, thus taking $\beta=\bar\rho$ is also not sufficient to characterize the square of the spinor, in contrast to the six-dimensional Euclidean case. However, arguing as in Corollary \ref{cor:6dbilinear}, it is still possible to show that:
\begin{equation*}
\rho=\theta_1\wedge\theta_2\wedge\theta_3
\end{equation*}

\noindent
for complex one-forms $\theta_1,\theta_2,\theta_3\in V^*_\C$ isotropic and orthogonal satisfying: 
\begin{equation*}
*(\theta_1\wedge\theta_2\wedge\theta_3)=\mu(\theta_1\wedge\theta_2\wedge\theta_3).
\end{equation*}

We now choose a basis $\{\theta_1,\theta_2,\theta_3,\phi_1,\phi_2,\phi_3\}$ of $(V^*_\C,h^*_\C)$ given by isotropic one-forms that are conjugate in pairs, that is, they are mutually orthogonal except for $\escal{\theta_j,\phi_j}=1$, $j=1,2,3$. Then, setting $\beta=\phi_1\wedge\phi_2\wedge\phi_3\in\wedge^3V^*_\C$ we have $(\alpha\diamond\beta)^{(0)}=-\escal{\rho,\phi_1\wedge\phi_2\wedge\phi_3}=-1$, and a tedious but straightforward computation shows that the equation $\alpha\diamond\beta\diamond\alpha=-8\alpha$ is automatically satisfied. Hence, we obtain the following result.

\begin{cor}
A complex exterior form $\alpha\in\wedge V^*_\C$ is the complex-bilinear square of a complex irreducible spinor $\eta\in\Sigma^\mu$ with chirality $\mu \in \mathbb{Z}_2$ if and only if: 
\begin{equation*}
\alpha=\theta_1\wedge\theta_2\wedge\theta_3    
\end{equation*}

\noindent
for complex one-forms $\theta_1,\theta_2,\theta_3\in V^*_\C$ isotropic and orthogonal satisfying $\ast (\theta_1\wedge\theta_2\wedge\theta_3) = \mu(\theta_1\wedge\theta_2\wedge\theta_3)$.
\end{cor}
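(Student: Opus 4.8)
The plan is to mimic, in six Lorentzian dimensions, the argument already carried out for the complex-bilinear square in six Euclidean dimensions (Corollary~\ref{cor:6dbilinear}), while paying attention to the points where the Lorentzian signature and the isotropy of the chiral subspaces change the situation. We start from Corollary~\ref{cor:sq_B_chiral}, which tells us that $\alpha\in\wedge V^*_\C$ is the complex-bilinear square of a chiral spinor $\eta\in\Sigma^\mu$ if and only if
\[
\alpha\diamond\alpha=8\alpha^{(0)}\alpha,\quad \tau(\alpha)=-\alpha,\quad *(\pi\circ\tau)(\alpha)=\mu\alpha,\quad \alpha\diamond\beta\diamond\alpha=8(\alpha\diamond\beta)^{(0)}\alpha
\]
for some $\beta$ with $(\alpha\diamond\beta)^{(0)}\neq0$ (and for every such $\beta$ once $\alpha$ is indeed a square). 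First I would solve the two linear equations by expanding $\alpha$ by degree: $\tau(\alpha)=-\alpha$ kills the degrees $0,1,4,5$, and $*(\pi\circ\tau)(\alpha)=\mu\alpha$ then forces the remaining components to collapse to a single self-dual three-form $\rho:=\alpha^{(3)}$ with $*\rho=\mu\rho$. Next, since $\ast\rho=\mu\rho$ one checks using Lemma~\ref{lemma:product_volume_form} and $\nu\diamond\nu=-1$ (exactly as in the six-dimensional argument recalled after Corollary~\ref{cor:6dbilinear}) that $\rho\diamond\rho=0$ automatically, so the quadratic equation $\alpha\diamond\alpha=8\alpha^{(0)}\alpha$ is vacuous, and we are reduced to extracting information from $\alpha\diamond\beta\diamond\alpha=8(\alpha\diamond\beta)^{(0)}\alpha$.

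The decomposability of $\rho$ is obtained by plugging $\beta=v^\flat\in V^*_\C$ for an arbitrary complex vector $v\in V_\C$: since $(\rho\diamond v^\flat)^{(0)}=0$ and $\rho\diamond\rho=0$, the cubic identity reduces, using $\rho\diamond v^\flat+v^\flat\diamond\rho=2\iota_v\rho$, to
\[
(\iota_v\rho)\wedge\rho-(\iota_v\rho)\triangle_1\rho-(\iota_v\rho)\triangle_2\rho=0,
\]
whose degree-$5$ part gives $(\iota_v\rho)\wedge\rho=0$ for all $v\in V_\C$. By the classical Pl\"ucker relations \cite{EM00}, $\rho$ is decomposable, so $\rho=\theta_1\wedge\theta_2\wedge\theta_3$ for some $\theta_1,\theta_2,\theta_3\in V^*_\C$. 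Feeding $\theta_1^\sharp,\theta_2^\sharp,\theta_3^\sharp$ into $*(\theta_1\wedge\theta_2\wedge\theta_3)=\mu(\theta_1\wedge\theta_2\wedge\theta_3)$ forces the three one-forms to be isotropic and mutually orthogonal --- exactly the step used in the proof of Corollary~\ref{cor:6dbilinear}. The key difference from the Lorentzian \emph{Hermitian} square computation (the preceding subsection), where the same reasoning on a \emph{real} three-form led to a contradiction, is that here the $\theta_j$ are genuinely complex, and a complex vector space of signature $(5,1)$ certainly admits three mutually orthogonal isotropic complex one-forms; so no contradiction arises and this is what lets an impure chiral square exist at all.

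For sufficiency, I would complete $\{\theta_1,\theta_2,\theta_3\}$ to a basis $\{\theta_1,\theta_2,\theta_3,\phi_1,\phi_2,\phi_3\}$ of $(V^*_\C,h^*_\C)$ by isotropic one-forms conjugate in pairs, $\escal{\theta_j,\phi_j}=1$, all other pairings vanishing (which is possible because the three $\theta_j$ span a maximal isotropic subspace), and take $\beta=\phi_1\wedge\phi_2\wedge\phi_3$. Then $(\alpha\diamond\beta)^{(0)}=-\escal{\rho,\phi_1\wedge\phi_2\wedge\phi_3}=-1\neq0$, so this $\beta$ is an admissible test form, and a direct (if tedious) expansion of $\alpha\diamond\beta\diamond\alpha=(\theta_1\wedge\theta_2\wedge\theta_3)\diamond(\phi_1\wedge\phi_2\wedge\phi_3)\diamond(\theta_1\wedge\theta_2\wedge\theta_3)$ in terms of the $\diamond$-products of the basis one-forms shows it equals $-8\,\theta_1\wedge\theta_2\wedge\theta_3=8(\alpha\diamond\beta)^{(0)}\alpha$; invoking Corollary~\ref{cor:sq_B_chiral} then gives that $\alpha$ is indeed the complex-bilinear square of a chiral $\eta\in\Sigma^\mu$. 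The main obstacle is purely computational: verifying the cubic identity for the chosen $\beta$ requires carefully tracking all the generalized products $\triangle_k$ appearing in expanding a triple geometric product of decomposable forms, and one must be sure the basis is set up so that $\rho$ spans a maximal isotropic subspace (dimension $3=\tfrac12\dim_\C V_\C$), which is exactly what makes the conjugate basis $\{\phi_j\}$ available. Everything else is a direct transcription of the six-dimensional Euclidean argument, with the signature entering only through the existence (rather than nonexistence) of the isotropic triple.
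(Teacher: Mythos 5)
Your proposal follows essentially the same route as the paper's own argument: solve the two linear conditions to reduce $\alpha$ to a self-dual three-form $\rho$, extract the Pl\"ucker relations from the cubic identity with $\beta=v^\flat$ to get decomposability, read off isotropy and orthogonality of the factors by contracting the duality condition with $\theta_j^\sharp$, and establish sufficiency by testing the cubic identity against $\beta=\phi_1\wedge\phi_2\wedge\phi_3$ built from a conjugate isotropic basis, exactly as in the text (which also correctly notes, as you implicitly do by avoiding $\beta=\bar\rho$, that $\rho\diamond\bar\rho=0$ here, unlike in the Euclidean case).

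One small correction: in signature $(5,1)$ with the paper's conventions one has $\nu\diamond\nu=+1$, not $-1$ as you state. The derivation of $\rho\diamond\rho=0$ still goes through because the duality condition has simultaneously lost its factor of $i$ (it is $\ast\rho=\mu\rho$ rather than $\ast\rho=i\mu\rho$), so the two sign changes compensate: $\ast\rho\diamond\ast\rho=\rho\diamond\rho$ from the duality, while $\ast\rho\diamond\ast\rho=-\rho\diamond(\nu\diamond\nu)\diamond\rho=-\rho\diamond\rho$ from Lemma \ref{lemma:product_volume_form}. As literally written, combining $\nu\diamond\nu=-1$ with $\ast\rho=\mu\rho$ would only give a tautology; note also that in the only-if direction $\rho\diamond\rho=0$ is anyway part of the hypotheses (it is the quadratic condition with $\alpha^{(0)}=0$), and in the sufficiency direction it follows directly from the isotropy and orthogonality of the $\theta_j$, so the slip is harmless but should be fixed.
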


% % % % % % % % % % % % % % % % % % % % % % % % % % % % % % % % % % % % % %

\subsection{Spinorial curvings on Lorentzian six-manifolds}

% % % % % % % % % % % % % % % % % % % % % % % % % % % % % % % % % % % % % %

Let $(M,g)$ be an oriented and \emph{time}-oriented Lorentzian six-dimensional manifold equipped with a bundle of irreducible complex spinors $S$. By the results of \cite{LS18,LS19,LS22_complex_type}, such $S \to (M,g)$ exists if and only if $(M,g)$ admits a $\Spin_o^c(5,1)$-structure, in which case $S$ is associated to a $\Spin_o^c(5,1)$-structure in the standard way through the tautological representation induced by $\gamma\colon\C\mathrm{l}(5,1)\to\End(\Sigma)$ and the natural embedding $\Spin_o^c(5,1)\subset\C\mathrm{l}(5,1)$. We fix a $\mathbb{C}^{\ast}$-bundle gerbe with connective structure $(\cP, Y, \cA)$, where $\cP\to Y\times_M Y$ is a principal $\mathbb{C}^{\ast}$-bundle defined on the fibered product of the smooth submersion $Y\to M$ with itself, and $\cA$ is a connection on $\cP$ satisfying adequate compatibility conditions. We refer the reader to \cite{Brylinski1993,Bunk2021,Giraud1971,Murray1996} for more details about abelian bundle gerbes. A \emph{curving} on $\cC := (\cP , Y, \cA)$ is a complex-valued two-form $b\in \Omega^2(Y,\C)$ on the total space of the submersion $Y\to M$ satisfying:
\begin{equation*}
F_{\cA} = \delta b \in \Omega^2( Y \times_M Y,\mathbb{C}),
\end{equation*}

\noindent
where $F_{\cA}$ denotes the curvature of $\cA$ understood as a complex two-form on $Y\times_M Y$ and $\delta$ is the simplicial differential of the simplicial manifold defined by $Y\to M$. It can be seen that the exterior derivative of $b$ descends to a closed, possibly non-exact, complex-valued three-form $H_b \in \Omega^3(M ,\C)$, called the \emph{curvature} of $b$.
 
\begin{definition}
A \emph{spinorial curving} on $(\cC,M,g)$ is a pair $(\eta,b)$ consisting of a nowhere vanishing irreducible chiral complex spinor $\eta \in \Gamma(S)$ and a curving $b$ on $\cC$ satisfying:
\begin{equation*}
H_b\cdot \eta = 0,
\end{equation*}

\noindent
where $\cdot$ denotes Clifford multiplication. 
\end{definition}

\noindent
As an immediate consequence of Lemma \ref{lemma:constrainedspinoreven}, we obtain the following characterization of spinorial curvings $(\eta,b)$ in terms of the Hermitian square of $\eta$.

\begin{cor}
A pair $(\eta,b)$ is a spinorial curving on $(\cC,M,g)$ if and only if:
\begin{equation}
\label{eq:Hbspinorial}
H_b \diamond \widehat{\alpha} = 0,
\end{equation}

\noindent
where $\widehat{\alpha}$ is the Hermitian square of $\eta$ as determined in Proposition \ref{prop:HermitiansquareLorentz6d}.
\end{cor}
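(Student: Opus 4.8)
The statement is essentially an immediate corollary of Lemma \ref{lemma:constrainedspinoreven}, so the plan is short. First I would recall that by definition a spinorial curving $(\eta,b)$ on $(\cC,M,g)$ is characterized by the pointwise algebraic condition $H_b\cdot\eta = 0$, where the dot denotes Clifford multiplication of the three-form part $H_b$ acting on the spinor $\eta$ through the representation $\gamma$. Since the bundle of spinors $S$ is associated to a $\Spin_o^c(5,1)$-structure via the tautological representation, the Clifford action at each point $x\in M$ is precisely the map $\Psi_\gamma\colon \wedge T^*_xM\otimes\C \to \End(S_x)$ composed with evaluation; thus $H_b\cdot\eta = 0$ is equivalent to $\Psi_\gamma(H_b)(\eta) = 0$ fiberwise.

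Next I would apply Lemma \ref{lemma:constrainedspinoreven} with $Q = \Psi_\gamma(H_b) \in \End(S_x)$ and $\eta = \eta_x$. That lemma states that $\eta\in\Sigma$ satisfies $Q(\eta)=0$ if and only if $\mathfrak{q}\diamond\hcE^\kappa_\gamma(\eta) = 0$, where $\mathfrak{q} = \Psi_\gamma^{-1}(Q)$ is the dequantization of $Q$. Here the dequantization of $\Psi_\gamma(H_b)$ is simply $H_b$ itself (as an exterior form), and $\hcE^\kappa_\gamma(\eta)$ is by definition the Hermitian square $\widehat{\alpha}$ of $\eta$ — which in the present six-dimensional Lorentzian setting has been explicitly determined in Proposition \ref{prop:HermitiansquareLorentz6d} to be of the form $\widehat{\alpha} = u + i\, u\wedge\omega - \mu\ast u$ for a non-zero isotropic one-form $u$ and a two-form $\omega$ subject to the conditions \eqref{eq:omegaconditions}. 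Therefore $H_b\cdot\eta = 0$ holds at $x$ if and only if $H_b\diamond\widehat{\alpha} = 0$ at $x$, which is exactly Equation \eqref{eq:Hbspinorial}.

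Finally I would note that the equivalence is genuinely pointwise and that the hypothesis that $\eta$ be nowhere vanishing (built into the definition of spinorial curving) guarantees that the Hermitian square $\widehat{\alpha}$ is nowhere zero, so that Proposition \ref{prop:HermitiansquareLorentz6d} applies at every point of $M$; hence the equivalence $H_b\cdot\eta=0 \iff H_b\diamond\widehat{\alpha}=0$ holds globally on $M$. Since Lemma \ref{lemma:constrainedspinoreven} is formulated for a fixed endomorphism and a fixed spinor, no subtlety of smoothness or global integrability enters: the argument is purely algebraic and carried out fiberwise, then transported to sections verbatim.

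The only conceptual point worth care — and the part most likely to need a sentence of justification rather than a calculation — is the identification of Clifford multiplication by a three-form on the spinor bundle with the geometric product $H_b\diamond\widehat\alpha$ via the Kähler-Atiyah model, i.e.\ checking that $\mathfrak{q} = \Psi_\gamma^{-1}(\Psi_\gamma(H_b)) = H_b$ unambiguously. This is immediate from the fact that $\Psi_\gamma$ is an isomorphism of unital associative algebras, so there is in fact no real obstacle; the corollary follows directly.
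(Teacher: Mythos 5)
Your proposal is correct and follows essentially the same route as the paper: the corollary is stated there as an immediate consequence of Lemma \ref{lemma:constrainedspinoreven}, applied fiberwise with $Q=\Psi_\gamma(H_b)$ and dequantization $\mathfrak{q}=H_b$, exactly as you argue. Your additional remarks on the pointwise nature of the equivalence and the nowhere-vanishing hypothesis are consistent with, and slightly more explicit than, the paper's one-line justification.
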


\begin{remark}
In the previous corollary we could have equivalently used the complex-bilinear square of $\eta$, instead of its Hermitian square, which we consider to be more convenient for our purposes.
\end{remark}

\noindent
From the previous corollary, we arrive at the following equivalent characterization of spinorial curvings.

\begin{lemma}
\label{lemma:spinorialcurving}
A pair $(\eta,b)$ is a spinorial curving on $(\cC,M,g)$ if and only if:
\begin{gather*}
H_b(u^\sharp) - \mu \ast (H_b \wedge u) + i( u\wedge H_b\triangle_2 \omega - H_b(u^\sharp) \triangle_1 \omega ) = 0,\\
H_b\wedge u + \mu \ast H_b(u^\sharp) + i (H_b (u^\sharp)\wedge \omega - u \wedge H_b\triangle_1\omega) = 0,\\
u\wedge \omega \wedge H_b = 0 \, , \qquad \escal{H_b(u^\sharp),\omega} = 0,
\end{gather*}

\noindent
where $u\in \Omega^1(M)$ and $\omega \in \Omega^2(M)$ are determined by the Hermitian square $\widehat{\alpha} = u \diamond (1 + i \omega - \mu \nu)$ of $\eta$.
\end{lemma}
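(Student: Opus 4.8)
The plan is to start from the previous corollary, which tells us that the pair $(\eta,b)$ is a spinorial curving if and only if $H_b \diamond \widehat{\alpha} = 0$, where by Proposition \ref{prop:HermitiansquareLorentz6d} and Remark \ref{remark:specialform6d} the Hermitian square of $\eta$ can be written in the compact form $\widehat{\alpha} = u + i u\wedge \omega - \mu \ast u = u\diamond(1 + i\omega - \mu\nu)$, with $u$ a nowhere vanishing isotropic one-form and $\omega$ a two-form satisfying $\omega(u^\sharp)=0$, $\escal{\omega,\omega}=2$, and $2\mu\ast u = u\wedge\omega\wedge\omega$. So the entire task is to expand the single geometric-product equation $H_b \diamond \widehat{\alpha} = 0$ into its homogeneous components and read off the resulting system of exterior-algebra equations.

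First I would compute $H_b \diamond \widehat{\alpha}$ term by term using the expansion rules for the geometric product of a three-form with a form of mixed degree recalled in Appendix \ref{app:KA}, namely the formulas expressing $H_b \diamond u$, $H_b \diamond (u\wedge\omega)$, and $H_b \diamond \ast u$ in terms of the wedge product $\wedge$ and the generalized contractions $\triangle_1$, $\triangle_2$, $\triangle_3$. Concretely, $H_b \diamond u = H_b\wedge u + H_b(u^\sharp)$ (no higher contractions, since $u$ has degree one), $H_b \diamond \ast u = \ast(\,\cdot\,)$-type terms obtained from Lemma \ref{lemma:product_volume_form} by writing $\ast u = u\diamond\nu$ up to sign and commuting $\nu$ through $H_b$, and $H_b \diamond (u\wedge \omega)$ is a sum of a degree-$5$ term $H_b\wedge u\wedge\omega$, degree-$3$ terms coming from $\triangle_1$ and $\triangle_2$ contractions, and a degree-$1$ term from the $\triangle_2$ and $\triangle_3$ contractions. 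Assembling everything, using $\rho = u\wedge\omega$ together with $\rho(u^\sharp)=0$ (equivalently $\omega(u^\sharp)=0$) and the duality relation $\ast\rho=\mu\rho$ to simplify, and keeping in mind that $\nu\diamond\nu=-1$, one obtains a form of total degrees $1,3,5$ whose vanishing is equivalent to the claimed four equations after organizing the degree-$1$ and degree-$5$ parts (which are Hodge-dual to each other modulo the chirality sign) and the two degree-$3$ pieces. The identity $\ast(H_b\wedge u) = \pm H_b(u^\sharp) \triangle$-contracted-with-$\nu$-type relations will be used repeatedly to convert between starred one-forms and unstarred five-forms, so that the system can be presented purely in the low-degree language displayed in the statement.

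The main obstacle I anticipate is purely bookkeeping: correctly tracking the numerical and sign factors produced by (i) the generalized-product expansion coefficients, which depend on the degrees involved, (ii) the appearances of the chirality $\mu$, which enter both through $\widehat{\alpha}$ containing $-\mu\ast u$ and through $\ast\rho = \mu\rho$, and (iii) the Lorentzian volume form satisfying $\nu\diamond\nu=-1$ rather than $+1$, which differs from the Euclidean computations performed earlier in Section \ref{sec:lowdimensions} and in Section \ref{section:8dRiemannian}. A secondary subtlety is that the two-form $\omega$ is only defined modulo $u\wedge\theta$ with $\theta\perp u$; one must check that every term in the final equations is insensitive to this ambiguity, which holds because each occurrence of $\omega$ in $H_b\diamond\widehat{\alpha}$ is ultimately contracted or wedged against objects annihilated by $u^\sharp$ or wedged with $u$, so shifts $\omega\mapsto\omega+u\wedge\theta$ produce terms containing $u\wedge u = 0$.

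Once the expansion is done, the proof concludes by noting that $\Psi_\gamma\colon(\wedge V^*_\C,\diamond)\to\End(\Sigma)$ is an isomorphism of algebras, so $H_b\cdot\eta = \Psi_\gamma(H_b)(\eta) = 0$ is equivalent, by Lemma \ref{lemma:constrainedspinoreven}, to $H_b \diamond \widehat{\alpha} = 0$, hence to the vanishing of each homogeneous component, hence to the displayed system; conversely any $(u,\omega,H_b)$ satisfying the system reassembles into $H_b\diamond\widehat{\alpha}=0$. I would present the computation of $H_b\diamond\widehat{\alpha}$ compactly, grouping it as $H_b\diamond\big(u\diamond(1+i\omega-\mu\nu)\big) = (H_b\diamond u)\diamond(1+i\omega-\mu\nu)$ to reduce the number of distinct product expansions needed, and then separate by degree to obtain the four equations of Lemma \ref{lemma:spinorialcurving}.
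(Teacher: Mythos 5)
Your proposal follows essentially the same route as the paper's proof: invoke Lemma \ref{lemma:constrainedspinoreven} to reduce the spinorial curving condition to $H_b\diamond\widehat{\alpha}=0$, write $\widehat{\alpha}=u\diamond(1+i\omega-\mu\nu)$, expand the geometric product via the generalized products $\triangle_k$ and Lemma \ref{lemma:product_volume_form}, and separate by degree to obtain the four displayed equations. The only slip is in your anticipated bookkeeping: since $H_b$ has degree three and $\widehat{\alpha}$ has components of degrees $1,3,5$, the product $H_b\diamond\widehat{\alpha}$ decomposes into components of even degrees $0,2,4,6$ (matching the four equations), not degrees $1,3,5$ — a detail that the actual computation would immediately correct.
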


\begin{proof}
Equation \eqref{eq:Hbspinorial} is equivalent to:
\begin{equation*}
H_b \diamond \widehat{\alpha} = H_b\diamond u \diamond (1 + i\omega - \mu \nu) = 0.
\end{equation*}

\noindent
We compute:
\begin{align*}
H_b\diamond u\diamond(1-\mu\nu)&=H_b\wedge u+H_b(u^\sharp)-\mu*(H_b\wedge u)+\mu *H_b(u^\sharp),\\
H_b\diamond u\diamond\omega&=H_b\diamond(u\wedge\omega)=H_b\wedge u\wedge\omega+H_b\triangle_1(u\wedge\omega)-H_b\triangle_2(u\wedge\omega)-\escal{H_b,u\wedge\omega}\\
&=H_b(u^\sharp)\wedge\omega-H_b(u^\sharp)\triangle_1\omega-\escal{H_b(u^\sharp),\omega}-u\wedge(H_b\wedge\omega+H_b\triangle_1\omega-H_b\triangle_2\omega),
\end{align*}

\noindent
where we have used:
\begin{align*}
H_b\triangle_1(u\wedge\omega)&=H_b(u^\sharp)\wedge\omega-u\wedge(H_b\triangle_1\omega),\\
H_b\triangle_2(u\wedge\omega)&=H_b(u^\sharp)\triangle_1\omega-u\wedge(H_b\triangle_2\omega),\\
\escal{H_b,u\wedge\omega}&=\escal{H_b(u^\sharp),\omega}.
\end{align*}

Isolating by degree, we obtain the stated equations.
\end{proof}

\noindent
In order to shed light on the conditions contained in the previous lemma, we choose a one-form $v\in \Omega^1(M)$ conjugate to $u$ and we choose a representative for $\omega$ satisfying $\omega(v^\sharp)=0$. Note that the existence of $v$ is guaranteed by the orientability and time-orientability of $(M,g)$. The pair $(u,v)$ defines a natural orthogonal splitting of the tangent bundle of $(M,g)$:
\begin{align*}
TM &= \langle \mathbb{R} u^\sharp \rangle \oplus \langle \mathbb{R} v^\sharp \rangle \oplus (\mathrm{Ker}(u)\cap \mathrm{Ker}(v))\\
&= \langle \mathbb{R} u^\sharp \rangle \oplus \langle \mathbb{R} v^\sharp \rangle \oplus (\langle \mathbb{R} u^\sharp \rangle \oplus \langle \mathbb{R} v^\sharp \rangle)^{\perp_g}\\
&= \langle \mathbb{R} u^\sharp \rangle \oplus \langle \mathbb{R} v^\sharp \rangle \oplus V_{uv},
\end{align*}

\noindent
and similarly for the cotangent bundle $T^{\ast}M$. For every curving $b\in \Omega^2(Y , \mathbb{C})$, we split $H_b \in \Omega^3(M ,\C)$ accordingly:
\begin{equation}
\label{eq:splitHb}
H_b = u\wedge v \wedge \beta + u \wedge \chi_u + v\wedge \chi_v + H_b^{\perp}
\end{equation}

\noindent
for uniquely determined complex-valued sections:
\begin{equation*}
\beta \in \Gamma(V_{uv}^{\ast} , \mathbb{C}) \, , \qquad \chi_u , \chi_v \in \Gamma(\wedge^2 V_{uv}^{\ast} ,\mathbb{C})\, , \qquad H_b^{\perp} \in \Gamma(\wedge^3 V_{uv}^{\ast} , \mathbb{C}).
\end{equation*}

\noindent
Hence:
\begin{equation*}
H_b(u^{\sharp}) = - u \wedge \beta + \chi_v.
\end{equation*}

\noindent
For further reference, we compute the following Hodge duals using Equation \eqref{eq:nunuuv}:
\begin{align}
\ast(u \wedge \chi_u) &= (-1)^{\vert \chi_u \vert + 1} u\wedge \ast_{uv} \chi_u = - u\wedge \ast_{uv} \chi_u, \nonumber\\
\ast(v \wedge \chi_v) &= (-1)^{\vert \chi_v \vert } v\wedge \ast_{uv} \chi_v = v \wedge \ast_{uv} \chi_v, \label{eq:Hodgeidentities}\\
\ast H_b^{\perp} &= u\wedge v \wedge \ast_{uv} H_b^{\perp}, \nonumber
\end{align}

\noindent
where $\ast_{uv}$ denotes the Hodge dual of the induced metric and orientation on $V_{uv}$. In the following, we denote by $\triangle^{\perp}_k$ the bilinear operator $\triangle_k$ defined in Equation \eqref{eq:bilinearDelta} restricted to elements in $\wedge V_{uv}^{\ast}$ with the induced Euclidean metric. A direct computation proves the following lemma.

\begin{lemma}
\label{lemma:HDeltaomega}
The following formulas hold:
\begin{align*}
H_b \triangle_1 \omega &= u\wedge v \wedge \omega(\beta^{\sharp}) - u \wedge \chi_u \triangle^{\perp}_1 \omega -  v \wedge \chi_v \triangle^{\perp}_1 \omega + H_b^{\perp} \triangle^{\perp}_1 \omega,\\
H_b \triangle_2 \omega &=  \langle \chi_u , \omega\rangle u +  \langle \chi_v , \omega \rangle v + H_b^{\perp} \triangle^{\perp}_2 \omega,
\end{align*}

\noindent
where $H_b \in \Omega^3(M ,\C)$ is given as in Equation \eqref{eq:splitHb}.
\end{lemma}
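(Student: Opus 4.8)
\textbf{Proof plan for Lemma \ref{lemma:HDeltaomega}.}

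The plan is to substitute the orthogonal decomposition \eqref{eq:splitHb} of $H_b$ into the expressions $H_b\triangle_1\omega$ and $H_b\triangle_2\omega$ and expand by bilinearity, using the fact that $\omega$ is a two-form supported on $V_{uv}$, i.e.\ that our chosen representative satisfies $\omega(u^\sharp)=\omega(v^\sharp)=0$. Concretely, since $\triangle_k$ is bilinear, we get a sum of four terms coming from the summands $u\wedge v\wedge\beta$, $u\wedge\chi_u$, $v\wedge\chi_v$, and $H_b^\perp$. First I would record the general contraction identities for $\triangle_k$ against the one-forms $u$, $v$ appearing as wedge factors: because $\triangle_1$ is a single contraction and $\triangle_2$ a double contraction (see the definitions in Appendix \ref{app:KA}), and because $u,v$ are orthogonal to every element of $\wedge V_{uv}^{\ast}$ and isotropic with $\escal{u,v}=1$, contracting $\omega$ (which lives in $\wedge^2 V_{uv}^\ast$) against a form of the shape $u\wedge\zeta$ or $u\wedge v\wedge\zeta$ with $\zeta\in\wedge V_{uv}^\ast$ either pulls $u$ (resp.\ $v$) outside the operation unchanged, or pairs $u$ with $v$ producing a scalar $\escal{u,v}=1$; the crucial point is that $\omega$ itself never gets contracted with $u$ or $v$. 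This is what yields the factorized formulas: for $\triangle_1$, the $u$ and $v$ in $u\wedge\chi_u$ and $v\wedge\chi_v$ survive as prefactors with the appropriate sign, while in $u\wedge v\wedge\beta$ the two-form $\omega$ acts on the one-form $\beta$ through the single contraction, producing $u\wedge v\wedge\omega(\beta^\sharp)$; for $\triangle_2$, the double contraction annihilates the purely-$V_{uv}$ part of each summand whenever the remaining degree is too low, leaving the scalar pairings $\escal{\chi_u,\omega}$ and $\escal{\chi_v,\omega}$ as coefficients of $u$ and $v$ respectively.

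The key steps, in order, are: (i) isolate and prove the elementary contraction rules $\big(u\wedge\zeta\big)\triangle_1\omega = -\,u\wedge(\zeta\triangle_1^\perp\omega)$ (up to the sign dictated by $|\zeta|$) and $\big(u\wedge v\wedge\zeta\big)\triangle_1\omega$, $\big(u\wedge\zeta\big)\triangle_2\omega$, $\big(u\wedge v\wedge\zeta\big)\triangle_2\omega$, using only $\omega(u^\sharp)=\omega(v^\sharp)=0$, $\escal{u,v}=1$, $\escal{u,u}=\escal{v,v}=0$, and orthogonality of $\langle u,v\rangle$ with $V_{uv}$; (ii) apply these to the four summands of \eqref{eq:splitHb} and collect terms by which of $u$, $v$, $u\wedge v$ they are wedged with; (iii) identify $H_b^\perp\triangle_k\omega$ with $H_b^\perp\triangle_k^\perp\omega$, which holds because both $H_b^\perp$ and $\omega$ lie in $\wedge V_{uv}^\ast$ and $V_{uv}$ carries the induced Euclidean metric, so the generalized product computed in $(V,h)$ restricts to the one computed in $(V_{uv},h_{uv})$; (iv) for the $\triangle_2$ formula, observe that the terms from $u\wedge v\wedge\beta$ drop out entirely (a double contraction of a two-form with a form of the type $u\wedge v\wedge(\text{one-form})$ against $\omega$ gives degree $1-2<0$ on the $V_{uv}$ part, hence vanishes) and that the $\triangle_2$ of $u\wedge\chi_u$ against $\omega$ collapses to $\escal{\chi_u,\omega}\,u$ since the two contractions must use up both legs of $\omega$ on the two legs of $\chi_u$.

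I do not expect a genuine obstacle here; this is a bookkeeping computation of exactly the kind the paper elsewhere labels ``a direct computation.'' The only point requiring a little care is getting the signs right in step (i): the sign in $(u\wedge\zeta)\triangle_1\omega$ depends on $|\zeta|$ through the Koszul-type sign appearing in the definition of the interior/generalized product, and one must be consistent with the conventions of Appendix \ref{app:KA} and with the identities already used in the proof of Lemma \ref{lemma:spinorialcurving} (namely $H_b\triangle_1(u\wedge\omega)$ and $H_b\triangle_2(u\wedge\omega)$ there). Once those sign conventions are pinned down, steps (ii)--(iv) are mechanical, and assembling the collected terms gives precisely the two displayed formulas. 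Hence the lemma follows.
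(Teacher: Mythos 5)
Your plan is correct and is essentially the computation the paper has in mind: the paper offers no written proof beyond the phrase ``a direct computation,'' and expanding \eqref{eq:splitHb} by bilinearity, using $\omega(u^\sharp)=\omega(v^\sharp)=0$ together with the block-diagonality of $h$ on $\langle u,v\rangle\oplus V_{uv}$ so that all contractions are forced into $V_{uv}$ directions (hence $\triangle_k$ restricts to $\triangle_k^\perp$, the $\beta$-term dies in $\triangle_2$, and $\chi_u\triangle_2^\perp\omega=\escal{\chi_u,\omega}$ by Proposition \ref{prop:appendix_properties}), is exactly that computation. The only caveat is that no $\escal{u,v}=1$ pairing ever actually contributes, since the corresponding contraction would have to land on $\omega$ along $u^\sharp$ or $v^\sharp$ and therefore vanishes, as your own ``crucial point'' already implies.
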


\noindent
We refine now Lemma \ref{lemma:spinorialcurving} in terms of $\beta$, $\chi_u$, $\chi_v$ and $H_b^{\perp}$ in the splitting of $H_b$ given in Equation \eqref{eq:splitHb}.

\begin{lemma}
\label{lemma:spinorialcurvingII}
A pair $(\eta,b)$ is a spinorial curving on $(\cC,M,g)$ if and only if:
\begin{equation*}
\beta = \mu*_{uv}H_b^\perp + iH_b^\perp\triangle_2^\perp\omega - i\omega(\beta^\sharp)\, , \qquad  \chi_v  \wedge \omega = 0\, , \qquad  \mu \ast_{uv} \chi_v = \chi_v - i \chi_v \triangle^{\perp}_1 \omega,
\end{equation*}

\noindent
where $H_b \in \Omega^3(M ,\C)$ is given as in Equation \eqref{eq:splitHb}.
\end{lemma}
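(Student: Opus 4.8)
The plan is to take the characterization of a spinorial curving given in Lemma \ref{lemma:spinorialcurving} and substitute the explicit block decomposition \eqref{eq:splitHb} of $H_b$ together with the computational identities from Lemma \ref{lemma:HDeltaomega}, then isolate the resulting equations by their pure-degree components in the splitting $TM = \langle \mathbb{R} u^\sharp\rangle \oplus \langle \mathbb{R} v^\sharp\rangle \oplus V_{uv}$. The input equations are the four relations
\begin{gather*}
H_b(u^\sharp) - \mu \ast (H_b \wedge u) + i( u\wedge H_b\triangle_2 \omega - H_b(u^\sharp) \triangle_1 \omega ) = 0,\\
H_b\wedge u + \mu \ast H_b(u^\sharp) + i (H_b (u^\sharp)\wedge \omega - u \wedge H_b\triangle_1\omega) = 0,\\
u\wedge \omega \wedge H_b = 0 \, , \qquad \escal{H_b(u^\sharp),\omega} = 0,
\end{gather*}
and the first step is to rewrite each of $H_b(u^\sharp) = -u\wedge\beta + \chi_v$, $H_b\wedge u = -u\wedge v\wedge\chi_u + u\wedge H_b^\perp$ (using anticommutativity and $u\wedge u = 0$), their Hodge duals via \eqref{eq:Hodgeidentities}, and the $\triangle_1,\triangle_2$ contractions via Lemma \ref{lemma:HDeltaomega}. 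Since $\omega$ has been normalized so that $\omega(u^\sharp)=\omega(v^\sharp)=0$, i.e.\ $\omega$ effectively lives in $\wedge^2 V_{uv}^\ast$, all the mixed terms collapse and one is left with expressions that are polynomial in $\beta$, $\chi_u$, $\chi_v$, $H_b^\perp$ with $\ast_{uv}$ and $\triangle^\perp_k$.

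Next I would extract components. Decomposing the first input equation: the piece proportional to pure $V_{uv}^\ast$-one-forms (no $u$, no $v$) should give the first claimed equation $\beta = \mu*_{uv}H_b^\perp + iH_b^\perp\triangle_2^\perp\omega - i\omega(\beta^\sharp)$, coming from matching $\chi_v$-type terms against $\mu\ast(H_b\wedge u)$ and the $H_b^\perp\triangle^\perp_2\omega$ part of $-i H_b(u^\sharp)\triangle_1\omega$; here one uses that $\ast(H_b\wedge u)$ contributes a term of the form $\mu u \wedge \ast_{uv}(\cdots)$ plus $\pm u \wedge v \wedge \ast_{uv} H_b^\perp$ components that must be tracked carefully against $\chi_v$. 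The second and third claimed equations, $\chi_v \wedge \omega = 0$ and $\mu\ast_{uv}\chi_v = \chi_v - i\chi_v\triangle^\perp_1\omega$, should emerge from the $u\wedge(\cdots)$ components of the second input equation ($H_b\wedge u + \mu\ast H_b(u^\sharp) + \cdots = 0$) and from $u\wedge\omega\wedge H_b = 0$ respectively; one notes $u\wedge\omega\wedge H_b = u\wedge\omega\wedge(v\wedge\chi_v + H_b^\perp) = u\wedge v\wedge(\omega\wedge\chi_v) \pm u\wedge\omega\wedge H_b^\perp$, and the second summand vanishes for degree reasons in $V_{uv}$ (it is a $5$-form on a $4$-dimensional space), leaving $\omega\wedge\chi_v = 0$. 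The condition $\escal{H_b(u^\sharp),\omega}=0$ becomes $\escal{\chi_v,\omega}=0$, which should turn out to be a consequence of $\mu\ast_{uv}\chi_v = \chi_v - i\chi_v\triangle^\perp_1\omega$ together with $\escal{\omega,\omega}=2$ (taking the inner product with $\omega$ and using that $\ast_{uv}$ on $\wedge^2 V_{uv}^\ast$ is an isometry with $\ast_{uv}^2 = \mathrm{id}$), or else it should be shown equivalent to one of the listed equations; I would check this reduction explicitly.

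The last step is bookkeeping: verify that the $u\wedge v$-components and $v\wedge(\cdots)$-components of the two vector/bivector input equations either reproduce the same three equations or are automatically implied, so that no constraint is lost and none is spurious. In particular the $v\wedge\chi_v$ part of $H_b$ feeds into $H_b(u^\sharp)$ while the $u\wedge\chi_u$ part is annihilated by contraction with $u^\sharp$; one must confirm that $\chi_u$ and $\beta$ are then either fully determined by the three stated equations (in terms of $\chi_v$ and $H_b^\perp$) or genuinely free, consistent with the phrasing of the lemma. The main obstacle I expect is precisely the sign and Hodge-duality bookkeeping across the three-fold splitting: the operators $\ast$, $\ast_{uv}$, $\triangle_k$, and $\triangle^\perp_k$ interact through identities like $\ast(u\wedge\chi_u) = -u\wedge\ast_{uv}\chi_u$ and the translation $v^\flat\triangle_2 = \iota_{v^\sharp}$, and a single misplaced sign or a conflation of $\triangle_1$ on $M$ with $\triangle^\perp_1$ on $V_{uv}$ would corrupt the final equations; the remedy is to fix orientation conventions once via \eqref{eq:nunuuv} and to carry every term through \eqref{eq:Hodgeidentities} and Lemma \ref{lemma:HDeltaomega} mechanically rather than by inspection. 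Everything else is a routine, if lengthy, degree-by-degree comparison, and no genuinely new idea beyond Lemma \ref{lemma:constrainedspinoreven} and Proposition \ref{prop:HermitiansquareLorentz6d} is needed.
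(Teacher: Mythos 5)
Your plan follows the paper's own proof essentially verbatim: substitute the splitting \eqref{eq:splitHb} into the equations of Lemma \ref{lemma:spinorialcurving}, use \eqref{eq:Hodgeidentities} and Lemma \ref{lemma:HDeltaomega}, separate by components, and confirm the leftover components are redundant (the paper does exactly this, showing the $\beta$--$H_b^\perp$ relation coming from the second line is the $\ast_{uv}$-dual of the first claimed equation via $\ast_{uv}(H_b^\perp\triangle_1^\perp\omega)=\mu\,H_b^\perp\triangle_2^\perp\omega$, and that $\escal{\chi_v,\omega}=0$ is equivalent to $\chi_v\wedge\omega=0$ because $\ast_{uv}\omega=-\mu\omega$). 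The only slip is the word ``respectively'', which swaps which input equation yields $\chi_v\wedge\omega=0$ versus $\mu\ast_{uv}\chi_v=\chi_v-i\chi_v\triangle_1^\perp\omega$; your explicit computation of $u\wedge\omega\wedge H_b$ immediately afterwards gets the correct assignment, so this is harmless.
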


\begin{proof}
A pair $(\eta,b)$ is a spinorial curving if and only if all equations in Lemma \ref{lemma:spinorialcurving} are satisfied. Plugging \eqref{eq:splitHb} into the last line of the equations in Lemma \ref{lemma:spinorialcurving}, we obtain:
\begin{equation*}
\chi_v  \wedge \omega = 0 \, , \qquad  \langle \chi_v , \omega \rangle = 0,
\end{equation*}

\noindent
which are equivalent equations since $\omega$ satisfies $\ast_{uv} \omega = -\mu \omega$. On the other hand, plugging \eqref{eq:splitHb} into the second line of the equations in Lemma \ref{lemma:spinorialcurving}, and using the identities \eqref{eq:Hodgeidentities} together with the previous equations, we obtain the following equivalent conditions:
\begin{equation}
\label{eq:ast_uv_chi_v}
\mu \ast_{uv} \chi_v = \chi_v  - i \chi_v \triangle_1^{\perp} \omega\, , \qquad \mu \ast_{uv}\beta + H_b^\perp + i\beta\wedge\omega + i H_b^\perp\triangle_1^\perp\omega = 0.
\end{equation}

\noindent
Finally, plugging \eqref{eq:splitHb} into the first line of the equations in Lemma \ref{lemma:spinorialcurving}, and using the identities \eqref{eq:Hodgeidentities} together with the previous equations, we obtain:
\begin{equation}
\label{eq:ast_uv_chi_vII}
\beta = \mu*_{uv}H_b^\perp + iH_b^\perp\triangle_2^\perp\omega - i\omega(\beta^\sharp),
\end{equation}

\noindent
where we have used the identities $\ast\chi_v=u\wedge v\wedge \ast_{uv}\chi_v$, $\ast (u\wedge\beta)=u\wedge \ast_{uv}\beta$, as well as Equation \eqref{eq:ast_uv_chi_v}. Taking the Hodge dual of the second equation in \eqref{eq:ast_uv_chi_v}, we conclude:
\begin{equation*}
\beta = \mu\ast_{uv}H_b^\perp + i \mu \ast_{uv} (H_b^\perp\triangle_1^\perp\omega) - i\omega(\beta^\sharp).
\end{equation*}

\noindent
Using the identity $\ast_{uv} (H_b^\perp\triangle_1^\perp\omega) = \mu H_b^\perp\triangle_2^\perp\omega$ we recover Equation \eqref{eq:ast_uv_chi_vII}. Hence, the equations in \eqref{eq:ast_uv_chi_v}, or, equivalently, the first equation in \eqref{eq:ast_uv_chi_v} together with Equation \eqref{eq:ast_uv_chi_vII}, give the if and only if conditions for $(\eta,b)$ to be a spinorial curving on $(\cC,M,g)$.
\end{proof}

\begin{thm}
\label{thm:spinorialcurving}
A pair $(\eta,b)$ is a spinorial curving on $(\cC,M,g)$ if and only if there exists an isotropic one-form $v\in \Omega^1(M)$ conjugate to the Dirac current $u$ of $\eta$ such that the following conditions are satisfied:
\begin{gather*}
H_b(u^{\sharp},v^{\sharp}) = -\mu \ast_{uv} H_b^\perp - iH_b^\perp\triangle_2^\perp\omega - i\omega(H_b(u^{\sharp},v^{\sharp})^\sharp),\\
(H_b(u^\sharp,v^\sharp)\wedge u+H_b(u^\sharp))\wedge\omega=0,\\
\mu \ast_{uv} (H_b(u^\sharp,v^\sharp)\wedge u+H_b(u^\sharp)) = H_b(u^\sharp,v^\sharp)\wedge u+H_b(u^\sharp) - i (H_b(u^\sharp,v^\sharp)\wedge u+H_b(u^\sharp)) \triangle^{\perp}_1 \omega,
\end{gather*}

\noindent
where $\ast_{uv}$ denotes the Hodge dual induced by $h$ on the orthogonal complement $V_{uv}$ to $u$ and $v$ and the Hermitian square of $\eta$ is given by $\widehat{\alpha} = u + i u\wedge \omega - \mu \ast u$. 
\end{thm}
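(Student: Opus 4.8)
The strategy is to recognize that Theorem \ref{thm:spinorialcurving} is essentially a restatement of Lemma \ref{lemma:spinorialcurvingII} once one re-expresses the abstract components $\beta$, $\chi_u$, $\chi_v$, $H_b^\perp$ of the decomposition \eqref{eq:splitHb} in terms of coordinate-free quantities built directly from $H_b$ and the pair $(u,v)$. First I would fix an isotropic one-form $v\in\Omega^1(M)$ conjugate to the Dirac current $u$, whose existence is guaranteed by the orientability and time-orientability of $(M,g)$, and choose the representative of $\omega$ satisfying $\omega(v^\sharp)=0$, exactly as in the setup preceding Lemma \ref{lemma:spinorialcurvingII}. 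With respect to the orthogonal splitting $TM = \langle \mathbb{R} u^\sharp\rangle\oplus\langle\mathbb{R} v^\sharp\rangle\oplus V_{uv}$, I would then identify each term in \eqref{eq:splitHb} intrinsically: the scalar component is $\beta = H_b(u^\sharp, v^\sharp)$ viewed as a one-form on $V_{uv}$ after contracting $H_b$ with $u^\sharp$ and $v^\sharp$; the component $\chi_v \in \Gamma(\wedge^2 V_{uv}^\ast,\mathbb{C})$ is the $V_{uv}$-part of $\iota_{u^\sharp} H_b = H_b(u^\sharp)$, so that $H_b(u^\sharp) = -u\wedge\beta + \chi_v$; and $H_b^\perp$ is the purely $V_{uv}$-valued part of $H_b$. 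This is just careful bookkeeping with the contraction identities $\iota_{u^\sharp}(u\wedge v\wedge\gamma) = v\wedge\gamma - u\wedge\iota_{u^\sharp}(v\wedge\gamma)$ etc., using $\langle u,u\rangle=0$, $\langle u,v\rangle=1$.

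Second, I would substitute these identifications into the three conditions of Lemma \ref{lemma:spinorialcurvingII}. The first condition there, $\beta = \mu\ast_{uv} H_b^\perp + i H_b^\perp\triangle_2^\perp\omega - i\omega(\beta^\sharp)$, becomes verbatim the first displayed equation of the theorem once $\beta$ is written as $H_b(u^\sharp,v^\sharp)$. For the remaining two conditions, $\chi_v\wedge\omega = 0$ and $\mu\ast_{uv}\chi_v = \chi_v - i\chi_v\triangle_1^\perp\omega$, I would rewrite $\chi_v$ using $\chi_v = H_b(u^\sharp) + u\wedge\beta = H_b(u^\sharp) + H_b(u^\sharp,v^\sharp)\wedge u$ (the sign following from $u\wedge\beta = -\beta\wedge u$ if one is careful, or absorbed into the convention for $\beta$; I'd track this explicitly). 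Since $\omega$ and $\ast_{uv}$ annihilate the $u$-direction appropriately — more precisely $u\wedge(\text{anything})$ interacts with $\wedge\omega$ and $\triangle_1^\perp\omega$ in a controlled way because $\omega$ lives in $\wedge^2 V_{uv}^\ast$ and $\iota_{u^\sharp}\omega = 0$ — the conditions $\chi_v\wedge\omega=0$ and $\mu\ast_{uv}\chi_v = \chi_v - i\chi_v\triangle_1^\perp\omega$ are equivalent to the same equations with $\chi_v$ replaced by $H_b(u^\sharp,v^\sharp)\wedge u + H_b(u^\sharp)$, which are precisely the second and third displayed equations of the theorem. The key point is that adding a $u\wedge(\cdot)$ term to $\chi_v$ does not alter these equations, or alters them in a way that is automatically consistent, which is the same invariance already exploited in Proposition \ref{prop:HermitiansquareLorentz6d} regarding the ambiguity $\omega\mapsto\omega + u\wedge\theta$.

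Third, I would note that the Hermitian square formula $\widehat{\alpha} = u + iu\wedge\omega - \mu\ast u$ is exactly the content of Proposition \ref{prop:HermitiansquareLorentz6d}, so nothing new is needed there; I would simply cite it and recall that $u$ is the Dirac current and $\omega$ satisfies \eqref{eq:omegaconditions}. Assembling the three equivalences above gives the stated if-and-only-if characterization, and I would close by remarking that the choice of conjugate one-form $v$ is an auxiliary device: the existence of \emph{some} such $v$ for which the equations hold is equivalent to the spinorial curving condition, which is intrinsic to $(\eta,b)$.

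\textbf{Main obstacle.} The delicate step is the bookkeeping in the second paragraph: correctly matching the abstract components $\chi_v$, $\beta$, $H_b^\perp$ with the intrinsic contractions $H_b(u^\sharp)$, $H_b(u^\sharp,v^\sharp)$, and verifying that the $u\wedge(\cdot)$-ambiguity genuinely drops out of the equations $\chi_v\wedge\omega=0$ and $\mu\ast_{uv}\chi_v = \chi_v - i\chi_v\triangle_1^\perp\omega$. One must check that $\big(H_b(u^\sharp,v^\sharp)\wedge u\big)\wedge\omega$ and $\big(H_b(u^\sharp,v^\sharp)\wedge u\big)\triangle_1^\perp\omega$ behave as required under $\ast_{uv}$ — i.e. that promoting $\chi_v$ to the full three-form $\chi_v + u\wedge\beta$ on $M$ and applying the $V_{uv}$-operators $\wedge\omega$, $\triangle_1^\perp\omega$, $\ast_{uv}$ yields equations equivalent to those for $\chi_v$ alone. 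This uses $\iota_{u^\sharp}\omega = 0$, the grading-shift behavior of $\ast_{uv}$ on $u\wedge(\cdot)$ from \eqref{eq:Hodgeidentities}, and Lemma \ref{lemma:HDeltaomega}; it is routine but sign-sensitive, and this is where I would concentrate the actual computation.
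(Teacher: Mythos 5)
Your overall route is exactly the paper's: the theorem is obtained by substituting into Lemma \ref{lemma:spinorialcurvingII} the identification of the components of the splitting \eqref{eq:splitHb} in terms of contractions of $H_b$ with $u^\sharp$ and $v^\sharp$, with the Hermitian square supplied by Proposition \ref{prop:HermitiansquareLorentz6d}. So structurally your proposal matches the paper's proof.

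Two points of correction, one of which matters. First, the sign you leave ``to be tracked'' is not optional: contracting \eqref{eq:splitHb} with $u^\sharp$ gives $H_b(u^\sharp)=-u\wedge\beta+\chi_v$ (using $\langle u,u\rangle=0$, $\langle u,v\rangle=1$, $\iota_{u^\sharp}\beta=\iota_{u^\sharp}\chi_u=0$), and a further contraction with $v^\sharp$ gives $H_b(u^\sharp,v^\sharp)=-\beta$, so the correct identification is $\beta=-H_b(u^\sharp,v^\sharp)$, not $\beta=H_b(u^\sharp,v^\sharp)$ as you state. With your convention the first condition of Lemma \ref{lemma:spinorialcurvingII} does \emph{not} become the theorem's first equation ``verbatim'': the signs of the terms $\mu\ast_{uv}H_b^\perp$ and $iH_b^\perp\triangle_2^\perp\omega$ come out flipped. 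It is precisely the minus sign in $\beta=-H_b(u^\sharp,v^\sharp)$ that, after multiplying the lemma's first equation by $-1$, produces $H_b(u^\sharp,v^\sharp)=-\mu\ast_{uv}H_b^\perp-iH_b^\perp\triangle_2^\perp\omega-i\omega(H_b(u^\sharp,v^\sharp)^\sharp)$; it is also the only convention consistent with the formula $\chi_v=H_b(u^\sharp)+H_b(u^\sharp,v^\sharp)\wedge u$ that you (correctly) write two lines later. Second, the ``main obstacle'' you identify is not an obstacle at all: since $\chi_v=H_b(u^\sharp)+u\wedge\beta=H_b(u^\sharp,v^\sharp)\wedge u+H_b(u^\sharp)$ is an exact equality of elements of $\Gamma(\wedge^2V_{uv}^{\ast},\C)$, the second and third equations of the theorem are literally the conditions $\chi_v\wedge\omega=0$ and $\mu\ast_{uv}\chi_v=\chi_v-i\chi_v\triangle_1^\perp\omega$ rewritten; no invariance under shifts by $u\wedge(\cdot)$ needs to be verified, and none is invoked in the paper. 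With the sign fixed and that detour removed, your argument coincides with the paper's proof.
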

  
\begin{proof}
The pair $(\eta , b)$ is a spinorial curving if and only if the conditions of Lemma \ref{lemma:spinorialcurvingII} are satisfied. The first equation in Lemma \ref{lemma:spinorialcurvingII} is equivalent to the first equation in the statement after noticing that $\beta = -H_b(u^{\sharp},v^{\sharp})$. The second and third equation in the statement follow by substituting $\chi_v=H_b(u^\sharp,v^\sharp)\wedge u+H_b(u^\sharp)$ in Lemma \ref{lemma:spinorialcurvingII}.
\end{proof}

\noindent
As we can see from the above theorem, the equations that an arbitrary complex three-form $H_b$ must satisfy so that the pair $(\eta,b)$ is a spinorial curving are rather involved. If we assume that $H_b$ satisfies the appropriate duality condition, the condition of being spinorial follows automatically.

\begin{cor}\label{cor:b_curving_if_H_dual}
Let $b$ be a curving whose curvature satisfies $\ast H_b = \mu H_b$. Then, the pair $(\eta,b)$ is a spinorial curving on $(\cC,M,g)$ for every chiral irreducible complex spinor $\eta$ of chirality $\mu\in \mathbb{Z}_2$ on $(M,g)$.
\end{cor}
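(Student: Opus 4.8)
The plan is to establish the pointwise identity $H_b\cdot\eta=\Psi_\gamma(H_b)(\eta)=0$ for every chiral spinor $\eta$ of chirality $\mu$, after which the corollary is immediate from the definition of a spinorial curving (granting, as throughout the section, that $\eta$ is nowhere vanishing). The whole argument is a chirality--eigenvalue computation, so the first task is to assemble the relevant parity data in dimension $d=6$ and signature $(5,1)$. Since $q=1$ and $d/2=3$ we have $i^{q+d/2}=i^{4}=1$, so the complex volume form $\nu_\C$ agrees with the real Lorentzian volume form $\nu$, and $\eta\in\Gamma(S)$ has chirality $\mu\in\Z_2$ exactly when $\Psi_\gamma(\nu_\C)\eta=\mu\eta$.

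Next I would translate the hypothesis $\ast H_b=\mu H_b$ into a statement about left multiplication by $\nu_\C$ inside the Kähler--Atiyah algebra. Because $H_b$ is a three--form, $\pi(H_b)=(-1)^{3}H_b=-H_b$ and $\tau(H_b)=(-1)^{3\cdot 2/2}H_b=-H_b$, hence $(\pi\circ\tau)(H_b)=H_b$; Lemma~\ref{lemma:product_volume_form} then gives $\nu_\C\diamond H_b=i^{q+d/2}\ast(\pi\circ\tau)(H_b)=\ast H_b=\mu H_b$. On the other hand, in the even dimension $d=6$ the Clifford volume element anticommutes with every odd element of $\Cl(V^{*},h^{*})$, in particular $\nu_\C\diamond H_b=-H_b\diamond\nu_\C$. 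Applying the algebra isomorphism $\Psi_\gamma$ and evaluating on a chiral spinor $\eta$ of chirality $\mu$ yields
\[
\mu\,\Psi_\gamma(H_b)\eta=\Psi_\gamma(\nu_\C\diamond H_b)\eta=-\Psi_\gamma(H_b)\,\Psi_\gamma(\nu_\C)\eta=-\mu\,\Psi_\gamma(H_b)\eta ,
\]
so $\Psi_\gamma(H_b)\eta=0$. Conceptually this says that a three--form $\rho$ with $\ast\rho=\mu\rho$ defines, via $\gamma$, an operator that simultaneously preserves $\Sigma^{\mu}$ (by $\nu_\C\diamond\rho=\mu\rho$) and exchanges $\Sigma^{+}\leftrightarrow\Sigma^{-}$ (by anticommutation with $\nu_\C$), and therefore vanishes on $\Sigma^{\mu}$. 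If $\eta$ is nowhere vanishing this is precisely the condition that $(\eta,b)$ be a spinorial curving on $(\cC,M,g)$, which completes the proof.

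There is no substantive obstacle in this argument; the only point requiring care is the sign bookkeeping, namely checking that $\nu_\C=\nu$ and that $(\pi\circ\tau)$ acts as the identity on three--forms in this dimension and signature, together with confirming that the Hodge convention of the paper is the one for which $\ast^{2}=\mathrm{id}$ on Lorentzian six--dimensional three--forms, so that $\ast H_b=\mu H_b$ genuinely is an (anti)self--duality projection. As an independent consistency check one may instead feed $\ast H_b=\mu H_b$ into Theorem~\ref{thm:spinorialcurving}: after decomposing $H_b$ along a conjugate pair $(u,v)$ as in \eqref{eq:splitHb}, the self--duality relation forces each of the three listed conditions to hold identically. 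This route is computationally heavier, so I would relegate it to a remark rather than use it as the main proof.
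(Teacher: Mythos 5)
Your proof is correct, but it follows a genuinely different route from the paper. The paper deduces the corollary from Theorem \ref{thm:spinorialcurving}: it decomposes $H_b$ along the conjugate pair $(u,v)$ determined by $\eta$ as in \eqref{eq:splitHb}, translates $\ast H_b=\mu H_b$ into the component relations $*_{uv}\chi_u=-\mu\chi_u$, $*_{uv}\chi_v=\mu\chi_v$, $H_b(u^\sharp,v^\sharp)=-\mu*_{uv}H_b^\perp$, and then checks (using $\ast_{uv}\omega=-\mu\omega$) that the three conditions of the theorem hold identically — precisely the computation you relegate to a remark. You instead argue directly at the Clifford-algebra level: since $\nu_\C=\nu$ in signature $(5,1)$, $(\pi\circ\tau)$ is the identity on three-forms, Lemma \ref{lemma:product_volume_form} gives $\nu\diamond H_b=\ast H_b=\mu H_b$, while the volume element anticommutes with odd elements in even dimension, so $\mu\,\Psi_\gamma(H_b)\eta=-\mu\,\Psi_\gamma(H_b)\eta$ and hence $H_b\cdot\eta=0$ for every $\eta$ of chirality $\mu$; your sign bookkeeping ($\ast^2=\mathrm{id}$ on Lorentzian six-dimensional three-forms, $i^{q+d/2}=1$) is accurate and matches the paper's conventions. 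What your approach buys is brevity and generality: it avoids the Hermitian square, the choice of $v$, and the whole decomposition, and it exhibits the underlying representation-theoretic fact that a chirality-matched self-dual three-form annihilates the entire half-spinor bundle pointwise. What the paper's approach buys is an internal consistency check of the heavier machinery of Lemmas \ref{lemma:spinorialcurving}--\ref{lemma:spinorialcurvingII} and Theorem \ref{thm:spinorialcurving}, which is the point of the section; both proofs are valid.
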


\begin{proof}
The condition $\ast H_b = \mu H_b$ is equivalent to: $$*_{uv}\chi_u=-\mu\chi_u,\qquad *_{uv}\chi_v=\mu\chi_v,\qquad H_b(u^\sharp,v^\sharp)=-\mu*_{uv}H_b^\perp.$$

The condition $*_{uv}\chi_v=\mu\chi_v$, together with $\ast_{uv} \omega = -\mu \omega$, implies that $\chi_v\diamond\omega=0$, which in turns implies $\chi_v\wedge\omega=0$ and $\chi_v\triangle_1^\perp\omega=0$. Hence, the second and third equations in Theorem \ref{thm:spinorialcurving} automatically hold. Similarly, the condition $H_b(u^\sharp,v^\sharp)=-\mu*_{uv}H_b^\perp$ implies $H_b^\perp\triangle_2^\perp\omega=-\omega(H_b(u^\sharp,v^\sharp)^\sharp)$, thus the first equation in Theorem \ref{thm:spinorialcurving} is automatically satisfied.
\end{proof}

\begin{remark}
Corollary \ref{cor:b_curving_if_H_dual} gives the six-dimensional Lorentzian analog of one of the directions of the well-known correspondence between spinorial connections on a Riemannian four-manifold and (anti-)self-dual instantons. 
\end{remark}

\noindent
Theorem \ref{thm:spinorialcurving} has been established for general $\mathbb{C}^{\ast}$-bundle gerbes, whose associated curvature is in general a complex three-form. The case of $\U(1)$-bundle gerbes, whose curvature is purely imaginary, follows as a corollary.

\begin{cor}
\label{cor:U(1)spinorialb}
A pair $(\eta,b)$ is a spinorial curving on a $\U(1)$-bundle gerbe $(\cC,M,g)$ if and only if there exists an isotropic one-form $v\in \Omega^1(M)$ conjugate to the Dirac current $u$ of $\eta$ such that the following conditions are satisfied:
\begin{equation*}
H_b(u^\sharp,v^\sharp)=-\mu*_{uv}H_b^\perp,\qquad *_{uv}(H_b(u^\sharp,v^\sharp)\wedge u+H_b(u^\sharp))=\mu(H_b(u^\sharp,v^\sharp)\wedge u+H_b(u^\sharp)),
\end{equation*}

\noindent
where $\ast_{uv}$ denotes the Hodge dual induced by $h$ on the orthogonal complement $V_{uv}$ to $u$ and $v$. 
\end{cor}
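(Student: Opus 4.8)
The plan is to specialize Theorem \ref{thm:spinorialcurving} to the case of a $\U(1)$-bundle gerbe, for which the curvature $H_b$ is purely imaginary, and observe that the general conditions of the theorem collapse into two real duality-type relations. First I would write $H_b = i\, \tilde{H}_b$ for a \emph{real} closed three-form $\tilde{H}_b \in \Omega^3(M,\mathbb{R})$, and decompose $\tilde{H}_b$ with respect to the orthogonal splitting induced by the Dirac current $u$ and a conjugate one-form $v$, exactly as in Equation \eqref{eq:splitHb}, obtaining real sections $\tilde\beta$, $\tilde\chi_u$, $\tilde\chi_v$, $\tilde{H}_b^{\perp}$. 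Since $\omega$ is a real two-form, all the terms appearing in the three equations of Theorem \ref{thm:spinorialcurving} split cleanly into real and imaginary parts, and the purely imaginary overall factor $i$ factors out. The key step is then to separate each of the three equations into its real and imaginary components and check which survive.

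The main computation goes as follows. In the first equation of Theorem \ref{thm:spinorialcurving}, the left-hand side $H_b(u^\sharp,v^\sharp)$ is purely imaginary, the term $-\mu \ast_{uv} H_b^\perp$ is purely imaginary, while $-iH_b^\perp\triangle_2^\perp\omega$ and $-i\omega(H_b(u^{\sharp},v^{\sharp})^\sharp)$ are — after substituting $H_b = i\tilde H_b$ — also seen to be purely imaginary (they pick up one extra factor of $i$, but the entries $H_b^\perp$ and $H_b(u^\sharp,v^\sharp)$ each carry one $i$ as well, so these two terms are real; more carefully, one tracks that $H_b^\perp\triangle_2^\perp\omega$ is imaginary and $\omega(H_b(u^\sharp,v^\sharp)^\sharp)$ is imaginary, so multiplying by $-i$ makes them real). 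Thus the real part of the first equation forces $H_b^\perp\triangle_2^\perp\omega + \omega(H_b(u^\sharp,v^\sharp)^\sharp) = 0$, and the imaginary part is $H_b(u^\sharp,v^\sharp) = -\mu \ast_{uv} H_b^\perp$. The crucial observation is that the real part is then automatically implied: once $H_b(u^\sharp,v^\sharp)=-\mu\ast_{uv}H_b^\perp$ holds, one uses the identity $\ast_{uv}(\ast_{uv} H_b^\perp) = H_b^\perp$ together with the defining relation of $\triangle_2^\perp$ and $\ast_{uv}\omega = -\mu\omega$ to deduce $H_b^\perp\triangle_2^\perp\omega=-\omega((\ast_{uv}H_b^\perp)^\sharp)=-\mu\,\omega(H_b(u^\sharp,v^\sharp)^\sharp)$, which is precisely the real-part condition after multiplying by $\mu$. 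Hence the first equation is equivalent to $H_b(u^\sharp,v^\sharp)=-\mu\ast_{uv}H_b^\perp$ alone.

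For the remaining two equations I would argue similarly. The second equation $(H_b(u^\sharp,v^\sharp)\wedge u + H_b(u^\sharp))\wedge\omega = 0$ is purely imaginary and its content, after stripping $i$, is the real relation $(\tilde H_b(u^\sharp,v^\sharp)\wedge u + \tilde H_b(u^\sharp))\wedge\omega = 0$; but observe that $\tilde H_b(u^\sharp,v^\sharp)\wedge u + \tilde H_b(u^\sharp) = \tilde\chi_v$ in the notation of \eqref{eq:splitHb}, and the third equation will force a self-duality relation on $\chi_v$ from which $\chi_v\wedge\omega = 0$ follows exactly as in the proof of Corollary \ref{cor:b_curving_if_H_dual}. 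For the third equation, the left side $\mu\ast_{uv}(\cdots)$ and first term on the right $(\cdots)$ are both imaginary, while the term $-i(\cdots)\triangle_1^\perp\omega$ is real; separating real and imaginary parts yields $\mu\ast_{uv}\chi_v = \chi_v$ (imaginary part) and $\chi_v\triangle_1^\perp\omega = 0$ (real part), and again the real part follows automatically from the imaginary one via $\ast_{uv}\omega=-\mu\omega$ and the identity relating $\triangle_1^\perp$ under the Hodge dual, since $\chi_v\diamond\omega = 0$ once $\ast_{uv}\chi_v=\mu\chi_v$. Assembling, the pair $(\eta,b)$ is a spinorial curving on the $\U(1)$-bundle gerbe if and only if $H_b(u^\sharp,v^\sharp)=-\mu\ast_{uv}H_b^\perp$ and $\ast_{uv}(H_b(u^\sharp,v^\sharp)\wedge u+H_b(u^\sharp))=\mu(H_b(u^\sharp,v^\sharp)\wedge u+H_b(u^\sharp))$, which is the claimed statement. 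The main obstacle I anticipate is bookkeeping the factors of $i$ correctly through the $\triangle_k^\perp$ operators and verifying that the real parts are genuinely redundant rather than imposing new constraints; this requires care with the signs coming from $\ast_{uv}^2$ on forms of each degree in the four-dimensional Euclidean space $V_{uv}$, but no conceptually new input beyond what already appears in the proofs of Lemma \ref{lemma:spinorialcurvingII} and Corollary \ref{cor:b_curving_if_H_dual}.
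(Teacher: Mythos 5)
Your overall strategy is correct and is essentially the paper's own: Corollary \ref{cor:U(1)spinorialb} is Theorem \ref{thm:spinorialcurving} specialized to purely imaginary $H_b$, with each equation split into real and imaginary parts and the real parts (together with the middle equation) shown to be consequences of the imaginary parts, which is exactly the mechanism already used in the proof of Corollary \ref{cor:b_curving_if_H_dual}. Your treatment of the second and third equations is fine. However, your justification of the redundancy in the \emph{first} equation contains two sign slips. First, $\ast_{uv}\circ\ast_{uv}$ is not the identity on $H_b^\perp$: on the four-dimensional Euclidean space $V_{uv}$ one has $\ast_{uv}^2=(-1)^{k(4-k)}\,\mathrm{id}$ on $k$-forms, which is $-\mathrm{id}$ on one- and three-forms, so $\ast_{uv}(\ast_{uv}H_b^\perp)=-H_b^\perp$. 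Second, the relation you end up with, $H_b^\perp\triangle_2^\perp\omega=-\mu\,\omega(H_b(u^\sharp,v^\sharp)^\sharp)$, carries a spurious factor of $\mu$, and ``multiplying by $\mu$'' does not convert it into the real-part condition $H_b^\perp\triangle_2^\perp\omega=-\omega(H_b(u^\sharp,v^\sharp)^\sharp)$; as written your chain is also internally inconsistent with the substitution $\ast_{uv}H_b^\perp=-\mu H_b(u^\sharp,v^\sharp)$. The implication you need is nevertheless true, and a clean derivation goes as follows: set $\beta=-H_b(u^\sharp,v^\sharp)$, so the imaginary part of the first equation reads $\beta=\mu\ast_{uv}H_b^\perp$, equivalently $H_b^\perp=-\mu\ast_{uv}\beta$; then by Proposition \ref{prop:appendix_properties} (symmetry of $\triangle_2$ and $\alpha\triangle_a(\ast\beta)=\ast(\beta\wedge\alpha)$) one gets
\begin{equation*}
H_b^\perp\triangle_2^\perp\omega=\omega\triangle_2^\perp H_b^\perp=-\mu\,\omega\triangle_2^\perp(\ast_{uv}\beta)=-\mu\ast_{uv}(\beta\wedge\omega)=-\mu\,\iota_{\beta^\sharp}(\ast_{uv}\omega)=\mu^2\,\iota_{\beta^\sharp}\omega=\omega(\beta^\sharp)=-\omega(H_b(u^\sharp,v^\sharp)^\sharp),
\end{equation*}
using $\ast_{uv}\omega=-\mu\omega$ and $\mu^2=1$. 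This is precisely the implication asserted in the proof of Corollary \ref{cor:b_curving_if_H_dual}, so with this correction the redundancy of the real part is genuine and your argument, and hence the stated equivalence, goes through.
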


\begin{example}
Consider $(M,g) = (\mathbb{R}^2 \oplus \mathbb{R}^4 , \delta_{1,1}\oplus h)$ as the direct product of two-dimensional Minkowski space $(\mathbb{R}^2 , \delta_{1,1})$ and $\mathbb{R}^4$ equipped with a Riemannian metric $h$. We consider a trivialized $\U(1)$-bundle gerbe on $\mathbb{R}^6$ with trivial connective structure. Then, we can consider curvings on such bundle gerbe simply as two-forms on $\mathbb{R}^6$, which we split as follows in terms of the splitting $\mathbb{R}^6 = \mathbb{R}^2 \oplus \mathbb{R}^4$ with Cartesian coordinates $(x_u,x_v,x_1,\hdots , x_4)$ in which $\delta_{1,1} = \dd x_u \odot \dd x_v$. We write:
\begin{equation*}
b = f\, \dd x_u \wedge \dd x_v + \dd x_u \wedge a_u + \dd x_v \wedge a_v + b^{\perp}
\end{equation*}

\noindent
for a function $f\colon \mathbb{R}^6\to \mathbb{R}$, a pair of one-forms $a_u , a_v\colon \mathbb{R}^6 \to (\mathbb{R}^4)^{\ast}$ and a two-form $b^{\perp} \colon \mathbb{R}^6 \to \wedge^2 (\mathbb{R}^4)^{\ast}$. The curvature of $b$ is readily found to be:
\begin{eqnarray*}
& H_b = \dd b = \dd x_u \wedge \dd x_v \wedge \dd^{\perp}f - \dd x_u \wedge \dd x_v \wedge \partial_{x_v} a_u - \dd x_u   \wedge \dd^{\perp} a_u \\
& - \dd x_v \wedge \dd x_u \wedge \partial_{x_u} a_v - \dd x_v \wedge \dd^{\perp} a_v + \dd x_u \wedge \partial_{x_u} b^{\perp} + \dd x_v \wedge \partial_{x_v} b^{\perp} + \dd^{\perp} b^{\perp} \\
& = \dd x_u \wedge \dd x_v \wedge (\dd^{\perp}f + \partial_{x_u} a_v - \partial_{x_v} a_u) + \dd x_u   \wedge (\partial_{x_u} b^{\perp} - \dd^{\perp} a_u) + \dd x_v   \wedge (\partial_{x_v} b^{\perp} - \dd^{\perp} a_v) + \dd^{\perp} b^{\perp},
\end{eqnarray*}

\noindent
where $\dd^{\perp}$ denotes the exterior derivative on $\mathbb{R}^4$. Take the Hermitian square of $\eta$ to be given by: $$\widehat{\alpha} = \dd x_u + i \dd x_u \wedge \omega -\mu \ast \dd x_u,$$ where $\omega$ is a symplectic form on $\mathbb{R}^4$ satisfying $\ast_h \omega = -\mu \omega$, where $\ast_h$ denotes the Hodge dual on $(\mathbb{R}^4 , h)$. Hence, by Corollary \ref{cor:U(1)spinorialb}, such $b$ is a spinorial curving if and only if:
\begin{equation*}
\mu \ast_h \dd^{\perp} b^{\perp} = \dd^{\perp}f + \partial_{x_u} a_v - \partial_{x_v} a_u \, ,\qquad \ast_h (\partial_{x_v} b^{\perp} - \dd^{\perp} a_v) = \mu (\partial_{x_v} b^{\perp} - \dd^{\perp} a_v).
\end{equation*}

\noindent
Hence, taking $a_v$ as closed, $b^{\perp}$ independent of $x_v$, and setting:
\begin{equation*}
a_u = \int_{0}^{x_u}  (\dd^{\perp}f + \partial_{x_u} a_v )\, \dd s  - \mu \, x_u \ast_h \dd^{\perp} b^{\perp} + a_o
\end{equation*}

\noindent
for a one-form $a_o$, we obtain an example of spinorial curving on $(\mathbb{R}^2 \oplus \mathbb{R}^4 , \delta_{1,1}\oplus h)$.
\end{example}

%%%%%%%%%%%%%%%%%%%%%%%%%%%%%%%%%%%%%%%%%%%%%%%%%%%%%%%%%%%%%%%%%%%%%%%%%%%%%%
%%%%%%%%%%%%%%%%%%%%%%%%%%%%%%%%%%%%%%%%%%%%%%%%%%%%%%%%%%%%%%%%%%%%%%%%%%%%%%
%%%%%%%%%%%%%%%%%%%%%%%%%%%%%%%%%%%%%%%%%%%%%%%%%%%%%%%%%%%%%%%%%%%%%%%%%%%%%%
%%%%%%%%%%%%%%%%%%%%%%%%%%%%%%%%%%%%%%%%%%%%%%%%%%%%%%%%%%%%%%%%%%%%%%%%%%%%%%

\appendix

%%%%%%%%%%%%%%%%%%%%%%%%%%%%%%%%%%%%%%%%%%%%%%%%%%%%%%%%%%%%%%%%%%%%%%%%%%%%%%
%%%%%%%%%%%%%%%%%%%%%%%%%%%%%%%%%%%%%%%%%%%%%%%%%%%%%%%%%%%%%%%%%%%%%%%%%%%%%%
%%%%%%%%%%%%%%%%%%%%%%%%%%%%%%%%%%%%%%%%%%%%%%%%%%%%%%%%%%%%%%%%%%%%%%%%%%%%%%
%%%%%%%%%%%%%%%%%%%%%%%%%%%%%%%%%%%%%%%%%%%%%%%%%%%%%%%%%%%%%%%%%%%%%%%%%%%%%%

% % % % % % % % % % % % % % % % % % % % % % % % % % % % % % % % % % % % % %
% % % % % % % % % % % % % % % % % % % % % % % % % % % % % % % % % % % % % %

\section{The Kähler-Atiyah model for the Clifford algebra}
\label{app:KA}

% % % % % % % % % % % % % % % % % % % % % % % % % % % % % % % % % % % % % %
% % % % % % % % % % % % % % % % % % % % % % % % % % % % % % % % % % % % % %

Let $V$ be an oriented $d$-dimensional real vector space equipped with a non-degenerate metric $h$ of signature $(p,q)$ and let $(V^*,h^*)$ be the quadratic space dual to $(V,h)$, where $h^*$ denotes the metric dual to $h$. Let $\mathrm{Cl}(V^*,h^*)$ be the real Clifford algebra of the quadratic vector space $(V^*,h^*)$, viewed as a $\Z_2$-graded associative algebra with decomposition: $$\mathrm{Cl}(V^*,h^*)=\mathrm{Cl}^{\mathrm{ev}}(V^*,h^*)\oplus\mathrm{Cl}^{\mathrm{odd}}(V^*,h^*).$$

In our conventions the Clifford algebra satisfies: $$\theta^2=h^*(\theta,\theta),\quad\theta\in V^*.$$

We identify the real Clifford algebra $\mathrm{Cl}(V^*,h^*)$ with the \emph{Kähler-Atiyah algebra} of $(V^*,h^*)$, which we denote by $(\wedge V^*,\diamond)$ (see \cite{Chev54,Chev55}). The map $\diamond\colon\wedge V^*\times\wedge V^*\to\wedge V^*$ denotes the \emph{geometric product} determined by $h$. This is given by the linear and associative extension of the following expression: $$\theta\diamond\alpha=\theta\wedge\alpha+\iota_{\theta^\sharp}\alpha,\quad\theta\in V^*,\,\alpha\in\wedge V^*,$$ where $\theta^\sharp\in V$ denotes the $h$-dual vector of the one-form $\theta$.\medskip

In order to do computations with the geometric product it is convenient to introduce the \emph{generalized products} of $(V^*,h^*)$. These are the bilinear operators:
\begin{equation}
\label{eq:bilinearDelta}
\triangle_k\colon\wedge^i V^*\times\wedge^j V^*\to\wedge^{i + j - 2k}V^*,
\end{equation}

\noindent
where $k=0,\ldots,d$, defined through the expansion: 
\begin{equation*}
\alpha\diamond\beta=\sum_{k=0}^d(-1)^{\binom{k+1}{2}+jk}\alpha\triangle_k\beta,\quad\alpha\in\wedge^j V^*,\,\beta\in\wedge V^*.
\end{equation*}

\noindent
Choosing a basis $\{e_1,\ldots,e_d\}$ of $V$ we can express the generalized products as: 
\begin{equation*}
    \alpha\triangle_k\beta=\tfrac{1}{k!}h^{i_1j_1}\cdots h^{i_kj_k}(\iota_{e_{i_1}}\ldots\iota_{e_{i_k}}\alpha)\wedge(\iota_{e_{j_1}}\ldots\iota_{e_{j_k}}\beta).
\end{equation*}

\noindent
Below, we collect some useful properties of the generalized products that we will use in the computations.

\begin{prop}[{\cite{LB13,LBC13,LBC16}}]\label{prop:appendix_properties}
    Let $\alpha\in\wedge^aV^*$ and $\beta\in\wedge^bV^*$. Then: \begin{itemize}
        \item $\alpha\triangle_k\beta=0$ if $k>a$ or $k>b$.
        \item $\alpha\triangle_k\beta=(-1)^{(a-k)(b-k)}\beta\triangle_k\alpha$. In particular $\alpha\triangle_k\alpha=0$ if $a-k$ is odd.
        \item $\alpha\triangle_0\beta=\alpha\wedge\beta$ and if $b=a$ then $\alpha\triangle_a\beta=\escal{\alpha,\beta}$.
        \item $\alpha\triangle_a(*\beta)=*(\beta\wedge\alpha)$ if $a+b\leq d$.
    \end{itemize}
\end{prop}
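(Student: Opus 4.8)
The final statement to prove is Proposition \ref{prop:appendix_properties}, which collects four basic properties of the generalized products $\triangle_k$ of a quadratic vector space $(V^*,h^*)$. I note this is attributed to the references \cite{LB13,LBC13,LBC16}, so the expected proof is a direct verification from the coordinate formula
\[
\alpha\triangle_k\beta=\tfrac{1}{k!}h^{i_1j_1}\cdots h^{i_kj_k}(\iota_{e_{i_1}}\ldots\iota_{e_{i_k}}\alpha)\wedge(\iota_{e_{j_1}}\ldots\iota_{e_{j_k}}\beta),
\]
together with the defining expansion of the geometric product.

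The plan is as follows. For the first bullet, I would observe that $\iota_{e_{i_1}}\cdots\iota_{e_{i_k}}\alpha=0$ whenever $k>a=\deg\alpha$, since contracting a homogeneous form more times than its degree annihilates it; by the symmetry in $\alpha$ and $\beta$ (to be proven next, or by inspecting the formula directly) the same holds for $k>b$. For the third bullet, $\triangle_0$ has no contractions and is literally the wedge product; and when $b=a$ the operator $\triangle_a$ lands in $\wedge^0 V^*=\R$ and a short index computation identifies it with the induced inner product $\escal{\,\cdot\,,\,\cdot\,}$ on $\wedge^a V^*$ (one checks it on a basis of decomposable forms, where $\escal{e^{i_1}\wedge\cdots,e^{j_1}\wedge\cdots}$ is a determinant of $h^*$-pairings, matching the contraction formula). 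For the second bullet, the commutation rule $\alpha\triangle_k\beta=(-1)^{(a-k)(b-k)}\beta\triangle_k\alpha$, I would move the wedge factor of degree $b-k$ past the one of degree $a-k$ in the coordinate formula, picking up the sign $(-1)^{(a-k)(b-k)}$, using that the contraction operators $\iota_{e_{i}}$ are symmetric in their indices once paired against the symmetric tensor $h^{i_\bullet j_\bullet}$; the special case $\alpha\triangle_k\alpha=0$ for $a-k$ odd is then immediate since $(-1)^{(a-k)^2}=-1$. For the fourth bullet, $\alpha\triangle_a(*\beta)=*(\beta\wedge\alpha)$ when $a+b\le d$, I would use the standard identity relating contraction with the Hodge star, $\iota_{X}(*\beta)=*(\beta\wedge X^\flat)$ (up to the sign conventions fixed in this appendix), iterated $a$ times, and then recognize the resulting expression as the full contraction of $\beta\wedge\alpha$ against $\alpha$'s dual, i.e. $*(\beta\wedge\alpha)$; the degree hypothesis $a+b\le d$ ensures $\beta\wedge\alpha$ is not forced to vanish and the star is applied to a form of admissible degree.

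The main obstacle I anticipate is purely bookkeeping: tracking the sign conventions consistently between the Hodge star $*$, the contraction $\iota$, the musical isomorphisms $\sharp,\flat$, and the $(-1)^{\binom{k+1}{2}+jk}$ factors appearing in the expansion $\alpha\diamond\beta=\sum_k(-1)^{\binom{k+1}{2}+jk}\alpha\triangle_k\beta$. In particular the fourth bullet is the most sign-sensitive, and one must be careful that the identity $\iota_{\theta^\sharp}(*\beta)=\pm *(\beta\wedge\theta)$ is applied with the sign dictated by the plus convention $\theta^2=h^*(\theta,\theta)$ used throughout the paper. Since the statement is quoted from the literature and is standard, I would keep the write-up brief: verify the easy bullets (one and three) in a sentence each, give the one-line index manipulation for bullet two, and cite the relevant identity for the Hodge-star formula in bullet four, referring the reader to \cite{LB13,LBC13,LBC16} for the detailed sign computation.

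\begin{proof}
All four statements follow by a direct computation from the coordinate expression for $\triangle_k$ and the defining expansion of $\diamond$; see \cite{LB13,LBC13,LBC16}. Indeed, if $k>a=\deg\alpha$ then $\iota_{e_{i_1}}\cdots\iota_{e_{i_k}}\alpha=0$, and likewise for $k>b$, which gives the first item. For the third item, $\triangle_0$ involves no contractions and hence equals $\wedge$, while for $b=a$ the operator $\triangle_a$ takes values in $\wedge^0V^*=\R$ and a computation on decomposable forms identifies it with the induced inner product $\escal{\cdot,\cdot}$. For the second item, exchanging in the coordinate formula the two wedge factors, of degrees $a-k$ and $b-k$ respectively, produces the sign $(-1)^{(a-k)(b-k)}$, using the symmetry of $h^{i_\bullet j_\bullet}$ in the paired contraction indices; taking $\alpha=\beta$ and $a-k$ odd yields $\alpha\triangle_k\alpha=-\alpha\triangle_k\alpha=0$. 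For the fourth item, iterating $a$ times the identity relating interior multiplication with the Hodge dual, and using $a+b\leq d$ so that $\beta\wedge\alpha$ has admissible degree, one obtains $\alpha\triangle_a(*\beta)=*(\beta\wedge\alpha)$ with the signs fixed by the plus convention $\theta^2=h^*(\theta,\theta)$.
\end{proof}
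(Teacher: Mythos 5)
Your verification is correct, and in fact the paper itself gives no proof of this proposition at all — it is simply quoted from \cite{LB13,LBC13,LBC16} — so the direct check from the coordinate formula that you sketch (contraction beyond degree vanishes, swapping the two wedge factors gives the sign $(-1)^{(a-k)(b-k)}$, $\triangle_0=\wedge$ and full contraction recovers $\escal{\cdot,\cdot}$, and the iterated interior-product/Hodge identity for the last item) is exactly the expected argument. The only soft spot is that you defer the sign bookkeeping in the fourth bullet to the references, which is acceptable here since the paper does the same.
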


As an associative and unital algebra, the Kähler-Atiyah algebra $(\wedge V^*,\diamond)$ is isomorphic to the Clifford algebra $\mathrm{Cl}(V^*,h^*)$ through the $h$-dependent \emph{Chevalley-Riesz isomorphism} (see \cite{CLS21,LBC13,LBC16}), which we denote by: \begin{equation}\label{eq:ChevRiesz_iso}
    \Psi_0\colon(\wedge V^*,\diamond)\to\mathrm{Cl}(V^*,h^*).
\end{equation}

We denote by $\pi$ the \emph{signature automorphism} of the Kähler-Atiyah algebra, which is defined as the unique unital algebra automorphism which acts as minus the identity on $V^*\subset\wedge V^*$, and by $\tau$ the \emph{reversion anti-automorphism}, defined as the unique unital algebra anti-automorphism which acts as the identity on $V^*$. For $\alpha\in\wedge^aV^*$ we have: \begin{equation*}
    \pi(\alpha)=(-1)^a\alpha,\quad\tau(\alpha)=(-1)^{\binom{a}{2}}\alpha.
\end{equation*}

The pseudo-Riemannian volume form $\nu\in\wedge^dV^*$ satisfies the following properties:

\begin{lemma}\label{lemma:product_volume_form}
Let $(V,h)$ be a quadratic vector space of signature $(p,q)$ and dimension $d$. Then the following identities hold for all $\alpha\in\wedge V^*$: \begin{equation}\label{eq:diamond_volume}
\alpha\diamond\nu=*\tau(\alpha)\quad\text{and}\quad\nu\diamond\alpha=*(\pi^{d-1}\circ\tau)(\alpha).
\end{equation}
\end{lemma}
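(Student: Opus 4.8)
\textbf{Proof plan for Lemma \ref{lemma:product_volume_form}.}

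The plan is to verify the two identities in \eqref{eq:diamond_volume} directly, reducing everything to the action of the geometric product on an orthonormal basis, since both sides are linear in $\alpha$. First I would fix an $h^*$-orthonormal basis $\{e^1,\ldots,e^d\}$ of $V^*$, so that $\nu = e^1 \diamond \cdots \diamond e^d = e^1 \wedge \cdots \wedge e^d$ (these coincide because the $e^i$ are mutually orthogonal). It then suffices to prove each identity when $\alpha = e^{i_1}\wedge\cdots\wedge e^{i_k} = e^{i_1}\diamond\cdots\diamond e^{i_k}$ for $1\le i_1 < \cdots < i_k \le d$, and by linearity this covers all of $\wedge V^*$.

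For the first identity, $\alpha\diamond\nu = *\tau(\alpha)$, the key computation is $ (e^{i_1}\diamond\cdots\diamond e^{i_k})\diamond(e^1\diamond\cdots\diamond e^d)$. Using the Clifford relations $e^i\diamond e^i = h^*(e^i,e^i) \eqnot \varepsilon_i \in\{\pm1\}$ and $e^i\diamond e^j = - e^j\diamond e^i$ for $i\ne j$, I would move each $e^{i_r}$ in the left factor past the initial segment of $\nu$ to annihilate against its twin, accumulating a sign from the anticommutations and a factor $\varepsilon_{i_r}$ from the square. What remains is, up to sign, the product of the complementary basis covectors, i.e.\ a scalar multiple of $*(e^{i_1}\wedge\cdots\wedge e^{i_k})$; tracking the sign of the Hodge star in the chosen orientation and signature and comparing it with the reversion sign $(-1)^{\binom{k}{2}}$ recorded after Proposition \ref{prop:appendix_properties} yields precisely $*\tau(\alpha)$. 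This is most cleanly organized by first checking the single-generator case $e^i\diamond\nu$ and then inducting on $k$, using associativity of $\diamond$ to peel off one generator at a time; one also uses that $\nu\diamond e^i = (-1)^{d-1} e^i\diamond\nu$ to keep the bookkeeping uniform.

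The second identity, $\nu\diamond\alpha = *(\pi^{d-1}\circ\tau)(\alpha)$, follows from the first by a symmetry argument: since $e^i\diamond\nu = (-1)^{d-1}\nu\diamond e^i$ for every generator, and both $\alpha\mapsto\alpha\diamond\nu$ and $\alpha\mapsto\nu\diamond\alpha$ are determined by their values on $\diamond$-monomials, one gets $\nu\diamond(e^{i_1}\diamond\cdots\diamond e^{i_k}) = (-1)^{(d-1)k}\,(e^{i_1}\diamond\cdots\diamond e^{i_k})\diamond\nu = (-1)^{(d-1)k} *\tau(\alpha)$ for $\alpha$ of degree $k$, and $(-1)^{(d-1)k}\tau(\alpha) = \pi^{(d-1)}(\tau(\alpha))$ on degree-$k$ elements. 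Alternatively one can apply $\tau$ to the first identity and use that $\tau$ is an anti-automorphism with $\tau(\nu) = (-1)^{\binom{d}{2}}\nu$, together with $\tau\circ* = *\circ(\pi^{d-1}\circ\tau)\circ\tau$-type identities; I would present whichever of these is shortest. The main obstacle is purely a sign/normalization bookkeeping issue: getting the Hodge star sign conventions, the $i^{q+\cdots}$ factors absent here but the $(-1)$ powers present, and the reversion/signature automorphism exponents to align exactly as stated — nothing conceptually deep, but it requires care in a fixed convention (recall the \emph{plus} convention $\theta^2 = h^*(\theta,\theta)$ is in force), and it is cleanest to pin down the orientation so that $*1 = \nu$ and propagate from there.
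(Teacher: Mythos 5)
Your plan is correct, but it takes a more computational route than the paper for the first identity. The paper's proof is two lines: it starts from the identity $*\alpha = \tau(\alpha)\diamond\nu$, taken as known, and substitutes $\alpha \mapsto \tau(\alpha)$ using $\tau^2 = \Id$ to get $\alpha\diamond\nu = *\tau(\alpha)$; for the second identity it uses the commutation relation $\nu\diamond\alpha = \pi^{d-1}(\alpha)\diamond\nu$ together with $\pi\circ\tau = \tau\circ\pi$. Your treatment of the second identity is essentially the same idea, just phrased generator by generator via $e^i\diamond\nu = (-1)^{d-1}\nu\diamond e^i$ and then $(-1)^{(d-1)k} = \pi^{d-1}$ on degree-$k$ elements, which is fine. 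For the first identity, however, you reprove from scratch, on an orthonormal basis, precisely the relation $*\beta = \tau(\beta)\diamond\nu$ that the paper invokes (your identity with $\beta = \tau(\alpha)$ is literally this relation), by cancelling each generator against its twin in $\nu$ and matching signs with $(-1)^{\binom{k}{2}}$ and the Hodge convention $*1 = \nu$. What your approach buys is self-containedness: it makes explicit where the plus convention $\theta^2 = h^*(\theta,\theta)$, the orientation, and the signature factors $\varepsilon_i$ enter, and a spot check (e.g.\ $\alpha = e^1$ or $e^1\wedge e^2$ in Euclidean signature) confirms the signs do come out as $*\tau(\alpha)$. What the paper's route buys is brevity and immunity to convention bookkeeping, since it only manipulates $\tau$, $\pi$, and the known expression of $*$ through the geometric product. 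The only soft spot in your write-up is that the sign bookkeeping in the basis computation is deferred rather than carried out; it is routine (either by the twin-cancellation count or by induction on $k$ using $e^i\wedge *\gamma$ and $\iota_{e_i}*\gamma$ identities), so there is no gap of substance, but that step is where all the content of your version lives and should be written out if you keep this approach.
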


\begin{proof}
On one hand we have $*\alpha=\tau(\alpha)\diamond\nu$, thus $\alpha\diamond\nu=\tau(\tau(\alpha))\diamond\nu=*\tau(\alpha)$ since $\tau^2=\Id$. On the other hand we have that $\nu\diamond\alpha=\pi^{d-1}(\alpha)\diamond\nu$, thus $\nu\diamond\alpha=\pi^{d-1}(\alpha)\diamond\nu=*(\tau\circ\pi^{d-1})(\alpha)=*(\pi^{d-1}\circ\tau)(\alpha)$ since $\tau\circ\pi=\pi\circ\tau$.
\end{proof}

\begin{remark}
The second equation in \eqref{eq:diamond_volume} implies that the left-multiplication with the volume form $\nu$ depends on the parity of the dimension $d$ rather than on the signature $(p,q)$ of the metric $h$. For $d$ even we obtain the same equations as in \cite[Lemma 3.24]{CLS21}, whereas for $d$ odd, we obtain the same equations as in \cite[Equation 4]{Sha24}.
\end{remark}

% % % % % % % % % % % % % % % % % % % % % % % % % % % % % % % % % % % % % %
% % % % % % % % % % % % % % % % % % % % % % % % % % % % % % % % % % % % % %

\bibliographystyle{myamsplain}
\bibliography{biblio}

\end{document}